\theoremstyle{plain}
\newtheorem{theorem}{Theorem}[section]
\newtheorem{proposition}[theorem]{Proposition}
\newtheorem{lemma}[theorem]{Lemma}
\newtheorem{corollary}[theorem]{Corollary}
\theoremstyle{definition}
\newtheorem{definition}[theorem]{Definition}
\newcommand{\OR}{orientation-reversing\xspace}
\newcommand{\OP}{orientation-preserving\xspace}
\newcommand{\ena}{enantiomorphic\xspace}
\newcommand{\cry}{crystallographic\xspace}
\newcommand\R{\ensuremath{\mathbb{R}}}
\newcommand\T{\ensuremath{\mathbb{T}}}
\renewcommand\H{\ensuremath{\mathcal{H}}}
\renewcommand\O{\ensuremath{\mathrm{O}}}
\newcommand\Orth[1]{\ensuremath{\mathrm{O}(#1)}}
\newcommand\SO{\ensuremath{\mathrm{SO}}}
\newcommand\dist{\ensuremath{\mathrm{dist}}}
\newcommand\orbit{\ensuremath{\mathrm{orbit}}}
\newcommand\id{\ensuremath{\mathrm{id}}}
\newcommand\w{\ensuremath{\omega}}
\let \phi=\varphi
\DeclareMathOperator{\lcm}{lcm}
\DeclareMathOperator{\vor}{Vor}
\DeclareMathOperator{\diag}{diag}
\newdimen\symsize
\newbox\slashbox
\newbox\backslashbox
\newcommand\symB[1]{\hbox{$\vcenter{\hrule
      \hbox{\vrule height \symsize \kern\symsize \vrule}\hrule}\kern-0.4pt \kern-\symsize
          \hbox to \symsize{\hss$
      \if .#1\cdot\else
      \if X#1\times\else
      \if 1#1\else 
      \if /#1\vcenter{\copy\slashbox}\else
      \if |#1\vcenter{\hrule height\symsize width 0,4pt}\else
      \if \setminus#1\vcenter{\copy\backslashbox}
           \else
      \if *#1\hbox to \wd0{\hss$+$\hss}\kern-\wd0
             \hbox to \wd0{\hss$\times$\hss}\else
      \if L#1\raise 0,55pt\hbox{$\scriptstyle\circlearrowleft$}\else
      \if R#1\raise 0,55pt\hbox{$\scriptstyle\circlearrowright$}\else
      #1\fi\fi\fi\fi\fi\fi\fi\fi\fi
      $\hss}\kern0.4pt$}}
\newcommand\sym{\raise 0,5pt \symB} 
\newcommand\grp[1]{\ensuremath{\mskip 1mu\vcenter{\hrule
      \hbox{\vrule \kern 0,6pt
        $\vcenter{\kern 0,6pt \symB#1 \kern 0,6pt}$\kern
        0,6pt\vrule}\hrule}\mskip 1mu}}
\newcommand*\samethanks[1][\value{footnote}]{\footnotemark[#1]}
\newif\iffigurescompact
\renewcommand{\subsectionmark}[1]{}
\begin{document}

\setbox\slashbox=\hbox{\rotatebox{-45}{\vrule height 1,4143\symsize
    width 0,42pt }}
\setbox\backslashbox=\hbox{\rotatebox{45}{\vrule height 1,4143\symsize
   width 0,42pt }}

\title{Towards a Geometric Understanding\\of the 4-Dimensional Point Groups}
\author{Laith Rastanawi and Günter Rote}

\begin{small}
\author{Laith Rastanawi\thanks{
            Supported by the DFG Research Training Group
            GRK 2434 ``Facets of Complexity''
        }\\[4pt]
\parbox{0.36\linewidth}{\normalsize\centering
        Institut f\"ur Mathematik\\
        Freie Universität Berlin\\
        Arnimallee 2\\
        14195 Berlin, Germany\\[1pt]
        \nolinkurl{laith.rastanawi@fu-berlin.de}}
        \and
        G\"unter Rote\samethanks\\[4pt]
\parbox{0.36\linewidth}{\normalsize\centering
        Institut f\"ur Informatik\\
        Freie Universität Berlin\\
        Takustra\ss e 9\\
        14195 Berlin, Germany\\[1pt]
        \nolinkurl{rote@inf.fu-berlin.de}}
    }
\end{small}

\maketitle

\begin{abstract}
We classify the finite groups of orthogonal
transformations in 4-space, and we study these groups from the
viewpoint of their geometric action,
using polar orbit polytopes.
For one type of groups (the toroidal groups), we develop a new
classification based on their action on an invariant torus, while we
rely on classic results for the remaining groups.

As a tool, we develop a convenient parameterization of the oriented great
circles on the 3-sphere, which leads to (oriented) Hopf fibrations in a
natural way.
\end{abstract}

 \tableofcontents
\markboth{Laith Rastanawi and Günter Rote: 4-Dimensional Point Groups}
         {Laith Rastanawi and Günter Rote: 4-Dimensional Point Groups}
\listoftables
\markboth{Laith Rastanawi and Günter Rote: 4-Dimensional Point Groups}
         {Laith Rastanawi and Günter Rote: 4-Dimensional Point Groups}
         
\section{Introduction and Results}
\label{sec:intro}

A \emph{$d$-dimensional point group} is a finite group of orthogonal
transformations in $\R^d$, or in other words, a finite subgroup of $\Orth{d}$.
We propose the following classification for the $4$-dimensional point groups.

\begin{theorem}
\label{classification}
The 4-dimensional point groups can be classified into
\begin{itemize}
    \item 25 polyhedral groups (Table~\ref{tab:polyhedral}),
    \item 21 axial groups (7 pyramidal groups, 7 prismatic groups,
        and 7 hybrid groups, Table~\ref{tbl:axial_full}),
    \item 22 one-parameter families of tubical groups
        (11 left tubical groups and 11 right tubical groups,
        Table~\ref{tbl:left_tubical_groups}), and
    \item 25 infinite families of toroidal groups
        (Table~\ref{tab:overview}), among them
        \begin{itemize}
            \item 2 three-parameter families,
            \item 19 two-parameter families, and
            \item 4 one-parameter families.
        \end{itemize}
\end{itemize}
\end{theorem}

In contrast to earlier classifications of these groups
(notably by Du Val in 1962 \cite{duval} and by
Conway and Smith in 2003 \cite{CS}), see Section~\ref{sec:previous}),
we emphasize a geometric viewpoint,
trying to visualize and understand actions of these groups.
Besides, we correct some omissions, duplications,
and mistakes in these classifications.

\paragraph{Overview of the groups.}
The 25 \emph{polyhedral} groups are related to the regular polytopes.
The symmetries of the regular polytopes are well understood,
because they are generated by reflections,
and the classification of such groups as Coxeter groups is classic.
We will deal with these groups only briefly,
dwelling a little on just a few groups that come in \ena pairs
(i.e., groups that are not equal to their own mirror.)

The 21 \emph{axial} groups are those that keep one axis fixed.
Thus, they essentially operate in the three dimensions
perpendicular to this axis
(possibly combined with a flip of the axis),
and they are easy to handle,
based on the well-known classification of
the three-dimensional point groups.

The \emph{tubical} groups are characterized as those that
have (exactly) one Hopf bundle invariant.
They come in left and right versions
(which are mirrors of each other)
depending on the Hopf bundle they keep invariant.
They are so named because they arise with
a decomposition of the 3-sphere into tube-like structures
(discrete Hopf fibrations).

The \emph{toroidal} groups are characterized as having an invariant torus.
This class of groups is where our main contribution in terms of
the completeness of the classification lies.
We propose a new, geometric, classification of these groups.
Essentially, it boils down to classifying the isometry groups of
the two-dimensional square flat torus.

We emphasize that, 
regarding the completeness of the classification,
in particular concerning the polyhedral and tubical groups,
we rely on the classic approach (see Section~\ref{sec:classic}).
Only for the toroidal and axial groups,
we supplant the classic approach by our geometric approach.

\paragraph{Hopf fibrations.}
We give a self-contained presentation of Hopf fibrations (Section~\ref{sec:hopf}).
In many places in the literature,
one particular Hopf map is introduced as ``the Hopf map'',
either in terms of four real coordinates or two complex coordinates,
leading to ``the Hopf fibration''.
In some sense, this is justified,
as all Hopf bundles are (mirror-)congruent.
However, for our characterization,
we require the full generality of Hopf bundles.
As a tool for working with Hopf fibrations, 
we introduce a parameterization for great circles in $S^3$,
which might be useful elsewhere.

\paragraph{Orbit polytope.}
Our main tool to understand tubical groups are polar orbit polytopes.
(Section~\ref{sec:orbit-polytopes}).
In particular,
we study the symmetries of a cell of the polar orbit polytope
 for different starting points.

\section{Orbit Polytopes}
\label{sec:orbit-polytopes}

\subsection{Geometric understanding through orbit polytopes: the
  pyritohedral group}
\label{subsec:orbit_polytopes}

One can try to visualize a point group $G \leqslant \O(d)$
by looking at the orbit of some point $0 \ne v \in \R^d$ and
taking the convex hull.
This is called the \emph{$G$-orbit polytope} of $v$.
For an in-depth study of orbit polytopes and their symmetries,
refer to \cite{FL16,FL18}.

The {orbit polytope} will usually depend on the choice of $v$,
and it may have other symmetries in addition to those of $G$.
For example, the $C_n$-orbit polytope in the plane is
always a regular $n$-gon, and this orbit polytope has the larger
dihedral group
$D_{2n}$ as its symmetry group.

\label{sec:pyrit}

We will illustrate the usefulness of orbit polytopes
with a three-dimensional example.
The pyritohedral group
 is perhaps the most interesting among the point groups in 3 dimensions.
It is generated by a cyclic rotation of the
coordinates $(x_1,x_2,x_3)\mapsto(x_2,x_3,x_1)$ and by the coordinate
reflection $(x_1,x_2,x_3)\mapsto(-x_1,x_2,x_3)$.
It has order 24.
Figure~\ref{fig:pyrit} shows a few examples of orbit polytopes for this
group, and
their polars.
The elements of the pyritohedral group are simultaneously symmetries of the octahedron
(where it is an index-2 subgroup of the full symmetry group)
and the icosahedron (an index-5 subgroup),
and of course of their polars, the cube and the dodecahedron.
The group contains reflections, but it is not generated by its reflections.

The orbit of the points $(1,0,0)$ and $(1,1,1)$ generate the regular
octahedron and the cube, respectively.
These are each other's polars,
but they don't give any specific information about the
pyritohedral group.

Figure~\ref{subfig:pyrit_generic}
shows the orbit polytope (in yellow) of a generic point
$(\frac23,\frac12,1)$, and its polar (in orange).
The symmetries of these polytopes are exactly the pyritohedral group.
That orbit polytope has 6 rectangular faces
(lying in planes of the faces of a cube),
8 equilateral triangles (lying in the faces of an octahedron),
and 12 trapezoids
(going through the %
edges of some cube, but not of some regular octahedron).
The polar has 24 quadrilateral faces,
corresponding to the 24 group elements.
For any pair of faces,
there is a unique symmetry of the polytope that
maps one face to the other.\footnote{In mineralogy,
this shape is sometimes called a \emph{diploid}, and
\emph{diploidal symmetry} is an alternative name for pyritohedral symmetry.
In our context, the term diploid will show up in a different sense.}

If we choose one coordinate of the starting point to be 0,
the rectangles shrink to line segments,
and the trapezoids become isosceles triangles.
See Figure~\ref{subfig:pyrit_generic_mirror}.
The orbit polytope is an icosahedron with 20 triangular faces:
8 equilateral triangles and 12 isosceles triangles.
The polar polytope is a \emph{pyritohedron}, that is,
a dodecahedron with 12 equal but not necessarily regular pentagons.
For this choice, the orbit contains only 12 points,
but the polytope gains no additional symmetries beyond the
pyritohedral symmetries.
However, for $(0, \frac{\sqrt{5}-1}{2}, 1)$,
we get the regular icosahedron and the regular
dodecahedron.
For the specific choice $(0, \frac{1}{2}, 1)$,
the polar orbit polytope is
one of the crystal forms of the mineral pyrite,
which gave the polytope and group its name,
see Figure~\ref{subfig:pyrit_generic_mirror}.
This polytope is also an \emph{alternahedron} on $4$ symbols~\cite{CK2006_f2}.
An alternahedron can be constructed as the orbit of a generic point
$(x_1,x_2,x_3,x_4) \in \mathbb R^4$
under all even permutations.
Since the points lie in a hyperplane
$x_1+x_2+x_3+x_4 = \mathrm{const}$,
this is a three-dimensional polytope.
For the starting point $(0, 1,2)$,
we obtain %
the alternahedron
that results from the canonical choice
$(x_1,x_2,x_3,x_4)=(1,2,3,4)$, a scaled copy of
Figure~\ref{subfig:pyrit_generic_mirror}.\footnote
{The illustration of this polytope in
\cite[Fig.~1]{CK2006_f2} may make the wrong impression of consisting of
equilateral triangles only. However,
its isosceles faces have base length $2$ and two
equal legs of length $\sqrt6 \approx 2.45$.}

The pyritohedral group differs from the symmetries of the cube (or the
octahedron) by allowing only even permutations of the coordinates
$x_1,x_2,x_3$.
When two coordinates are equal, this distinction plays no role, and
the resulting polyhedron will have all symmetries of the cube,
see Figure~\ref{subfig:pyrit_nice_triangulation}.
(We mention that some
special starting points of this form lead to Archimedean polytopes:
The starting point $(1, 1, \sqrt{2}+1)$ generates
a rhombicuboctahedron with 8 regular triangles and 18 squares;
 $(0, 1, 1)$ generates the cuboctahedron
with 8 regular triangles and 6 squares;
with $(\frac{1}{\sqrt{2}+1}, 1, 1)$,
we get the truncated cube with 8 regular triangles and 8 regular
octagons,
similar to the yellow polytope in Figure~\ref{subfig:pyrit_nice_triangulation}.)

For the purpose of visualizing the pyritohedral group,
we will try to keep the three coordinates distinct.
By choosing the point close to $(1,1,1)$ or $(0,0,1)$, we can
emphasize the cube-like or the octahedron-like appearance of the orbit
polytope or its polar.
For example, the polar orbit polytope for $(0,\frac{1}{10},1)$
resembles a cube whose squares are subdivided into rectangles,
like the orange polytope in Figure~\ref{subfig:pyrit_rectangles}.
(Actually, the mineral pyrite has sometimes a cubic crystal form in which
the faces carry parallel thin grooves, %
so-called \emph{striations}.\footnote{See
\url{http://www.mineralogische-sammlungen.de/Pyrit-gestreift-engl.html}})
See also Figure~\ref{subfig:pyrit_close_cube}
for $(\frac{2}{10}, \frac{1}{10}, 1)$.
The orbit polytope in Figure~\ref{subfig:pyrit_rectangles} appears like an octahedron
whose edges have been shaved off, but in an asymmetric way that provides
a direction for the edges
(see Figure~\ref{fig:truncated-cube}a on
p.~\pageref{fig:truncated-cube} in Section~\ref{sec:24-cell}).

On the other hand, the polar orbit polytope for
$(\frac{8}{10},\frac{9}{10},1)$ resembles an octahedron, carrying
a pinwheel-like structure on every face.
See Figure~\ref{subfig:pyrit_close_octahedron}.

\begin{figure}[htb]
    \begin{subfigure}{0.5\textwidth}
        \centering
        \begin{minipage}{0.5\textwidth}
        \includegraphics[width=3cm]{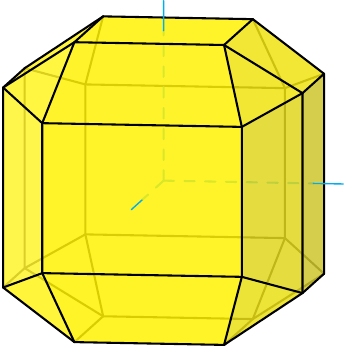}
        \end{minipage}%
        \begin{minipage}{0.5\textwidth}
        \includegraphics[width=3cm]{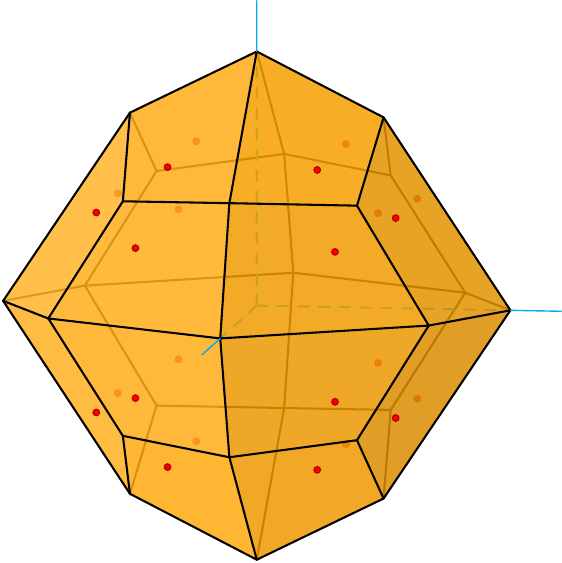}
        \end{minipage}
        \caption{$(\frac23, \frac12, 1)$}
        \label{subfig:pyrit_generic}
    \end{subfigure}%
    \begin{subfigure}{0.5\textwidth}
        \centering
        \begin{minipage}{0.5\textwidth}
        \includegraphics[width=3cm]{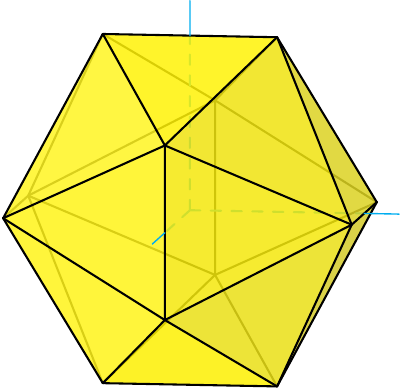}
        \end{minipage}%
        \begin{minipage}{0.5\textwidth}
        \includegraphics[width=3cm]{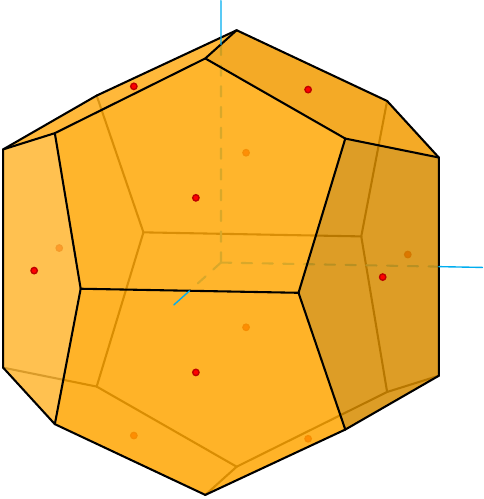}
        \end{minipage}
        \caption{$(0, \frac12, 1)$}
        \label{subfig:pyrit_generic_mirror}
    \end{subfigure}
    \begin{subfigure}{0.5\textwidth}
        \centering
        \begin{minipage}{0.5\textwidth}
        \includegraphics[width=3cm]{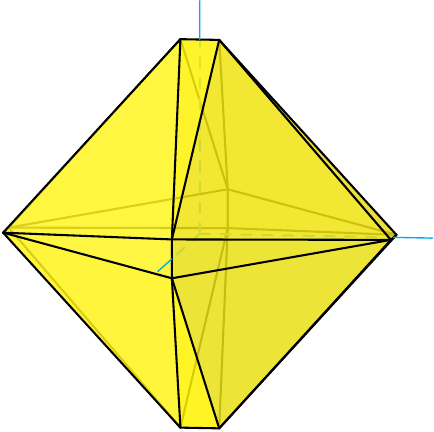}
        \end{minipage}%
        \begin{minipage}{0.5\textwidth}
        \includegraphics[width=3cm]{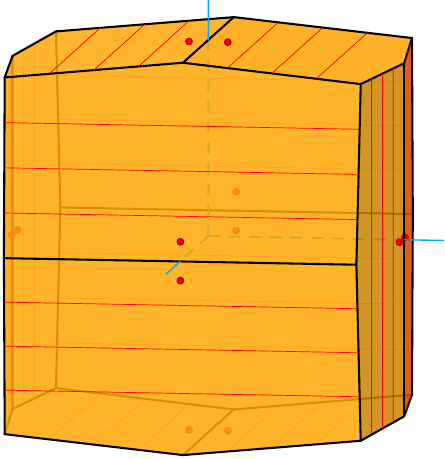}
        \end{minipage}
        \caption{$(0, \frac{1}{10}, 1)$}
        \label{subfig:pyrit_rectangles}
    \end{subfigure}%
    \begin{subfigure}{0.5\textwidth}
        \centering
        \begin{minipage}{0.5\textwidth}
        \includegraphics[width=3cm]{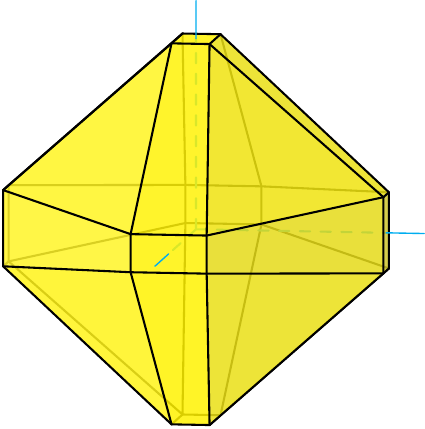}
        \end{minipage}%
        \begin{minipage}{0.5\textwidth}
        \includegraphics[width=3cm]{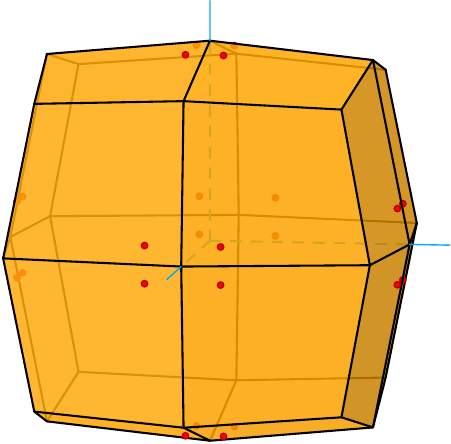}
        \end{minipage}
        \caption{$(\frac{2}{10}, \frac{1}{10}, 1)$}
        \label{subfig:pyrit_close_cube}
    \end{subfigure}
    \begin{subfigure}{0.5\textwidth}
        \centering
        \begin{minipage}{0.5\textwidth}
        \includegraphics[width=3cm]{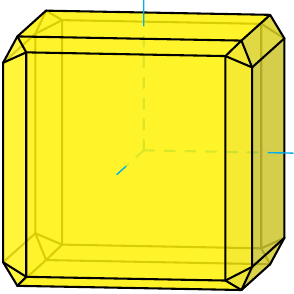}
        \end{minipage}%
        \begin{minipage}{0.5\textwidth}
        \includegraphics[width=3cm]{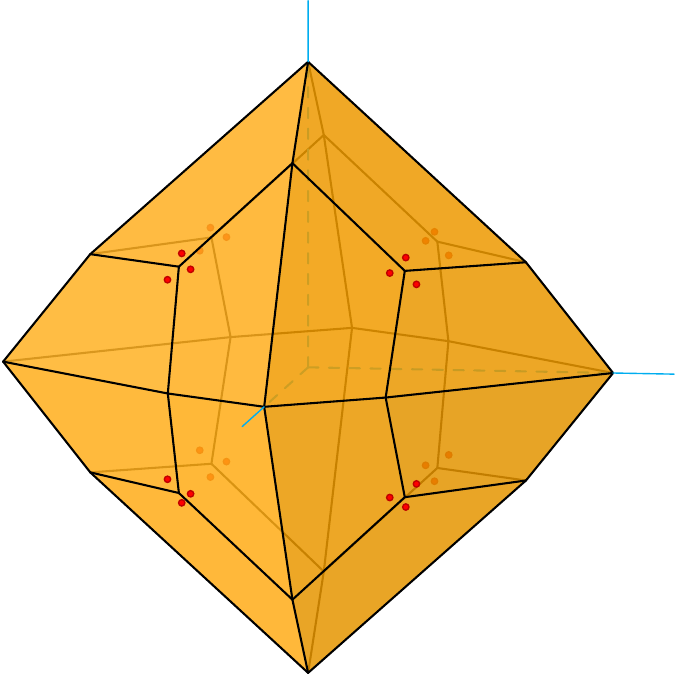}
        \end{minipage}
        \caption{$(\frac{8}{10}, \frac{9}{10}, 1)$}
        \label{subfig:pyrit_close_octahedron}
    \end{subfigure}
    \begin{subfigure}{0.5\textwidth}
        \centering
        \begin{minipage}{0.5\textwidth}
        \includegraphics[width=3cm]{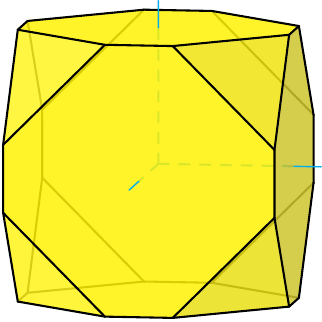}
        \end{minipage}%
        \begin{minipage}{0.5\textwidth}
        \includegraphics[width=3cm]{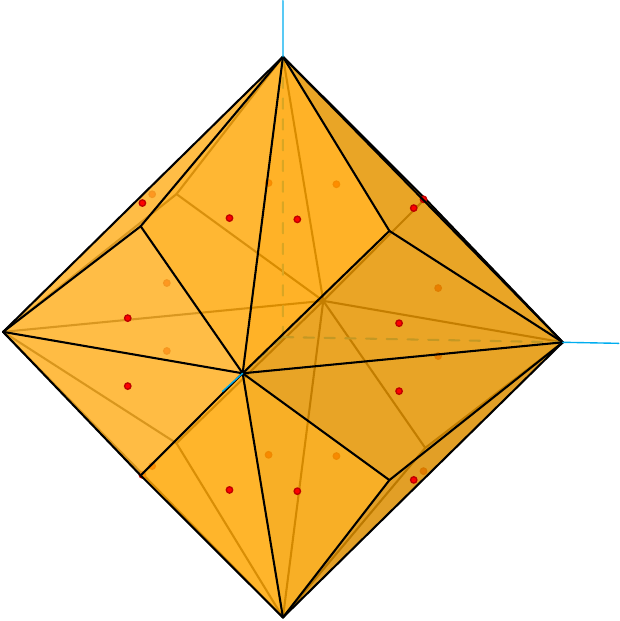}
        \end{minipage}
        \caption{$(\frac{1}{4}, 1, 1)$}
        \label{subfig:pyrit_nice_triangulation}
    \end{subfigure}%
    \caption{Orbit polytopes of the pyritohedral group (yellow, on the
      left)
    and their polar polytopes (orange, on the right) for various starting
    points. The pictures are rescaled to uniform size; the scale is
    not maintained between the pictures.}
  \label{fig:pyrit}
\end{figure}

\subsubsection{The pyritohedral group for flatlanders}
\label{sec:pyrit-flat}
We will be in the situation that we try to visualize 4-dimensional
point groups through orbit polytopes or their polars.
So let us go one dimension lower and imagine that we,
as ordinary three-dimensional people, would like to
explain the pyritohedral group to flatlanders.
We will see that different options have different merits, and
there may be no unique best way of visualizing a group.

Assuming that flatlanders accept the notions of a cube or an
octahedron,
we could tell them that we build a cube whose squares are striped in
such a way that the patterns on adjacent squares never abut, similar to
 the orange polytope in Figure~\ref{subfig:pyrit_rectangles}.
It is allowed to map any square to any other square (6 possibilities) in
such a way that the stripes match (the dihedral group $D_4$ with 4
possibilities, for a total of 24 transformations).

Alternatively, we could tell them that
the edges of an octahedron are oriented such that
each triangle forms a directed cycle
(Figure~\ref{fig:truncated-cube}a on
p.~\pageref{fig:truncated-cube}). %
It is allowed to map any triangle to any other triangle (8 possibilities) in
such a way that edge directions are preserved (the cyclic group $C_3$ with 3
possibilities, for a total of 24 transformations).

Another option is the polar of $(c,1,1)$, where $c \not\in \{0, 1\}$,
see the orange polytope in Figure~\ref{subfig:pyrit_nice_triangulation}.
It has 24 isosceles triangles, one per group element,
As $c$ approaches 1 or 0,
the polar orbit polytope converges to an octahedron
or to a rhombic dodecahedron.
As a shape, the triangle does not reveal much about the group,
so we have to add the information that the base edge acts as a mirror,
and the opposite vertex is a 3-fold \emph{gyration point},
i.e., there are three rotated copies that fit together.
(This is essentially what is expressed in the orbifold notation $3{*}2$.)
We are not allowed to use the reflection that maps the triangle to itself,
and we might indicate this by placing an arrow along the base edge.

In most cases, it was advantageous to describe the group in terms of
the polar orbit polytope: We have many copies of one shape, and any
shape can be mapped to any other. It is not necessarily the best
option to insist that all points of the orbit are distinct.
Sometimes it is
preferable to allow also symmetries within each face. In this case,
the information, which of these symmetries are in the group must be
conveyed as side information, for example by decorations or patterns that should
be left invariant, such as the stripes
 in Figure~\ref{subfig:pyrit_rectangles}.

\begin{figure}[htb]\centering
  \fboxsep=5mm
  \fbox
  {\qquad
    \includegraphics{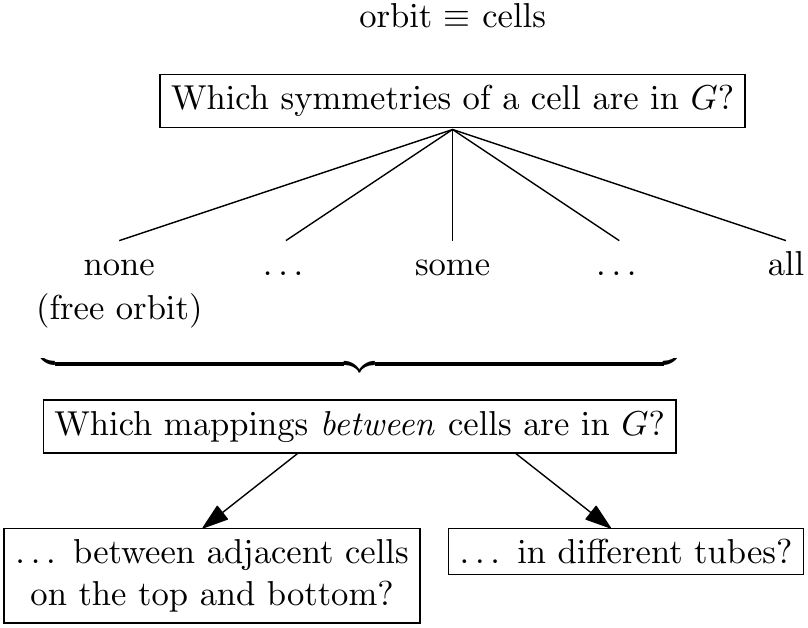}
    \qquad
  }
  \caption{Geometric understanding of a group $G$ through is polar orbit polytope}
  \label{fig:geometric-understanding}
\end{figure}

Figure~\ref{fig:geometric-understanding} summarizes the relation
between a polar orbit polytope and its group $G$. All cells are equal,
and the cells correspond to the points of the orbit. We know that
between any two cells, there is at least one transformation in $G$
that carries one cell to the other. However, it is not directly apparent
\emph{which} transformations
 carry one cell to another cell, or to itself. If all symmetries of a
 cell belong to the group, the answer is clear; otherwise we have to
 discuss this question and describe the answer separately.

 The bottom row of
 Figure~\ref{fig:geometric-understanding} splits this question into
 two subproblems that are relevant only for tubical groups (Section~\ref{sec:tubical}), namely the
 relation between adjacent cells in a tube, and between cells of
 different tubes.

\begin{figure}[htb]
    \begin{subfigure}{0.5\textwidth}
    \centering
    \includegraphics[width=4cm]{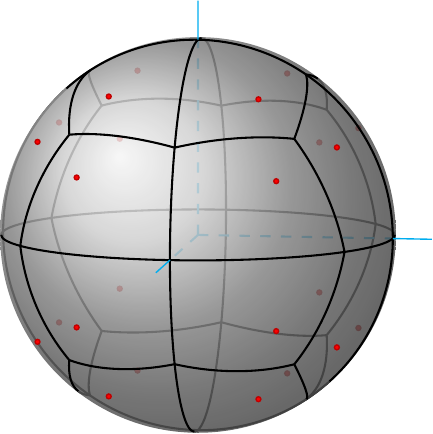}
    \end{subfigure}%
    \begin{subfigure}{0.5\textwidth}
    \centering
    \includegraphics[width=4cm]{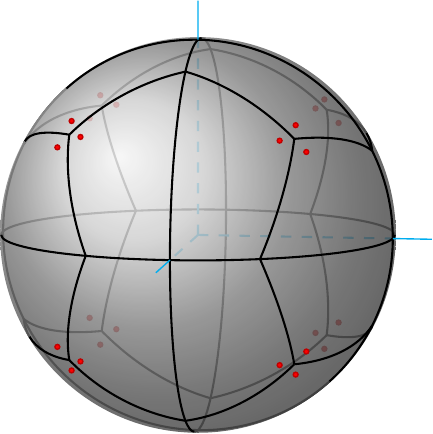}
    \end{subfigure}
    \caption{Spherical Voronoi diagrams of the orbits in
    Figure~\ref{subfig:pyrit_generic} and
    Figure~\ref{subfig:pyrit_close_octahedron}.
    }
    \label{fig:svornoi_example}
\end{figure}

\subsubsection{Polar orbit polytopes and Voronoi diagrams}
\label{sec:voronoi}

There is a well-known connection between polar orbit polytopes and spherical
Voronoi diagrams, or more generally, between polytopes whose facets
are tangent to a sphere and spherical Voronoi diagrams: The central
projection of the polytope to the sphere gives the spherical Voronoi
diagram of the tangency points (the orbit points).
Figure~\ref{fig:svornoi_example} shows spherical Voronoi diagrams for two
orbits of Figure~\ref{fig:pyrit}.

Thus, when we look at polar orbit polytopes, we may think about partitioning the
sphere according to the closest point from the orbit.  The orbit
polytope and the spherical Voronoi diagram have the same combinatorial
structure, but the faces of the orbit polytope are true Euclidean
polytopes, whereas the faces of the Voronoi diagram are spherical
polytopes.
The closer the orbit points are together, the smaller the distortion
will be, and the more the orbit polytope will represent the true
metric situation of the Voronoi diagram.

In our illustrations of 4-dimensional groups, we will prefer to show
orbit polytopes, because these are easier to compute.

\subsection{Fundamental domains and orbifolds}

For comparison, we mention
another way to characterize geometric groups, namely by showing a
fundamental domain of the group, possibly extended by additional
information that characterizes the type of rotations that fix an edge,
such as in an orbifold.
This is particularly appropriate for
Coxeter groups, which are generated by reflections and for which the
choice of fundamental domain is canonical.

Dunbar \cite{dunbar94_f2} studied \OP 4-dimensional point groups.
He constructed fundamental
domains for 10 out of the 14 \OP polyhedral groups
(omitting $\pm[I\times T]$ and $\pm[I\times O]$ and their mirrors).
For each of the 21
\OP polyhedral and axial groups,
he showed the structure of the singular set (fixpoints of some group
elements) of the corresponding
orbifold,
which is a 3-valent
graph where each edge is labeled with the order of the rotational
symmetry around the edge.\footnote
{In the
  list of \OR polyhedral groups that are Coxeter
  groups
  \cite[Figure~17]{dunbar94_f2},   the 6th and 8th entries, which are
 the Coxeter-Dynkin diagrams for the \OR extensions of ${T}\times_{{C}_3}T$
  and $J\times _J^* J^1$, must be exchanged.}

The fundamental domain, possibly enriched by additional information,
is a concise way for representing some groups, but it does not have
the immediate visual appeal of polar orbit polytopes.
For example, the fundamental domain of every Coxeter group is a
simplex,
and the distinctions between different groups lies
only in the dihedral angles at the edges.

\subsection{Left or right orientation of projected images: view from outside}
\label{sec:view-orientation}
We will illustrate many situations in 4-space by %
three-dimensional
graphics that are derived through projection. Just as a plane in space has no
preferred orientation, a 3-dimensional hyperplane in 4-space has no intrinsic orientation.
It depends on from which side we look at it.
Hence, it is important to establish a convention about the orientation,
in order to distinguish a
situation from its mirror image.

Let us look at plane images of the familiar three-dimensional space
``for orientation'' in this matter.
For a polytope or a sphere, we follow the convention that we want to
look at it \emph{from outside}, as for a map of some part of the Earth.
Accordingly, when we interpret a plane picture with
an $x_1,x_2$-coordinate system (with $x_2$ counterclockwise from $x_1$),
the usual convention is to think of the third coordinate $x_3$ as the
``vertical upward'' direction that is facing %
us, leading to a right-handed coordinate system $x_1,x_2,x_3$.

Similarly, when we deal with a 4-polytope and want to show a picture
of one of its facets, which is a three-dimensional polytope~$F$, we
use a
right-handed
orthonormal $x_1,x_2,x_3$-coordinate system in the space of $F$
that can be extended to a positively oriented coordinate system
$x_1,x_2,x_3,x_4$ of 4-space such that $x_4$ points outward from the
4-polytope.

We use the same convention when drawing a cluster of adjacent facets,
or when illustrating situations in the 3-sphere, either through
central projection or through parallel projection.
For example, a small region in the 3-sphere can be visualized as 3-space,
with some distortion, and we will be careful to ensure that this corresponds
to a view on the sphere ``from outside''.

There are other contexts that favor the opposite convention.
For example, stereographic projection is often done
from the North Pole $(x_1,x_2,x_3,x_4)=(0,0,0,1)$ of $S^3$,
and this yields a view %
``from inside'' in the $(x_1,x_2,x_3)$-hyperplane. See for example
\cite[\S 7]{ThrS-I}, or also \cite[p.~123]{dunbar94_f2} for a different
ordering of the coordinates with the same effect.

\section{Point groups}

The 2-dimensional point groups are the
cyclic groups $C_n$ and the dihedral groups $D_{2n}$, for $n\ge 1$.
For $n\ge 3$, they can be visualized, respectively,
as the $n$ rotations of the regular $n$-gon, and
the $2n$ symmetries (rotations and reflections) of the regular $n$-gon.
See Figure~\ref{fig:symmetries_pentagon}.

\begin{figure}[ht]
  \centering
\begin{subfigure}{0.25\textwidth}
    \centering
    \includegraphics[scale=0.8]{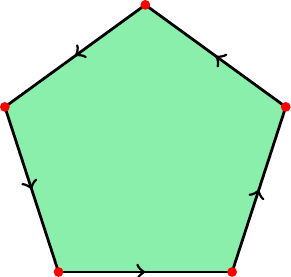}
    \caption*{$C_5$}
\end{subfigure}%
\begin{subfigure}{0.25\textwidth}
    \centering
    \includegraphics[scale=0.8]{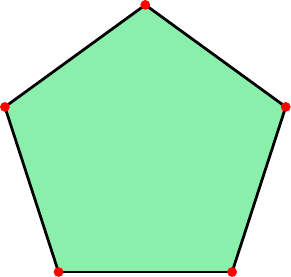}
    \caption*{$D_{10}$}
\end{subfigure}
\caption{The group $C_5$ consisting of the rotational symmetries of the
  regular pentagon, and the group %
  $D_{10}$ of all symmetries of the regular pentagon. %
}
\label{fig:symmetries_pentagon}
\end{figure}

The 3-dimensional point groups are well-studied (see
Section~\ref{sec:3-d-groups} below).
In one sentence, they can be characterized as
 the symmetry groups of the five Platonic solids and
of the  regular $n$-side prisms, and their subgroups.
This gives a frame for classifying these groups, but it does not
give the full information. It remains to work out what the subgroups are,
and moreover, there are duplications, for example:
certain Platonic solids are polar to each other;
the vertices of the cube are contained in the vertices of an icosahedron;
and in turn, they contain the vertices of a tetrahedron;
a cube is a special quadrilateral prism.

\subsection{The 4-dimensional orthogonal transformations}
\label{sec:the-4d-transformations}
\subsubsection{Orientation-preserving transformations}%
We call a 4-dimensional \OP\ transformation a \emph{rotation}.
In some appropriate basis with coordinates $x_1,x_2,x_3,x_4$,
every rotation has the form
\begin{equation}
\label{rotation}
R_{\alpha_1,\alpha_2} = 
    \begin{pmatrix}
    \cos\alpha_1 & -\sin\alpha_1 & 0 & 0 \\
    \sin\alpha_1 & \cos\alpha_1 & 0 & 0 \\
    0 & 0 & \cos\alpha_2 & -\sin\alpha_2  \\
    0 & 0 & \sin\alpha_2 & \cos\alpha_2  \\
    \end{pmatrix},
\text{ or }
R_{\alpha_1,\alpha_2} =
    \begin{pmatrix}
    R_{\alpha_1}&0\\
    0&R_{\alpha_2}\\
    \end{pmatrix} = \diag(R_{\alpha_1},R_{\alpha_2})
\end{equation}
in block form, using the rotation matrices
$R_\alpha =
\left(
\begin{smallmatrix}
    \cos\alpha & -\sin\alpha \\
    \sin\alpha & \cos\alpha
\end{smallmatrix}
\right)
$
as building blocks \cite[\S12.1]{Coxeter}.
If $\alpha_2=0$, we have a \emph{simple rotation}:
a rotation in the $x_1x_2$-plane by the angle $\alpha_1$,
leaving the complementary %
$x_3x_4$-plane fixed.
Thus, the general rotation is the product of two simple rotations in
two %
orthogonal planes,
and we call it more specifically a \emph{double rotation}.
If $\alpha_2\ne\pm\alpha_1$ then the two planes are uniquely determined.
Each plane is an \emph{invariant plane}:
as a set, it is fixed by the operation.

If $\alpha_1=\alpha_2=\pi$, the matrix is the negative identity
matrix, and we have the \emph{central inversion} or \emph{antipodal map}, which we denote by
$-\id$. In $\R^4$, this is an \OP\ transformation.

\subsubsection{Absolutely orthogonal planes and circles}
When we speak of orthogonal planes in 4-space,
we always mean ``absolutely'' orthogonal,
in the sense that every vector in one plane is orthogonal
to every vector in the other plane.%

We will mostly study the situation on the sphere.
Here, an invariant plane becomes an \emph{invariant great circle}, and
there are
\emph{absolutely orthogonal great circles}.

\subsubsection{Left and right rotations}
The rotations with $\alpha_2=\pm\alpha_1$ play a special role: Every
point is moved by the same angle $|\alpha_1|$, and there is no unique
pair of invariant planes.
The rotations with $\alpha_2=\alpha_1$ are \emph{left rotations}, and
the rotations with $\alpha_2=-\alpha_1$ are \emph{right rotations}.
It is easy to see that every rotation
$R_{\alpha_1,\alpha_2}$
is the product of a left and a right rotation
(with angles
$(\alpha_1\pm\alpha_2)/2$).
This representation is unique,
up to a multiplication of both factors with $-\id$.
Left rotations commute with right rotations. These facts are not
straightforward, but they follow easily from the quaternion representation
that is discussed below.
The product of a left rotation by $\beta_L$
and a right rotation by $\beta_R$ is a rotation
$R_{\beta_L+\beta_R,\beta_L-\beta_R}$.

\subsubsection{Orientation-reversing transformations}
An \OR\ transformation has the following form,
in some appropriate basis with coordinates $x_1,x_2,x_3,x_4$:
\begin{equation}\label{eq:OR}
\bar R_{\alpha}=
    \begin{pmatrix}
    \cos\alpha & -\sin\alpha & 0 & 0 \\
    \sin\alpha & \cos\alpha & 0 & 0 \\
     0 & 0& -1&0\\
     0 & 0& 0&1
   \end{pmatrix}
    = \diag(R_{\alpha},-1,1)
\end{equation}
It operates in some three-dimensional subspace $x_1,x_2,x_3$
and leaves one axis $x_4$ fixed.
The $x_3$-axis is inverted.
For $\alpha=0$, we have a mirror reflection in a hyperplane,
$\bar R_0=\diag(1,1,-1,1)$.
For $\alpha=\pi$, we have $\bar R_{\pi}=\diag(-1,-1,-1,1)$,
which could be interpreted as a reflection in the $x_4$-axis.
In general, we have a rotary-reflection,
which has two unique invariant planes:
In one plane, it acts as a rotation by $\alpha$;
in the other plane, it has two opposite fixpoints in $S^3$,
and two other opposite points that are swapped.
The square of an \OR\ transformation $\bar R_{\alpha}$ is always a
simple rotation.

\subsubsection{Quaternion representation} %
The quaternions $x_1+x_2i+x_3j+x_4k$ are naturally identified with
the
vectors $x=(x_1,x_2,x_3,x_4) \in \mathbb R^4$.
We identify the set of unit quaternions with $S^3$, the 3-sphere,
and the set of pure unit quaternions
$v_1i+v_2j+v_3k$ with the points $(v_1,v_2,v_3)$ on $S^2$, the 2-sphere.

Every 4-dimensional rotation can be represented by a pair $[l,r]$
of unit quaternions $l, r \in S^3$.
See \cite[\S 4.1]{CS}.
The pair $[l,r]$ operates on
the vectors $x \in \mathbb R^4$, treated as quaternions,
 by the rule
\begin{displaymath}
  [l,r]\colon
  x \mapsto
  \bar lxr.
\end{displaymath}
The representation of rotations by quaternion pairs is unique
except that $[l,r]=[-l,-r]$.
The rotations $[l,1]$ are the \emph{left rotations},
and the rotations $[1,r]$ are the \emph{right rotations}:
They correspond to quaternion multiplication
from the left and from the right.
A left or right rotation moves every point by the same angular
distance $\alpha$. In fact, as we shall see
(Proportion~\ref{one-parameter-rotations}(ii)),
a left or right rotation
by an angle $\alpha$ other than 0 or $\pi$
defines a \emph{Hopf bundle}, a decomposition of the 3-sphere $S^3$
into circles, each of which is rotated in itself by~$\alpha$. As
transformations on $S^3$, they operate as left screws and right screws, respectively.
See Section~\ref{sec:screws}.

We compose transformations by writing them from left to right, i.e.
$[l_1,r_1][l_2,r_2]$ denotes the effect of first applying
$[l_1,r_1]$ and then $[l_2,r_2]$.\footnote{Du Val~\cite{duval} used the opposite convention, and accordingly his
 notation $[l,r]$ denotes the map
 $  x \mapsto  lx\bar r$.}
Accordingly, composition can be carried out as componentwise
quaternion multiplication: $[l_1,r_1][l_2,r_2]=
[l_1l_2,r_1r_2]$.

Every \OR\ transformation can be represented as
\begin{displaymath}
  {*}[l,r]\colon
  x \mapsto \bar l\bar xr.
\end{displaymath}
See \cite[\S 4.1]{CS}.
The stand-alone symbol
$*$ is alternate notation for quaternion conjugation $*[1,1]\colon x \mapsto  \bar x$.
Then $*[a,b]$ can be interpreted as a composition of the operations
$*$ and $[a,b]$.
Geometrically, the transformation ${*}$ maps $(x_1,x_2,x_3,x_4)$ to
$(x_1,-x_2,-x_3,-x_4)$, and it is a reflection in the $x_1$-axis.
The transformation $-{*}$ maps $(x_1,x_2,x_3,x_4)$ to
$(-x_1,x_2,x_3,x_4)$, and it is a reflection in the hyperplane $x_1=0$.

The inverse transformations are given by these formulas:
\begin{align} \nonumber
  [l,r]^{-1} &= 
  [\bar l, \bar r]\\
  ({*}[l,r])^{-1} &= 
  {*}[\bar r, \bar l]
  =  
  [\bar l, \bar r]{*}  \label{eq:inverse}
\end{align}
The last equation in \eqref{eq:inverse} is also interesting: We may
put the ${*}$ operation on the other side of a transformation $[l,r]$
after swapping the components $l$ and~$r$.

For $l=r$, it is easy to see that $[l,l]$ maps the point $1$ to itself,
and thus operates only on the pure quaternion part.
Thus, the
pairs $[l,l]$ act as 3-dimensional rotations.
For $l=\cos \alpha + \sin \alpha (ui+vj+wk)$,
$[l,l]$ performs a rotation by $2\alpha$ around
the axis with unit vector $(u,v,w)\in \mathbb R^3$.
We will denote $[l, l]$ by $[l]\colon x \mapsto \bar lxl$.
When viewed as an operation on the unit sphere $S^2$, 
$[l]$ is a \emph{clockwise} rotation by $2\alpha$ around the point
$(u,v,w)$.\footnote
{Measuring the rotation angle clockwise is opposite to
the usual convention of regarding
 the counterclockwise direction
 as the mathematically positive direction.
This is a consequence of writing the operation $[l]$
as $x \mapsto \bar lxl$ (as opposed to the alternative
$x \mapsto  lx\bar l$, which was chosen, for example,
by Du Val~\cite{duval}) and regarding
 the quaternion axes $i,j,k$ as a
right-handed coordinate frame of 3-space,
  see~\cite[Exercise 6.4 on p.~67, answer on
  pp.~189--190]{Cox-complex}.
  }
Note that, when the quaternion~$l$ is used as a left rotation $[l,1]$
or a right rotation $[1,l]$ in 4-space,
every point is rotated only by~$\alpha$, not by~$2\alpha$.

\subsection{The classic approach to the classification}
\label{sec:classic}

For a finite subgroup
$G \leqslant \SO(4)$, we can consider the group
\begin{displaymath}
  A = \{\,(l,r) \in S^3\times S^3\mid [l,r] \in G\,\},
\end{displaymath}
which is a two-fold cover of $G$, as 
each rotation $[l,r] \in G$ is represented by two quaternion pairs
$(l,r)$
and $(-l,-r)$ in~$A$.
The elements $l$ and $r$ of these pairs form the
 \emph{left and the right group} of~$G$:
 \begin{displaymath}
   L := \{\,l\mid (l,r) \in A\,\},\quad
   R := \{\,r\mid (l,r) \in A\,\}
 \end{displaymath}
 These are finite groups of quaternions.
 \begin{proposition}
   There is a one-to-one correspondence between
   \begin{enumerate}
   \item The finite subgroups $G$ of $\SO(4)$
   \item The subgroups $A$ of $L\times R$ that contain the element
     $(-1,-1)$, where $L$ and $R$ are finite groups of unit quaternions.
   \end{enumerate}
 \end{proposition}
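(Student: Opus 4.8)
The plan is to establish the correspondence in both directions, using the quaternion representation $[l,r]\colon x\mapsto \bar l x r$ together with the uniqueness statement that $[l,r]=[l',r']$ iff $(l',r')=\pm(l,r)$. Starting from (i), given a finite $G\leqslant\SO(4)$, I would define $A=\{(l,r)\in S^3\times S^3\mid [l,r]\in G\}$ as in the text. Three things need checking. First, $A$ is a subgroup of $S^3\times S^3$: this follows from the composition rule $[l_1,r_1][l_2,r_2]=[l_1l_2,r_1r_2]$ and the inverse formula $[l,r]^{-1}=[\bar l,\bar r]$, both recorded in the excerpt, so closure of $G$ under products and inverses transfers to $A$. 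Second, $(-1,-1)\in A$: this is because $[-1,-1]=[1,1]=\id\in G$, using $[l,r]=[-l,-r]$. Third, $A$ is finite: the map $A\to G$, $(l,r)\mapsto[l,r]$ is two-to-one (its fibres are the pairs $\pm(l,r)$), so $|A|=2|G|<\infty$. Then $L$ and $R$ are the images of $A$ under the two coordinate projections $S^3\times S^3\to S^3$; being homomorphic images of the finite group $A$, they are finite subgroups of $S^3$, and by construction $A\leqslant L\times R$. This produces the data in (ii).

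For the reverse direction, suppose we are given finite quaternion groups $L,R$ and a subgroup $A\leqslant L\times R$ with $(-1,-1)\in A$. Define $G=\{[l,r]\mid (l,r)\in A\}$. That $G$ is a subgroup of $\SO(4)$ again follows from the componentwise multiplication rule and the inverse formula: the map $S^3\times S^3\to\SO(4)$, $(l,r)\mapsto[l,r]$, is a group homomorphism, so $G$ is the image of the subgroup $A$ and is therefore a subgroup; it is finite because $A$ is. (One should note $G\leqslant\SO(4)$ rather than merely $\O(4)$ because every $[l,r]$ is orientation-preserving, as stated earlier.) It remains to see that this $G$ feeds back into the same $A$ under the construction of the first direction, i.e.\ that $\{(l,r)\in S^3\times S^3\mid [l,r]\in G\}$ equals $A$. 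The inclusion $\supseteq$ is immediate. For $\subseteq$, if $[l,r]\in G$ then $[l,r]=[l',r']$ for some $(l',r')\in A$, whence $(l,r)=\pm(l',r')$ by uniqueness; since $(-1,-1)\in A$ and $A$ is a subgroup, $-(l',r')=(-1,-1)\cdot(l',r')\in A$ as well, so $(l,r)\in A$ in either case. This is exactly where the hypothesis $(-1,-1)\in A$ is used, and it is the one genuinely non-formal point: without it the correspondence would fail because distinct $A$'s could yield the same $G$.

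Finally I would check that the two constructions are mutually inverse. Going $G\mapsto A\mapsto G$ returns the original $G$ trivially, since by definition $[l,r]\in G \iff (l,r)\in A$. Going $A\mapsto G\mapsto A$ returns the original $A$ by the paragraph above. Hence the two assignments are inverse bijections between the sets described in (i) and (ii), which is the claimed one-to-one correspondence.

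I expect no serious obstacle here; the proposition is essentially a bookkeeping consequence of the quaternion parameterization of $\SO(4)$ and its two-to-one ambiguity. The only subtlety worth stating carefully is the role of the condition $(-1,-1)\in A$: it is precisely what guarantees that $A$ is recovered from $G$, i.e.\ that the full $\pm$-fibre of each rotation in $G$ lies in $A$, so that the map in direction (ii)$\to$(i) is injective.
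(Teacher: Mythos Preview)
The paper states this proposition without proof; it is treated as a standard consequence of the two-to-one quaternion parameterization of $\SO(4)$ and is immediately followed by the discussion of Goursat's Lemma. Your argument is correct and supplies exactly the details the paper omits: the homomorphism $(l,r)\mapsto[l,r]$ with kernel $\{\pm(1,1)\}$, the two-to-one fibre count giving $|A|=2|G|$, and the crucial observation that the hypothesis $(-1,-1)\in A$ is precisely what makes the assignment $A\mapsto G$ injective (so that the full $\pm$-fibre of each rotation in $G$ lies in~$A$). There is nothing to compare against; your write-up is the natural proof.

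One small point of presentation: the statement is slightly ambiguous about whether $L$ and $R$ are part of the data in~(ii) or merely a finiteness condition on~$A$. Your proof implicitly takes them to be the coordinate projections of~$A$ (hence determined by~$A$), which is the reading under which the bijection is well-defined; it may be worth saying this explicitly, since otherwise the same~$A$ could sit inside many products $L\times R$.
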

Since there are only five possibilities for finite groups of unit
quaternions (including two infinite families,
see Section~\ref{sec:quaternion-groups}), this makes it easy, in
principle,
to determine the finite subgroups of $\SO(4)$.

One task of this program,
 the enumeration of the subgroups $A$
of a direct product $L\times R$ is guided by Goursat's Lemma, which
was established by Goursat~\cite{goursat-1889} in this very context:
 The groups
 \begin{displaymath}
   L_0 := \{\,l\mid (l,1) \in A\,\},\quad
   R_0 := \{\,r\mid (1,r) \in A\,\}
 \end{displaymath}
form normal subgroups of $L$ and $R$, which we call the \emph{left and
  right kernel} of $G$.
The group $A$, and hence $G$, is determined by $L,R,L_0,R_0$ and an isomorphism
$\Phi: L/L_0 \to R/R_0$ between the factor groups:
\begin{displaymath}
G = \{\,[l, r] \in \SO(4) \mid l \in L,\ r \in R,\ \Phi(lL_0) = rR_0 \,\}
\end{displaymath}
The task reduces to the enumeration of all possibilities
for the components $L,R,L_0,R_0,\Phi$, and to the less trivial task
of determining which parameters lead to geometrically equal groups.

This approach underlies all classifications so far,
and we call it the \emph{classic} classification.

\subsection{Previous classifications}
\label{sec:previous}

\begin{itemize}
\item Goursat~\cite{goursat-1889}, in 1889,
  classified the finite groups of motions of \emph{elliptic
    3-space}.
  Elliptic 3-space can be interpreted as the 3-sphere $S^3$ in which
  antipodal points are identified.
  Hence, these groups can be equivalently described as those groups
  in \SO(4) that contain the central inversion $-\id$
(the so-called diploid groups, see Section~\ref{sec:notations-diploid}).

\item Threlfall and Seifert~\cite{ThrS-I,ThrS-II}, in a series of two papers in 1931 and
  1933,
  extended this to the groups of \SO(4),
but they only concentrated on the chiral groups.
Their goal was to study the quotient spaces of the 3-sphere %
under fixpoint-free group actions, because these lead to %
\emph{space forms}, spaces of constant curvature without
singularities.\footnote
{%
  The term ``Diskontuinuitätsbereich'' in
  the title of~\cite{ThrS-I,ThrS-II}
   is used
like %
  a well-established concept that does not require a
definition.
In the contemporary literature, it means
 what we today call a {fundamental
   domain}. Seifert and Threlfall were in particular interested in its
 topological properties,
referring by ``Diskontuinuitätsbereich'' to the quotient space under a
group action,
with a specification how the boundary faces of the fundamental
   domain are to be pairwise identified.
  Du Val~\cite[\S\,30]{duval} also takes this interpretation and calls it a \emph{group-set space},
  where \emph{group-set} is his term for orbit.

   In modern usage, ``region of discontinuity'' has other meanings,
   closer to the literal meaning of the words, where discontinuity
   plays a role.
  }
  
\item Hurley~\cite{hurley}, in 1951, independently of  Threlfall and Seifert,
  built on Goursat's classification and extended it to $\O(4)$.
 However, he considered only the crystallographic groups, see
 Appendix~\ref{sec:crystallographic}.
\item Du Val~\cite{duval},
independently of Hurley,
  in a small monograph from 1964,
  took up Goursat's classification and extended it to all groups.
  From a geometric viewpoint,
  he extensively discussed the symmetries of the 4-dimensional regular
  polytopes.
\item Conway and Smith~\cite{CS} in a monograph from 2003,
  took up the classification task again, correcting some omissions and duplications
of the previous classifications.
They gave geometric
descriptions for the polyhedral and axial groups in terms of
Coxeter's notation.
  \end{itemize}
\subsubsection{Related work}
  \begin{itemize}
\item
  De Medeiros
  and Figueroa-O'Farrill~\cite{atmp/spin}, in 2012,
  classified the groups of
order pairs $(l,r)\in S^3\times S^3$ of unit quaternions
under componentwise multiplication
(using Goursat's Lemma again).
These form 
the 4-dimensional spin group
Spin(4). Since this is a double cover of $\SO(4)$,
the results should confirm the classification of the chiral point groups.
Indeed,
Tables 16--18 in  \cite[Appendix B]{atmp/spin} give
references to \SO(4) and the classification of~\cite{CS}.%
\footnote
{However, besides noticing a few typographical errors, we found some
  discrepancies in these tables: %
(i) The 6th entry in Table~18 lists a group $\pm [C_{2k+1}\times \bar D_{4m}]$.
We cannot match this with anything in the Conway--Smith classification, even
allowing for one typo.
(ii) The last entry in Table 4.2 of \cite{CS} is $+ \frac1f [C_{mf} \times C_{nf}]$.
 This group does not appear in the tables of~\cite{atmp/spin}.
We don't know whether these discrepancies arose
 in the translation
from the classification in
\cite{atmp/spin} to the notions of \SO(4)
or they indicate problems in the classification itself.
}

\item

  Marina
  Maerchik, in 1976 \cite{maerchik76},
  investigated the groups that are generated
  by
 reflections and simple rotations (also in higher dimensions),  
as reported in Lange and Mikha\^\i lova \cite{lange16},
(The term ``pseudoreflections'' in the
title of~%
\cite{maerchik76} refers to simple rotations.)

\item We mention that the approach of understanding the 4-dimensional
  groups through their orbits was pioneered by
  Robinson~\cite{robinson_1931}, who, in 1931, studied the orbits of
  the polyhedral groups. He focused on the orbits themselves
  and
  their convex hulls
  (and not
  on the polar orbit polytopes as we do).

\end{itemize}

\subsection{Conjugacy, geometrically equal groups}
\label{sec:conjugacy}

Conjugation with a rotation $[a,b]$ transforms a group into a
different group, which is
geometrically the same, but expressed in a different coordinate
system.
Conjugation transforms an \OP\ transformation $[l,r]$ as follows:
\begin{displaymath}
  [a,b]^{-1}[l,r][a,b] = [a^{-1}la, b^{-1}rb]
\end{displaymath}
Its effect is thus a conjugation of the left group by $a$
and an independent conjugation of the right group by $b$.
As a conclusion, we can represent the left group $L$ and the
right group~$R$ in any convenient coordinate system of our choice,
and it is no loss of generality to choose a particular representative
for each finite group of
quaternions. (Section~\ref{sec:quaternion-groups} specifies the
representatives that we use.)

\subsection{Obtaining the achiral groups}
\label{sec:achiral}

\newcommand\extendingelement{e}

The {classic} approach by Goursat's Lemma leads only to the chiral
groups.
Since the chiral part of an achiral group is an index-2 subgroup,
every achiral group $G$ is obtained by
extending a chiral group $H$ with some
\OR\ element 
\begin{displaymath}
 \extendingelement={*}[a,b].
\end{displaymath}
We will now derive some conditions on
$\extendingelement$, and possibly by modifying the group $G$ into a
geometrically conjugate group,
constrain
$\extendingelement$ to a finite number of possibilities.

Let $H$ be a chiral group with {left group}
 $L$
 and {right group}
$R$.
For each $[l,r] \in H$,
we must have $\extendingelement^{-1} [l,r]  \extendingelement\in H$,
i.e., $H$ is normalized
by $\extendingelement$:
\begin{displaymath}
\extendingelement^{-1} [l,r]  \extendingelement
  =  {*}[\bar b, \bar a] [l,r]  {*}[a,b]
  = [\bar a r a, \bar b l b] \in H %
\end{displaymath}
This means that
$\bar a r a \in L$ and $\bar b l b \in R$ for every
$[l,r] \in H$,
which implies
$\bar a R a = L$ and $\bar b L b = R$, i.e., $L$ and $R$ are
conjugate.

We conjugate $G$ with $[1,a]$, transforming $G$ to some
geometrically equivalent group $G'$ with left group $L'$ and right
group~$R'$.
Let us see what happens to an arbitrary element $[l,r]$:
\begin{equation}
  \label{eq:A}
  [1,\bar a] [l,r]  [1,a] = [l,\bar a r a]
\end{equation}
The set of values $\bar a r a$ forms the new right group $R'=\bar a R a=L$,
while the left group remains unchanged: $L'=L$.
Thus, we have achieved $L'=R'$, i.e., the left and right groups are
not just conjugate, but equal.

The extending element $\extendingelement={*}[a,b]$ is transformed as follows:
\begin{equation}
  \label{eq:B} \extendingelement' := 
  [1,\bar a] {*}[a,b]
   [1,a] = {*}[1,ba] = {*}[1,c]
\end{equation}
Thus we have simultaneously achieved $\extendingelement'={*}[1,c]$.
Moreover,
\begin{displaymath}
  \extendingelement'\extendingelement' =
  {*}[1,c]  {*}[1,c] = [c,c]\in H,
\end{displaymath}
and thus,
$c$ must be an element of $L=R$.

\begin{proposition}
  \label{1,c}
  W.l.o.g., we can assume $L=R$,
  and the extending element %
  is of the form
  $\extendingelement={*}[1,c]$, with $c\in L$.
\end{proposition}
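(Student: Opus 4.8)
The plan is to track the two kinds of freedom we have — conjugating $G$ into a geometrically equal group, and choosing which \OR\ element to use as the coset representative $e$ — and spend them to force $e$ into the stated normal form; the content is elementary quaternion bookkeeping together with one counting step. First I would use that $H=G\cap\SO(4)$ is normal of index~$2$ in the achiral group $G$, so every $[l,r]\in H$ satisfies $e^{-1}[l,r]e\in H$. Writing $e={*}[a,b]$, using the inverse formula~\eqref{eq:inverse} and the fact — also read off from that display — that ${*}$ moves past a pair $[l,r]$ at the cost of swapping its two slots (so ${*}[p,q]=[q,p]{*}$), one computes $e^{-1}[l,r]e=[\bar a r a,\ \bar b l b]$. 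Hence $\bar a R a\subseteq L$ and $\bar b L b\subseteq R$; comparing cardinalities in the two inclusions gives $|L|=|R|$, so both are equalities and $L,R$ are conjugate quaternion groups.

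Next I would conjugate $G$ by $[1,a]$, which yields a geometrically equal group; on a rotation it acts by $[l,r]\mapsto[1,\bar a][l,r][1,a]=[l,\ \bar a r a]$, so the left group is unchanged while the right group becomes $\bar a R a=L$. Thus after this change of coordinates we may assume $L=R$. The same conjugation sends $e$ to $e'=[1,\bar a]\,{*}[a,b]\,[1,a]$; pushing the outer factors through the~${*}$ and multiplying componentwise makes the two copies of $a$ cancel, leaving $e'={*}[1,c]$ with $c=ba$. So we have achieved $L=R$ and $e={*}[1,c]$ simultaneously.

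Finally I would read off $c\in L$ from closure of $G$ under multiplication: $e'e'$ is \OP, hence lies in $H$, and ${*}[1,c]\,{*}[1,c]=[c,c]$ by the same swap-and-multiply rule, so $[c,c]\in H$; since the left group of $H$ is~$L$, this forces $c\in L$. I do not expect any genuine obstacle here: the only places needing care are the slot/sign bookkeeping for ${*}$ against pairs $[l,r]$ (and keeping left versus right multiplications on the correct side) and the small cardinality argument that upgrades the two containments to equalities. The substance is simply that normalization makes $L$ and $R$ conjugate, a single conjugation normalizes the groups and the extending element at once, and closure pins down~$c$.
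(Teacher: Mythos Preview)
Your proof is correct and follows essentially the same approach as the paper: normalize $H$ by $e$ to get $\bar aRa=L$ and $\bar bLb=R$, conjugate by $[1,a]$ to force $L=R$ and simultaneously reduce $e$ to ${*}[1,c]$ with $c=ba$, then square $e$ to see $[c,c]\in H$ and hence $c\in L$. Your explicit cardinality step to upgrade the two containments to equalities is a reasonable detail that the paper leaves implicit.
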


This reduces the extending element to a finite number of
possibilities.  Conway and Smith \cite[p.~51]{CS} have sketched some
additional considerations, which allow to further restrict the
extending element, sometimes at the cost of giving up the condition
$L=R$, see Figure~\ref{fig:p51} on p.~\pageref{fig:p51}.

Conjugation by $[a,a]$ changes the transformations as follows:
\begin{align*}
  [a,a]^{-1}[l,r][a,a] &= [a^{-1}la, a^{-1}ra]\\
  [a,a]^{-1}{*}[l,r][a,a] &= *[a^{-1}la, a^{-1}ra]
\end{align*}
Its effect is thus a conjugation of the left and right group $L=R$ by $a$.
As %
for the chiral groups,
we can therefore choose any convenient representation
of the left and right group~$L$ in Proposition~\ref{1,c}.

\subsection{Point groups in 3-space and their quaternion
  representation}
\label{sec:3-d-groups}

Table~\ref{tab:3d-point-groups} lists the three-dimensional point
groups that we will use.
We will refer to them by the notation of Conway and Smith  \cite{CS},
given in the first column.
As alternate
notations, we give the orbifold notation, the Hermann-Mauguin notation or
international symbol~\cite{IT}, and the Coxeter notation, which we
will revisit in Section~\ref{sec:polyhedral}.
\begin{table}[htb]
  \centering
  \begin{tabular}[t]{|cccc|r|l|}
    \hline
   \multicolumn 6{|c|}{the chiral groups }
\\    \hline
    CS%
    & \hbox to 0pt{\hss orbifold\hss}& I.T. 
          & Coxeter\ name
          & {o}rder &\OP\ symmetries of \dots\\\hline
$+C_n$& $\mathbf{nn}$& $n$& $[n]^+$ & $n$& the $n$-sided pyramid
    $n\ge1$)
    \\
$+D_{2n}$& $\mathbf{22n}$& $n2$& $[2,n]^+$ & $2n$& the $n$-sided prism
        ($n\ge1$)
    \\
$+T$& $\mathbf{332}$& $23$& $[3,3]^+$ & $12$& the tetrahedron\\
$+O$& $\mathbf{432}$& $432$& $[3,4]^+$ & $24$& the octahedron / the cube\\
    $+I$& $\mathbf{532}$& $532$& $[3,5]^+$ & $60$& the icosahedron /
                                                   the dodecahedron\\
    \hline
    \multicolumn 6{|c|}{achiral polyhedral groups}\\\hline
    CS%
    & \hbox to 0pt{\hss orbifold\hss}& I.T. 
          & Coxeter\ name
                                               & \llap{o}rder
          &description of the group\\
    \hline
$TO$& $\mathbf{*332}$& $\raise1pt\strut\bar43m$& $[3,3]$ & $24$&all symmetries of the
    tetrahedron\\
$\pm T$& $\mathbf{3{*}2}$& $m\bar 3$& $[3^+,4]$ or $[^+3,4]$ & $24$&the pyritohedral group\\
$\pm O$& $\mathbf{*432}$& $m\bar 3m$&  $[3,4]$ & $48$&all symmetries of the
    octahedron\\
$\pm I$& $\mathbf{*532}$& $53m$&     $[3,5]$ & $120$&all symmetries of
                                                      the
                                                      icosahedron\\
    \hline
  \end{tabular}
  \caption
  [Point groups in 3 dimensions]
  {Some point groups in 3 dimensions}
  \label{tab:3d-point-groups}
\end{table}

  The table contains all polyhedral groups (3 chiral and 4 achiral
  ones): groups consisting of symmetries of regular polytopes.
 The groups that are not polyhedral (subgroups of the
symmetry groups of regular prisms, related to the
frieze groups) include, besides $+C_n$ and $+D_{2n}$, five additional
classes of \emph{achiral} groups, which
are not listed here.
In total, there are 14 types of three-dimensional point groups.
Note that the subscript $2n$ in $D_{2n}$ is always even; we follow the
convention of using
the \emph{order} of the group, not the number of sides of the polygon
or prism
of which it is the symmetry group.

The notations $+I,\pm I$, etc. for the polyhedral groups are easy to
remember.  The one that requires some attention is the full
symmetry group of the tetrahedron, which is denoted by $TO$,
as opposed to the pyritohedral group $\pm T$,
which is obtained by extending $+T$ by the central reflection,
and which we have discussed extensively in
Section~\ref{sec:pyrit}.

\subsection{Finite groups of quaternions}
\label{sec:quaternion-groups}

The finite groups of quaternions
are \cite[Theorem 12]{CS}:
\begin{align*}
2I &= \langle i_I, \w \rangle &
2D_{2n} &= \langle e_n, j \rangle \\
2O &= \langle i_O, \w \rangle &
2C_n &= \langle e_n \rangle \\
2T &= \langle i, \w \rangle 
&1C_n &= \langle e_{n/2} \rangle \ \ (n \text{ odd})
\end{align*}
The generators are defined
in terms of the following  quaternions, which we will use throughout:
\begin{equation}
\label{eq:defining_quaternions}
\begin{aligned}
    \w &= \tfrac12(-1+i+j+k) &&\text{(order 3)}\\
    i_O &= \tfrac1{\sqrt2}(j+k) &&\text{(order 4)}\\
    i_I &= \tfrac12\bigl(i + \tfrac{\sqrt5-1}2 j +
    \tfrac{\sqrt5+1}2k\bigr) &&\text{(order 4)}\\
    e_n &%
    = \cos \tfrac{\pi}{n} + i \sin\tfrac{\pi}{n}&&\text{(order $2n$)}
\end{aligned}
\end{equation}
We follow Conway and Smith's notation for these groups.
For each group $+G < \SO(3)$
(see the upper part of Table~\ref{tab:3d-point-groups}),
there is quaternion group $2G$ of
twice the size, containing the quaternions~$\pm l$ for which $[l]$
represents a rotation in $+G$.
All these groups contain the quaternion $-1$.
In addition, there are the odd cyclic groups $1C_n$, of order $n$.
 They cannot arise as left or right groups, because $(-1,-1)$ is
always contained in $A$ and hence the left and right groups
contain the quaternion~$-1$.

\subsection{Notations for the 4-dimensional point groups, diploid
  and haploid groups}
\label{sec:notations-diploid}

We use the notation by
Conway and Smith \cite{CS}
for 4-dimensional point groups $G$, except for the toroidal groups,
where we will replace it with our own notation.
If $L$ and $R$ are 3-dimensional \OP\ point groups,
$\pm[L\times R]$
denotes full product group $\{\,[l,r]\mid (l,r) \in 2L\times
2R\,\} $, of order $2|L|\cdot |R|$.
Note that the groups $2L$ and $2R$ that appear in the
definition are quaternion groups,
while the notation shows only the
corresponding rotation groups $L,R\in\SO(3)$.

A group that contains the negation $-\id =[1,-1]$ is called a
\emph{diploid} group.
A diploid index-$f$ subgroup of $\pm[L\times R]$
is denoted by
$\pm\frac1f[L\times R]$. It is defined by two %
normal
subgroups %
of $2L$ and %
of $2R$ of index~$f$.
Different possibilities for the
normal
subgroups %
and for
the isomorphism~$\Phi$ are distinguished by various
ornamentations of the notation,
see Appendix~\ref{conway-smith} for %
some of these cases.

A \emph{haploid} group, which does not contain the negation $-\id$,
is denoted by
$+\frac1f[L\times R]$, and it is %
an index-2 subgroup of
the corresponding diploid group
$\pm\frac1f[L\times R]$.
Achiral groups are index-2 extensions of
chiral groups, and they are also denoted by various decorations.

Du Val \cite{duval} writes the groups as $(\mathbf{L}/\mathbf{L_0};
\mathbf{R}/\mathbf{R_0})$,
where the
 boldface letters distinguish
 quaternion groups from the corresponding 3-dimensional rotation groups.
Again, various ornamentations denote different cases of normal
subgroups and
the isomorphism~$\Phi$.
Achiral extensions are denoted by a star.
We will not work with this notation except for reference in our tables, and then we will
omit the boldface font.
In some cases, we had to adapt Du Val's names, see
Table~\ref{tbl:axial_full} and footnote~\ref{duval-41-42}.

\section{Hopf fibrations}
\label{sec:hopf}
We give a self-contained presentation of Hopf fibrations. %
In many places in the literature,
one particular Hopf map is introduced as ``the Hopf map'',
either in terms of four real coordinates
or two complex coordinates,
leading to ``the Hopf fibration''.
In some sense, this is justified,
as %
all Hopf bundles are (mirror-)congruent.
However, for our characterization,
we need the full generality of Hopf bundles.

Our treatment was inspired by Lyons~\cite{lyons03_f2},
but we did not see it anywhere in this generality.
As a %
tool, we introduce a parameterization of
the great circles in $S^3$, which might be useful elsewhere.
We also define \emph{oriented} Hopf bundles: families of
consistently oriented great circles.

We summarize the main statements:
\begin{itemize}
\item The great circles in $S^3$ can be parameterized by pairs $p,q$
  of pure unit quaternions, or equivalently, by pairs of points
  $p,q\in S^2$
 (Section~\ref{great-circles}).
 The choice of parameters is unique except that
  $K_p^q = K_{-p}^{-q}$.
  The twofold
  ambiguity of the parameters can be used to specify an orientation
  of the circles
 (Section~\ref{sec:oriented}).

\item The great circles
  $K_p^q$ with fixed $q$ form a partition of $S^3$, which we call the
\emph{left Hopf bundle}~$\H^q$.
  It naturally comes with a \emph{left Hopf map} $h^q\colon S^3\to S^2$,
  which maps
  all points of $K_p^q$ to the point $p\in S^2$.
  
  This map provides a bijection between the
  circles of the {left Hopf bundle} $\H^q$ and the points on $S^2$.
  
  Similarly, the great circles
  $K_p^q$ with fixed $p$ form a
\emph{right Hopf bundle}~$\H_p$, with a  \emph{right Hopf map} $h_p$,
etc.
In the following, we will mention only the left Hopf bundles, but all
statements hold also with left and right reversed. %

\item Every great circle of $S^3$ belongs to a unique left Hopf
  bundle. In other words, the left Hopf bundles form a partition of
  the set of great circles of $S^3$.

\item
  For every left Hopf bundle $\H^q$,
  there is a one-parameter family of right rotations that maps every
  circle in $\H^q$ to itself, rotating each circle by the same
  angle~$\alpha$.

  Conversely,
  a right rotation by an angle $\alpha\notin\{ 0,\pi\}$
rotates every point of $S^3$ by the  same
  angle~$\alpha$, and the 
  set of circles along which these rotations happen form
 a left Hopf bundle
  (Proposition~\ref{one-parameter-rotations}).
  
    \item The following statements discuss the behavior of Hopf
      bundles under orthogonal transformations (Proposition~\ref{prop-congruent-bundles}):
    \begin{itemize}
\item Any left rotation leaves
  the
    left Hopf bundle $\H^q$ fixed, as a partition. It permutes the
    great circles of the bundle.
  \item Any rotation maps the
    left Hopf bundle $\H^q$ to another left Hopf bundle.
 Any two left Hopf bundles are congruent (by some
    right rotation).
\item 
   Left Hopf bundles and right Hopf bundles are mirrors of each other.

    \end{itemize}

\item The intersection of  a left Hopf bundle and a right Hopf bundle
  consists of two absolutely orthogonal circles
(Corollary~\ref{coro:hopf-intersect}).

\item Any two great circles in the same Hopf bundle are Clifford-parallel
  (Proposition~\ref{prop:clifford_parallel}).
  This means that a point moving on one circle maintains a constant
  distance to the other circle.

\end{itemize}

\subsection{Parameterizing the great circles in \texorpdfstring{$S^3$}{S3}}\label{great-circles}
\begin{definition}
  \label{definition-Kpq}
For any two pure unit quaternions $p, q \in S^2$,
we define the following subset of unit quaternions:
\begin{equation}
    \label{eq:great-circles}
    K_p^q := \{\, x \in S^3 \mid [x]p=q \,\}
\end{equation}
This %
can be interpreted as the set of rotations on $S^2$ that
map $p$ to $q$.
\end{definition}

\begin{proposition}
\label{hopf-alternate}
$K_p^q$ has an alternative representation
\begin{equation}
    \label{eq:great-circles2}
    K_p^q = \{\, x \in S^3 \mid [p, q]x = x \,\},
\end{equation}
and it forms a great circle in $S^3$.
Moreover, every great circle in $S^3$ can be represented in this way,
and the choice of parameters $p, q \in S^2$ is unique except that
$K_p^q = K_{-p}^{-q}.$
\end{proposition}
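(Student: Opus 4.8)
The plan is to read $K_p^q$ as the fixed-point set of the rotation $[p,q]$, to identify that rotation as a half-turn, and then to obtain both the existence and the uniqueness statements from this identification.

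First I would establish the alternative form \eqref{eq:great-circles2}. For $x\in S^3$ we have $[x]p=q \iff \bar xpx=q \iff px=xq$ (multiplying by $x$ on the left and using $x\bar x=1$) $\iff \bar pxq=x \iff [p,q]x=x$. Hence $K_p^q=\{x\in S^3\mid [p,q]x=x\}$ is exactly the set of points of $S^3$ fixed by the rotation $[p,q]$. Next I claim $[p,q]$ is a \emph{half-turn}, i.e.\ conjugate to $\diag(-1,-1,1,1)$. Since $p,q$ are pure unit quaternions, $p^2=q^2=-1$, so $[p,q]^2=[p^2,q^2]=[-1,-1]=\id$; thus $[p,q]$ is an involution, and by the normal form \eqref{rotation} its rotation angles lie in $\{0,\pi\}$. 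It remains to exclude $[p,q]=\pm\id$. If $[p,q]=\id$ then $px=xq$ for all $x$; putting $x=1$ gives $p=q$, whence $p$ is central and $p=\pm1$, contradicting that $p$ is pure. If $[p,q]=-\id$ then $px=-xq$ for all $x$; putting $x=1$ gives $q=-p$, hence $px=xp$ for all $x$, again forcing $p=\pm1$. So $[p,q]$ is a genuine half-turn, its fixed set $K_p^q$ on $S^3$ is a great circle $C$, and $[p,q]$ is the identity on the $2$-plane $P$ spanned by $C$ and equals $-\id$ on the orthogonal plane $P^\perp$; in particular $[p,q]$ is the \emph{unique} element of $\O(4)$ with that action.

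For surjectivity I would start from an arbitrary great circle $C\subseteq S^3$, let $P$ be the $2$-plane it spans, and let $\rho\in\O(4)$ be the half-turn that is the identity on $P$ and $-\id$ on $P^\perp$. Since $\det\rho=1$, $\rho\in\SO(4)$, so $\rho=[l,r]$ for unit quaternions $l,r$. From $\rho^2=\id$ we get $[l^2,r^2]=[1,1]$, hence $(l^2,r^2)=\pm(1,1)$. The case $(l^2,r^2)=(1,1)$ forces $l,r\in\{\pm1\}$ and so $\rho=\pm\id$, which is impossible; therefore $l^2=r^2=-1$, i.e.\ $p:=l$ and $q:=r$ are pure unit quaternions. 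By the first two steps, $K_p^q$ is the fixed set of $\rho$ on $S^3$, which is $C$. For uniqueness, $K_p^q=K_{-p}^{-q}$ is immediate from either description. Conversely, if $K_p^q=K_{p'}^{q'}$, write $C$ for this common great circle and $P$ for its span: by the second step both $[p,q]$ and $[p',q']$ coincide with the unique transformation that is $\id$ on $P$ and $-\id$ on $P^\perp$, so $[p,q]=[p',q']$ as rotations, and since the quaternion-pair representation of a rotation is unique up to $[l,r]=[-l,-r]$ we conclude $(p',q')=\pm(p,q)$.

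The main obstacle is the second step — pinning down that $[p,q]$ is exactly a half-turn and not $\pm\id$ — together with the reverse bookkeeping in the surjectivity step, where one must represent the prescribed half-turn $\rho$ by a quaternion pair and argue the pair consists of pure quaternions. An alternative to the normal-form argument there is to compute $\operatorname{tr}[p,q]=0$ (conjugate $p$ to $i$ and $q$ into the $ij$-plane, and expand), which together with $[p,q]^2=\id$ forces eigenvalues $1,1,-1,-1$; or, more geometrically, to note that for any $x_0\in K_p^q$ the set $x_0^{-1}K_p^q$ is the great circle through $1$ spanned by $1$ and $p$, so $K_p^q$ is the image of a great circle under the isometry $x\mapsto x_0x$.
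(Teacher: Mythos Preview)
Your proof is correct and follows essentially the same route as the paper: both establish the equivalence \eqref{eq:great-circles2} by the same chain of quaternion manipulations, identify $[p,q]$ as a half-turn so that its fixed set is a great circle, and then invoke the uniqueness of the half-turn with a given fixed plane together with the $[l,r]=[-l,-r]$ ambiguity for the converse and uniqueness. The paper is terser---it simply asserts that $[p,q]$ is a half-turn because $p,q$ are pure unit quaternions---whereas you spell out the exclusion of $[p,q]=\pm\id$ and, in the surjectivity step, the verification that a half-turn is represented by a pair of \emph{pure} quaternions; these are exactly the details the paper leaves to the reader.
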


This %
gives a convenient parameterization
of the great circles in $S^3$ (or equivalently, the planes in~$\R^4$)
by pairs of points on $S^2$, which might be useful
in other contexts.
For example, they might be used to define a notion of distance between
great circles (or planes in~$\R^4$). (Other distance measures are
discussed in \cite{kr-ctps4-16,kr-ctps4-16t} and \cite{packing-planes}.
  The connection to these different distance notions remains to
  be explored.)

Before giving the proof, let us make a general remark about
quaternions.
Multiple %
meanings can be associated to a
unit quaternion~$x$: Besides treating it (i) as a point on $S^3$, we can regard it
(ii)
as a rotation $[x]$ of $S^2$,
or (iii)~as a left rotation $[x,1]$ of $S^3$,
or (iv)~as a right rotation $[1,x]$ of $S^3$.
Rather than fixing an opinion on what a %
quaternion really \emph{is} (cf.~\cite[p.~298]{Altman-Hamilton-1989}),
we capitalize on this ambiguity
and freely switch between
the definitions  %
\eqref{eq:great-circles} and \eqref{eq:great-circles2}.

\begin{proof}[Proof of Proposition~\ref{hopf-alternate}]
 The two expressions
\eqref{eq:great-circles} and \eqref{eq:great-circles2}
are equivalent by a simple rearrangement of terms:
\begin{displaymath}
    [x]p = q
    \iff \bar x p x = q
    \iff p x = x q
    \iff x = \bar p x q
    \iff x = [p, q]x
\end{displaymath}
The expression \eqref{eq:great-circles2} shows that  
$K_{p}^q$ is the set of fixpoints of the rotation $[p,q]$.
Since $p$ and $q$ are unit quaternions,
the rotation $[p,q]$ is a simple rotation by $180^\circ$ (a half-turn).
Its set of fixpoints is a two-dimensional plane, or when restricted
to unit quaternions, a great circle.

Conversely, if a great circle $K$ is given and we want to determine
$p$ and $q$, we know that we are looking for a simple rotation %
by $180^\circ$ whose set of fixpoints is $K$. This rotation is
uniquely determined, and its quaternion representation $[p,q]$
is unique up to flipping both signs simultaneously.
\end{proof}

The effect of orthogonal
transformations on great circles is expressed easily in our parameterization:
\begin{proposition}
\label{prop:relations_between_circles}
Let $p, q \in S^2$. Then for any $l, r \in S^3$,
\begin{enumerate}[\rm(i)]
    \item $[l, r] K_p^q = K_{[l]p}^{[r]q}$.
    \item $(*[l, r]) K_p^q = K_{[l]q}^{[r]p}$,
      and in particular,
      $* K_p^q = K_{q}^{p}$.
\end{enumerate}
\end{proposition}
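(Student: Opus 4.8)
The plan is to verify both parts by a direct quaternion computation from the defining equation $K_p^q=\{\,x\in S^3\mid \bar xpx=q\,\}$ of~\eqref{eq:great-circles}, and to deduce part~(ii) from part~(i) together with the single special case ${*}K_p^q=K_q^p$.

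For part~(i), I would write $[l,r]K_p^q=\{\,\bar lxr\mid x\in S^3,\ \bar xpx=q\,\}$ and change variables to $y=\bar lxr$, so that $x=ly\bar r$ and, since quaternion conjugation reverses products, $\bar x=r\bar y\bar l$. Substituting into $\bar xpx=q$ gives $r\bar y\bar l\,p\,ly\bar r=q$, which rearranges (multiply by $\bar r$ on the left and by $r$ on the right) to $\bar y(\bar lpl)y=\bar rqr$, that is, $[y]([l]p)=[r]q$. Hence $y\in[l,r]K_p^q$ if and only if $y\in K_{[l]p}^{[r]q}$. Alternatively, using the fixed-point description~\eqref{eq:great-circles2}, $K_p^q=\mathrm{Fix}([p,q])$, and a transformation $T$ carries $\mathrm{Fix}(S)$ onto $\mathrm{Fix}(T^{-1}ST)$ (composition read left to right, as in the text); with $T=[l,r]$ and $S=[p,q]$ the conjugate is $[\bar l,\bar r][p,q][l,r]=[\bar lpl,\bar rqr]=[[l]p,[r]q]$, the half-turn whose fixed circle is $K_{[l]p}^{[r]q}$.

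For part~(ii), I would first establish ${*}K_p^q=K_q^p$: a unit quaternion $y$ lies in ${*}K_p^q$ iff $\bar y\in K_p^q$ iff $yp\bar y=q$ iff $p=\bar yqy=[y]q$ iff $y\in K_q^p$. Then, since ${*}[l,r]$ is by definition the composition ``apply ${*}$, then apply $[l,r]$'', one has $({*}[l,r])K_p^q=[l,r]\bigl({*}K_p^q\bigr)=[l,r]K_q^p$, and part~(i) turns this into $K_{[l]q}^{[r]p}$. Taking $l=r=1$ recovers the displayed special case ${*}K_p^q=K_q^p$.

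I do not expect a genuine obstacle; the only delicate point is the bookkeeping with the quaternion conventions — that conjugation reverses products, that $[x]$ acts by $p\mapsto\bar xpx$ (so the bar on the conjugating quaternion sits on the left), that $[x]^{-1}=[\bar x]$, and that ${*}[l,r]$ composes as $x\mapsto\bar l\bar xr$ rather than in the other order. A sign or order slip here would swap $p$ with $q$, or $l$ with $\bar l$, in the answer, so I would sanity-check the final formulas against the instances $l=r=1$ and $p=q$.
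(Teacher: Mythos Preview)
Your proposal is correct and follows essentially the same argument as the paper: a direct change-of-variables computation for part~(i), and for part~(ii) the reduction to ${*}K_p^q=K_q^p$ followed by an application of part~(i). The additional observation via the fixed-point description~\eqref{eq:great-circles2} (conjugating the half-turn $[p,q]$ by $[l,r]$) is a clean alternative for part~(i), though the paper uses only the direct computation.
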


\begin{proof}
The following calculation proves part (i).
\begin{multline*}
    [l, r] K_p^q
    = \{\,\bar{l} x r \mid \bar{x} p x = q \,\}
\\    = \{\,y \mid r \bar{y} \bar{l} p l y \bar{r} = q \,\}
    = \{\,y \mid \bar{y} \bar{l} p l y = \bar{r} q r \,\}
    = \{\,y \mid [y][l] p = [r] q \,\}
    = K_{[l]p}^{[r]q},
\end{multline*}
where we have substituted $x$ by $y := \bar{l} x r$.
Part (ii) follows from part (i) and $* K_p^q = K_{q}^{p}$.
This last statement expresses the fact that the inverse rotations
$[\bar x]$ of
the rotations $[x]$ that map $p$ to $q$ are the rotations mapping $q$
to $p$.  More formally,
\begin{displaymath}
    * K_p^q
    = \{\,\bar{x} \mid \bar{x}px = q \,\}
    = \{\,y \mid yp\bar{y} = q \,\}
    = \{\,y \mid p = \bar yq{y} \,\}
    = K_{q}^{p},
  \end{displaymath}
  with $y := \bar{x}$.
\end{proof}

The elements of $K_p^p$ form
 a subgroup of the quaternions \cite{dunbar94_f2}: %
 According to~%
\eqref{eq:great-circles},
 $K_p^p$ is the stabilizer of~$p$.
Its cosets
can be characterized
by Proposition~\ref{prop:relations_between_circles}(i):
\begin{corollary}
  \label{coro:circle_as_costs}
The left cosets of $K_p^p$ are the circles  $K_{p'}^p$, and
the right cosets of $K_p^p$ are the circles  $K_{p}^{p'}$, for
arbitrary $p'\in S^2$.
\qed
\end{corollary}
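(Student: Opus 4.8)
The plan is to reduce everything to Proposition~\ref{prop:relations_between_circles}(i), combined with the observation that left and right multiplication in the quaternion group $S^3$ are themselves instances of the operations $[l,r]$. Recall from the preceding paragraph that $K_p^p$ is the stabilizer of $p$ under the action $x\mapsto[x]$ on $S^2$, hence a subgroup of $S^3$; so its left cosets $x_0K_p^p$ and right cosets $K_p^px_0$, for $x_0\in S^3$, are well defined and partition $S^3$, and the task is merely to name them.

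First I would note that left multiplication by $x_0$ is the left rotation $[\bar x_0,1]\colon x\mapsto x_0x$, and right multiplication by $x_0$ is the right rotation $[1,x_0]\colon x\mapsto xx_0$. Hence, as subsets of $S^3$, $x_0K_p^p=[\bar x_0,1]K_p^p$ and $K_p^px_0=[1,x_0]K_p^p$. Applying Proposition~\ref{prop:relations_between_circles}(i) then gives $[\bar x_0,1]K_p^p=K_{[\bar x_0]p}^{[1]p}=K_{[\bar x_0]p}^{p}$ and $[1,x_0]K_p^p=K_{[1]p}^{[x_0]p}=K_{p}^{[x_0]p}$, using that $[1]$ is the identity rotation. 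So every left coset of $K_p^p$ has the form $K_{p'}^p$ with $p'=[\bar x_0]p$, and every right coset has the form $K_p^{p'}$ with $p'=[x_0]p$.

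To finish, I would check that every $p'\in S^2$ is attained: as $x_0$ ranges over $S^3$, the rotation $[\bar x_0]$ (respectively $[x_0]$) ranges over all of $\SO(3)$, so $[\bar x_0]p$ (respectively $[x_0]p$) ranges over all of $S^2$. Thus the left cosets of $K_p^p$ are exactly the circles $K_{p'}^p$, $p'\in S^2$, and the right cosets are exactly the circles $K_p^{p'}$, $p'\in S^2$; as a consistency check, this matches the fact from Section~\ref{sec:hopf} that these are precisely the left Hopf bundle $\H^p$ and the right Hopf bundle $\H_p$, each of which partitions $S^3$. There is no genuine obstacle: the computation is essentially a one-liner. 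The only thing that needs care is the bookkeeping of conjugates and of left/right placement — writing $[\bar x_0,1]$ rather than $[x_0,1]$ for left multiplication, and correctly matching the circles $K_{p'}^p$ (fixed superscript, a left bundle) with the \emph{left} cosets rather than the right ones.
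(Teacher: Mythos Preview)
Your argument is correct and is exactly the approach the paper intends: the corollary is stated with a bare \qed, the preceding sentence says it ``can be characterized by Proposition~\ref{prop:relations_between_circles}(i)'', and your unpacking via $x_0K_p^p=[\bar x_0,1]K_p^p=K_{[\bar x_0]p}^{p}$ and $K_p^p x_0=[1,x_0]K_p^p=K_{p}^{[x_0]p}$ is precisely the intended one-line computation. The surjectivity check and the bookkeeping about $\bar x_0$ versus $x_0$ are handled correctly.
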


We emphasize that the two parameters $p$ and $q$ in $K_p^q$ ``live on
different spheres $S^2$'': Any relation between them has no intrinsic
geometric meaning, and will be changed by coordinate
transformations according to
 Proposition~\ref{prop:relations_between_circles}.
This is despite the fact that $p=q$ has an
algebraic significance, since the circle $K_p^p$
goes through the special
quaternion 1, which is one of the coordinate axes,
and hence $K_p^p$ forms a subgroup of quaternions.

\subsubsection{Keeping a circle invariant}

The following proposition characterizes the transformations that map a
given great circle %
to itself. Moreover, it describes the action of these transformations 
when restricted to that circle.
For a pure unit quaternion $p\in S^2$ and an angle $\theta\in\R$ we use the notation
\begin{displaymath}
  \exp p\theta := \cos\theta + p\sin\theta,
\end{displaymath}
so that
$[\exp p\theta]$ is a clockwise rotation around $p$ by $2\theta$ on $S^2.$

\begin{proposition}
\label{prop:action_on_its_circle}
Consider the circle $K_p^q$, for $p, q \in S^2$.
The rotations $[l, r]$ that leave $K_p^q$ invariant fall into two categories,
each of which is a
 two-parameter family.
\begin{enumerate}[\rm(a)]
\item
  \label{OP-circle-rotate}
The \OP\ case: $[l]p = p$ and $[r]q = q$.

Every transformation in this family can be written as
$[\exp p\phi , \exp q\theta]$ for $\phi, \theta \in \R$.
This transformation acts on the circle $K_p^q$ as rotation by
$|\theta-\phi|$.
\item
  The \OR\ case: $[l]p = -p$ and $[r]q = -q$.

After choosing two fixed quaternions $p', q' \in S^2$
 orthogonal to $p$ and $q$,
respectively, they can be written as the transformations  
 $[p'\exp p\phi, q'\exp q\theta]$ 
    for $\phi, \theta \in \R$,
and they act on $K_p^q$ as reflections.
\end{enumerate}
\end{proposition}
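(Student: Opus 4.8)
The plan is to separate two questions: first, \emph{which} rotations $[l,r]$ preserve the great circle $K_p^q$; second, \emph{how} each of them acts on $K_p^q$. The first is answered by the transformation rule $[l,r]K_p^q=K_{[l]p}^{[r]q}$ of Proposition~\ref{prop:relations_between_circles}(i) together with the uniqueness of parameters in Proposition~\ref{hopf-alternate}; the second by writing $K_p^q$ as a coset as in Corollary~\ref{coro:circle_as_costs}.

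First I would observe that $[l,r]K_p^q=K_p^q$ holds exactly when the pair $([l]p,[r]q)$ equals $(p,q)$ or $(-p,-q)$, and these two alternatives are disjoint because $p\ne-p$; they are cases~(a) and~(b). To make $l$ explicit in case~(a): $[l]p=p$ says that $l$ commutes with $p$, and the unit quaternions commuting with a pure unit quaternion $p$ are exactly those in the plane spanned by $1$ and $p$ (a unit quaternion orthogonal to both $1$ and $p$ anticommutes with $p$), so $l=\exp p\phi$, i.e.\ $l\in K_p^p$. For case~(b), fix any $p'\in S^2$ with $p'\perp p$; then $[p']$ is the half-turn of $S^2$ about $p'$, which sends $p$ to $-p$, so $[l]p=-p$ iff $[p'l]p=p$ iff $p'l\in K_p^p$, i.e.\ $l\in p'K_p^p=\{\,p'\exp p\phi\mid\phi\in\R\,\}$ (using $\overline{p'}=-p'$ and $-K_p^p=K_p^p$). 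The same computations, with $q,q'$ in place of $p,p'$, describe $r$.

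For the action, fix some $x_0\in K_p^q$. By Corollary~\ref{coro:circle_as_costs}, $K_p^q$ is a left coset of $K_q^q$, so $K_p^q=x_0K_q^q=\{\,x_0\exp q\psi\mid\psi\in\R\,\}$, and since $\psi\mapsto\exp q\psi$ traverses $K_q^q$ once at unit speed and left translation by $x_0$ is an isometry of $S^3$, the parameter $\psi$ is an arclength, hence angular, coordinate on $K_p^q$. In case~(a) I would then compute
\[
  [\exp p\phi,\exp q\theta](x_0\exp q\psi)
  =\exp(-p\phi)\,x_0\exp q(\psi+\theta)
  =x_0\exp q(\psi+\theta-\phi),
\]
where the second equality uses $\exp(-p\phi)\,x_0=x_0\,[x_0](\exp(-p\phi))=x_0\exp(-q\phi)$, valid because $[x_0]$ is $\R$-linear, fixes $1$, and sends $p$ to $q$. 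Thus the action is $\psi\mapsto\psi+(\theta-\phi)$, a rotation by $|\theta-\phi|$.

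For case~(b) I would avoid a direct calculation by factoring $[\,p'\exp p\phi,\ q'\exp q\theta\,]=[p',q']\,[\exp p\phi,\exp q\theta]$: the second factor is a case-(a) transformation, hence a rotation of $K_p^q$, so it suffices to show that the half-turn $[p',q']$ acts on $K_p^q$ as a reflection. Now $[p',q']$ is a simple rotation of $\R^4$ by $\pi$ (as in the proof of Proposition~\ref{hopf-alternate}), i.e.\ $\diag(-1,-1,1,1)$ in suitable coordinates, whose $+1$- and $-1$-eigenplanes carry $K_{p'}^{q'}$ and $K_{p'}^{-q'}$; it leaves the plane $P$ of $K_p^q$ invariant since $[p',q']K_p^q=K_{-p}^{-q}=K_p^q$. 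As $p\perp p'$ forces $p\ne\pm p'$, the plane $P$ is neither eigenplane, so it is spanned by one vector from each eigenspace and $[p',q']|_P=\diag(1,-1)$ is a reflection; composing a reflection of the circle with a rotation again gives a reflection. I expect the only real difficulty to be bookkeeping: keeping the conventions for $[l]$, for componentwise products $[l_1,r_1][l_2,r_2]$, and for $x\mapsto\bar lxr$ mutually consistent; verifying the identity $\exp(-p\phi)\,x_0=x_0\exp(-q\phi)$ and the classification of $\diag(-1,-1,1,1)$-invariant planes; and absorbing the sign ambiguities $[l,r]=[-l,-r]$ and $K_p^q=K_{-p}^{-q}$ so that the two families come out in exactly the stated normal form.
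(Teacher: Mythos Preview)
Your proof is correct and follows essentially the same route as the paper: the same use of Proposition~\ref{prop:relations_between_circles}(i) and the uniqueness in Proposition~\ref{hopf-alternate} to split into the two cases, the same parametrizations $l=\exp p\phi$ and $l=p'\exp p\phi$, and the same key identity $\exp(-p\phi)\,x_0=x_0\exp(-q\phi)$ (the paper writes it as $x\exp q\theta=(\exp p\theta)x$) for the rotation angle in case~(a).

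The only genuine difference is in case~(b). The paper shows that $[p',q']$ acts as a reflection on $K_p^q$ by explicitly exhibiting a fixed point and an antipodally-mapped point: it constructs rotations of $S^2$ to produce elements of $K_p^q\cap K_{p'}^{q'}$ and $K_p^q\cap K_{p'}^{-q'}$. You instead argue linear-algebraically: $[p',q']$ is $\diag(-1,-1,1,1)$ with eigenplanes $K_{p'}^{q'}$ and $K_{p'}^{-q'}$, and since the invariant $2$-plane of $K_p^q$ coincides with neither (as $p\ne\pm p'$), it must meet each in a line, giving $\diag(1,-1)$ on $K_p^q$. Both arguments are short and complete; yours avoids the explicit geometric construction at the cost of invoking the (easy) fact that an invariant subspace of a diagonalizable map splits along the eigenspaces, while the paper's version gives, as a by-product, concrete intersection points of the circles.
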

Note that the transformations that we consider are always {\OP} when
considered in 4-space; they can
be \OR\ when considered as (2-dimensional) operations on the
circle $K_p^q$.

\begin{proof}
Let $[l, r] \in \SO(4)$ be a rotation.
Then we have the following equivalences.
\begin{displaymath}
    [l, r]K_p^q = K_p^q
    \iff K_{[l]p}^{[r]q} = K_p^q
    \iff ([l]p = p \wedge [r]q = q) \lor ([l]p = -p \wedge [r]q = -q)
\end{displaymath}
For the first case,
the transformations $[l]$ on $S^2$ that leave the point $p$ fixed
are the rotations around $p$,
and they are given by the quaternions
$l=\exp p\phi$, and similarly for~$r$.
For the second case, the transformations $[l]$ on $S^2$
that map $p$ to $-p$ can be written as a composition of
$[p']$, which maps $p$ to $-p$,
and an arbitrary rotation around the axis through $p$ and $-p$,
which is expressed as $[\exp p\phi]$.
This establishes that $[l,r]$ can be written in the claimed form.

We now investigate the action of these rotations on $K_p^q$.
\begin{enumerate}[(a)]
\item
Let $x \in K_p^q$. 
Since $x q = p x$,
we have $x \exp q\theta = (\exp p\phi)x$.
In particular,
\begin{align*}
    [\exp p\phi, \exp q\theta]x = \exp(-p\phi)x\exp q\theta
    = \exp(-p\phi)(\exp p\theta)x
    = (\exp p(-\phi+\theta))x.
\end{align*}
Thus, $[\exp p\phi, \exp q\theta]$ acts on $K_p^q$
like the left multiplication with $\exp p(\theta-\phi)$, which
(being a left rotation) moves every point by the angle $|\theta-\phi|$.

\item
It is enough to show that $[p', q']$ acts as a reflection on $K_p^q$.
We will show that $K_p^q \cap K_{p'}^{q'} \not= \emptyset$
and $K_p^q \cap K_{p'}^{-q'} \not= \emptyset$.
Thus, there is a point $x \in K_p^q$
with $[p', q']x = x$ and another point
$y \in K_p^q$ with $[p', q']y = -y$, and
this means
 that $[p', q']$ fixes some, but not all, points on $K_p^q$,
and thus its action cannot be a rotation.

Let $[x_0]$ be a rotation that maps $p$ to $q$.
Then it maps $p'$ to some point $p''$ that is orthogonal to $q$.
Let $[y_0]$ be the rotation that fixes $q$ and maps $p''$ to $q'$.
The rotation $[x_0y_0]$ maps $p$ to $q$ and $p'$ to $q'$.
Thus, $x_0y_0 \in K_p^q \cap K_{p'}^{q'}$.
Similarly, if $[z_0]$ is the rotation that fixes $q$
and maps $p''$ to $-q'$,
then $x_0z_0 \in  K_p^q \cap K_{p'}^{-q'}$.
\qedhere
\end{enumerate}
\end{proof}

\begin{proposition}
The great circles $K_p^q$ and $K_p^{-q}=K_{-p}^{q}$ are absolutely orthogonal.
\end{proposition}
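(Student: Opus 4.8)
The plan is to realize both circles as the two invariant great circles of one and the same half-turn. First note that the equality $K_p^{-q}=K_{-p}^q$ asserted in the statement is nothing but the parameter ambiguity of Proposition~\ref{hopf-alternate} applied with $a=p,\ b=-q$, so there is nothing to prove there. For the orthogonality, I would start from the alternative description \eqref{eq:great-circles2}: $K_p^q$ is the set of fixpoints of the rotation $[p,q]$, and — as already observed in the proof of Proposition~\ref{hopf-alternate} — since $p,q\in S^2$ this rotation is a simple rotation by $180^\circ$, i.e.\ a symmetric orthogonal involution. Its $+1$-eigenspace is the plane through $K_p^q$, and its $-1$-eigenspace is the absolutely orthogonal plane.

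The key step is then a one-line computation with componentwise quaternion multiplication, using $p^2=q^2=-1$ for pure unit quaternions:
\[
[p,q]\,[p,-q] = [\,p\cdot p,\ q\cdot(-q)\,] = [-1,\,1] = -\id .
\]
Since $[p,q]^{-1}=[\bar p,\bar q]=[-p,-q]=[p,q]$, this rearranges to $[p,-q]=-[p,q]$. Hence the fixpoint set of $[p,-q]$ is exactly the $(-1)$-eigenspace of $[p,q]$, which is the invariant plane absolutely orthogonal to the plane of $K_p^q$; restricting to $S^3$ shows that $K_p^{-q}$ is the great circle in that orthogonal plane, proving the claim.

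I do not expect a real obstacle here. The only points requiring a little care are: (a) noting that $[p,q]$ is an involution, so that $[p,q][p,-q]=-\id$ genuinely yields $[p,-q]=-[p,q]$; and (b) recalling that for a symmetric orthogonal map the eigenspaces for $+1$ and $-1$ are orthogonal complements in the strong sense — every vector of one is orthogonal to every vector of the other — which is precisely the notion of ``absolutely orthogonal'' planes used in the paper.
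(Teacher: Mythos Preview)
Your proof is correct and takes essentially the same approach as the paper: both arguments identify $K_p^q$ and $K_p^{-q}$ as the two invariant circles of the same half-turn. The paper checks directly that $[p,-q]$ sends each $x\in K_p^q$ to $-x$ (so $K_p^q$ is the non-fixed invariant circle of $[p,-q]$, while $K_p^{-q}$ is its fixed circle), whereas you reach the equivalent conclusion via the operator identity $[p,-q]=-[p,q]$; the content is the same.
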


\begin{proof}
    The simple rotation $[p, -q]=[-p, q]$ maps $x \in K_p^q$
    to $-x \in K_p^q$. That is, $[p, -q]$ preserves (not pointwise) $K_p^q$.
    Since $K_p^{-q}$ is the fixed circle of $[p, -q]$ and the invariant
    circles of a simple rotation are absolutely orthogonal, we are done.
\end{proof}

\subsubsection{Oriented great circles}
\label{sec:oriented}
By Proportion~\ref{prop:action_on_its_circle},
the left rotation $[\exp (-p\theta),1]$ has the same effect on the circle $K_p^q$
as the
right rotation
 $[1,\exp q\theta]$. This
allows us to specify an
orientation for~$K_p^q$.
For some starting point
 $x\in K_p^q$, we write
\begin{equation}
  \label{eq:orient-circle}
  K_p^q= \{\, (\exp p\theta) x \mid \theta \in \R\,\}
  = \{\, x\exp q\theta \mid \theta \in \R\,\},
\end{equation}
and both parameterizations traverse the circle in the same sense, for
increasing $\theta$.
We may thus introduce the notation
$\vec K_p^q$ to denote an \emph{oriented great circle} on $S^3$.
If we use
$\vec K_{-p}^{-q}$ in \eqref{eq:orient-circle}, the same circle will be
traversed in the \emph{opposite sense}.
Thus,
we obtain
a notation for oriented great
circles on~$S^3$, and for this notation, the choice of parameters $p,q\in S^2$ is unique.
Only for an oriented circle,
the phrase ``rotation by $\pi/4$'' or  ``rotation by $-\pi/3$''
has a well-defined meaning, and
we can give a more specific version of
Proposition~\ref{prop:action_on_its_circle}\ref{OP-circle-rotate}:
The operation
$[\exp p\phi, \exp q\theta]$ rotates
$\vec K_p^q$ by
 $\theta-\phi$.

In Appendix~\ref{sec:oriented-circles-interpretation}, we give a
direct geometric view of this orientation, based on the
original interpretation of $K_p^q$ as the set of rotations on $S^2$ that map
$p$ to $q$ (Definition~\ref{definition-Kpq}).

Proposition~\ref{prop:relations_between_circles} extends to oriented
circles as follows:
\begin{proposition}
\label{prop:relations_between_oriented_circles}
 $[l, r] \vec K_p^q = \vec K_{[l]p}^{[r]q}$ and
      $* \vec K_p^q = \vec K_{-q}^{-p}$.
\end{proposition}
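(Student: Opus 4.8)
The plan is to leverage Proposition~\ref{prop:relations_between_circles}, which already records the analogous statement for unoriented circles, and simply upgrade it to keep track of orientation. The only new content is the claim that the parameterizations \eqref{eq:orient-circle} transform correctly, and that the sign convention for $*$ works out to $\vec K_p^q \mapsto \vec K_{-q}^{-p}$ rather than $\vec K_q^p$.

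First I would handle $[l,r]$. Recall from the proof of Proposition~\ref{prop:relations_between_circles}(i) that $[l,r]$ maps $x\in K_p^q$ to $y=\bar l x r$, and $y$ then lies in $K_{[l]p}^{[r]q}$. To see that orientations match, I would take the right-hand parameterization in \eqref{eq:orient-circle}, $K_p^q=\{\,x\exp q\theta\mid\theta\in\R\,\}$, and push it through $[l,r]$:
\begin{displaymath}
  [l,r](x\exp q\theta) = \bar l\, x\exp(q\theta)\, r
  = (\bar l x r)\,\bar r\exp(q\theta)\, r
  = (\bar l x r)\exp\bigl((\bar r q r)\theta\bigr)
  = y\exp\bigl(([r]q)\theta\bigr),
\end{displaymath}
using $\bar r\exp(q\theta)r=\exp([r]q\,\theta)$, which is just the statement that conjugation by $r$ rotates the axis $q$ to $[r]q$ while preserving the angle $\theta$. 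Since $y\in K_{[l]p}^{[r]q}$, the right-hand side is exactly the orientation-defining parameterization of $\vec K_{[l]p}^{[r]q}$ with base point $y$ and increasing $\theta$. Hence $[l,r]\vec K_p^q=\vec K_{[l]p}^{[r]q}$.

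For the $*$ case, recall $* K_p^q = K_q^p$ via $x\mapsto\bar x$, from the proof of Proposition~\ref{prop:relations_between_circles}(ii). The subtlety is that $*$ is orientation-reversing on $S^3$ (it is a reflection in the $x_1$-axis), so it may reverse the traversal sense. I would check this directly: if $K_p^q=\{\,x\exp q\theta\mid\theta\in\R\,\}$, then $*$ sends $x\exp q\theta$ to $\overline{x\exp q\theta}=\overline{\exp q\theta}\,\bar x=\exp(-q\theta)\,\bar x=(\exp(-q\theta))\bar x$. Writing $x'=\bar x$, this is $(\exp(-q\theta))x'$, i.e.\ the \emph{left}-multiplication parameterization with axis $-q$ and increasing $\theta$. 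By the orientation convention \eqref{eq:orient-circle}, the oriented circle parameterized as $\{\,(\exp p''\theta)x'\mid\theta\in\R\,\}$ with base point $x'\in K_q^p$ is $\vec K_{p''}^{?}$ where $p''$ is the left index; here $p''=-q$, and since $x'=\bar x\in * K_p^q = K_q^p$ we must have the upper index equal to $-p$ to be consistent (equivalently $\vec K_{-q}^{-p}=\vec K_q^p$ only as unoriented circles, and the oriented one is $\vec K_{-q}^{-p}$). Thus $*\vec K_p^q=\vec K_{-q}^{-p}$, as claimed.

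The main obstacle is purely bookkeeping: making sure the two parameterizations in \eqref{eq:orient-circle} are used consistently (left-multiplication by $\exp p\theta$ versus right-multiplication by $\exp q\theta$ both increasing $\theta$), and getting the signs right in the $*$ computation, where the quaternion conjugation flips $\exp q\theta$ to $\exp(-q\theta)$ and also swaps the roles of left and right multiplication. No genuinely hard step is involved; the computation for $[l,r]$ is a one-line conjugation identity, and the computation for $*$ is a one-line conjugation of a product of quaternions.
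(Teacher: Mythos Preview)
Your proposal is correct and follows essentially the same approach as the paper: both parts push the parameterization \eqref{eq:orient-circle} through the transformation via the identical one-line computations $[l,r](x\exp q\theta)=(\bar l x r)\exp([r]q\,\theta)$ and ${*}(x\exp q\theta)=\exp(-q\theta)\,\bar x$, then match the result against the defining parameterization of the target oriented circle.
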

\begin{proof}
  For $x\in K_p^q$,
  \begin{displaymath}
  [l,r](x\exp q\theta)
  =
  \bar lx(\exp q\theta)r
  =
  \bar lx r \bar r(\exp q\theta)r
  =
  (\bar lx r) \exp (\bar r qr\theta)
  =
  y \exp (([r] q)\theta)
  \end{displaymath}
with $y = \bar lx r \in
[l, r] K_p^q = 
K_{[l]p}^{[r]q}$. Thus, the orientation that we get on
$[l, r] \vec K_p^q $ coincides with the orientation prescribed
 in \eqref{eq:orient-circle} for
 $\vec K_{[l]p}^{[r]q}$.
 Similarly,
  \begin{displaymath}
  {*}(x\exp q\theta)
  =
  (\exp \bar q\theta)\bar x
  =
  \exp (-q\theta)\,y
  \end{displaymath}
  with $y = \bar x \in {*} K_p^q = K_q^p=
  K_{-q}^{-p}$,
  and this is the correct orientation for  $\vec K_{-q}^{-p}$
  in accordance with~\eqref{eq:orient-circle}.
\end{proof}

\subsection{Hopf bundles}
Hopf bundles are %
families of circles $K_p^q$
with fixed $p$ or
with fixed $q$:

\begin{definition}
  Let $q_0 \in S^2$ be a pure unit quaternion.
  The \emph{left Hopf bundle} $\H^{q_0}$ %
is
$$\H^{q_0} := \{\, K_{q}^{q_0} \mid q \in S^2 \,\},$$
and the \emph{right Hopf bundle} $\H_{q_0}$ %
is %
$$\H_{q_0} := \{\, K_{q_0}^q \mid q \in S^2 \,\}.$$
The \emph{oriented} left and right Hopf bundles are defined
analogously:
\begin{align*}
  \vec \H^{q_0} &:= \{\, \vec K_{q}^{q_0} \mid q \in S^2 \,\}\\
  \vec  \H_{q_0} &:= \{\, \vec K_{q_0}^q \mid q \in S^2 \,\}
\end{align*}
\end{definition}
The convention for left and right was adopted from Dunbar~\cite{dunbar94_f2}:
According to Corollary~\ref{coro:circle_as_costs}, the circles $K_q^{q_0}$
of the
{left Hopf bundle} $\H^{q_0}$
are the \emph{left} cosets of the circle $K_{q_0}^{q_0}$.

We can naturally assign a Hopf map to each bundle, such that
 the circles of a bundle become the fibers of the associated Hopf
map:
\begin{definition}
Let $q_0 \in S^2$ be a pure unit quaternion.
The \emph{left Hopf map} associated with~$q_0$ is
\begin{align*}
    h^{q_0}\colon  S^3 &\to S^2 \\
    x &\mapsto [\bar{x}]q_0 = x q_0 \bar{x},
\end{align*}
and the \emph{right Hopf map} associated with $q_0$ is
\begin{align*}
    h_{q_0}\colon  S^3 &\to S^2 \\
    x &\mapsto [x]q_0 = \bar{x} q_0 x.
\end{align*}
\end{definition}

\begin{corollary}\label{coro:hopf-intersect}
The following statements are direct consequences of the definitions:
\begin{itemize}
    \item The choice of the parameter $q_0$ in the left Hopf bundle
      $\H^{q_0}$ is unique except that $\H^{q_0} = \H^{-q_0}$.
As oriented Hopf bundles,
$\vec\H^{q_0}$ and $\vec \H^{-q_0}$ contain the same circles in
opposite orientation.
      
         The same statement holds for right Hopf bundles.

    \item No two different left Hopf bundles share a circle.
        That is,
        \begin{displaymath}
            \H^{p_0} \cap \H^{p_1} = \emptyset\ \text{if}\ p_0 \neq \pm p_1.
        \end{displaymath}
        A similar statement holds for right Hopf bundles.

    \item A left Hopf bundle intersects a right Hopf bundle in exactly
      two circles, which %
      are absolutely orthogonal:
        \begin{displaymath}
        \H_{q_0} \cap \H^{p_0}
            = \{K_{q_0}^{p_0}, K_{q_0}^{-p_0}=K_{-q_0}^{p_0}\}.
        \end{displaymath}
    \item Every great circle $K_{q_0}^{p_0}$ in $S^3$ belongs
        to a unique left Hopf bundle $\H^{p_0}$ and
        to a unique right Hopf bundle $\H_{q_0}$.
\end{itemize}
\end{corollary}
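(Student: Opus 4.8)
The plan is to read off all four items from the parameter-uniqueness established in Proposition~\ref{hopf-alternate}: two pure unit quaternions $(p,q)$ and $(p',q')$ determine the same great circle precisely when $(p',q')=(p,q)$ or $(p',q')=(-p,-q)$. Everything below is bookkeeping with this two-fold sign ambiguity, together with the proposition proved above that $K_p^q$ and $K_p^{-q}$ are absolutely orthogonal.

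\emph{Items 1, 2 and 4.} For item~1, if $\H^{q_0}=\H^{q_1}$ then in particular $K_{q_0}^{q_0}=K_{q'}^{q_1}$ for some $q'$, and the uniqueness statement forces $q_1=\pm q_0$; conversely $K_q^{q_0}=K_{-q}^{-q_0}$ for every $q$, so $\H^{q_0}$ and $\H^{-q_0}$ are literally the same collection of circles. For the oriented refinement I would appeal to the orientation convention \eqref{eq:orient-circle} in place of the bare identity $K_q^{q_0}=K_{-q}^{-q_0}$: that convention says $\vec K_{-q}^{-q_0}$ is the circle underlying $\vec K_q^{q_0}$ traversed backwards, whence $\vec\H^{q_0}$ and $\vec\H^{-q_0}$ carry the same circles with reversed orientations. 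Item~2 is the same argument: a common circle $K_q^{p_0}=K_{q'}^{p_1}$ forces $p_1=\pm p_0$. Item~4 then follows at once: by Proposition~\ref{hopf-alternate} an arbitrary great circle is of the form $K_{q_0}^{p_0}$, which lies in $\H^{p_0}$ and in $\H_{q_0}$ by definition; and if it also lay in $\H^{p_1}$, item~2 would give $p_1=\pm p_0$, so by item~1 $\H^{p_1}=\H^{p_0}$, and symmetrically for the right bundle. All ``right'' versions come from swapping upper and lower indices throughout.

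\emph{Item 3.} A circle in $\H_{q_0}\cap\H^{p_0}$ is simultaneously of the form $K_{q_0}^{p}$ (for some $p$) and of the form $K_{q}^{p_0}$ (for some $q$); by uniqueness either $(q_0,p)=(q,p_0)$, giving $K_{q_0}^{p_0}$, or $(q_0,p)=(-q,-p_0)$, giving $K_{q_0}^{-p_0}=K_{-q_0}^{p_0}$. These two circles are distinct, since $K_{q_0}^{p_0}=K_{q_0}^{-p_0}$ would force $p_0=-p_0$ by uniqueness, which is impossible on $S^2$; and they are absolutely orthogonal by the proposition recalled above. Hence the intersection is exactly the stated pair.

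\emph{Expected obstacle.} There is essentially no obstacle here, which is why this is stated as a corollary. The two spots that want a moment's care are (i)~keeping orientations straight in item~1, where one must invoke \eqref{eq:orient-circle} rather than the unoriented equality of circles, and (ii)~checking in item~3 that the two extracted circles are genuinely different, which is again just one more application of the sign-ambiguity statement.
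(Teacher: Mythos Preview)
Your proof is correct and matches the paper's approach: the paper simply declares these statements ``direct consequences of the definitions'' and gives no explicit argument, so your write-up is a faithful unpacking of exactly the bookkeeping with Proposition~\ref{hopf-alternate}'s sign ambiguity (plus the orthogonality proposition and the orientation convention~\eqref{eq:orient-circle}) that the paper leaves to the reader.
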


From Proposition~\ref{prop:relations_between_oriented_circles}, we
can directly work out the effect of a transformation on an (oriented) Hopf bundle:
\begin{proposition}
  \label{prop:mappings_between_oriented_Hopf_bundles}
  (a)
 $[l, r] \vec \H^q = \vec \H^{[r]q}$ and
 $[l, r] \vec \H_p = \vec \H_{[l]p}$;
 \ (b)
 $* \vec \H^q = \vec \H_{-q}$ and
  $* \vec \H_p = \vec \H^{-p}$.
\end{proposition}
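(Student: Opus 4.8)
The plan is to reduce everything to the circle-level transformation rules already established in Proposition~\ref{prop:relations_between_oriented_circles} and then simply re-index. Recall that by definition $\vec\H^q = \{\,\vec K_p^q \mid p\in S^2\,\}$ and $\vec\H_p = \{\,\vec K_p^q \mid q\in S^2\,\}$, so an orthogonal transformation acts on a Hopf bundle by acting on each of its member circles; the whole content of the proposition is to identify the resulting set of circles as another bundle.

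For part (a), I would first apply $[l,r]$ to $\vec\H^q$. Using $[l,r]\vec K_p^q = \vec K_{[l]p}^{[r]q}$ term by term gives $[l,r]\vec\H^q = \{\,\vec K_{[l]p}^{[r]q}\mid p\in S^2\,\}$. Since $[l]$ is a bijection of $S^2$, as $p$ runs over $S^2$ so does $[l]p$, while the upper index $[r]q$ is constant; hence the set on the right is exactly $\vec\H^{[r]q}$. The computation for $\vec\H_p$ is the mirror image: now the lower index $[l]p$ is constant while $[r]q$ ranges over all of $S^2$, yielding $\vec\H_{[l]p}$.

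For part (b) I would proceed identically with the rule $*\vec K_p^q = \vec K_{-q}^{-p}$. Applied to $\vec\H^q$, this gives $\{\,\vec K_{-q}^{-p}\mid p\in S^2\,\}$; here the lower index $-q$ is fixed and the upper index $-p$ sweeps out all of $S^2$ as $p$ does, so this set is $\vec\H_{-q}$. Symmetrically, $*\vec\H_p = \{\,\vec K_{-q}^{-p}\mid q\in S^2\,\} = \vec\H^{-p}$, because now $-q$ ranges over $S^2$ while $-p$ stays fixed.

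The only point worth checking beyond pure bookkeeping is that the index maps $p\mapsto[l]p$, $q\mapsto[r]q$, and $p\mapsto -p$ are bijections of $S^2$ — which is immediate, the first two being rotations of $S^2$ and the last the antipodal map — so that the target bundle is neither undershot nor double-counted. I expect no real obstacle: the proposition is essentially Proposition~\ref{prop:relations_between_oriented_circles} read one level up, at the level of whole bundles rather than individual circles.
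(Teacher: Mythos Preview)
Your proposal is correct and matches the paper's approach exactly: the paper simply states that the proposition follows directly from Proposition~\ref{prop:relations_between_oriented_circles}, and your argument spells out precisely that reduction, applying the circle-level rules $[l,r]\vec K_p^q = \vec K_{[l]p}^{[r]q}$ and $*\vec K_p^q = \vec K_{-q}^{-p}$ and re-indexing over $S^2$.
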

We get consequences about the operations that leave a
 Hopf bundle invariant and about mappings between Hopf bundles.
\begin{proposition}
  \label{prop-congruent-bundles}
 The following statements about the operations that leave a
left Hopf bundle invariant hold, and similar statements hold for
right Hopf bundles.
  \begin{enumerate}[\rm(i)]
\item Any left rotation leaves
an oriented left Hopf bundle $\vec \H^q$ invariant.
It permutes the
    great circles of the bundle.
  \item
    \label{Hopf-preserve}
    A right rotation $[1,r]$ leaves
    the oriented left Hopf bundle $\vec \H^q$ invariant iff $[r]q=q$.

\item
    \label{Hopf-reverse}
  A right rotation $[1,r]$ 
  maps the oriented left Hopf bundle $\vec \H^q$ to the opposite
  bundle $\vec \H^{-q}$ iff $[r]q=-q$.
\item Any two oriented left Hopf bundles are congruent,
    and can be mapped to each other by a right rotation.
\item Any oriented right Hopf bundle and any oriented left Hopf bundle
    are mirrors of each other.
    \qed
  \end{enumerate}
\end{proposition}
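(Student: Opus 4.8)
The plan is to read off all five parts directly from Proposition~\ref{prop:mappings_between_oriented_Hopf_bundles}, which already records the action of an arbitrary orthogonal transformation on an oriented Hopf bundle, together with the uniqueness of the parameter of an oriented bundle from the first bullet of Corollary~\ref{coro:hopf-intersect} (it is precisely to get this uniqueness, as opposed to the weaker $\H^q=\H^{-q}$, that the oriented versions were introduced). So the proof is essentially a bookkeeping argument, and I would present it item by item.

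For (i), a left rotation is $[l,1]$, and Proposition~\ref{prop:mappings_between_oriented_Hopf_bundles}(a) gives $[l,1]\vec\H^q=\vec\H^{[1]q}=\vec\H^q$, so the bundle is preserved unconditionally. Since $[l,1]$ is a bijection of $S^3$ and $\vec\H^q$ is a partition of $S^3$ into oriented great circles, $[l,1]$ must permute these circles; explicitly, by Proposition~\ref{prop:relations_between_oriented_circles} it sends $\vec K_p^q$ to $\vec K_{[l]p}^q$, and $p\mapsto[l]p$ is a bijection of $S^2$. For (ii) and (iii), a right rotation $[1,r]$ satisfies $[1,r]\vec\H^q=\vec\H^{[r]q}$ by the same proposition; by uniqueness of the oriented parameter, $\vec\H^{[r]q}=\vec\H^q$ iff $[r]q=q$, and $\vec\H^{[r]q}=\vec\H^{-q}$ iff $[r]q=-q$, which is exactly the content of (ii) and (iii).

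For (iv), given oriented left bundles $\vec\H^q$ and $\vec\H^{q'}$, I want a right rotation $[1,r]$ with $[r]q=q'$; since $r\mapsto[r]$ maps $S^3$ onto $\SO(3)$ and $\SO(3)$ acts transitively on $S^2$, such an $r$ exists, and then $[1,r]\vec\H^q=\vec\H^{q'}$. For (v), Proposition~\ref{prop:mappings_between_oriented_Hopf_bundles}(b) gives $*\vec\H^q=\vec\H_{-q}$, so the reflection $*$ already exhibits a left bundle as the mirror image of a right bundle; for an arbitrary pair $\vec\H^q$, $\vec\H_p$ I would then use the right-bundle analogue of (iv) (namely $[l,1]\vec\H_s=\vec\H_{[l]s}$ together with transitivity of $\SO(3)$ on $S^2$) to pick a left rotation carrying $\vec\H_{-q}$ to $\vec\H_p$, and compose it with $*$; the result is still orientation-reversing, hence a genuine ``mirror''. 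The ``similar statements for right Hopf bundles'' follow by the symmetric argument (or by conjugating with $*$ and invoking Proposition~\ref{prop:mappings_between_oriented_Hopf_bundles}(b) again).

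There is no deep obstacle here: all the geometric content has been front-loaded into Proposition~\ref{prop:mappings_between_oriented_Hopf_bundles}. The only two points that need a moment's care are (a) the passage from an equality of Hopf bundles to an equality of their $S^2$-parameters in (ii) and (iii), which relies on the oriented-parameter uniqueness and is the reason the oriented setup pays off, and (b) checking in (v) that precomposing the reflection $*$ with a rotation is still orientation-reversing, so that the left/right bundles are genuinely mirror-related and not merely congruent.
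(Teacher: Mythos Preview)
Your proof is correct and matches the paper's approach exactly. The paper treats this proposition as an immediate corollary of Proposition~\ref{prop:mappings_between_oriented_Hopf_bundles} (it is stated with a \qed\ and no separate proof), and you have simply spelled out the direct read-off that the paper leaves implicit, including the use of oriented-parameter uniqueness for parts (ii) and (iii) and transitivity of $\SO(3)$ on $S^2$ for part~(iv).
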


We can summarize properties (i)--(iii) in the following statement,
which characterizes the transformations that
leave a given left Hopf bundle invariant, in analogy to Proposition~\ref{prop:action_on_its_circle}.
\begin{proposition}
\ \label{prop:transformations_preserving_H2}
\begin{enumerate}[\rm(i)]
\item 
  A rotation  $[l, r]$ preserves $\H^{q_0}$ if and only if
$[r]q_0=\pm q_0$.
\item
More precisely,
these rotations come
in two %
families.
\begin{enumerate}[\rm(a)]
\item
The rotations with $[r]q_0=q_0$ can be written as
  $[l, \exp q_0\theta]$
  for $\theta \in \R$,
and they map $\vec\H^{q_0}$ to $\vec\H^{q_0}$,
preserving the orientation of the circles.
\item
The rotations with $[r]q_0=-q_0$ can be written as
  $[l, q'\exp q_0\theta]$
  for $\theta \in \R$, where
   $q' \in S^2$ is some fixed quaternion orthogonal to $q_0$.
They map $\vec\H^{q_0}$ to $\vec\H^{-q_0}$,
reversing the %
orientation of the circles.
\end{enumerate}
\end{enumerate}
\end{proposition}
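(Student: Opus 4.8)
The plan is to derive the whole statement from results already in hand, so that essentially no new computation is needed. The key inputs are Proposition~\ref{prop:mappings_between_oriented_Hopf_bundles}(a), which says $[l,r]\vec\H^{q_0}=\vec\H^{[r]q_0}$ (so that the image bundle depends only on $r$), and Corollary~\ref{coro:hopf-intersect}, which tells us that $\H^{q_0}=\H^{p}$ exactly when $p=\pm q_0$, and moreover that $\vec\H^{-q_0}$ consists of the same circles as $\vec\H^{q_0}$ with every orientation reversed. Part~(i) is then immediate: as unoriented bundles $[l,r]\H^{q_0}=\H^{[r]q_0}$, and this equals $\H^{q_0}$ iff $[r]q_0=\pm q_0$. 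Note that $l$ is entirely unconstrained here, which is consistent with Proposition~\ref{prop-congruent-bundles}(i) (any left rotation fixes any left Hopf bundle); so in both families below $l$ will range over all of $S^3$.

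For part~(ii) I would identify the two relevant cosets of quaternions, in the same spirit as the proof of Proposition~\ref{prop:action_on_its_circle}. The rotations $[r]$ of $S^2$ that fix $q_0$ are precisely the rotations about the axis through $q_0$, i.e.\ $r=\exp q_0\theta$ for $\theta\in\R$; together with the free choice of $l$ this gives the family $[l,\exp q_0\theta]$ in case~(a). For case~(b), pick any $q'\in S^2$ orthogonal to $q_0$; from $q'q_0=-q_0q'$ and $\bar{q'}=-q'$ one gets the one-line identity $[q']q_0=-q'q_0q'=q_0(q')^2=-q_0$, so every rotation sending $q_0\mapsto-q_0$ is $[q']$ followed by a rotation about the $q_0$-axis, giving $r=q'\exp q_0\theta$ up to an overall sign that is absorbed into the parameter (indeed $q'\exp q_0\theta=\exp(-q_0\theta)\,q'$, so the coset $\{q'\exp q_0\theta\}$ is well defined independently of which side one multiplies). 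This yields the family $[l,q'\exp q_0\theta]$.

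The orientation statements then follow by plugging back into Proposition~\ref{prop:mappings_between_oriented_Hopf_bundles}(a): in case~(a), $[l,r]\vec\H^{q_0}=\vec\H^{[r]q_0}=\vec\H^{q_0}$, so the circle orientations are preserved; in case~(b), $[l,r]\vec\H^{q_0}=\vec\H^{[r]q_0}=\vec\H^{-q_0}$, which by Corollary~\ref{coro:hopf-intersect} is the bundle $\vec\H^{q_0}$ with the orientation of every circle reversed, giving the claim. I do not expect a genuine obstacle: the content is already packaged in the cited propositions. The only points needing a little care are the explicit description of the coset $\{r:[r]q_0=-q_0\}$ as $q'\exp q_0\theta$ (and the harmless left/right-multiplication ambiguity), the verification $[q']q_0=-q_0$, and keeping track of the fact that the $\pm$ ambiguity of the quaternion representation of a rotation is exactly what is reflected in $\H^{q_0}=\H^{-q_0}$.
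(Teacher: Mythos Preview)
Your proposal is correct and matches the paper's approach: the paper presents this proposition without a separate proof, introducing it as a summary of properties (i)--(iii) of Proposition~\ref{prop-congruent-bundles} ``in analogy to Proposition~\ref{prop:action_on_its_circle}'', and you have spelled out exactly that derivation, going back to Proposition~\ref{prop:mappings_between_oriented_Hopf_bundles}(a) and Corollary~\ref{coro:hopf-intersect} for part~(i) and the orientation claims, and mimicking the coset argument of Proposition~\ref{prop:action_on_its_circle} for the explicit parametrizations in part~(ii).
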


Note that an \OR\ transformation sends a left Hopf bundle to a right one,
and those two share exactly two circles.
Thus, no \OR\ transformation can
preserve a Hopf bundle.

\subsubsection{Left and right screws}
\label{sec:screws}

A generic rotation has two circles that it leaves invariant. The left
and right rotations are special: they have infinitely many invariant
circles, and as we will see, these circles form a Hopf bundle.
In contrast to Proposition~\ref{prop:transformations_preserving_H2},
we now discuss rotations that leave {every} \emph{individual} circle of a
Hopf bundle invariant:
\begin{proposition}
  \ \label{one-parameter-rotations}
  \begin{enumerate} [\rm(i)]
  \item 
  For the oriented left Hopf bundle $\vec \H^{q_0}$,
  the one-parameter subgroup
  of right rotations $[1,\exp  q_0\phi]$ rotates every circle of
  $\vec \H^{q_0}$ in itself by the same angle~$\phi$.
\item
  Conversely, for a right rotation $[1,r]$
  with $r\ne 1,-1$, the set of circles that it leaves invariant forms a
  left Hopf bundle $\H^{q_0}$, and
 $[1,r]$ rotates every circle of $\vec H^{q_0}$ in itself by the same angle~$\phi$.
\end{enumerate}
\end{proposition}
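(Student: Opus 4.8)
The plan is to reduce both parts to the parameterization calculus already set up, in particular Proposition~\ref{prop:relations_between_oriented_circles} together with the sharpened rotation-angle statement for oriented circles at the end of Section~\ref{sec:oriented}.

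For part~(i), I would take an arbitrary circle $\vec K_q^{q_0}$ of the bundle and compute its image under $[1,\exp q_0\phi]$ by Proposition~\ref{prop:relations_between_oriented_circles}: this yields $\vec K_{[1]q}^{[\exp q_0\phi]q_0}=\vec K_q^{[\exp q_0\phi]q_0}$, and since $[\exp q_0\phi]$ is a rotation of $S^2$ about the axis through $q_0$, it fixes $q_0$, so the image is $\vec K_q^{q_0}$ again — every circle of $\vec\H^{q_0}$ is sent to itself, orientation included. To identify the angle, I would write $1=\exp q\cdot 0$ and read off from the oriented form of Proposition~\ref{prop:action_on_its_circle}\ref{OP-circle-rotate} that $[\exp q\cdot 0,\exp q_0\phi]$ rotates $\vec K_q^{q_0}$ by $\phi-0=\phi$, a value independent of $q$. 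That is the claim.

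For part~(ii), I would first write $r=\exp q_0\phi$ for the unique $q_0\in S^2$ and $\phi\not\equiv 0\pmod\pi$ determined by $r$; here is exactly where the hypothesis $r\ne\pm1$ enters. Part~(i) then already shows that $[1,r]$ leaves every circle of $\vec\H^{q_0}$ invariant and rotates it by $\phi$, so only the converse inclusion remains. If $K_p^q$ is left invariant by $[1,r]$, then Proposition~\ref{prop:relations_between_circles}(i) gives $K_p^{[r]q}=[1,r]K_p^q=K_p^q$; by the uniqueness of the circle parameters (Proposition~\ref{hopf-alternate}) the only way this can hold is $[r]q=q$, since the alternative $(p,[r]q)=(-p,-q)$ is impossible. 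But $[r]$ acts on $S^2$ as a rotation through the angle $2\phi\not\equiv 0\pmod{2\pi}$ about $q_0$, whose fixed-point set is exactly $\{q_0,-q_0\}$; hence $q=\pm q_0$, and in either case $K_p^q$ belongs to $\H^{q_0}$ (using $K_p^{-q_0}=K_{-p}^{q_0}$). So the invariant circles are precisely the members of $\H^{q_0}$, and the angle assertion is part~(i).

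All the computations are one-liners, so I do not expect a genuine obstacle; the one point requiring care is the hypothesis $r\ne\pm1$. It is exactly what makes $[r]$ a nontrivial rotation of $S^2$ with precisely two fixed points, so that $q_0$, and hence the bundle $\H^{q_0}$, is unambiguously recovered from $r$ — without it the set of invariant circles would be all of $S^3$. I would also remark that the two representations $r=\exp q_0\phi=\exp(-q_0)(-\phi)$ correspond to the two orientations $\vec\H^{q_0}$ and $\vec\H^{-q_0}$ of the same bundle, with rotation angles $\phi$ and $-\phi$ respectively, so the statement is orientation-consistent.
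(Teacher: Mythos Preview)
Your proof is correct and follows essentially the same route as the paper's. The paper dispatches part~(i) directly from the parameterization~\eqref{eq:orient-circle} and handles part~(ii) by invoking Proposition~\ref{prop:action_on_its_circle} (noting that case~(b) is excluded since $l=1$); you instead go through Propositions~\ref{prop:relations_between_oriented_circles}, \ref{prop:relations_between_circles}(i), and the uniqueness clause of Proposition~\ref{hopf-alternate}, which amounts to inlining the first step of the proof of Proposition~\ref{prop:action_on_its_circle} rather than citing it, but the substance is the same.
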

\begin{proof}
Part~(i) is a direct consequence of
the definition~\eqref{eq:orient-circle} of oriented circles.

According to Proposition~\ref{prop:action_on_its_circle},
the right rotation $[1,r]$
leaves a circle $K_p^q$ invariant iff
 $[r]q = q$.
(Case~(b) of Proposition~\ref{prop:action_on_its_circle}, where
$[l]p = -p$, does not apply since $l=1$.)
After writing $r=\exp  q_0\phi$ %
with $\phi\ne 0,\pi$,
the condition
 $[r]q = q$ translates to $q=\pm q_0$, and the circles
 $\{\,K_p^{\pm q_0}\mid p\in S^2\,\}$ form the Hopf bundle
 $ \H^{q_0}$. The last part of the statement repeats~(i).
\end{proof}
 \begin{figure}[htb]
    \centering
    \includegraphics{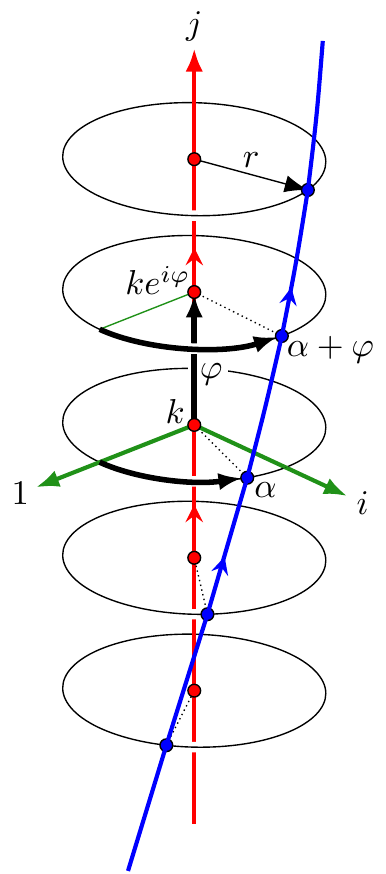}
    \caption{A right screw}
    \label{fig:right-screw}
  \end{figure}
Geometrically,
these rotations
are
\emph{screw motions}.
  If we look at one circle $K_p^{q_0}$ from the bundle,
  the adjacent circles %
form helices that wind around this circle,
see Figure~\ref{fig:right-screw}.
  The right multiplication by
$\exp  q_0\phi$ effects
a forward motion of $\phi$ \emph{along} every circle, and a simultaneous
clockwise rotation by the same angle $\phi$ \emph{around} the circle,
when seen in the direction of the forward movement,
and is thus a 
 \emph{right screw}.\footnote
 {While not everything that is associated to right rotations is ``right'',
   it is a lucky coincidence that at
   least right rotations effect right
   screws,
and left rotations effect left screws.
   This view depends on the convention that we have chosen
   in Section~\ref{sec:view-orientation}
    for
viewing parts of the 3-sphere as three-dimensional space.

 Here is a check of this fact at an example:   
 Figure~\ref{fig:right-screw}
 shows the situation around the point
 $(x_1,x_2,x_3,x_4)=(0,0,0,1)\equiv k \in K_{-i}^{i}$.
 According to our
 conventions from Section~\ref{sec:view-orientation}, we draw this in
 3-space by projecting to the tangent space $x_4=1$, i.e., omitting
 the $x_4$-coordinate, and drawing $(x_1,x_2,x_3)\equiv(1,i,j)$ as a
 right-handed coordinate system.
 The great circle
 $ K_{-i}^{i}$
 is invariant under the family of right rotations
 $[1,\exp i\phi]$, which move the point $k$ along the circle:
 \begin{displaymath}
   \vec K_{-i}^{i} =
   \{\,k\exp i\phi\,\} =
   \{\,k(\cos\phi +i\sin\phi)\,\} =
   \{\,k\cos\phi +j\sin\phi %
   \,\}
\end{displaymath}
The tangent vector at $\phi=0$ points in the direction $j\equiv(0,0,1,0)$.

Let us look at a small circle of radius $r$ around  $ K_{-i}^{i}$,
centered at~$k$:
It lies in a plane parallel to the $1,i$-plane and can be written as
\begin{displaymath}
\tfrac1{\sqrt{1+r^2}}  (k + r(\cos\alpha + i\sin\alpha))
=
\tfrac1{\sqrt{1+r^2}}
  (k + r\exp i\alpha).
\end{displaymath}
The right rotation
 $[1,\exp i\phi]$ maps this to
\begin{displaymath}
\tfrac1{\sqrt{1+r^2}}
  (k + r\exp i\alpha)\exp i\phi
=
\tfrac1{\sqrt{1+r^2}}
  (k\exp i\phi + r\exp i(\alpha+\phi))
\end{displaymath}
i.e., it increases $\alpha$ together with~$\phi$.
As can be seen in 
 Figure~\ref{fig:right-screw}, this is a right screw.
    
Du Val~\cite[\S\,14, p.~36]{duval}, for example, considers right quaternion
multiplications as left screws, without giving reasons for this
choice, and he draws his illustrations accordingly. %
On the other hand,
Coxeter \cite[Chapter~6,
p.~70]{Cox-complex} considers right
quaternion multiplications as right screws.
  }
In contrast to the situation in Euclidean 3-space, these screws have
no distinguished axis. The blue circle
{seems} to wind around the red circle,
but this is an artifact of the projection of this picture.
All circles are in fact equivalent, and the situation looks the
same for every circle of the bundle.

\subsubsection{Clifford-parallel circles}
\label{cliffor-parallel}
We measure the \emph{distance} between two points
 $p, q \in S^3$ as the geodesic distance on the sphere, which equals
 the angular {distance} %
along the great circle through $p$ and $q$:
$\dist(p, q) := \arccos\langle p, q\rangle$,
where $\langle p, q\rangle$ denotes the scalar product. %
The distance between two sets $K, K' \subseteq S^3$ is
$\dist(K, K') = \inf \{\,\dist(p, q) \mid p \in K, q \in K'\,\}.$

Two great circles $K$ and $K'$ in $S^3$ are called \emph{Clifford-parallel}
if $\dist(x, K')$ does not depend on $x \in K$.
See for example \cite[Section 18.8]{berger09_f2} for
 more information on Clifford parallelism.

\begin{proposition}[{\cite[Exercise 18.11.18]{berger09_f2}}]
  \label{prop:dist_bw_clifford_parallel}
  \label{prop:clifford_parallel}
  Great circles in the same Hopf bundle $\H^q$ are Clifford-parallel,
  and  
$\dist(K_p^q, K_r^q) = \dist(p, r)/2$.
\end{proposition}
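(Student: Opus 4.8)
The plan is to establish the slightly stronger statement that $\dist(x,K_r^q)=\dist(p,r)/2$ for \emph{every} $x\in K_p^q$; this yields at once that $K_p^q$ and $K_r^q$ are Clifford-parallel (the function $x\mapsto\dist(x,K_r^q)$ is then constant on $K_p^q$) and that $\dist(K_p^q,K_r^q)=\dist(p,r)/2$. That the answer is $\dist(p,r)/2$ is already plausible from the upper bound: the rotation of $S^2$ about the axis perpendicular to $p$ and $r$ by the angle $\dist(p,r)$ is $[l]$ for some $l=\cos\tfrac{\dist(p,r)}2+\sin\tfrac{\dist(p,r)}2(\cdots)$, and as a $4$-dimensional left rotation $[l,1]$ moves every point of $S^3$ by the angle $\dist(p,r)/2$ while, by Proposition~\ref{prop:relations_between_circles}(i), carrying $K_p^q$ onto $K_{[l]p}^q=K_r^q$; hence $\dist(x,K_r^q)\le\dist(p,r)/2$ for all $x$. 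The work is in the matching lower bound, which I would obtain from a direct computation after reducing to a normal form.

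For the reduction, note that by Proposition~\ref{prop:relations_between_circles}(i) a rotation $[l,r']$ carries the pair $(K_p^q,K_r^q)$ of circles of the Hopf bundle $\H^q$ isometrically onto the pair $(K_{[l]p}^{[r']q},K_{[l]r}^{[r']q})$ of circles of $\H^{[r']q}$, where $[l]\in\SO(3)$ acts diagonally on $(p,r)$ and $[r']\in\SO(3)$ acts on $q$ independently. Thus the whole claim transforms correctly under such rotations, and since $\dist(p,r)=\arccos\langle p,r\rangle$ is the only $\SO(3)$-invariant of the pair $(p,r)$, it suffices to treat (the case $\dist(p,r)=0$ being trivial) the normal form
\[
  q=i,\qquad p=i,\qquad r=i\cos\beta+j\sin\beta,\qquad \beta:=\dist(p,r)\in(0,\pi].
\]

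Next I would realize the two circles as $2$-planes in $\R^4$ regarded as quaternions. From the definition, $K_i^i=\{x\in S^3\mid[x]i=i\}$ is the stabilizer of $i$, i.e.\ the great circle $\{\cos\phi+i\sin\phi\mid\phi\in\R\}$ spanning the coordinate $\langle 1,i\rangle$-plane. For $K_r^i$, a short quaternion computation shows that $x_0:=\cos\tfrac\beta2+k\sin\tfrac\beta2$ satisfies $[x_0]r=i$, so $x_0\in K_r^i$; then, since the right coset of the stabilizer $K_r^r=\{\cos\psi+r\sin\psi\mid\psi\in\R\}$ of $r$ satisfies $[1,x_0]K_r^r=K_r^{[x_0]r}=K_r^i$ by Proposition~\ref{prop:relations_between_circles}(i), we get $K_r^i=\{(\cos\psi+r\sin\psi)x_0\mid\psi\in\R\}$, the great circle spanned by $x_0$ and $rx_0$. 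A second short computation gives $rx_0=i\cos\tfrac\beta2+j\sin\tfrac\beta2$, and $\langle x_0,rx_0\rangle=\mathrm{Re}(\bar x_0\,rx_0)=\mathrm{Re}\,i=0$, so $\{x_0,rx_0\}$ is an orthonormal basis of the plane of $K_r^i$.

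Finally, for arbitrary $x=\cos\phi+i\sin\phi\in K_i^i$ and $y=\cos\psi\,x_0+\sin\psi\,rx_0\in K_r^i$, expanding in the coordinates $(1,i,j,k)$ gives
\[
  \langle x,y\rangle=\cos\tfrac\beta2\,(\cos\phi\cos\psi+\sin\phi\sin\psi)=\cos\tfrac\beta2\cos(\phi-\psi),
\]
so $\dist(x,y)=\arccos\!\bigl(\cos\tfrac\beta2\cos(\phi-\psi)\bigr)$; for each fixed $\phi$ this is minimized over $\psi$ at $\psi=\phi$, with minimum $\arccos\cos\tfrac\beta2=\tfrac\beta2$ (using $\tfrac\beta2\in(0,\tfrac\pi2]$). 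Hence $\dist(x,K_r^i)=\beta/2$ for every $x$, which is the assertion. I expect the only real difficulty to be bookkeeping rather than anything conceptual: checking that the normal-form reduction genuinely exhausts all configurations — in particular that left rotations move the ``lower'' index and right rotations the ``upper'' index of $K_p^q$, exactly as recorded in Proposition~\ref{prop:relations_between_circles}(i) — and keeping the quaternion conventions consistent (composition order, and the clockwise convention for $[\,\cdot\,]$ on $S^2$) in the two auxiliary computations.
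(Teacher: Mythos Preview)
Your argument is correct. It differs from the paper's proof mainly in how the distance value is pinned down. The paper first obtains Clifford-parallelism by a symmetry argument (the right rotations $[1,\exp q\theta]$ act transitively on $K_p^q$ while keeping $K_r^q$ invariant as a set, so $\dist(x,K_r^q)$ is automatically constant), then uses a left rotation to reduce to $r=q$, and finally computes $\dist(K_p^q,1)$ by invoking the original interpretation of $K_p^q$ as the set of rotations of $S^2$ taking $p$ to $q$: the nearest such rotation to the identity is the one through the angle $\dist(p,q)$ along the great circle joining $p$ and $q$, and its quaternion has real part $\cos\tfrac{\dist(p,q)}2$. You instead reduce to an explicit normal form and carry out a direct inner-product computation, obtaining Clifford-parallelism as a by-product of the formula $\langle x,y\rangle=\cos\tfrac\beta2\cos(\phi-\psi)$. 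Your route is more hands-on but entirely self-contained; the paper's is shorter and exploits the geometric meaning of $K_p^q$ (Definition~\ref{definition-Kpq}) to avoid any coordinate work. Your preliminary upper-bound paragraph via the left rotation $[l,1]$ is in the same spirit as the paper's reduction step, and your normal-form reduction via Proposition~\ref{prop:relations_between_circles}(i) is sound.
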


\begin{proof}
By 
Proposition~\ref{prop:action_on_its_circle}\ref{OP-circle-rotate},
the right rotations $[1, \exp q\theta]$ rotate $x\in K_p^q$ along the
circle
$K_p^q$ while keeping $K_r^q$ invariant as a set. Thus,
$\dist(x, K_r^q)$ is constant as
$x$ moves on $K_p^q$, showing
that $K_p^q$ and $K_r^q$ are Clifford-parallel.
  
Since $K_r^q$ is a left coset of $K_q^q$, 
by applying some left rotation to $K_p^q$ and $K_r^q$,
we may assume that $r = q$.
That is, it is enough to show that
$\dist(K_p^q, K_q^q) = \dist(p, q)/2$.
Since $1 \in K_q^q$ and the circles
$K_p^q$ and $K_p^p$ are Clifford parallel,
it is enough to show that
$\dist(K_p^q, 1) = \dist(p, q)/2$.

The points $x = \cos{\alpha} + v\sin{\alpha}\in K_p^q$
represent the rotations
 $[x]$ on $S^2$ that map $p$ to~$q$,
and $\dist(x, 1) = \arccos \cos \alpha = \alpha$, assuming $0\le\alpha\le\pi$.
Thus,
we are trying to minimize $\alpha$, which is half the rotation angle of~$[x]$.
The rotation
that minimizes the rotation angle
is the one that
moves $p$ to $q$ along the great circle through $p$ and $q$,
and its rotation angle $2\alpha$ is $\dist(p, q)$.
\end{proof}
We mention that Clifford parallelism arises in two kinds: left and
right, accordingly as the circles belong to a common left or right
Hopf bundle.  Each kind of Clifford parallelism is transitive, but
Clifford parallelism in itself is not.

\section{Classification of the point groups}
\label{sec:classification}

We make a coarse classification of the groups by their invariant Hopf
bundles.
The following observation of Dunbar~\cite[p.~124]{dunbar94_f2}
characterizes this
in terms of the left and right groups.

\begin{proposition}
\label{prop:tubical_characterization}
A 4-dimensional point group leaves some left Hopf bundle  invariant
if and only if its right group is cyclic or dihedral.
A similar statement holds for right Hopf bundles and the left group.
\end{proposition}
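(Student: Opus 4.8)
The plan is to reduce the statement to Proposition~\ref{prop:transformations_preserving_H2}(i), which says that a rotation $[l,r]$ preserves the left Hopf bundle $\H^{q_0}$ precisely when $[r]q_0=\pm q_0$. First I would make two preliminary reductions. (1)~By the left--right mirror symmetry (Proposition~\ref{prop-congruent-bundles}(v); the operation $*$ turns left Hopf bundles into right ones by Proposition~\ref{prop:mappings_between_oriented_Hopf_bundles}(b) and, by the conjugation formula of Section~\ref{sec:achiral}, conjugation by $*$ sends $[l,r]$ to $[r,l]$, interchanging the left and right groups), the version for right Hopf bundles and the left group is the mirror image of the version for left Hopf bundles and the right group, so it suffices to prove the latter. (2)~An \OR transformation maps every left Hopf bundle to a \emph{right} Hopf bundle (Proposition~\ref{prop:mappings_between_oriented_Hopf_bundles}(b)), and a left and a right Hopf bundle are never equal, since they share only two circles (Corollary~\ref{coro:hopf-intersect}); hence no \OR transformation can preserve a left Hopf bundle, and a point group that leaves some left Hopf bundle invariant is necessarily chiral. (For an achiral group the question thus concerns its chiral subgroup, which has the same left and right groups.) So from here on $G\leqslant\SO(4)$ is chiral with right group $R$.

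For the forward direction, suppose $G$ leaves the left Hopf bundle $\H^{q_0}$ invariant. Applying Proposition~\ref{prop:transformations_preserving_H2}(i) to every element $[l,r]\in G$ gives $[r]q_0=\pm q_0$ for all $r\in R$, i.e.\ the $S^2$-rotation group $\{\,[r]\mid r\in R\,\}$ maps the axis through $q_0$ to itself. Here I invoke the elementary fact that a finite subgroup of $\SO(3)$ fixing a given axis setwise consists of the rotations about that axis together with possibly some half-turns interchanging its two endpoints, and is therefore cyclic ($C_n$) or dihedral ($D_{2n}$). Hence $R$ (equivalently, the quaternion group $2R\in\{2C_n,2D_{2n}\}$) is cyclic or dihedral.

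For the converse, suppose $R$ is cyclic or dihedral. Then the rotation group $\{\,[r]\mid r\in R\,\}\leqslant\SO(3)$ is $C_n$ or $D_{2n}$ and so fixes some axis of $S^2$; choose a unit vector $q_0\in S^2$ on that axis, so that $[r]q_0=\pm q_0$ for every $r\in R$. By Proposition~\ref{prop:transformations_preserving_H2}(i) again, every $[l,r]\in G$ preserves $\H^{q_0}$, so $G$ leaves the left Hopf bundle $\H^{q_0}$ invariant. This settles both directions, and the right-bundle/left-group statement follows by the mirror symmetry from reduction~(1).

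There is no substantial obstacle: the statement is essentially a repackaging of Proposition~\ref{prop:transformations_preserving_H2}. The only points that need care are the two preliminary reductions (making precise why \OR transformations are irrelevant here and why the right-bundle/left-group version is merely the mirror image of the other) and the bookkeeping identifying ``$R$ cyclic or dihedral'' with ``the $S^2$-rotation group generated by $R$ preserves an axis'', including the degenerate cases $n=1,2$. The cited fact about axis-preserving finite subgroups of $\SO(3)$ is standard and admits a one-line proof if one wishes to include it.
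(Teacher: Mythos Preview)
Your proposal is correct and follows essentially the same approach as the paper: both reduce to Proposition~\ref{prop:transformations_preserving_H2}(i) and then invoke the classification of finite subgroups of the axis-stabilizer $\O(2)$ in $\SO(3)$. The paper's proof is terser (three sentences) and does not spell out your reductions~(1) and~(2); in particular it silently restricts to $[l,r]\in\SO(4)$, which is consistent with the fact that the right group is only defined for the chiral part, so your extra care about \OR\ transformations is a harmless elaboration rather than a genuinely different route.
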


\begin{proof}
By Proposition~\ref{prop:transformations_preserving_H2}(i), a transformation
$[l, r] \in \SO(4)$ preserves $\H^{q_0}$ if and only if
$[r]$ keeps the line through $q_0$ invariant.
The set of such $r$'s form an infinite group that is isomorphic to $\O(2)$.
Its finite subgroups are either cyclic or dihedral.
\end{proof}

As we have seen,
the left
and right groups $L$ and $R$
are one of the five classes $2I,2O,2T,2D_{2n}$, and $2C_n$.
Besides the infinite families
of cyclic groups $2C_n$ and dihedral groups
 $2D_{2n}$,
there are the three polyhedral groups
$2I,2O,2T$.
Accordingly, we get a rough classification into three classes of groups.

\begin{enumerate}
\item
  The left subgroup is cyclic or dihedral, and the right subgroup is
  polyhedral, or vice versa.

  These groups
  leave some left or right Hopf bundle invariant, and they
  are the \emph{tubical groups}, to be discussed in
  Section~\ref{sec:tubical}.
\item
  Both the left and right subgroup are cyclic or dihedral.

  These groups
  leave some both some left and some right Hopf bundle invariant.
  They form a large family,
 the \emph{toroidal groups}, to be discussed in
  Section~\ref{sec:toroidal}.
  
\item
  Both the left and right subgroup are polyhedral.

  These groups leave no Hopf bundle invariant.
  There are finitely many groups of this class: the polyhedral groups
  and the axial groups.
\end{enumerate}

For all classes except the tubical groups, there is the possibility that
 $L=R$, and hence we also consider the achiral extensions of these groups.

\subsection{The Clifford torus}%
\label{sec:clifford-tori}

The toroidal groups
are characterized as
leaving
both some left Hopf bundle
$\H_{p}$ and some right Hopf bundle
$\H^{q}$ invariant.
By Corollary~\ref{coro:hopf-intersect}, these two bundles intersect
in two orthogonal circles
$K_{p}^{q}\cup K_{p}^{-q}$, and hence these two circles must
also be invariant.
We conclude that the set
$\T_p^q$
of points that are equidistant from these two
circles is also invariant.
We will see that
this set is a \emph{Clifford torus}.
It has several alternative representations.
\begin{align}
     \T_p^q &
    = \{\,x \in S^3 \mid \dist(x, K_p^q) = \dist(x, K_p^{-q}) \,\}
             \label{eq:torus}
  \\&    = \{\,x \in S^3 \mid \dist(x, K_p^q) = \tfrac{\pi}{4} \,\}
   \nonumber 
\\&    = \{\,x \in S^3 \mid \dist(x, K_p^{-q}) = \tfrac{\pi}{4} \,\}
  \nonumber
\\&    = \{\,x \in S^3 \mid \dist(x, K_p^q) = \dist(x, K_{-p}^{q}) \,\}
  \nonumber
\end{align}
Proposition~\ref{prop:relations_between_circles} tells us how an
orthogonal transformation acts on the circle $K_p^q$ that defines the
torus $\T_p^q$.
As an immediate corollary, we obtain:
\begin{proposition}
\label{prop:relations_between_tori}
Let $p, q \in S^2$. Then for any $l, r \in S^3$,
\begin{enumerate}[\rm(i)]
    \item $[l, r] \T_p^q = \T_{[l]p}^{[r]q}$.
    \item $(*[l, r]) \T_p^q = \T_{[l]q}^{[r]p}$, and as a special case,
      $* \T_p^q = \T_{q}^{p}$.
\end{enumerate}
\end{proposition}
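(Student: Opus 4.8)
The statement to prove is Proposition~\ref{prop:relations_between_tori}: for $p,q\in S^2$ and $l,r\in S^3$,
\begin{enumerate}[(i)]
\item $[l,r]\T_p^q = \T_{[l]p}^{[r]q}$,
\item $(*[l,r])\T_p^q = \T_{[l]q}^{[r]p}$, and in particular $*\T_p^q = \T_q^p$.
\end{enumerate}

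The plan is to deduce this directly from the definition of the Clifford torus as an equidistant set and from Proposition~\ref{prop:relations_between_circles}, which already tells us exactly how orthogonal transformations permute the circles $K_p^q$. First I would recall that by \eqref{eq:torus},
\[
  \T_p^q = \{\, x\in S^3 \mid \dist(x, K_p^q) = \dist(x, K_p^{-q}) \,\},
\]
and that every element $g$ of $\O(4)$ is an isometry of $S^3$, so $\dist(gx, gK) = \dist(x, K)$ for any point $x$ and any set $K$. This immediately gives, for any $g\in\O(4)$,
\[
  g\,\T_p^q = \{\, y \in S^3 \mid \dist(y, gK_p^q) = \dist(y, gK_p^{-q}) \,\},
\]
obtained by substituting $y = gx$. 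So the whole proposition reduces to identifying the two circles $gK_p^q$ and $gK_p^{-q}$ for $g = [l,r]$ and for $g = *[l,r]$, and recognizing the resulting equidistant set as the appropriate $\T$.

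For part (i), I would apply Proposition~\ref{prop:relations_between_circles}(i): $[l,r]K_p^q = K_{[l]p}^{[r]q}$ and $[l,r]K_p^{-q} = K_{[l]p}^{[r](-q)} = K_{[l]p}^{-[r]q}$, using that $[r]$ is linear on $S^2$ so it sends $-q$ to $-[r]q$. Hence $[l,r]\T_p^q$ is the set of points equidistant from $K_{[l]p}^{[r]q}$ and $K_{[l]p}^{-[r]q}$, which is precisely $\T_{[l]p}^{[r]q}$ by definition. For part (ii), I would similarly invoke Proposition~\ref{prop:relations_between_circles}(ii): $(*[l,r])K_p^q = K_{[l]q}^{[r]p}$ and $(*[l,r])K_p^{-q} = K_{[l](-q)}^{[r]p} = K_{-[l]q}^{[r]p}$. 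The set of points equidistant from $K_{[l]q}^{[r]p}$ and $K_{-[l]q}^{[r]p}$ is $\T_{[l]q}^{[r]p}$ — here I would use the fourth alternative representation in \eqref{eq:torus}, namely $\T_p^q = \{\,x \mid \dist(x,K_p^q) = \dist(x,K_{-p}^q)\,\}$, with $p$ replaced by $[l]q$ and $q$ by $[r]p$. The special case $*\T_p^q = \T_q^p$ is just the $l=r=1$ instance.

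There is no real obstacle here; the statement is, as the text says, an immediate corollary. The only point requiring a moment's care is matching the right alternative form of the torus in part (ii): after applying $*$, the two defining circles differ in their \emph{lower} index (the one coming from $q$ via the orientation-reversing swap), whereas the primary definition \eqref{eq:torus} has them differing in the \emph{upper} index. Using the fourth line of \eqref{eq:torus} sidesteps this cleanly, and one should note that all four representations in \eqref{eq:torus} describe the same set, so it is legitimate to pick whichever is convenient. I would close by remarking that every isometry permutes the two absolutely orthogonal circles $K_p^{\pm q}$ either trivially or by swapping them, and in either case the equidistant set is unchanged, which is why the formulas come out symmetric in the obvious way.
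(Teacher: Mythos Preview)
Your proof is correct and takes essentially the same approach as the paper, which states the proposition as ``an immediate corollary'' of Proposition~\ref{prop:relations_between_circles} without further detail. You have simply spelled out what that immediate corollary amounts to, including the small care needed in part~(ii) to match the image circles against the fourth alternative representation of the torus in~\eqref{eq:torus}.
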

From $\T_p^q$, we can recover the two defining circles
$K_{p}^{q}\cup K_{p}^{-q}$ as those points whose distance from
$\T_p^q$ takes the extreme values $\pi/4$:
\begin{displaymath}
  K_{p}^{q}\cup K_{p}^{-q}
= \{x \in S^3 \mid \dist(x, \T_p^{q}) = \tfrac{\pi}{4} \}
\end{displaymath}
Since the choice of parameters $p,q$ for
circles $K_{p}^{q}$ is unique up to simultaneous sign changes,
 the choice of parameters $p,q\in S^2$ for the torus
 $\T_p^q$ is unique
up to independent sign changes:
$\T_p^q = \T_{-p}^{-q} = \T_{-p}^{q} = \T_p^{-q}$.

By Proposition~\ref{prop:relations_between_tori},
any two Clifford tori are related by an appropriate
\OP\ transformation.
There are no ``left'' or ``right'' Clifford tori.
Thus, it is sufficient to study one special torus.
In particular,
$\T_i^i$ is the ``standard'' Clifford torus:
\begin{equation}\label{standard-clifford}
    \T_i^i = 
    \{\,\tfrac{1}{\sqrt{2}}
        (\cos\theta, \sin\theta, \cos\phi, \sin\phi)
        \mid 0 \leq \theta, \phi < 2\pi
        \,\}
=\{\,x \in \mathbb{R}^4\mid x_1^2+y_1^2=   x_2^2+y_2^2=  \tfrac12\,\}
\end{equation}
It is a square flat torus, and we name the coordinates
$(x_1,y_1,x_2,y_2)$ to emphasize that it is the Cartesian product of
a circle of radius $\sqrt{1/2}$
in the $x_1,y_1$-plane and
a circle of radius $\sqrt{1/2}$
 in the $x_2,y_2$-plane.
For this torus,
the two circles of extreme distance are
$K_i^i$ and
$K_i^{-i}$, the great circles in the $x_1,y_1$-plane and
in the $x_2,y_2$-plane.

In Section~\ref{sec:dup-example}, we will see another torus, $\T^i_k$,
with a different, but equally natural equation~\eqref{clif2}.

\goodbreak
\section{The tubical groups}
\label{sec:tubical}

In this section we consider the point groups that preserve a 
left or a right Hopf bundle, but \emph{not both}.
By Proposition~\ref{prop:tubical_characterization}, these groups are characterized
as the groups for which the left or the right group, but not both,
is cyclic or dihedral.
These groups will be called \emph{tubical groups}.
We have chosen this name because,
as we will see (see for instance Figure~\ref{fig:smooth_tubes}),
for large enough order, the
polar orbit polytope consists of
intertwined congruent tube-like structures.\footnote{
There is a notion of \emph{tubular groups}, which is something completely different,
see for example \cite{cashen10-tubular}.} %

Since any two left (resp.\ right) Hopf bundles are congruent, it is enough to
consider the tubical groups that preserve a specific left (resp.\ right) Hopf
bundle.
We will call these the \emph{left tubical groups} and the \emph{right
tubical groups}.
Since left and right Hopf bundles are mirror-congruent, we can
restrict our attention
to the left tubical groups.

 The classic classification leads to 11 classes of left tubical groups.
 Table \ref{tbl:left_tubical_groups} lists them
with the notation from Conway and Smith~\cite[Table~4.1]{CS} in the
first column,
together with their generators.
In Appendix~\ref{sec:subgroups-tubical},
we depict subgroup relations between these groups.

\begin{table}[htb]
\centering
\setlength{\extrarowheight}{1.41pt}
\begin{tabular}{|r@{}l|c|l@{ }l@{ }l|r|c|}
\hline
\multicolumn2{|c|}{$G \leqslant \SO(4)$}& parameter $n$ &
\multicolumn3{l|}{generators}&orde\rlap r&
$G^h \leqslant \O(3)$\\
\hline
\multicolumn8{|c|}{cyclic type} \\ \hline
$\pm[I $&${}\times C_n]$ & $n\ge 1$ &
$[i_I, 1], [\w, 1];$&$ [1, e_n]$& &$120n$ &
$+I$ \\
\hline
$\pm[O $&${}\times C_n]$ & $n\ge 1$ &
$[i_O, 1], [\w, 1];$&$ [1, e_n]$& &$48n$ &
$+O$ \\
$\pm\frac{1}{2}[O$&${}\times C_{2n}]$ & $n\ge 1$ &
$[i, 1], [\w, 1];$&$ [1, e_n];$&$ [i_O, e_{2n}]$ &$48n$ &
$+O$\\
\hline
$\pm[T $&${}\times C_n]$ & $n\ge 1$ &
$[i, 1], [\w, 1];$&$ [1, e_n]$& & $24n$ &
$+T$ \\
$\pm\frac{1}{3}[T$&${}\times C_{3n}]$ & $n\ge 1$ &
$[i, 1];$&$ [1, e_n];$&$ [\w, e_{3n}]$ &$24n$ &
$+T$ \\
\hline
\multicolumn8{|c|}{dihedral type} \\ \hline
$\pm[I $&${}\times D_{2n}]$ & $n\ge 2$ &
$[i_I, 1], [\w, 1];$&$ [1, e_n], [1, j]$& &$240n$ &
$\pm I$ \\
\hline
$\pm[O $&${}\times D_{2n}]$ & $n\ge 2$ &
$[i_O, 1], [\w, 1];$&$ [1, e_n], [1, j]$& &$96n$ &
$\pm O$ \\
$\pm\frac{1}{2}[O$&${}\times\overline{D}_{4n}]$ & $n\ge 2$ &
$[i, 1], [\w, 1];$&$ [1, e_n], [1, j];$&$ [i_O, e_{2n}]$ &$96n$ &
$\pm O$\\
\hline
$\pm\frac{1}{2}[O$&${}\times D_{2n}]$ & $n\ge 2$ &
$[i, 1], [\w, 1];$&$ [1, e_n];$&$ [i_O, j]$ &$48n$ &
$TO$\\
$\pm\frac{1}{6}[O$&${}\times D_{6n}]$ & $n\ge 1$ &
  $[i, 1]%
  ;$&$ [1, e_n] ;$&$ [i_O, j], [\w, e_{3n}]$ &$48n$ &
$TO$\\
\hline
$\pm[T $&${}\times D_{2n}]$ & $n\ge 2$ &
$[i, 1], [\w, 1];$&$ [1, e_n], [1, j]$& &$48n$ &
$\pm T$ \\
\hline
\end{tabular}
\caption
[The 11 classes of left tubical groups]
{Left tubical groups \cite[Table~4.1]{CS}.
  See \eqref{eq:defining_quaternions}
  on p.~\pageref{eq:defining_quaternions}
  for definitions of the quaternions $i_I,i_O,\omega,e_n$.
}
\label{tbl:left_tubical_groups}
\end{table}

According to the right group, there
are 5 tubical group classes of \emph{cyclic type} and
6 tubical group classes of \emph{dihedral type}.
The left Hopf bundle that they leave invariant is $\H^i$.
This follows from Proposition~\ref{prop:transformations_preserving_H2}(ii)
and our choice for the generators of $2C_n$ and $2D_{2n}$.
The cyclic-type groups are those tubical groups that moreover preserve
the consistent orientation of the circles in $\H^i$.
That is, they preserve $\vec \H^i$.
Each of these classes is parameterized by a positive integer $n$,
which is the largest integer $n$ such that $[1, e_n]$ is in the group.

In some cases the parameter $n$ starts from 2 in order to exclude the
groups $D_2$, which is geometrically the same as $C_2$.
We also exclude $\pm\tfrac12[O\times \overline{D}_4]$ because
the notation $\overline{D}_{4n}$ indicates that the normal
subgroup $D_{2n}$ of $D_{4n}$ is used, and not $C_{2n}$.
For $n=1$, this
distinction disappears, and hence
$\pm\tfrac12[O\times \overline{D}_4]$ is geometrically the same as
$\pm\tfrac12[O\times D_4]$
(see also Appendix~\ref{sec:index4}).
In this case and in all other cases where
$C_2$ and $D_2$ are exchanged, the respective groups
are conjugate under $[1, \tfrac1{\sqrt2}(i+j)]$,
which exchanges $[1, i]$ with $[1, j]$. %

\paragraph{Convention.}

For ease of use,
we drop the word ``left'' from ``left tubical group''
and call it simply ``tubical group'' in this section.
We will denote $\H^i$ by $\H$ and call it
\emph{the} %
Hopf bundle.
We will also denote $h^i(x)=xi\bar x$ by $h(x)$ and call it
\emph{the} %
Hopf map.

\subsection{Orbit circles}
\label{sec:orbit-circles}

An element of a %
tubical group has one of the following
two forms, and
Proposition~\ref{prop:action_on_its_circle} describes its action on
the circles of~$\H$:
\begin{itemize}
    \item $[l, e_m^s]$, which maps $\vec K_p$ to $\vec K_{[l]p}$, and
    \item $[l, je_m^s]$, which maps
      $ K_p$ to $ K_{-[l]p}$ with a reversal of orientation.
      More precisely, this rotation maps
      $\vec K_p=\vec K_p^i$
      to $\vec K^{-i}_{[l]p}$, which is the reverse
      of 
      $\vec K^i_{-[l]p} = \vec K_{-[l]p}$.
      These elements occur only in the groups of dihedral type.
\end{itemize}
Thus, the rotations permute the Hopf circles of $\H$.
Via the
one-to-one correspondence of the
Hopf map, they induce mappings on the Hopf sphere $S^2$:

\begin{proposition}
A tubical group $G$ induces a $3$-dimensional point group $G^h$ via
the Hopf map~$h$.
This group $G^h$ is isomorphic to $G/\langle[1, e_n]\rangle$,
where $n$ is the largest integer such that $[1, e_n] \in G$.
\end{proposition}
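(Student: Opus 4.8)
The plan is to make precise the map $G \to G^h$ and identify its kernel. First I would observe that every element of $G$ has the form $[l, e_m^s]$ or $[l, je_m^s]$, and by Proposition~\ref{prop:action_on_its_circle} its action on the circles of $\H = \H^i$ depends only on the value $[l] \in \SO(3)$ (the element $[l,e_m^s]$ sends $K_p$ to $K_{[l]p}$, and $[l,je_m^s]$ sends $K_p$ to $K_{-[l]p}$). Via the Hopf map $h$, which is a bijection between the circles of $\H$ and the points of $S^2$, each such element therefore induces a well-defined transformation of $S^2$: a rotation $[l]$ in the first case, and the composition of $[l]$ with the antipodal map in the second case. Since the antipodal map on $S^2$ is realized by a rotation by $\pi$ (it is $[l']$ for a suitable $l'$ of order $2$, e.g.\ $l' = i$ acting as the half-turn about the $i$-axis), every induced transformation lies in $\SO(3)$. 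So we get a map $\rho\colon G \to \SO(3)$, and I would check it is a group homomorphism, using Proposition~\ref{prop:relations_between_circles}(i): composition of rotations of $S^3$ composes the induced actions on circles, hence on $S^2$. Its image is by definition $G^h$, a finite subgroup of $\SO(3)$, i.e.\ a $3$-dimensional point group.

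The second step is to compute the kernel. An element $g = [l, r]$ with $r = e_m^s$ or $r = je_m^s$ acts trivially on every circle of $\H$ precisely when its induced action on $S^2$ is the identity. In the first case this forces $[l] = \id$ on $S^2$, i.e.\ $l = \pm 1$, so $g = [\pm 1, e_m^s] = [1, \pm e_m^s]$; in the second case it would force $[l]$ to equal the antipodal map, which is not in $\SO(3)$ as an orientation issue — more carefully, $[l]$ is always a rotation, and the antipodal map on $S^2$ is not a rotation when regarded honestly, but it \emph{is} realized by $[l']$ with $l'$ of order $2$; so this case does give kernel elements only when $[l]$ itself is the half-turn matching $[l']$, i.e.\ the induced map is $\id$ only if $[l][l'] = \id$, contradicting that dihedral-type elements reverse circle orientation — hence no dihedral-type element lies in the kernel. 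Thus the kernel consists exactly of the elements $[1, r]$ of $G$ with $r$ a power of $e_n$ up to sign, which is the cyclic group $\langle [1, e_n] \rangle$ generated by the longest right rotation in $G$ (recalling $[1, e_n]$ has order $2n$ and $[1,e_n]^n = [1,-1] = -\id \in G$ for a diploid group). Then the first isomorphism theorem gives $G^h \cong G / \langle [1, e_n] \rangle$.

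The main obstacle is bookkeeping around orientation and the antipodal map: one must be careful that ``the induced map on $S^2$'' is genuinely an element of $\SO(3)$ even for dihedral-type elements, and that such an element is \emph{not} in the kernel. The cleanest way to handle this is to note that the dihedral-type element $[l, je_m^s]$ maps $\vec K_p$ to (the reverse of) $\vec K_{-[l]p}$, so on unoriented circles it acts as $p \mapsto -[l]p$; this is a rotation of $S^2$ (composition of $[l]$ with a half-turn) but it has no fixed circle-stabilizing issue — it equals the identity on $S^2$ only if $-[l]p = p$ for all $p$, impossible. So dihedral-type elements never lie in $\langle [1,e_n]\rangle$, and the kernel computation goes through. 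The remaining steps — that $\langle[1,e_n]\rangle$ is exactly the set of powers $[1,\pm e_n^t]$ in $G$, and that $n$ is the stated largest integer — are immediate from the definition of the parameter $n$ and our choice of generators for $2C_n$ and $2D_{2n}$.
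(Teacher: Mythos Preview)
Your overall architecture matches the paper's: define a homomorphism from $G$ to a $3$-dimensional orthogonal group, identify the image as $G^h$, compute the kernel as $\langle[1,e_n]\rangle$, and invoke the first isomorphism theorem. But there is a genuine error in how you set up the homomorphism.

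You claim that the antipodal map $p\mapsto -p$ on $S^2$ is a rotation, ``realized by $[l']$ for a suitable $l'$ of order $2$, e.g.\ $l'=i$''. This is false: $[i]$ is the half-turn about the $i$-axis, sending $(x,y,z)$ to $(x,-y,-z)$, not to $(-x,-y,-z)$. The antipodal map comes from $-\id\in\O(3)$, which has determinant $-1$ and is \emph{not} in $\SO(3)$. Consequently the induced transformation $p\mapsto -[l]p$ for a dihedral-type element $[l,je_m^s]$ is orientation-reversing on $S^2$, not a rotation. Your target group must be $\O(3)$, not $\SO(3)$. Indeed, the paper's homomorphism is
\[
G\to\O(3),\qquad [l,e_m^s]\mapsto[l],\qquad [l,je_m^s]\mapsto -[l],
\]
and for dihedral-type tubical groups the image $G^h$ is an \emph{achiral} subgroup of $\O(3)$ (such as $\pm I$, $\pm O$, $TO$, $\pm T$; see Table~\ref{tbl:left_tubical_groups}). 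Your later remark that $-[l]p=p$ for all $p$ is impossible is correct and does show dihedral-type elements avoid the kernel, but the surrounding claim that the image lies in $\SO(3)$ contradicts both this computation and the actual content of the proposition.
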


\begin{proof}
  The above considerations show
that $[l, e_m^s]$ induces the \OP\ transformation $[l]$ on $S^2$,
and $[l, je_m^s]$ induces the \OR\ transformation $-[l]$ on $S^2$.
We are done since the image of $G$ in the %
homomorphism 
\begin{align*}
    G &\to \O(3) \\
    [l, e_m^s] &\mapsto [l]\\
    [l, je_m^s] &\mapsto -[l]
\end{align*}
is $G^h$, and the kernel is $\langle[1, e_n]\rangle$.
\end{proof}

The column ``$G^h \leqslant \O(3)$'' in Table~\ref{tbl:left_tubical_groups}
lists the induced group for each tubical group $G$.
Tubical groups of cyclic type induce chiral groups $G^h$, and
tubical groups of dihedral type induce achiral groups $G^h$.

    As a consequence, the orbit of some starting point $v\in S^3$ can be
    determined as follows:
    \begin{enumerate}
    \item The starting point lies on the circle $K_{h(v)}$.
      The subgroup $\langle[1, e_n]\rangle$
generates a regular $2n$-gon in this circle.
      
\item For each $t\in G^h$,
there is a coset of elements that map
$K_{h(v)}$ to the circle $K_{t(h(v))}$,
and these elements
generate a regular $2n$-gon in this circle.
\end{enumerate}
\begin{proposition}
  \label{prop:orbit-circles}
  Let $G$ be a tubical group.
  The orbit of a point $v \in S^3$ is the union
  of  regular $2n$-gons on the circles
  $K_{t(h(v))}$ for $t \in  G^h$.
  \qed
\end{proposition}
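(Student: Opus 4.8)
The plan is to read the orbit off the coset decomposition of $G$ over the normal subgroup $N := \langle[1,e_n]\rangle = \ker(G\to G^h)$. By the proposition just proved, $G^h \cong G/N$, so the cosets of $N$ in $G$ correspond bijectively to the elements $t\in G^h$; fix for each $t$ a representative $g_t\in G$ whose image in $G^h$ is $t$. Then $G = \bigsqcup_{t\in G^h} g_tN$, hence $Gv = \bigcup_{t\in G^h} g_tNv$, and it suffices to show that each block $g_tNv$ is a regular $2n$-gon lying on the great circle $K_{t(h(v))}$; the union of these blocks over $t\in G^h$ is then precisely the asserted description of $Gv$.

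First I would treat the block for $t=\id$, namely $Nv$. The quaternion $e_n=\exp i(\pi/n)$ has order $2n$, so $N$ is cyclic of order $2n$, generated by the right rotation $[1,e_n]$. The point $v$ lies on the Hopf circle $K_{h(v)}=K_{h(v)}^i$ of $\H=\H^i$, and Proposition~\ref{prop:action_on_its_circle}\ref{OP-circle-rotate}, applied with $\phi=0$ and $\theta=\pi/n$, shows that $[1,e_n]=[1,\exp i(\pi/n)]$ leaves $K_{h(v)}$ invariant and acts on it as a rotation by $\pi/n$. Consequently the $N$-orbit of $v$ consists of $2n$ points spaced equally along $K_{h(v)}$, i.e.\ a regular $2n$-gon on $K_{h(v)}$. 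The same computation shows more generally that every power of $[1,e_n]$ leaves every circle of $\H$ invariant and rotates it by a multiple of $\pi/n$ (cf.\ Proposition~\ref{one-parameter-rotations}(i)), a fact I reuse below.

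For a general block, the left-to-right composition convention gives $(g_t[1,e_n^k])v = [1,e_n^k](g_tv)$, hence $g_tNv = N(g_tv)$, and it only remains to locate $g_tv$. An element $g_t$ of a tubical group has the form $[l,e_m^s]$ or $[l,je_m^s]$, and by Proposition~\ref{prop:relations_between_circles}(i) it maps $K_{h(v)}=K_{h(v)}^i$ to $K_{[l](h(v))}^{[e_m^s]i}=K_{[l](h(v))}$ in the first case and to $K_{[l](h(v))}^{[je_m^s]i}=K_{[l](h(v))}^{-i}=K_{-[l](h(v))}$ in the second, using $[e_m^s]i=i$ and $[je_m^s]i=-i$. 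By the definition of the homomorphism $G\to G^h$ in the preceding proof, which sends $[l,e_m^s]\mapsto[l]$ and $[l,je_m^s]\mapsto-[l]$, both values equal $K_{t(h(v))}$. Thus $g_tv\in K_{t(h(v))}$; since $N$ leaves $K_{t(h(v))}$ invariant and acts on it by rotations through multiples of $\pi/n$, the set $N(g_tv)=g_tNv$ is a regular $2n$-gon on $K_{t(h(v))}$. Taking the union over $t\in G^h$ finishes the proof.

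I do not expect a genuine obstacle: the statement merely repackages Proposition~\ref{prop:action_on_its_circle}, Proposition~\ref{prop:relations_between_circles} and the identification $G^h\cong G/\langle[1,e_n]\rangle$. The only points demanding care are bookkeeping — respecting the left-to-right composition convention when rewriting $g_tNv$ as $N(g_tv)$, and keeping the two sign cases $[e_m^s]i=i$ (cyclic-type elements) and $[je_m^s]i=-i$ (dihedral-type elements) straight, so that the image circle comes out as $K_{t(h(v))}$ with the correct parameter in both cases.
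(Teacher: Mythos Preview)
Your proof is correct and follows essentially the same approach as the paper. The paper presents the argument as the two numbered items immediately preceding the proposition (the subgroup $\langle[1,e_n]\rangle$ generates a regular $2n$-gon on $K_{h(v)}$, and each coset carries this to a regular $2n$-gon on $K_{t(h(v))}$) and then states the proposition with a bare \qed; you have simply spelled out the coset decomposition and the bookkeeping with the composition convention in more detail.
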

We call these circles the \emph{orbit circles} of~$G$.

If the $G^h$-orbit of $h(v)$ is not free, several of these $2n$-gons
will share the same circle, and they may overlap.  The $2n$-gons may
coincide, or they may form polygons with more vertices.
It turns out that they can intersperse to form a regular $2fn$-gon
or, in the case of dihedral-type groups, the union of two
 regular $2fn$-gons, for
some $1\le f\le 5$.  

The $G^h$-orbit of $h(v)$ is always free when
the starting point does not lie on a rotation
center or a mirror of $G^h$.
The following corollary follows directly 
from the previous proposition.

\begin{corollary}
\label{coro:free_orbit}
Let $G$ be a tubical group and let $v \in S^3$ be a point.
If the $G^h$-orbit of $h(v)$ is free,
then the $G$-orbit of $v$ is also free.
\qed
\end{corollary}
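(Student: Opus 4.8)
The plan is to derive the statement directly from Proposition~\ref{prop:orbit-circles} together with the proposition that identifies $G^h$ with $G/\langle[1,e_n]\rangle$, by a short counting argument. Write $n$ for the largest integer with $[1,e_n]\in G$ and put $N:=\langle[1,e_n]\rangle$. The right rotation $[1,e_n]$ has order $2n$ in $\SO(4)$, so $|N|=2n$ and $|G|=|G^h|\cdot 2n$. I would show that, when the $G^h$-orbit of $h(v)$ is free, the $G$-orbit of $v$ has exactly $|G|$ points; by the orbit–stabilizer relation this forces the stabilizer of $v$ in $G$ to be trivial, which is precisely the assertion.

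First I would invoke Proposition~\ref{prop:orbit-circles} to write the $G$-orbit of $v$ as the union of regular $2n$-gons carried by the circles $K_{t(h(v))}$, one for each $t\in G^h$. Freeness of the $G^h$-orbit yields $|G^h|$ pairwise distinct points $t(h(v))\in S^2$, and distinct points of $S^2$ parameterize distinct circles of the Hopf bundle $\H=\H^i$ (uniqueness in Proposition~\ref{hopf-alternate}); since $\H$ is a partition of $S^3$, these $|G^h|$ orbit circles are pairwise disjoint. Each of them carries a \emph{regular} $2n$-gon, that is, exactly $2n$ distinct points: by Proposition~\ref{prop:action_on_its_circle}\ref{OP-circle-rotate}, $N$ acts on $K_{h(v)}$ as the rotation group generated by a rotation through $\pi/n$, which has order $2n$ on the circle. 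Summing over the disjoint orbit circles gives exactly $|G^h|\cdot 2n=|G|$ orbit points, and the corollary follows.

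An essentially equivalent, perhaps more transparent route works at the level of stabilizers. If $g\in G$ fixes $v$, then $g$ carries the Hopf circle $K_{h(v)}$ to the Hopf circle $K_{g^h(h(v))}$ (by the description of how tubical-group elements act on the circles of $\H$ in Section~\ref{sec:orbit-circles}, where $g^h$ denotes the image of $g$ in $G^h$); both circles contain $v$, so by disjointness of distinct circles of $\H$ they coincide, giving $g^h(h(v))=h(v)$. Freeness of the $G^h$-orbit then forces $g^h=\id$, i.e. $g\in N$. But $N$ acts faithfully on $K_{h(v)}$, and a nontrivial rotation of a circle has no fixed point, so from $g\cdot v=v$ we get $g=\id$.

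I do not expect a genuine obstacle: the statement is a bookkeeping consequence of the orbit structure already established. The two points that need a moment's care are (i) that distinct parameters give disjoint Hopf circles, which rests on the partition property of a Hopf bundle and the uniqueness in Proposition~\ref{hopf-alternate}, and (ii) that $N$ contributes a full $2n$-gon on each orbit circle, equivalently that $N$ acts faithfully on each fiber of $h$; both are immediate from the fact that $[1,e_n]$ acts on $K_p^i$ as a rotation through $\pi/n$.
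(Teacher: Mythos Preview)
Your proof is correct and follows exactly the route the paper intends: the corollary is marked \qed\ in the paper with no further argument, simply stating that it ``follows directly from the previous proposition'' (Proposition~\ref{prop:orbit-circles}). Your counting argument---disjoint orbit circles, each carrying a full $2n$-gon, totaling $|G^h|\cdot 2n=|G|$ points---is precisely the elaboration the paper omits, and your alternative stabilizer argument is an equally valid rephrasing.
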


For tubical groups of cyclic type, the orbit has the following nice
property.%

\begin{proposition}
\label{prop:cyclic_type_orbit}
Let $G$ be a cyclic-type tubical group.
The $G$-orbit of a point $v \in S^3$, up to congruence,
depends only on the circle of $\H$ on which $v$ lies.
\end{proposition}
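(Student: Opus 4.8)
The plan is to produce an explicit rotation carrying the $G$-orbit of one point onto the $G$-orbit of the other. Suppose $v,v'\in S^3$ lie on the same circle $K_p=K_p^i$ of $\H$, so that $h(v)=h(v')=p$. By the parameterization~\eqref{eq:orient-circle} of the oriented circle $\vec K_p^i$, taking $v$ as the base point we may write every point of that circle as $v\exp i\theta=[1,\exp i\theta]\,v$ for some $\theta\in\R$; in particular there is an angle $\theta$ with $v'=[1,\exp i\theta]\,v$. Set $g:=[1,\exp i\theta]$, a right rotation.

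Next I would check that $g$ centralizes $G$. Since $G$ is of cyclic type, every element of $G$ has the form $[l,e_m^s]$ (the elements $[l,je_m^s]$ occur only in the dihedral-type groups), and $e_m^s=(\cos\tfrac{\pi}{m}+i\sin\tfrac{\pi}{m})^s=\exp i\phi$ with $\phi=s\pi/m$. Because $\exp i\theta$ and $\exp i\phi$ lie in the same abelian one-parameter subgroup and composition is componentwise quaternion multiplication,
\[
  g\,[l,e_m^s]=[\,l,\ \exp i\theta\cdot\exp i\phi\,]=[\,l,\ \exp i\phi\cdot\exp i\theta\,]=[l,e_m^s]\,g .
\]
Hence $gG=Gg$. (Equivalently: a cyclic-type tubical group preserves $\vec\H^i$, so by Proposition~\ref{prop:transformations_preserving_H2}(ii)(a) it sits inside the group of all $[l,\exp i\phi]$, in which the one-parameter subgroup $\{[1,\exp i\theta]\}$ is central.)

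Applying the orthogonal transformation $g$ to the orbit of $v$ then gives $g\cdot(Gv)=(gG)v=(Gg)v=G(gv)=Gv'$, the orbit of $v'$; so the two orbits are congruent, and since the only datum used was $p$, equivalently the circle $K_p$, the orbit depends up to congruence only on that circle. I do not anticipate a real obstacle: the one point to keep in mind is that the argument genuinely uses the cyclic type, since for a dihedral-type generator $[l,je_m^s]$ one has $\exp i\theta\cdot j=j\exp(-i\theta)$, so $[1,\exp i\theta]$ fails to commute with it — which is precisely why the statement is restricted to cyclic-type groups.
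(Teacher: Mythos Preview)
Your proof is correct and follows essentially the same approach as the paper: both arguments observe that moving $v$ along its circle $K_p$ is effected by a right rotation $[1,\exp i\theta]$, which commutes with every element $[l,e_m^s]$ of a cyclic-type group, so that the orbit is carried congruently. Your version is a bit more explicit in verifying the commutation, but the idea is identical.
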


\begin{proof}
Rotation of $v$ along $K_{h(v)}$ can be performed by
a right rotation of the form $[1, \exp \theta i]$.
Since the right group of $G$ is cyclic,
elements of $G$ have the form $[l, e_m^s]$.
These elements commute with right rotations of the form $[1, \exp \theta i]$.
In particular,
\begin{displaymath}
  \orbit([1, \exp \theta i] v, G) = [1, \exp \theta i] \orbit(v, G).
  \qedhere
\end{displaymath}
\end{proof}

\subsection{Tubes}
\label{sec:tubes}

If $n$ is large, the orbit fills the orbit circles
densely.
Figure~\ref{subfig:smooth_tubes_discrete} shows
 the cells (i.e.\ facets) of the polar orbit polytope
 that correspond to orbit points on three orbit circles.
 Here orbit points form a regular 80-gon on each orbit circle.
We clearly see twisted and intertwined tubes, which are characteristic
for these groups, and which we have used to assign
their names.
Figures \ref{subfig:smooth_tubes_cell} and~\ref{subfig:smooth_tubes_top_view}
show a single cell. It has two
large flat faces, where successive cells are stacked on top of each
other with a slight twist.
On the boundary of the tubes in
Figure~\ref{subfig:smooth_tubes_discrete} we can distinguish two different sets of
``parallel'' curves. One set of curves comes from the boundaries between
successive \emph{slices} (cells) of the tubes, and the other set of curves is a
trace of the slices of the adjacent tubes.
At first sight, it is hard to know which of the two line patterns is
which.
In Figure~\ref{subfig:smooth_tubes_discrete_cut}, we have cut the
tubes open to show where the boundaries between the slices are,
revealing also the three orbit circles.

If we let $n$ grow to infinity, the tubes become smooth, see
Figure~\ref{subfig:smooth_tubes}.
We explore the limiting shape of these tubes in
Section~\ref{sec:geometry-tubes}. We will see that the
tubes are either 3-sided, 4-sided, or 5-sided, and their shape as
well as their structure, how they share common boundaries and how they meet
around edges, can be understood in terms of the spherical Voronoi
diagram on the Hopf sphere $S^2$.
Figure~\ref{subfig:smooth_tubes_voronoi} shows this Voronoi diagram
for our example.

We will show some more examples of cells below
(Figures~\ref{fig:IxCn_5fold} and~\ref{fig:hOxC2n_4fold}) and
in Appendix~\ref{sec:special_starting_points}.
In general, the cell of a polar orbit polytope
of a tubical group
for large enough $n$
will always exhibit the following characteristic features.
\begin{itemize}
\item It is a thin slice with a roughly polygonal shape.
\item The top and bottom faces are parallel.
\item Moreover, the top and bottom faces are congruent and slightly
  twisted with a right screw.  (There are, however exceptions
for tubical groups of dihedral type: With some choices
  of starting points, there is an
  alternative way of stacking the slices: every other slice is
  upside down, as in Figure~\ref{fig:pancakes}.)
\item The top and bottom faces approach the shape of a triangle,
  quadrilateral or pentagon with curved sides.
\item The sides are decorated with slanted patterns, which come from
  the boundaries of the adjacent tubes.
\item The tube twists around the orbit circle
by one full $360^\circ$ turn 
  as it closes up on itself.
\end{itemize}

If $n$ is small, these properties break down:
The circles are not filled densely enough to ensure that
the
cells are thin slices. %
Sometimes they are regular or Archimedean
polytopes,
and the
orbit polytopes coincide with those of
polyhedral groups, and
the ``tubes'' may even
be disconnected, see for example Figures~\ref{fig:IxCn_3fold}
or~\ref{fig:TxCn_2fold}
in Appendix~\ref{sec:special_starting_points}.
See~Section~\ref{small-n} for more examples.

Figure~\ref{fig:smooth_tubes} shows a case where the $2n$-gons lie
on different circles. Then the orbit is free: for any two cells, there is a
unique transformation in the group that moves one cell to the other.
If the starting point is generic enough, the cells have no
symmetries.
(See Proposition~\ref{prop:generic-start} below for a precise statement.)
Then the given group is the symmetry group of its orbit
polytope:  There is a unique transformation mapping one cell to the
other even among \emph{all} orthogonal transformations, not just the
group elements.

\subsubsection{Mapping between adjacent cells}

\begin{definition}
The \emph{cell axis} of a cell of the polar
orbit polytope is the orthogonal projection
of the orbit circle into the 3-dimensional hyperplane of the cell.
\end{definition}
The cell axis thus gives the direction in which
consecutive cells are stacked upon each other along
the orbit circle.
It is a line going through the orbit point.
Figure~\ref{subfig:smooth_tubes_cell} shows a cell together with its axis.
The cell axis is not necessarily a symmetry axis.
The cell axis intersects the boundary of the cell in two
\emph{poles}. %

This is where consecutive cells are attached to each other
(unless $n$ is too small and the tubes are disconnected.)
More precisely: For the orbit polytope of a generic starting point,
the next cell is attached as follows. We translate the cell $C$ from the
bottom pole to the top pole. Call the new cell $C'$.
We rotate $C'$ slightly until its bottom
face matches the top face of $C$, and we attach it there (with a bend
into the fourth dimension, as for every polytope).

\begin{figure}
\begin{subfigure}[b]{0.5\textwidth}
    \centering
    \includegraphics[height=75mm]{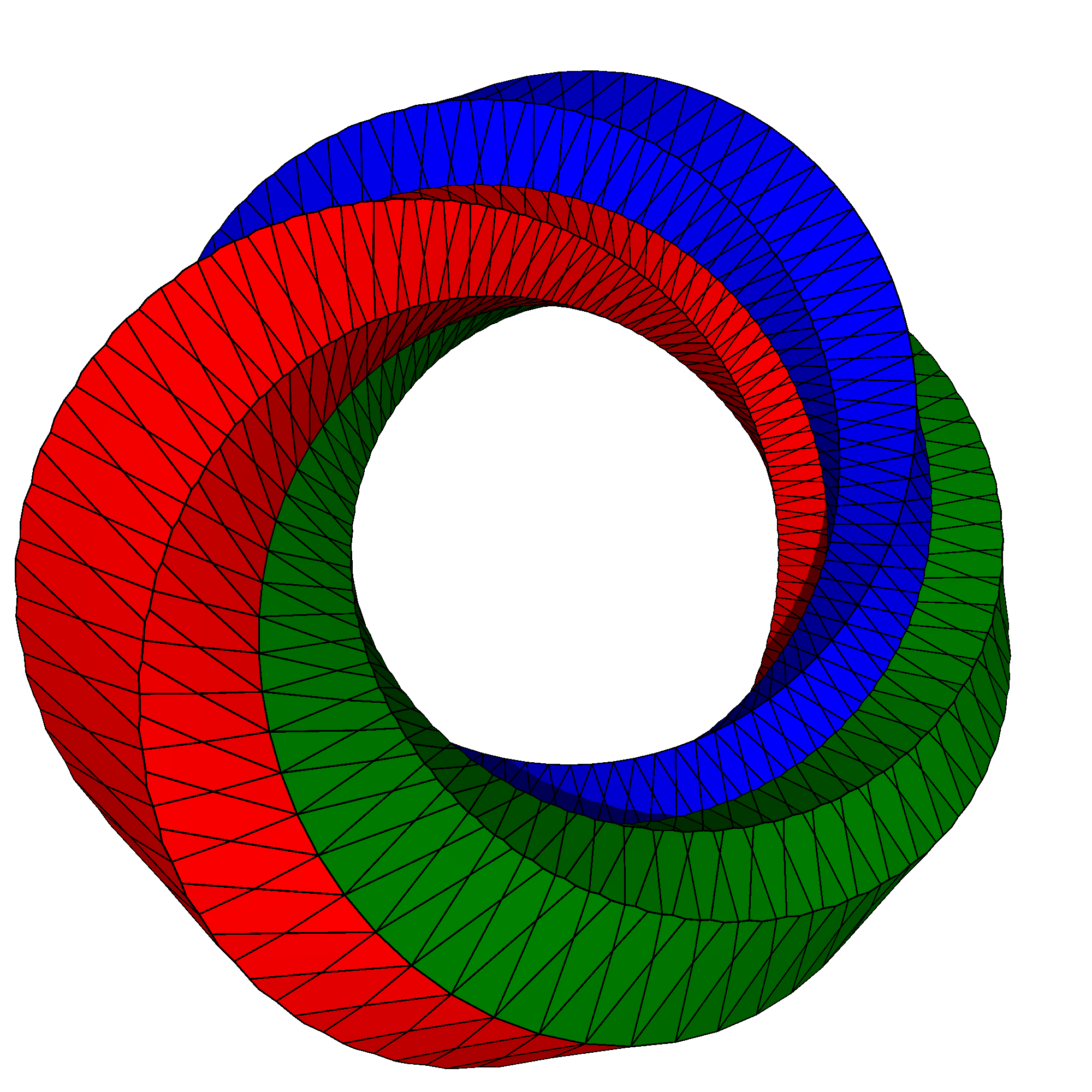}
    \caption{Three tubes}
    \label{subfig:smooth_tubes_discrete}
\end{subfigure}%
\begin{subfigure}[b]{0.5\textwidth}
    \centering
    \includegraphics[height=75mm]{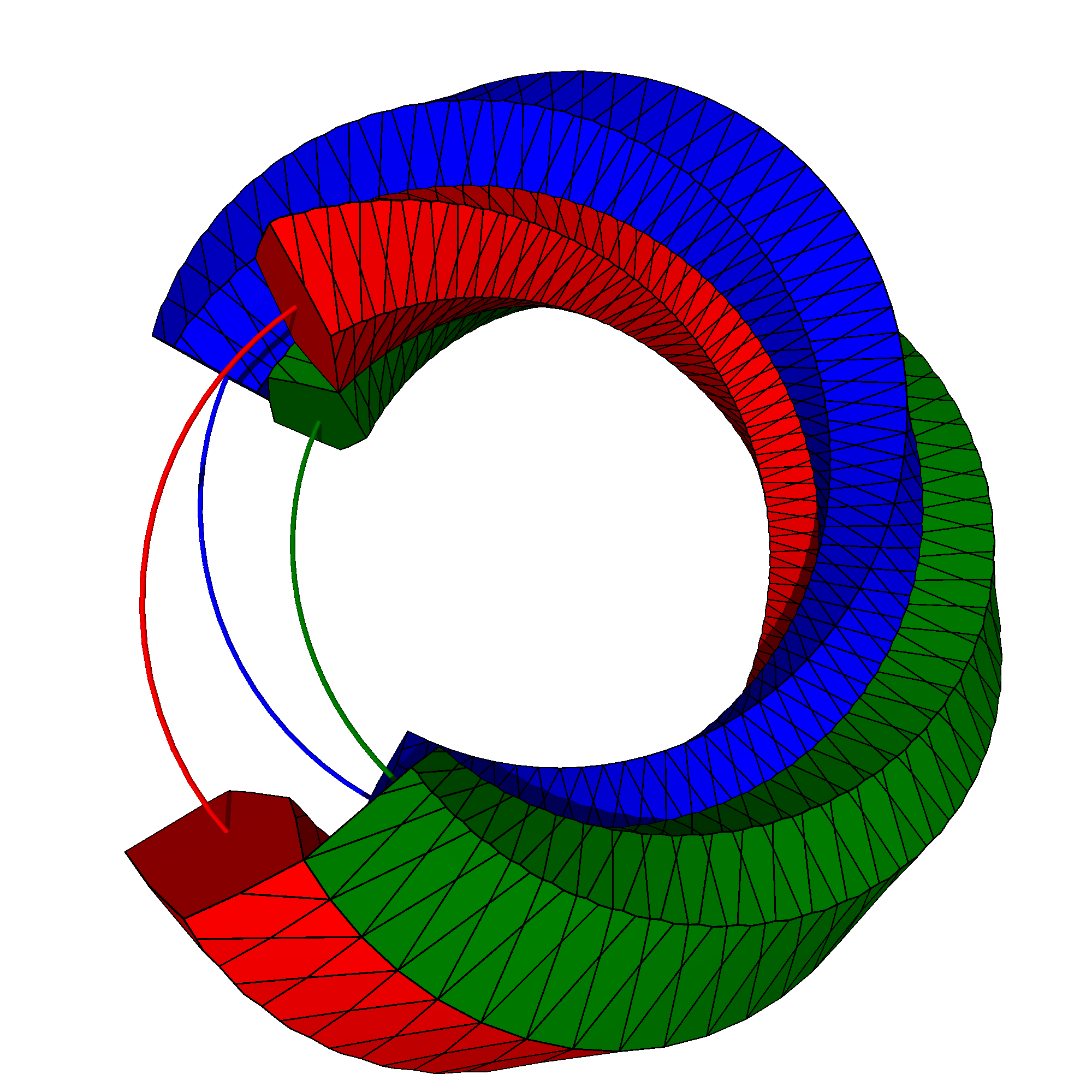}
    \caption{Three partial tubes}
    \label{subfig:smooth_tubes_discrete_cut}
\end{subfigure}%

\begin{subfigure}[b]{0.5\textwidth}
    \centering
    \noindent
    \raisebox{1.5cm}{
    \includegraphics[width=5.5cm]{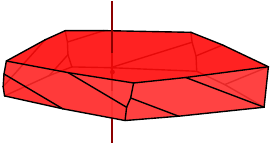}%
}
    \vskip 3mm
    \caption{A single cell}
    \label{subfig:smooth_tubes_cell}
\end{subfigure}
\begin{subfigure}[b]{0.5\textwidth}
    \centering
    \includegraphics[height=75mm]{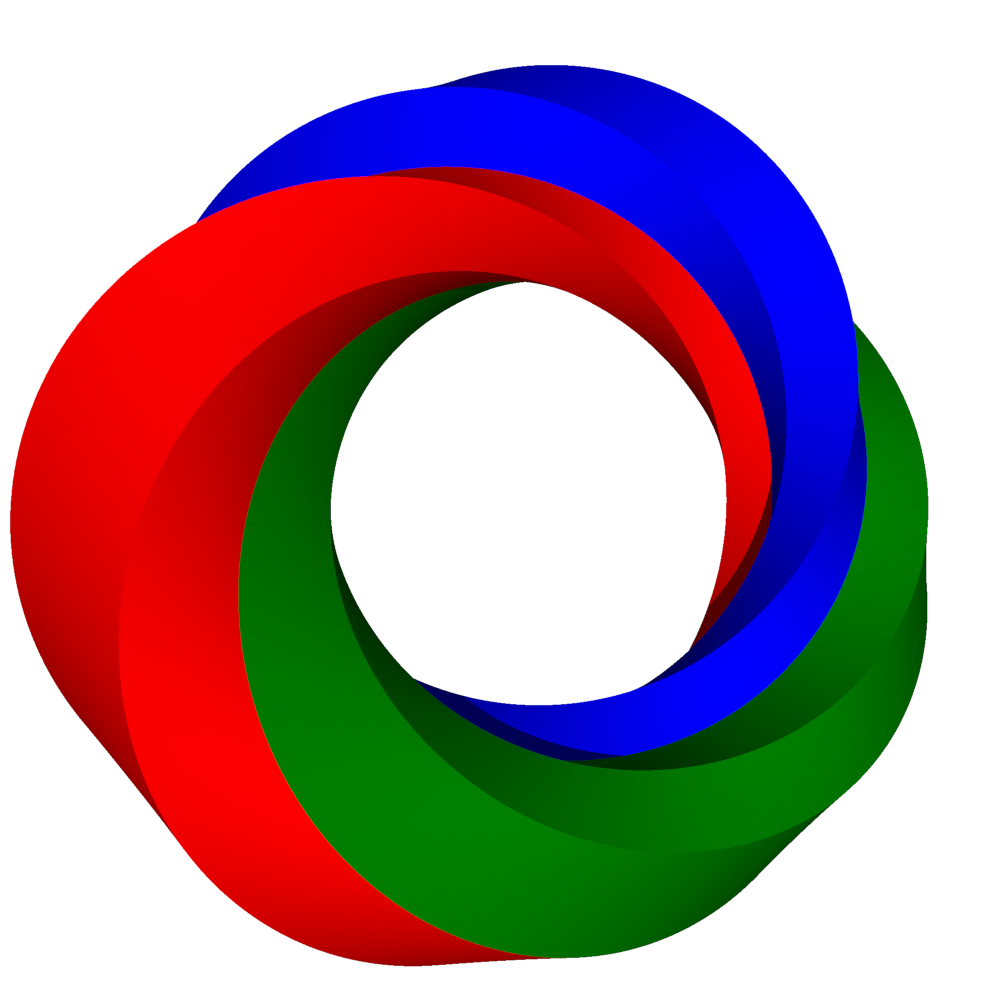}
    \caption{Three smooth tubes}
    \label{subfig:smooth_tubes}
  \end{subfigure}

  \bigskip

\begin{subfigure}[b]{0.5\textwidth}
    \centering
    \includegraphics[width=5.5cm]{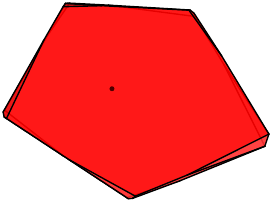}
    \caption{Top view of a cell in orthogonal
      projection
    }
    \label{subfig:smooth_tubes_top_view}
\end{subfigure}
\begin{subfigure}[b]{0.5\textwidth}
    \centering
    \includegraphics[scale=1.2]{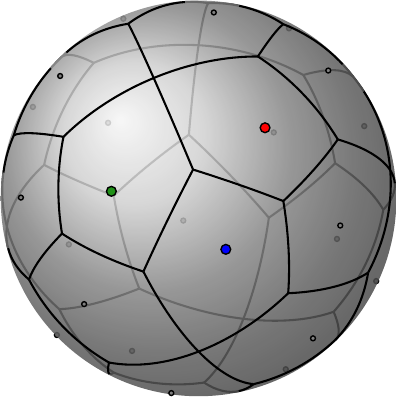}
    \caption{Voronoi diagram on the Hopf sphere}
    \label{subfig:smooth_tubes_voronoi}
\end{subfigure}%
\caption{
(a) Three tubes (out of twenty-four) of the polar $\pm[O\times C_n]$-orbit
polytope for a generic starting point $v$ and $n=40$.
Each tube consists of 80 cells (slices).
The tubes are shown in a central projection.
(b) Some of the cells are removed to make the slices visible.
We also show the corresponding orbit circles. %
(c) A single cell (with its cell axis) from those tubes, in a
perspective view from the side, and
(e) a top view in orthogonal projection.
(f) The spherical Voronoi diagram of the ${+O}$-orbit of $h(v)$.
The colored points correspond to the tubes of the same color.
(d) The tubes as $n$ goes to infinity.
}
\label{fig:smooth_tubes}
\end{figure}

\subsection{The geometry of the tubes}
\label{sec:geometry-tubes}

We investigate the structure of the tubes in the limiting case as
$n\to \infty$, where they become smooth objects.
As $n$ gets larger,
the orbit circle is filled more and more densely,
and the slices get thinner.
In the limit,
every slice becomes a flat plane convex region,
which we call a \emph{tangential slice}.
The tangential slices around an orbit circle sweep out
the \emph{tangential tube} as $v$ moves around the circle.
The limit of the polar orbit polytope consists of tangential tubes,
and this is what is shown in Figure~\ref{subfig:smooth_tubes}.
The central projections of these tubes and
slices to the sphere are the
\emph{spherical tubes} and the \emph{spherical slices}
of these tubes.
The spherical tubes are the Voronoi diagram on $S^3$ of the orbit circles.

This gives us a way to generalize these notations to 
any finite set of circles from a common Hopf bundle.
For that we first need the definition of the spherical Voronoi diagram.
Let $\mathcal{X}$ be a finite collection of nonempty subsets of $S^d$,
and let $X \in \mathcal{X}$ be one of these subsets.
The \emph{spherical Voronoi cell} of $X$
with respect to $\mathcal{X}$ is 
\begin{displaymath}
\vor_\mathcal{X}(X) := \{ x \in S^d \mid \dist(x, X) \leq \dist(x, Y) \text{ for all } Y \in \mathcal{X} \}.
\end{displaymath}
The spherical Voronoi cells of the subsets in $\mathcal{X}$
give a decomposition of $S^d$,
denoted by $\vor_\mathcal{X}$ and
called the \emph{spherical Voronoi diagram}.
If the subsets in $\mathcal{X}$ are singletons,
we get the usual spherical Voronoi diagram.

Let $\mathcal{C}$ be a finite set of at least two circles from a common
Hopf bundle,
and let $K \in \mathcal{C}$ be one of them.
We can assume that the common Hopf bundle is $\H$.
The Voronoi cell of~$K$ with respect to $\mathcal{C}$ is 
called a \emph{spherical tube}.
The intersection of $\vor_\mathcal{C}(K)$ with
the hyperplane perpendicular to $K$ at a point $v \in K$
gives two (2-dimensional) patches. One contains $v$ and one contains $-v$.
These are \emph{spherical slices}.
The \emph{tangential slices} and \emph{tangential tubes}
are defined as above in the special case of orbit circles.

 We will show that
the spherical tubes are bounded by patches of Clifford tori
(Theorem~\ref{thm:boundaries_of_tubes}),
and the tangential slices are polygons of circular arcs
(Theorem~\ref{thm:tangential_slice}).

\subsubsection{The spherical tubes}
Given that the circles belong to a common Hopf bundle and
the Hopf map transforms distances appropriately
(Proposition~\ref{prop:dist_bw_clifford_parallel}),
it is no surprise that %
the Voronoi diagram of
the set of \emph{circles} on $S^3$ is closely related to the Voronoi
diagram of the corresponding \emph{points} on $S^2$ (see Figure~\ref{subfig:smooth_tubes_voronoi}.)

\begin{proposition}
\label{prop:tube_circles}
Let $\mathcal{C} \subset \H$ be a finite set of circles from $\H$,
and let $K \in \mathcal{C}$ be one of them.
The spherical tube $\vor_\mathcal{C}(K)$ is the union of circles from $\H$
that are the preimages under $h$ of the points in
$\vor_{h(\mathcal{C})}(h(K))$, where
$h(\mathcal{C}):=\{\,h(C) \mid C \in \mathcal{C}\,\}$.
\end{proposition}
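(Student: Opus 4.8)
The plan is to reduce the $S^3$ Voronoi computation to the $S^2$ one using the distance formula from Proposition~\ref{prop:dist_bw_clifford_parallel}, together with the fact that the spherical tube is a union of full Hopf circles. First I would observe that $\vor_\mathcal{C}(K)$ is saturated with respect to $\H$: if $x \in \vor_\mathcal{C}(K)$ and $x'$ lies on the same Hopf circle $K_x \in \H$ as $x$, then by Clifford-parallelism (Proposition~\ref{prop:clifford_parallel}) we have $\dist(x', C) = \dist(K_x, C) = \dist(x, C)$ for every $C \in \mathcal{C}$, since each $C \in \mathcal{C}$ lies in $\H$. Hence the defining inequalities $\dist(x,K) \le \dist(x,C)$ are constant along $K_x$, so $x' \in \vor_\mathcal{C}(K)$ as well. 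This shows $\vor_\mathcal{C}(K)$ is a union of circles of $\H$, so it equals $h^{-1}(S)$ for some $S \subseteq S^2$, namely $S = h(\vor_\mathcal{C}(K))$.

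Next I would identify $S$ with $\vor_{h(\mathcal{C})}(h(K))$. For a point $x \in S^3$ lying on the circle $C_x = h^{-1}(h(x)) \in \H$, Proposition~\ref{prop:dist_bw_clifford_parallel} gives, for each $C \in \mathcal{C}$ with $C = h^{-1}(h(C))$,
\begin{displaymath}
  \dist(x, C) = \dist(C_x, C) = \tfrac12\dist\bigl(h(x), h(C)\bigr).
\end{displaymath}
(Here I use that $\dist(x,C) = \dist(C_x, C)$ because $x \in C_x$ and $C_x, C$ are Clifford-parallel, so the point-to-set distance equals the circle-to-circle distance.) Therefore the inequality $\dist(x,K) \le \dist(x,C)$ holds if and only if $\dist(h(x), h(K)) \le \dist(h(x), h(C))$. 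Quantifying over all $C \in \mathcal{C}$, we get $x \in \vor_\mathcal{C}(K)$ iff $h(x) \in \vor_{h(\mathcal{C})}(h(K))$, which is exactly the claim: $\vor_\mathcal{C}(K) = h^{-1}\bigl(\vor_{h(\mathcal{C})}(h(K))\bigr)$, the union of the Hopf circles over the points of $\vor_{h(\mathcal{C})}(h(K))$.

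The only mild subtlety — and the one place warranting a sentence of care rather than a one-liner — is the identity $\dist(x, C) = \dist(C_x, C)$ for $x \in C_x$, i.e.\ that the distance from a point to a Hopf-parallel circle equals the distance between the two circles. This is immediate from Clifford-parallelism: $\dist(x, C) \ge \dist(C_x, C)$ trivially since $x \in C_x$, and $\dist(C_x, C) = \inf_{y \in C_x}\dist(y, C) = \dist(x, C)$ because $\dist(y, C)$ is independent of $y \in C_x$ by Proposition~\ref{prop:clifford_parallel}. Everything else is bookkeeping: the bijectivity of $h$ between circles of $\H$ and points of $S^2$ (established in Section~\ref{sec:hopf}) lets us pass freely between $S$ and its preimage, and the fact that $\mathcal{C}$ has at least two circles guarantees the Voronoi cells are genuine regions, though this is not actually needed for the set-theoretic identity. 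I do not anticipate any real obstacle; the proof is essentially the observation that $h$ is, up to the factor $\tfrac12$, an isometry on the level of circle-distances.
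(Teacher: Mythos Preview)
Your proposal is correct and follows essentially the same approach as the paper's proof: first show via Clifford-parallelism that $\vor_{\mathcal C}(K)$ is a union of Hopf circles, then use the distance formula of Proposition~\ref{prop:dist_bw_clifford_parallel} to convert the defining inequalities on $S^3$ into the corresponding inequalities on $S^2$. The paper's version is slightly terser (it does not spell out the $\dist(x,C)=\dist(C_x,C)$ step explicitly), but the structure and the key ingredients are identical.
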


\begin{proof}
First we will show that for any point $x' \in \vor_\mathcal{C}(K)$,
the great circle $K'$ from $\H$ on which $x'$ lies is also in
$\vor_\mathcal{C}(K)$.
Since all the circles in $\H$ are Clifford-parallel
(Proposition~\ref{prop:clifford_parallel}),
$\dist(K', C) = \dist(x', C)$ for all $C \in \mathcal{C}$.
Thus, we get the following equivalence.
\begin{displaymath}
\dist(x', K) \leq \dist(x', C) \iff \dist(K', K) \leq \dist(K', C),
\end{displaymath}
for all $C \in \mathcal{C}$.
That is, $K' \subset \vor_\mathcal{C}(K)$.
By Proposition~\ref{prop:dist_bw_clifford_parallel}
we know that
\begin{displaymath}
\dist(K', K) \leq \dist(K', C) \iff
\dist(h(K'), h(K)) \leq \dist(h(K'), h(C)),
\end{displaymath}
for all $C \in \mathcal{C}$.
That is, $K' \in \vor_\mathcal{C}(K)$ if and only if
$h(K') \in \vor_{h(\mathcal{C})}\bigl(h(K)\bigr)$.
\end{proof}

\subsubsection{The spherical tube boundaries}

\begin{theorem}
\label{thm:boundaries_of_tubes}
Let $\mathcal{C} \subset \H$ be a finite set of circles from $\H$.
The boundaries of the corresponding spherical tubes consist of
patches of Clifford tori.
The edges of these tubes are great circles from~$\H$.
\end{theorem}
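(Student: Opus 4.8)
The plan is to push the entire Voronoi picture from $S^3$ down to the Hopf sphere $S^2$ through the Hopf map $h$, using Proposition~\ref{prop:tube_circles}, and then to recognize the pieces. The key input is a distance identity: if $x\in S^3$, then $x$ lies on the Hopf circle $K_{h(x)}$, which is Clifford-parallel to $K_p$ for every $p\in S^2$ (Proposition~\ref{prop:clifford_parallel}); hence $\dist(x,K_p)=\dist(K_{h(x)},K_p)=\tfrac12\dist(h(x),p)$. So comparing distances from $x$ to two circles of $\H$ amounts to comparing distances from $h(x)$ to two points of $S^2$.

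First I would identify the two-dimensional boundary pieces. Let $K_p,K_r\in\mathcal C$ with $p\neq r$. By the identity above, $x$ is equidistant from $K_p$ and $K_r$ exactly when $h(x)$ lies on the great-circle bisector $\gamma_{pr}=\{\,s\in S^2\mid\langle s,p-r\rangle=0\,\}$ of $p$ and $r$ on $S^2$. Writing $u:=(p-r)/\lVert p-r\rVert\in S^2$ for a pole of $\gamma_{pr}$, the condition $\langle h(x),u\rangle=0$ means $\dist(h(x),u)=\tfrac\pi2$, hence $\dist(x,K_u)=\tfrac\pi4$, which by the description of the Clifford torus in~\eqref{eq:torus} says precisely $x\in\T_u^i$. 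Thus the full bisector of $K_p$ and $K_r$ in $S^3$ is the Clifford torus $\T_u^i=h^{-1}(\gamma_{pr})$, and the portion of it occurring in $\vor_{\mathcal C}$ (where $K_p$ and $K_r$ are simultaneously closest) is the preimage under $h$ of the corresponding Voronoi edge, an arc of $\gamma_{pr}$; this portion is a patch of $\T_u^i$. The whole two-dimensional boundary of a spherical tube $\vor_{\mathcal C}(K_p)$ is then the union of one such patch for each Voronoi neighbor of $K_p$ in $\mathcal C$.

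Next I would treat the edges. By Proposition~\ref{prop:tube_circles}, each spherical tube is the $h$-preimage of a two-dimensional Voronoi cell of the spherical Voronoi diagram $\vor_{h(\mathcal C)}$ on $S^2$. Since $h$ is a continuous open map (a fiber bundle), pulling back the cell stratification of $\vor_{h(\mathcal C)}$ by $h$ yields a stratification of $S^3$ whose three-, two-, and one-dimensional strata are the interiors of the spherical tubes, the relative interiors of the Clifford-torus patches above, and the fibers over the Voronoi vertices of $\vor_{h(\mathcal C)}$; in particular the topological boundary of a tube is the union of its Clifford-torus patches together with these one-dimensional strata. The fiber over a Voronoi vertex $s_0\in S^2$ is $h^{-1}(s_0)=K_{s_0}$, a great circle of $\H$, so the edges of the tubes are great circles of $\H$ (and conversely every Voronoi vertex produces such an edge).

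The computations in the middle paragraph (the distance identity, the description of $\gamma_{pr}$, and the identification with~\eqref{eq:torus}) are immediate once the material of Section~\ref{sec:hopf} is available, so the only delicate point is the last paragraph: one must be sure that the face--edge--vertex incidence structure of $\vor_{\mathcal C}$ on $S^3$ is \emph{literally} the $h$-pullback of that of $\vor_{h(\mathcal C)}$ on $S^2$, with edges of the latter becoming faces of the former and vertices becoming edges, rather than merely having the listed shapes appear somewhere in the boundary. This is where I expect the (mild) main obstacle to lie; it is handled by the fact that $h$ is a fiber bundle, so that $h^{-1}$ commutes with closure and with taking relative interiors on the relevant sets, and it is precisely the reason Proposition~\ref{prop:tube_circles} is proved first.
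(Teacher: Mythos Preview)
Your proof is correct and follows essentially the same route as the paper: push the Voronoi picture down to $S^2$ via $h$ using Proposition~\ref{prop:tube_circles}, recognize that a bisecting great circle on $S^2$ pulls back to a Clifford torus (you do this via the pole $u$ and the $\pi/4$-distance characterization in~\eqref{eq:torus}; the paper does it via the equidistance-from-antipodes description, which is the same thing), and observe that Voronoi vertices on $S^2$ pull back to Hopf circles. Your extra paragraph about the fiber-bundle stratification is more careful than the paper, which simply invokes Proposition~\ref{prop:tube_circles} for the correspondence; this is a welcome clarification rather than a different argument.
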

\begin{proof}
  As in Proposition~\ref{prop:tube_circles}, the boundary between two
  tubes is the preimage, under the Hopf map $h$, of the boundary
  between the two corresponding Voronoi regions in
  $\vor_{h(\mathcal{C})}$.
  Such a boundary edge on the Hopf sphere~$S^2$ is contained in a great
  circle.
  A great circle can be described as the points that are equidistant from two
  antipodal points $\pm p$ on~$S^2$, and under the inverse Hopf map, these
  become the points on $S^3$ that are equidistant from two absolutely
  orthogonal circles $K_p$ and $K_{-p}$, and this is, by definition, a Clifford torus.

The tube edges, where three or more tubes meet,
are the preimages of the Voronoi vertices of
$\vor_{h(\mathcal{C})}$.
Thus, they are circles from $\H$.
\end{proof}

\subsubsection{The tangential slices}

\begin{theorem}
\label{thm:tangential_slice}
Let $\mathcal{C} \subset \H$ be a finite set of circles from $\H$.
The corresponding tangential slices are
(flat) convex regions bounded by circular arcs.
\end{theorem}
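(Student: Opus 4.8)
The plan is to exhibit each tangential slice as a finite intersection of round disks in a $2$-plane, after transporting the picture to the complex model of $S^3$ that is adapted to the Hopf bundle $\H=\H^i$; once the constraint regions are seen to be disks, the circular-arc property is immediate. First I would record what the tangential slice is as a concrete subset of $\R^4$. Fix $K=K_p\in\mathcal C$ with $p=h(v)$, let $t$ span the tangent line of $K$ at $v$, and put $Q_v:=\{\,x\in\R^4\mid\langle x,v\rangle=1,\ \langle x,t\rangle=0\,\}$; this is a $2$-plane through $v$, namely the tangent plane at $v$ of the perpendicular $2$-sphere $\Sigma:=S^3\cap\{\langle\,\cdot\,,t\rangle=0\}$. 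For a great circle $K'$ with spanning $2$-plane $P_{K'}\subset\R^4$ and any $y\in S^3$ one has the elementary identity $\dist(y,K')=\arccos|\operatorname{proj}_{P_{K'}}(y)|$ (the closest point of $K'$ realizes $\langle y,z\rangle=|\operatorname{proj}_{P_{K'}}(y)|$), and for $y\in\Sigma$ one has $|\operatorname{proj}_{P_K}(y)|=\langle y,v\rangle$. Hence a point $y$ in the open hemisphere of $\Sigma$ around $v$ lies in the spherical tube $\vor_{\mathcal C}(K)$ iff $|\operatorname{proj}_{P_{K'}}(y)|\le\langle y,v\rangle$ for every $K'\in\mathcal C$, i.e.\ iff its central projection from the origin onto $Q_v$ lies in
\[
  S:=\{\,x\in Q_v\mid |\operatorname{proj}_{P_{K'}}(x)|\le1\ \text{for all }K'\in\mathcal C\,\}.
\]
Thus the tangential slice at $v$ — the central projection onto $Q_v$ of the spherical slice $\vor_{\mathcal C}(K)\cap\Sigma$ (equivalently, the limiting shape of the polar-orbit-polytope cell at $v$) — equals $S$, which is an intersection of convex sets and hence a flat convex region; it remains only to show its bounding curves are circular arcs.

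Next I would normalize and compute. A left rotation $[l,1]$ preserves the left Hopf bundle $\H$ and, by Proposition~\ref{prop:relations_between_circles}, maps $K=K_p^i$ to $K_{[l]p}^i$, so choosing $[l]p=i$ moves $K$ to $K_i$; a right rotation $[1,\exp i\theta]$ then rotates $K_i$ in itself (Proposition~\ref{one-parameter-rotations}) and can move $v$ to $1$. Since these are congruences, I may assume $v=1$ and $K=K_i=\operatorname{span}\{1,i\}\cap S^3$. I then pass to the complex model $\R^4\cong\mathbb C^2$ via $a+bi+cj+dk\leftrightarrow(a+bi,\,-c+di)$, which is chosen so that right multiplication by $i$ becomes multiplication by the scalar $i$; under it the circles of $\H=\H^i$ become the fibres $K_\lambda=\{(z_1,z_2)\in S^3\mid z_1=\lambda z_2\}$, $\lambda\in\mathbb C\cup\{\infty\}$, of the standard complex Hopf map, with $K$ given by $\lambda=\infty$ and $v=(1,0)$. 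One checks directly that $Q_v=\{(1,w)\mid w\in\mathbb C\}$, which I identify with $\mathbb C$ by $(1,w)\leftrightarrow w$. For any other circle $K'=K_\lambda\in\mathcal C$ (so $\lambda\in\mathbb C$), the plane $P_{K'}$ is the complex line through $(\lambda,1)$, whence $|\operatorname{proj}_{P_{K'}}(1,w)|^2=|\bar\lambda+w|^2/(1+|\lambda|^2)$, and the constraint $|\operatorname{proj}_{P_{K'}}(1,w)|\le1$ becomes $|w+\bar\lambda|\le\sqrt{1+|\lambda|^2}$: a round disk $D_{K'}$ centred at $-\bar\lambda$ of radius $\sqrt{1+|\lambda|^2}$, which contains $0$ (that is, $v$) in its interior since $|\lambda|<\sqrt{1+|\lambda|^2}$.

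Therefore $S$ is the intersection of the finitely many round disks $D_{K'}$, $K'\in\mathcal C\setminus\{K\}$ — there is at least one of them, since $\mathcal C$ contains at least two circles — all of which contain $v$ in their interior. Consequently $S$ is a bounded convex region containing $v$ in its interior and bounded by finitely many circular arcs; these arcs are the traces in $Q_v$ of the central projections of the Clifford tori that bound the spherical tube, in agreement with Theorem~\ref{thm:boundaries_of_tubes}. The one point that is not mere formalism is that the bounding curves are genuine \emph{circles} and not ellipses: a generic $2$-plane meets a solid cylinder $D^2\times\R^2$ in an elliptical region, and it is exactly the Clifford-parallelism of the circles in $\mathcal C$ — equivalently, the fact that every plane $P_{K'}$ is a complex line for the complex structure attached to $\H^i$ — that forces the round shape. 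I expect the most error-prone part of a full write-up to be setting up and verifying this complex identification (so that $\H^i$ becomes the standard complex Hopf fibration), but that step is routine.
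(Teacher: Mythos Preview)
Your proof is correct and takes a genuinely different route from the paper's.

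The paper normalizes to $v=i$, then interprets the restriction of the Hopf map $h$ to the perpendicular $2$-sphere $S_0$ as a \emph{radial contraction} (halving angular distance from $i$); thereby the spherical slice $A$ is obtained from the Voronoi cell $B\subset S^2$ of $h(K)$ by radial contraction, and the tangential slice by a further central projection. The heart of the paper's argument is an explicit calculation showing that the composed map $f$ sends the great-circle edges of $B$ to circles in the tangent plane, yielding in addition the explicit circle equation $(\hat y+1/c_0)^2+\hat z^2=(c_0^2+1)/c_0^2$ and the observation that all these circles pass through $(1,0,\pm1)$.

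You instead normalize to $v=1$ and write the tangential slice directly as the intersection of the solid cylinders $\{\,|\mathrm{proj}_{P_{K'}}(x)|\le 1\,\}$ with the $2$-plane $Q_v$; passing to the complex model $\R^4\cong\mathbb C^2$ adapted to $\H^i$, each $P_{K'}$ is a complex line, so the Hermitian projection formula makes each constraint a round disk $|w+\bar\lambda|\le\sqrt{1+|\lambda|^2}$. This bypasses the Voronoi picture on $S^2$ and the explicit computation of $f$, and it isolates precisely the reason the arcs are \emph{circles} rather than ellipses: the planes $P_{K'}$ are complex lines for the complex structure singled out by $\H^i$. What the paper's approach buys in return is the direct geometric link (via radial contraction) between the slice and the Voronoi cell on $S^2$, together with explicit formulas for the bounding circles; what your approach buys is a shorter, more conceptual argument that needs essentially no coordinate computation beyond the projection formula.
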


\begin{proof}
Let $K \in \mathcal{C}$ be one of the circles.
We want to consider the tangential slice of $K$ at a point $v \in K$.
Without loss of generality, we may assume that $v = i$, because
the left rotation $[-vi, 1]$ preserves $\H$
(see Proposition~\ref{prop-congruent-bundles}(i))
and maps $v$ to $i$.
Then $K$ is actually $K_i$,
the great circle through the points $1$ and~$i$.

The tangent direction of $K$ at $v$ is the quaternion 1.
The hyperplane $Q$ perpendicular to $K$ at $v$ is 
spanned by $i$, $j$ and $k$,
which we represent in a 3-dimensional coordinate
system $\hat{x},\hat{y},\hat{z}$,
see Figure~\ref{subfig:S0}.
$Q$ intersects $S^3$ in a great 2-sphere $S_0$.
 The spherical tube $\vor_\mathcal{C}(K)$ cuts out two opposite patches from $S_0$:
the spherical slices.
Denote by $A$ the slice that contains $v$.
The slice $A$ intersects each circle of $\vor_\mathcal{C}(K)$.
Thus, by Proposition~\ref{prop:tube_circles},
$h(A)$ equals $\vor_{h(\mathcal{C})}(h(v))$,
which we will denote by~$B$.

Using spherical coordinates,
a point in $S_0$ has the form
$i\cos\theta  + p\sin\theta$,
where
the direction vector
$p$ is a unit vector in the $\hat{y},\hat{z}$-plane that plays the role of the longitude,
and $\theta \in \R$ is the angular distance
on $S_0$ between that point and $i$.
See Figure~\ref{fig:radial_contraction}.
Since $p$ and $i$ are pure unit quaternions, they anticommute, and
in particular, $pip %
= -i{p}p = i$.
We will now apply the Hopf map $h$ to a point in $S_0$:
\begin{align*}
h(i\cos\theta + p\sin\theta ) &=
(i\cos\theta  + p\sin\theta  )\, i\, (-i\cos\theta  -p\sin\theta  ) \\
&= i\cos^2{\theta} -pip\sin^2{\theta} + p \cos\theta\sin\theta  + p\cos\theta\sin\theta  \\
&= i(\cos^2{\theta} - \sin^2{\theta})  + 2p\cos\theta\sin\theta  \\
&= i\cos{2\theta} + p\sin{2\theta} .
\end{align*}
That is, $h$ maps a point whose angular distance from $i$ is $\theta$
to the point in the same direction but with angular distance $2\theta$.
Thus, if we identify $S_0$ with $S^2$ using the natural identification
(on $S^2$, we denote the $i$, $j$ and $k$ directions
by $x$, $y$ and $z$, respectively),
we see that $A$ is obtained %
from $B$ by
a \emph{radial contraction}. That is, we look from $i$ in all directions
and multiply the angular distance between $i$ and each point in $B$ by~$1/2$.

\begin{figure}
\null  \hskip -5mm
\begin{subfigure}{.58\textwidth}
    \centering
    \includegraphics[scale=1.1]{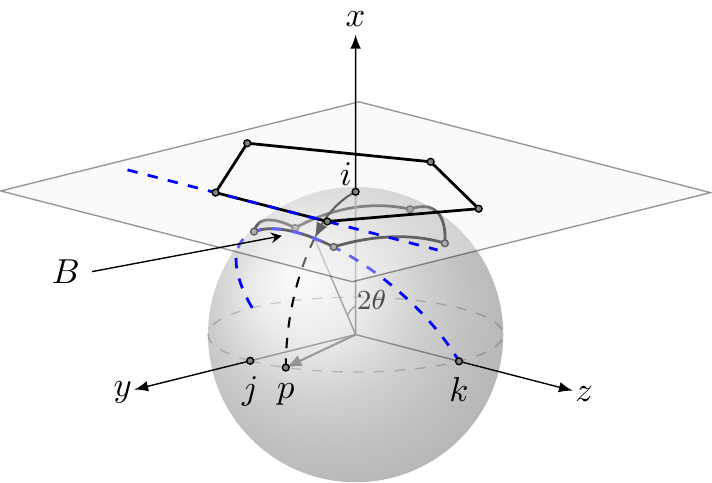}
    \caption{$S^2$}
    \label{subfig:S2}
\end{subfigure}%
\begin{subfigure}{.45\textwidth}
    \centering
    \includegraphics[scale=1.1]{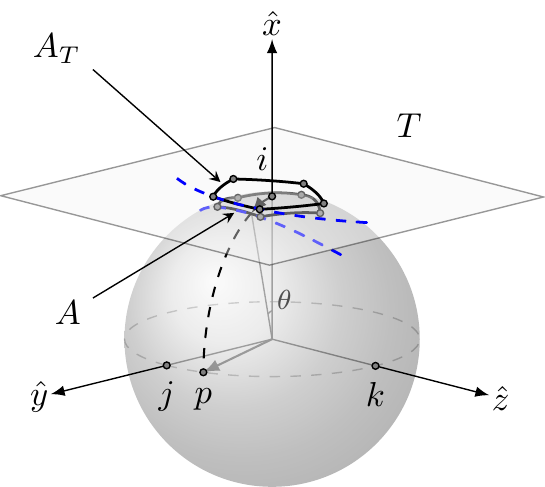}
    \caption{$S_0 \subset Q$}
    \label{subfig:S0}
\end{subfigure}
\caption{
The procedure to get the tangential slice $A_T$.
(a)
The spherical pentagon $B$ is the Voronoi cell of the
point~$i=h(K_i)$ with respect to $h(\mathcal{C})$.
The pentagon in the plane passing through~$i$ is
the central projection of $B$ onto that plane.
(b)
The spherical pentagon $A$ is the spherical slice at~$i$,
which we get from a radial contraction of $B$.
The circular-arc pentagon $A_T$ in the tangent plane $T$ passing through~$i$ is
the corresponding tangential slice,
which we get from a central projection of $A$ to~$T$. %
This example is constructed from
the orbit circles of %
Figure~\ref{fig:smooth_tubes}.
}
\label{fig:radial_contraction}
\end{figure}

The intersection of $Q$ with
the (3-dimensional) tangent space of $S^3$ at $v$ is
the 2-dimensional tangent plane $T$ of $S_0$ at $v$.
For our choice $v=i$, $T$ is the plane in $Q$ defined by $\hat{x}=1$.
 The tangential slice lies in this plane.%

So to get the tangential slice $A_T$ at $v$,
we radially contract $B$ to get $A$,
and then centrally project $A$ to $T$.
We will describe this procedure algebraically.
The radial contraction towards $i$ is the map
\begin{displaymath}
    i\cos\theta  + p\sin\theta 
    \mapsto 
    i\cos\tfrac\theta2 + p \sin\tfrac\theta2.
  \end{displaymath}
This map is not uniquely determined at the South Pole ($\theta= \pi$),
and we will tacitly exclude this point from further consideration.
Writing $p$ as $j\cos\phi  + k\sin\phi $,
the map can be described
as follows:
\begin{align*}
    \begin{pmatrix}
        x \\ y \\ z
    \end{pmatrix}
    =
    \begin{pmatrix}
        \cos\theta \\
        \cos\phi\sin\theta \\
        \sin\phi\sin\theta \\ 
    \end{pmatrix}
    \mapsto
    \begin{pmatrix}
        \hat{x} \\ \hat{y} \\ \hat{z}
    \end{pmatrix}
    =
    \begin{pmatrix}
        \cos\frac\theta2 \\
        \cos\phi\sin\frac\theta2 \\
        \sin\phi\sin\frac\theta2 \\ 
    \end{pmatrix}
\end{align*}
Using the identities
$\cos\frac\theta2 = \frac{\sqrt{1+\cos\theta}}{\sqrt2}$ and
$\sin\theta = 2 \sin\frac\theta2\cos\frac\theta2$,
the map is written as follows.
\begin{displaymath}
    (x, y, z) \mapsto
    (\hat{x}, \hat{y}, \hat{z}) = 
    \frac{1}{\sqrt{2}}
    \Bigl(\sqrt{1+x}, \frac{y}{\sqrt{1+x}}, \frac{z}{\sqrt{1+x}}\Bigr)
\end{displaymath}
Combining this with the central projection
from the origin onto $T$
gives the following map $f$.
\begin{displaymath}
    f\colon (x, y, z) \mapsto
    (\hat{x}, \hat{y}, \hat{z}) = 
    \Bigl(1, \frac{y}{1+x}, \frac{z}{1+x}\Bigr)
    =
    \Bigl(1, \frac{y/x}{1+1/x}, \frac{z/x}{1+1/x}\Bigr)
\end{displaymath}

If we apply %
$f$ to a boundary edge of $B$,
it will turn out the resulting curve is part of a circle.
The boundary edges of $B$ are arcs of great circles on $S^2$.
 We obtain such an arc by centrally projecting to $S^2$
a straight segment in the tangent plane of $S^2$ at $h(v)=i$.
Without loss of generality suppose that one of these 
segments lies on the line
$(x,y,z)=%
(1, c_0, t)$, %
$t \in \R %
$,
for some constant $c_0 \neq 0$,
see the blue line in Figure~\ref{subfig:S2}.
The central projection of this line to $S^2$ lies on the great circle
\begin{displaymath}
\Bigl\{\,
\frac{\pm1}{\sqrt{c_0^2+t^2+1}}(1, c_0, t) \Bigm| t \in \R
\,\Bigr\}.
\end{displaymath}
See the blue curve in Figure~\ref{subfig:S2}.
The map $f$ transforms this great circle into the set
\begin{equation}\label{eq:blue-curve}
\Bigl\{\,
\Bigl(1, \frac{ c_0}{1\pm\sqrt{c_0^2+t^2+1}},
\frac{ t}{{1\pm \sqrt{c_0^2+t^2+1}}}
\Bigr)
\Bigm| t \in \R
\,\Bigr\}.
\end{equation}
See the blue curve in the tangent plane in Figure~\ref{subfig:S0}.
Straightforward manipulations show that this set is a circle:
\begin{multline*}
    \hat{y} = \frac{c_0}{1\pm\sqrt{c_0^2 + t^2 + 1}}
    \iff
    \pm \hat{y}\sqrt{c_0^2 + t^2 + 1} = c_0 - \hat{y}  \\
    \iff
    \hat{y}^2c_0^2 + \hat{y}^2t^2 + \hat{y}^2 = \hat{y}^2 -2c_0\hat{y} + \hat{y}_0^2
    \iff
    \hat{y}^2c_0^2 + \hat{y}^2t^2 +2c_0\hat{y} = c_0^2
\end{multline*}
Dividing both sides by $c_0^2$ and then substituting the relation
$
\frac{\hat{z}}{\hat{y}}
=
\frac{t}{c_0}
$, which follows from~\eqref{eq:blue-curve},
gives
\begin{equation}\label{circle}
    \hat{y}^2 + \hat{z}^2 +\frac{2}{c_0}\hat{y} = 1 \iff
    \Bigl(\hat{y}+\frac1{c_0}\Bigr)^2 + \hat{z}^2 = \frac{c_0^2+1}{c_0^2},
\end{equation}
which is the equation of a circle.
\end{proof}
The circle defined in~\eqref{circle}
belongs to the pencil of circles through the points
$(\hat{x},\hat{y},\hat{z})=(1,0,\pm 1)$, because these points fulfill the
equations~\eqref{circle}.
The center
$(\hat{x},\hat{y},\hat{z})=(1,-\frac1{c_0},0)$ lies on the axis
$(\hat{x},\hat{y},\hat{z})=\lambda(c_0,-1,0)$ perpendicular to the plane
$c_0x=y$ containing the great circle and the line that started the
construction.

If the set of great circles $\mathcal{C}$ in the previous theorem
are the orbit circles of a tubical group $G$,
then the spherical Voronoi cell $B$ on $S^2$
can have 3, 4 or 5 sides, because the cells form a tiling of the
sphere with equal cells.
Thus, the spherical slice is also 3, 4 or 5 sided.
In particular, we get the following corollary.

\begin{corollary}
  The tangential slice
  of an orbit of a tubical group
  is a convex plane region whose boundary consists of
 3, 4, or 5 circular arcs.
\end{corollary}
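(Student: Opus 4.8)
The plan is to reduce the statement to a counting fact about the spherical Voronoi diagram on the Hopf sphere and then invoke Euler's formula. By Theorem~\ref{thm:tangential_slice}, applied with $\mathcal{C}$ the set of orbit circles of the tubical group $G$, the tangential slice $A_T$ at a point $v$ is already a flat convex region bounded by finitely many circular arcs, so only the \emph{number} of arcs is in question. Reading off the proof of that theorem, $A_T$ is the image of the spherical Voronoi cell $B=\vor_{h(\mathcal{C})}(h(v))$ on $S^2$ under the injective smooth map $f\colon (x,y,z)\mapsto\bigl(1,\tfrac{y}{1+x},\tfrac{z}{1+x}\bigr)$ (radial contraction followed by central projection): each great-circle edge of $B$ is carried to a circular arc, and the vertices of $B$ are carried to the corners of $A_T$. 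Since $f$ is a diffeomorphism away from the excluded South Pole, the number of bounding arcs of $A_T$ equals the number $s$ of edges of $B$, and it suffices to show $3\le s\le 5$.

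Next I would identify the sites. By Proposition~\ref{prop:orbit-circles} the orbit circles of $G$ are exactly the circles $K_{t(h(v))}$ for $t\in G^h$, so $h(\mathcal{C})$ is the $G^h$-orbit of the point $h(v)\in S^2$, where $G^h$ is the induced $3$-dimensional point group listed in Table~\ref{tbl:left_tubical_groups} (one of $+T,+O,+I,TO,\pm T,\pm O,\pm I$, each of which acts irreducibly on $\R^3$). Because $G^h$ acts on $S^2$ by isometries and permutes $h(\mathcal{C})$ transitively, it permutes the cells of $\vor_{h(\mathcal{C})}$ transitively; hence all Voronoi cells are congruent spherical polygons and, in particular, every cell of this tiling of $S^2$ has the same number $s$ of sides. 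Moreover, irreducibility of $G^h$ forces the orbit $h(\mathcal{C})$ to affinely span $\R^3$ (its affine hull is a $G^h$-invariant flat, hence cannot be a point, line, or plane), and a short argument then rules out degenerate cells: if a cell $\vor_{h(\mathcal{C})}(h(v))$ were a hemisphere or a lune, all the bisecting great circles containing its edges would have to be perpendicular to a common nonzero vector, forcing the sites into a proper subspace — contradiction. Thus $s\ge 3$.

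It remains to show $s\le 5$. Apply Euler's formula $V-E+F=2$ to the cell complex $\vor_{h(\mathcal{C})}$ on the sphere. Distinct sites determine distinct bisecting great circles, so no vertex of the diagram has degree less than $3$; hence $2E\ge 3V$, i.e.\ $V\le\tfrac23 E$. Since each of the $F$ faces has $s$ sides, double counting edge–face incidences gives $2E=sF$, so $\tfrac13 E=\tfrac{s}{6}F$. Substituting,
\begin{displaymath}
2 = V - E + F \le \tfrac23 E - E + F = F - \tfrac13 E = F - \tfrac{s}{6}F = \Bigl(1-\tfrac{s}{6}\Bigr)F ,
\end{displaymath}
and since $F>0$ this forces $1-\tfrac{s}{6}>0$, i.e.\ $s\le 5$. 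Together with $s\ge 3$ and the identification of $s$ with the number of circular arcs bounding $A_T$ from the first paragraph, the corollary follows. Note that none of this depends on the parameter $n$, so the statement holds uniformly in the limit $n\to\infty$ in which the tangential slices are defined.

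I expect the only delicate point to be the lower bound $s\ge 3$: a priori a spherical Voronoi cell can collapse to a lune or a hemisphere, so one genuinely needs the irreducibility of the induced group $G^h$ (equivalently, that a tubical group always induces a polyhedral or pyritohedral group, whose point orbits affinely span $\R^3$) to exclude this. The upper bound is the routine Euler-characteristic estimate, and the reduction from arcs of $A_T$ to sides of $B$ is already contained in the proof of Theorem~\ref{thm:tangential_slice}.
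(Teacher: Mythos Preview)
Your proof is correct and follows the same route as the paper: reduce the count of arcs of $A_T$ to the count of sides of the spherical Voronoi cell $B$ on $S^2$, observe that all cells of $\vor_{h(\mathcal C)}$ are congruent because $G^h$ acts transitively, and conclude $3\le s\le 5$. The paper compresses this into a single sentence (``the cells form a tiling of the sphere with equal cells'') and takes the $3$--$5$ bound for granted, whereas you spell out the Euler-formula estimate and the irreducibility argument; one small wording slip in your $s\ge 3$ step is that the bounding great circles \emph{pass through} (rather than are \emph{perpendicular to}) a common pair of antipodal points, but the substance---that this forces the sites onto a circle, contradicting that an irreducible $G^h$-orbit affinely spans~$\R^3$---is sound.
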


\subsubsection{The tangential tube boundaries}

 The boundary surfaces of the
tangential tubes (shown in Figure~\ref{subfig:smooth_tubes})
carry some interesting
structures, but
we don't know what these surfaces are.

The points on such a surface are equidistant from two circles $K$ and
$K'$,
and we denote the surface by $B(K,K')$.
We know from
Theorem~\ref{thm:boundaries_of_tubes}
that its central projection to the sphere is a Clifford
torus $\T$, whose image $h(\T)$ is the bisector between $h(K)$ and
$h(K')$ on~$S^2$.
According to the relation between Voronoi diagrams and polar orbit polytopes
(as briefly discussed in Section~\ref{sec:voronoi}), a circle $K\in
\H$ that belongs to $\T$ is expanded by some factor, depending on the
distance to $K$ and $K'$, to become a circle on
 $B(K,K')$. Thus,
 the surface $B(K,K')$
 is fibered by circles (of different radii)
around the origin.

Another fibration by circles, this time of equal radii, can be obtained
by taking the circular arc %
that forms the boundary of the tangential slice %
towards $K'$,
and sweeping it along the
circle $K$.  In Figure~\ref{subfig:S0}, the circle $K$ proceeds from
the point $i$ into the fourth dimension, and the circular boundary arc
must simultaneously wind around $K$ as it moves along~$K$.
A third fibration, by circles of the same radius, is obtained in 
an analogous way from $K'$. Each of these fibrations leads to a straightforward parametric
description of $B(K,K')$.

Alternatively,
an implicit description $B(K,K')$ by two equations can be obtained
as the intersection of two ``tangential hypercylinders'' in which the two
tangential tubes of $K$ and $K'$ lie. (If the circle $K$ is
described by the
system $x_1^2+x_2^2=1$,
$x_3=x_4=0$ in an appropriate coordinate system,
its tangential hypercylinder is obtained by omitting the equations
$x_3=x_4=0$.)

\subsection{Generic starting points}
\label{sec:generic-start}

We return to the analysis of the polar orbit polytope, and start with
the easy generic case.

\begin{proposition}
\label{prop:generic-start} 
Let $G$ be a tubical group whose right group is $C_{n}$ or $D_{n}$ 
for $n \ge 6$.
Let $v \in S^3$ be a point.
If the $G^h$-orbit of $h(v)$
has no symmetries other than $G^h$,
then the same holds for the $G$-orbit of $v$:
the symmetry group of this orbit is $G$.
\end{proposition}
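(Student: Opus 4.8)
I would write $\Gamma\le\O(4)$ for the symmetry group of $O:=\orbit(v,G)$, observe that $G\le\Gamma$, and aim to show $\Gamma=G$. Almost all of the work is in a single claim: \emph{$\Gamma$ is contained in $\SO(4)$ and preserves the Hopf bundle $\H=\H^i$.} Granting this claim, $\Gamma$ inherits the same kind of induced action on $S^2$ that $G$ has, and the hypothesis on $S^2$ then finishes the job.

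To prove the claim, set $\Gamma^{+}:=\Gamma\cap\SO(4)$, with left and right groups $L_\Gamma\supseteq L$ and $R_\Gamma\supseteq R$, where $L\in\{2I,2O,2T\}$ and $R\in\{2C_n,2D_{2n}\}$ with $n\ge6$. The decisive input is arithmetic: the polyhedral quaternion groups $2I,2O,2T$ contain no cyclic subgroup of order exceeding $10$, so for $n\ge6$ neither $2C_n$ nor $2D_{2n}$ (each containing a cyclic subgroup of order $2n\ge12$) embeds into any of them. Hence $R_\Gamma$ is again of cyclic or dihedral type, while $L_\Gamma$, containing a polyhedral group, is again polyhedral. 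Since $2C_n\le R_\Gamma$ with $n\ge3$, the rotation part of $R_\Gamma$ consists of rotations about the $i$-axis, so $[r](i)=\pm i$ for every $r\in R_\Gamma$; by Proposition~\ref{prop:transformations_preserving_H2}(i) this is exactly the condition for $\Gamma^{+}$ to preserve $\H$. Finally, $\Gamma^{+}$ is then a left tubical group, so $L_\Gamma$ and $R_\Gamma$ are non-conjugate (polyhedral versus cyclic/dihedral); by the discussion in Section~\ref{sec:achiral} no orthogonal map can normalize $\Gamma^{+}$ while interchanging its left and right groups, so $\Gamma$ has no orientation-reversing element and $\Gamma=\Gamma^{+}$.

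With the claim in hand, the homomorphism $[l,e_m^s]\mapsto[l]$, $[l,je_m^s]\mapsto-[l]$ that defines $G^h$ applies verbatim to $\Gamma$, producing a $3$-dimensional point group $\Gamma^h\supseteq G^h$ for which the Hopf map $h$ intertwines the two actions. Thus $\Gamma^h$ permutes $h(O)=\orbit(h(v),G^h)$, and the hypothesis gives $\Gamma^h\le\operatorname{Sym}\bigl(\orbit(h(v),G^h)\bigr)=G^h$, hence $\Gamma^h=G^h$. Now $\ker(G\to G^h)=\langle[1,e_n]\rangle$ has order $2n$, while $\ker(\Gamma\to\Gamma^h)$ is a finite cyclic group of right rotations about the $i$-axis containing it, hence $\langle[1,e_{n'}]\rangle$ with $n\mid n'$. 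Its generator rotates the orbit circle $K_{h(v)}$ by $\pi/n'$ (Proposition~\ref{prop:action_on_its_circle}\ref{OP-circle-rotate}) and must fix the point set $O\cap K_{h(v)}$, which by Proposition~\ref{prop:orbit-circles} is a regular $2n$-gon (cyclic type) or, for generic $v$, a union of two $2n$-gons in general position (dihedral type); in both cases its rotation stabilizer has order exactly $2n$, forcing $n'\mid n$ and so $n'=n$. Counting orders, $|\Gamma|=|\Gamma^h|\cdot 2n=|G^h|\cdot 2n=|G|$, and since $G\le\Gamma$ we conclude $\Gamma=G$.

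The step I expect to be the main obstacle is the claim in the second paragraph; the hypothesis $n\ge6$ is used there precisely to force the quaternion dichotomy (for small $n$ the right group of $\Gamma^{+}$ could grow to a polyhedral group and $\Gamma$ need not stabilize any Hopf bundle). The only other point requiring care arises in the dihedral case, where ``generic'' must be read to exclude the exceptional positions of $v$ on its orbit circle at which the two $2n$-gons interlace to a regular $4n$-gon (there a right screw by $\pi/(2n)$ enlarges the symmetry group) or at which $v$ lies on a reflection axis (there the $G$-orbit is not free).
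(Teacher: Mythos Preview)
Your argument follows the same overall line as the paper's: first establish (using $n\ge6$) that the symmetry group of the orbit is again a left tubical group preserving $\H^i$, then compare it with $G$. The paper compresses the first step into one sentence (``no $C_n$ or $D_n$ for $n\ge6$ is contained in a polyhedral group''), while you spell out the quaternion arithmetic and the chirality argument in more detail; both are fine.

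The finish differs. The paper argues that the hypothesis forces $h(v)$ off every rotation center and mirror of any supergroup of $G^h$, so the $H^h$-orbit of $h(v)$ is free; Corollary~\ref{coro:free_orbit} then lifts this to freeness of the $H$-orbit of $v$, and a size count gives $H=G$. You instead observe that $\Gamma^h$ permutes $h(O)=G^h\cdot h(v)$, hence $\Gamma^h\le\operatorname{Sym}(G^h\cdot h(v))=G^h$, so $\Gamma^h=G^h$, and then pin down the kernel by its action on $O\cap K_{h(v)}$. Both routes are short once tubicality is in hand; your direct derivation of $\Gamma^h=G^h$ is arguably the cleaner step.

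One correction about the dihedral case: when the $G^h$-orbit of $h(v)$ is free (which is what the hypothesis is meant to encode), the only elements of $G$ preserving the circle $K_{h(v)}$ are those in the kernel $\langle[1,e_n]\rangle$, so $O\cap K_{h(v)}$ is a \emph{single} regular $2n$-gon, for cyclic and dihedral type alike. The ``two $2n$-gons'' configuration you describe arises only when $h(v)$ lies on a mirror of $G^h$ (Section~\ref{sec:near-mirror}), which is excluded here. Your final caveat about special positions of $v$ along its orbit circle is therefore unnecessary: once $h(v)$ is generic on $S^2$, every $v\in K_{h(v)}$ works, and your kernel argument goes through unchanged with a single $2n$-gon (whose rotation stabilizer is exactly $C_{2n}$, as you need).
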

\begin{proof}
Since no $C_{n}$ or $D_{n}$ for $n\ge 6$ is contained in a polyhedral group,
the only groups containing $G$ are tubical.
In particular, the symmetry group $H$ of the $G$-orbit of $v$ is tubical.
Since the symmetry group of the $G^h$-orbit of $h(v)$ is $G^h$
by assumption,
the point $h(v)$ does not lie on any rotation center or
a mirror of a supergroup of $G^h$.
In particular, the $H^h$-orbit of $h(v)$ is free.
Thus, by Corollary~\ref{coro:free_orbit}, the $H$-orbit of $v$ is free.
So $G$ and $H$ have the same order.
Since $G \leqslant H$, we get $G = H$.
\end{proof}

According to our goal of obtaining a geometric understanding through
the orbit polytope, as described in
Figure~\ref{fig:geometric-understanding} in
Section~\ref{sec:orbit-polytopes}, we are done, in principle.
Since the cell has no nontrivial symmetries, \emph{all}
symmetries of a cell are in $G$.
We are in the branch of
Figure~\ref{fig:geometric-understanding} that requires no further action.
Every cell can be mapped to every other cell in a unique way.

In particular,
for two consecutive cells on a tube it is obvious what the
transformation between them is: a small translation along the orbit
circle combined with a slight twist around the orbit circle, or in other
words, a right screw, effected by the right rotation $[1,e_n]$.

Between cells on different tubes, the transformation is not so
obvious.  For example, in Figure~\ref{subfig:smooth_tubes_cell}, we
see a vertical zigzag of three short edges between the front
corner of the upper (roughly pentagonal) face and the corresponding
corner of the lower face.
These edges are part of a longer sequence of edges, where 3 tubes meet,
and which closes in a circular way.
How are the cells arranged around this
``axis'', and how does the group map between them? To
investigate this question, it is helpful to move the starting point
closer to the axis to look what happens there. In particular, this
will help us to distinguish different classes of groups $G$ with the same
group $G^h$. 
We will see an example in Section~\ref{sec:same-symmetry}.
Eventually, we will  also consider starting points \emph{on} the axis.

\subsection{Starting point close to a mirror}
\label{sec:near-mirror}
Let $G$ be a dihedral-type tubical group,
and $p \in S^2$ be a point close to the mirror of a reflection of $G^h$.
Moreover, assume that $p$ does not
lie on any rotation center of $G^h$.
The point $p$ has a \emph{neighboring partner} $p'$, which is obtained 
from $p$ by reflecting it across that mirror.
We call the corresponding circles
$K_p$ and $K_{p'}$ \emph{neighboring circles}.
The red point and the blue point in Figure~\ref{fig:neighboring_pair}
form a neighboring pair for the group $\pm T$.

We will now discuss the $G$-orbit under different
choices for the starting point $v$ on $K_p$.

\begin{enumerate}[\textbf{Case}~1.]
\item
Choose $v \in K_p$ such that for each orbit point,
the closest point on the neighboring circle is also in the orbit.
See Figure~\ref{subfig:near_mirror_min_cell}.
Thus, in the polar $G$-orbit polytope,
each cell has a ``big'' face
that directly faces the closest point on the neighboring circle.

\item
If we move $v$ in one direction,
the orbit points on the neighboring circle move in the opposite direction.
We choose $v$ such that the orbit points on neighboring circles are in
``alternating positions''.
That is, the distance between orbit points on
neighboring circles is maximized.
See Figure~\ref{subfig:near_mirror_max_cell}.
Thus, in every cell of the polar $G$-orbit polytope,
the side that is close to the neighboring circle is divided
into two faces, on each a cell of the neighboring tube is stacked.

\item
Figure~\ref{subfig:near_mirror_inb_cell}
shows an intermediate situation.
\end{enumerate}

\iffigurescompact
  \begin{figure}[tb]
\else
  \begin{figure}
\fi
\begin{subfigure}{0.5\textwidth}
    \centering
    \includegraphics[scale=1]{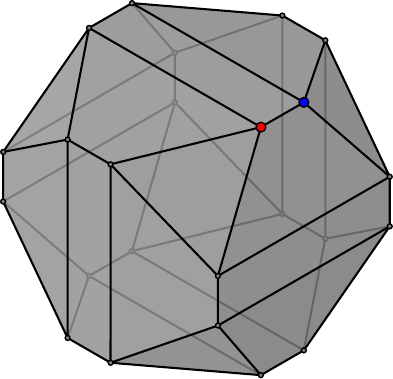}
    \caption{The ${\pm T}$-orbit polytope of $p$.}
    \label{fig:neighboring_pair}
\end{subfigure}%
\begin{subfigure}{0.5\textwidth}
    \centering
    \includegraphics[scale=1]{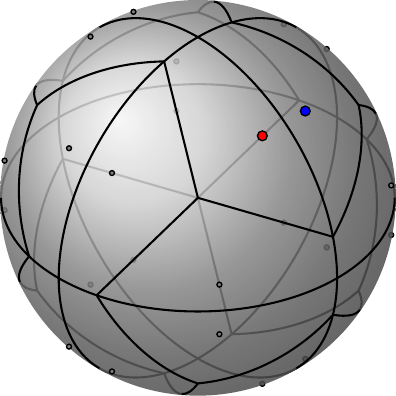}
    \caption{The spherical Voronoi diagram of the orbit.}
\end{subfigure}
\smallskip

\begin{subfigure}{0.33\textwidth}
    \centering
    \includegraphics[scale=1.2]{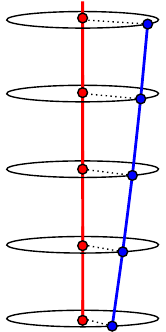}\\
\end{subfigure}%
\begin{subfigure}{0.33\textwidth}
    \centering
    \includegraphics[scale=1.2]{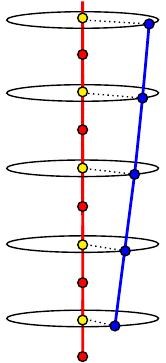}\\
\end{subfigure}%
\begin{subfigure}{0.33\textwidth}
    \centering
    \includegraphics[scale=1.2]{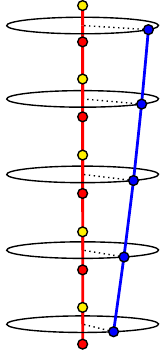}\\
\end{subfigure}
\smallskip

\begin{subfigure}{0.33\textwidth}
    \centering
    \includegraphics[scale=1]{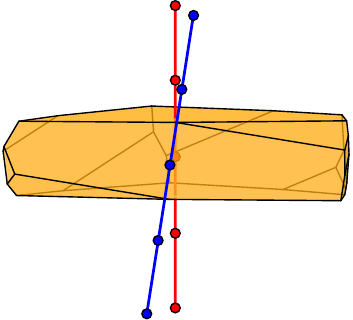}\\
    \caption{}
    \label{subfig:near_mirror_min_cell}
\end{subfigure}%
\begin{subfigure}{0.33\textwidth}
    \centering
    \includegraphics[scale=1]{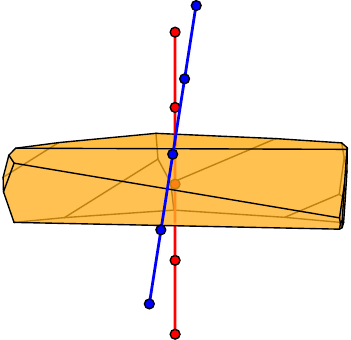}\\
    \caption{}
    \label{subfig:near_mirror_max_cell}
\end{subfigure}%
\begin{subfigure}{0.33\textwidth}
    \centering
    \includegraphics[scale=1]{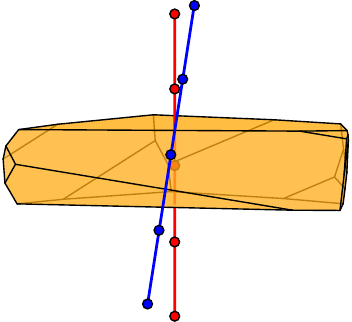}\\
    \caption{}
    \label{subfig:near_mirror_inb_cell}
\end{subfigure}
\caption{Orbits of the group $G=\pm[T\times D_{20}]$
for a starting point $v$ whose image $p:=h(v)$
lies near a mirror of $G^h= {\pm T}$.
The top row shows the three-dimensional ${\pm T}$-orbit polytope of $p$
and the corresponding spherical Voronoi diagram.
The red and the blue points form a neighboring pair.
The next row shows different possible configurations for orbit points
on the corresponding neighboring circles.
Red points and blue points are orbit points on the two neighboring circles.
Yellow points are midpoints of orbit points on the red circle.
They are not orbit points.
The third row shows a cell of the corresponding polar orbit polytope.
}
\label{fig:near_mirror}
\end{figure}

\subsection{Starting point on a mirror}
It is also interesting to see what happens 
if we move $p$ to lie on that mirror of $G^h$.
We still assume that $p$ does not
lie on any rotation center of $G^h$.
In this case, the neighboring pairs on $S^2$ coincide,
and thus the corresponding neighboring circles also coincide.
We describe next what happens in each of the previous cases.

\begin{enumerate}[\textbf{Case}~1.]
\item
The orbit points coincide in pairs,
and thus they form a regular $2n$-gon on $K_p$.
Each orbit point can be mapped to any other orbit point
by two different elements of $G$,
one of which rotates $K_p$ and one of which reverses the orientation of~$K_p$.
Thus, in the polar orbit polytope, 
each cell has a half-turn symmetry that
flips the direction of the cell axis, and
exchanges the top and bottom faces.
We call it a \emph{flip symmetry}.
(For small $n$, top and bottom faces might not be defined.)

It is interesting to notice that for this choice of the starting point,
the $G$-orbit of $v$ coincides with the orbit of $v$ under the 
cyclic-type index-2 subgroup $G_C$ of $G$.
Since the $G_C$-orbit is the same up to congruence for any starting point on $K_p$
(Proposition~\ref{prop:cyclic_type_orbit}),
the $G_C$-orbit of \emph{any} starting point on $K_p$ has the extra
symmetries coming from a dihedral-type group that is
 geometrically equal to $G$.
(This geometrically equal group has the generators of $G$
with $j$ replaced by a different unit quaternion $q'$ orthogonal to~$i$,
which is the quaternion $q'$ from Proposition~\ref{prop:action_on_its_circle}(b).)
We put this in a proposition since we will need it later.

\begin{proposition}
\label{prop:cylic_on_mirror}
Let $G_C$ be a cyclic-type tubical group,
and let $G_D$ be a dihedral-type tubical group
containing $G_C$ as an index-2 subgroup.
If $p$ lies on a mirror of $G_D^h$,
then the $G_C$-orbit of any point on $K_p$ has
the symmetries from (a geometrically equal copy of)~$G_D$.
\end{proposition}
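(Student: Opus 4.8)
The plan is to reduce the statement to one conveniently chosen starting point on $K_p$ and then transport the conclusion to every other point of $K_p$ by a right rotation, using Proposition~\ref{prop:cyclic_type_orbit}. Write $n$ for the largest integer with $[1,e_n]\in G_C$; since $G_C$ has index $2$ in $G_D$, this is also the largest such integer for $G_D$, so $\langle[1,e_n]\rangle$ is a common ``kernel'' on the Hopf bundle $\H$. Because $p$ lies on a mirror of $G_D^h$ but not on a rotation center, the stabilizer of $p$ in $G_D^h$ is $\{\id,\sigma\}$ for a single reflection $\sigma$, which is induced by an element $g=[l,je_m^s]\in G_D$ with $-[l]=\sigma$, hence $[l]p=-p$; so $g$ carries $K_p=K_p^i$ to $K_{-[l]p}=K_p$, and by Proposition~\ref{prop:action_on_its_circle}(b) it acts on $K_p$ as a reflection. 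Consequently the stabilizer of the circle $K_p$ in $G_D$ has order $4n$ and acts on $K_p$ as the dihedral group extending the rotation group $\langle[1,e_n]\rangle$ of order $2n$.

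First I would take $v_0\in K_p$ on one of the reflection axes of this dihedral action --- this is exactly the ``Case~1'' starting point discussed above. Then the $G_D$-orbit points on $K_p$ collapse to the single regular $2n$-gon $P:=\orbit(v_0,\langle[1,e_n]\rangle)$, which is also the set of $G_C$-orbit points on $K_p$. Moreover $\sigma p=p$ together with $G_D^h=G_C^h\cup\sigma G_C^h$ shows that $G_C$ and $G_D$ have the same family of orbit circles $\{K_{t(p)}\}$, and on each of them the orbit is the image of $P$ under any group element carrying $K_p$ there; two such carriers differ by an element of the stabilizer of $K_p$ in $G_D$, and every such element permutes the vertices of the regular $2n$-gon $P$. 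Hence $\orbit(v_0,G_C)=\orbit(v_0,G_D)$, so the symmetry group of $\orbit(v_0,G_C)$ contains $G_D$. For an arbitrary $v\in K_p$, the proof of Proposition~\ref{prop:cyclic_type_orbit} gives $v=[1,\exp\theta i]\,v_0$ with $\orbit(v,G_C)=[1,\exp\theta i]\,\orbit(v_0,G_C)$ (elements $[l,e_m^s]$ of $G_C$ commute with $[1,\exp\theta i]$); therefore the symmetry group of $\orbit(v,G_C)$ contains $[1,\exp\theta i]\,G_D\,[1,\exp\theta i]^{-1}$, which by Section~\ref{sec:conjugacy} is a group geometrically equal to $G_D$, as claimed.

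The hard part is the first step: checking that the Case~1 choice of $v_0$ really makes the $G_D$-orbit collapse onto the $G_C$-orbit. This is where the hypotheses enter --- ``$p$ on a mirror'' guarantees an orientation-reversing element of $G_D$ fixing the circle $K_p$ (and, via Proposition~\ref{prop:action_on_its_circle}(b), acting on it as a genuine reflection, not as something that could shrink the $2n$-gon further), while ``$p$ off every rotation center'' pins the stabilizer of $K_p$ to order exactly $4n$, so the $G_D$-orbit on $K_p$ is at most two interleaved regular $2n$-gons and becomes one precisely when $v_0$ sits on a reflection axis. The remaining point --- that passing from $G_C$ to $G_D$ does not change the orbit on the other orbit circles --- is then routine bookkeeping, since the relevant coset representatives differ only by stabilizer elements, which preserve $P$.
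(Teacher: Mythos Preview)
Your proof is correct and follows the same two-step strategy as the paper: show that for the Case~1 starting point $v_0$ the $G_C$- and $G_D$-orbits coincide, then transport to an arbitrary point of $K_p$ via Proposition~\ref{prop:cyclic_type_orbit} and conjugation. Your verification of the first step is more detailed than necessary---once you have $g\in G_D\setminus G_C$ with $g\cdot v_0=v_0$, the coset decomposition $G_D=G_C\cup G_Cg$ gives $\orbit(v_0,G_D)=G_C\cdot v_0\cup G_Cg\cdot v_0=G_C\cdot v_0=\orbit(v_0,G_C)$ in one line, without the circle-by-circle bookkeeping.
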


\item
Orbit points on $K_p$ form a regular $4n$-gon.
Each orbit point can be mapped to any other orbit point
by a unique element of $G$.
However, this orbit has extra symmetries, which
 come from the supergroup of $G$ that we obtain by
extending $G$ by the new symmetry $[1, e_{2n}]$.
This orbit of the supergroup follows the behavior described in Case~1.
Accordingly, each cell of the polar $G$-orbit polytope has a flip symmetry.

In almost all choices for $G$,
the supergroup has the same class as $G$
but with twice the parameter $n$.
The only exceptional case is $G = \pm\frac12[O\times \overline{D}_{4n}]$.
In this case, the supergroup is $\pm[O\times D_{4n}]$.

\item
Orbit points on $K_p$ form two regular $2n$-gons
whose union is a $4n$-gon with equal angles,
and side lengths alternating between two values.
The orbit points come in close pairs.
Accordingly, the cells of the polar orbit polytope
come in a sequence of alternating ``up-and-down pancakes''
stacked upon each other. See the two cells in Figure~\ref{fig:pancakes}.

\begin{figure}[ht]
\begin{subfigure}{0.33\textwidth}
\centering
\includegraphics{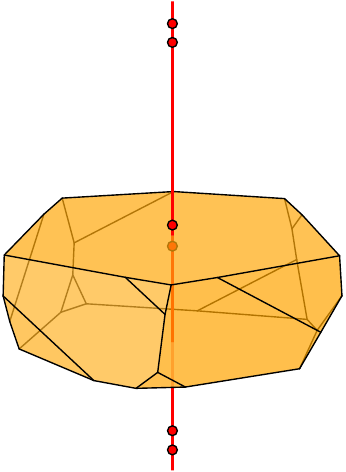}
\end{subfigure}
\begin{subfigure}{0.33\textwidth}
\centering
\includegraphics{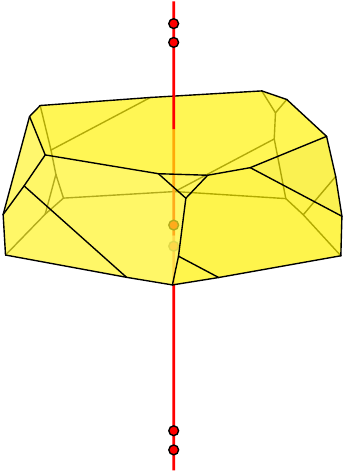}
\end{subfigure}
\begin{subfigure}{0.33\textwidth}
\centering
\includegraphics{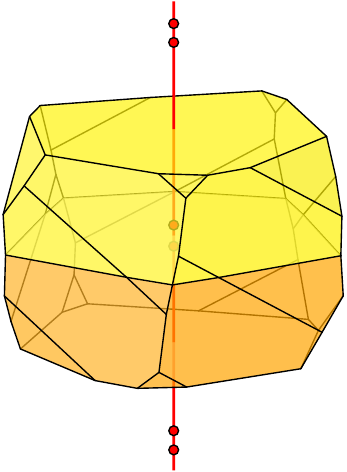}
\end{subfigure}
\caption{Two cells %
stacked upon each other with a $180^\circ$ rotation.
The two left figures show each cell individually.
}
\label{fig:pancakes}
\end{figure}
\end{enumerate}

\iffigurescompact
  \begin{figure}[tb]
\else
  \begin{figure}
\fi
\begin{subfigure}{0.5\textwidth}
    \centering
    \includegraphics[scale=1]{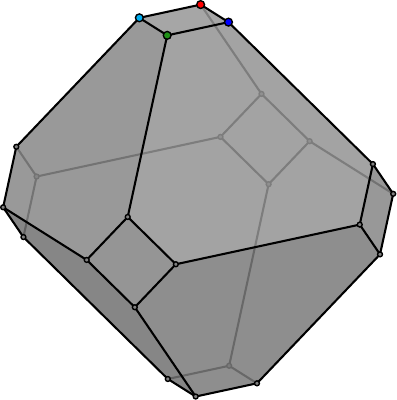}
    \caption{The ${+O}$-orbit polytope of $p$.}
    \label{fig:near_4fold_orbit}
\end{subfigure}%
\begin{subfigure}{0.5\textwidth}
    \centering
    \includegraphics[scale=1]{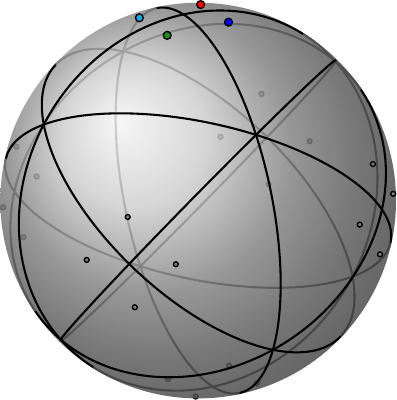}
    \caption{The spherical Voronoi diagram of the orbit.}
\end{subfigure}
\smallskip

\begin{subfigure}[c]{0.25\textwidth}
    \centering
    \includegraphics[scale=1.5]{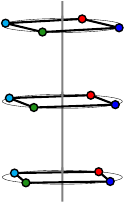}
\end{subfigure}%
\begin{subfigure}[c]{0.25\textwidth}
    \centering
    \includegraphics[scale=1.5]{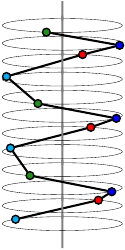}
\end{subfigure}%
\begin{subfigure}[c]{0.25\textwidth}
    \centering
    \includegraphics[scale=1.5]{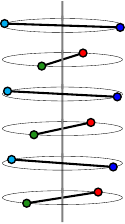}
\end{subfigure}%
\begin{subfigure}[c]{0.25\textwidth}
    \centering
    \includegraphics[scale=1.5]{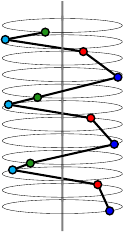}
\end{subfigure}
\iffigurescompact\else\smallskip\fi

\begin{subfigure}[c]{0.25\textwidth}
    \centering
    \includegraphics{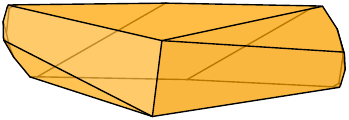}
\end{subfigure}%
\begin{subfigure}[c]{0.25\textwidth}
    \centering
    \includegraphics{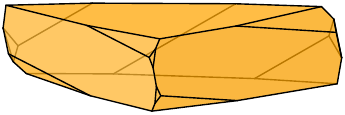}
\end{subfigure}%
\begin{subfigure}[c]{0.25\textwidth}
    \centering
    \includegraphics{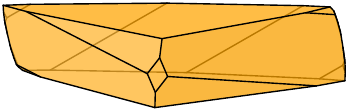}
\end{subfigure}%
\begin{subfigure}[c]{0.25\textwidth}
    \centering
    \includegraphics{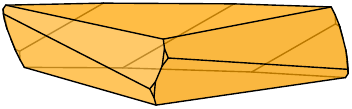}
\end{subfigure}
\iffigurescompact\else\smallskip\fi

\begin{subfigure}[c]{0.25\textwidth}
    \centering
    \includegraphics{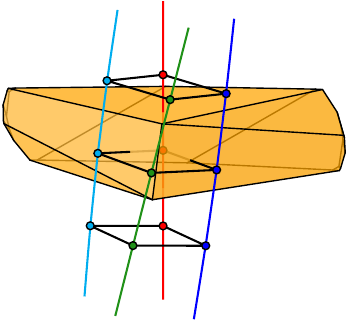}
    \caption{$\pm[O\times C_{20}]$\\ squares}
    \label{subfig:near_4fold_OxC20}
\end{subfigure}%
\begin{subfigure}[c]{0.25\textwidth}
    \centering
    \includegraphics{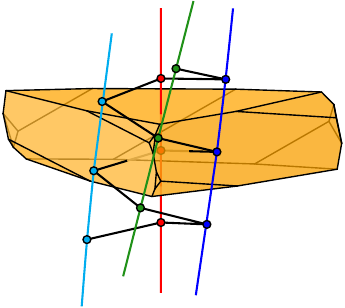}
    \caption{$\pm[O\times C_{21}]$\\ 3/4 (right) staircase}
    \label{subfig:near_4fold_OxC21}
\end{subfigure}%
\begin{subfigure}[c]{0.25\textwidth}
    \centering
    \includegraphics{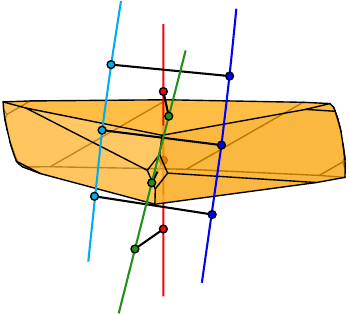}
    \caption{$\pm[O\times C_{22}]$\\ pairs}
    \label{subfig:near_4fold_OxC22}
\end{subfigure}%
\begin{subfigure}[c]{0.25\textwidth}
    \centering
    \includegraphics{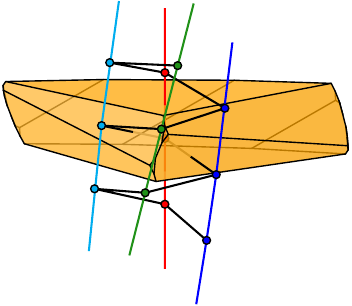}
    \caption{$\pm[O\times C_{23}]$\\ 1/4 (left) staircase}
    \label{subfig:near_4fold_OxC23}
\end{subfigure}
\caption{
  Orbits of the groups
  $G=\pm[O\times C_{n}]$ for a starting point $v$ whose image $p:=h(v)$
  lies  near a 4-fold rotation center of $G^h={+O}$.
The top row shows the three-dimensional ${+O}$-orbit polytope of $p$
and the corresponding spherical Voronoi diagram.
The four images of $p$ under the 4-fold rotation are colored.
The next row shows all possible configurations for orbit points
on the corresponding colored circles.
The vertical line in each figure is the great circle of $\H$
that correspond to the rotation center.
The third row shows a cell of the corresponding polar orbit polytope,
and
the bottom row combines the previous two rows.
}
\label{fig:near_4fold}
\end{figure}

\subsection{Starting point close to a rotation center}
\label{sec:near-axis}

Let $G$ be a cyclic-type tubical group,
and let $p$ be an $f$-fold rotation center\footnote{
We call $p$ an \emph{$f$-fold} rotation center of some 3-dimensional point group
if $f$ is the largest order of a rotation around
$p$ in that group. Hence, a 4-fold rotation center of a group
 is \emph{not} a 2-fold rotation center of that group.}
of $G^h$.
Let $[g] \in G^h$ be the clockwise rotation of $G^h$ around $p$ by $\frac{2\pi}{f}$.
That is, $g = \cos\frac\pi{f} + p\sin\frac\pi{f}$.

Choose a point $p_1 \in S^2$ close to $p$.
Since $p_1$ avoids rotation centers of $G^h$,
its images under $[g]$ are all distinct:
$$p_1,\ p_2:=[g]p_1,\ \ldots,\ p_f := [g]^{f-1}p_1$$
Figure~\ref{fig:near_4fold_orbit} and
Figure~\ref{fig:near_5fold_orbit} show these
points around a 4-fold rotation center and a 5-fold rotation center,
respectively.

We want to describe the $G$-orbit for a starting point on $K_{p_1}$.
By Proposition~\ref{prop:cyclic_type_orbit},
any point on $K_{p_1}$ will give the same $G$-orbit, up to congruence.
Thus, let $v \in K_{p_1}$ be any point on $K_{p_1}$ and consider its $G$-orbit.

We will now discuss the $G$-orbit of $v$ under different
assumptions on the subgroup $H$ of elements of $G$ that preserve $K_{p}$.

\begin{enumerate}[\textbf{Case}~1.]
\item
$H$ contains a simple rotation fixing $K_{p}$ of order $f$:
Orbit points around $K_{p}$ can be grouped into regular $f$-gons
(if $f \geq 3$) or pairs (if $f=2$).
See
Figure~\ref{subfig:near_4fold_OxC20} and
Figure~\ref{subfig:near_5fold_IxC20}.

\item 
$H$ contains no simple rotation fixing $K_{p}$:
Orbit points around $K_{p}$ form different types of staircases.
See Figures~\ref{subfig:near_4fold_OxC21} and
\ref{subfig:near_4fold_OxC23}, and
 Figures~\ref{subfig:near_5fold_IxC21}--\ref{subfig:near_5fold_IxC24}.

\item
$H$ contains a simple rotation fixing $K_{p}$ of order not equal to $f$:
This case can only occur when $f=4$ and the order of that simple rotation
is 2.
Orbit points around $K_{p}$ can be grouped into pairs.
See Figure~\ref{subfig:near_4fold_OxC22}.
\end{enumerate}

\iffigurescompact
  \begin{figure}[tb]
\else
  \begin{figure}
\fi
\begin{subfigure}{0.5\textwidth}
    \centering
    \includegraphics[scale=0.85]{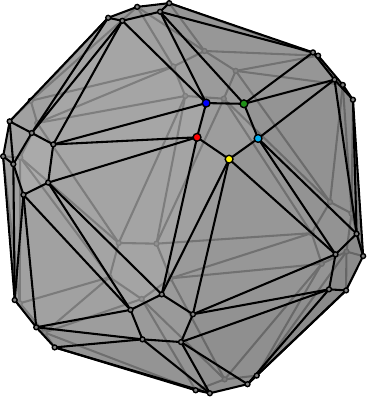}
    \caption{The ${+I}$-orbit polytope of $p$.}
    \label{fig:near_5fold_orbit}
\end{subfigure}%
\begin{subfigure}{0.5\textwidth}
    \centering
    \includegraphics[scale=0.85]{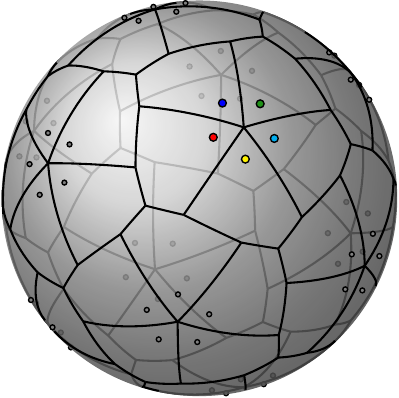}
    \caption{The spherical Voronoi diagram of the orbit.}
\end{subfigure}
\iffigurescompact\else\smallskip\fi

\begin{subfigure}[c]{0.2\textwidth}
    \centering
    \includegraphics[scale=1.4]{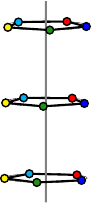}
\end{subfigure}%
\begin{subfigure}[c]{0.2\textwidth}
    \centering
    \includegraphics[scale=1.4]{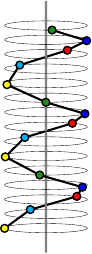}
\end{subfigure}%
\begin{subfigure}[c]{0.2\textwidth}
    \centering
    \includegraphics[scale=1.4]{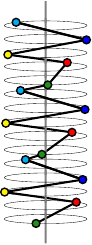}
\end{subfigure}%
\begin{subfigure}[c]{0.2\textwidth}
    \centering
    \includegraphics[scale=1.4]{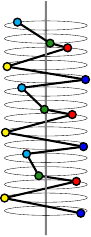}
\end{subfigure}%
\begin{subfigure}[c]{0.2\textwidth}
    \centering
    \includegraphics[scale=1.4]{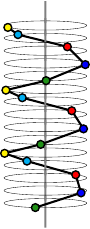}
\end{subfigure}
\iffigurescompact\vskip-1mm \else\smallskip\fi

\begin{subfigure}[c]{0.2\textwidth}
    \centering
    \includegraphics[scale=1.4]{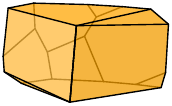}
\end{subfigure}%
\begin{subfigure}[c]{0.2\textwidth}
    \centering
    \includegraphics[scale=1.4]{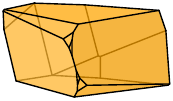}
\end{subfigure}%
\begin{subfigure}[c]{0.2\textwidth}
    \centering
    \includegraphics[scale=1.4]{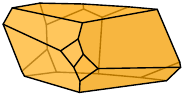}
\end{subfigure}%
\begin{subfigure}[c]{0.2\textwidth}
    \centering
    \includegraphics[scale=1.4]{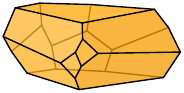}
\end{subfigure}%
\begin{subfigure}[c]{0.2\textwidth}
    \centering
    \includegraphics[scale=1.4]{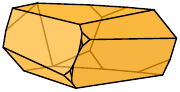}
\end{subfigure}
\iffigurescompact\vskip-1mm \else\smallskip\fi

\begin{subfigure}[c]{0.2\textwidth}
    \centering
    \includegraphics[scale=1.4]{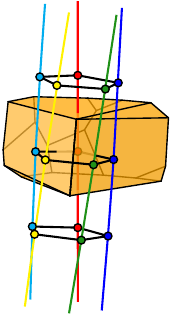}
\end{subfigure}%
\begin{subfigure}[c]{0.2\textwidth}
    \centering
    \includegraphics[scale=1.4]{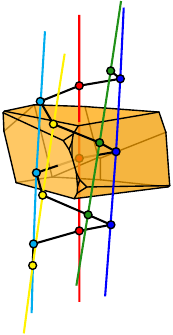}
\end{subfigure}%
\begin{subfigure}[c]{0.2\textwidth}
    \centering
    \includegraphics[scale=1.4]{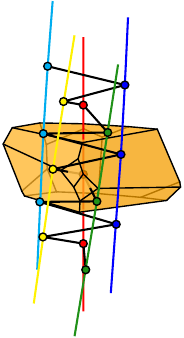}
\end{subfigure}%
\begin{subfigure}[c]{0.2\textwidth}
    \centering
    \includegraphics[scale=1.4]{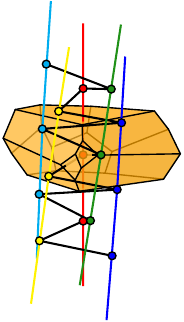}
\end{subfigure}%
\begin{subfigure}[c]{0.2\textwidth}
    \centering
    \includegraphics[scale=1.4]{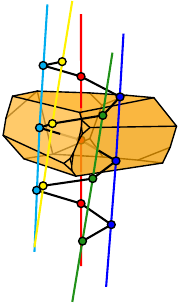}
\end{subfigure}
\iffigurescompact\vskip -7mm
\vbox to 3pt{\textcolor{white}{\hrule height 0pt depth 15pt
    width\textwidth \vss}}

\else\smallskip\fi

\begin{subfigure}[c]{0.2\textwidth}
    \centering
    \caption{$\pm[I\times C_{20}]$\\ pentagons}
    \label{subfig:near_5fold_IxC20}
\end{subfigure}%
\begin{subfigure}[c]{0.2\textwidth}
    \centering
    \caption{$\pm[I\times C_{21}]$\\ $4/5$ staircase}
    \label{subfig:near_5fold_IxC21}
\end{subfigure}%
\begin{subfigure}[c]{0.2\textwidth}
    \centering
    \caption{$\pm[I\times C_{22}]$\\ $2/5$ staircase}
    \label{subfig:near_5fold_IxC22}
\end{subfigure}%
\begin{subfigure}[c]{0.2\textwidth}
    \centering
    \caption{$\pm[I\times C_{23}]$\\ $3/5$ staircase}
    \label{subfig:near_5fold_IxC23}
\end{subfigure}%
\begin{subfigure}[c]{0.2\textwidth}
    \centering
    \caption{$\pm[I\times C_{24}]$\\ $1/5$ staircase}
    \label{subfig:near_5fold_IxC24}
\end{subfigure}
\iffigurescompact
\hrule height 0pt
\else\smallskip\fi
\caption{
Orbits of the groups $G=\pm[I\times C_{n}]$
for a starting point $v$ whose image $p:=h(v)$
lies near a 5-fold rotation center of $G^h= {+I}$.
The top row shows the three-dimensional ${+I}$-orbit polytope of $p$
and the corresponding spherical Voronoi diagram.
The five images of $p$ under the 5-fold rotation are colored.
The next row shows all possible configurations for orbit points
on the corresponding colored circles.
The vertical line in each figure is the great circle of $\H$
that correspond to the rotation center.
The third row shows a cell of the corresponding polar orbit polytope,
and the bottom row combines the previous two rows.
}
\label{fig:near_5fold}
\end{figure}

\subsection{Starting point on a rotation center}
\label{sec:on-axis}
It is also interesting to see what happens 
if we move $p_1$ to~$p$.
In this case, the points $p_1, \ldots, p_{f}$ coincide with $p$,
and thus the corresponding circles
$K_{p_1}, \ldots, K_{p_f}$ coincide with $K_{p}$.
We describe next what happens in each of the previous cases.

\begin{enumerate}[\textbf{Case}~1.]
\item
 The orbit points coincide in groups of size $f$,
and thus they form a regular $2n$-gon on $K_{p}$.
Each orbit point can be mapped to itself
by $f$ different elements of $G$.
Thus, in the polar orbit polytope,
each cell has an $f$-fold rotational symmetry
whose axis is the cell axis.

\item
Orbit points on $K_p$ form a regular $2fn$-gon.
Each orbit point can be mapped to itself by a unique element of $G$.
However, the orbit has extra symmetries,
which come from the supergroup of $G$ that we obtain by
extending $G$ by the new symmetry $[1, e_{fn}]$.
Thus, in total, each orbit point can be mapped to itself by $f$ symmetries.
Accordingly, in the polar orbit polytope,
each cell has an $f$-fold rotational symmetry
whose axis is the cell axis.

\item
Orbit points on $K_p$ form a regular $4n$-gon. 
Each orbit point can be mapped to itself
by 2 different elements of $G$.
However, the orbit has extra symmetries, which
 come from the supergroup of $G$ that we obtain by
extending $G$ by the new symmetry $[1, e_{2n}]$.
Thus, each orbit point can be mapped to itself by \emph{extra} 2 symmetries.
Accordingly, in the polar orbit polytope,
each cell has a 4-fold rotational symmetry
whose axis is the cell axis.
\end{enumerate}

See Section~\ref{subsec:examples} for particular examples
and Appendix~\ref{sec:special_starting_points} for a coverage of all groups.

\subsubsection{Supergroups of cyclic type}

The cyclic-type supergroups described in Case~2 and Case~3 are listed in
Table~\ref{tbl:tubical_supergroups} for each group class and each type of rotation
center.
For large enough $n$, this supergroup is the largest cyclic-type symmetry group
of the orbit.
In most cases, this is the same class of group with a larger parameter~$n$.
The only exception are the groups $G=\pm[T\times C_n]$ when $p$ is a
2-fold rotation center of $G^h=+T$.
As can be seen in
Table~\ref{tbl:tubical_supergroups},
the symmetry groups of cyclic type of the orbit are then of the form
$\pm[O\times C_{n'}]$ or
$\pm\frac12[O\times C_{n'}]$.

The reason for this exceptional behavior can already be seen at the level of the groups $G^h$ in
three dimensions:
On $S^2$, the group ${+T}$ is an index-2 subgroup of ${+O}$.
The 2-fold rotation centers $p$
of ${+T}$ coincide with the 4-fold rotation centers of ${+O}$, and 
the orbit has size 6 in both cases.

The group $G_1:= \pm[T\times C_n]$ is an index-2 subgroup of $G_2: = \pm[O\times C_n]$.
One can show that when $n \equiv 0\bmod 4$, the orbits of both
groups %
have a simple rotation fixing
$K_{p}$ of order 2 (for $G_1$) and
of order 4 (for $G_2$).
In particular, both orbits follow Case~1 above and they form a regular
$2n$-gon on each orbit circle.
Since they also have the same orbit circles, these two orbits coincide.
The other cases
($n \equiv 2\bmod 4$, and $n$ odd) are similar.

Accordingly, all cells of the groups $\pm[T\times C_n]$
when $p$ is a 2-fold rotation center (Section~\ref{T-2fold}),
appear also as cells of the groups
$\pm\frac12[O\times C_{n'}]$
when $p$ is a 4-fold rotation center (Figure~\ref{fig:hOxC2n_4fold}),
and those when $n$ is a multiple of~$4$ also appear for the groups
$\pm[O\times C_{n'}]$ (Section~\ref{O-4fold}).

It is perhaps instructive to look at a particular example and compare the groups
 $\pm[T\times C_{24}]$ (Figure~\ref{fig:TxCn_2fold}) and
 $\pm\frac12[O\times C_{24}]$
 (Figure~\ref{fig:hOxC2n_4fold} for $n=12$), which have equal, 4-sided cells.
 The allowed rotations between consecutive cells, apart from the necessary
 adjustment of $\pi/24$,
 are $0^\circ$ and  $180^\circ$
  in the first case
  and  $\pm 90^\circ$ in the second case.
  The common supergroup that has all four rotations is
 $\pm[O\times C_{24}]$
 (Figure~\ref{fig:OxCn_4fold}).

 \subsubsection{Supergroups of dihedral type, and flip symmetries}
 \label{sec:flip}
 For each cyclic-type tubical group and
for each rotation center $p$ of its induced group on $S^2$,
there is a dihedral-type tubical group
whose induced group on $S^2$ has a mirror through $p$,
and the cyclic-type group is an index-2 subgroup of the dihedral-type group.
Thus, by Proposition~\ref{prop:cylic_on_mirror},
the orbit of the cyclic-type group for a starting point on $K_p$
has extra symmetries coming from (a geometrically equal copy of)
that dihedral-type tubical group.
In particular, each cell of the polar orbit polytope
will have a flip symmetry.
See the figures in Section~\ref{subsec:examples}
and Appendix~\ref{sec:special_starting_points}.
The dihedral-type supergroups are listed in Table~\ref{tbl:tubical_supergroups}.
\begin{table}
\centering
\setlength{\extrarowheight}{1.41pt}
\begin{tabular}{|c|c|c|c|c|c|c|}
\hline
center & \multirow{2}{*}{\#tubes}
& \multirow{2}{*}{$n$} & orbit &
cyclic-type& dihedral-type & \multirow{2}{*}{figure}\\[-1.4pt]
type &&& size& supergroup & supergroup & \\
\hline
\multicolumn7{|c|} {$\raise3pt\strut\lower2pt\strut \pm[I\times C_n]$} \\
\hline
\multirow{2}{*}{5-fold} &\multirow{2}{*}{12} & $0 \bmod 5$ & $24n$ & -- & $\pm[I\times D_{2n}]$ & \multirow2*{\ref{fig:IxCn_5fold}}\\
&& else & $120n$ & $\pm[I \times C_{5n}]$ & $\pm[I\times D_{10n}]$ &\\
\hline
\multirow{2}{*}{3-fold} & \multirow{2}{*}{20} & $0 \bmod 3$ & $40n$ & -- & $\pm[I\times D_{2n}]$ & \multirow2*{\ref{fig:IxCn_3fold}} \\
&& else & $120n$ & $\pm[I \times C_{3n}]$ & $\pm[I\times D_{6n}]$ &\\
\hline
\multirow{2}{*}{2-fold} & \multirow{2}{*}{30} & $0 \bmod 2$ & $60n$ & -- & $\pm[I\times D_{2n}]$ & \multirow2*{\ref{fig:IxCn_2fold}} \\
& & else & $120n$ & $\pm[I \times C_{2n}]$ & $\pm[I \times D_{4n}]$ &\\
\hline

\multicolumn7{|c|} {$\raise3pt\strut\lower2pt\strut \pm[O\times C_n]$} \\
\hline
\multirow{3}{*}{4-fold} &\multirow{3}{*}{6}& $0 \bmod 4$ & $12n$ & -- & $\pm[O\times D_{2n}]$ & \multirow3*{\ref{fig:OxCn_4fold}} \\
&& $2 \bmod 4$ & $24n$ & $\pm[O\times C_{2n}]$ & $\pm[O\times D_{4n}]$& \\
&& else & $48n$ & $\pm[O \times C_{4n}]$ & $\pm[O\times D_{8n}]$& \\
\hline
\multirow{2}{*}{3-fold} &\multirow{2}{*}{8} & $0 \bmod 3$ & $16n$ & -- & $\pm[O\times D_{2n}]$ & \multirow2*{\ref{fig:OxCn_3fold}}\\
&& else & $48n$ & $\pm[O \times C_{3n}]$ & $\pm[O\times D_{6n}]$& \\
\hline
\multirow{2}{*}{2-fold} &\multirow{2}{*}{12} & $0 \bmod 2$ & $24n$ & -- & $\pm[O\times D_{2n}]$ & \multirow2*{\ref{fig:OxCn_2fold}}\\
&& else & $48n$ & $\pm[O \times C_{2n}]$ & $\pm[O\times D_{4n}]$& \\
\hline

\multicolumn7{|c|} {$\raise3pt\strut\lower2pt\strut \pm\frac12[O\times C_{2n}]$} \\
\hline
\multirow{3}{*}{4-fold} & \multirow{3}{*}{6} & $2 \bmod 4$ & $12n$ & -- & $\pm\frac12[O\times \overline{D}_{4n}]$ & \multirow3*{\ref{fig:hOxC2n_4fold}} \\
&& $0 \bmod 4$ & $24n$ & $\pm[O\times C_{2n}]$ & $\pm[O\times D_{4n}]$& \\
&& else & $48n$ & $\pm[O\times C_{4n}]$ & $\pm[O\times D_{8n}]$& \\
\hline
\multirow{2}{*}{3-fold} &\multirow{2}{*}{8} & $0 \bmod 3$ & $16n$ & -- & $\pm\frac12[O\times \overline{D}_{4n}]$ & \multirow2*{\ref{fig:hOxC2n_3fold}}\\
&& else & $48n$ & $\pm\frac12[O \times C_{6n}]$ & $\pm\frac12[O\times \overline{D}_{12n}]$& \\
\hline
\multirow{2}{*}{2-fold} & \multirow{2}{*}{12} & $0 \bmod 2$ & $24n$ & -- & $\pm\frac12[O\times \overline{D}_{4n}]$ & \multirow2*{\ref{fig:hOxC2n_2fold}}\\
&& else & $48n$ & $\pm\frac12[O \times C_{4n}]$ & $\pm\frac12[O\times \overline{D}_{8n}]$& \\
\hline

\multicolumn7{|c|} {$\raise3pt\strut\lower2pt\strut \pm[T\times C_n]$} \\
\hline
\multirow{2}{*}{3-fold} &\multirow{2}{*}{4} & $0 \bmod 3$ & $8n$ & -- & $\pm\frac12[O\times D_{2n}]$ & \multirow2*{\ref{fig:TxCn_3fold}}\\
&& else & $24n$ & $\pm[T \times C_{3n}]$ & $\pm\frac12[O\times D_{6n}]$& \\
\hline
\multirow{3}{*}{2-fold} & \multirow{3}{*}{6} & $0 \bmod 4$ & $12n$ & $\pm[O\times C_n]$ & $\pm[O\times D_{2n}]$ & \multirow3*{\ref{fig:TxCn_2fold}}\\
&& $2 \bmod 4$ & $12n$ & $\pm\frac12[O\times C_{2n}]$ & $\pm\frac12[O\times \overline{D}_{4n}]$ &\\
&& else & $24n$ & $\pm\frac12[O \times C_{4n}]$ & $\pm\frac12[O\times \overline{D}_{8n}]$& \\
\hline

\multicolumn7{|c|} {$\raise3pt\strut\lower2pt\strut \pm\frac13[T\times C_{3n}]$} \\
\hline
\multirow{2}{*}{3-fold I} & \multirow{2}{*}{4} & $1 \bmod 3$ & $8n$ & -- & $\pm\frac16[O\times D_{6n}]$ & \multirow2*{\ref{fig:tTxC3n_3pfold}} \\
&& else & $24n$ & $\pm[T \times C_{3n}]$ & $\pm\frac12[O\times D_{6n}]$& \\
\hline
\multirow{2}{*}{3-fold II} &\multirow{2}{*}{4} & $2 \bmod 3$ & $8n$ & -- & $\pm\frac16[O\times D_{6n}]$ & \multirow2*{\ref{fig:tTxC3n_3fold}}\\
&& else & $24n$ & $\pm[T \times C_{3n}]$ & $\pm\frac12[O\times D_{6n}]$& \\
\hline
\multirow{2}{*}{2-fold} & \multirow{2}{*}{6}& $0 \bmod 2$ & $12n$ & -- & $\pm\frac16[O\times D_{6n}]$ & \multirow2*{\ref{fig:tTxC3n_2fold}}\\
&& else & $24n$ & $\pm\frac13[T \times C_{6n}]$ & $\pm\frac16[O\times D_{12n}]$& \\
\hline
\end{tabular}
\caption
[Relations among tubical groups]
{The columns ``cyclic-type supergroup'' and ``dihedral-type
supergroup'' indicate the largest symmetry group of the orbit that
is tubical of that type.
In Section~\ref{subsec:examples}, we extensively discuss two cases
from the table. %
For the other cases, we summarize the results in 
Appendix~\ref{sec:special_starting_points}.
The last column %
refers to the figure that
shows cells of the corresponding polar orbit polytope
with different values for $n$.
The two types of 3-fold rotation centers
for $\pm\frac13[T\times C_{3n}]$
(3-fold~I and 3-fold~II)
are defined in Section~\ref{sec:same-symmetry}.
}
\label{tbl:tubical_supergroups}
\end{table}

\subsection{Two examples of special starting points}
\label{subsec:examples}

In this section we will discuss two cases of non-generic starting points.
In particular, we want to consider orbits of cyclic-type tubical groups
where the image of the starting point under $h$ is a rotation center of
the induced group.
In Table~\ref{tbl:tubical_supergroups} and
Appendix~\ref{sec:special_starting_points}, we summarize the results
for the remaining groups and rotation centers.

\subsubsection{\texorpdfstring{$\pm[I\times C_n]$}{+-[IxCn]}, 5-fold rotation center}

Let $G = \pm[I\times C_n]$.
We want to consider the $G$-orbit of a point whose image under $h$
is a 5-fold rotation center $p$ of ${+I}$.
By Proposition~\ref{prop:cyclic_type_orbit},
any starting point on $K_p$ will give the same orbit, up to congruence.
Notice also that the other orbit circles correspond to
the other 5-fold rotation centers of ${+I}$.
Thus, choosing $p$ to be an arbitrary 5-fold rotation center will yield 
the same orbit, up to congruence.

So let $p$ be the 5-fold rotation center
$p = \frac1{\sqrt{\varphi^2+1}}(0, 1, \varphi)$,
where $\varphi=\frac{1+\sqrt5}{2}$.
Then
$g = -\w i_I = \cos\frac\pi5 + p \sin\frac\pi5 \in 2I$
defines the $72^\circ$ clockwise rotation $[g]\in {+I}$ around $p$.
By Proposition~\ref{prop:action_on_its_circle},
we know the elements of $G$ that preserve $K_p$.
These elements form a subgroup
$H = \langle [g, 1], [1, e_n] \rangle$
of order $10n$.
Proposition~\ref{prop:action_on_its_circle} also
tells us the $H$ acts on $K_p$ as a 2-dimensional cyclic group.

The rotation $[g, 1]$ rotates $\vec K_p$ by $-\frac\pi5$,
while $[1, e_n]$ rotates it by $\frac\pi{n}$.
Thus, the $G$-orbit of a point on $K_p$
forms a regular $\lcm(2n, 10)$-gon on $K_p$.
We will discuss the orbit of a point $v \in K_p$ depending on the value of $n$.
 Figure~\ref{fig:IxCn_5fold}
shows cells of the polar orbit polytopes for different values of $n$.

\begin{enumerate}
\item[$\bullet$]
If $n$ is a multiple of 5,
then the orbit points form a regular $2n$-gon
on each orbit circle.
So, every orbit point can be mapped to itself by 5 different elements of $G$.
This is reflected on the cells of the polar orbit polytope where each cell
has a 5-fold rotational symmetry whose axis is the cell axis.

This case corresponds to Case~1 in Section~\ref{sec:on-axis},
where $H$ contains a simple rotation of order~$5$ fixing $K_p$.

The element $[1, e_n]$ of $G$ maps an orbit point
to an adjacent one on the same circle.
Correspondingly, on each tube,
the cells of the polar orbit polytope are stacked upon each other
with a right screw by~$\frac\pi{n}$.

\item[$\bullet$]
If $n$ is not a multiple of 5,
then the orbit points form a regular $10n$-gon
on each orbit circle.
That is, the orbit is free.
So, every orbit point can be mapped to itself by a unique element of $G$.
However, this orbit has extra symmetries. %
In particular, the rotation $[1, e_{5n}]$ maps each orbit point
to an adjacent one on the same circle.
Adjoining $[1, e_{5n}]$ to $G$ gives the supergroup $\pm[I\times C_{5n}]$,
whose orbit of $n$ follows the first case.
Accordingly, each cell of the polar orbit polytope has a 5-fold symmetry
whose axis is the cell axis.

This case corresponds to Case~2 in Section~\ref{sec:on-axis},
where $H$ does not contain any simple rotation fixing $K_p$.

The symmetry $[1, e_{5n}]$ (which is not in $G$) maps an orbit point
to an adjacent one on the same circle.
Correspondingly, on each tube,
the cells of the polar orbit polytope are stacked upon each other
with a right screw by $\frac\pi{5n}$.
\end{enumerate}

In accordance with Section~\ref{sec:flip},
every cell has a flip symmetry,
which is not included in~$G$.
It comes from (a group geometrically equal to)
the group $\pm[I\times D_{2n}]$,
which contains $G$ as an index-2 subgroup.

The top and bottom faces in each cell are congruent.
They resemble the shape of a pentagon.
This corresponds to the fact that the spherical
Voronoi cell of the ${+I}$-orbit of $p$
on the 2-sphere is a spherical regular pentagon, as shown in
 the top right picture of Figure~\ref{fig:IxCn_5fold}.
(Refer to the discussion in Section~\ref{sec:geometry-tubes}.)

Since the ${+I}$-orbit of $p$ has size 12,
the $G$-orbit of $v$ lies on 12 orbit circles.
Accordingly, the cells of the polar orbit polytope can
be decomposed into 12 tubes, each with $\lcm(2n, 10)$ cells.
In the PDF-file of this article, the interested reader can click on the pictures in
Figure~\ref{fig:IxCn_5fold} for an interactive visualization of these tubes.
We refer to Section~\ref{sec:gallery} for more details.

In accordance with the program
set out in
Figure~\ref{fig:geometric-understanding} in
Section~\ref{sec:orbit-polytopes} to understand the group
by its action on the orbit polytope, we will now work out how each cell
is mapped to the adjacent cell in the same tube.
This requires a small number-theoretic calculation.
The mapping between adjacent cells is obtained in cooperation between
the right group and the left group.
In particular,
to get a rotation by $\tfrac {2\pi}{\lcm(2n, 10)}$
along the orbit circle $\vec K_p$, we have to combine
a left rotation by $-a\cdot\frac{\pi}5$
with a right rotation by $b\cdot\frac{\pi}{n}$, resulting in the angle
\begin{equation}
    \label{eq:b-a}
    \frac{b\pi}{n}
    -
    \frac{a\pi}5
    = \frac {2\pi}{\lcm(2n, 10)}.
\end{equation}
For example, for $n=12$ we can solve this by $a=2, b=5$.
The right screw angle between consecutive slices (or orbit points)
is then
$\frac{b\pi}{n} + \frac{a\pi}5$.
Using \eqref{eq:b-a}, this can be rewritten as
\begin{equation}
\frac{a\pi}5 + \frac{b\pi}{n}
=
\frac{2a\pi}5 + \frac{2\pi}{\lcm(2n, 10)}
=
\left(\frac{a}5 + \frac{1}{\lcm(2n, 10)}\right) \cdot 2\pi,
\end{equation}
which is
$(\frac{2}5+\frac \pi{120})\cdot 2\pi$ in our example.
This angle is always of the form
$(\frac{a}5 + \frac{1}{\lcm(2n, 10)}) \cdot 2\pi$ for some integer $a$,
in accordance
with the requirement to match the pentagonal shape.
The value $a$ can never be 0.
The rotation angles for different values of $n$ are listed in
Figure~\ref{fig:IxCn_5fold}.

When $n$ is not a multiple of 5, there is one element of the group
that maps a cell to the upper adjacent one. Thus, $a$ has a unique value.
When $n$ is a multiple of 5, each cell has a 5-fold symmetry included in 
the group. Thus, all values of $a$ are permissible.

\begin{figure}[htp]
\begin{minipage}{0.5\textwidth}
\centering
\includegraphics[scale=1]{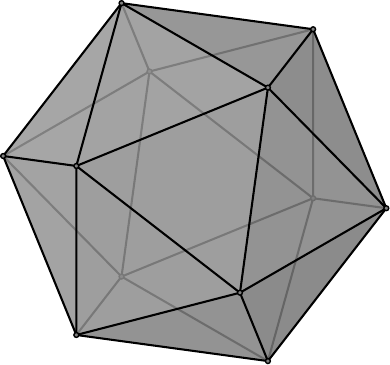}
\end{minipage}%
\begin{minipage}{0.5\textwidth}
\centering
\includegraphics[scale=1]{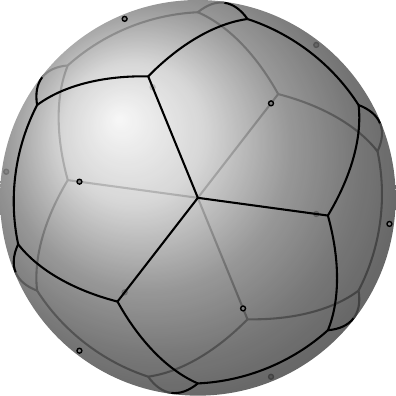}
\end{minipage}
\vskip 5pt plus 0.5fill
\begingroup
\setlength{\tabcolsep}{0pt}
\noindent\begin{tabular}{
>{\centering\arraybackslash}m{0.33\textwidth}
>{\centering\arraybackslash}m{0.33\textwidth}
>{\centering\arraybackslash}m{0.33\textwidth}}
\href{https://www.inf.fu-berlin.de/inst/ag-ti/software/DiscreteHopfFibration/gallery.html?f=IxCn/5/120cells_12tubes}{\includegraphics[scale=1.1]{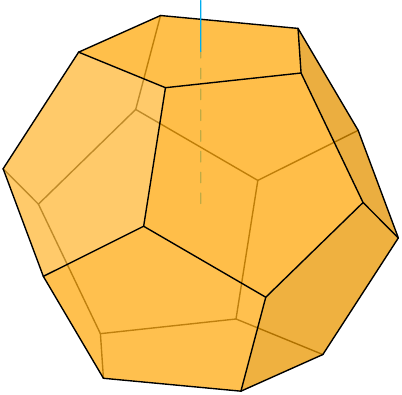}}
&
\href{https://www.inf.fu-berlin.de/inst/ag-ti/software/DiscreteHopfFibration/gallery.html?f=IxCn/5/240cells_12tubes}{\includegraphics[scale=1.1]{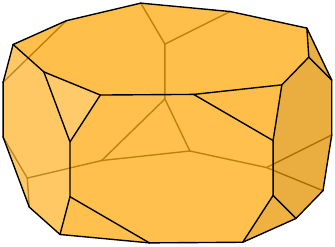}}
&
\href{https://www.inf.fu-berlin.de/inst/ag-ti/software/DiscreteHopfFibration/gallery.html?f=IxCn/5/360cells_12tubes}{\includegraphics[scale=1.1]{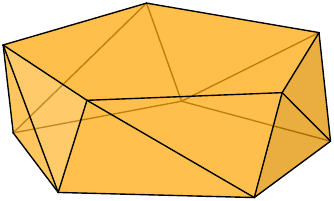}}
\\[1.5mm]
$n = 1, 5$\break
$(\frac{4}{5} + \frac{1}{10})\cdot 2\pi$
&$n = 2, 10$\break
$(\frac{2}{5} + \frac{1}{20})\cdot 2\pi$
&$n = 3, 15$\break
$(\frac{3}{5} + \frac{1}{30})\cdot 2\pi$
\end{tabular}\nobreak

\vfill\nobreak
\noindent\begin{tabular}{
>{\centering\arraybackslash}m{0.33\textwidth}
>{\centering\arraybackslash}m{0.33\textwidth}
>{\centering\arraybackslash}m{0.33\textwidth}}
\href{https://www.inf.fu-berlin.de/inst/ag-ti/software/DiscreteHopfFibration/gallery.html?f=IxCn/5/480cells_12tubes}{\includegraphics[scale=1.1]{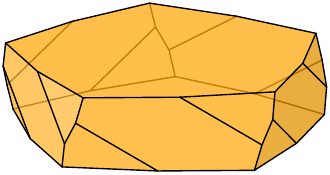}}
&
\href{https://www.inf.fu-berlin.de/inst/ag-ti/software/DiscreteHopfFibration/gallery.html?f=IxCn/5/600cells_12tubes}{\includegraphics[scale=1.1]{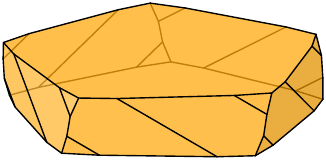}}
&
\href{https://www.inf.fu-berlin.de/inst/ag-ti/software/DiscreteHopfFibration/gallery.html?f=IxCn/5/720cells_12tubes}{\includegraphics[scale=1.1]{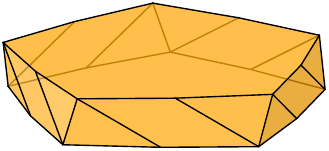}}
\\[1.5mm]
$n = 4, 20$\break
$(\frac{1}{5} + \frac{1}{40})\cdot 2\pi$
&$n = 25$\break
$(\frac{k}{5} + \frac{1}{50})\cdot 2\pi$
&$n = 6, 30$\break
$(\frac{4}{5} + \frac{1}{60})\cdot 2\pi$
\end{tabular}\nobreak

\vfill\nobreak
\noindent\begin{tabular}{
>{\centering\arraybackslash}m{0.33\textwidth}
>{\centering\arraybackslash}m{0.33\textwidth}
>{\centering\arraybackslash}m{0.33\textwidth}}
\href{https://www.inf.fu-berlin.de/inst/ag-ti/software/DiscreteHopfFibration/gallery.html?f=IxCn/5/840cells_12tubes}{\includegraphics[scale=1.1]{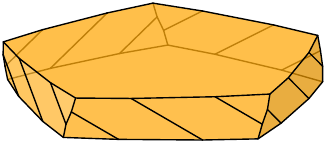}}
&
\href{https://www.inf.fu-berlin.de/inst/ag-ti/software/DiscreteHopfFibration/gallery.html?f=IxCn/5/960cells_12tubes}{\includegraphics[scale=1.1]{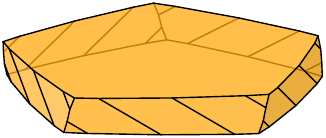}}
&
\href{https://www.inf.fu-berlin.de/inst/ag-ti/software/DiscreteHopfFibration/gallery.html?f=IxCn/5/1080cells_12tubes}{\includegraphics[scale=1.1]{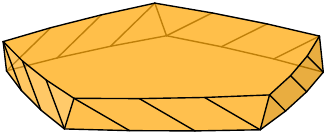}}
\\[1.5mm]
$n = 7, 35$\break
$(\frac{2}{5} + \frac{1}{70})\cdot 2\pi$
&$n = 8, 40$\break
$(\frac{3}{5} + \frac{1}{80})\cdot 2\pi$
&$n = 9, 45$\break
$(\frac{1}{5} + \frac{1}{90})\cdot 2\pi$
\end{tabular}\nobreak

\vfill\nobreak
\noindent\begin{tabular}{
>{\centering\arraybackslash}m{0.33\textwidth}
>{\centering\arraybackslash}m{0.33\textwidth}
>{\centering\arraybackslash}m{0.33\textwidth}}
\href{https://www.inf.fu-berlin.de/inst/ag-ti/software/DiscreteHopfFibration/gallery.html?f=IxCn/5/1200cells_12tubes}{\includegraphics[scale=1.1]{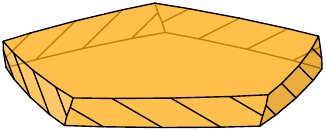}}
&
\href{https://www.inf.fu-berlin.de/inst/ag-ti/software/DiscreteHopfFibration/gallery.html?f=IxCn/5/1320cells_12tubes}{\includegraphics[scale=1.1]{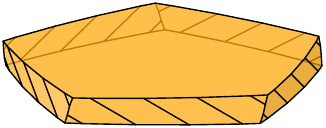}}
&
\href{https://www.inf.fu-berlin.de/inst/ag-ti/software/DiscreteHopfFibration/gallery.html?f=IxCn/5/1440cells_12tubes}{\includegraphics[scale=1.1]{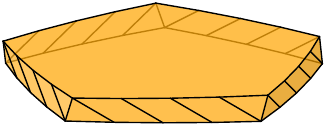}}
\\[1.5mm]
$n = 50$\break
$(\frac{k}{5} + \frac{1}{100})\cdot 2\pi$
&$n = 11, 55$\break
$(\frac{4}{5} + \frac{1}{110})\cdot 2\pi$
&$n = 12, 60$\break
$(\frac{2}{5} + \frac{1}{120})\cdot 2\pi$
\end{tabular}\nobreak

\vfill\nobreak
\noindent\begin{tabular}{
>{\centering\arraybackslash}m{0.33\textwidth}
>{\centering\arraybackslash}m{0.33\textwidth}
>{\centering\arraybackslash}m{0.33\textwidth}}
\href{https://www.inf.fu-berlin.de/inst/ag-ti/software/DiscreteHopfFibration/gallery.html?f=IxCn/5/1560cells_12tubes}{\includegraphics[scale=1.1]{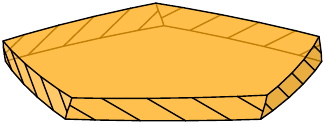}}
&
\href{https://www.inf.fu-berlin.de/inst/ag-ti/software/DiscreteHopfFibration/gallery.html?f=IxCn/5/1680cells_12tubes}{\includegraphics[scale=1.1]{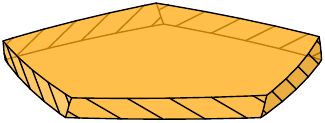}}
&
\href{https://www.inf.fu-berlin.de/inst/ag-ti/software/DiscreteHopfFibration/gallery.html?f=IxCn/5/1800cells_12tubes}{\includegraphics[scale=1.1]{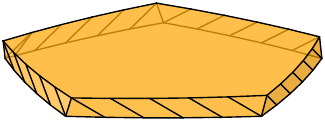}}
\\[1.5mm]
$n = 13, 65$\break
$(\frac{3}{5} + \frac{1}{130})\cdot 2\pi$
&$n = 14, 70$\break
$(\frac{1}{5} + \frac{1}{140})\cdot 2\pi$
&$n = 75$\break
$(\frac{k}{5} + \frac{1}{150})\cdot 2\pi$
\end{tabular}\nobreak

\begin{figure}[H]
\caption{
The ${+I}$-orbit polytope of the 5-fold rotation center
$p=(1/\sqrt{\phi^2+1})(0, 1, \phi)$ of ${+I}$,
where $\phi=(1+\sqrt5)/2$ (top left),
and the spherical Voronoi diagram of that orbit (top right).
The remaining pictures show cells of polar
$\pm[I\times C_n]$-orbit polytopes
for a starting point on $K_p$
for different values of $n$. %
In addition we indicate
the counterclockwise angle (as seen from the top) by which the
group rotates as it proceeds from a cell to the consecutive cell above.
When the same orbit arises for several values of $n$, then the indicated
angle is the unique valid angle only for the smallest value $n_0$ that
is specified. For a larger value $n=5n_0$, this can be combined with
arbitrary multiples of a $5$-fold rotation.
The polar orbit polytope
can be decomposed into 12 tubes,
each with $\lcm(2n, 10)$ cells.
The blue vertical line
indicates the cell axis, the direction towards the next cell along~$K_p$.
For an appropriate choice of starting point on $K_p$,
the group $\pm[I\times D_{2n}]$
produces the same orbit.
}

\label{fig:IxCn_5fold}
\end{figure}\endgroup
\end{figure}

\subsubsection{\texorpdfstring{$\pm\frac12[O\times C_{2n}]$}{+-1/2[OxC2n]}, 4-fold rotation center}
\label{ssubsec:hOxC2n}

Let $G = \pm\frac12[O\times C_{2n}]$.
We want to consider the $G$-orbit of a point whose image under $h$
is a 4-fold rotation center $p$ of ${+O}$.
The discussion will closely parallel that of the group from the previous section,
but in connection with the 4-fold rotation, we will also meet Case~3.
Any of the 4-fold rotation centers $p$ gives the same orbit.
So let $p$ be the 4-fold rotation center
$p = (0, 1, 0)$.
Then
$g = -\w i_O = \cos\frac\pi4 + p \sin\frac\pi4 \in 2O$
defines the $90^\circ$ clockwise rotation $[g]\in {+O}$ around~$p$.
By Proposition~\ref{prop:action_on_its_circle},
we determine the elements of $G$ that preserve $K_p$ as
the subgroup
$H = \langle [g, e_{2n}], [1, e_n] \rangle$
of order $8n$, which
acts on $K_p$ as a 2-dimensional cyclic group.
The rotation $[g, e_{2n}]$ rotates $\vec K_p$ by
$-\frac\pi4 + \frac\pi{2n} = -\frac{(n-2)\pi}{4n}$.
Its order is
\begin{displaymath}
    \frac{2\pi}{\gcd(\frac{(n-2)\pi}{4n}, 2\pi)} = 
    \frac{2\pi}{\frac{\pi}{4n}\gcd(n-2, 8n)} = 
    \frac{8n}{\gcd(n-2, 8n-8(n-2))} = 
    \frac{8n}{\gcd(n-2, 16)}.
\end{displaymath}
The other operation, $[1, e_n]$ rotates it by $\frac\pi{n}$.
Thus, the $G$-orbit of a point on $K_p$
forms a regular polygon with
$\lcm(2n, \frac{8n}{\gcd(n-2, 16)})$ sides on $K_p$.
The denominator $\gcd(n-2, 16)$ can take the values $1,2,4,8,16$, but
in the overall expression, the values $4,8,16$ make no distinction,
and thus we can simplify the expression for the number of sides
to $\frac{8n}{\gcd(n-2, 4)}$.

The structure of the orbit of a point $v \in K_p$ depends on $n$.
Cells of the polar orbit polytopes for different values of $n$ are
shown in Figure~\ref{fig:hOxC2n_4fold}.

\begin{enumerate}
\item[$\bullet$]
If $n - 2$ is a multiple of $4$,
then $\gcd(n-2, 4) = 4$ and $\frac{8n}{\gcd(n-2, 4)} = 2n$.
The orbit points form a regular $2n$-gon on each orbit circle,
and every point can be mapped to itself by 4 different elements of $G$.
This is reflected on the polar orbit polytope
where each cell has a 4-fold symmetry
whose axis is the cell axis.

This corresponds to Case~1 in Section~\ref{sec:on-axis},
where $H$ contains a simple rotation of order $4$ fixing $K_p$.

The element $[1, e_n]$ of $G$ maps an orbit point
to an adjacent one on the same circle.
Correspondingly, on each tube,
the cells of the polar orbit polytope are stacked upon each other
with a right screw by $\frac\pi{2n}$.

\item[$\bullet$]
If $n - 2 \equiv 2 \bmod 4$,
then $\gcd(n-2, 4) = 2$ and $\frac{8n}{\gcd(n-2, 4)} = 4n$.
The orbit points form a regular $4n$-gon on each orbit circle,
and every point can be mapped to itself by 2 different elements of $G$.
However, this orbit has extra symmetries. %
In particular, the rotation $[1, e_{2n}]$ maps each orbit point
to an adjacent one on the same circle.
Adjoining $[1, e_{2n}]$ to $G$ gives the supergroup $\pm[O\times C_{2n}]$,
which contains $G$ as an index-2 subgroup.
Thus, each orbit point can be mapped to itself by 2 extra symmetries
that are not in~$G$.
Accordingly, as in the first case, every cell of the polar orbit polytope has a 4-fold symmetry
whose axis is the cell axis.

This corresponds to Case~3 in Section~\ref{sec:on-axis},
where $H$ contains a simple rotation of order $2$ fixing $K_p$.

The symmetry $[1, e_{2n}]$ (which is not in $G$)
maps an orbit point to adjacent one on the same circle.
Correspondingly, on each tube,
the cells of the polar orbit polytope are stacked upon each other
with a right screw by $\frac\pi{2n}$.

\item[$\bullet$]
If $n - 2$ is odd,
then $\gcd(n-2, 4) = 1$ and $\frac{8n}{\gcd(n-2, 4)} = 8n$.
The orbit is free.
The orbit forms a regular $8n$-gon on each orbit circle.
Every point can be mapped to any other point by a unique element of $G$.
Again, the orbit has extra symmetries. %
In particular, the rotation $[1, e_{4n}]$ maps each orbit point
to an adjacent one on the same circle.
Adjoining $[1, e_{4n}]$ to $G$ gives the supergroup $\pm[O\times C_{4n}]$,
which contains $G$ as an index-4 subgroup.
Thus, each orbit point can be mapped to itself by 4 symmetries.
Accordingly, as in the other cases, every cell of the polar orbit polytope has a 4-fold symmetry
whose axis is the cell axis.

This corresponds to Case~2 in Section~\ref{sec:on-axis},
where $H$ does not contain a simple rotation fixing $K_p$.

The symmetry $[1, e_{4n}]$ (which is not in $G$)
maps an orbit point to the next one on the same circle.
Correspondingly, on each tube,
the cells of the polar orbit polytope are stacked upon each other
with a right screw by $\frac\pi{4n}$.
\end{enumerate}

In accordance with Section~\ref{sec:flip},
every cell has a flip symmetry,
which is not included in $G$.
It comes from (a group geometrically equal to)
the group $\pm\frac12[O\times \overline{D}_{4n}]$,
which contains $G$ as an index-2 subgroup.

The top and bottom faces in each cell are congruent.
They resemble the shape of a rounded square, in agreement
with %
the quadrilateral %
Voronoi cell %
on the 2-sphere, as shown in
the top right figure in Figure~\ref{fig:hOxC2n_4fold}.

Since the ${+O}$-orbit of $p$ has size 6,
the $G$-orbit of $v$ lies on 6 orbit circles.
Accordingly, the cells of the polar orbit polytope can
be decomposed into 6 tubes, each with $\frac{8n}{\gcd(n-2, 4)}$ cells.

Similar to the previous section,
one can work out the right screw angle (in $G$)
between consecutive slices.
To summarize:
When $n - 2$ is odd, there is a unique angle of the form:
$(\frac{k_0}4 + \frac{1}{8n})\cdot 2\pi$ (with specific $k_0=1$, $2$, or~$3$).
When $n - 2 \equiv 2 \bmod 4$,
there are two angles:
$(\frac{2k+1}4 + \frac{1}{4n})\cdot 2\pi$ (with arbitrary $k$).
When $n - 2$ is a multiple of 4,
there are four angles:
$(\frac{k}4 + \frac{1}{2n})\cdot 2\pi$ (with arbitrary $k$).

\begin{figure}[htp]
\begin{minipage}{0.5\textwidth}
\centering
\includegraphics[scale=1]{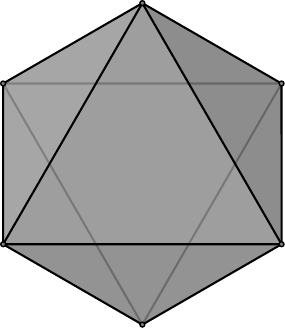}
\end{minipage}%
\begin{minipage}{0.5\textwidth}
\centering
\includegraphics[scale=1]{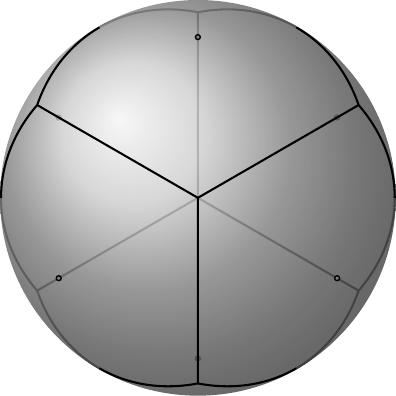}
\end{minipage}
\vskip 5pt plus 0.5fill
\begingroup
\setlength{\tabcolsep}{0pt}
\noindent\begin{tabular}{
>{\centering\arraybackslash}m{0.33\textwidth}
>{\centering\arraybackslash}m{0.33\textwidth}
>{\centering\arraybackslash}m{0.33\textwidth}}
\href{https://www.inf.fu-berlin.de/inst/ag-ti/software/DiscreteHopfFibration/gallery.html?f=hOxC2n/4/24cells_6tubes}{\vbox{\vskip-3mm \includegraphics[scale=0.5]{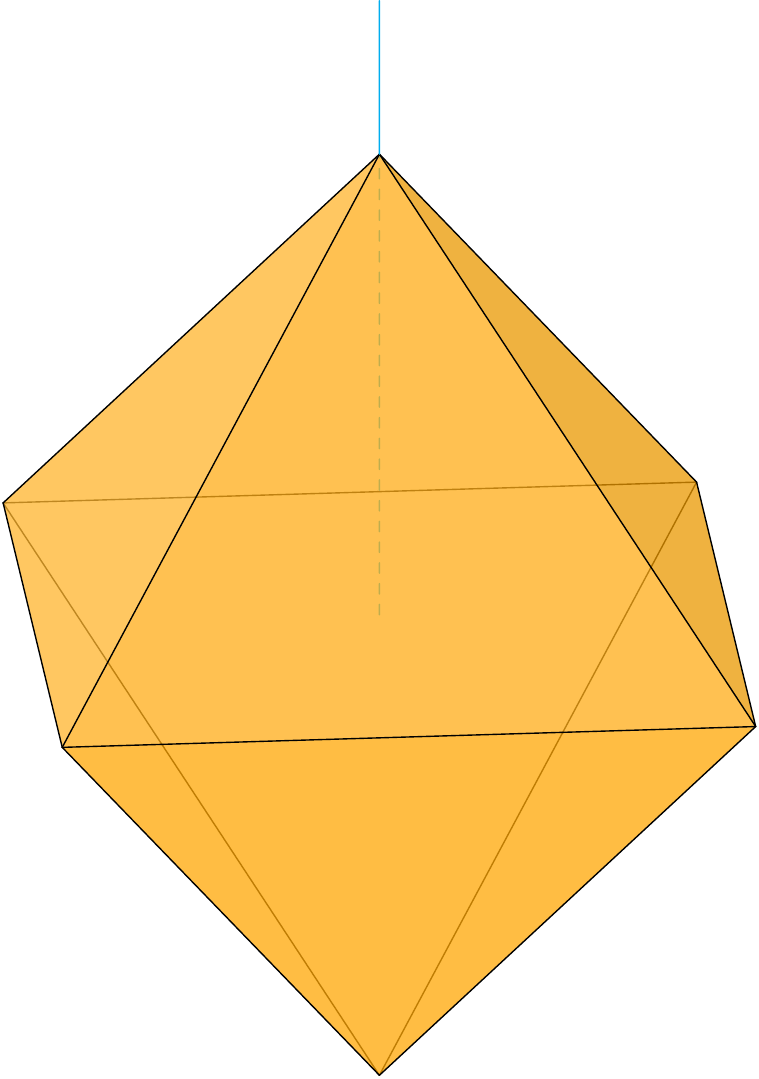}}}
&
\href{https://www.inf.fu-berlin.de/inst/ag-ti/software/DiscreteHopfFibration/gallery.html?f=hOxC2n/4/48cells_6tubes}{\vbox{\vskip-3mm \includegraphics[scale=0.8]{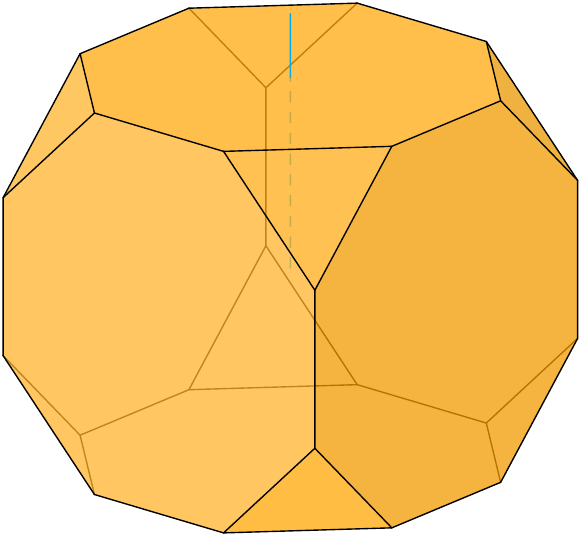}}}
&
\href{https://www.inf.fu-berlin.de/inst/ag-ti/software/DiscreteHopfFibration/gallery.html?f=hOxC2n/4/72cells_6tubes}{\includegraphics[scale=0.8]{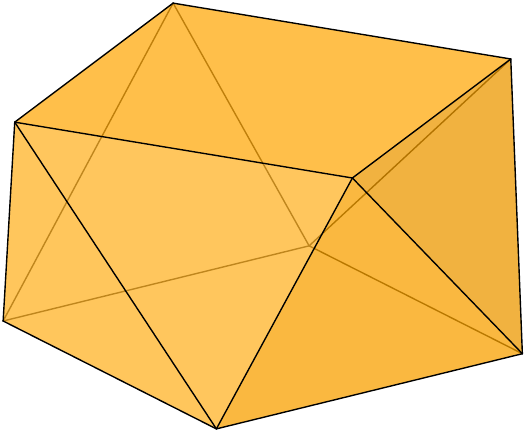}}
\\[1.5mm]
$n = 2$\break
$(\frac{k}{4} + \frac{1}{4})\cdot 2\pi$
&$n = 1$\break
$(\frac{1}{4} + \frac{1}{8})\cdot 2\pi$
&$n = 6$\break
$(\frac{k}{4} + \frac{1}{12})\cdot 2\pi$
\end{tabular}\nobreak

\vfill\nobreak
\noindent\begin{tabular}{
>{\centering\arraybackslash}m{0.33\textwidth}
>{\centering\arraybackslash}m{0.33\textwidth}
>{\centering\arraybackslash}m{0.33\textwidth}}
\href{https://www.inf.fu-berlin.de/inst/ag-ti/software/DiscreteHopfFibration/gallery.html?f=hOxC2n/4/96cells_6tubes}{\includegraphics[scale=0.8]{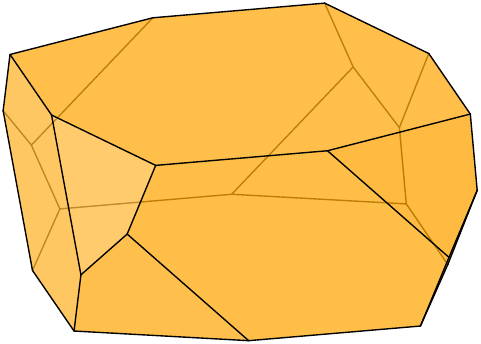}}
&
\href{https://www.inf.fu-berlin.de/inst/ag-ti/software/DiscreteHopfFibration/gallery.html?f=hOxC2n/4/120cells_6tubes}{\includegraphics[scale=0.8]{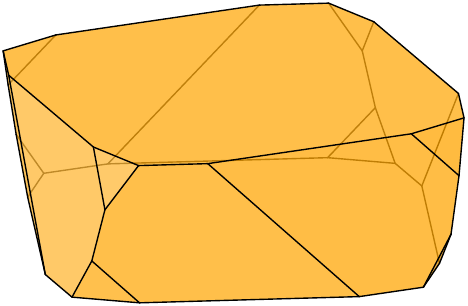}}
&
\href{https://www.inf.fu-berlin.de/inst/ag-ti/software/DiscreteHopfFibration/gallery.html?f=hOxC2n/4/144cells_6tubes}{\includegraphics[scale=0.8]{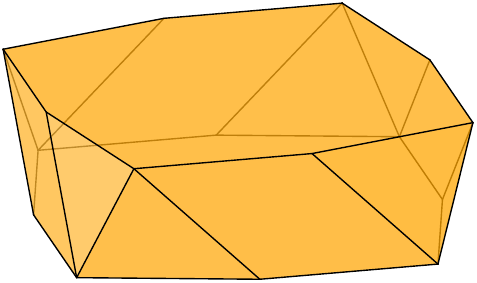}}
\\[1.5mm]
$n = 4$\break
$(\frac{2k+1}{4} + \frac{1}{16})\cdot 2\pi$
&$n = 10$\break
$(\frac{k}{4} + \frac{1}{20})\cdot 2\pi$
&$n = 3$\break
$(\frac{3}{4} + \frac{1}{24})\cdot 2\pi$
\end{tabular}\nobreak

\vfill\nobreak
\noindent\begin{tabular}{
>{\centering\arraybackslash}m{0.33\textwidth}
>{\centering\arraybackslash}m{0.33\textwidth}
>{\centering\arraybackslash}m{0.33\textwidth}}
\href{https://www.inf.fu-berlin.de/inst/ag-ti/software/DiscreteHopfFibration/gallery.html?f=hOxC2n/4/168cells_6tubes}{\includegraphics[scale=0.8]{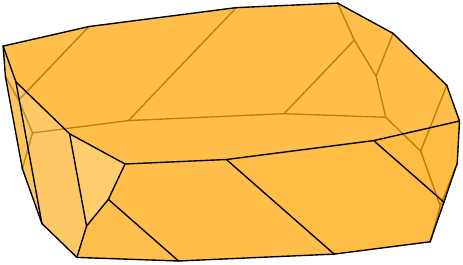}}
&
\href{https://www.inf.fu-berlin.de/inst/ag-ti/software/DiscreteHopfFibration/gallery.html?f=hOxC2n/4/192cells_6tubes}{\includegraphics[scale=0.8]{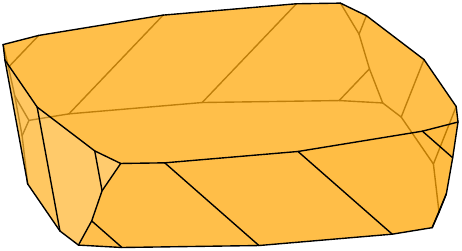}}
&
\href{https://www.inf.fu-berlin.de/inst/ag-ti/software/DiscreteHopfFibration/gallery.html?f=hOxC2n/4/216cells_6tubes}{\includegraphics[scale=0.8]{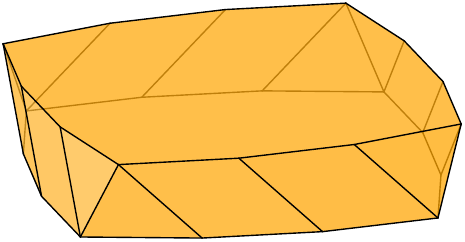}}
\\[1.5mm]
$n = 14$\break
$(\frac{k}{4} + \frac{1}{28})\cdot 2\pi$
&$n = 8$\break
$(\frac{2k+1}{4} + \frac{1}{32})\cdot 2\pi$
&$n = 18$\break
$(\frac{k}{4} + \frac{1}{36})\cdot 2\pi$
\end{tabular}\nobreak

\vfill\nobreak
\noindent\begin{tabular}{
>{\centering\arraybackslash}m{0.33\textwidth}
>{\centering\arraybackslash}m{0.33\textwidth}
>{\centering\arraybackslash}m{0.33\textwidth}}
\href{https://www.inf.fu-berlin.de/inst/ag-ti/software/DiscreteHopfFibration/gallery.html?f=hOxC2n/4/240cells_6tubes}{\includegraphics[scale=0.8]{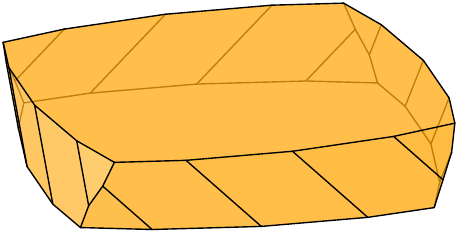}}
&
\href{https://www.inf.fu-berlin.de/inst/ag-ti/software/DiscreteHopfFibration/gallery.html?f=hOxC2n/4/264cells_6tubes}{\includegraphics[scale=0.8]{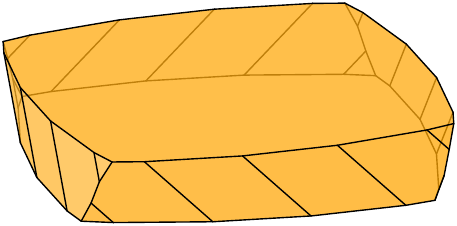}}
&
\href{https://www.inf.fu-berlin.de/inst/ag-ti/software/DiscreteHopfFibration/gallery.html?f=hOxC2n/4/288cells_6tubes}{\includegraphics[scale=0.8]{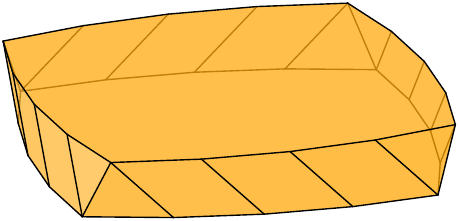}}
\\[1.5mm]
$n = 5$\break
$(\frac{1}{4} + \frac{1}{40})\cdot 2\pi$
&$n = 22$\break
$(\frac{k}{4} + \frac{1}{44})\cdot 2\pi$
&$n = 12$\break
$(\frac{2k+1}{4} + \frac{1}{48})\cdot 2\pi$
\end{tabular}\nobreak

\begin{figure}[H]
\caption{
The ${+O}$-orbit polytope of the 4-fold rotation center
$p = (0, 1, 0)$ of ${+O}$ (top left),
and the spherical Voronoi diagram of that orbit (top right).
The remaining pictures show cells of polar
$\pm\frac12[O\times C_{2n}]$-orbit polytopes
for a starting point on $K_p$
for different values of $n$. %
In addition we indicate
the counterclockwise angle (as seen from the top) by which the
group rotates as it proceeds from a cell to the consecutive cell above.
The polar orbit polytope
can be decomposed into 6 tubes,
each with $\frac{8n}{\gcd(n-2, 4)}$ cells.
The blue vertical line
indicates the cell axis, the direction towards the next cell along~$K_p$.
For an appropriate choice of starting point on $K_p$,
the group $\pm\frac12[O\times \overline{D}_{4n}]$
produces the same orbit.
When $n = 2$,
the cells that should form a tube touch each other only in a vertex.
}
\label{fig:hOxC2n_4fold}
\end{figure}\endgroup
\end{figure} %

\subsection{Consequences for starting points near rotation centers}
\label{relation-near-on}
In Sections~\ref{sec:near-axis} and~\ref{sec:on-axis} we have
discussed the different cases that can arise for an orbit
\emph{near} a rotation axis and \emph{on} a rotation axis. Indeed, we
can confirm this relation by comparing
Figure~\ref{fig:near_5fold} and
Figure~\ref{fig:IxCn_5fold}.
By the analysis that lead to Figure~\ref{fig:near_5fold},
 an orbit of
$\pm[I\times C_{n}]$
 near a 5-fold rotation axis forms a
 $4/5$,
 $2/5$,
 $3/5$, or
 $1/5$
 staircase
 if $n\equiv 1,2,3,4 \bmod 5$, respectively, and it forms
 pentagons if $n$ is a multiple of~$5$. We can check in
  Figure~\ref{fig:IxCn_5fold} that these values are precisely the
  specified rotations (up to the twist by $\frac\pi{5n}$), except when 
  $n$ is a multiple of~$5$, and in that case all five rotations are allowed.
Similarly, Figure~\ref{fig:near_4fold} corresponds with
Figure~\ref{fig:OxCn_4fold}.
  
  Conversely, we can consult the appropriate entries in
  Appendix~\ref{sec:special_starting_points} for orbits \emph{on} a
  rotation axis to conclude what type of pentagons, quadrilaterals,
  triangles, pairs, or staircases to expect for an orbit \emph{near}
  this rotation axis.

\subsection{Mappings between different tubes}
Continuing the discussion
of the tubes
for the groups $G = \pm\frac12[O\times C_{2n}]$,
from Section~\ref{ssubsec:hOxC2n},
we will now continue with the program
set out in
Figure~\ref{fig:geometric-understanding} in
Section~\ref{sec:orbit-polytopes}, by asking, for this example, how cells in different
tubes are mapped to each other.
The cells in
Figure~\ref{fig:hOxC2n_4fold} have a roughly four-sided shape.
At \emph{corners} of these quadrilaterals, three tubes meet.

To understand what is happening there, we imagine putting a starting
point $v'$ near a corner. Then $h(v')$ is near a three-fold rotation
center of $+O$. Near such a rotation center, the orbit forms either a
set of triangles, or a left or right
staircase. As just discussed, we can check this by consulting the
pictures for the orbit \emph{on} a three-fold rotation axis:
Figure~\ref{fig:hOxC2n_3fold}.

We see that those cells of
Figure~\ref{fig:hOxC2n_4fold} that have a straight line segment $A$
between the top and the bottom face at the corners ($n=6,3,18,12$)
correspond to cases where the orbit of $v'$ consists of triangles.
Indeed, one can imagine three cells arranges around a common edge $A$.
(The cells don't lie perpendicular to the axis $A$, but they are twisted.)

For the remaining cases ($n=1,4,10,14,8,5,22$) the edge is broken into
three parts between the top and the bottom face, and this is where the
cells are arranged in a staircase-like fashion.

\subsection{Small values of $n$}
\label{small-n}

For small values of $n$,
some of the cyclic-type tubical groups
recover well-known decompositions of regular/uniform polytopes
into tubes (or more commonly knows as rings).
These appear in various places in the literature.
We list some of the references.
Next to each group, we state the rotation center of the induced group
that is the image of the starting point.

\begin{itemize}
\item
$\pm[I\times C_1]$ and 5-fold rotation center
(Figure~\ref{fig:IxCn_5fold}):
We get the 
decomposition of the 120-cell into
12 tubes, each with 10 regular dodecahedra.\footnote
{A remarkable paper model of a Schlegel diagram with two rings
was produced by Robert Webb, \url{https://youtu.be/2nTLI89vdzg}.
An interesting burr puzzle was made in \cite{SS15}
using pieces of these rings.}.
Figure~\ref{fig:inscribed_tetrahedra} shows a picture of
three dodecahedra from one tube, see also \cite[Figure~21]{duval},
\cite[p.~75]{coxeter1959symmetrical} and Coxeter~\cite[p.~53]{Cox-complex}.

\item
$\pm[O\times C_1]$ and 4-fold rotation center
(Figure~\ref{fig:OxCn_4fold}):
We get the 
decomposition of the bitruncated 24-cell (the 48-cell) into
6 tubes, each with 8 truncated cubes,
stacked upon the octagonal faces.

\item
$\pm[O\times C_1]$ and 3-fold rotation center
(Figure~\ref{fig:OxCn_3fold}):
We get the 
decomposition of the bitruncated 24-cell (the 48-cell) into
8 tubes, each with 6 truncated cubes,
stacked upon the triangular faces.
\cite[p.~75-76]{coxeter1959symmetrical}.

\item
$\pm[T\times C_1]$ and 3-fold rotation center
(Figure~\ref{fig:TxCn_3fold}):
We get the 
decomposition of the 24-cell into
4 tubes, each with 6 octahedra
\cite[p.~74]{coxeter1959symmetrical},
\cite{banchoff2013}.

\item
$\pm[T\times C_1]$ and 2-fold rotation center
(Figure~\ref{fig:TxCn_2fold}):
We get the 
decomposition of the 24-cell into
6 tubes, each with 4 octahedra,
touching each other via vertices.

\item
$\pm\frac13[T\times C_3]$ and 3-fold (type I) rotation center
(Figure~\ref{fig:tTxC3n_3fold}):
This is a degenerate  case. We get the
decomposition of the hypercube into
4 ``tubes'', but each ``tube'' is just a pair of opposite cube faces.

\end{itemize}

We remark that the orbit of $G = \pm[L\times C_1]$,
is the same, up to congruence, for any starting point.
This follows since the $G$-orbit of a point $v \in \R^4$
can be obtained from the $G$-orbit of the quaternion $1$ by applying
the rotation $[1, v]$:
\begin{displaymath}
    \orbit(v, G)
    = \{\, \bar{l} v \mid l \in L \}
    = [1, v] \{\, \bar{l} \mid l \in L \}
    = [1, v] \orbit(1, G).
\end{displaymath}

\subsection{Online gallery of polar orbit polytopes}
\label{sec:gallery}

 The interested reader can explore
 polar orbit polytopes for
the cyclic-type tubical groups with
all special choices of starting points
in an online gallery that provides interactive three-dimensional views.\footnote
{\url{https://www.inf.fu-berlin.de/inst/ag-ti/software/DiscreteHopfFibration/}.
  In the PDF-file of this article,
the pictures of the cells in the figures in
Section~\ref{subsec:examples} and Appendix~\ref{sec:special_starting_points}
are linked to the corresponding entries in the gallery.}

The polytopes are shown in a central projection
to the three-dimensional tangent space at the starting point $v$ of the orbit.
The projection center lies
outside the polytope, close to the cell $F_0$ %
opposite to $v$.
In the projection, $F_0$ becomes the outer cell
that (almost) encloses all remaining projected cells.
The orientation of the outer cell is reversed with respect to the other cells.
We are mostly interested not in $F_0$
but in the cells near $v$, %
which are distorted the least in the projection,
and as a consequence,
we go with the majority and ensure that \emph{these}
cells are oriented according to our convention
(Section~\ref{sec:view-orientation}).
For large values on $n$, 
we have refrained from constructing true
Schlegel diagrams, because this would have resulted
in tiny inner cells.
As a result, cells near the boundary of the projection wrap around and
overlap.

The goal of the gallery is to show the decomposition of the polytopes
into tubes, and how these tubes are structured and interact with each other.
It is possible to remove cells one by one to see more structure.
The order of the cells is based on the distances of their orbit points to
the starting point $v$. %

\subsection{\texorpdfstring{$\pm[T\times C_n]$}{+-[TxCn]}
versus
\texorpdfstring{$\pm\frac13[T\times C_{3n}]$}{+-1/3[TxC3n]}}
\label{sec:same-symmetry}
Looking at the tubical groups in Table~\ref{tbl:left_tubical_groups},
we see that there are groups $G$ with the same induced symmetry
group $G^h$ on $S^2$. %
Thus, for the same starting point,
these groups have the same orbit circles.
However, they differ in the way how the points
on different circles are arranged relative to each other.

In this section we will consider the case where the induced group is ${+T}$.
For the same $n$, we will compare the actions of 
$\pm[T\times C_n]$ and $\pm\frac13[T\times C_{3n}]$
on and around the circles of $\H$ 
that correspond to rotation centers of ${+T}$.
We will see that these two groups
have different sets of fixed circles of $\H$,
which correspond to 3-fold rotation centers of ${+T}$.
On such a fixed circle, the size of the orbit is reduced by a factor
of~3
(from $24n$ to $8n$, see Table~\ref{tbl:tubical_supergroups}).
In Figures~\ref{fig:near_3fold}~and~\ref{fig:near_3pfold},
we visualize the effect of that difference on the orbit points
and the cells of the polar orbit polytope around these circles.
We will see that triangles and both types of staircases appear in
$\pm[T\times C_n]$ and $\pm\frac13[T\times C_{3n}]$,
depending on $n$.
In this sense, there is no sharp geometric distinction between the two families.

\paragraph{2-fold rotation center.}
Let $p \in S^2$ be a $2$-fold rotation center of ${+T}$
and let $[g]\in +T$ be the $180^\circ$ rotation around $p$.
If $n$ is even, then $[g, e_2]$ is in both groups,
and it is a simple rotation that fixes $K_p$.
If $n$ is odd, then $K_p$ is not fixed.
Thus, for the same $n$,
$\pm[T\times C_n]$ and $\pm\frac13[T\times C_{3n}]$
have the same set of fixed circles
that correspond to $2$-fold rotation centers of ${+T}$.

\paragraph{3-fold rotation center.}
The eight 3-fold rotation centers of ${+T}$ belong to two conjugacy classes,
depending on which ${+T}$-orbit they are in.
The rotation centers of \emph{type I},
are the ones in the orbit of $p_0 = (-1, -1, -1)$,
and the rotation centers of \emph{type II},
are the ones in the orbit of $-p_0 = (1, 1, 1)$.
We will see that the group $\pm[T\times C_n]$
does not distinguish between the circles $K_{p_0}$ and $K_{-p_0}$.
In particular, the orbit of a starting point on $p_0$
is congruent to the one of a starting point on $-p_0$.
However, this is not the case for $\pm\frac13[T\times C_{3n}]$.

The quaternion $-\w \in 2T$ defines the $120^\circ$ clockwise rotation
$[-\w]$ around $p_0$.
That is $-\w = \cos\frac\pi3 +p_0 \sin\frac\pi3$.
The quaternion $-\w^2 \in 2T$ defines the $120^\circ$ clockwise rotation
$[-\w^2]$ around $-p_0$.
That is $-\w^2 = \cos\frac\pi3 -p_0 \sin\frac\pi3$.

By Proposition~\ref{prop:action_on_its_circle},
the set of rotations that preserve $K_{p_0}$ is the 
same as the set of rotations that preserve $K_{-p_0}$.
Let's look at these rotations inside each of the two groups.

\begin{itemize}
\item
The elements of $\pm[T\times C_n]$ that
preserve $K_{p_0}$ (and $K_{-p_0}$)
form the subgroup
\begin{displaymath}
\langle [-\w, 1], [1, e_n] \rangle
=
\langle [-\w^2, 1], [1, e_n] \rangle
\end{displaymath}
of order $6n$.
The rotation $[-\w, 1]$ rotates $K_{p_0}$
by $\frac\pi3$ in one direction,
while $[1, e_n]$ rotates it by $\frac\pi n$ in the other direction.
Thus, the $\pm[T\times C_n]$-orbit of 
a starting point on $K_{p_0}$ forms a regular
$\lcm(2n, 3)$-gon on $K_{p_0}$.
Similarly, the $\pm[T\times C_n]$-orbit of 
a starting point on $K_{-p_0}$ forms a regular
$\lcm(2n, 3)$-gon on $K_{-p_0}$.
In particular, if $n$ is a multiple of $3$,
$\pm[T\times C_n]$ has 
a simple rotation ($[-\w, e_3]$) fixing $K_p$ and
a simple rotation ($[-\w^2, e_3]$) fixing $K_{-p_0}$.
If $n$ is not a multiple of $3$,
$\pm[T\times C_n]$ has no simple rotation fixing $K_{p_0}$ or
$K_{-p_0}$,
and the orbit points on the three circles form a left or right staircase.

\item
The elements of $\frac13[T\times C_{3n}]$ that
preserve $K_{p_0}$ (and $K_{-p_0}$)
form the subgroup
\begin{displaymath}
\langle[-\w, e_{3n}], [1, e_n]\rangle
=
\langle[-\w^2, e_{3n}^2], [1, e_n]\rangle
\end{displaymath}
of order $6n$.
We will now consider the action of this subgroup
on the circles $K_{p_0}$ and $K_{-p_0}$.
On $K_{p_0}$, the rotation $[-\w, e_{3n}]$ rotates $K_{p_0}$ by
$\frac\pi3 - \frac\pi{3n} = \frac{(n-1)\pi}{3n}$.
Its order is 
\begin{displaymath}
\frac{2\pi}{\gcd(\frac{(n-1)\pi}{3n}, 2\pi)} = 
\frac{2\pi}{\gcd(\frac{\pi}{3n}(n-1), 6n\frac{\pi}{3n})} = 
\frac{2\pi}{\frac{\pi}{3n}\gcd(n-1, 6n)} = 
\frac{6n}{\gcd(n-1, 6)}.
\end{displaymath}
Thus, the $\pm\frac13[T\times C_{3n}]$-orbit of
a starting point on $K_{p_0}$
forms a regular polygon %
with
$\lcm(2n, \frac{6n}{\gcd(n-1, 6)}) = \frac{6n}{\gcd(n-1, 3)}$
sides.
In particular, if $n-1$ is a multiple of $3$,
$\pm\frac13[T\times C_{3n}]$ has a simple rotation fixing $K_{p_0}$.
Otherwise, $G$ has no simple rotation fixing $K_{p_0}$.
On $K_{-p_0}$, the rotation $[-\w^2, e_{3n}^2]$ rotates $K_{-p_0}$ by
$\frac{\pi}3 - \frac{2\pi}{3n} = \frac{(n-2)\pi}{3n}$.
Its order is 
\begin{displaymath}
\frac{2\pi}{\gcd(\frac{(n-2)\pi}{3n}, 2\pi)} = 
\frac{2\pi}{\gcd(\frac{\pi}{3n}(n-2), 6n\frac{\pi}{3n})} = 
\frac{2\pi}{\frac{\pi}{3n}\gcd(n-2, 6n)} = 
\frac{6n}{\gcd(n-2, 12)}.
\end{displaymath}
Thus, the $\pm\frac13[T\times C_{3n}]$-orbit of
a starting point on $K_{-p_0}$
forms a regular polygon %
with
$\lcm(2n, \frac{6n}{\gcd(n-2, 12)}) = \frac{6n}{\gcd(n-2, 3)}$ sides.
In particular, if $n-2$ is a multiple of $3$,
$\pm\frac13[T\times C_{3n}]$ has a simple rotation
fixing $K_{-p_0}$.
Otherwise, $G$ has no simple rotation fixing $K_{-p_0}$.
\end{itemize}

To summarize,
$\pm[T\times C_n]$ fixes $K_{p_0}$ and $K_{-p_0}$
if and only if $n \equiv 0 \bmod 3$.
While,
$\pm\frac13[T\times C_{3n}]$ fixes $K_{p_0}$
if and only if $n \equiv 1 \bmod 3$,
and it fixes $K_{-p_0}$
if and only if $n \equiv 2 \bmod 3$.

Here, we have discussed the situation in terms of orbits near the
axis.
As discussed in Section~\ref{relation-near-on}, the results can be
checked against Figures~\ref{fig:TxCn_3fold},
\ref{fig:tTxC3n_3pfold}, and~\ref{fig:tTxC3n_3fold}.

\begin{figure}[htbp]
\begin{subfigure}{0.5\textwidth}
    \centering
    \includegraphics[scale=1]{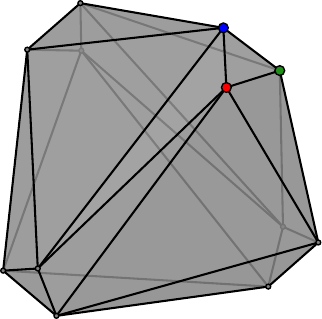}
\end{subfigure}%
\begin{subfigure}{0.5\textwidth}
    \centering
    \includegraphics[scale=1]{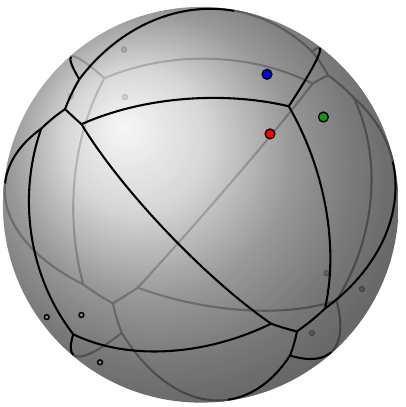}
\end{subfigure}
\caption{
The ${+T}$-orbit polytope of a starting point near
a %
3-fold rotation center
of ${+T}$ (left), and
the spherical Voronoi diagram of this orbit (right).
The picture looks the same for a Type I or a Type II center.
}
\label{fig:near_3fold_orbit}
\begin{subfigure}{.333\textwidth}
    \centering
    \includegraphics[width=4cm]{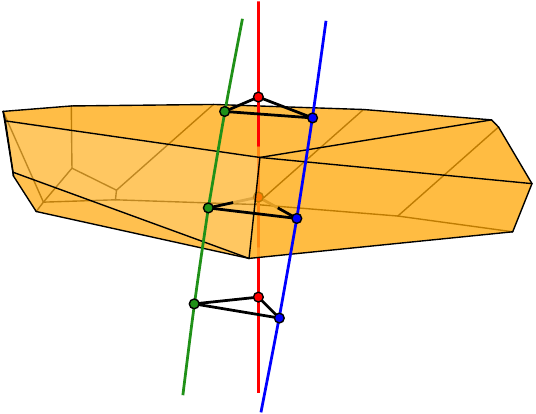}
    \caption{$\pm[T\times C_{15}]$}
    \label{subfig:near_3fold_TxC15}
\end{subfigure}%
\begin{subfigure}{.333\textwidth}
    \centering
    \includegraphics[width=4cm]{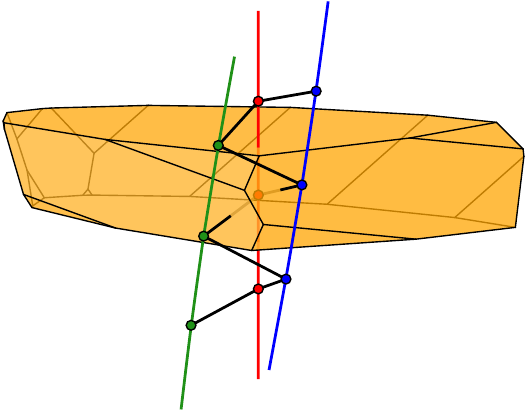}
    \caption{$\pm[T\times C_{16}]$}
    \label{subfig:near_3fold_TxC16}
\end{subfigure}%
\begin{subfigure}{.333\textwidth}
    \centering
    \includegraphics[width=4cm]{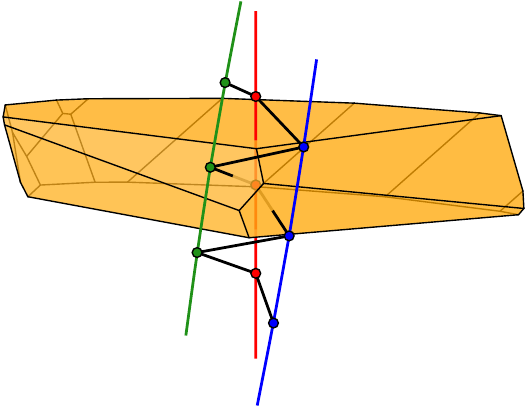}
    \caption{$\pm[T\times C_{17}]$}
    \label{subfig:near_3fold_TxC17}
\end{subfigure}
\begin{subfigure}{.333\textwidth}
    \centering
    \includegraphics[width=4cm]{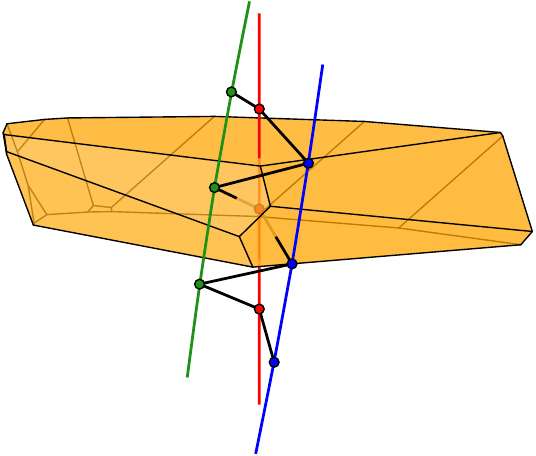}
    \caption{$\pm\frac13[T\times C_{45}]$}
    \label{subfig:near_3fold_tTxC45}
\end{subfigure}%
\begin{subfigure}{.333\textwidth}
    \centering
    \includegraphics[width=4cm]{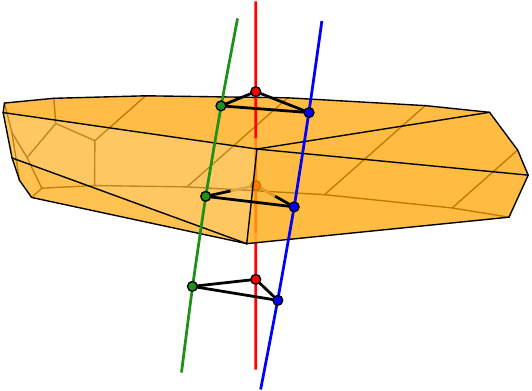}
    \caption{$\pm\frac13[T\times C_{48}]$}
    \label{subfig:near_3fold_tTxC48}
\end{subfigure}%
\begin{subfigure}{.333\textwidth}
    \centering
    \includegraphics[width=4cm]{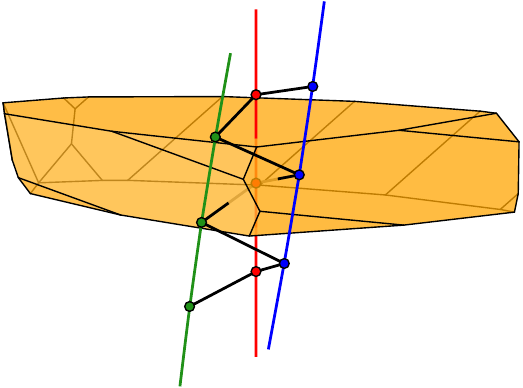}
    \caption{$\pm\frac13[T\times C_{51}]$}
    \label{subfig:near_3fold_tTxC51}
\end{subfigure}
\caption{
Cells of polar orbit polytopes of the corresponding groups,
where the image of the starting point lies near a 3-fold
rotation center of  type~I.
The colors are in correspondence with Figure~\ref{fig:near_3fold_orbit}.
}
\label{fig:near_3fold}

\begin{subfigure}{.333\textwidth}
    \centering
    \includegraphics[width=4cm]{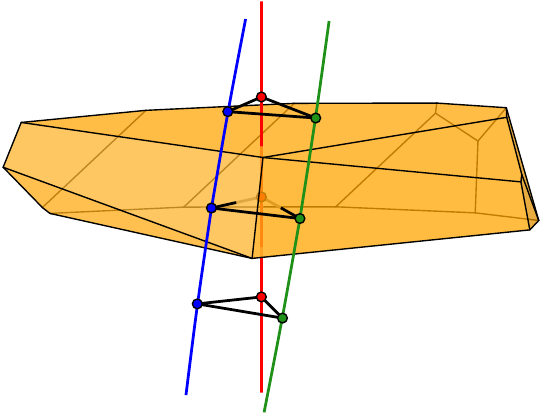}
    \caption{$\pm[T\times C_{15}]$}
    \label{subfig:near_3pfold_TxC15}
\end{subfigure}%
\begin{subfigure}{.333\textwidth}
    \centering
    \includegraphics[width=4cm]{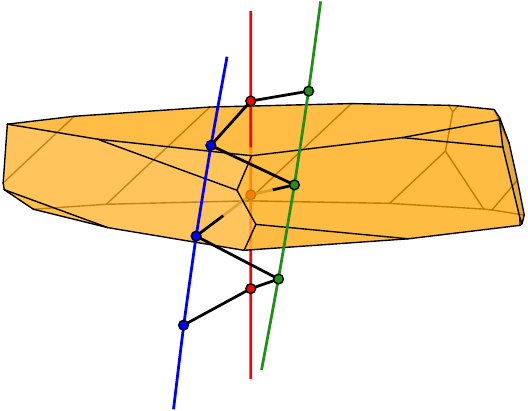}
    \caption{$\pm[T\times C_{16}]$}
    \label{subfig:near_3pfold_TxC16}
\end{subfigure}%
\begin{subfigure}{.333\textwidth}
    \centering
    \includegraphics[width=4cm]{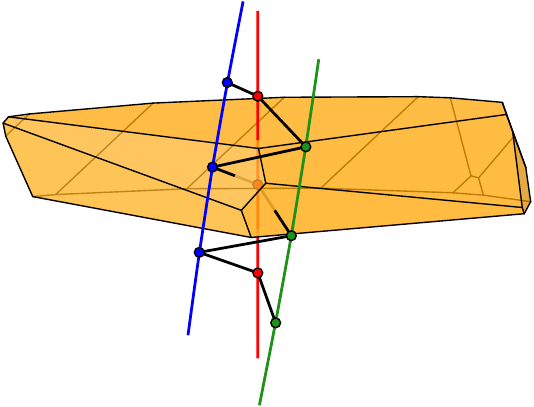}
    \caption{$\pm[T\times C_{17}]$}
    \label{subfig:near_3pfold_TxC17}
\end{subfigure}
\begin{subfigure}{.333\textwidth}
    \centering
    \includegraphics[width=4cm]{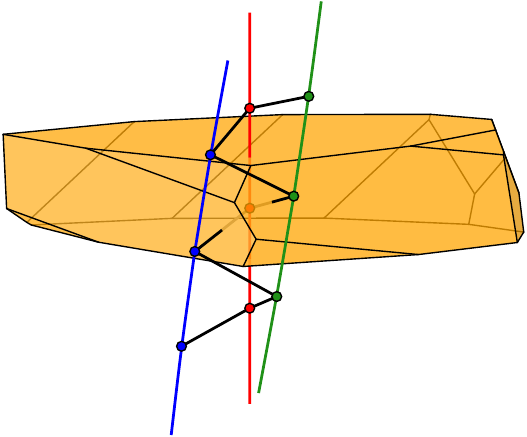}
    \caption{$\pm\frac13[T\times C_{45}]$}
    \label{subfig:near_3pfold_tTxC45}
\end{subfigure}%
\begin{subfigure}{.333\textwidth}
    \centering
    \includegraphics[width=4cm]{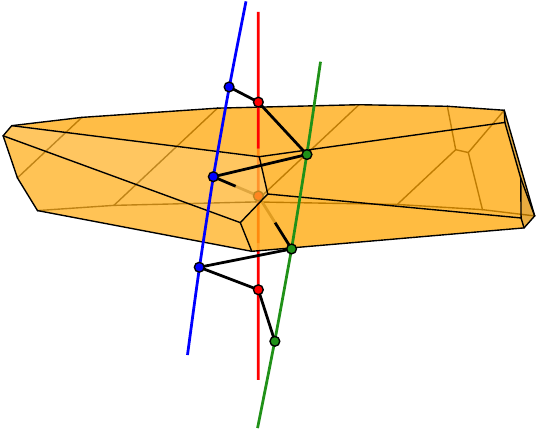}
    \caption{$\pm\frac13[T\times C_{48}]$}
    \label{subfig:near_3pfold_tTxC48}
\end{subfigure}%
\begin{subfigure}{.333\textwidth}
    \centering
    \includegraphics[width=4cm]{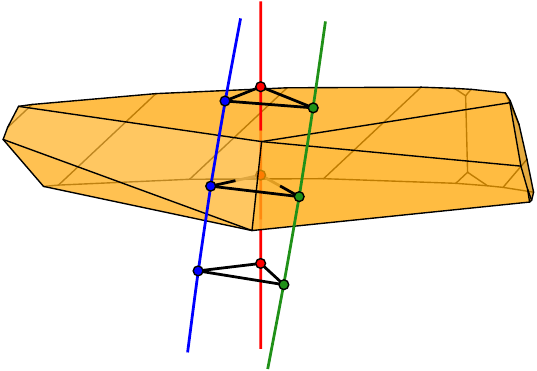}
    \caption{$\pm\frac13[T\times C_{51}]$}
    \label{subfig:near_3pfold_tTxC51}
\end{subfigure}
\caption{
Cells of polar orbit polytopes of the corresponding groups,
where the image of the starting point lies near a 3-fold
rotation center of type~II.
The colors are in correspondence with Figure~\ref{fig:near_3fold_orbit}.
}
\label{fig:near_3pfold}
\end{figure}

\section{The toroidal groups}
\label{sec:toroidal}

\subsection{The invariant Clifford torus}
\label{sec:Clifford-torus-tentative}

We will now study the large class of groups
of type $[D\times D]$ or
 $[C\times C]$ or
 $[C\times D]$,
where {both} the left {and} the right group are cyclic or
dihedral.
At the beginning of
Section~\ref{sec:clifford-tori}, we have seen that these groups have
an invariant
Clifford torus $\T_p^q$.
 All 
 tori $\T^q_p$ are the same up to orthogonal transformations. We can
 thus, without loss of generality, restrict our attention to
 the standard torus
 $\T^i_i$. Indeed this is the torus that is left invariant by the
 left and right multiplication with the groups
 $\pm[D_{2m}\times D_{2n}]$ and their subgroups, as follows from
Proposition~\ref{prop:transformations_preserving_H2}.
When we speak of \emph{the torus} in this section, we mean the torus
$\T^i_i$ and we denote it by $\T$.

Since we also have cases where the left and right subgroup are equal,
we also have to deal with their achiral extensions.
According to
Proposition~\ref{1,c},
the extending element can be taken as
$\extendingelement=*[1,c]$, which is a composition of
${*}\colon
(x_1,y_1,x_2,y_2)
\mapsto (x_1,-y_1,-x_2,-y_2)$,
which leaves the torus fixed, with $[1,c]$, for an element $c$ of the
right group,
which also leaves the torus fixed.
This means that the achiral extensions can also be found among the
groups that leave the torus fixed.

We call these groups, namely the subgroups $\pm[D_{2m}\times
D_{2n}]$ and their achiral extensions,
the \emph{toroidal groups}.

\emph{We will study and classify these groups by focusing on their action
  on $\T$}.
In particular, it will be of secondary interest whether the groups are
chiral or achiral, or which Hopf bundles they preserve. These
properties were important to derive the existence of the invariant torus,
but we will not use them for the classification.

Since $\T$ is a two-dimensional flat surface, the symmetry groups
acting on $\T$ bear much resemblance to the discrete symmetry groups of the
plane, i.e., the wallpaper groups. These groups are
well-studied and
intuitive.
All wallpaper groups except those that contain
3-fold rotations will make their appearance (12 out of the 17 wallpaper groups).
The reason for excluding 3-fold rotations is that
a Clifford torus has two distinguished directions, which are
perpendicular to each other, and these directions must be preserved.
We don't assume familiarity with the classification of the wallpaper groups.
We will develop the classification as we go and adapt it to our needs.

\begin{figure}[tb]
  \centering
 \includegraphics{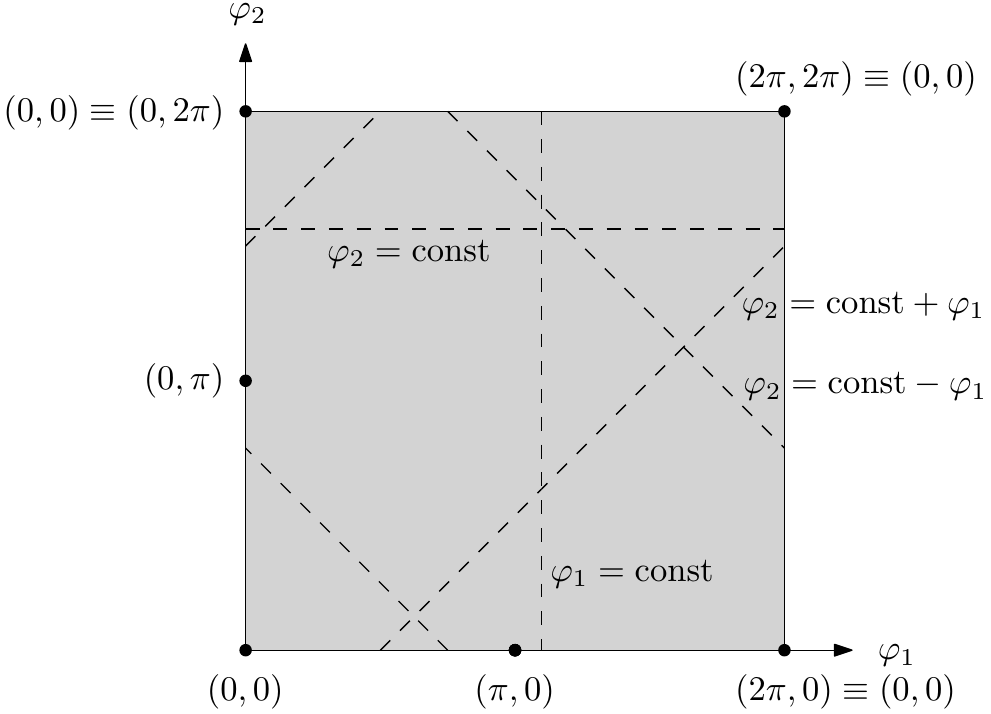}
  \caption{Torus coordinates for the Clifford torus}
  \label{fig:torus-coordinates}
\end{figure}

\subsection{Torus coordinates and the torus foliation}
\label{sec:torus-coordinates}

The Clifford torus
belongs to a foliation of $S^3$ by a family of tori, which,
in terms of Cartesian coordinates $(x_1,y_1,x_2,y_2)$,
have the equations
\begin{equation}
  \label{eq:torus-equation}
  x_1^2+y_1^2 = r_1^2
  ,\
  x_1^2+y_2^2 = r_2^2
\end{equation}
for fixed radii $r_1,r_2$ with $0<r_1,r_2<1$ and $r_1^2+r_2^2=1$.
The standard Clifford torus has the parameters
$r_1=r_2=\sqrt{1/2}$. As limiting cases,
$r_1=1$ gives the great
circle
in the $x_1,y_1$-plane, and
$r_1=0$ gives the great
circle
in the $x_2,y_2$-plane. Every torus in this family is the Cartesian product of two
circles, and thus is a flat torus, with a locally Euclidean metric,
forming a
$2\pi r_1 \times 2\pi r_2 $
rectangle with opposite sides identified.

\begin{figure}[htbp]
  \centering
  \includegraphics{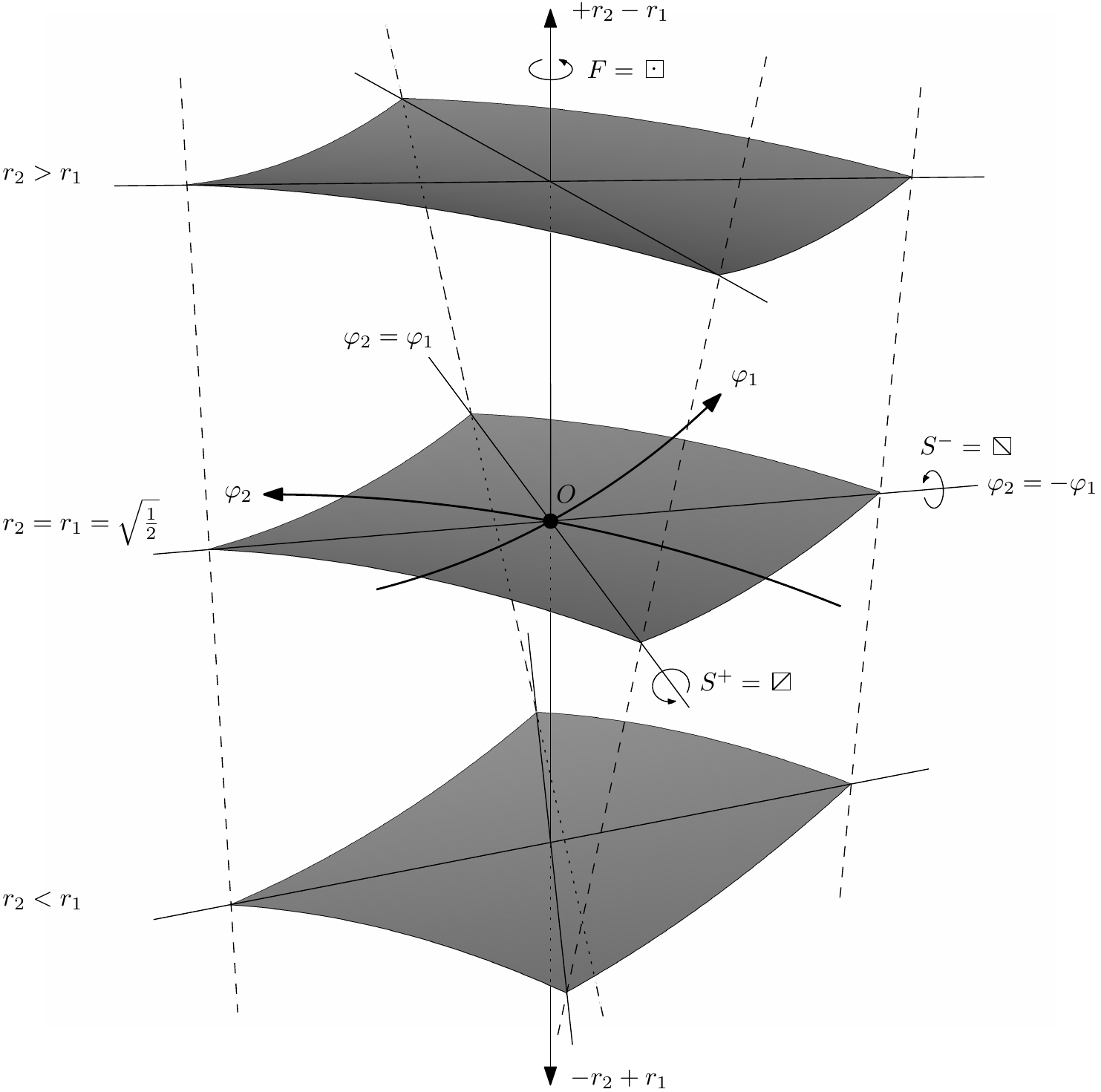}
  \caption{Patches of flat tori in the 3-sphere.
    This %
    illustration is %
    a central
    projection from the 3-sphere to the 3-dimensional
    tangent hyperplane at the point $O=(\sqrt{1/2},0,\sqrt{1/2},0)$,
    which is the marked point in the center.
    Great circles, i.e. geodesics on the 3-sphere,
    appear as straight lines.
    The axes of the \emph{flip} half-turns $F$
    and the \emph{swap} half-turns $S^+$ and $S^-$ are indicated.
\\
The tangent in direction $\phi_1$ points in the direction $(0,1,0,0)$
and 
the tangent vector in direction $\phi_2$ points in the direction
$(0,0,0,1)$.
The ``perpendicular direction'', which is the vertical axis $+r_2-r_1$
in the figure, is the direction
 $(-\sqrt{1/2},0,\sqrt{1/2},0)$.
}
  \label{fig:torus-patch}
\end{figure}

The best way to see the mapping to the rectangle is to use double
polar coordinates:
   \begin{equation}
     \label{eq:torus-coordinates}
        \begin{pmatrix}
     x_1\\ y_1\\
     x_2\\ y_2
   \end{pmatrix}
=   \begin{pmatrix}
     r_1\cos \phi_1\\
     r_1\sin \phi_1\\
     r_2\cos \phi_2\\
     r_2\sin \phi_2
   \end{pmatrix}
 \end{equation}
Then $\phi_1$ and $\phi_2$ (appropriately scaled) can be used as
rectangular two-dimensional coordinates, see
Figure~\ref{fig:torus-coordinates}.

The lines with $\phi_1=\mathrm{const}$ and $\phi_2=\mathrm{const}$ are
what we would normally call meridian
circles and parallel circles of the torus, except that there is no natural way to
distinguish the two classes. These circles have radius $\sqrt{1/2}$.
The $45^\circ$ lines with $\phi_2=\mathrm{const}+\phi_1$
and $\phi_2=\mathrm{const}-\phi_1$
are great
circles. They are the circles from the Hopf bundles $\H_i$ and $\H^i$.

Figure~\ref{fig:torus-patch} gives a picture of
corresponding patches around the origin $\phi_1=\phi_2=0$ for three
tori. The middle one is the Clifford torus with $r_1=r_2=\sqrt{1/2}\approx0.7$, the top one has
$r_1=0.55<r_2\approx 0.835$, and the bottom one has the reversed
values $r_1$ and~$r_2$.

Each torus is intrinsically flat, i.e., isometric to the Euclidean
plane in every small patch, but, as the figure suggests, it is
embedded as a ``curved'' surface inside $S^3$. The only
``lines'' in the torus that are geodesics of
 $S^3$ are those that are parallel to the diagonal lines $\phi_2=\pm\phi_1$. The dotted
 ``vertical'' lines connect points with the same
 $\phi_1,\phi_2$-coordinates on different tori. They are great
 circles, and they intersect every torus of the family orthogonally.

 In Section~\ref{sec:dup-example}, we will see the easy equation
 $x_1x_3=x_2x_4$~\eqref{clif2a} for the same torus in a different
 coordinate system.

\subsection{Symmetries of the torus}
\label{sec:torus-symmetries}

Since the torus is locally like the Euclidean plane, and the
plane is the universal covering space of the torus, we can investigate
the isometric symmetries of the torus by studying the isometries of the plane.
However, not every isometry of the plane can be used as a symmetry of
the torus; it must be ``compatible'' with the torus structure.
The following theorem makes this precise:

\begin{theorem} \label{theorem-torus-symmetries}
  There is a one-to-one correspondence between
  \begin{itemize}
  \item groups $G$ of isometries of the torus $[0,2\pi) \times [0,2\pi)$,
  \item groups $\hat G$ of isometries $x\mapsto Ax+t$
    of the $(\phi_1,\phi_2)$-plane
    with the following properties:
    \begin{enumerate}[\rm(i)]
    \item The directional part $A$ of every isometry
      in $\hat G$
      keeps the integer grid $\mathbb Z^2$
      invariant.
    \item The group contains the two
    translations $\phi_1 \mapsto \phi_1+2\pi$ and
    $\phi_2 \mapsto \phi_2+2\pi$.
    \end{enumerate}
  \end{itemize}
\end{theorem}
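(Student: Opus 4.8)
Write $\Lambda := 2\pi\mathbb Z^2 \subset \R^2$, acting on $\R^2$ by translations, so that the flat torus $[0,2\pi)\times[0,2\pi)$ is identified with the quotient $\R^2/\Lambda$ with quotient map $\pi\colon\R^2\to\R^2/\Lambda$, a local isometry. Inside $\mathrm{Isom}(\R^2)$, the group of Euclidean isometries $x\mapsto Ax+t$ with $A\in\O(2)$, let $N$ be the normalizer of the translation subgroup $\{\,x\mapsto x+\lambda\mid\lambda\in\Lambda\,\}$. Since conjugating the translation by $\lambda$ with $x\mapsto Ax+t$ gives the translation by $A\lambda$, we have $N=\{\,x\mapsto Ax+t\mid A\Lambda=\Lambda\,\}$, and because $\Lambda=2\pi\mathbb Z^2$ the condition $A\Lambda=\Lambda$ is the same as $A\mathbb Z^2=\mathbb Z^2$. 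The plan is to establish the natural isomorphism $N/\Lambda\cong\mathrm{Isom}(\R^2/\Lambda)$; the theorem then follows from the subgroup correspondence for the normal subgroup $\Lambda\trianglelefteq N$, once one observes that a group $\hat G$ of Euclidean isometries satisfies (i) and (ii) precisely when $\Lambda\le\hat G\le N$ — indeed (i) says $\hat G\le N$, and (ii) says $\hat G$ contains a generating set of $\Lambda$, hence all of $\Lambda$.

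\emph{Descent direction.} First I would check the easy inclusion: every $g\in N$, say $g\colon x\mapsto Ax+t$ with $A\Lambda=\Lambda$, satisfies $g(x+\lambda)=g(x)+A\lambda$ with $A\lambda\in\Lambda$, so it induces a well-defined map $\bar g\colon\R^2/\Lambda\to\R^2/\Lambda$ with $\pi\circ g=\bar g\circ\pi$. Since $\pi$ is a local isometry and $g$ is an isometry, $\bar g$ is a local isometry of the (compact) torus, hence an isometry, and $g\mapsto\bar g$ is a homomorphism $N\to\mathrm{Isom}(\R^2/\Lambda)$ whose kernel consists of those $g$ with $g(x)-x\in\Lambda$ for all $x$, i.e.\ exactly $\Lambda$.

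\emph{Lifting direction.} The substantive step is surjectivity of $g\mapsto\bar g$: every isometry $\phi$ of the torus arises from some $g\in N$. Here I would use that $\pi\colon\R^2\to\R^2/\Lambda$ is the universal Riemannian covering with deck group $\Lambda$; applying the lifting criterion to $\phi\circ\pi\colon\R^2\to\R^2/\Lambda$ (with $\R^2$ simply connected) yields a continuous $\tilde\phi\colon\R^2\to\R^2$ with $\pi\circ\tilde\phi=\phi\circ\pi$, and the standard covering-space argument (lift $\phi^{-1}$ as well) shows $\tilde\phi$ is a bijection. Being a bijective local isometry of $\R^2$, $\tilde\phi$ is an affine orthogonal map $x\mapsto Ax+t$ — if one prefers to stay elementary, this rigidity is reproved directly, since $D\tilde\phi$ is a continuous $\O(2)$-valued map on the connected set $\R^2$ and equality of mixed partials forces it to be constant. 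Moreover $\tilde\phi$ normalizes $\Lambda$: for $\lambda\in\Lambda$ the map $\tilde\phi\circ\tau_\lambda\circ\tilde\phi^{-1}$ is again a lift of $\phi\circ\phi^{-1}=\id$, hence a deck transformation $\tau_{\lambda'}\in\Lambda$. Thus $\tilde\phi\in N$ with $\overline{\tilde\phi}=\phi$, which proves $N/\Lambda\cong\mathrm{Isom}(\R^2/\Lambda)$.

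\emph{Conclusion.} With this isomorphism the second isomorphism theorem gives an inclusion-preserving bijection between subgroups $G$ of $\mathrm{Isom}(\R^2/\Lambda)$ and subgroups $\hat G$ of $N$ containing $\Lambda$, via $G=\hat G/\Lambda$ and $\hat G$ the preimage of $G$ — that is, the group of all lifts of all elements of $G$. Unwinding the description of $N$ (condition (i)) and of the inclusion $\Lambda\le\hat G$ (condition (ii)) gives exactly the asserted one-to-one correspondence. I expect the lifting/rigidity step to be the only real obstacle — that an isometry of the flat torus is covered by an affine Euclidean isometry of $\R^2$ that normalizes $\Lambda$; the remainder is bookkeeping with the quotient map $\pi$ and the subgroup correspondence.
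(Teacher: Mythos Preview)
Your proof is correct and covers the same ground as the paper's argument, but the packaging is noticeably different. The paper proceeds in an elementary, hands-on fashion: it first proves a stand-alone lemma that lifts a single torus isometry $T$ to a plane isometry $\hat T$ by choosing a basepoint, lifting locally (using that the quotient map is locally injective), extending uniquely to a global plane isometry, and then verifying the commutation relation $p\circ\hat T=T\circ p$ along paths; property~(b) ($\hat T$ maps $\Lambda$ to a translate of $\Lambda$) is checked by a direct computation. The theorem is then deduced by assembling all lifts into $\hat G$ and checking the group property via an explicit diagram chase, and conversely by defining $T$ from $\hat T$ and checking well-definedness.

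You replace all of this with the standard covering-space framework: $\pi\colon\R^2\to\R^2/\Lambda$ is the universal Riemannian cover with deck group $\Lambda$, the lifting criterion gives the lift in one stroke, and the fact that the lift normalizes $\Lambda$ is the usual deck-transformation argument. What you gain is concision and a clean structural statement --- the isomorphism $N/\Lambda\cong\mathrm{Isom}(\R^2/\Lambda)$, from which the theorem is just the subgroup correspondence (lattice/correspondence theorem, which you call the second isomorphism theorem; the naming conventions vary). What the paper gains is self-containedness: it does not assume any covering-space machinery, which fits the expository style of that section. Mathematically the two arguments are equivalent; yours is the textbook abstraction of what the paper carries out by hand.
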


The proof uses the following lemma, which shows how to lift %
torus isometries to plane isometries:
\begin{lemma}
  \label{toroidal-symmetry-constraints}
  Let $\Lambda$ denote the scaled integer grid $\{\,(k_12\pi,k_22\pi) \mid k_1,k_2\in
  \mathbb Z\,\}$, and let $p\colon
\mathbb R^2 \to  \mathbb R^2|_\Lambda$ be the quotient map
from the plane to the torus $[0,2\pi) \times [0,2\pi)$:
 $$p (\phi_1,\phi_2) =
  (\phi_1\bmod 2\pi,\phi_2\bmod 2\pi)$$ 
  For every isometry $T$ of the torus  $[0,2\pi) \times [0,2\pi)$,
  there is an isometry $\hat T$ of the plane with the following
  properties.
  \begin{enumerate}[\rm(a)]
  \item 
    $T(p(x)) = p(\hat T(x))$ for all $x \in \mathbb R^2$.
  \item $\hat T$ maps the grid $\Lambda$ to a translate of $\Lambda$.
  \end{enumerate}
 The isometry $\hat T$ is unique up to translation by a grid vector
  $t\in\Lambda$.
\end{lemma}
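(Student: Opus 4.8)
The plan is to treat $p$ as the universal covering map of the flat torus and to lift $T$ along it. Give both the torus $[0,2\pi)\times[0,2\pi)$ and the plane $\mathbb R^2$ their flat Euclidean metrics; then $p$ is a Riemannian local isometry and a covering map whose deck transformation group is exactly the group of translations $\tau_\gamma\colon x\mapsto x+\gamma$ by vectors $\gamma\in\Lambda$. Since $T$ is an isometry of the torus, the composition $f:=T\circ p\colon\mathbb R^2\to[0,2\pi)\times[0,2\pi)$ is continuous, and because $\mathbb R^2$ is simply connected and locally path connected, the classical lifting criterion for covering maps applies to $f$ and the covering $p$: for a chosen base point $x_0$ and any $\hat x_0\in p^{-1}(f(x_0))$ there is a unique continuous $\hat T\colon\mathbb R^2\to\mathbb R^2$ with $\hat T(x_0)=\hat x_0$ and $p\circ\hat T=f$. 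This is exactly property (a). The unique lifting property further says two lifts of $f$ that agree at one point agree everywhere, while the admissible values $p^{-1}(f(x_0))$ form a single coset of $\Lambda$; hence any two lifts differ by a deck transformation, i.e.\ by a translation by a vector of $\Lambda$, which is the uniqueness assertion.

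Next I would verify that $\hat T$ is in fact an isometry of $\mathbb R^2$. From $p\circ\hat T=T\circ p$, together with the local-isometry property of $p$ on both sides and the fact that $T$ is an isometry, one reads off that $\hat T$ is a local isometry; running the same construction for $T^{-1}$ and matching base points produces a two-sided inverse lift, so $\hat T$ is a bijective local isometry of $\mathbb R^2$. Straightening line segments then shows $\hat T$ and $\hat T^{-1}$ are both $1$-Lipschitz, so $\hat T$ is a global isometry and hence of the form $\hat T(x)=Ax+t$ with $A\in\mathrm O(2)$ and $t=\hat T(0)$. (Alternatively one may invoke the standard fact that a local isometry of a complete connected Euclidean space is a covering map, which here must be trivial.)

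Finally I would prove property (b) by equivariance. For $\gamma\in\Lambda$ we have $p\circ\tau_\gamma=p$, so $\hat T\circ\tau_\gamma$ is again a lift of $T\circ p$ through $p$; by the uniqueness part it equals $\tau_{\gamma'}\circ\hat T$ for some $\gamma'\in\Lambda$, and comparing with $\hat T(\tau_\gamma(x))=A(x+\gamma)+t=\hat T(x)+A\gamma$ yields $\gamma'=A\gamma$. Thus $A\Lambda\subseteq\Lambda$, and since $A$ is invertible and $\Lambda$ is a lattice, $A\Lambda=\Lambda$; therefore $\hat T(\Lambda)=A\Lambda+t=t+\Lambda$ is a translate of $\Lambda$. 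The only genuinely delicate point is upgrading the lift from "continuous" to "isometry"; everything else is a direct application of the lifting and unique-lifting theorems for covering spaces, and the isometry upgrade is itself a routine consequence of $p$ and $T$ being (local) isometries.
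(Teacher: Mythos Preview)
Your proof is correct and follows the same underlying covering-space idea as the paper, but the packaging differs in a way worth noting. The paper proceeds more by hand: it defines $\hat T$ locally near a chosen basepoint via $p^{-1}\circ T\circ p$, observes immediately that this is a local isometry (since $p$ and $T$ are), extends this local isometry uniquely to a global isometry of $\mathbb R^2$, and only then verifies property~(a) globally by a path-continuation argument. You instead invoke the general lifting criterion for covering maps to obtain a continuous global lift satisfying~(a) at once, and afterwards upgrade it to an isometry. For property~(b), the paper simply observes that $p(\hat T(t))=T(p(0))$ is the same point for every $t\in\Lambda$, so $\hat T(\Lambda)$ lies in a single fiber; your equivariance argument via deck transformations yields the sharper statement $A\Lambda=\Lambda$. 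Your route is the cleaner textbook argument and extracts a bit more information; the paper's route is more self-contained and avoids citing the lifting theorem as a black box.
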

\begin{proof}
  Pick some point $y_0$ of the torus and let $T(y_0)=y_0'$. Find points $x_0,x_0'\in \mathbb
R^2$
with $y_0=p(x_0)$ and $y_0'=p(x_0')$.
Since $p$ is locally injective, the mapping $T$ can be lifted to a
mapping $\hat T(x)=p^{-1}(T(p(x)))$ 
in some neighborhood $N(x_0)$ of $x_0\in \mathbb R^2$:
\begin{equation}
  \label{eq:lift}
\begin{tikzcd}
	{\mathbb R^2\colon} & {x_0} & {x_0'} \\
	{\mathbb T\colon} & {y_0} & {y_0'}
	\arrow["{\hat T}", from=1-2, to=1-3]
	\arrow["p", from=1-3, to=2-3]
	\arrow["T"', from=2-2, to=2-3]
	\arrow["p"', from=1-2, to=2-2]
      \end{tikzcd}
\end{equation}
In other words,
 $\hat
T(x_0)=x_0'$, and
for all $x\in N(x_0)$:
\begin{equation}
  \label{eq:commute}
p(\hat T(x)) = T(p(x)) 
\end{equation}
 Moreover,
since both $p$ and $T$ are
locally isometries,
$\hat T$ is an isometry in
$N(x_0)$. This isometry can be extended to a unique isometry $\hat T$
of the plane.

To extend the validity of~\eqref{eq:commute} from $N(x_0)$ to the whole
plane, we look at a path $x_0+\lambda t$ from $x_0$ to an arbitrary
point $x_0+t$ of the plane, where ($0\le \lambda \le 1$). On the torus, it
corresponds to a path $p(x_0+\lambda t)$, which is mapped to an image path
$T(p(x_0+\lambda t))$, which in turn can be lifted to a path on $\R^2$. Since
$p$ is locally invertible and an isometry, \eqref{eq:commute} must
hold along the whole path, and therefore for an arbitrary point
$x_0+t$ of the plane.  This is claim~(a).

To show claim~(b),
consider any $t\in \Lambda$.
By \eqref{eq:commute},
\begin{displaymath}
  p(\hat T(t)) = T(p(t)) = T(p(0))
\end{displaymath}
that is, 
all values $\hat T(t)$ for $t\in \Lambda$ project to the same point
$T(p(0))$ on
the torus.
It follows that the image of $\Lambda$ under
$\hat T$ is contained in a translate of $\Lambda$. But then it must be
\emph{equal} to this translate of~$\Lambda$.

Once $x_0$ and $x_0'$ have been chosen, 
the construction gives a unique transformation $\hat T$.
The result %
can be varied by adding an arbitrary translation
$t\in \Lambda$ to $x_0$ (before applying $\hat T$)
or $t'\in \Lambda$ to $x_0'$ (after applying $\hat T$).
By property~(b), it makes no difference whether we are allowed
to translate by an element of $\Lambda$ before
applying $\hat T$ or after (or both).
This proves the uniqueness claim of the lemma.
\end{proof}

As a consequence, we can write a torus isometry like a plane isometry
in the form
$x\mapsto Ax+t$ with an orthogonal matrix $A$ and a translation vector $t$,
bearing in mind that $t$ is unique only up to grid translations.

\begin{proof}[Proof of Theorem~\ref{theorem-torus-symmetries}]
Given a group $G$, we can construct the lifted group $\hat G$ as the
set of lifted isometries $\hat T$ of the transformations $T\in G$
according to the lemma.
The group property of $\hat G$ can be easily shown by extending the diagram
\eqref{eq:lift}:
\begin{displaymath}
\begin{tikzcd}
	{\mathbb R^2\colon} & {x_0} & {x_0'} & {x_0''} \\
	{\mathbb T\colon} & {y_0} & {y_0'}& {y_0''}
	\arrow["\hat T\hat T'"', %
        curve={height=-18pt}, swap, from=1-2, to=1-4]
	\arrow["{\hat T}", from=1-2, to=1-3]
	\arrow["{\hat T}'", from=1-3, to=1-4]
	\arrow["p", from=1-3, to=2-3]
	\arrow["T"', from=2-2, to=2-3]
	\arrow["T'"', from=2-3, to=2-4]
	\arrow["TT'"', curve={height=18pt}, from=2-2, to=2-4]
	\arrow["p"', from=1-2, to=2-2]
	\arrow["p"', from=1-4, to=2-4, swap]
      \end{tikzcd}
\end{displaymath}

The translations $\phi_1 \mapsto \phi_1+2\pi$ and
    $\phi_2 \mapsto \phi_2+2\pi$ arise as lifts of the identity $\id
    \in G$.
It is clear that a matrix $A$
  keeps the scaled integer grid
  $\Lambda := \{\,(k_12\pi,k_22\pi) \mid k_1,k_2\in \mathbb Z\,\}$
  invariant (Property~(b)) if and only if it keeps the standard integer grid $\mathbb Z^2$
  invariant (Property~(i)).

  Conversely, given a transformation $\hat T$ in the group $\hat G$,
  we can define
$T$ as follows:
 For a point $y_0$ of the torus, pick a point $x_0$ with $p(x_0)=y_0$,
 and define $T(y_0)$ through the relation
\eqref{eq:lift}:
$T(y_0) := p(\hat T(x_0))$.
 The choice of $x_0$ is ambiguous. %
It is
determined only up a translation by $t\in \Lambda$, but
we see that this has no effect on $T(y_0)$:
 \begin{displaymath}
p(\hat T(x_0+t))=
p(\hat T(x_0)+t')=
p(\hat T(x_0))
 \end{displaymath}
 By
property~(i), or
property~(b), $t'\in \Lambda$, and
therefore the ambiguity evaporates through the projection $p$.
\end{proof}

\subsubsection{Torus translations}
\label{sec:torus-translations}
The simplest operations are the ones that appear as translations on
the torus, modulo $2\pi$. We denote them by
$$R_{\alpha_1,\alpha_2}\colon (\phi_1,\phi_2) \mapsto
(\phi_1+\alpha_1,\phi_2+\alpha_2)$$
in accordance with \eqref{rotation}.
In this notation,
a left rotation $[\exp \alpha i,1]$ turns out to be a negative translation along
the $45^\circ$ direction: $T_{-\alpha,-\alpha}$.
A right rotation $[1,\exp \alpha i]$ is
a translation in the $-45^\circ$ direction:
$R_{\alpha,-\alpha}$.
Arbitrary torus translations can be composed from left and right rotations,
and the general translation is written in quaternion notation as
$$R_{\alpha_1,\alpha_2}=\left[\exp(\tfrac{-\alpha_1-\alpha_2}2i),\, \exp(\tfrac{\alpha_1-\alpha_2}2i)\right].
$$
The torus translations
$R_{\alpha,0}$ and
$R_{0,\alpha}$ along the $\phi_1$ and $\phi_2$-axis are simple
rotations, leaving
the $x_2,y_2$-plane or
the $x_1,y_1$-plane fixed,
respectively.

One should bear in mind that all ``translations'', as they appear on
the torus, are actually rotations of $S^3$. (Only the left and right
rotations among them
may be called \emph{translations of~$S^3$} with some justification, because
they correspond to the translations in elliptic 3-space.)

\subsubsection{The directional group: symmetries with a fixed point}
\label{sec:fixed-point}

We pick the point $O=(\sqrt{1/2},0,\sqrt{1/2},0)$ with torus coordinates
$\phi_1=\phi_2=0$ as a reference point or origin on~$\T$.
Every isometry of $\T$ can be decomposed in a unique way into a symmetry that leaves
$O$ fixed (the \emph{directional part}), plus a torus translation (the
\emph{translational part}).

Let us therefore study the symmetries that leave $O$ fixed. In the plane, these
would be all rotations and reflections. However,
according to Theorem~\ref{theorem-torus-symmetries}
we can only use symmetries that
leave the standard square grid
$\mathbb Z^2$ invariant,
apart from a translation. This allows
rotations by multiples of $90^\circ$, as well as reflections in the
coordinate axes and in the $45^\circ$-lines.

In the plane, 
these seven operations together with the identity form the dihedral group $D_8$, the symmetries
of the square.
We denote the group by $D_8^\T$, to indicate that we think of the
transformations of $S^3$ that leave the torus $\T$ invariant.
Table~\ref{tab:symmetries} summarizes these operations and their
properties.
For each operation, we have chosen a symbol indicating the
axis direction in case of a reflection, or otherwise some suggestive
sign, and
a name. We also give the quaternion representation, the effect in
terms of the $\phi_1,\phi_2$-coordinates, and the order of the group element.

Some transformations may \emph{swap} the two sides of
$\T$, exchanging the tori with parameters $r_1,r_2$
and $r_2,r_1$. This is indicated by a ``$-$'' in the column ``side'',
and the names of these operations include the term ``swap''.
The nonswapping operations leave every torus of the foliation~\eqref{eq:torus-equation}
invariant,
not just the ``central'' Clifford torus.

The column ``det''\ indicates whether the operation is \OP\ ($+$) or
\OR~($-$).
One must keep in mind that the operation on the torus $\T$ induces a
transformation of the whole $S^3$, and what appears as a
reflection in the planar $\phi_1,\phi_2$-picture of $\T$ may or may not be an \OR\
transformation of $S^3$.
Thus, it may at first sight come as a surprise that the \emph{torus swap} \sym/ is \OP.
The reason is that it goes together with a swap of the sides.
As shown in Figure~\ref{fig:torus-patch}, it is actually a half-turn
around the axis $S^+$.
(The product of the signs in the ``side'' and ``det'' columns tells
whether the operation is \OP\ when considered purely in the plane.)

  Figure~\ref{fig:torus-patch}
 makes it clear why there is no ``pure swap'', no ``inversion'' at the
central torus that would keep the torus pointwise fixed and swap the
two sides of the torus: such a mapping would flip the dashed
perpendicular lines and thus map the long side of the rectangular
patch on the top to the short side of the rectangular
patch at the bottom. We see that a swap is only possible if it goes hand in hand with an
exchange of the $\phi_1$ and $\phi_2$ axes. In particular, such an exchange comes with
the rotations by $\pm90^\circ$, the right and left \emph{swapturn}
operations, which are accordingly \OR.

The column ``conj.''\ indicates operations that are conjugate to each
other, i.e., geometrically equivalent. Thus, for example, the
operation $\sym|$ may, in a different coordinate system, appear
as the operation $\sym-$. By contrast,
\sym/ and
\sym\setminus\ are distinguished: the axis of \sym/ belongs to the
invariant left Hopf bundle $\H^i$,
and
 the axis of \sym\setminus\ belongs to the
invariant right Hopf bundle $\H_i$. %
The operations
\sym/ and
\sym\setminus\ are mirrors of each other, i.e., conjugate under an
\OR\ transformation. This is indicated in the last column.

When viewed in isolation, the half-turns $S^+=\sym/$, $S^-=\sym\setminus$, and $F=\sym.$
are conjugate to each other. However, they are distinct when
considering only transformations that leave the torus invariant.

\begin{table}
  \centering
\begin{tabular}{|c|l|c|c|c|c|c|c|c|}
  \hline
  \!\!symbol\!\!&name&$[l,r] %
               $&$(\phi_1,\phi_2)\to$&\!order\!&side&det&\!conj.\!%
  &\!mirror\!\\\hline
  \sym1&identity&$[1,1]$&$(\phi_1,\phi_2)$&1&$+$&$+$&--&\sym1\\
  \sym|&horizontal reflection &${*}[i,i]$
                          &$(-\phi_1,\phi_2)$&2&$+$&$-$&\sym-%
  &--\\
  \sym-&vertical reflection &${*}[k,k]$
                          &$(\phi_1,-\phi_2)$&2&$+$&$-$&\sym|&--\\
  \sym.&torus flip $F =\sym-\cdot\sym|%
         $&$[j,j]$
                          &$(-\phi_1,-\phi_2)$&2&$+$&$+$&--&\sym.\\
  \sym/&torus swap $S^+$&$[i,k]$
                          &$(\phi_2,\phi_1)$&2&$-$&$+$&--&\sym\setminus\\
  \sym\setminus&alternate torus swap $S^-$&$[-k,i]$
                          &$(-\phi_2,-\phi_1)$&2&$-$&$+$&--&\sym/\\
  \sym L&left swapturn $\sym/\cdot\sym|$ %
                     &${*}[-j,1]$ %
                          &$(\phi_2,-\phi_1)$&4&$-$&$-$&\sym R%
  &--\\
  \sym R&right swapturn $\sym L^{\,-1}$  %
          &${*}[1,j]$
                &$(-\phi_2,\phi_1)$&4&$-$&$-$&\sym L%
  &--\\
  \hline
\end{tabular}
\caption[The group $D_8^\mathbb{T}$, the directional parts of the torus symmetries]
{The directional parts of the torus symmetries,
  the elements of the group $D_8^\mathbb{T}$. Some come in conjugate
  pairs, as indicated in the column ``conj.'', meaning that they are
  geometrically equivalent. The conjugacy is
  established by any of the operations \hbox{\sym/}
  or \sym{\setminus} in these cases.
  The torus flip \sym. commutes with all other operations.
The last column shows the mirror transformation for each
transformation of determinant $+1$ (the \OP\ transformations).
}
  \label{tab:symmetries}
\end{table}

\subsubsection{Choice of coordinate system}
\label{sec:coordinate-system}
 The conjugacies discussed above introduces ambiguities in the
 representation of torus translations, which depend on the choice of
 the coordinate system for a given invariant torus.
 $R_{\alpha_1,\alpha_2}$ may, in a different coordinate system, appear
 as
 $R_{-\alpha_1,-\alpha_2}$ (conjugacy by \sym.), or as
 $R_{\alpha_2,\alpha_1}$ (conjugacy by \sym/), or as
 $R_{-\alpha_2,-\alpha_1}$ (conjugacy by \sym\setminus).
 (The operation  $R_{\alpha_1,-\alpha_2}$ or $R_{-\alpha_1,\alpha_2}$
 is its mirror operation.)
 The choice of origin in the $\phi_1,\phi_2$-plane, on the other
 hand, has no influence on the torus translations. It only affects the
 other operations.

\subsubsection{The directional group and the translational subgroup}

We have mentioned that every symmetry of the torus can be decomposed in
a unique way (after fixing an origin) into a
directional part and a translational part.

For a group $G$, the torus translations contained in it form
 a normal subgroup,
the
\emph{translational subgroup}, which we denote by $G_{\Box}$.
The directional parts of the group operations form the
\emph{directional group} of $G$.  It is a subgroup of $D_8^\T$, and we
will use it as a coarse classification of the toroidal groups.
(The directional group is isomorphic to the factor group $G/G_{\Box}$.)

The ten subgroups of $D_8^\T$ are listed in
Table~\ref{tab:subgroups}, together with a characteristic symbol and a name.
 Figure~\ref{fig:subgroups} shows their pictorial representation.

\begin{table}
  \centering
  \begin{tabular}{|l|l|c|c|c|c|}
    \hline
    group &name&chirality&swapping&conjugate & mirror
    \\\hline
    $\grp 1 = \{ \sym 1 \}$ &translation &chiral&no&--&\grp1\\
    $\grp | = \{ \sym 1, \sym | \}$ &reflection &achiral&no&\grp -, by \sym /&--\\
    $\grp - = \{ \sym 1, \sym - \}$ &reflection&achiral&no&\grp |, by \sym /&--\\
    $\grp . = \{ \sym 1, \sym . \}$ &flip&chiral&no&--&\grp .\\
    $\grp + = \{ \sym 1, \sym|,\sym-,\sym . \}$ &full reflection&achiral&no&--&--\\
    $\grp / = \{ \sym 1, \sym / \}$ &swap&chiral&yes&--&\grp\setminus\\
    $\grp \setminus = \{ \sym 1, \sym \setminus \}$ &swap&chiral&yes&--&\grp/\\
    $\grp X = \{ \sym 1, \sym /, \sym \setminus,\sym . \}$ &full swap&chiral&yes
      &--&\grp X\\
    $\grp L = \{ \sym 1, \sym L, \sym .,\sym R \}\cong C_4$ &swapturn&achiral&yes&--&--\\
    $\grp * = \{ \sym 1, \sym L, \sym .,\sym
    R,\sym|,\sym-,\sym/,\sym\setminus \}$ &full torus&achiral&yes&--&--\\
    \hline
  \end{tabular}
  \caption[The 10 subgroups of $%
    D_8^\T$]
{The 10 subgroups of $%
    D_8^\T$.
    A group is achiral if it contains an \OR\ transformation.
    A group is swapping if it contains a transformation that swaps the
    two sides of the torus.
  The fifth column shows to which other groups the group is
  conjugate by an \OP\ transformation.
  The last column shows the mirror group of each chiral group,
i.e., the conjugate group by an \OR\ transformation.  
(Each achiral group in this list is its own mirror image.)}

  \label{tab:subgroups}
\end{table}

\begin{figure}[htb]
  \centering
  
  \begin{minipage}{0.20\textwidth}
    \centering
    \includegraphics[scale=.16]{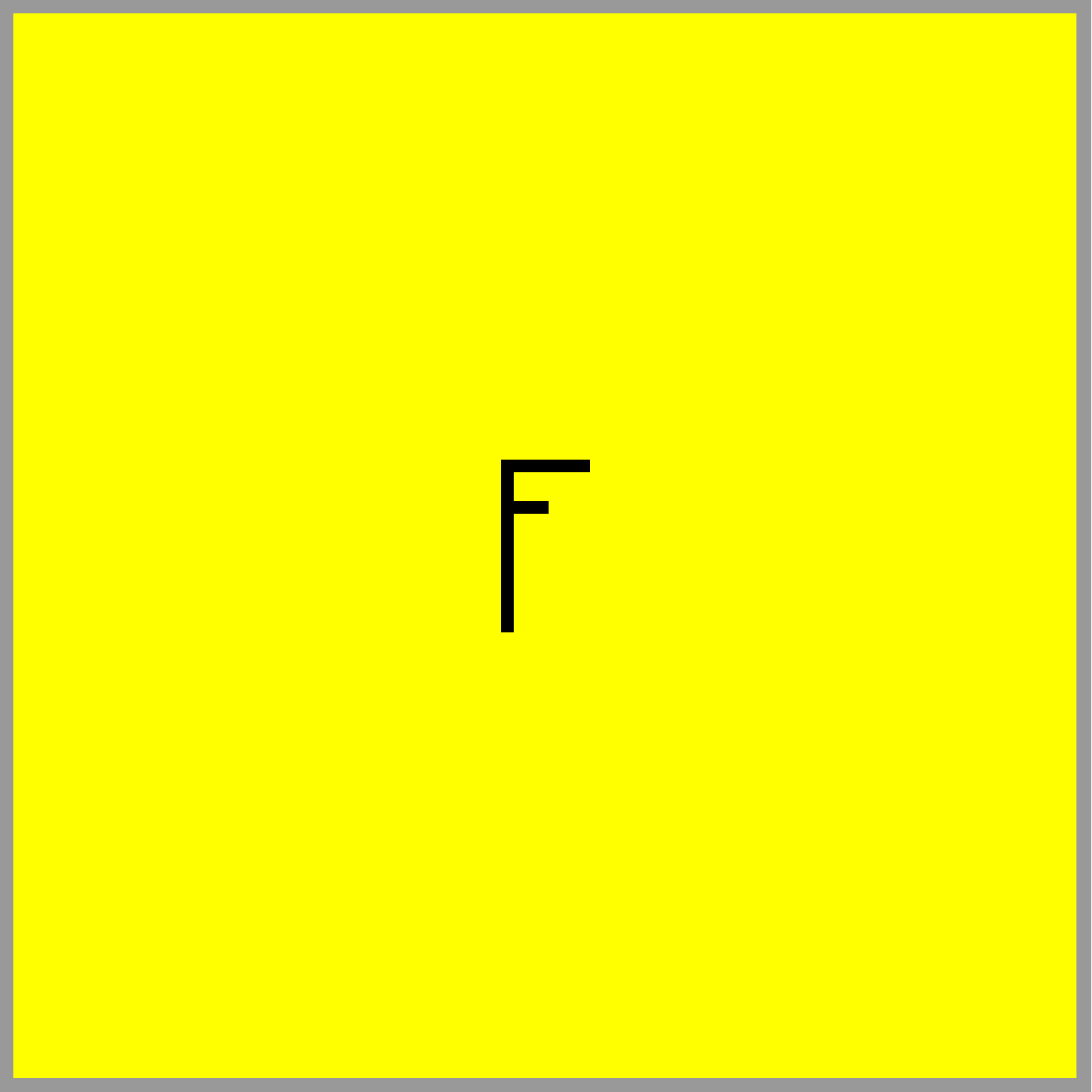} \\
    $\grp 1$
  \end{minipage}%
  \begin{minipage}{0.20\textwidth}
    \centering
    \includegraphics[scale=.16]{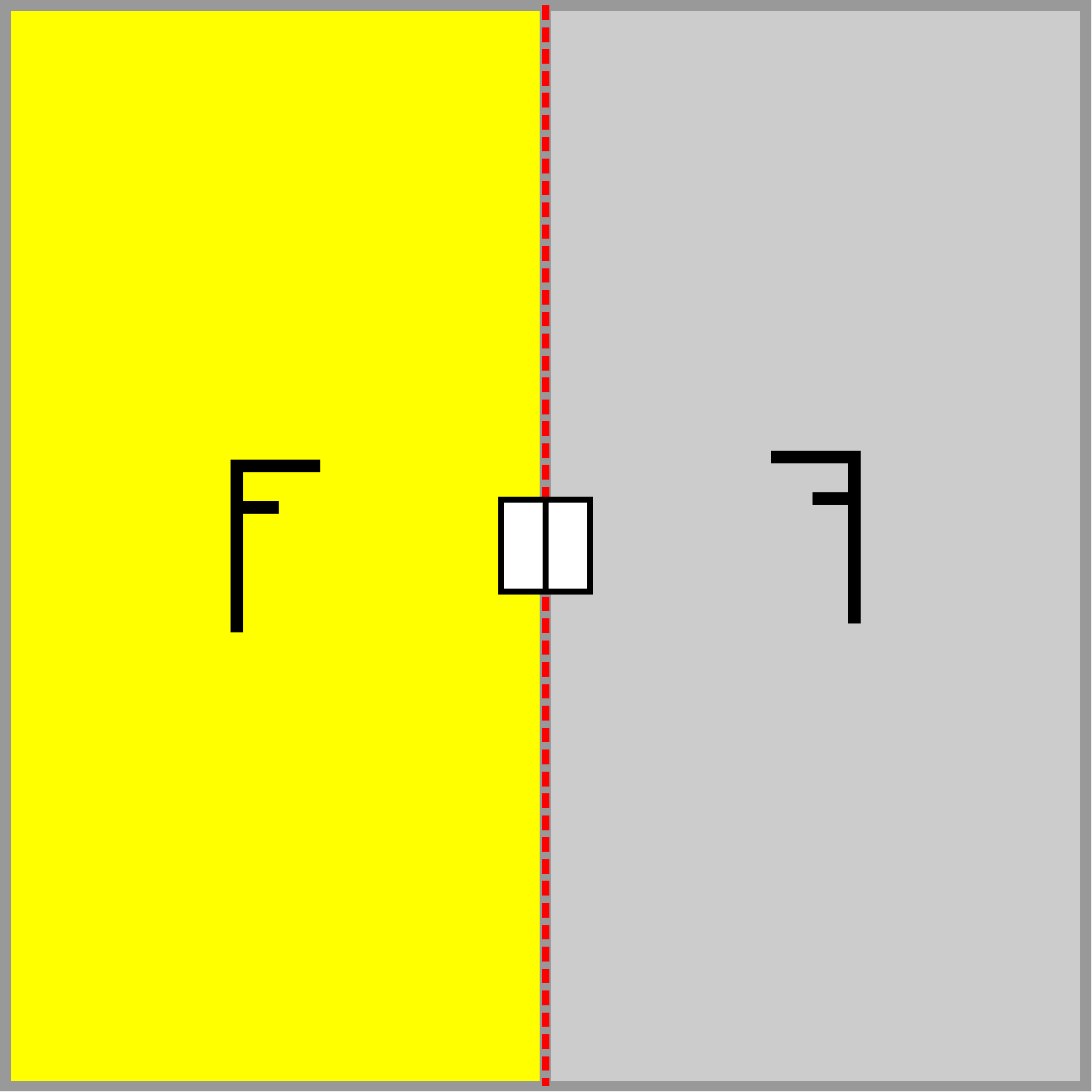} \\
    $\grp |$
  \end{minipage}%
  \begin{minipage}{0.20\textwidth}
    \centering
    \includegraphics[scale=.16]{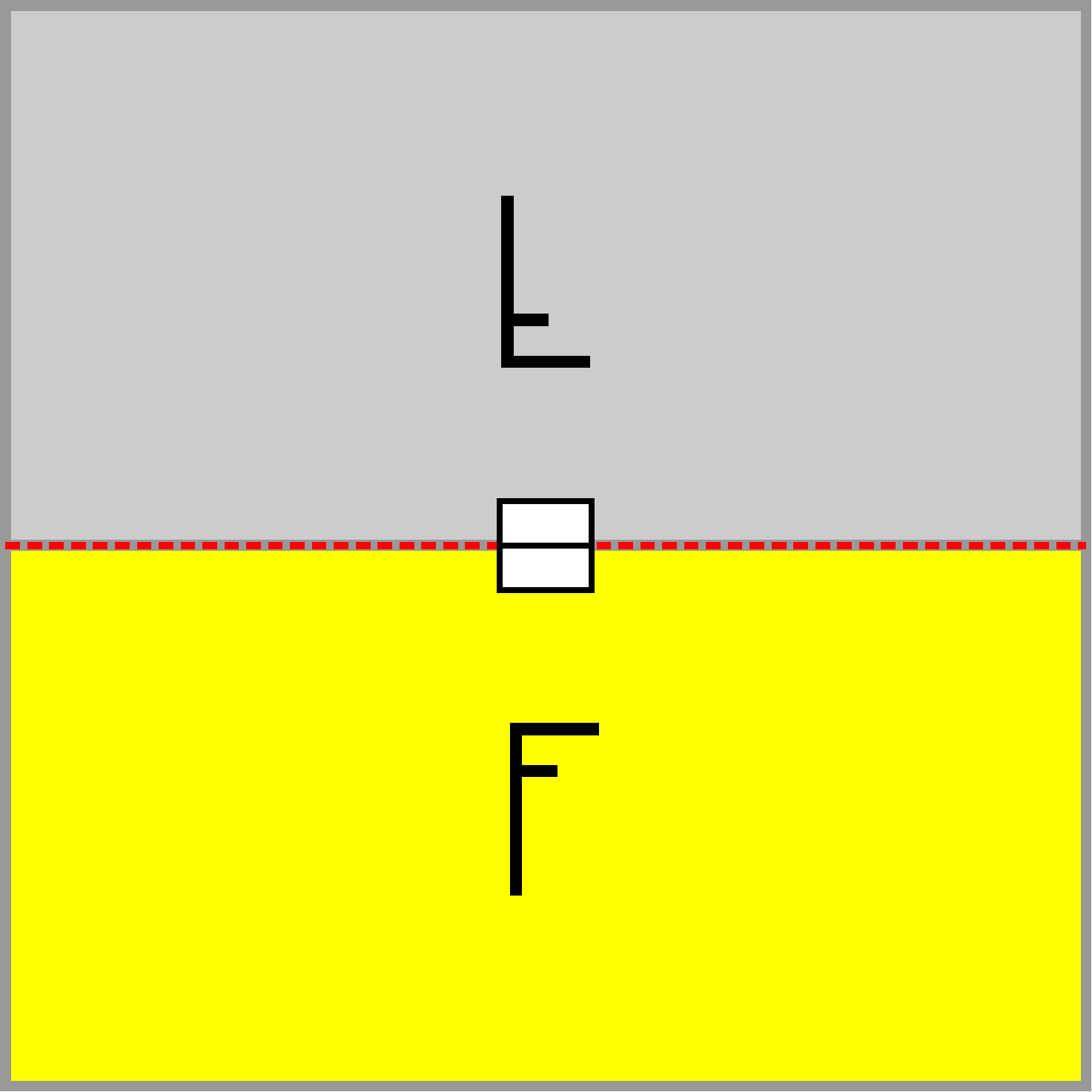} \\
    $\grp -$
  \end{minipage}%
  \begin{minipage}{0.20\textwidth}
    \centering
    \includegraphics[scale=.16]{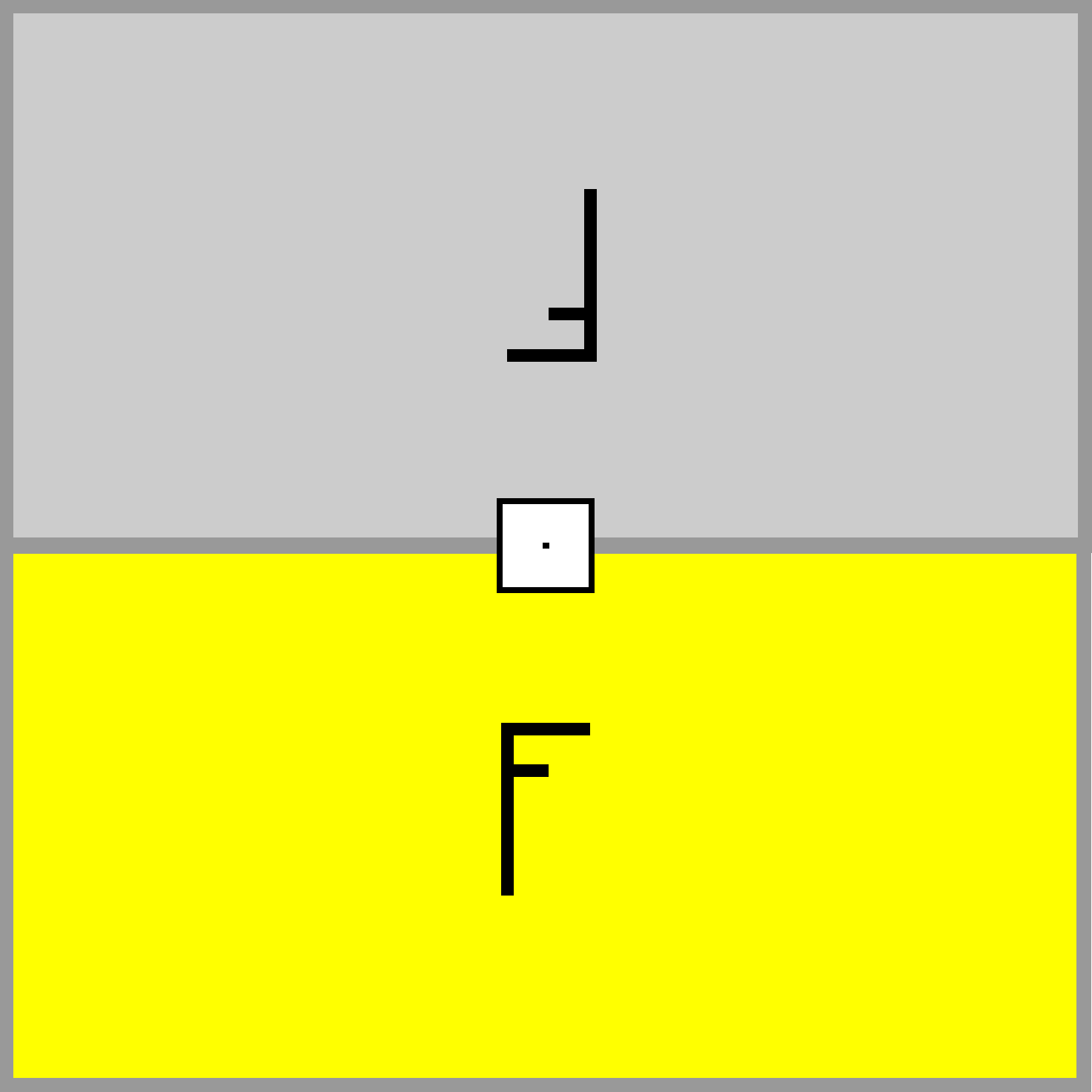} \\
    $\grp .$
  \end{minipage}%
  \begin{minipage}{0.20\textwidth}
    \centering
    \includegraphics[scale=.16]{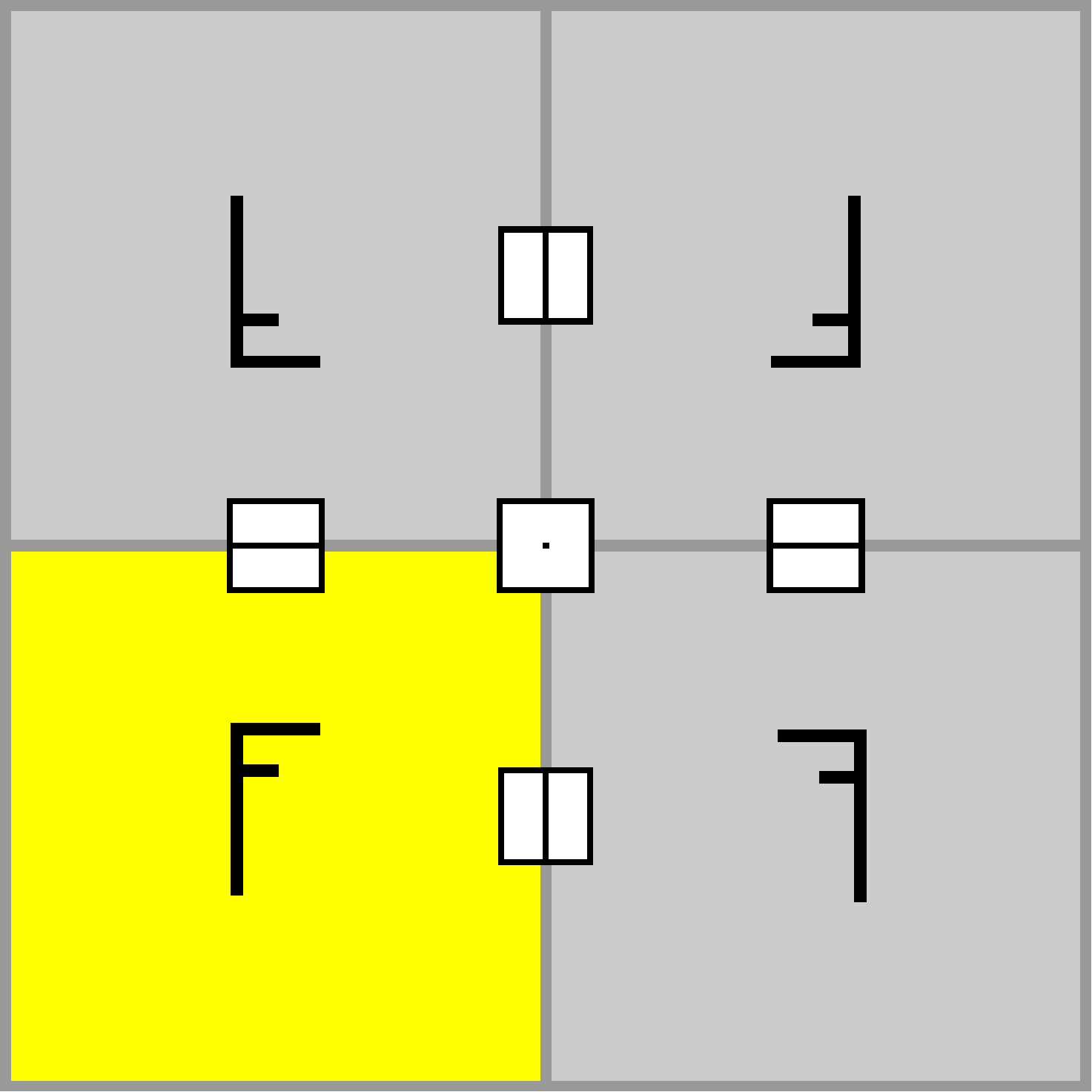} \\
    $\grp +$
  \end{minipage}
    \vspace{.5cm}
  
  \begin{minipage}{0.20\textwidth}
    \centering
    \includegraphics[scale=.16]{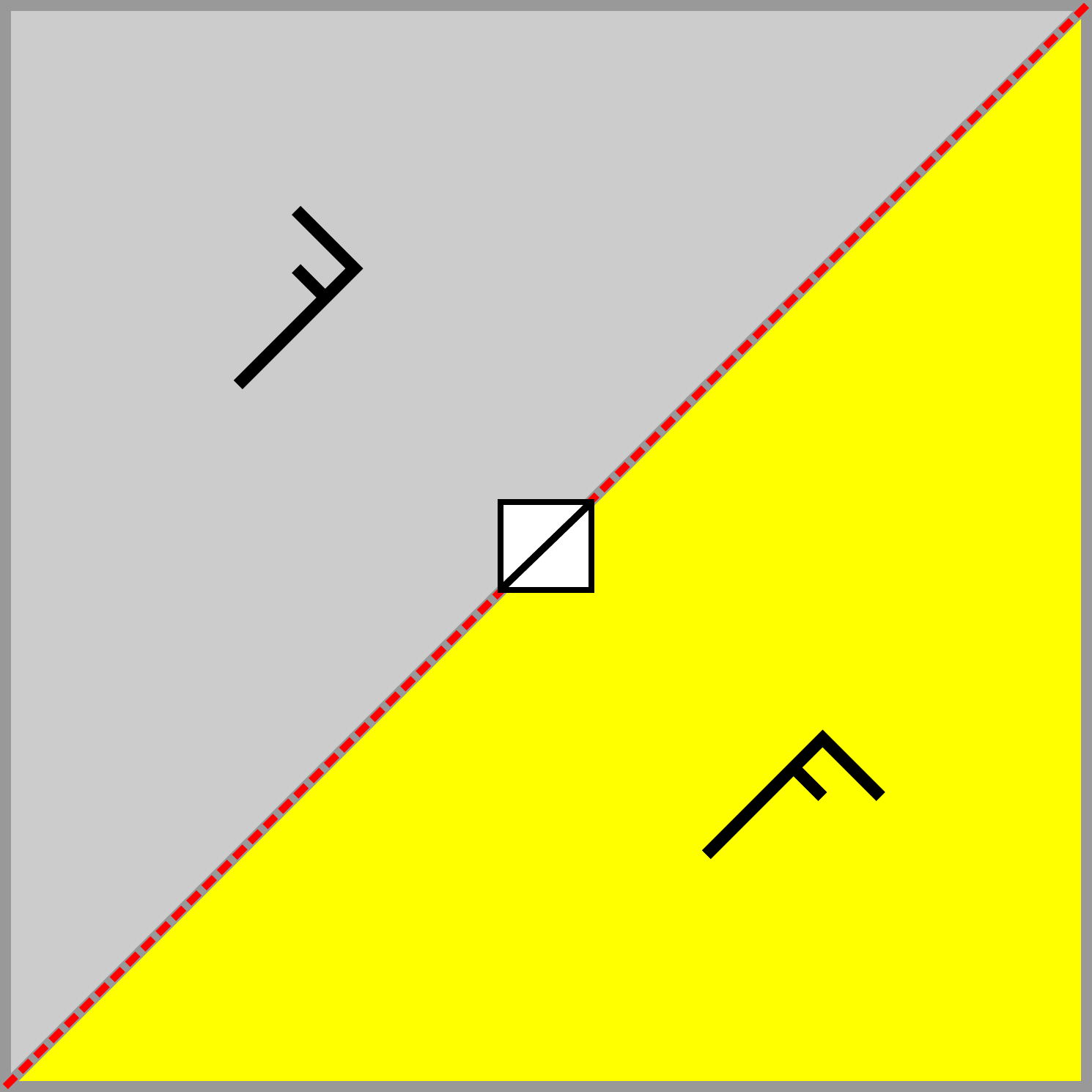} \\
    $\grp /$
  \end{minipage}%
  \begin{minipage}{0.20\textwidth}
    \centering
    \includegraphics[scale=.16]{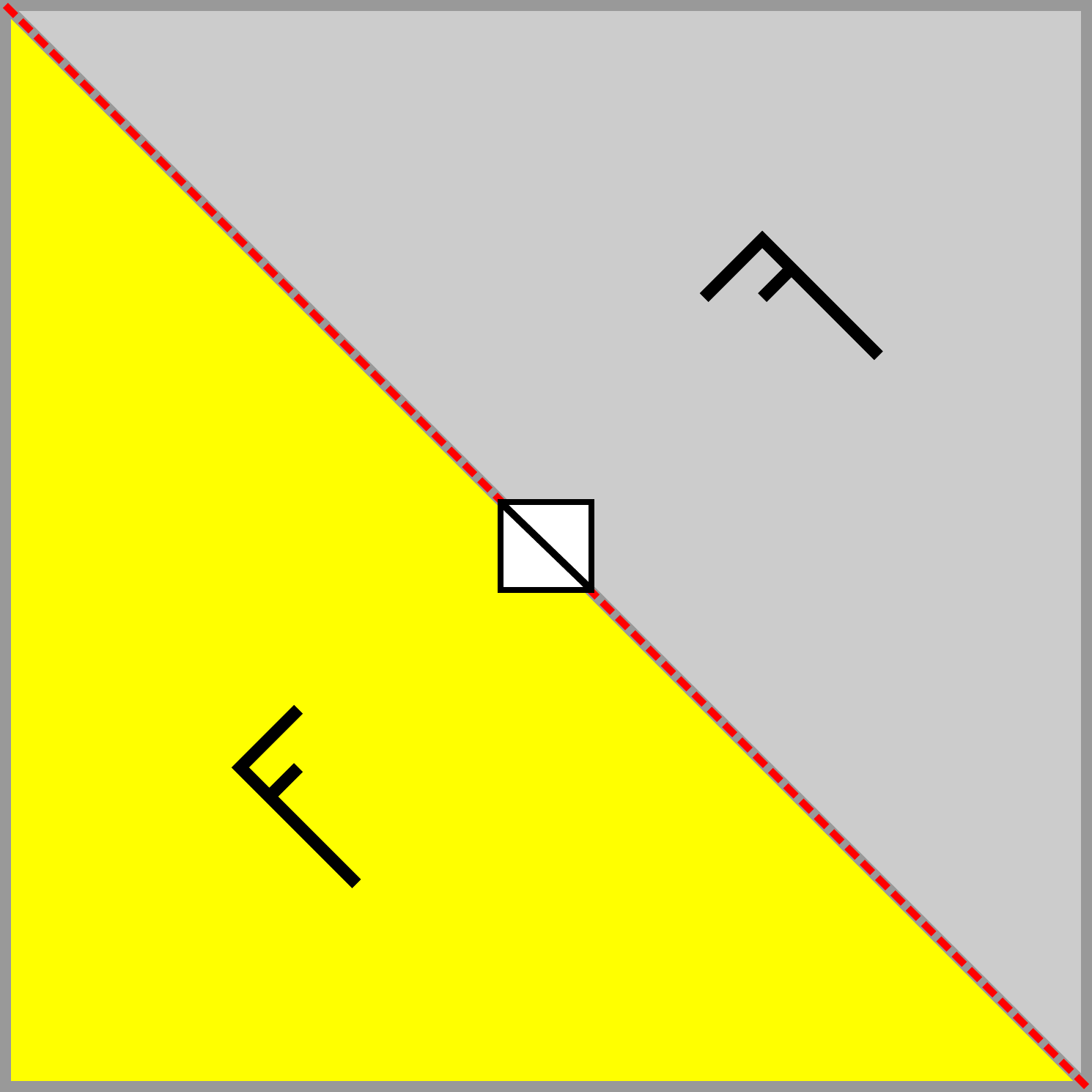} \\
    $\grp \setminus$
  \end{minipage}%
  \begin{minipage}{0.20\textwidth}
    \centering
    \includegraphics[scale=.16]{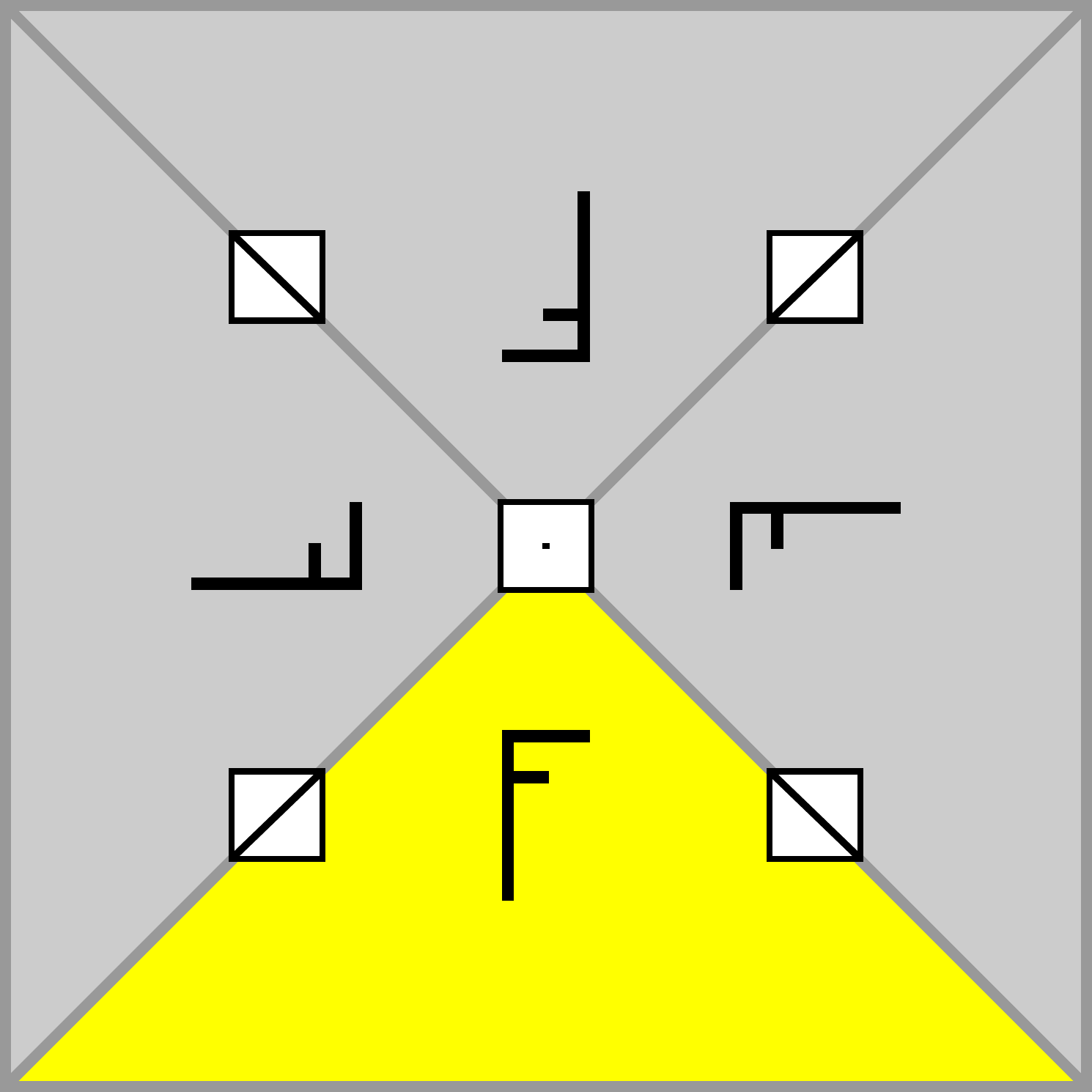} \\
    $\grp X$
  \end{minipage}%
  \begin{minipage}{0.20\textwidth}
    \centering
    \includegraphics[scale=.16]{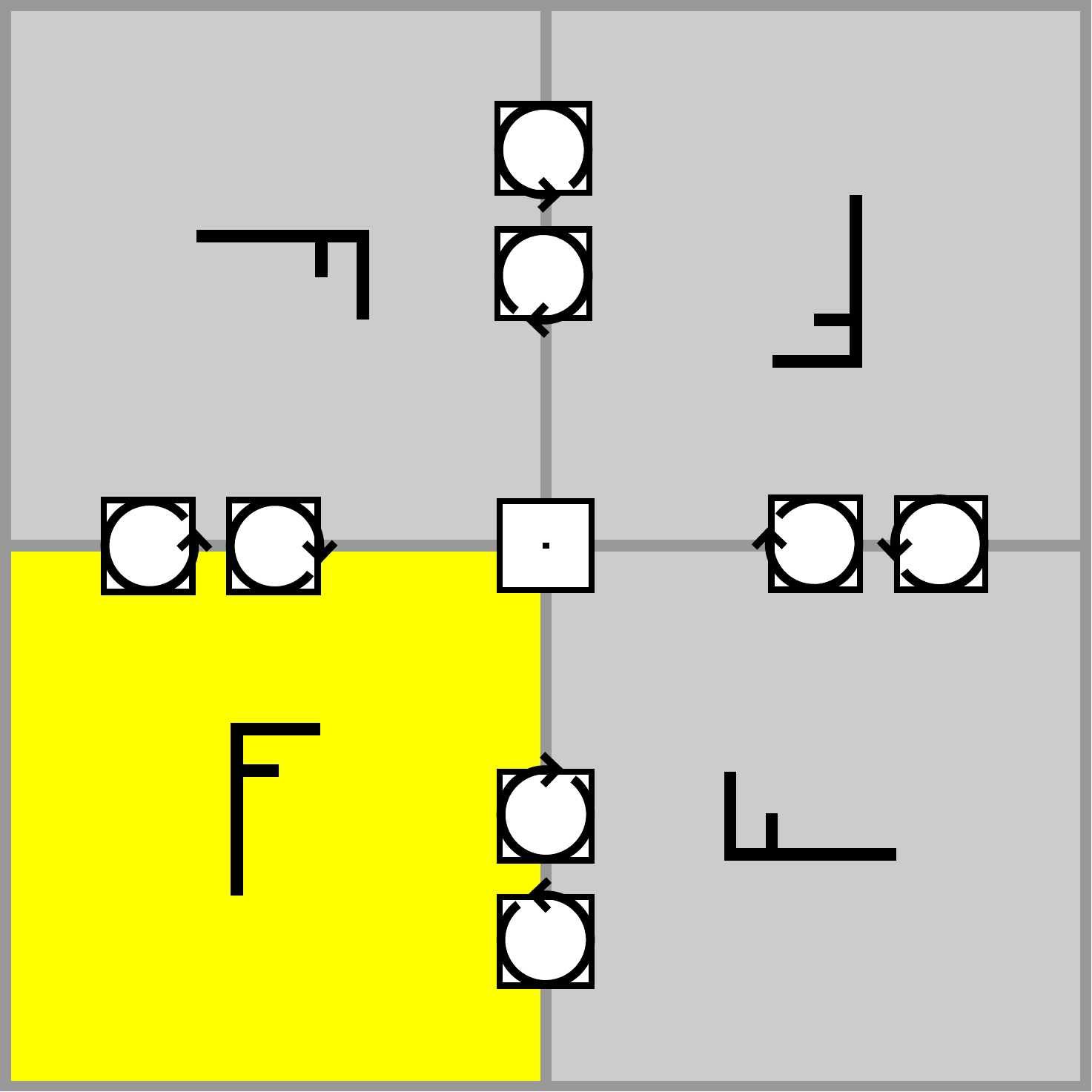} \\
    $\grp L$
  \end{minipage}%
  \begin{minipage}{0.20\textwidth}
    \centering
    \includegraphics[scale=.16]{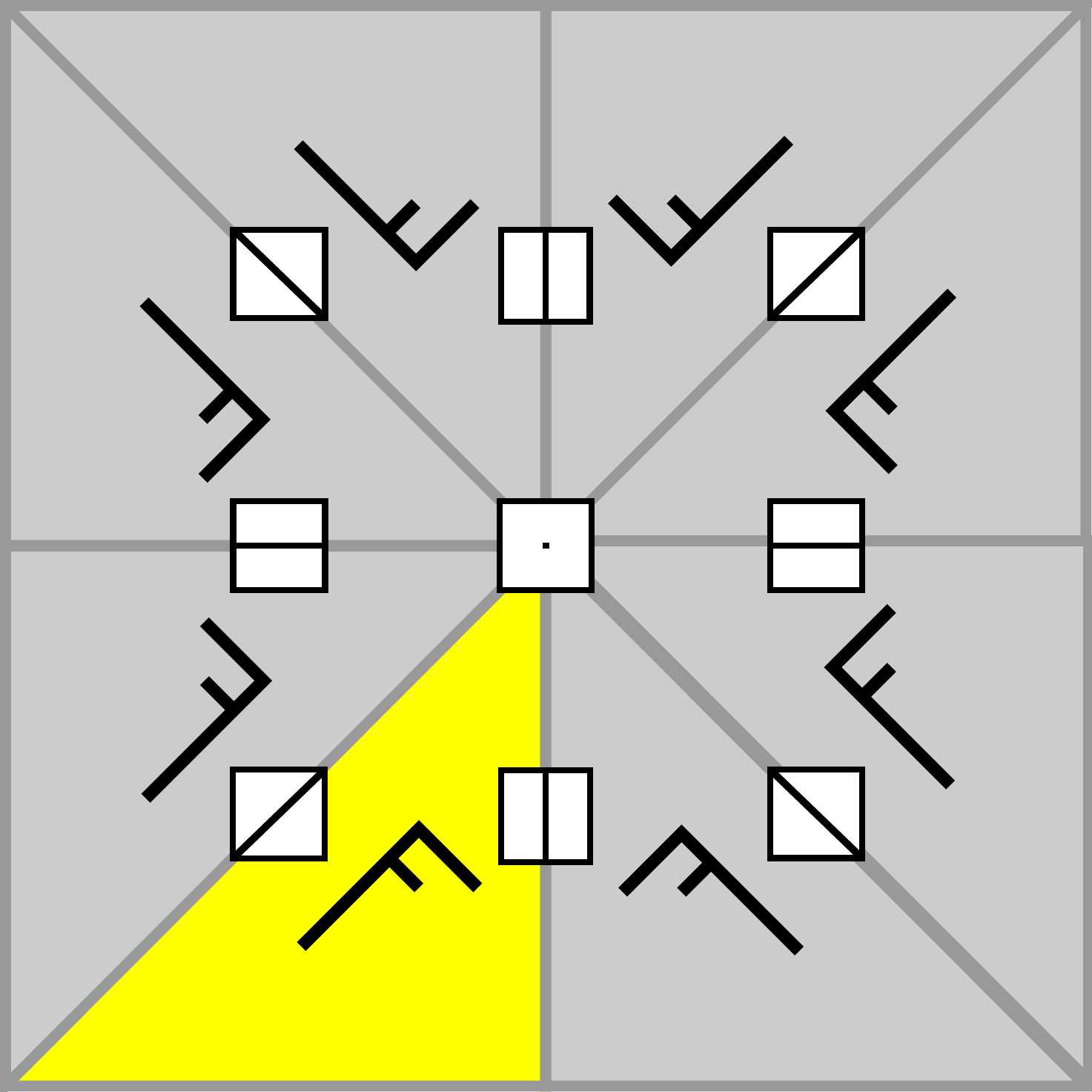} \\
    $\grp *$
  \end{minipage}
    \caption{The 10 subgroups of $D_8^{\mathbb{T}}$. See Table \ref{tab:subgroups}.}
  \label{fig:subgroups}
\end{figure}

The following lemma is useful in order to restrict the translational
subgroup for a given directional group.

\begin{lemma}\label{closed}
  For a group $G$ of torus symmetries,
  the translational subgroup $G_\Box$ is closed under every symmetry
  in the directional group of $G$.
\end{lemma}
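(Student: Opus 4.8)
The plan is to use the standard fact that the translational subgroup is a normal subgroup, and then observe that conjugation by a torus isometry acts on a torus translation in a way that only depends on the directional (linear) part. Concretely, I would first recall from the preceding discussion that $G_\Box$, the set of torus translations in $G$, is a normal subgroup of $G$ (this was already asserted when $G_\Box$ was introduced). So for any $S \in G$ and any torus translation $R \in G_\Box$, the conjugate $S R S^{-1}$ again lies in $G_\Box$, i.e.\ it is again a torus translation in $G$.

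Next I would make explicit what conjugation does to a translation. Writing $S$ in the plane-lift form $x \mapsto Ax+t$ (as justified by Lemma~\ref{toroidal-symmetry-constraints} and the discussion following it), and writing the torus translation $R$ as $x \mapsto x + u$ for a vector $u = (\alpha_1,\alpha_2)$ (well-defined modulo the grid $\Lambda$), a direct computation gives
\begin{displaymath}
  (S R S^{-1})(x) = A(A^{-1}(x - t) + u) + t = x + Au.
\end{displaymath}
So $S R S^{-1}$ is the torus translation by $Au$, where $A$ is the directional part of $S$. In other words, conjugation by any element of $G$ sends the translation $R_u \in G_\Box$ to $R_{Au} \in G_\Box$, and $A$ ranges exactly over the directional group of $G$. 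Hence $G_\Box$, viewed as a subset of translation vectors modulo $\Lambda$, is invariant under every matrix $A$ in the directional group; equivalently, $G_\Box$ is closed under every symmetry in the directional group of $G$. Since the directional group of $G$ consists precisely of the directional parts of elements of $G$, and each such directional part is realized by conjugation (by the corresponding element of $G$), this is exactly the claim.

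The only subtlety — and the main thing to be careful about — is the ambiguity of $t$ and $u$ modulo the grid $\Lambda$: the lifts $\hat S$ and $\hat R$ are each defined only up to a grid translation. I would dispose of this by noting that a different choice of lift changes $t$ by a grid vector and $u$ by a grid vector, and since $A$ keeps $\Lambda$ invariant (Theorem~\ref{theorem-torus-symmetries}(i)), the resulting change in $Au$ is again a grid vector; thus the torus translation $R_{Au}$ is well-defined independently of the choices of lifts, and the computation above is legitimate at the level of torus isometries. No genuine obstacle remains beyond this bookkeeping; the lemma is essentially an unwinding of normality of $G_\Box$ together with the fact that the conjugation action on translations factors through the linear part.
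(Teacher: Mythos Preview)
Your proof is correct and follows essentially the same approach as the paper: both arguments conjugate a translation $x\mapsto x+u$ by a group element $x\mapsto Ax+t$ and compute that the result is the translation $x\mapsto x+Au$. Your version is more explicit about the lifting ambiguity modulo~$\Lambda$, but the core idea is identical.
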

\begin{proof}
  Assume that $t\in G_\Box$, and we have an operation in
  $G/G_\Box$ that is represented by an orthogonal $2\times 2$ matrix
  $A$.
This means that $G$ contains some transformation $x\mapsto
Ax+b$. If we conjugate the translation $x\mapsto x+t$ with this
transformation, we get
$x\mapsto A(A^{-1}(x-b)+t)+b=x+At$, i.e., a translation by $At$.
\end{proof}

 \subsection{Overview of the toroidal groups}
\label{sec:toroidal-overview}

After fixing the directional group, we have to look at the
translational subgroup, and the interaction between the two.
The result is %
summarized as follows.

\begin{proposition}\label{classification-toroidal}
  The 4-dimensional point groups that have an invariant torus can be classified into 25 infinite families of toroidal groups, among them
    \begin{itemize}
    \item 2 three-parameter families
    \item 19 two-parameter families
    \item 4 one-parameter families
    \end{itemize}
    as shown in  Table~\ref{tab:overview}.
\end{proposition}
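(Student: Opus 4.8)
The plan is to carry out a systematic enumeration organized by the directional group, which, by Theorem~\ref{theorem-torus-symmetries} together with Lemma~\ref{closed}, reduces the whole problem to a finite case analysis. First I would fix one of the ten subgroups $D\leqslant D_8^{\mathbb T}$ from Table~\ref{tab:subgroups} as the directional group of the toroidal group $G$. By Theorem~\ref{theorem-torus-symmetries}, $G$ corresponds to a group $\hat G$ of Euclidean isometries of the $(\phi_1,\phi_2)$-plane whose directional parts lie in $D$ and which contains the two lattice translations by $2\pi$ along the axes. The translational subgroup $G_\Box$ corresponds to a lattice $\Lambda'$ of translations with $\Lambda\subseteq\Lambda'$, and by Lemma~\ref{closed} this lattice $\Lambda'$ must be invariant under every matrix occurring in $D$. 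So the first task, for each $D$, is to enumerate the superlattices $\Lambda'$ of $\Lambda=(2\pi\mathbb Z)^2$ that are $D$-invariant; this is elementary linear algebra over $\mathbb Q$ and produces a finite list of lattice ``shapes,'' each carrying one or two integer parameters (an index or a pair of side lengths). Because the directional group can never contain a $3$-fold rotation (it keeps the two perpendicular torus directions, hence the square grid, invariant — exactly the reason stated in Section~\ref{sec:Clifford-torus-tentative}), only rectangular, square, and ``centered'' lattice types occur.

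Second, for each pair (directional group $D$, invariant superlattice $\Lambda'$), I would determine which extensions $1\to\Lambda'\to G\to D\to 1$ actually occur, i.e. what the translational parts of the non-translational elements can be. Here one uses the standard fact that a group of plane isometries with point group $D$ and translation lattice $\Lambda'$ is determined, up to conjugacy, by the classes in $H^1(D;\,(\mathbb R^2/\Lambda')\rtimes\text{(torsion data)})$ — concretely, by the possible half-integer or quarter-integer offsets of the reflection axes and rotation centers relative to $\Lambda'$, modulo the coordinate changes allowed by Section~\ref{sec:coordinate-system} (conjugacy by $\sym.$, $\sym/$, $\sym\setminus$, and translations). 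This is precisely the computation that underlies the classification of the wallpaper groups, restricted to the $12$ types that have no $3$-fold rotation, but now refined because some of our lattices carry a free integer parameter (the aspect ratio $m\times n$) and because the ``swap'' and ``swapturn'' directional elements $\sym/,\sym\setminus,\sym L,\sym R$ exchange the two tori of the foliation and thus impose $r_1=r_2$ — but since we work with the standard torus $\mathbb T=\mathbb T_i^i$ this is automatic and only constrains which families can coexist, not the count. Each such extension class gives one family; tabulating parameters (lattice parameters plus any discrete glide/offset data that happens to be continuous — though offsets are discrete, so the only continuous parameters are the lattice side lengths, giving at most two, and a third parameter appears only in the families where the directional group is so small, $\grp 1$ or $\grp.$, that the lattice may be an arbitrary sublattice of a rectangular lattice, contributing a shear parameter). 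Collecting over all ten directional groups yields the $25$ families, sorted by number of parameters as claimed: the two three-parameter families come from $\grp 1$ and $\grp .$ (oblique lattices), the four one-parameter families from the most rigid directional groups ($\grp L$ and $\grp *$, where the square lattice is forced and only its size is free), and the remaining nineteen are two-parameter.

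Third, and this is where the real care is needed, I would eliminate duplications: two (directional group, lattice, offset) triples may yield the same subgroup of $\O(4)$ up to conjugacy in $\O(4)$, either because of the coordinate-change conjugacies within $D_8^{\mathbb T}$ recorded in Table~\ref{tab:symmetries} and Section~\ref{sec:coordinate-system}, or because a group with a small directional group sits inside the foliation in a way that coincides with another description, or because of the $C_2\cong D_2$ type coincidences that already caused bookkeeping adjustments in the tubical case. I expect this deduplication step to be the main obstacle: it is the only part that is not purely mechanical, and it requires matching our list against the classic Goursat/Du~Val/Conway--Smith data to make sure we neither miss a family nor split one spuriously. Concretely I would (a) for each candidate family write down its left and right groups $L,R$ (cyclic or dihedral, with orders read off from the lattice parameters and the directional elements), (b) check the Goursat data $L_0,R_0,\Phi$, and (c) confirm that no $\O(4)$-conjugacy (in particular no conjugacy by an element of $D_8^{\mathbb T}$ or by a relabelling of the two $S^1$ factors) identifies two of them; the surviving classes are exactly the $25$ listed in Table~\ref{tab:overview}. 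The counting of parameters then follows by inspection of that table, completing the proof of Proposition~\ref{classification-toroidal}.
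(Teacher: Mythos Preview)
Your overall plan matches the paper's strategy: organize by the directional subgroup of $D_8^{\mathbb T}$, then within each directional group classify the compatible translational lattices and the cocycle data (mirror vs.\ glide offsets), recovering a wallpaper-group-style list, and finally remove duplications. That is exactly how Sections~\ref{sec:translations}--\ref{sec:everything} proceed.

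There are two points to correct. First, your parameter count for $\grp L$ is off: the swapturn directional group does force the translational lattice to be a \emph{square} lattice, but a square lattice in the $(\phi_1,\phi_2)$-plane containing $(2\pi\mathbb Z)^2$ need not be axis-aligned; it is parameterized by \emph{two} integers $(a,b)$ (the sides of the grid rectangle spanned by $(0,0)$ and $(2\pi,0)$ in grid units, see Section~\ref{sec:swapturn} and Figure~\ref{fig:square-grid}). Hence $\grp L_{a,b}$ is a two-parameter family, and all four one-parameter families come from the full directional group $\grp *$, where the lattice must be simultaneously rectangular in the axis-aligned and in the diagonal directions, leaving only its size~$n$ free.

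Second, your deduplication strategy---matching against the classical Goursat/Du~Val/Conway--Smith lists---is weaker than what the paper does and carries a circularity risk (the paper in fact points out an oversight in~\cite{CS}, see footnote~\ref{fn-CS-escaped}). The paper's approach in Section~\ref{dupli} is intrinsic: it characterizes exactly which orientation-preserving transformations fail to have a unique invariant Clifford torus (those whose torus-translation vectors lie on the diagonals or on the tilted square of Figure~\ref{fig:ambiguities}), giving a criterion (Corollary~\ref{cor:several-invariant-tori}) that confines the search for duplications to groups whose translational subgroup is contained in those loci, and produces Table~\ref{tab:duplications} without appeal to any prior classification.
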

The last column of  Table~\ref{tab:overview} shows the names of these groups in
the classification of Conway and Smith.\footnote
{\label{fn-mirror}%
To get a closer correspondence with our parameterization
for the groups of type
\grp1 and \grp. in the first two rows, we
swap the role of the left and right factors in
the generators given in Conway and Smith.
Effectively, we consider the mirror groups.
Accordingly,
we have adapted the Conway--Smith convention of writing $\frac1f[C_m\times
C_n^{(s)}]$, by decorating the \emph{left}
factor with the parameter $s$.
More details are given in Appendix~\ref{conway-smith}.}
We make a comparison in
Section~\ref{sec:compare-CS}.

There is one difficulty that we have not addressed: We look at the groups
that leave one particular Clifford torus invariant. However, there are
some groups, in particular small groups, that have several
invariant Clifford tori. This leads to ambiguities. For example, a torus translation by
$180^\circ$ on one torus may appear as a swapturn $\sym/$ on a
different torus. We %
investigate these cases %
in detail in Section~\ref{dupli}.

The natural constraint on the parameters $m$ and $n$ is
$m,n\ge 1$ in all cases
 of  Table~\ref{tab:overview}, in the sense that all these choices
(in a few cases under the additional constraint that $m\equiv n\pmod 2$)
lead to valid groups. (But note that some extra evenness constraints are
already built into the notation, for example, when we write
$\grp\setminus_{2m,2n}^{\textbf{pm}}$ instead of
$\grp\setminus_{m,n}^{\textbf{pm}}$.)
For the swapturn
groups
$\grp L_{a,b}$, the natural choices are $a,b\ge 0$ except for $(a,b)=(0,0)$.
The stricter conditions on $m$ and $n$ in Table~\ref{tab:overview}
are imposed in order to
exclude duplications.

\begin{table}
  \centering
  \setlength{\extrarowheight}{2.1pt}
  \begin{tabular}{|@{\,}l@{\,}|@{\,}c@{\,}|@{ }l@{\,}|@{\,}l@{ }|@{ }l@{\,}|@{ }l@{\,}|}
    \hline
    group& order & parameters &\multicolumn3{l|}{names in Conway--%
                                Smith \cite[Tables 4.1--4.3]{CS}
    }\\
    \hline \multicolumn 6{|@{\,}l|}{torus translation groups (chiral,
    wallpaper group $\mathbf{p1}$)}\\\hline
    \raise2pt\hbox{\strut}%
    $\grp1_{m,n}^{(s)}$  & $mn$
                 &
                   $m,n\ge1$,
                   $- \frac m2  \le s \le \frac{n-m}2$
 &\multicolumn3{l|}{
$m\equiv 0 \pmod 2$:
 $\pm\frac1{f}[C_{m\!f/2}^{(s')}\times C_{n}]$
}\\                                              
    &&%
 &\multicolumn3{l|}{
$m\equiv 1 \pmod 2$:
$+\frac1{f}[C_{m\!f}^{(s')}\times C_{n}]$}\\
    \hline \multicolumn 6{|@{\,}l|}{torus flip groups (chiral,
        wallpaper group $\mathbf{p2}$)}\\\hline
    \raise2pt\hbox{\strut}%
    $\grp._{m,n}^{(s)}$ & $2mn$ &
 $m,n\ge1$,
 $- \frac m2  \le s \le \frac{n-m}2$
                              &\multicolumn3{l|}{
$m\equiv 0 \pmod 2$:
    $\pm\frac{1}{2f}[D_{m\!f}^{(f-s')}\times D_{2n}]$}\\
         &&\quad %
$(m,n)\ne(1,1),(2,1)$
                 &\multicolumn3{l|}{%
$m\equiv 1 \pmod 2$:
    $+\frac{1}{2f}[D_{2m\!f}^{(2f-s')}\times D_{2n}]$}\\
    \hline \multicolumn 3{|@{\,}l|@{\,}}{torus swap groups (chiral)}
        & \multicolumn2{l|@{ }}{$n$ even}&$n$ odd\\\hline
    $\grp\setminus_{2m,2n}^{\textbf{pm}}$ & $4mn$ &$m,n\ge2$&\multicolumn2{l|@{ }}{ $\pm[D_{2m}\times C_{n}]$}&
$\pm\frac{1}{2}[D_{2m}\times C_{2n}]$\\
    $\grp\setminus_{2m,2n}^{\textbf{pg}}$ & $4mn$
                 &$m\ge2,n\ge1$&\multicolumn2{l|@{ }}{$\pm\frac{1}{2}[D_{2m}\times
                             C_{2n}]$}&$\pm[D_{2m}\times C_{n}]$ \\
    $\grp\setminus_{m,n}^{\textbf{cm}}$ & $2mn$ &
$m\ge3,n\ge2$, $m-n$ even&\multicolumn2{l|@{ }}{
 $\pm\frac{1}{2}[\overline{D}_{2m} \times C_{n}]$}%
    &    \raise-1pt\hbox{\strut}%
      $+\frac{1}{2}[D_{2m}\times C_{2n}]$\\
    \hline
    \raise1pt\hbox{\strut}%
    $\grp/_{2m,2n}^{\textbf{pm}}$
         & $4mn$ &$m,n\ge2$&\multicolumn3{l|}{
        mirrors of the groups $\grp\setminus_{2n,2m}^{\textbf{pm}}$}\\
    $\grp/_{2m,2n}^{\textbf{pg}}$ & $4mn$ &$m\ge1,n\ge2$&\multicolumn3{l|}{
        mirrors of the groups $\grp\setminus_{2n,2m}^{\textbf{pg}}$}\\
    $\grp/_{m,n}^{\textbf{cm}}$ & $2mn$ &$m\ge2,n\ge3$, $m-n$ even&\multicolumn3{l|}{        mirrors of the groups $\grp\setminus_{n,m}^{\textbf{cm}}$}\\   
    \hline \multicolumn 3{|@{\,}l|@{\,}}{full torus swap groups (chiral)}&$m
  $&$n$ even&$n$ odd\\\hline
    \raise1pt\hbox{\strut}%
    $\grp X_{2m,2n}^{\textbf{p2mm}}$ & $8mn$ &$m,n\ge2$
      &$m$ even& $\pm[D_{2m} \times D_{2n}]$
&
             $\pm\frac12[D_{2m} \times \overline D_{4n}]$\\
    &&&$m$ odd& $\pm\frac12[\overline D_{4m} \times D_{2n}]$
   & $\pm\frac14[D_{4m} \times \overline D_{4n}]$\\
$\grp X_{2m,2n}^{\textbf{p2mg}
    }$ & $8mn$ &$m,n\ge2$
     &$m$ even& $\pm\frac12[D_{2m} \times \overline D_{4n}]$
 & $\pm[D_{2m} \times D_{2n}]$\\
    &&&$m$ odd& $\pm\frac14[D_{4m} \times \overline D_{4n}]$
 & $\pm\frac12[\overline D_{4m} \times D_{2n}]$\\
$\grp X_{2m,2n}^{\textbf{p2gm}
    }$ & $8mn$ &$m,n\ge2$
      &$m$ even& $\pm\frac12[\overline D_{4m} \times D_{2n}]$
 & $\pm\frac14[D_{4m} \times \overline D_{4n}]$\\
    &&&$m$ odd& $\pm[D_{2m} \times D_{2n}]$
 & $\pm\frac12[D_{2m} \times \overline D_{4n}]$\\

$\grp X_{2m,2n}^{\textbf{p2gg}}$ & $8mn$ &$m,n\ge2$
      &$m$ even& $\pm\frac14[D_{4m} \times \overline D_{4n}]$
& $\pm\frac12[\overline D_{4m} \times D_{2n}]$\\
    &&&$m$ odd& $\pm\frac12[D_{2m} \times \overline D_{4n}]$
& $\pm[D_{2m} \times D_{2n}]$\\[0.5ex]
    $\grp X_{m,n}^{\textbf{c2mm}}$ & $4mn$ &$m,n\ge3$, $m-n$ even
  &$m\equiv n$&%
  $\pm\frac12[\overline{D}_{2m} \times \overline D_{2n}]$
    &    \raise-1pt\hbox{\strut}%
$+\frac14[D_{4m} \times \overline D_{4n}]$ \\
    \hline \multicolumn 6{|@{\,}l|}{torus reflection groups (achiral)}\\\hline
    \raise1pt\hbox{\strut}%
    $\grp|_{m,n}^{\textbf{pm}}$ & $2mn$ &$m,n\ge1$&
        \multicolumn3{@{ }l|}{
        $\left\{\vbox to 3,8ex{}\right.%
   \begin{aligned}
&\textstyle
{+}\text{ or }
{\pm}\frac{1}{f}[C_{n'\!f} \times C_{n'\!f}^{(s)}] \cdot 2^{(0)}\text{ or }\\
&\textstyle
{+}\frac{1}{f}[C_{n'\!f} \times C_{n'\!f}^{(s)}] \cdot 2^{(2)}
\end{aligned}%
$
}\\
    $\grp|_{m,n}^{\textbf{pg}}$ & $2mn$ &$m,n\ge1$&\multicolumn3{@{ }l|}{%
        $\left\{\vbox to 3,8ex{}\right.%
   \begin{aligned}
&\textstyle
{+}\text{ or }
{\pm}\frac{1}{f}[C_{n'\!f} \times C_{n'\!f}^{(s)}] \cdot 2^{(1)}\text{ or }\\
&\textstyle
{+}\frac{1}{f}[C_{n'\!f} \times C_{n'\!f}^{(s)}] \cdot 2^{(0)}
\end{aligned}$
}\\
    $\grp|_{m,n}^{\textbf{cm}}$ & $4mn$ &$m,n\ge1$
      &\multicolumn3{@{\,}l@{\,}|}{
    \raise-2pt\hbox{\strut}%
$\textstyle{\pm}
\frac{1}{f}[C_{n'\!f} \times C_{n'\!f}^{(s)}] \cdot 2^{(0)}
\text{ or }{+}\frac{1}{f}[C_{n'\!f} \times C_{n'\!f}^{(s)}] \cdot 2^{(0)}$}\\
    \hline \multicolumn 6{|@{\,}l|}{full torus reflection groups (achiral)}\\\hline
    \raise1pt\hbox{\strut}%
    $\grp+_{m,n}^{\textbf{p2mm}}$ & $4mn$ &$m \geq n \geq 1$, $(m,n) \not= (1,1)$&\multicolumn3{l|}{}\\
    $\grp+_{m,n}^{\textbf{p2mg}}$ & $4mn$ &$m,n\ge1$, $(m,n) \not= (1,1)$&
\multicolumn3{l|}{
\smash{%
\hbox{$\left.\vbox to 5ex{}\right\}
\begin{aligned}
&\textstyle\pm \frac{1}{2f}[D_{2n'f} \times D_{2n'f}^{(s)}]\cdot
2^{(\alpha,\beta)}
\text{ or}%
\\&\textstyle+ \frac{1}{2f}[D_{2n'f} \times D_{2n'f}^{(s)}]\cdot 2^{(\alpha,\beta)}
\end{aligned}
    $}}}\\
    $\grp+_{m,n}^{\textbf{p2gg}}$ & $4mn$ &$m \geq n \geq 1$, $(m,n) \not= (1,1)$&
 \multicolumn3{l|}{}\\
    $\grp+_{m,n}^{\textbf{c2mm}}$ & $8mn$ &$m \geq n \geq 1$, $(m,n) \not= (1,1)$
&
 \multicolumn3{l|}{
    \raise-2pt\hbox{\strut}%
$\textstyle\pm\text{ or }{+\frac{1}{2f}}[D_{2n'f} \times D_{2n'f}^{(s)}]\cdot 2^{(0,0)}$
}\\
    \hline \multicolumn 6{|@{\,}l|}{torus swapturn groups (achiral,
        wallpaper group $\mathbf{p4}$)}\\\hline
    \raise1pt\hbox{\strut}%
    $\grp L_{a,b}$ & $4(a^2{+}b^2)$ &
       $a \ge b \ge 0$%
     &\multicolumn3{l|}{
       $a\equiv b\pmod 2$:
      $\pm\frac{1}{2f}[D_{2nf}\times D_{2nf}^{(s)}]\cdot \overline{2}$
}\\
    && $a\ge2$, $(a,b) \not= (2,0)$ &%
    \multicolumn3{l|}{ $a\not\equiv b\pmod 2$:
    \raise-2pt\hbox{\strut}%
    $+\frac{1}{2f}[D_{2nf}\times D_{2nf}^{(s)}]\cdot \overline{2}$}
    \\
    \hline \multicolumn 3{|@{\,}l|@{\,}}{full torus groups (achiral)}
    & \multicolumn2{l|@{ }}{$n$ even}&$n$ odd
\\\hline
    \raise1pt\hbox{\strut}%
    $\grp*_{n}^{\textbf{p4mm}\mathrm{U}}$ & $8n^2$ &$n\ge3$&\multicolumn2{l|@{ }}{$\pm\frac{1}{2}[\overline{D}_{2n} \times \overline{D}_{2n}] \cdot 2$}%
& $+\frac{1}{4}[D_{4n} \times \overline{D}_{4n}]\cdot 2_1$\\
    $\grp*_{n}^{\textbf{p4gm}\mathrm{U}}$ & $8n^2$&$n\ge3$&\multicolumn2{l|@{ }}{$\pm\frac{1}{2}[\overline{D}_{2n} \times \overline{D}_{2n}] \cdot \overline{2}$}%
&$+\frac{1}{4}[D_{4n} \times \overline{D}_{4n}]\cdot 2_3$\\
    $\grp*_{n}^{\textbf{p4mm}\mathrm{S}}$ & $16n^2$ &$n\ge2$&\multicolumn2{l|@{ }}{$\pm[D_{2n}\times D_{2n}]\cdot 2$}%
& $\pm\frac{1}{4}[D_{4n} \times \overline{D}_{4n}]\cdot 2$\\
    $\grp*_{n}^{\mathbf{p4gm}\textrm{S}}$ & $16n^2$ &$n\ge2$
& \multicolumn2{l|@{ }}{    \raise-1pt\hbox{\strut}%
$\pm\frac{1}{4}[D_{4n} \times \overline{D}_{4n}]\cdot 2$}%
& $\pm[D_{2n}\times D_{2n}]\cdot 2$\\
    \hline
  \end{tabular}
  \caption[Overview of the 25 classes of toroidal groups]{Overview of the toroidal groups. In the Conway--Smith names, we write
    $n'$ and $s'$ when these parameters don't directly
    correspond to our parameters $n,s$.}
  \label{tab:overview}
\end{table}

We will now go through the categories one by one.
This closely parallels the classification of the wallpaper groups.
When appropriate, we use the established notations for wallpaper groups to distinguish
the torus groups.
We have to choose suitable parameters for the different dimensions of each
 wallpaper group,
and in some cases,
we have to refine the classification of wallpaper 
groups because
different axis directions are distinguished.

\subsection{The torus translation groups\texorpdfstring{, type \grp1}{}}
\label{sec:translations}

\begin{figure}
  \centering
  \includegraphics[scale=1]{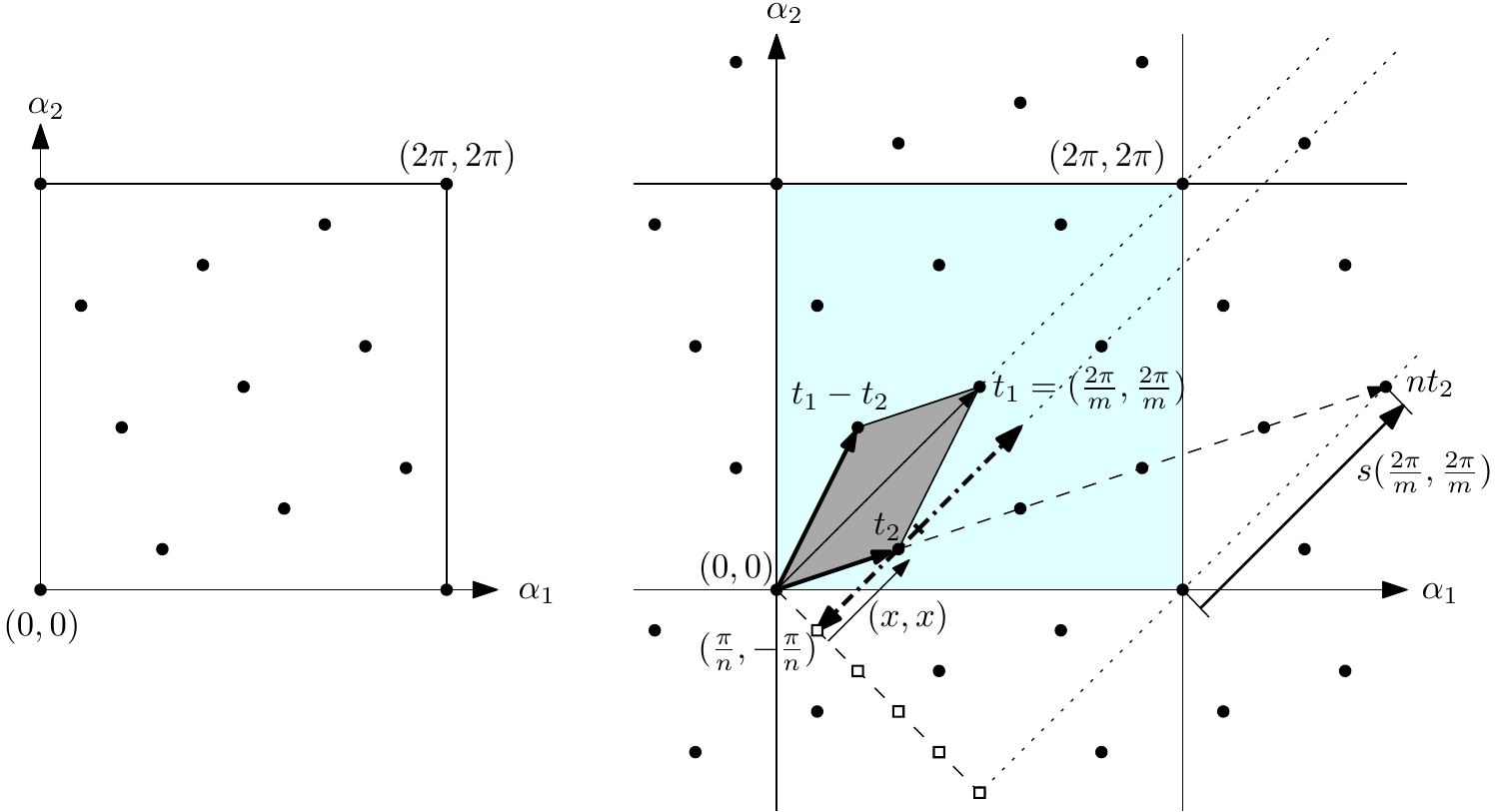}
  \caption{A lattice of torus translations.
    In the right part, we see that it is given by the parameters $m=2$, $n=5$, and
    $s=1$. The vectors $t_1=(\pi,\pi)$ and
    $t_2=(\frac\pi5,-\frac\pi5)+
    (\frac{2\pi}{5},\frac{2\pi}{5})
    =(\frac{3\pi}{5},\frac\pi{5})
    $ generate
    the group $\grp1_{2,5}^{(1)}$. This lattice happens to be
    a square lattice, but this %
    plays no role.}
  \label{fig:translations}
\end{figure}

These are the groups that contain only torus translations.
The pure translation groups are the
simplest class, but they are also the richest type of groups, requiring
three parameters for their description.
The translations $(\alpha_1,\alpha_2)$ with
$R_{\alpha_1,\alpha_2}\in G$
form an additive group
modulo
$(2\pi,2\pi)$, and hence a lattice
modulo
$(2\pi,2\pi)$.
In accordance with Theorem~\ref{theorem-torus-symmetries} %
we can also view it as a lattice in the plane that contains all
points whose coordinates are multiples of $2\pi$,
see Figure~\ref{fig:translations}.

We parameterize these lattices with three parameters $m,n,s$:
The lattice subdivides the principal diagonal from $(0,0)$ to $(2\pi,2\pi)$
into some number $m\ge1$ of segments.
Then we choose $t_1=%
({\frac{2\pi}m,\frac{2\pi}m})$ as the first
generator of the lattice.
The second parameter $n\ge1$ is the number of lattice lines
parallel to the principal diagonal that run between $(0,0)$ and $(2\pi,0)$,
including the last one through~$(2\pi,0)$.
In the figure, we have $m=2$ and $n=5$.
On each such line, the points are equidistant with distance
$\frac{2\pi}m\cdot \sqrt2$.
The first parallel lattice line thus contains a unique point
$t_2=(\frac\pi n,-\frac\pi n)+(x,x)$ with $0\le x <\frac{2\pi}m$,
and we choose $x$ as the third parameter. The range from
which $t_2$ can be chosen is indicated by a double arrow in the figure.

We still have to take into account the ambiguity from the choice of the coordinate
system (Section~\ref{sec:coordinate-system}).
The choice of origin is no problem, since a translation does not
depend on the origin.
Also,
the ``flip'' ambiguity from \sym. is no problem at all:
Rotating the coordinate system by $180^\circ$ maps the lattice to itself.
The ``swap'' ambiguity from \sym/, however, is more serious, as
it exchanges the coordinate axes:
$\alpha_1\leftrightarrow\alpha_2$.
(From \sym\setminus, we get no extra ambiguity, since
$\sym\setminus = \sym. \cdot \sym/$.)

To eliminate this  ambiguity, we look at
the vectors $t_1-t_2$ and $t_2$. They form also a lattice basis, and they
span a parallelogram whose diagonal $t_1$ lies on the
$\alpha_1=\alpha_2$ axis. %
The alternate choice of the basis will
reflect the parallelogram at this diagonal.
Thus, the choices $x$ and $\frac{2\pi}m-x$ will lead to the same group.
We can achieve a
unique representative by stipulating that $t_2$ is not longer than
$t_1-t_2$.
This means that we restrict $t_2$ to the lower half of the
range,
including the midpoint, which is marked in the figure:
$0\le x \le\frac{\pi}m$.\footnote
{This easy way of dealing with the duplications caused by \sym/ is the
  reason for preferring the
  oblique axes of
  Figure~\ref{fig:translations} for measuring the parameters $m$ and
  $n$
  over %
  the more natural
  $\alpha_1,\alpha_2$-axes.
  This oblique system is also aligned with the specification of the group
  by its
  left and right group (of left translations and right translations)
  that underlies the classic classification, see Appendix~\ref{conway-smith}.
  Curiously, %
  \label{fn-CS-escaped}
  these duplications
 caused by \sym/
  were overlooked by Conway and Smith~\cite{CS},
  although they had escaped %
  {none} of the previous classifications
\cite[p.~62, groupe~I]{goursat-1889},
\cite[p.~20, item \S1, formula~(2)]{ThrS-I},
 \cite[p.~55, first paragraph]{duval}.
}

Finally, we look at the point $nt_2$, which lies on the $45^\circ$
line through~$(2\pi,0)$.  We have to ensure that it is one of the
existing lattice points on this line because additional points would contradict
the choice of~$m$.  Thus
$$nt_2=(\pi,-\pi)+(nx,nx)=(2\pi,0)+s(\tfrac{2\pi}m,\tfrac{2\pi}m)$$ for
some integer $s$, or in other words
$$
x = \frac\pi n + s\cdot \frac {2\pi}{mn}
$$
Combining this with the constraint $0\le x \le\frac{\pi}m$,
we get
\begin{equation}
  \label{eq:ks}
  -\frac m2  \le s \le %
  -\frac{m}2+\frac{n}2
\end{equation}
This range contains
$\lceil \frac {n}2\rceil$ integers if $m$ is odd and
$\lceil \frac {n+1}2\rceil$ integers if $m$ is even.
In particular, there is always
at least one possible value~$s$.

\begin{proposition}
\label{prop:torus_translations}
  The point groups that contain only torus translations
  can be classified as follows:

    For any integers $m,n\ge 1$ and
    any integer~$s$ in the range~\eqref{eq:ks}, there is one such
    group, the torus translation group
    $\grp1_{m,n}^{(s)}$,
of order $mn$.
It is generated by
  $R_{\frac{2\pi}m,\frac{2\pi}m}$ 
and
  $R_{\frac{2\pi}n+\frac{2s\pi}{mn},\frac{2s\pi}{mn}}$.
\end{proposition}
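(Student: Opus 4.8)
The plan is to establish a bijection between torus translation groups and triples $(m,n,s)$ subject to~\eqref{eq:ks}, in two directions. In the forward direction, given such a group $G$ of torus translations, I would exploit the fact (Theorem~\ref{theorem-torus-symmetries}) that $G$ corresponds to a lattice $\hat G$ in the $(\phi_1,\phi_2)$-plane containing $\Lambda=\{(2\pi k_1,2\pi k_2)\}$. First I would define $m$ as the number of subdivisions the lattice induces on the principal diagonal from $(0,0)$ to $(2\pi,2\pi)$: since $\hat G\supseteq\Lambda$, the point $(2\pi,2\pi)$ is a lattice point, and the lattice points on that diagonal segment form an arithmetic progression, so $m\ge1$ is well-defined and $t_1:=(\tfrac{2\pi}{m},\tfrac{2\pi}{m})\in\hat G$. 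Next I would define $n$ as the number of lattice lines parallel to the diagonal lying strictly between $(0,0)$ and $(2\pi,0)$ together with the one through $(2\pi,0)$ — this is finite and $\ge1$ because $(2\pi,0)\in\Lambda\subseteq\hat G$ lies on the same diagonal direction — and then on the first such parallel line pick the unique lattice point $t_2=(\tfrac{\pi}{n},-\tfrac{\pi}{n})+(x,x)$ with $0\le x<\tfrac{2\pi}{m}$. The key spacing fact I need is that on each diagonal line the lattice points are equispaced with gap $\tfrac{2\pi}{m}\sqrt2$, which follows from $t_1$ being the shortest diagonal translation; and that $t_1,t_2$ generate $\hat G$, which follows by a standard ``fundamental parallelogram'' argument (any lattice point reduces modulo $t_1,t_2$ into the half-open parallelogram they span, and there are no interior lattice points by the minimality in the definitions of $m$ and $n$).

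Then I would pin down $s$. The point $nt_2$ lies on the $45^\circ$ line through $(2\pi,0)$, and being a lattice point it must equal $(2\pi,0)+s\,t_1$ for some integer $s$ (no other lattice points on that line, again by the choice of $m$); solving gives $x=\tfrac{\pi}{n}+s\tfrac{2\pi}{mn}$, and combining with $0\le x<\tfrac{2\pi}{m}$ yields $-\tfrac m2\le s<\tfrac m2+\tfrac n2$, i.e.\ essentially~\eqref{eq:ks}. To get the sharpened upper bound in~\eqref{eq:ks} (and hence a genuinely unique representative), I would invoke the swap ambiguity from \sym/ discussed in Section~\ref{sec:coordinate-system}: the alternate basis $t_1-t_2,\ t_2$ spans the same lattice and reflects the spanning parallelogram across the diagonal $\alpha_1=\alpha_2$, so the choices $x$ and $\tfrac{2\pi}{m}-x$ describe the same group; restricting to $t_2$ no longer than $t_1-t_2$, i.e.\ $0\le x\le\tfrac{\pi}{m}$, selects one and converts the bound to $s\le-\tfrac m2+\tfrac n2$, which is~\eqref{eq:ks}. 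I should also note here that the flip ambiguity from \sym.\ (rotation by $180^\circ$) maps the lattice to itself and changes nothing, and \sym$\setminus$ gives nothing new since $\sym\setminus=\sym.\cdot\sym/$, so no further quotient is needed.

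For the converse direction I would check that for any $m,n\ge1$ and any integer $s$ in the range~\eqref{eq:ks}, the two generators $t_1=R_{\frac{2\pi}{m},\frac{2\pi}{m}}$ and $t_2=R_{\frac{2\pi}{n}+\frac{2s\pi}{mn},\,\frac{2s\pi}{mn}}$ together with $\Lambda$ generate a lattice $\hat G$ satisfying the hypotheses of Theorem~\ref{theorem-torus-symmetries}, which is automatic since all elements are translations (so the directional part is trivial and certainly preserves $\mathbb Z^2$), giving a well-defined group of torus translations of order equal to the index $[\hat G:\Lambda]$. A short determinant computation with the basis vectors $t_1,t_2$ shows this index is $mn$: the $2\times2$ matrix with rows $t_1/(2\pi)$ and $t_2/(2\pi)$ has determinant $\tfrac1m(\tfrac1n+\tfrac{s}{mn})-\tfrac1m\cdot\tfrac{s}{mn}=\tfrac1{mn}$, so $[\hat G:\Lambda]=mn$. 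Finally I would verify that the reduction procedure of the forward direction applied to this $\hat G$ recovers the same $(m,n,s)$, which closes the bijection.

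\textbf{Main obstacle.}
The routine linear algebra (index $=mn$, generators span the lattice) is straightforward; the delicate point is the bookkeeping of \emph{which} coordinate-change ambiguities must be quotiented out and which must not. The argument must be careful that within the single directional group \grp1, only \sym/ causes a genuine identification among distinct $(m,n,s)$, that this is exactly accounted for by halving the range of $x$, and that no \emph{further} collapse occurs (e.g.\ no accidental coincidence $\grp1_{m,n}^{(s)}=\grp1_{m',n'}^{(s')}$ with different parameters). This requires showing that the triple $(m,n,s)$ is a complete invariant of the reduced lattice — essentially that the reduction algorithm is deterministic and its output depends only on the lattice, not on intermediate choices — which is where footnote~\ref{fn-CS-escaped}'s warning about an overlooked duplication indicates the real subtlety lies. (The genuinely global question of tori with several invariant Clifford tori is explicitly deferred to Section~\ref{dupli}, so it is not part of this proof.)
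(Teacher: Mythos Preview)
Your proposal is correct and follows essentially the same approach as the paper: the paper carries out exactly this construction in the prose preceding the proposition (defining $m$ from the diagonal, $n$ from the parallel lines, $t_2$ on the first parallel line, deriving $s$ from $nt_2$, and using the \sym/ ambiguity to halve the range of $x$), and the proposition is stated as a summary without a separate formal proof. Your write-up adds a couple of details the paper leaves implicit---the explicit determinant computation for the order $mn$ and the fundamental-parallelogram argument that $t_1,t_2$ generate $\hat G$---but these are the obvious fill-ins, not a different route. (Minor remark: in your determinant you have a harmless sign slip; the determinant is $-\tfrac1{mn}$, but only its absolute value matters for the index.)
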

In terms of quaternions, these generators %
are
$
[\exp (-\tfrac{2\pi}{m}i), 1]\text{ and }
[\exp(-\tfrac{(m+2s)\pi}{mn}i), \exp \tfrac{\pi i}{n}].
$
We emphasize that the two parameters $m$ and $n$ play different roles in this
parameterization, and there is no straightforward way to
read off
the parameters of the mirror group from the original parameters
$m,n,s$.  (See for example the entries 11/01 and 11/02 in
Table~\ref{tab:crystallographic}.)

We have observed above that $x$ and
$x'=\frac{2\pi}m-x$ lead to the same group, and the same is true for
$x'=\frac{2\pi}m+x$. In terms of $s$ this means that
the parameters $s'=-m-s$ and $s'=s+n$ lead to the same group as~$s$.
In Section~\ref{dupli}, when we discuss duplications, it will be convenient to allow values $s$ outside the 
range~\eqref{eq:ks}. In particular, it is good to remember that $s=0$
corresponds to a generating point on the $\alpha_1$-axis.

\subsubsection{Dependence on the starting point}

\begin{proposition}
Any two full-dimensional orbits %
of a toroidal translation group 
are linearly equivalent.
\end{proposition}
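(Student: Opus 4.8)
The plan is to reduce the statement to an elementary fact: torus translations act on $\R^4$ as diagonal complex‑linear maps, and the block‑diagonal complex‑linear maps are exactly their commutant; from this, any two full‑dimensional orbits are visibly interchanged by a single invertible linear map. Throughout, ``linearly equivalent'' is read as: there is an invertible linear transformation of $\R^4$ carrying one orbit onto the other.

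First I would set up coordinates. Identify $\R^4$ with $\mathbb{C}^2$ via $(x_1,y_1,x_2,y_2)\mapsto(x_1+iy_1,\,x_2+iy_2)=(z_1,z_2)$. Under this identification the rotation $R_{\alpha_1,\alpha_2}=\diag(R_{\alpha_1},R_{\alpha_2})$ of equation~\eqref{rotation} acts by $(z_1,z_2)\mapsto(e^{i\alpha_1}z_1,\,e^{i\alpha_2}z_2)$, i.e.\ by coordinatewise multiplication with unit complex numbers. Since a torus translation group consists entirely of such transformations $R_{\alpha_1,\alpha_2}$ (Proposition~\ref{prop:torus_translations}), every $g\in G$ acts on $\mathbb{C}^2$ as a diagonal $\mathbb{C}$‑linear map $(z_1,z_2)\mapsto(\mu_1z_1,\mu_2z_2)$.

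Next the dimension count, which is the only place the hypothesis enters. I would show that if the $G$‑orbit of $v=(z_1,z_2)$ is full‑dimensional then $z_1\ne0$ and $z_2\ne0$: if, say, $z_1=0$, the whole orbit lies in the real $2$‑plane $\{0\}\times\mathbb{C}$, contradicting full‑dimensionality. Then comes the construction. Given two full‑dimensional orbits, of $v=(z_1,z_2)$ and $w=(w_1,w_2)$, set $\lambda_1:=w_1/z_1$ and $\lambda_2:=w_2/z_2$ (well defined, both nonzero), and let $A$ be the $\mathbb{C}$‑linear map $A(u_1,u_2):=(\lambda_1u_1,\lambda_2u_2)$. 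Then $A$ is an invertible $\R$‑linear map of $\R^4$; it commutes with every diagonal $\mathbb{C}$‑linear map, because multiplication in $\mathbb{C}$ is commutative, hence with every $g\in G$; and $Av=w$. Therefore $A$ maps $\orbit(v,G)=\{gv\mid g\in G\}$ bijectively onto $\{g(Av)\mid g\in G\}=\orbit(w,G)$, which proves the claim.

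I would close with a remark that this argument is not special to translation groups beyond the fact that all their elements preserve the complex structure; moreover the map $A$ can be taken as a product of two commuting planar similarities, one in each coordinate plane, which is slightly stronger than bare linear equivalence. Honestly, there is no serious obstacle here: once one recognizes that the commutant of the maximal torus is exactly the block‑diagonal complex‑linear maps, the proof is a one‑line computation, and the only point requiring (trivial) care is the implication ``full‑dimensional $\Rightarrow$ both complex coordinates nonzero.''
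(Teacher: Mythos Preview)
Your proof is correct and follows essentially the same approach as the paper: the paper first rotates $v$ in each coordinate plane to the form $(r_1,0,r_2,0)$ and then scales each plane separately, using that both steps commute with all torus translations, whereas you package these two steps into a single block-diagonal $\mathbb{C}$-linear map $(z_1,z_2)\mapsto(\lambda_1 z_1,\lambda_2 z_2)$. The complex-coordinate formulation is a tidy shortcut, but the underlying idea---that the block-diagonal linear maps of the form $\diag(R_{\alpha_1},R_{\alpha_2})\cdot\diag(\mu_1,\mu_1,\mu_2,\mu_2)$ commute with every torus translation and act transitively on points with both projections nonzero---is identical.
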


\begin{proof}
Let $G$ be a toroidal translation group.
We will show that any full-dimensional $G$-orbit
can be obtained from the $G$-orbit of the point 
$(\frac1{\sqrt2}, 0, \frac1{\sqrt2}, 0)$
by applying an invertible linear transformation.

Let $v\in \R^4$ be a point whose $G$-orbit is full-dimensional.
This is equivalent to requiring that the projections of $v$
to the $x_1,y_1$-plane and to the $x_2,y_2$-plane
are not zero.
We can map $v$ to a point $v'$ of the form $(r_1, 0, r_2, 0)$,
with $r_1 \ne 0$ and $r_2 \ne 0$,
by applying a rotation of the form 
\begin{equation}
R_{\alpha_1,\alpha_2} =
\begin{pmatrix}
    R_{\alpha_1} & 0 \\
    0 & R_{\alpha_2}
\end{pmatrix}.
\label{transformation_1}
\end{equation}
The new point $v'$ can be mapped to the point
$(\frac1{\sqrt2}, 0, \frac1{\sqrt2}, 0)$
by applying a matrix of the form
\begin{equation}
\diag(\lambda_1, \lambda_1, \lambda_2, \lambda_2) =
\begin{pmatrix}
    \lambda_1 & 0 & 0 & 0 \\
    0 & \lambda_1 & 0 & 0 \\
    0 & 0 & \lambda_2 & 0\\
    0 & 0 & 0 & \lambda_2  \\
\end{pmatrix}.
\label{transformation_2}
\end{equation}
Since torus translations commute with the linear
transformations \eqref{transformation_1}~and~\eqref{transformation_2},
we are done.
\end{proof}

Frieder and Ladisch~\cite[Proposition~6.3 and Corollary~8.4]{FL16} proved that the
same conclusion holds 
for any abelian group: All full-dimensional orbits
are linearly equivalent to each other in this case.

\subsection{The torus flip groups\texorpdfstring{, type \grp .}{}}
\label{sec:translations+flip}

These groups are generated by torus translations together with a single torus flip.
Adding the flip operation is completely harmless.
Conjugation with a flip changes $R_{\alpha_1,\alpha_2}$
to
$R_{-\alpha_1,-\alpha_2}$, and therefore does not change the
translation lattice at all.
The order of the group doubles.

If we choose the origin at the center of a $2$-fold rotation induced by
a torus flip, then  $\grp._{m,n}^{(s)}$ is generated by
$$
[\exp(-\tfrac{2\pi i}{m}), 1],
[\exp(-\tfrac{(m+2s)\pi i}{mn}), \exp \tfrac{\pi i}{n}],
[j, j].
$$

\subsection{Groups that contain only one type
  of reflection}
\label{sec:translations+reflect1}

These are the torus reflection groups  $\grp|$ and $\grp-$,
as well as the torus swap groups
$\grp/$ and $\grp\setminus$.
The groups of type $\grp|$ and $\grp-$ are geometrically the same,
because $\sym/$ (or $\sym\setminus$) exchanges vertical mirrors with
horizontal mirrors. Thus, Table~\ref{tab:overview} contains no entries
for~\grp-.
The groups $\grp/$ and $\grp\setminus$ are mirrors, and their
treatment is similar.

If the directional part of a transformation is a reflection (in the
plane), the transformation itself can be either a reflection or a
glide reflection. In both cases there is an invariant line.
We will classify the groups by placing a letter F on the invariant
line
and looking at its orbit.

We need a small lemma that is familiar from the classification of the
wallpaper groups:
\begin{lemma}\label{lattice-with-axis}
  If a two-dimensional lattice has
  an axis of symmetry, then the lattice is either
  \begin{enumerate}
  \item[(1)] a rectangular lattice that is aligned with the axis, or
  \item[(2)] a rhombic lattice, which contains in addition the midpoints of
    the rectangles.
  \end{enumerate}
  In case (1), the symmetry axis goes through a lattice line or
  half-way between two lattice lines.
   In case (2), the symmetry axis goes through a lattice line.
 \end{lemma}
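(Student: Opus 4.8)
The plan is to mimic the standard classification argument for two-dimensional lattices admitting a line of symmetry, adapted to the notation here. Let $\Lambda$ be the lattice and let $\ell$ be the axis of symmetry; by translating we may assume $\ell$ passes through the origin, so the reflection $\sigma$ in $\ell$ is a linear isometry fixing $\Lambda$. First I would set up coordinates with $\ell$ the first coordinate axis, so $\sigma(u,v)=(u,-v)$. The key observation is that for any lattice vector $w=(u,v)\in\Lambda$, both $w+\sigma w=(2u,0)$ and $w-\sigma w=(0,2v)$ lie in $\Lambda$, so $\Lambda$ contains nonzero vectors on each of the two perpendicular axes (using that $\Lambda$ is not contained in a line, which follows from it being a full-rank lattice in the plane). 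Let $a=(p,0)$ and $b=(0,q)$ be generators of these two (rank-one) sublattices $\Lambda\cap\ell$ and $\Lambda\cap\ell^{\perp}$, with $p,q>0$.

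Next I would analyze how a general vector of $\Lambda$ sits relative to $\langle a,b\rangle$. Take any $w=(u,v)\in\Lambda$. Then $(2u,0)\in\Lambda\cap\ell=\mathbb Z a$ and $(0,2v)\in\Lambda\cap\ell^{\perp}=\mathbb Z b$, so $2u$ is an integer multiple of $p$ and $2v$ an integer multiple of $q$; thus $u\in\{0,\tfrac p2\}\pmod p$ and $v\in\{0,\tfrac q2\}\pmod q$ after reducing modulo the sublattice $\mathbb Z a+\mathbb Z b$. Hence the quotient $\Lambda/(\mathbb Z a+\mathbb Z b)$ is a subgroup of $(\mathbb Z/2)^2$ generated by the classes of $(\tfrac p2,0)$, $(0,\tfrac q2)$, $(\tfrac p2,\tfrac q2)$. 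But $(\tfrac p2,0)$ cannot be in $\Lambda$ (it would contradict $a$ being the shortest vector of $\Lambda\cap\ell$), and likewise $(0,\tfrac q2)\notin\Lambda$. So the quotient is either trivial — giving case (1), the rectangular lattice $\Lambda=\mathbb Z a+\mathbb Z b$ aligned with $\ell$ — or it is the order-two group generated by the class of $(\tfrac p2,\tfrac q2)$, i.e. $\Lambda=\mathbb Z a+\mathbb Z b+\mathbb Z(\tfrac p2,\tfrac q2)$, which is exactly the centred (rhombic) lattice: the original rectangle lattice together with the midpoints $(\tfrac p2,\tfrac q2)$ of the rectangles. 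This establishes the dichotomy.

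Finally I would verify the statements about where $\ell$ lies. For this it is convenient to note that the set of axes of symmetry of $\Lambda$ parallel to $\ell$ is itself invariant under translation by lattice vectors, and also under translation by $\tfrac12 b$ (reflecting in $\ell$ then composing with the translation by $b$ gives a reflection in the line halfway). In case (1), $\Lambda=\mathbb Z a+\mathbb Z b$: a line $u=c$ is irrelevant; consider lines $v=c$. The line $v=c$ is an axis of $\Lambda$ iff reflection in it preserves $\Lambda$, iff $2c\in q\mathbb Z$, iff $c\in\tfrac q2\mathbb Z$; so $\ell$ either meets a lattice line ($c\in q\mathbb Z$) or runs half-way between two adjacent lattice lines ($c\in\tfrac q2+q\mathbb Z$), as claimed. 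In case (2), the centred lattice $\Lambda=\mathbb Z a+\mathbb Z b+\mathbb Z(\tfrac p2,\tfrac q2)$: the lattice lines perpendicular to $\ell^{\perp}$ — i.e. parallel to $\ell$ — occur at heights $v\in\tfrac q2\mathbb Z$ (since the midpoint row sits at height $\tfrac q2$), so the spacing between consecutive lattice lines is $\tfrac q2$, and the candidate axes $v=c$ require $2c\in\tfrac q2\mathbb Z$, i.e. $c\in\tfrac q4\mathbb Z$; but one checks that $c=\tfrac q4$ does \emph{not} give a symmetry of the centred lattice (it would force $(\tfrac p4,\ldots)$-type vectors), so only $c\in\tfrac q2\mathbb Z$ works, and every such line is a lattice line. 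Hence in case (2) the axis always goes through a lattice line.

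I expect the main obstacle to be bookkeeping rather than conceptual: one must be careful about the distinction between the sublattice $\mathbb Z a+\mathbb Z b$ and the full lattice $\Lambda$ when arguing that $(\tfrac p2,0),(0,\tfrac q2)\notin\Lambda$ (this uses minimality of $a,b$ within $\Lambda$, not just within the sublattice), and about which family of lattice lines is meant in the final statements (the lines parallel to the axis, whose spacing is $q$ in the rectangular case but $\tfrac q2$ in the centred case). Everything else is elementary linear algebra over $\mathbb Z$, so I would keep those computations brief.
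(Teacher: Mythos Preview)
Your argument is correct and takes a cleaner, more algebraic route than the paper's proof. The paper argues geometrically: after placing the axis on the $y$-axis it picks a lattice point $(x_0,y_0)$ with smallest positive $x$-coordinate, distinguishes whether $(0,y_0)$ is or is not a lattice point, and then examines the next horizontal lattice line, with reference to a figure; the position of the axis falls out of the same case split. Your approach via $w\pm\sigma w$ and the quotient $\Lambda/(\mathbb Z a+\mathbb Z b)\le(\mathbb Z/2)^2$ is the standard crystallographic argument; it is shorter, avoids pictures, and generalises readily, at the cost of needing a separate computation for the axis position.

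One point in your setup needs tightening. When you write ``by translating we may assume $\ell$ passes through the origin,'' the translated point set is in general only a coset of $\Lambda$, not a subgroup, so $w+\sigma w$ need not lie in it. The clean fix is not to translate at all but to observe that the \emph{linear part} $\sigma_0$ of the affine reflection $\rho$ in $\ell$ already preserves $\Lambda$: since $0\in\Lambda$ we have $\rho(0)\in\Lambda$, and then $x\mapsto\rho(x)-\rho(0)=\sigma_0(x)$ maps $\Lambda$ into $\Lambda$. Run your $w\pm\sigma_0 w$ argument with $\sigma_0$; the classification of $\Lambda$ depends only on the direction of $\ell$, and your subsequent analysis of the position of $\ell$ (the lines $v=c$) is unaffected. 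A minor correction in that analysis: in case~(2), the obstruction to $c=\tfrac q4$ is not ``$(\tfrac p4,\ldots)$-type vectors'' but simply that reflecting $(0,0)$ gives $(0,\tfrac q2)\notin\Lambda$, since $\Lambda\cap\ell^\perp=\mathbb Z b$; this immediately forces $2c\in q\mathbb Z$, i.e.\ $c\in\tfrac q2\mathbb Z$, and every such line is a lattice line of the centred lattice. With these tweaks the proof is complete.
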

 For an example,
see the upper half of Figure~\ref{fig:vertical-axis}, where the mirror
lines are drawn as solid lines.
\begin{proof}
  Assume without loss of generality that the symmetry axis is the
  $y$-axis. (We may have to translate the lattice so that it no
  longer contains the origin.)
  With every lattice point $(x,y)$,
  the lattice contains also the mirror point
  $(-x,y)$, and thus $(2x,0)$ is a horizontal lattice vector.
  It follows that there must be a lattice point
  $(x_0,y_0)$ with smallest positive $x$-coordinate, since otherwise
  there would be arbitrarily short lattice vectors.
  
  \begin{figure}[htb]
    \centering
    \includegraphics{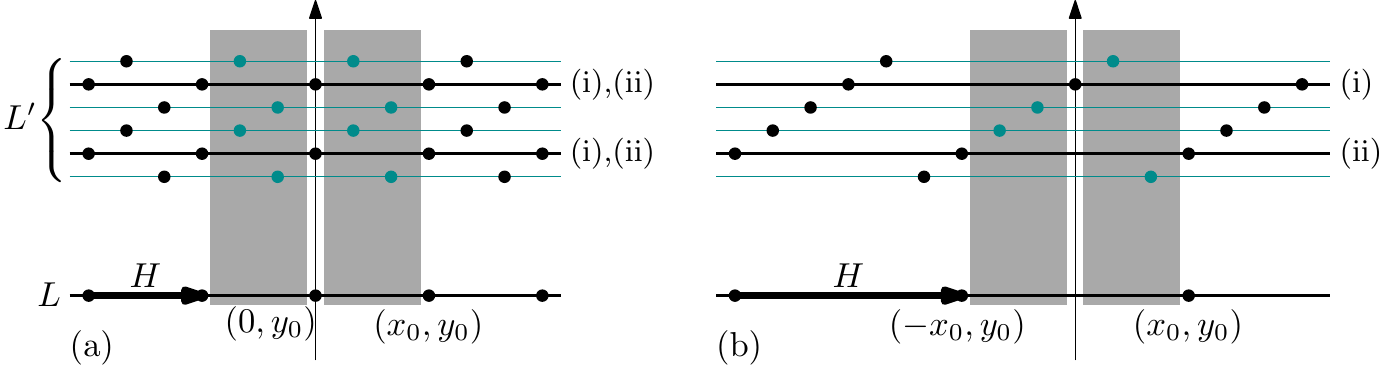}
    \caption{%
      Different
      possibilities for the lattice line $L'$. The gray area is forbidden.}
    \label{fig:lattice-with-axis}
  \end{figure}
  Consider the horizontal lattice line $L$ through $(x_0,y_0)$. 
  There are two cases, see Figure~\ref{fig:lattice-with-axis}.
  (a) $(0,y_0)$ is also a lattice point, and $(H,0)=(x_0,0)$ is a lattice basis vector.
  (b) $(0,y_0)$ is not a lattice point, and $(H,0)=(2x_0,0)$ is a
  lattice basis vector.
  Now look at 
  the next-higher
  horizontal lattice line $L'$ above $L$,
  and choose a lattice point  $(x',y')$ on $L'$.
  $L'$ contains the points $(x'+kH,y')$ for $k\in \mathbb Z$, and
  therefore
  a point $(x,y')$ in the interval $-H/2\le x \le H/2$.
  The value of $x$ cannot be in the range
  $-x_0<x<0$ or
  $0<x<x_0$
  because this would contradict the choice
  of $(x_0,y_0)$. Thus, either (i) $x=0$ or (ii) both points $(\pm
  x_0,y')$ are in the lattice. In case~(a), both possibilities (i) and (ii)
hold simultaneously, and this leads to a rectangular lattice with the axis
  through lattice points. If (b) and (ii) holds,
  we have a rectangular lattice with the axis between lattice lines.
 If (b) and (i) holds, we have a rhombic lattice.
\end{proof}

 \subsubsection{The torus reflection groups\texorpdfstring{, type
     \grp|}{}}
 \label{sec:torus-reflection}

We distinguish two major cases.
 \begin{compactenum}
 \item [M)]The group contains a mirror reflection.
 \item [G)]The group contains only glide reflections.
 \end{compactenum}
In both cases, every \OR\ transformation has a vertical invariant line.
(Actually,
since the translation $\phi_1 \mapsto \phi_1+2\pi$ is always an
element of the group, by Theorem~\ref{theorem-torus-symmetries},
the invariant lines come in pairs $\phi_1=\beta$ and
$\phi_1=\beta+\pi$.)

As announced, we observe the orbit of the letter~F.
We put the bottom endpoint of the F on an invariant line $\ell$.
First we look at the orbit under those transformations that leave
$\ell$ invariant, see the left side of Figure~\ref{fig:vertical-axis}.
In case G, the images with and without reflection alternate along~$\ell$.
In case M, they are mirror images of each other.

\begin{figure}[tbh]
  \centering
  \includegraphics {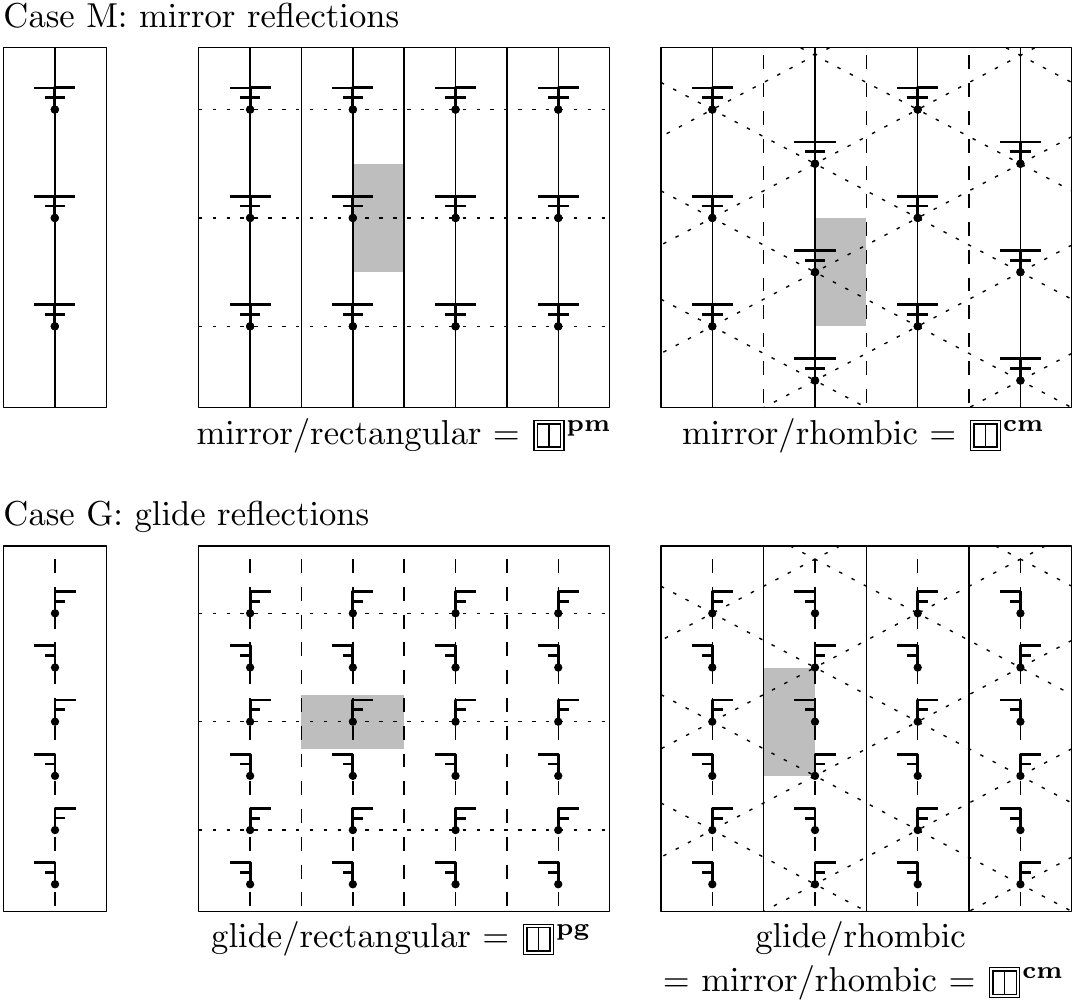}
  \caption{Torus reflection groups, type \grp|.
Combinations of a vertical mirror/glide reflection axis with either a
rectangular or a rhombic grid.
    Invariant lines are shown as solid lines if they act as
    mirrors, otherwise dashed.
    The dotted lines indicate the lattice of translations, and
    the shaded area is a fundamental
    domain.
  }
  \label{fig:vertical-axis}
\end{figure}

In case M, we have a mirror symmetry, and by
Lemma~\ref{closed}, the translational subgroup must be closed under
the mirror symmetry.
 Lemma~\ref{lattice-with-axis} gives the two possibilities of
a rectangular or a rhombic
translational subgroup. Combining these translations with the mirror
operations leads to the two cases
in the top
row
of Figure~\ref{fig:vertical-axis}.

In case G, we cannot apply
Lemma~\ref{lattice-with-axis} right away.
Let $H$ be the vertical distance between consecutive points on the
axis.
If we combine each glide reflection with a vertical translation by
$-H$, we get mirror reflections, as in case M.
To this modified group, we can apply Lemma~\ref{lattice-with-axis},
and we conclude that the translational group must
either form
a rectangular or a rhombic pattern.
Adding back the translation by $H$ to the \OR\ transformations leads to
the
 results in the lower
row
of Figure~\ref{fig:vertical-axis}.
In the rhombic case in the lower right picture
we see that,
when we try to combine  glide reflections with
a rhombic
translational subgroup, we generate mirror symmetries, and thus, this case
really belongs to case M. The picture looks different from the corresponding
picture in the upper row because there are two
alternating types of invariant lines: mirror lines, and lines with a
glide reflection. Depending on where we put the F, we get different pictures.

We are thus left with three cases, which we denote by superscripts
that are chosen
in accordance with the International Notation for these wallpaper groups:
\begin{compactitem}
\item mirror/rectangular: $\grp|^{\textbf{pm}}$,
\item  mirror/rhombic: $\grp|^{\textbf{cm}}$, and
\item glide/rectangular: $\grp|^{\textbf{pg}}$.
\end{compactitem}

The groups are parameterized by two parameters $m\ge1$ and $n\ge1$, the
dimensions of the rectangular grid of translations in the $\phi_1$ and
$\phi_2$ directions, see the left part of
Figure~\ref{fig:grid-parameters}.

Since the invariant lines give a distinguished direction, we need not
worry about duplications when exchanging $m$ and $n$.
The order of each group $G$ is twice the order of the translational subgroup $G_{\Box}$.

\begin{figure}
  \centering
  \includegraphics {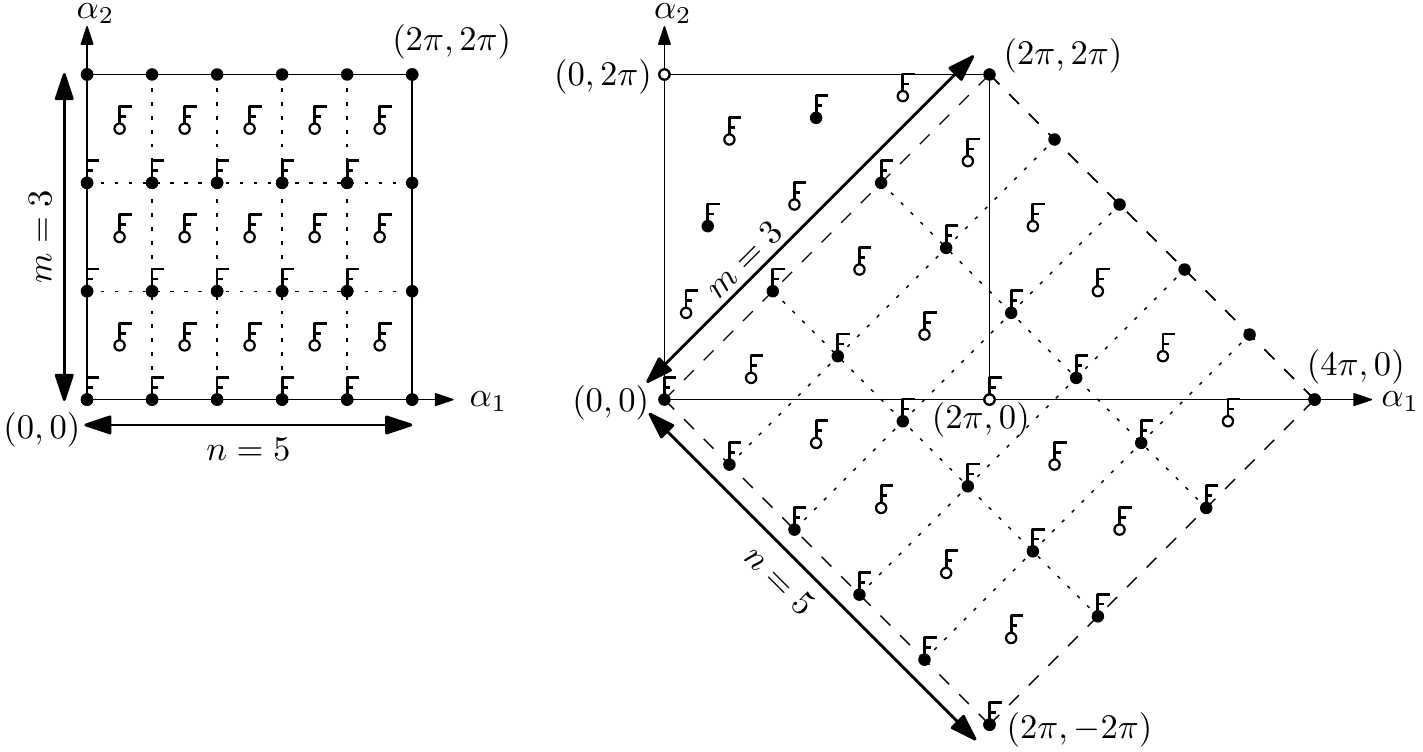}
  \caption{Left: Parameters for the translational subgroup of the groups
    with
    vertical invariant lines, type \grp|.
    We divide the vertical axis into $m$ equal parts and
    the horizontal axis into $n$ equal parts.
      In the rectangular case, the grid consists only of
    the $mn$ black points.
    In the rhombic case, the white points are
    also present, for $2mn$ translations in total.
\hfill\break
Right: For the groups of type $\grp/$, %
the axes are tilted clockwise by $45^\circ$ and longer by the factor~$\sqrt2$.
}
  \label{fig:grid-parameters}
\end{figure}

\subsubsection{The torus swap groups}

For the groups of type \grp/, we have to
turn the picture by $45 ^\circ$.
We have the same three cases,
$\grp/^{\textbf{pm}}$,
$\grp/^{\textbf{cm}}$, and
$\grp/^{\textbf{pg}}$, but we must adapt the definition of $m$ and $n$,
see the right part of
Figure~\ref{fig:grid-parameters}.
We divide the principal diagonal from $(0,0)$ to $(2\pi,2\pi)$
into $m$ parts and the secondary diagonal
from $(0,0)$ to $(2\pi,-2\pi)$ into $n$ parts.
We cannot choose $m$ and $n$ freely because the midpoint $(2\pi,0)$ of
the square spanned by these two diagonal directions, which represents the identity mapping, is always part of the lattice.
Therefore, for the rectangular
lattice cases
$\grp/^{\textbf{pm}}$ and
$\grp/^{\textbf{pg}}$,
 $m$ and $n$ must be even, and
the number of lattice points on the torus is $mn/2$.
(We loose a factor of 2 compared to \grp|, because the tilted square in the figure
covers the torus twice.)
 For the rhombic
lattice case $\grp/^{\textbf{cm}}$,
$m$ and $n$ must have the same parity, and
the number of lattice points on the torus is $mn$.

We mention that the parameter $m$ in this case coincides with the parameter $m$ for the
translations-only case \grp1 of Figure~\ref{fig:translations}. The parameter $n$ coincides
in the rhombic case; in the rectangular case, it is twice as big.

As mentioned,
the groups of type \grp\setminus\ are mirrors of
the groups of type \grp/, and we need not discuss them separately.

\paragraph{Generators for $\grp|$, $\grp/$ and $\grp\setminus$.}
Whenever a mirror line exists 
($\textbf{cm}$ and $\textbf{pm}$), we choose the
origin
of the coordinate system on such a line; otherwise
($\textbf{pg}$), we place it
on an axis of glide reflection.
With these conventions, the groups can be generated by the generators listed
in %
Table~\ref{tab:swap-generators}.

\begin{table}[htb]
  \centering

  \begin{tabular}{|l|l|}
    \hline
    group& generators \\ %
    \hline
        $\grp|_{m,n}^{\textbf{pm}}$&
            $[e^{\frac{\pi i}{m}}, e^{\frac{\pi i}{m}}],
            [e^{\frac{\pi i}{n}}, e^{-\frac{\pi i}{n}}],
            *[i,i]$\\
        $\grp|_{m,n}^{\textbf{pg}}$&
            $[e^{\frac{\pi i}{m}}, e^{\frac{\pi i}{m}}],
            [e^{\frac{\pi i}{n}}, e^{-\frac{\pi i}{n}}],
            {*}[i,i] [e^{\frac{\pi i}{2m}}, e^{\frac{\pi i}{2m}}] $\\
        $\grp|_{m,n}^{\textbf{cm}}$&
            $[e^{\frac{\pi i}{m}}, e^{\frac{\pi i}{m}}],
            [e^{\frac{\pi i}{n}}, e^{-\frac{\pi i}{n}}],
            [e^{\frac{\pi i}{2m}+\frac{\pi i}{2n}}, e^{\frac{\pi i}{2m}-\frac{\pi i}{2n}}],
            *[i,i]$ \\
    \hline
        $\grp\setminus_{2m,2n}^{\textbf{pm}}$&
        $[e^{\frac{\pi i}{m}}, 1],
        [1, e^{\frac{\pi i}{n}}], 
        [-k,i]$\\
        $\grp\setminus_{2m,2n}^{\textbf{pg}}$&
        $[e^{\frac{\pi i}{m}}, 1],
        [1, e^{\frac{\pi i}{n}}], 
        [1,e^{\frac{\pi i}{2n}}] [-k,i] $\\
        $\grp\setminus_{m,n}^{\textbf{cm}}$&
        $[e^{\frac{i2\pi}{m}}, 1],
        [1, e^{\frac{i2\pi}{n}}], 
        [e^{\frac{\pi i}{n}}, e^{\frac{\pi i}{m}}],
        [-k,i]$\\
        \hline
        $\grp/_{2m,2n}^{\textbf{pm}}$&
        $[e^{\frac{\pi i}{m}}, 1],
        [1, e^{\frac{\pi i}{n}}], 
        [i, k]$\\
        $\grp/_{2m,2n}^{\textbf{pg}}$&
        $[e^{\frac{\pi i}{m}}, 1],
        [1, e^{\frac{\pi i}{n}}], 
        [e^{\frac{\pi i}{2m}}, 1] [i, k] $\\
        $\grp/_{m,n}^{\textbf{cm}}$&
        $[e^{\frac{i2\pi}{m}}, 1],
        [1, e^{\frac{i2\pi}{n}}], 
        [e^{\frac{\pi i}{n}}, e^{\frac{\pi i}{m}}],
        [i, k]$\\
    \hline
  \end{tabular}

  \caption{Generators for torus reflection groups and torus swap groups}
  \label{tab:swap-generators}
\end{table}

\subsection{The torus swapturn groups\texorpdfstring{, type \grp L}{}}
\label{sec:swapturn}
\label{sec:rotation}

\begin{figure}[tb]
  \centering
 \noindent\includegraphics[scale=0.9]{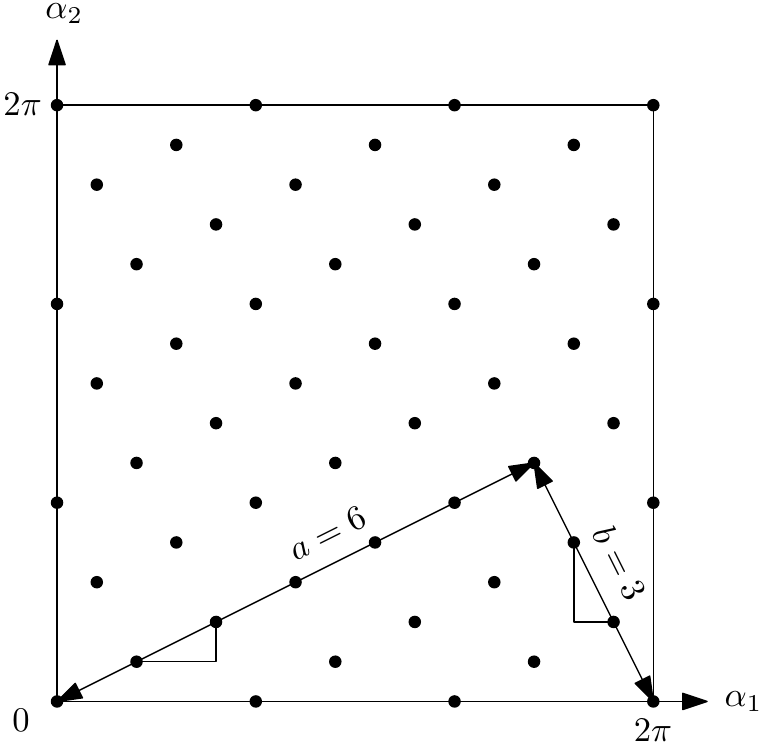}\hfill
  \raise
  0mm\hbox{\includegraphics[scale=1.2]{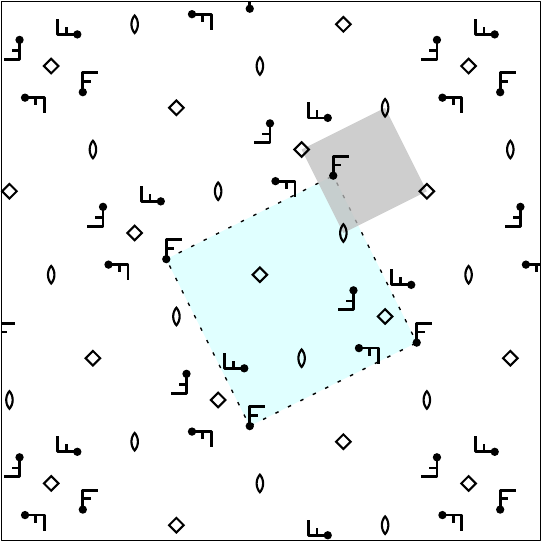}}
  \caption{Left: Parameterizing a square grid.
    Right:
    The wallpaper group \textbf{p4} corresponding to the groups \grp L.
    The centers of 4-fold rotations are
    marked by diamonds, the centers of 2-fold rotations are marked by
    ``digons'' in the form of a lense. The dotted light-blue square indicates the square lattice of the subgroup
    of translations,
    arbitrarily anchored at an upright F.
  }
  \label{fig:square-grid}
  \label{fig:square-rotation}
\end{figure}
By Lemma~\ref{closed}, the lattice of translations must be a square grid.
The left part of Figure~\ref{fig:square-grid} shows how we parameterize a square grid on the
torus. We take the
sides $a\ge0$ and $b\ge 0$ of the grid rectangle spanned by the
two points $(0,0)$ and $(2\pi,0)$, measured in grid units.
Since $(0,b)$ leads to the same grid rectangle as $(b,0)$, we require
$a\ge 1$.

Conjugation by $\sym/$ reflects the grid at the principal diagonal. Since
the grid is symmetric under $90^\circ$ rotations, this has the same
effect as reflection at a vertical axis, and it is easy to see
that such a reflection swaps the parameters $a$ and $b$.
Thus, $(a,b)$ and $(b,a)$ describe the same group, and we can assume
$a\ge b$ without loss of generality.

The number of grid points, i.e., the size of the translational subgroup,
is $a^2+b^2$, and the order is $4(a^2+b^2)$.
The right part of
Figure~\ref{fig:square-rotation} shows the various centers of 2-fold
and 4-fold rotations, and a typical orbit. This corresponds to the
wallpaper group \textbf{p4}.

The grid is generated by the two orthogonal vectors
$(\alpha_1,\alpha_2) = 2\pi(\frac
{a}{{a^2+b^2}},
  \frac {b}{{a^2+b^2}})$
    and
$(\alpha_1,\alpha_2) = 2\pi(\frac
{b}{{a^2+b^2}},
-\frac {-a}{a^2+b^2})$,
with $c=\sqrt{a^2+b^2}$.
If we choose the origin at the center of a $4$-fold rotation induced by
a swapturn, then  $\grp{L}_{a,b}$ can be generated by
$$
    [\exp \tfrac{(-a-b)\pi i}{a^2 + b^2}, \exp\tfrac{(a-b)\pi i}{a^2+b^2}],
    [\exp\tfrac{(a-b)\pi }{a^2 + b^2}, \exp\tfrac{(a+b)\pi i}{a^2+b^2}],
    *[-j, 1].
$$

\subsection{Groups that contain two orthogonal reflections%
\texorpdfstring{, type \grp+ and \grp X}{}}
\label{sec:translations+reflect2}

As in the case of \grp|, we distinguish, for each axis separately,
whether there are mirror reflections or only glide reflections.
We know that the glide reflection case is inconsistent with the
rhombic lattice (cf.~Section~\ref{sec:torus-reflection}).
Hence, we have the following cases, see Figure~\ref{fig:2reflections}.
\begin{itemize}
\item
  The grid of translations is a rhombic grid.
  In this case, both axes directions must be mirrors:
 \textbf{c2mm}.
\item
  The grid of translations is a rectangular grid.
  In this case each axis direction can be a mirror direction or a
  glide reflection
  \begin{itemize}
  \item
     \textbf{p2mm}. Two mirror directions
  \item
     \textbf{p2mg}. One mirror direction and one glide direction
  \item
     \textbf{p2gg}. Two glide directions
  \end{itemize}
\end{itemize}

In \textbf{p2mg}, the two families of invariant lines are distinguishable: one family of parallel lines consists of mirror lines,
whereas the perpendicular family has only glide reflections.
Thus, there are two different types, where the two directions change roles.

However, for $\grp+$, we need not distinguish two versions
of $\grp +^{\textbf{p2mg}}$,
because
conjugation with $\sym/$ maps one to the other.
For $\grp X$, on the other hand, the two versions are distinct. They are mirror
images.
We distinguish
$\grp X^{\textbf{p2mg}}$,
where the mirror lines are parallel to
    the principal diagonal $\phi_2=+\phi_1$, and
    $\grp X^{\textbf{p2gm}}$, where the mirror lines are parallel to
    the secondary diagonal direction $\phi_2=-\phi_1$.\footnote{
This is in accordance with previous editions of the International
Tables of X-Ray Crystallography, which
explicitly provided variations of the symbols for different ``settings''
\cite[Table 6.1.1, p.~542 in the 1952/1969 edition]{IT}:
short symbol \textbf{pmg}, full symbol \textbf{p2mg}, or \textbf{p2gm} for other setting.
}
The parameters $m$ and $n$ have the same meaning as in the
corresponding groups \grp| and \grp/.

These groups contain torus flips, as 
the product of two perpendicular reflections.
We choose the origin on the center of a $2$-fold rotation induced by
a torus flip.
For the groups \textbf{c2mm}, we place origin at the
intersection of two mirror lines. %
Then the groups can be generated by the generators given in
Table~\ref{tab:full-swap-generators}. %

\begin{figure}[htb]
  \centering

  \includegraphics{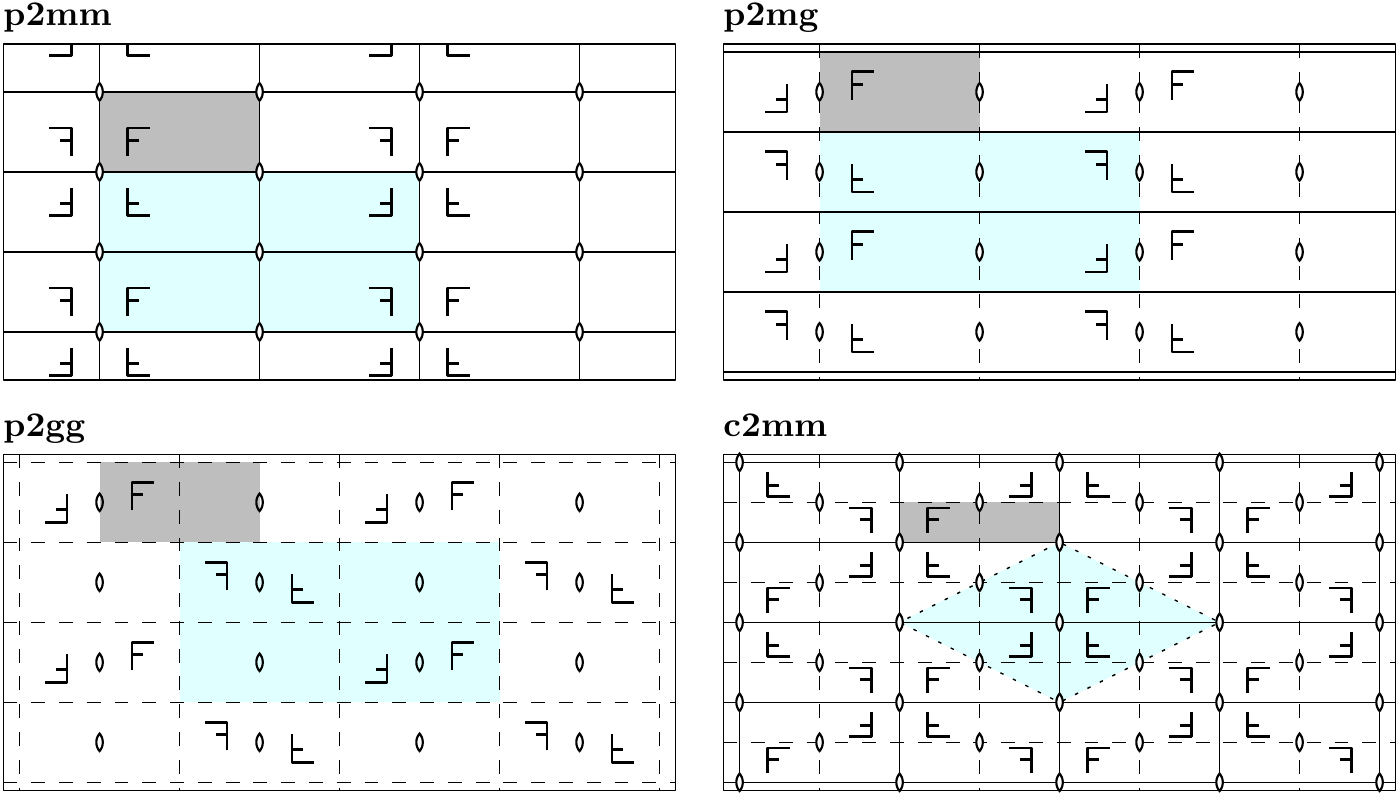}
  
  \caption{The four types of groups with two orthogonal families
    invariant lines.
    The light-blue region indicates the lattice of translations.
    For better visibility, the letter F is
    moved away from the mirror lines.
    Axes of mirror reflection are shown as solid lines, and
    axes of glide reflection are
    dashed.
As in Figure~\ref{fig:square-rotation},
 lenses
    mark centers of 2-fold rotations.   
}
  \label{fig:2reflections}
\end{figure}

\begin{table}
  \centering

    \begin{tabular}{|l|l|}
    \hline
        group& generators \\ %
        \hline
        $\grp+_{m,n}^{\textbf{p2mm}}$&
        $[e^{\frac{\pi i}{m}}, e^{\frac{\pi i}{m}}],
        [e^{\frac{\pi i}{n}}, e^{-\frac{\pi i}{n}}],
        *[i, i],
        *[k, k]$\\
        $\grp+_{m,n}^{\textbf{p2mg}}$&
        $[e^{\frac{\pi i}{m}}, e^{\frac{\pi i}{m}}],
        [e^{\frac{\pi i}{n}}, e^{-\frac{\pi i}{n}}],
        {*}[i,i]
        [e^{\frac{\pi i}{2n}}, e^{-\frac{\pi i}{2n}}],
        {*}[k,k]
        [e^{\frac{\pi i}{2n}}, e^{-\frac{\pi i}{2n}}]$\\
        $\grp+_{m,n}^{\textbf{p2gg}}$&
        $[e^{\frac{\pi i}{m}}, e^{\frac{\pi i}{m}}],
        [e^{\frac{\pi i}{n}}, e^{-\frac{\pi i}{n}}],
        {*}[i,i]
        [e^{\frac{\pi i}{2m}+\frac{\pi i}{2n}}, e^{\frac{\pi i}{2m}-\frac{\pi i}{2n}}],
        {*}[k,k]
        [e^{\frac{\pi i}{2m}+\frac{\pi i}{2n}}, e^{\frac{\pi i}{2m}-\frac{\pi i}{2n}}]$\\
        $\grp+_{m,n}^{\textbf{c2mm}}$&
        $[e^{\frac{\pi i}{m}}, e^{\frac{\pi i}{m}}],
        [e^{\frac{\pi i}{n}}, e^{-\frac{\pi i}{n}}],
        [e^{\frac{\pi i}{2m}+\frac{\pi i}{2n}},
                                      e^{\frac{\pi i}{2m}-\frac{\pi i}{2n}}],
        *[i, i],
        *[k, k]$\\
        \hline
        $\grp{X}_{2m,2n}^{\textbf{p2mm}}$&
        $[e^{\frac{\pi i}{m}}, 1],
        [1, e^{\frac{\pi i}{n}}],
        [i, k],
        [-k, i]$\\
        $\grp{X}_{2m,2n}^{\textbf{p2mg}}$&
        $[e^{\frac{\pi i}{m}}, 1],
        [1, e^{\frac{\pi i}{n}}],
        [1, e^{\frac{\pi i}{2n}}][i, k],
        [1, e^{\frac{\pi i}{2n}}][-k, i]$\\
        $\grp{X}_{2m,2n}^{\textbf{p2gm}}$&
        $[e^{\frac{\pi i}{m}}, 1],
        [1, e^{\frac{\pi i}{n}}],
        [e^{\frac{\pi i}{2m}}, 1] [i, k],
        [e^{\frac{\pi i}{2m}}, 1] [-k, i]$\\
        $\grp{X}_{2m,2n}^{\textbf{p2gg}}$&
        $[e^{\frac{\pi i}{m}}, 1],
        [1, e^{\frac{\pi i}{n}}],
        [e^{\frac{\pi i}{2m}}, e^{\frac{\pi i}{2n}}] [i, k], 
        [e^{\frac{\pi i}{2m}}, e^{\frac{\pi i}{2n}}] [-k, i]$\\ 
        $\grp{X}_{m,n}^{\textbf{c2mm}}$&
        $[e^{\frac{i2\pi}{m}}, 1],
        [1, e^{\frac{i2\pi}{n}}],
        [e^{\frac{\pi i}{m}}, e^{\frac{\pi i}{n}}],
        [i, k],
        [-k, i] %
         $\\
        \hline
        $\grp*_{n}^{\textbf{p4mm}\mathrm{U}}$&
        $[e^{\frac{\pi i}{n}}, e^{\frac{\pi i}{n}}],
        [e^{\frac{\pi i}{n}}, e^{-\frac{\pi i}{n}}],
        [i, k],
        *[i, i]$\\ 
        $\grp*_{n}^{\textbf{p4gm}\mathrm{U}}$&
        $[e^{\frac{\pi i}{n}}, e^{\frac{\pi i}{n}}],
        [e^{\frac{\pi i}{n}}, e^{-\frac{\pi i}{n}}],
        [i, k] [e^{\frac{\pi i}{n}}, 1], 
         {*}[i, i][e^{\frac{\pi i}{n}}, 1]$\\ 
        $\grp*_{n}^{\textbf{p4mm}\mathrm{S}}$&
        $[e^{\frac{\pi i}{n}}, 1],
        [1, e^{\frac{\pi i}{n}}],
        [i, k],
        {*}[i, i]$\\ 
        $\grp*_{n}^{\textbf{p4gm}\mathrm{S}}$&
        $[e^{\frac{\pi i}{n}}, 1],
        [1, e^{\frac{\pi i}{n}}],
        [i, k] [e^{\frac{\pi i}{2n}}, e^{\frac{\pi i}{2n}}],
        {*}[i, i]
        [e^{\frac{\pi i}{2n}}, e^{\frac{\pi i}{2n}}] 
                                               $\\
        \hline
  \end{tabular}

  \caption
  [Generators for full torus reflection/swap groups and full torus groups]
  {Generators for full torus reflection groups, full torus swap groups,
    and full torus groups}
  \label{tab:full-generators}
  \label{tab:full-swap-generators}
\label{tab:full-reflection-generators}
\end{table}

\subsection{The full torus groups\texorpdfstring{, type {\grp*}}{}}
\label{sec:everything}

Finally, we have the groups where all directional transformations are combined.
The conditions of \grp+ and \grp X force the lattice to be a
rectangular lattice both in the $\phi_1,\phi_2$ axis direction and in
the $\pm 45^\circ$ direction, possibly with added midpoints (rhombic case).
This means that the lattice is a \emph{square} lattice.
It appears as a
\emph{rectangular} lattice in
one pair of
perpendicular directions and as a %
\emph{rhombic} lattice in the
other directions.

Thus, there are only two cases for the translation lattice: The square $n\times n$ lattice with
$n^2$ translations (the upright grid ``U'',
Figure~\ref{fig:everything}a),
and its rhombic extension with $2 n^2$
translations (the slanted grid ``S'', Figure~\ref{fig:everything}b).

Let us first consider the slanted case,
 see
Figure~\ref{fig:everything}b.
The lattice appears as a rhombic lattice for the \grp+ directions.
From the point of view of the subgroups of type \grp+, we know that 
this means that the ``glide reflection'' case is excluded
(cf.~the
discussion in Section~\ref{sec:torus-reflection}).
There must
be mirror reflections in the horizontal and vertical axes.

\begin{figure}
  \centering
  \includegraphics{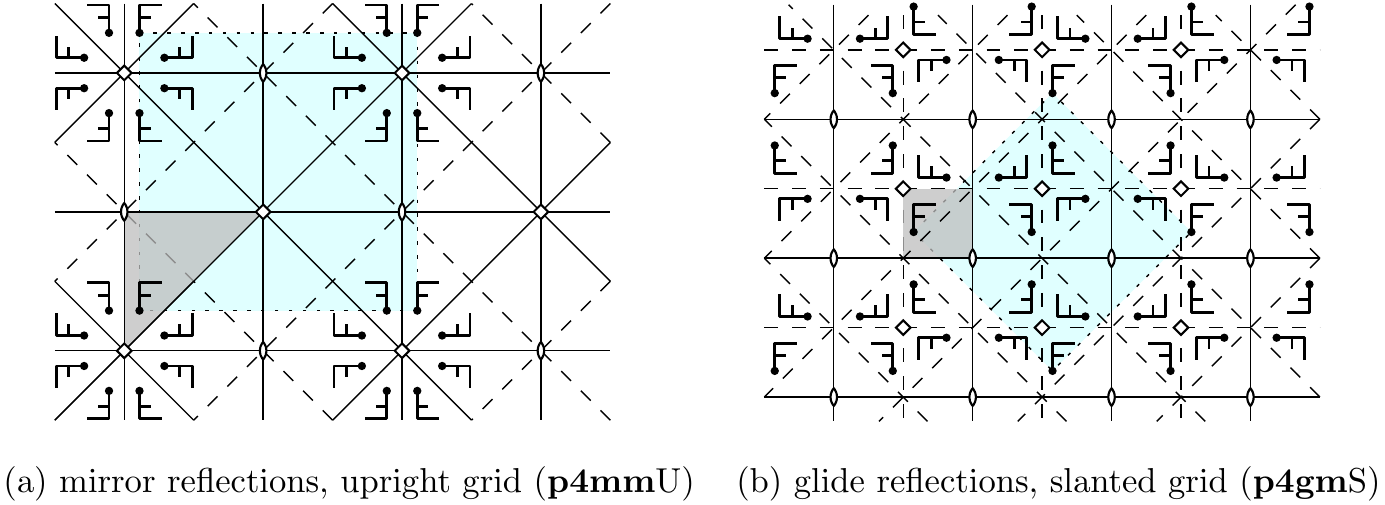}
  \caption{Two of the four types of groups \grp*.
Small squares denote centers of 4-fold rotations.
  For each figure, there exists a rotated version by
  $45^\circ$, where $\grp*^{\textbf{p4mm}\mathrm{U}}$ becomes
  $\grp*^{\textbf{p4mm}\mathrm{S}}$,
  and
  $\grp*^{\textbf{p4gm}\mathrm{S}}$ becomes
  $\grp*^{\textbf{p4gm}\mathrm{U}}$.}
  \label{fig:everything}
\end{figure}

For the \grp X directions, the lattice appears as a rectangular lattice.
According to Section~\ref{sec:translations+reflect2} we can have the
cases mirror/mirror, mirror/glide, glide/glide.
But since $90^\circ$ rotations are included, the mixed mirror/glide case is
impossible.
Two cases remain, which we call $\grp*^{\mathbf{p4mm}\mathrm{S}}$ and
$\grp*^{\mathbf{p4gm}\mathrm{S}}$. The latter is shown in
Figure~\ref{fig:everything}b.
When
the lattice appears as a square lattice for the \grp+ directions, the
two pairs of directions
\grp+ 
and \grp X
change roles, and we have
 two more groups,
 $\grp*^{\mathbf{p4mm}\mathrm{U}}$ and
 $\grp*^{\mathbf{p4gm}\mathrm{U}}$.
 The first one is shown in
 Figure~\ref{fig:everything}a.
The groups $\grp*^{\mathbf{p4mm}}$ have mirrors in all four
directions, whereas the groups
 $\grp*^{\mathbf{p4gm}}$ have mirrors in two directions only.

To list the generators for the full torus groups, we choose the origin of the
coordinate system on the center of a $4$-fold rotation induced by 
a swapturn, 
see Table~\ref{tab:full-generators}. %

This concludes the discussion of the toroidal groups. The reader who
wishes to practice the understanding of these classes might try to
count, as an exercise, all groups of order~100, see
Appendix~\ref{sec:counting}.

\subsection{Duplications}
\label{dupli}

As we have seen, every subgroup of a group
$\pm[D_{2m}\times D_{2n}]$ has an invariant %
torus.
So far, we have analyzed the groups that leave a \emph{fixed} torus invariant.
We have already mentioned that some subgroups have more than one invariant
Clifford torus, and this leads to duplications.
Unfortunately,
when it comes to weeding out duplications, all classifications
(including the classic classification) become messy.\footnote
{The difficulty caused by these ambiguous transformations, in
particular in connection with achiral groups, was already acknowledged by
Hurley~\cite[p.~656--7]{hurley}.}

We analyze the situation
as follows.
Every \OP\ transformation is of the form $R_{\alpha_1,\alpha_2}$,
with $-\pi \le \alpha_1, \alpha_2\le \pi$.
If
$\alpha_1 \ne \pm \alpha_2$, there is a unique pair of absolutely
orthogonal invariant planes, and hence, there is a unique invariant
Clifford torus \emph{on which the transformation appears as a torus
  translation}.
We call this torus the \emph{primary} invariant torus.

Our strategy is to analyze the situation
backwards. We look at all \OP\ transformations that are not torus
translations, we write them in the form
$R_{\alpha_1,\alpha_2}$ and determine the translation vector
$(\alpha_1,\alpha_2)$ by which they would appear on their primary
invariant torus.
The result is summarized in the following proposition.
The torus translations that lead to  ambiguity
are shown in
Figure~\ref{fig:ambiguities}:

\begin{proposition}
  The \OP\ transformations that have more than one invariant torus are
  the following:
\begin{enumerate}[\rm (a)]

  \item Simple half-turns of the form $\diag(-1,-1,1,1)$.
    
On their primary torus, they appear
as torus translation by $(\pi,0)$
or $(0,\pi)$. There is an infinite family of alternate tori
for which they are interpreted as torus flips or torus swaps.

\item 
  Double rotations
$R_{\alpha,\pi\pm\alpha}$.

On an alternate torus, they appear as reflections or
glide reflections associated to torus swaps \sym/ or~\sym\setminus.

\item Left and right rotations
 $R_{\alpha,\pm\alpha}$, including $\id$ and $-\id$.
(For $\alpha=\pm \pi/2$, these fall also under case~(b).)
 
A  left rotation $R_{\alpha,\alpha}$
with $\alpha\ne\pm \pi/2$ %
  appears as a torus translation
  by $(\alpha,\alpha)$ or
  by $(-\alpha,-\alpha)$
  on every invariant torus.

  Similarly, a right rotation $R_{\alpha,-\alpha}$
with $\alpha\ne\pm \pi/2$ %
  appears as a torus translation
  by $(\alpha,-\alpha)$ or
  by $(-\alpha,\alpha)$
  on every invariant torus.
\end{enumerate}
\end{proposition}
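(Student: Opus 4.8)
The plan is to classify the orientation-preserving transformations $R_{\alpha_1,\alpha_2}$ according to how many invariant Clifford tori they possess, and to identify, for each such transformation, the translation vector by which it appears on each of its invariant tori. Recall from the discussion in Section~\ref{sec:clifford-tori} that an invariant torus of a transformation is determined by a pair of absolutely orthogonal invariant great circles. A rotation $R_{\alpha_1,\alpha_2}$ with $\alpha_1\neq\pm\alpha_2$ has a \emph{unique} pair of absolutely orthogonal invariant planes (this is recorded in Section~\ref{sec:the-4d-transformations}), hence a unique invariant torus on which it acts as a torus translation---the \emph{primary} torus. So the only way to have more than one invariant torus is either (i) to act as a torus translation on the primary torus but \emph{also} have some other invariant torus (on which it is not a translation), or (ii) to have $\alpha_1=\pm\alpha_2$, i.e.\ to be a left or right rotation, which has infinitely many invariant planes.

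First I would dispatch case~(c), the left and right rotations. A left rotation $[l,1]$ leaves every circle of the left Hopf bundle $\H^i$ invariant (Proposition~\ref{prop-congruent-bundles}), but more is true: by Proposition~\ref{prop:transformations_preserving_H2}, for \emph{any} invariant Clifford torus $\T_p^q$ the left rotation preserves it, and using the torus-coordinate formula $R_{\alpha_1,\alpha_2}=\bigl[\exp(\tfrac{-\alpha_1-\alpha_2}2 i),\exp(\tfrac{\alpha_1-\alpha_2}2 i)\bigr]$ from Section~\ref{sec:torus-translations}, a left rotation is exactly $R_{\alpha,\alpha}=[\exp(-\alpha i),1]$, which in the coordinate system of any invariant torus is a pure translation in the $45^\circ$ direction. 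The two-fold ambiguity $R_{\alpha,\alpha}$ versus $R_{-\alpha,-\alpha}$ comes from the flip conjugacy \sym. (Section~\ref{sec:coordinate-system}). The right-rotation case is symmetric, using $\H_i$ and the $-45^\circ$ direction. The excluded value $\alpha=\pm\pi/2$ is exactly where a left/right rotation $R_{\pi/2,\pi/2}$ equals a double rotation of the form $R_{\alpha,\pi-\alpha}$, and so overlaps case~(b).

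Next, cases~(a) and~(b), where $\alpha_1\neq\pm\alpha_2$. Here I would argue: if the rotation $R_{\alpha_1,\alpha_2}$ has an invariant torus \emph{other} than its primary torus, then on that other torus it cannot appear as a torus translation (translations determine their primary torus uniquely), so its directional part is a nontrivial element of $D_8^\T$; consulting Table~\ref{tab:symmetries}, the orientation-preserving elements are the flip \sym., the swaps \sym/ and \sym\setminus. A flip corresponds to a half-turn of $S^3$, forcing $\{\alpha_1,\alpha_2\}=\{\pi,0\}$ modulo the torus-lattice, i.e.\ a simple half-turn $\diag(-1,-1,1,1)$---this is case~(a); one then exhibits the infinite family of alternate tori explicitly as the tori containing the invariant-plane circle as a $\phi_i=\mathrm{const}$ circle. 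A swap \sym/ or \sym\setminus\ is likewise a half-turn around $S^+$ or $S^-$; composed with a translational part on that torus, the transformation reads off as $R_{\alpha,\pi-\alpha}$ or $R_{\alpha,\pi+\alpha}$ (the swap exchanges the two coordinates and shifts one by $\pi$), which is exactly the double-rotation family of case~(b). Finally the swap can come with a reflectional ``glide'' part, accounting for the phrase ``reflections or glide reflections associated to torus swaps'' in~(b).

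The main obstacle I anticipate is the bookkeeping in cases~(a) and~(b): one must be careful that the ``translation vector'' is only defined modulo the torus lattice, and that conjugation by the coordinate ambiguities \sym. and \sym/ (Section~\ref{sec:coordinate-system}) can rename $(\alpha_1,\alpha_2)$ to $(-\alpha_1,-\alpha_2)$, $(\alpha_2,\alpha_1)$, or $(-\alpha_2,-\alpha_1)$---so one must fix a convention (e.g.\ $-\pi\le\alpha_1,\alpha_2\le\pi$, as stated) and check that the claimed normal forms are the genuinely distinct ones. I would also double-check the boundary/overlap cases ($\alpha=\pm\pi/2$ in~(c) coinciding with~(b), and $\id=R_{0,0}$, $-\id=R_{\pi,\pi}$ which are the degenerate instances of the left/right rotation family) so that the three itemized cases are exhaustive and their overlaps are exactly as annotated in the proposition.
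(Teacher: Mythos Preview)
Your plan follows essentially the same route as the paper: analyze the non-translation \OP\ torus symmetries (flips \sym., swaps \sym/ and \sym\setminus) and determine what rotation type $R_{\alpha_1,\alpha_2}$ each one realizes on its primary torus; treat left/right rotations separately. The paper organizes it ``backwards'' (start from the list of non-translation operations, compute their $R_{\alpha_1,\alpha_2}$-type) exactly as you do.

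One point where your sketch is imprecise: your parenthetical justification for case~(b), ``the swap exchanges the two coordinates and shifts one by~$\pi$'', conflates the action on torus coordinates $(\phi_1,\phi_2)$ with the rotation angles $(\alpha_1,\alpha_2)$. These are different things, and the claimed form $R_{\alpha,\pi\pm\alpha}$ does not follow from that description. The paper gets it via an explicit quaternion computation: the general swap is $[i,k]\cdot[\exp\beta_li,\exp\beta_ri]=[\exp((\tfrac\pi2+\beta_l)i),\,k\cos\beta_r+j\sin\beta_r]$; the right component remains a pure unit quaternion regardless of~$\beta_r$, so its rotation angle stays $\pi/2$, while the left angle becomes $\pi/2+\beta_l$. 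Hence (via $R_{\beta_L+\beta_R,\beta_L-\beta_R}$) the rotation is of type $R_{\pi+\beta_l,\beta_l}$. You will need this computation (or an equivalent one) rather than the torus-coordinate heuristic.
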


\begin{figure}[htb]
  \centering
  \includegraphics{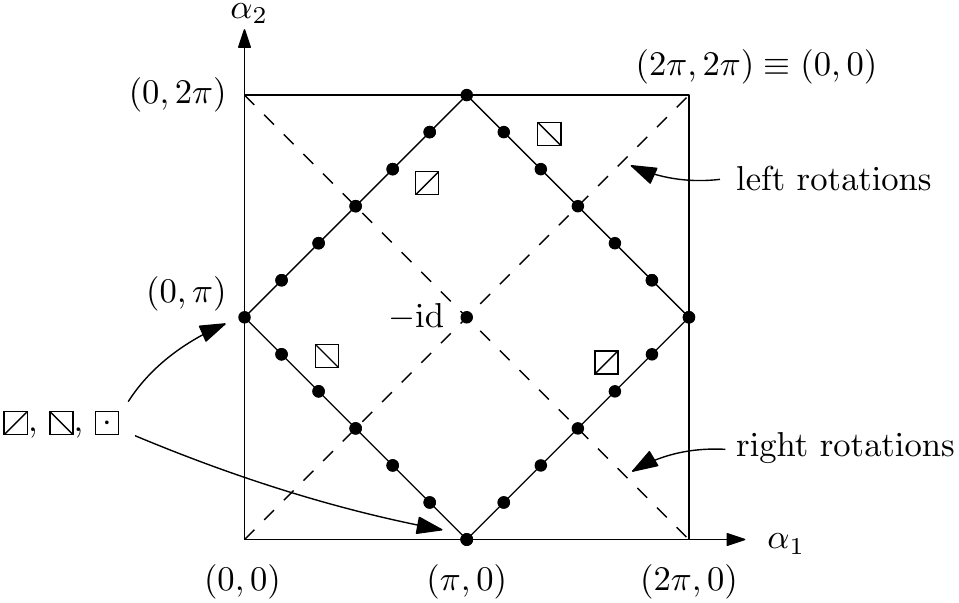}
  \caption{%
The torus translations on the tilted square are ambiguous: they can appear as rotations
of different types, as indicated.
Left and right rotations (on the diagonal) also have no unique
invariant torus, but they appear as left and right rotations on any
invariant torus.}
  \label{fig:ambiguities}
\end{figure}

\begin{proof}
  The \OP\ transformations that are not torus translations are
  \sym. (torus flips) and \sym/ and \sym\setminus\ (reflections and
  glide reflections associated to torus swaps).

  Every torus flip is a half-turn, and these are covered in case~(a).

Let us look at   
reflections and
glide reflections associated to the torus swaps \sym/.
The torus swap \sym/ at the principal diagonal is the transformation
$[i,k]$. Both $i$ and $k$ are pure quaternions, in accordance with the
fact that
\sym/ is a half-turn.
The general torus swap of type $\sym/$ is obtained by combining
$[i,k]$ with an arbitrary torus translation
$[\exp\beta_li,\exp\beta_ri]$:
\begin{displaymath}
[i\exp\beta_li,
k\exp\beta_ri] =
[\exp(\tfrac\pi2i)\exp\beta_li,
k(\cos\beta_r +i\sin\beta_r)]=
[\exp((\tfrac\pi2+\beta_l)i),
k\cos\beta_r +j\sin\beta_r]
\end{displaymath}
The right component
$k\cos\beta_r +j\sin\beta_r$
is still a unit quaternion (rotation angle $\pi/2$), and hence
the right rotation
$[1,\exp\beta_ri]$ has no effect on the type of the transformation.
This is in accordance with the fact that,
on the
$\phi_1,\phi_2$-torus, a right rotation is a translation perpendicular to the
reflection axis of \sym/, whose effect is just to move the reflection
axis.
The left rotation, however, changes the rotation angle from $\pi/2$ to
$\pi/2+\beta_l$. The result is a rotation of type
$R_{\pi+\beta_l,\beta_l}$.
As a torus translation $R_{\alpha_1,\alpha_2}$, it lies on the line
$\alpha_1=\alpha_2+\pi$
(and $\alpha_1=\alpha_2-\pi$, considering that angles are taken modulo
$2\pi$),
see Figure~\ref{fig:ambiguities}.

The operations of type
\sym\setminus\
are the mirrors of \sym/, and hence they appear on the reflected lines
$\alpha_1=-(\alpha_2\pm \pi)$.

Left and right rotations have infinitely many
 invariant tori, but cause no confusion for our classification,
because
a left rotation will appear as the same left rotation on \emph{any}
invariant torus (possibly with an inverted angle), except when it
falls under case~(b).
\end{proof}
We note the curious fact that the operations that don't have a unique
invariant torus coincide with the operations whose squares
 are left or right rotations.

\begin{corollary}
  \label{cor:several-invariant-tori}
  A group may have more than one invariant torus only if the
  translational subgroup contains only elements on the diagonals and
  on the tilted square in Figure~\ref{fig:ambiguities}.
\end{corollary}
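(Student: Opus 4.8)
The plan is to obtain the corollary as an immediate consequence of the preceding proposition, which lists the individual \OP\ transformations that admit more than one invariant Clifford torus; the only work is to pass from that per-element statement to a statement about the translational subgroup $G_\Box$.

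Fix an invariant torus $\T$ of $G$, let $G_\Box$ be the corresponding translational subgroup, and suppose $G$ leaves a second Clifford torus $\T'\ne\T$ invariant. I would take an arbitrary element $g\in G_\Box$. Being a torus translation on $\T$, it satisfies $g(\T)=\T$; and since $\T'$ is $G$-invariant, $g(\T')=\T'$ as well. Hence $g$ has at least two invariant Clifford tori, so the preceding proposition applies to $g$ and places it into one of the cases (a), (b), (c). It then remains to record, in each case, the point $(\alpha_1,\alpha_2)$ of the torus $(\R/2\pi\mathbb Z)^2$ representing $g$ as a translation $R_{\alpha_1,\alpha_2}$ on $\T$: in case~(a), $g$ is a simple half-turn, conjugate to $\diag(-1,-1,1,1)$, which on its primary torus $\T$ is the translation $(\pi,0)$ or $(0,\pi)$, a vertex of the tilted square; in case~(b), $g=R_{\alpha,\pi\pm\alpha}$, so its translation vector satisfies $\alpha_2\equiv\alpha_1+\pi$ or $\alpha_1+\alpha_2\equiv\pi\pmod{2\pi}$, which cuts out the edges of the tilted square; in case~(c), $g$ is a left or right rotation $R_{\alpha,\pm\alpha}$ (possibly $\id$ or $-\id$), so its vector lies on a diagonal $\alpha_2=\pm\alpha_1$ (for a left rotation the primary torus is not unique, but on \emph{every} invariant torus, in particular $\T$, the element still appears as a translation by $(\alpha,\alpha)$ up to sign). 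In all cases the representative of $g$ lies on the diagonals or on the tilted square of Figure~\ref{fig:ambiguities}. Since $g\in G_\Box$ was arbitrary, $G_\Box$ is contained in that union, which is exactly the assertion; the converse is not claimed, so the argument is complete.

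The only step needing a little care is the reduction modulo $2\pi$ in case~(b): one must check that the lines $\alpha_2=\alpha_1+\pi$ and $\alpha_1+\alpha_2=\pi$, drawn on the torus $(\R/2\pi\mathbb Z)^2$ rather than in $\R^2$, are precisely the four sides of the tilted square and do not extend further. This is seen at once by representing the torus as $[-\pi,\pi]^2$ with opposite sides identified and splitting each line where it wraps around: one half of $\alpha_2=\alpha_1+\pi$ is the edge from $(-\pi,0)$ to $(0,\pi)$ and the other half is the edge from $(0,-\pi)$ to $(\pi,0)$, and symmetrically for $\alpha_1+\alpha_2=\pi$. Apart from this bookkeeping, the proof is a direct quotation of the proposition, so I do not expect a genuine obstacle.
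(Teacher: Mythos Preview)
Your proof is correct and follows exactly the approach the paper intends: the corollary is stated in the paper without proof, as an immediate consequence of the preceding proposition, and you have spelled out that deduction carefully. The one implicit step worth making explicit is that in cases~(a) and~(b) the torus $\T$ must coincide with $g$'s primary torus (since the proposition says that on all \emph{alternate} tori $g$ appears as a flip or swap, not as a translation), which is why you may read off the translation vector on~$\T$ directly from the normal form $R_{\alpha_1,\alpha_2}$; but this is a routine observation and does not affect the validity of your argument.
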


This excludes from the search for duplications those groups for which
the translational subgroup is sufficiently rich, i.e., when both
parameters $m$ and $n$ are large. Still it leaves a large number of
cases where one of the parameters is small.
We present the list of duplications below.

\subsubsection{List of Duplications}
\label{sec:duplications-list}

As mentioned, we have imposed
the stricter conditions on $m$ and $n$ (and $a$ and $b$) in Table~\ref{tab:overview} in order to
exclude all duplications.
As a rule, among equal groups, we have chosen
the group with the larger subgroup of torus translations
(%
with the chosen invariant torus)
to stay in the table.

Table~\ref{tab:duplications} lists every group $G_1$ that is excluded
from Table~\ref{tab:overview}, together with a
group $G_2$ to which it is conjugate, and a conjugation
that converts the second group to the first one.
The conjugations %
depend on the specific parameterizations that we have
chosen and that were given with each class of groups discussed above,
in particular in Tables~\ref{tab:swap-generators}
and~\ref{tab:full-reflection-generators}.

In this section, we use the notation
$G_1\doteq G_2$ for groups that are geometrically the same, i.e., conjugate
under an \OP\ transformation, and we reserve the sign
``$=$'' for groups that are equal in our chosen coordinate system.

In some classes, the choice of the two parameters $m$ and $n$ is
symmetric (e.g., $
\grp+^{\mathbf{p2mm}}_{m,n}
\doteq \grp+^{\mathbf{p2mm}}_{n,m}$).
In those cases, we have achieved uniqueness by requiring $m\ge n$
in  Table~\ref{tab:overview}.
Such symmetries between the
parameters, %
and other general relations are listed
first
for each type of group in
 Table~\ref{tab:duplications}.
This is followed by a list of groups with small parameters that
are explicitly excluded in Table~\ref{tab:overview}.

We have made some simplifications to keep the table compact.
As mentioned previously, we sometimes refer to groups
$\grp{1}_{m,n}^{(s)}$ or
$\grp{.}_{m,n}^{(s)}$
where the parameter $s$  lies outside
 the ``legal''
 range (\ref{eq:ks}), in order to avoid case distinctions.
 The parameter $s$ can be brought into that range by using the
 equalities
 $\grp{1}_{m,n}^{(s)}=\grp{1}_{m,n}^{(s\pm m)}\doteq\grp{1}_{m,n}^{(-n-s)}$,
 and similarly for \grp..
If the permissible range of
parameters $s$ contains only one integer, we omit the parameter and denote the group simply by
$\grp{1}_{m,n}$ or
$\grp._{m,n}$.
In such a case, any choice of $s$ will lead to the same group.

 We have a few cases with more than two equal groups:
\begin{gather*}
    \grp{/}^{\mathbf{cm}}_{1,1}\doteq
    \grp{\setminus}^{\mathbf{cm}}_{1,1}\doteq
    \grp{.}^{{(0)}}_{1,1}\doteq \grp{1}^{{(0)}}_{1,2}
    = \langle\diag(1,1,-1,-1)\rangle
    \text{  (order 2)}
\\
    \grp{/}^{\mathbf{pm}}_{2,2}\doteq
    \grp{\setminus}^{\mathbf{pm}}_{2,2}\doteq
    \grp{.}^{{(-1)}}_{2,1}\doteq \grp{1}^{{(0)}}_{2,2}
    = \langle\diag(1,1,-1,-1),\diag(-1,-1,1,1)
    \rangle
    \cong D_4
    \text{  (order 4)}\\
    \grp{X}^{\mathbf{p2gg}}_{2,2}\doteq
    \grp{/}^{\mathbf{pm}}_{4,2}\doteq
    \grp{\setminus}^{\mathbf{pm}}_{2,4}\doteq \grp1^{(-2)}_{4,2}
    = \langle\diag(R_{\pi/2},R_{\pi/2}),\diag(R_{\pi/2},R_{-\pi/2})
    \rangle
    \text{  (order 8)}\\
    \grp{X}^{\mathbf{p2gm}}_{2,2}\doteq
    \grp{/}^{\mathbf{cm}}_{2,2}\doteq
    \grp{/}^{\mathbf{pm}}_{2,4}\doteq \grp.^{(-1)}_{2,2}
    \doteq \langle-\id,\diag(1,-1,1,-1),\diag(R_{\pi/2},R_{-\pi/2})
    \rangle
    \text{  (order 8)}\\
    \grp{X}^{\mathbf{p2mg}}_{2,2}\doteq
    \grp{\setminus}^{\mathbf{cm}}_{2,2}\doteq
    \grp{\setminus}^{\mathbf{pm}}_{4,2}\doteq \grp{.}^{{(-2)}}_{4,1}
    \doteq \langle-\id,\diag(1,-1,1,-1),\diag(R_{\pi/2},R_{\pi/2})
    \rangle
    \text{  (order 8)}\\
    \grp{*}^{\mathbf{p4gm\textrm{U}}}_{1}\doteq
    \grp{+}^{\mathbf{p2gg}}_{2,1}\doteq \grp{+}^{\mathbf{p2gg}}_{1,2}
    \doteq \langle
    \diag(-1,-1,1,1),
    \diag(1,1,-1,1),
    \diag(1,1,1,-1)
    \rangle
    \text{  (order 8)}\\
    \grp{X}^{\mathbf{c2mm}}_{2,2}\doteq
    \grp{X}^{\mathbf{p2mm}}_{4,2}\doteq
    \grp{X}^{\mathbf{p2mm}}_{2,4}\doteq \grp{.}^{{(-2)}}_{4,2}
    \text{  (order 16)}
\end{gather*}
To reduce case distinctions, some of these groups
$G_1$ point to groups $G_2$
that are themselves excluded in Table~\ref{tab:overview}, and which
must be looked up again
in Table~\ref{tab:duplications}.

The conjugations 
in Table~\ref{tab:duplications}
 were found
 by computer search for %
 particular values of $m$.
In many cases, the conjugate group or the conjugacy mapping depends
on the parity of some parameter.
We tried to simplify the
entries of the table
by manually adjusting them. %
All conjugations %
were
 checked by computer for $m\le100$.

 When the groups are translated to the Conway-Smith classification using
  Table~\ref{tab:overview}, the duplications have easy algebraic
  justifications: 
  For example, $C_2$ and $D_2$ are obviously the same group.
  Also, $\bar D_4$ can be replaced by $D_4$, see
  Appendix~\ref{sec:index4} for more information.

\begin{table}
\let\,\relax %

\begin{tabular}[htb]{|l|l|l||l|l|l|}
\hline
$G_1$ & $G_2$ & $[\hat{l}, \hat{r}]$%
& $G_1$ & $G_2$& $[\hat{l}, \hat{r}]$%
\\
\hline
\multicolumn6{|l|}{chiral groups}\\
\hline
$\grp{1}^{{(s)}}_{m,n}$& $\grp{1}^{(s+n)}_{m,n}$& $[1,1]$ (equal) &$\grp{.}^{{(s)}}_{m,n}$& $\grp{.}^{(s+n)}_{m,n}$& $[1,1]$ (equal) \\
$\grp{1}^{{(s)}}_{m,n}$& $\grp{1}^{(-m-s)}_{m,n}$&$[i,k] = \sym{/}$ &$\grp{.}^{{(s)}}_{m,n}$& $\grp{.}^{(-m-s)}_{m,n}$&$[i,k] = \sym{/}$ \\
&&&$\grp._{1,1}              $& $\grp1_{1,2}              $& $[i + j, 1 + k]$\\
&&&$\grp._{2,1}              $& $\grp1_{2,2}^{(0)}        $& $[i + j, i + j]$\\
\hline
$\grp\setminus^{\mathbf{pm}}_{4 \, m - 2,2}$& $\grp._{4 \, m - 2,1}     $& $[j + k, i + j]$&$\grp/^{\mathbf{pm}}_{2,4 \, m - 2}$& $\grp._{2,2 \, m - 1}^{(-1)}$& $[i + j, j + k]$\\
$\grp\setminus^{\mathbf{pm}}_{4 \, m,2}$& $\grp._{4 \, m,1}         $& $[1, i + j]$&$\grp/^{\mathbf{pm}}_{2,4 \, m}$& $\grp._{2,2 \, m}^{(-1)}  $& $[i + j, 1]$\\
$\grp\setminus^{\mathbf{pm}}_{2,4 \, m - 2}$& $\grp1_{2,4 \, m - 2}^{(2 \, m - 2)}$& $[i + k, 1]$&$\grp/^{\mathbf{pm}}_{2 \, m,2}$& $\grp1_{2 \, m,2}^{(0)}   $& $[1, i + k]$\\
$\grp\setminus^{\mathbf{pm}}_{2,4 \, m}$& $\grp1_{4,2 \, m}^{(-2)}  $& $[i + k, 1]$&&&\\
\hline
$\grp\setminus^{\mathbf{pg}}_{2,4 \, m - 2}$& $\grp1_{4,2 \, m - 1}^{(-2)}$& $[i + k, 1]$&$\grp/^{\mathbf{pg}}_{2 \, m,2}$& $\grp1_{2 \, m,2}^{(1)}   $& $[1, i + k]$\\
$\grp\setminus^{\mathbf{pg}}_{2,4 \, m}$& $\grp1_{2,4 \, m}^{(2 \, m - 1)}$& $[i + k, 1]$&&&\\
\hline
$\grp\setminus^{\mathbf{cm}}_{2 \, m + 1,1}$& $\grp._{2 \, m + 1,1}     $& $[j + k, 1 - k]$&$\grp/^{\mathbf{cm}}_{1,2 \, m + 1}$& $\grp._{1,2 \, m + 1}^{(m)}$& $[i + j, j + k]$\\
$\grp\setminus^{\mathbf{cm}}_{1,4 \, m - 3}$& $\grp1_{1,8 \, m - 6}^{(2 \, m - 2)}$& $[i + k, 1]$&$\grp/^{\mathbf{cm}}_{4 \, m - 3,1}$& $\grp1_{4 \, m - 3,2}     $& $[1, 1 - j]$\\
$\grp\setminus^{\mathbf{cm}}_{1,4 \, m - 1}$& $\grp1_{1,8 \, m - 2}^{(2 \, m - 1)}$& $[1 - j, 1]$&$\grp/^{\mathbf{cm}}_{4 \, m - 1,1}$& $\grp1_{4 \, m - 1,2}     $& $[1, i + k]$\\
$\grp\setminus^{\mathbf{cm}}_{2,4 \, m - 2}$& $\grp\setminus^{\mathbf{pm}}_{4,4 \, m - 2}$& $[i + j, 1]$&$\grp/^{\mathbf{cm}}_{4 \, m - 2,2}$& $\grp/^{\mathbf{pm}}_{4 \, m - 2,4}$& $[1, i + j]$\\
$\grp\setminus^{\mathbf{cm}}_{2,4 \, m}$& $\grp\setminus^{\mathbf{pg}}_{4,4 \, m}$& $[i + k, 1]$&$\grp/^{\mathbf{cm}}_{4 \, m,2}$& $\grp/^{\mathbf{pg}}_{4 \, m,4}$& $[1, i + k]$\\
\hline
$\grp X^{\mathbf{p2mm}}_{2 \, m,2}$& $\grp._{2 \, m,2}^{(0)}   $& $[1, i + k]$&$\grp X^{\mathbf{p2mm}}_{2,4 \, m - 2}$& $\grp._{2,4 \, m - 2}^{(2 \, m - 2)}$& $[i + k, 1]$\\
&&&$\grp X^{\mathbf{p2mm}}_{2,4 \, m}$& $\grp._{4,2 \, m}^{(-2)}  $& $[i + k, 1]$\\
\hline
$\grp X^{\mathbf{p2gm}}_{2 \, m,2}$& $\grp._{2 \, m,2}^{(1)}   $& $[1, i + k]$&$\grp X^{\mathbf{p2mg}}_{2,4 \, m - 2}$& $\grp._{4,2 \, m - 1}^{(-2)}$& $[i + k, 1]$\\
&&&$\grp X^{\mathbf{p2mg}}_{2,4 \, m}$& $\grp._{2,4 \, m}^{(2 \, m - 1)}$& $[i + k, 1]$\\
$\grp X^{\mathbf{p2gm}}_{2,4 \, m - 2}$& $\grp/^{\mathbf{cm}}_{2,4 \, m - 2}$& $[i + j, j + k]$&$\grp X^{\mathbf{p2mg}}_{4 \, m - 2,2}$& $\grp\setminus^{\mathbf{cm}}_{4 \, m - 2,2}$& $[j + k, i + j]$\\
$\grp X^{\mathbf{p2gm}}_{2,4 \, m}$& $\grp/^{\mathbf{pm}}_{4,4 \, m}$& $[i + j, 1]$&$\grp X^{\mathbf{p2mg}}_{4 \, m,2}$& $\grp\setminus^{\mathbf{pm}}_{4 \, m,4}$& $[1, i + j]$\\
\hline
$\grp X^{\mathbf{p2gg}}_{4 \, m - 2,2}$& $\grp\setminus^{\mathbf{pm}}_{4 \, m - 2,4}$& $[j + k, i + j]$&$\grp X^{\mathbf{p2gg}}_{4 \, m,2}$& $\grp\setminus^{\mathbf{cm}}_{4 \, m,2}$& $[1, i + j]$\\
$\grp X^{\mathbf{p2gg}}_{2,4 \, m - 2}$& $\grp/^{\mathbf{pm}}_{4,4 \, m - 2}$& $[i + j, j + k]$&$\grp X^{\mathbf{p2gg}}_{2,4 \, m}$& $\grp/^{\mathbf{cm}}_{2,4 \, m}$& $[i + j, 1]$\\
\hline
$\grp X^{\mathbf{c2mm}}_{4 \, m - 3,1}$& $\grp._{4 \, m - 3,2}     $& $[1, 1 - j]$&$\grp X^{\mathbf{c2mm}}_{4 \, m - 2,2}$& $\grp X^{\mathbf{p2mm}}_{4 \, m - 2,4}$& $[j + k, i + j]$\\
$\grp X^{\mathbf{c2mm}}_{4 \, m - 1,1}$& $\grp._{4 \, m - 1,2}     $& $[1, 1 + j]$&$\grp X^{\mathbf{c2mm}}_{4 \, m,2}$& $\grp X^{\mathbf{p2gm}}_{4 \, m,4}$& $[1, i + k]$\\
$\grp X^{\mathbf{c2mm}}_{1,4 \, m - 3}$& $\grp._{1,8 \, m - 6}^{(2 \, m - 2)}$& $[1 + j, 1]$&$\grp X^{\mathbf{c2mm}}_{2,4 \, m - 2}$& $\grp X^{\mathbf{p2mm}}_{4,4 \, m - 2}$& $[i + j, j + k]$\\
$\grp X^{\mathbf{c2mm}}_{1,4 \, m - 1}$& $\grp._{1,8 \, m - 2}^{(2 \, m - 1)}$& $[1 - j, 1]$&$\grp X^{\mathbf{c2mm}}_{2,4 \, m}$& $\grp X^{\mathbf{p2mg}}_{4,4 \, m}$& $[i + k, 1]$\\
\hline
\multicolumn6{|l|}{achiral groups}\\
\hline
$\grp+^{\mathbf{p2mm}}_{m,n}$& $\grp+^{\mathbf{p2mm}}_{n,m}$& $[i, k]=\sym/$&$\grp L_{a,b}             $& $\grp L_{b,a}             $& $[i, k]=\sym/$\\
$\grp+^{\mathbf{p2gg}}_{m,n}$& $\grp+^{\mathbf{p2gg}}_{n,m}$& $[i, k]=\sym/$&$\grp L_{1,0}             $& $\grp|^{\mathbf{pg}}_{2,1}$& $[1 + k, 1 - i + j + k]$\\
$\grp+^{\mathbf{c2mm}}_{m,n}$& $\grp+^{\mathbf{c2mm}}_{n,m}$& $[i, k]=\sym/$&$\grp L_{1,1}             $& $\grp|^{\mathbf{pg}}_{2,2}$& $[1 + k, 1 + i - j + k]$\\
$\grp+^{\mathbf{p2mm}}_{1,1}$& $\grp|^{\mathbf{pm}}_{1,2}$& $[1 + k, 1 - k]$&$\grp L_{2,0}             $& $\grp+^{\mathbf{p2gg}}_{2,2}$& $[1 + k, 1 + k]$\\
$\grp+^{\mathbf{p2mg}}_{1,1}$& $\grp|^{\mathbf{pm}}_{2,1}$& $[1 + k, i - j]$&&&\\
$\grp+^{\mathbf{p2gg}}_{1,1}$& $\grp|^{\mathbf{pg}}_{1,2}$& $[1 + k, 1 - k]$&&&\\
$\grp+^{\mathbf{c2mm}}_{1,1}$& $\grp|^{\mathbf{pm}}_{2,2}$& $[1 + k, 1 - k]$&&&\\
\hline
$\grp*^{\mathbf{p4mm}\textrm{U}}_{1}$& $\grp+^{\mathbf{p2mg}}_{1,2}$& $[1 + k, 1 - i - j - k]$&$\grp*^{\mathbf{p4gm}\textrm{U}}_{1}$& $\grp+^{\mathbf{p2gg}}_{2,1}$& $[1 + k, 1 + i - j + k]$\\
$\grp*^{\mathbf{p4mm}\textrm{U}}_{2}$& $\grp+^{\mathbf{c2mm}}_{2,2}$& $[1 + k, 1 + k]$&$\grp*^{\mathbf{p4gm}\textrm{U}}_{2}$& $\grp L_{2,2}             $& $[1 + j, 1 + j]$\\
$\grp*^{\mathbf{p4mm}\textrm{S}}_{1}$& $\grp+^{\mathbf{p2mg}}_{2,2}$& $[1 + k, 1 + i + j - k]$&$\grp*^{\mathbf{p4gm}\textrm{S}}_{1}$& $\grp|^{\mathbf{cm}}_{2,2}$& $[1 + k, 1 + k]$\\
\hline
\end{tabular}
\caption[The duplications among toroidal groups]{Duplications.
The range of the parameter $m$ is $m \ge 1$ in all cases.
The group $G_1$ is obtained from $G_2$ by conjugation with
$h:=[\frac{\hat{l}}{\|\hat{l}\|}, \frac{\hat{r}}{\|\hat{r}\|}]$.
That is, $G_1=h^{-1}G_2h$.
  }
  \label{tab:duplications}
\end{table}

\begin{figure}[htb]
  \centering
  \includegraphics{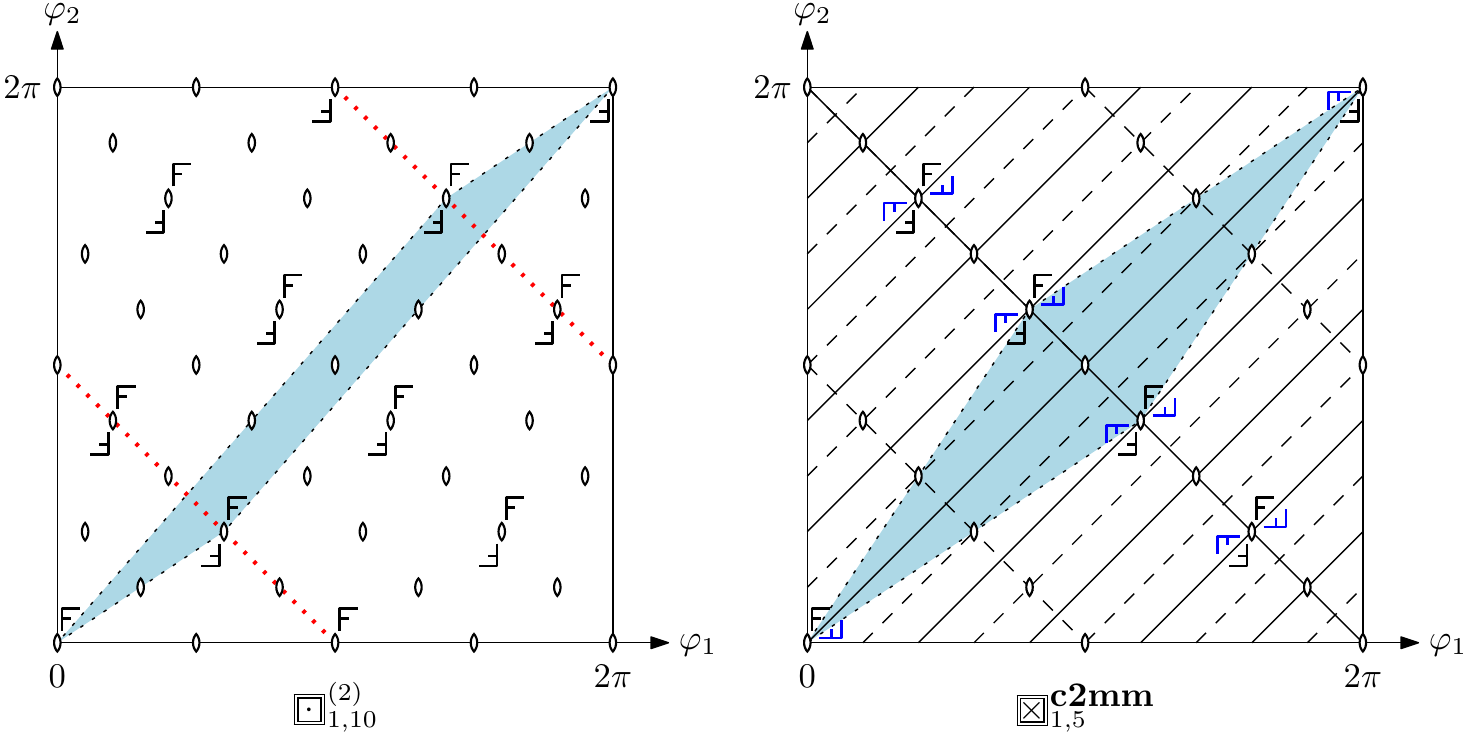}
  \caption{Duplication example, $ \grp{.}_{1,10}^{(2)} \doteq
    \grp{X}_{1,5}^{\textbf{c2mm}} $
  }
  \label{fig:dup}
\end{figure}

\subsubsection{A duplication example}
\label{sec:dup-example}
By way of example, we treat one
duplication in detail:
\begin{equation}
\label{eq:DUP}
\grp{X}_{1,n}^{\textbf{c2mm}} \doteq \grp{.}_{1,2n}^{(\frac{n-1}{2})}
\text{, for odd $n$.}
\end{equation}
Figure~\ref{fig:dup} shows the action of these groups on the torus
for $n=5$.
We can confirm that, in accordance with
Corollary~\ref{cor:several-invariant-tori},
the 10 torus translations of
$\grp{.}_{1,10}^{(2)}$ lie only on a diagonal and on the line
$\alpha_1+\alpha_2=\pm\pi$.
The latter 5 translations become reflections and glide reflections
in $\grp{X}_{1,5}^{\textbf{c2mm}}$.
More precisely,
in accordance with
Figure~\ref{fig:ambiguities}, they are the reflections at the
\sym\setminus\ diagonal (4 glide reflections and one reflection). The
picture shows actually more glide reflection and  reflection axes than
the order of the group would allow. The reason is that every glide
reflection in this group can also be interpreted as a reflection, at a
different axis.

We now prove the conjugacy formally.
Since these groups have the same order $4n$, it is enough to show
that  $G_2 = \grp{.}_{1,2n}^{(\frac{n-1}{2})}$ is contained in
$G_1 = \grp{X}_{1,n}^{\textbf{c2mm}}$. We do this by
checking
that the generators of $G_2$, %
under conjugation
by the element $h$ from Table~\ref{tab:duplications}, are elements of
$G_1$. %
Here are the generators we gave for these groups:
\begin{align*}
G_1=    \grp{X}_{1,n}^{\textbf{c2mm}} &= \langle [1,1],
        [1, e^{\frac{i2\pi}{n}}], [-1, e^{\frac{\pi i}{n}}], [i, k], [-k, i]
    \rangle \text{\quad (see Table~\ref{tab:full-generators})} \\
  G_2=    \grp{.}_{1,2n}^{(\frac{n-1}{2})} &= \langle
[e^{-2\pi i},1],                                             
        [-i, e^{\frac{\pi i}{2n}}], [j, j]
    \rangle
= \langle
        [-i, e^{\frac{\pi i}{2n}}], [j, j]
                                             \rangle
\text{\quad (see Section~\ref{sec:translations+flip})}
\end{align*}
We have to choose different conjugations depending on the value of $n$
modulo $4$.
\begin{itemize}
    \item For $\grp{X}_{1,4m-1}^{\textbf{c2mm}} \doteq \grp{.}_{1,8m-2}^{(2m-1)}$,
        we do conjugation by $h_1=[1-j, 1]$:
        \begin{align*}
          &[\tfrac{1+j}2, 1][-i, e^{\frac{\pi i}{8m-2}}][1-j, 1] =
             [k, e^{\frac{\pi i}{8m-2}}]
            = [k, e^{\frac{i(14m-3)\pi}{8m-2}}] 
              = [k, -i] [1, e^{\frac{i2\pi}{4m-1}}]^m \in G_1
          \\
          &[\tfrac{1+j}2, 1][j, j][1-j, 1] = [j, j] = [i, k][-k,i] \in G_1
        \end{align*}
    \item For $\grp{X}_{1,4m-3}^{\textbf{c2mm}} \doteq \grp{.}_{1,8m-6}^{(2m-2)}$,
        we do conjugation by $h_2=[1+j, 1]$:
        \begin{align*}
            &[\tfrac{1-j}2, 1][-i, e^{\frac{\pi i}{8m-6}}][1+j, 1] = [-k, e^{\frac{\pi i}{8m-6}}]
            = [j, j] [1, e^{\frac{i2\pi}{4m-3}}]^{m-1} [i, k] \in G_1
          \\
          &[\tfrac{1-j}2, 1][j, j][1+j, 1] = [j, j] = [i, k][-k,i] \in  G_1
        \end{align*}
      \end{itemize}

We can also study this transformation geometrically: What happens to
the torus under this coordinate transformation? On which other torus
do the glide reflections of $\grp{X}_{1,n}^{\textbf{c2mm}}$
appear as torus translations?
Indeed, there is another simple equation for a
Clifford torus that is commonly used.
We can transform our equation for the torus $\T$ as follows:
\begin{align}
  x_1^2+x_2^2 &= x_3^2+x_4^2\nonumber\\
  x_2^2-x_4^2 &= x_3^2-x_1^2\nonumber\\
  (x_2-x_4)(x_2+x_4) &= (x_3+x_1)(x_3-x_1)
                       \label{clif2a}\\
  \tilde x_2\tilde x_4&=\tilde x_1\tilde x_3,
                        \label{clif2}
\end{align}
with transformed coordinates $(\tilde x_1,\tilde x_2,\tilde x_3,\tilde x_4)$.
This is, for example, how the torus is introduced in
Coxeter~\cite[Eq.~(4.41)]{Cox-complex},
who has a separate section on
``the spherical torus''~\cite[\S 4.4, p.~35--37]{Cox-complex}.

Now, the coordinate change from~\eqref{clif2a} to~\eqref{clif2} is
precisely what the
transformation $h_1=[1-j,1]$
in our example achieves:
$[1-j,1]$
maps the quaternion units $(1,i,j,k)\equiv(x_1,x_2,x_3,x_4)$ to
$(1 + j, i - k, -1 + j, i + k)\equiv
(x_1+x_3,x_2-x_4,-x_1+x_3,x_2+x_4)=
(\tilde x_1,\tilde x_2,\tilde x_3,\tilde x_4)$.
Many conjugations in Table~\ref{tab:duplications} are of this form.

      The reason why we have chosen the example
\eqref{eq:DUP}
      for manual confirmation is that it corresponds to
one of two duplications in the Conway-Smith classification that are not literally mentioned there:
\begin{align*}
+\tfrac14[D_4  \times\bar D_{4n}]
&\doteq+\tfrac14[D_4  \times D_{4n}^{(1)}]\text{ for odd $n$.}
\\
\pm\tfrac14[D_4  \times\bar D_{4n}] &
\doteq\pm\tfrac14[D_4  \times D_{4n}^{(1)}]
\end{align*}
The second %
equality
appears in Table~\ref{tab:duplications} as %
$\grp{X}^{\mathbf{p2mm}}_{2,2m}$ for odd $m$ and
$\grp{X}^{\mathbf{p2gm}}_{2,2m}$ for even~$m$.
The reason behind these duplications is discussed in %
Section~\ref{sec:index4}.

\subsection{Comparison with the classification of Conway and Smith}
\label{sec:compare-CS}

Looking at the right column of  Table~\ref{tab:overview}, we see that
our classification and the classification of Conway and Smith~\cite{CS}
have some similarity
in the rough categorization.
For example the ``mixed'' groups of type $[C\times D]$ are
the torus swap groups (type $\grp/$).
 In the finer details, however, the two classifications
 are
 often quite at odds with each other.
Groups that come from one geometric family correspond to different
classes in the CS classification from the algebraic viewpoint,
depending on parity conditions. On the other hand,
some groups that belong together algebraically appear in different categories
of our classification.

While we acquired some understanding
of the classic classification of the toroidal groups
according to Conway and Smith~\cite{CS}, in particular, of the simplest
 case of the torus translation groups
  (type $\grp1$, corresponding to $[C\times C]$,
 see Appendix~\ref{conway-smith}),
 most entries in the right column of Table~\ref{tab:overview}
 were filled with the help of a computer, by generating the groups
 from the specified generators and
 comparing them by the fingerprints described in
Section~\ref{Fingerprinting},
 and recognizing patterns. %

One reason for the difficulty is the distinction between haploid and diploid groups,
a term borrowed from biology by Conway and Smith~\cite{CS}.
A group is \emph{diploid} if it contains the central reflection $-\id$;
otherwise, it is \emph{haploid}.\footnote
{Threlfall and Seifert~\cite[\S\,5]{ThrS-I} used the terms
  \emph{zweistufig} and \emph{einstufig} for these groups.}
In the classic classification,
the diploid groups arise easily, but the haploid groups
must be specially constructed as index-2 subgroups of  diploid groups.
Thus, 
the presence or absence of $-\id$ appears at the very beginning of
the classic classification by  quaternions.
In the notation of \cite{CS}, diploid and haploid groups are
distinguished by the prefix $\pm$ and $+$.

For our geometric construction of the toroidal groups, this
distinction is ephemeral.
The central reflection $-\id$ is the torus translation
$R_{\pi,\pi}$ in the center of the parameter square.
It depends on some parity conditions of the translation parameters whether this
element belongs to~$G_\Box$.
(For example, one can easily work out
from Figure~\ref{fig:grid-parameters} that
the groups
$\grp|^{\textbf{pm}}$ and
$\grp|^{\textbf{pg}}$
are diploid if $m$ and $n$ are even.
The groups
$\grp|^{\textbf{cm}}$
are diploid if $m$ and $n$ have the same parity.)

In elliptic geometry, where opposite points of $S^3$ are identified,
the distinction between haploid and the corresponding diploid groups
disappears, or in other words, only diploid groups play a role in
elliptic space.

\section{The polyhedral groups}
\label{sec:polyhedral}

We will now explain the polyhedral groups,
which are related to the regular 4-dimensional polytopes. 
The regular 4-dimensional polytopes have a rich and beautiful structure.
They and their symmetry groups have been amply discussed in the literature,
see for example \cite[Chapters VIII and~XIII]{Coxeter},
\cite[\S26, \S27]{duval},
and therefore we will be brief, except that we
study in some more detail the groups that come in enantiomorphic pairs.
Table~\ref{tab:polyhedral} gives an overview,\footnote
{In Du Val's enumeration of the achiral groups \cite[p.~61]{duval},
\label{duval-41-42}
  the
  descriptions of the orientation-reversing elements of the groups
\#41 $(T/V;T/V)^*$ and
\#42 $(T/V;T/V)^*_-$
  are
  swapped by mistake.
  We follow Goursat and Hurley and go with
  the convention that
the group with the more natural choice of elements should be associated to
the name
without a distinguishing subscript.
Du Val himself,
in the detailed
discussion of these groups
 \cite[%
p.~73]{duval},
follows the same (correct) interpretation. %
}
 and
Table~\ref{tab:polyhedral-appendix}
in Appendix~\ref{sec:polyhedral-and-axial-table}
lists these groups
with
generators
and cross references to other classifications.

We mention that pictures of the cube, the 120-cell, the 24-cell, and the
bitruncated 24-cell (also known as the 48-cell, defined in
Section~\ref{48-cell}) arise among the illustrations for the tubical
groups, see Section~\ref{small-n}.

\subsection{The Coxeter notation for groups}
For the geometric description of the groups, we will use the notations
of Coxeter, with adaptations by Conway and Smith~\cite[\S 4.4]{CS}.

In the basic Coxeter group notation,
a sequence of $n-1$ numbers
$[p, q, \ldots , r, s]$ stands
for the symmetry group of the $n$-dimensional
   polytope $\{p, q, \ldots , r, s\}$.
 This is generated by $n$ reflections $R_1,\ldots , R_n$.
Each reflection is its own mirror: $(R_i)^2=1$, and   
   any two adjacent reflections generate a rotation whose order is
   specified in the sequence:
   $(R_1R_2)^p = (R_2R_3)^q = \cdots = (R_{n-1}R_n)^s=1$.
   Nonadjacent mirrors are perpendicular:
   $R_iR_j=R_jR_i$ for $|i-j|\ge 2$.

   $G^+$ denotes the chiral part of the group $G$, which contains products of an even number
   of reflections.
  When just one of the numbers $p, q, \ldots , r, s$ is even, say that between $R_k
  $ and $R_{k+1}$, there are three further subgroups.
  The two subgroups
                    $[^+ p, q, \ldots , r, s]$ and $[p, q, \ldots , r, s^+ ]$
consist of words that use respectively
                       $R_1 , \ldots , R_k$ and $R_{k+1} , \ldots , R_n$
an even number of times. Their intersection is the index-4 subgroup
$[^+ p, q, \ldots , r, s^+]$. Coxeter's original notation
for $[^+ p, q, \ldots]$ is
$[p^+, q, \ldots]$.

A
second pair of brackets, like in $[[3,3,3]]$, indicates a swap between
a polytope and its polar, following~\cite{semiregular2}.
Some further extensions of the notation will be needed for the axial
groups in Section~\ref{sec:axial}, see Table~\ref{tbl:axial_full}.
In some cases, we have
extended the Coxeter notations in an ad-hoc manner, allowing us to
avoid other ad-hoc extension of~\cite{CS}.

\begin{table}[htb]
  \centering
  \setlength{\extrarowheight}{2.1pt}
      \begin{tabular}[t]{|lllr|l|}\hline
    CS name & Du Val \# and
              name& Coxeter name& \llap{ord}er &method\\
    [2pt]\hline \multicolumn 5{|l|}{symmetries of the 120-cell $Q_{120}=\{5,3,3\}$ / the 600-cell $P_{600}=\{3,3,5\}$}\\[1pt]\hline
$\pm[I\times I]\cdot 2$& 50. $(I/I;I/I)^*$& $[3,3,5]$&{14400}&\\
$\pm[I\times I]$& 30. $(I/I;I/I)$& $[3,3,5]^+$&{7200}&chiral part\\
$\pm[I\times O]$& 29. $(I/I;O/O)$& $[[3,3,5]^+_{\frac15L}]$&{2880}&inscribed polar \& swap\\
$\pm[O\times I]$& 29. $(O/O;I/I)$& $[[3,3,5]^+_{\frac15R}]$&{2880}&inscribed polar \& swap\\
$\pm[I\times T]$& 24. $(I/I;T/T)$& $[3,3,5]^+_{\frac15L}$&{1440}&inscribed polar\\
$\pm[T\times I]$& 24. $(T/T;I/I)$& $[3,3,5]^+_{\frac15R}$&{1440}&inscribed polar\\
[2pt]\hline \multicolumn 5{|l|}{symmetries of the 24-cell $P_T=\{3,4,3\}$ and its polar 24-cell $P_{T_1}$}\\[1pt]\hline
$\pm[O\times O]\cdot 2$& 48. $(O/O;O/O)^*$& $[[3,4,3]]$&{2304}&\\
$\pm[O\times O]$& 25. $(O/O;O/O)$& $[[3,4,3]]^+$&{1152}&chiral part\\
$\pm\frac1{2}[O\times O]\cdot 2$& 45. $(O/T;O/T)^*$& $[3,4,3]$&{1152}&nonswapping\\
$\pm\frac1{2}[O\times O]\cdot \bar2$& 46. $(O/T;O/T)^*_-$& $[[3,4,3]^+]$&{1152}&swap with mirror\\
$\pm\frac1{2}[O\times O]$& 28. $(O/T;O/T)$& $[3,4,3]^+$&{576}&chiral \& nonswapping\\
[2pt]\hline
$\pm[T\times T]\cdot 2$& 43. $(T/T;T/T)^*$& $[3,4,3^+]$&{576}&edge orientation\\
$\pm[O\times T]$& 23. $(O/O;T/T)$& $[[^+3,4,3^+]]_L$&{576}&diagonal marking\\
$\pm[T\times O]$& 23. $(T/T;O/O)$& $[[^+3,4,3^+]]_R$&{576}&diagonal marking\\
$\pm[T\times T]$& 20. $(T/T;T/T)$& $[^+3,4,3^+]$&{288}&2 dual edge orientations\\
[2pt]\hline \multicolumn 5{|l|}{symmetries of the hypercube $\{4,3,3\}$ / the cross-polytope $\{3,3,4\}$}\\[1pt]\hline
$\pm\frac1{6}[O\times O]\cdot 2$& 47. $(O/V;O/V)^*$& $[3,3,4]$&{384}&\\
$\pm\frac1{6}[O\times O]$& 27. $(O/V;O/V)$& $[3,3,4]^+$&{192}&chiral part\\
$\pm\frac1{3}[T\times T]\cdot 2$& 41. $(T/V;T/V)^*$& $[^+3,3,4]$&{192}&even permutations\\
$\pm\frac1{3}[T\times \overline T]\cdot 2$& 42. $(T/V;T/V)^*_-$& $[3,3,4^+]$&{192}&2-coloring\\
$\pm\frac1{3}[T\times T]$& 22. $(T/V;T/V)$& $[^+3,3,4^+]$&{96}&2-coloring \& chiral\\
[2pt]\hline \multicolumn 5{|l|}{symmetries of the simplex $\{3,3,3\}$ and its polar}\\[1pt]\hline
$\pm\frac1{60}[I\times \overline I]\cdot 2$& 51. $(I^\dag/C_2;I/C_2)^{\dag* }$& $[[3,3,3]]$&{240}&\\
$\pm\frac1{60}[I\times \overline I]$& 32. $(I^\dag/C_2;I/C_2)^{\dag}$& $[[3,3,3]]^+$&{120}&chiral part\\
$+\frac1{60}[I\times \overline I]\cdot 2_1$& 51\rlap{$'$}. $(I^\dag/C_1;I/C_1)^{\dag*}$& $[3,3,3]$&{120}&nonswapping\\
$+\frac1{60}[I\times \overline I]\cdot 2_3$& 51\rlap{$'$}. $(I^\dag/C_1;I/C_1)^{\dag*}_ -$& $[[3,3,3]^+]$&{120}&swap with mirror\\
$+\frac1{60}[I\times \overline I]$& 32\rlap{$'$}. $(I^\dag/C_1;I/C_1)^{\dag}$& $[3,3,3]^+$&{60}&chiral \& nonswapping\\
[2pt]\hline \end{tabular}

  \caption[The 25 polyhedral groups]{The polyhedral groups}
  \label{tab:polyhedral}
\end{table}

\subsection{Strongly inscribed polytopes}
\label{sec:strongly-inscribed}

We say that a polytope $P$ is \emph{strongly inscribed} in a
polytope $Q$ if every vertex of $P$ is a vertex of~$Q$, and every
facet of $Q$ contains a facet of $P$. Figure~\ref{fig:strongly-inscribed}
shows two three-dimensional examples.
This relation between $P$ and $Q$ is reversed under polarity:
With respect to an origin that lies inside $P$, the polar polytope
$Q^\Delta$ will be strongly inscribed in $P^\Delta$.

\begin{figure}[htb]
\begin{subfigure}{0.5\textwidth}
\centering
\includegraphics{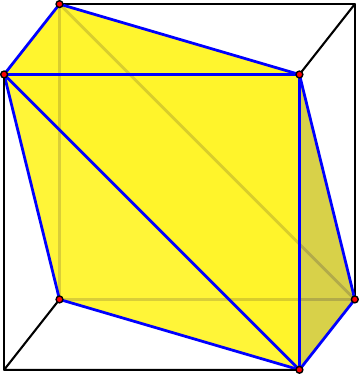}
\end{subfigure}%
\begin{subfigure}{0.5\textwidth}
\centering
\includegraphics{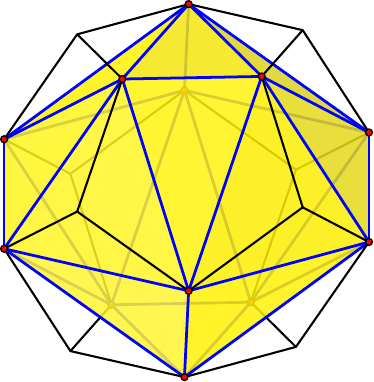}
\end{subfigure}
\caption{A cube with a
strongly inscribed (non-regular) octahedron (left).
A dodecahedron with a strongly inscribed (non-regular) icosahedron (right).}
  \label{fig:strongly-inscribed}
\end{figure}

In four dimensions, we will show two instances of this phenomenon
where a rotated copy of the polar polytope $P^\Delta$ of a polytope $P$ can be
strongly inscribed into $P$.
Among the \emph{regular} polytopes in three dimensions, there are just some
degenerate cases, %
where every facet of $Q$ contains
only an \emph{edge} of $P$:
In a cube $Q$, a regular tetrahedron $P$ can
be inscribed, with the six edges of $P$ on the six square sides of $Q$.
In a dodecahedron $Q$, a cube $P$ can
be inscribed, with its twelve edges on the twelve pentagons of $Q$.
The tetrahedron inscribed in a dodecahedron does not fall in this
category, since its edges go through the interior of the dodecahedron.

\subsection{Symmetries of the simplex}
\label{sec:4-simplex}

The full symmetry group of the 4-simplex is $[3,3,3]$. 
The group $[[3,3,3]]$ %
additionally swaps (by negation) the simplex with its polar.
The chiral versions are $[3,3,3]^+$ and $[[3,3,3]]^+$. %
The group $[[3,3,3]^+]$ %
allows the flip to the polar only in connection
with a reversal of orientation.

\subsection{Symmetries of the hypercube (and its polar, the cross-polytope)}
\label{subsec:hypercube}
The full symmetry group of the hypercube is $[3, 3, 4]$.
It is isomorphic to the semidirect product of
coordinate permutations with sign flips
$\{(\pm1,\pm1,\pm1,\pm1)\} \rtimes S_4$.
This group has four subgroups.

The cube has a natural 2-coloring
of the vertices that gives alternating colors
to adjacent vertices. One can check that
the vertices of each color form a cross-polytope. This cross-polytope is
strongly inscribed in the %
cube: %
Each facet of the hypercube contains exactly one
(tetrahedral) facet of that cross-polytope.
The subgroup $[3,3,4^+]$ contains those elements
that preserve the 2-coloring. Equivalently, these are the elements
that have an even number of sign changes.

The subgroup $[^+3,3,4]$ contains those elements
that have an even permutation of coordinates.
It is isomorphic to
$\{(\pm1,\pm1,\pm1,\pm1)\} \rtimes A_4$.
The subgroup $[^+3,3,4^+]$ is their intersection.
The subgroup $[3,3,4]^+$ contains the \OP\ transformations.
These are the transformations where the parity of
the sign changes matches the parity of the permutation.

It is interesting to note that
the 3-dimensional group $[3,4]$ closely mirrors the picture for $[3,3,4]$,
see Table~\ref{tab:cube}.
Both in three and four dimensions,
the ``half-cube'' is itself a regular polytope: %
in 3 dimensions, it is the regular tetrahedron,
while in 4 dimensions, it is the cross-polytope.
The subgroup $[3,4^+]=TO$ preserves the 2-coloring of the vertices,
i.e. it contains all symmetries of the tetrahedron.
Its subgroup $[^+3,4^+]=+T$ contains the \OP\ symmetries of the
tetrahedron.
The group $[^+3,4]=\pm T$ contains the \OP\ symmetries of the
tetrahedron together with its central reflection.
It is also characterized as those symmetries that subject the three space axes
to an even permutation.
The group $[3,4]^+$ contains all \OP\ transformations in $[3, 4]$.
For the groups $+T$ and $TO$ we have used alternate Coxeter names,
which are equivalent to %
the standard ones, in order to highlight the analogy with 4~dimensions,
cf.~\cite[p.~390]{Symmetries}.

\begin{table}[htb]
  \centering
  \begin{tabular}{|l|r|@{\,}|l|r||l|}
    \hline
    4 dimensions & order & 3 dimensions& order & description\\\hline
    $[3,3,4]$&384&$[3,4]=\pm O$&48&the full symmetry group\\
    $[3,3,4]^+$&192&$[3,4]^+=+O$&24&chiral part (preserves orientation)\\
    $[^+3,3,4]$&192&$[^+3,4]=\pm T$&24&even permutation of coordinates\\
    $[3,3,4^+]$&192&$[3,4^+]=[3,3]=TO$&24&preserves the 2-coloring\\
    $[^+3,3,4^+]$&96&$[^+3,4^+]=[3,3]^+=+T$&12&all three constraints
                                                above %
    \\
    \hline
  \end{tabular}
  \caption{Analogy between symmetries of the four-dimensional and three-dimensional
     cube}
  \label{tab:cube}
\end{table}

\subsection{Symmetries of the 600-cell (and its polar, the 120-cell)}
\label{sec:600-cell}

The 120 quaternions $2I$ form the vertices of
a 600-cell $%
P_{600}
=\{3,3,5\}$.
These quaternions are the centers of the 120 dodecahedra of
the polar 120-cell $Q_{120}=\{5,3,3\}$, which has 600 vertices.
The full symmetry group of $P_{600}$ (or $Q_{120}$) is $[3, 3, 5]$.
Its chiral version is $[3,3,5]^+$.

The group has four interesting subgroups, which come in
enantiomorphic versions.
Under the left rotations by elements of $2I$, or in other words, under
the group $\pm[I\times C_1]$, the 600 vertices of $Q_{120}$
decompose into five orbits, as shown by the
five labels $A,B,C,D,E$ for the cell $F_0$ in
Figure~\ref{fig:inscribed_tetrahedra_L}, cf.~\cite[Figure~22,
p.~84]{duval}.
We can regard this as a 5-coloring of the vertices.
(The points of each color are labeled $X,X',X'',X'''$ according to the
horizontal levels in this picture, but this grouping has otherwise no significance.)
One can indeed check that the mapping from a pentagonal face to the
opposite face with a left screw by $\pi/5$, as effected by the
elements of $\pm[I\times C_1]$, preserves the coloring.

The vertices of one color form
a regular tetrahedron inscribed in a regular dodecahedron, and
there are thus five ways inscribe such a ``left'' tetrahedron
in a regular dodecahedron.
There is an analogous ``right'' 5-coloring by the orbits under
$\pm[ C_1\times I]$, and correspondingly, there are five
 ways of inscribing a ``right'' tetrahedron
 in a regular dodecahedron.
One such tetrahedron is shown in Figure~\ref{fig:inscribed_tetrahedra_R}.\footnote
{The unions of these five or ten tetrahedra inside a dodecahedron form nice
nonconvex star-like polyhedral compounds,
see \cite[Figures~14 and 15a--b]{duval}. %
See also %
\url{https://blogs.ams.org/visualinsight/2015/05/15/dodecahedron-with-5-tetrahedra/}
from the AMS blog ``Visual Insight''.
}
The left and right tetrahedra
are mirrors of each other, and they
can be distinguished by
looking at the
paths of length 3 on the dodecahedron between %
 vertices of a tetrahedron: These paths are either S-shaped zigzag paths
 (for left tetrahedra) or
they have the shape of an inverted~S (for right tetrahedra).

\begin{figure}
\begin{subfigure}{.5\textwidth}
\centering
    \includegraphics{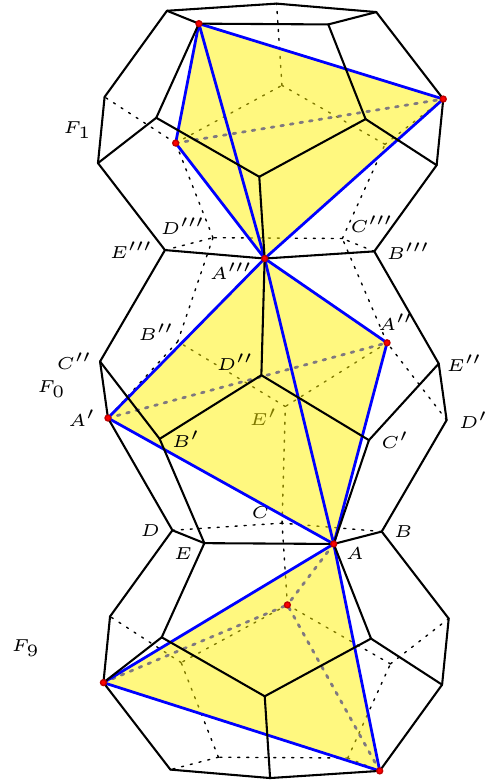}
    \caption{left tetrahedra}
    \label{fig:inscribed_tetrahedra_L}
\end{subfigure}
\begin{subfigure}{.5\textwidth}
\centering
    \includegraphics{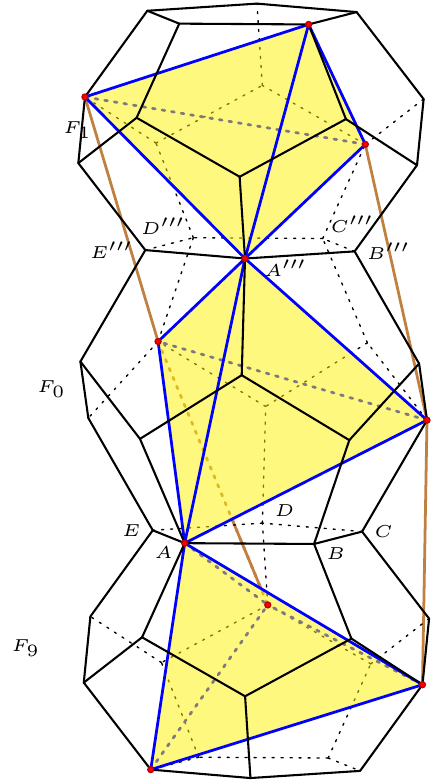}
    \caption{right tetrahedra}
    \label{fig:inscribed_tetrahedra_R}
\end{subfigure}%
\caption{A sequence of inscribed tetrahedra in three successive
  dodecahedra %
  of the 120-cell $Q_{120}$. The red vertices form a 600-cell
  $P'_{600}$.
This is an orthogonal projection to the tangent space in the center of
the middle cell, and this is why the adjacent cells are foreshortened.
}
\label{fig:inscribed_tetrahedra}
\end{figure}

Every color class consists of the points $2I\cdot p_0$ for some starting
point $p_0$, and hence it forms a rotated copy $P_{600}'$ the 600-cell
$P_{600}$. This polytope is strongly inscribed in~$Q_{120}$:
For each dodecahedron of $Q_{120}$, there is a unique
left rotation in $\pm[I\times C_1]$ mapping $F_0$ to this
dodecahedron,
and in this way we get 120 images of the starting tetrahedron.
Figure~\ref{fig:inscribed_tetrahedra_L} shows these
tetrahedra in  three adjacent dodecahedra.
(As a sanity check, one can perform a small calculation:
 A vertex is shared by four tetrahedra---one
tetrahedron in each of the four dodecahedra meeting in the vertex---,
and this gives a consistent vertex count, since every tetrahedron has
four vertices and $120\cdot4/4=120$.)

The red points in Figure~\ref{fig:inscribed_tetrahedra_R} form part of
an analogous 600-cell
 $P_{600}^{\prime R}$ spanned by right inscribed tetrahedra.
Some additional edges of this $P_{600}^{\prime R}$, which don't lie in the three
dodecahedra that are shown, are drawn in brown.

The group $%
\pm[I\times T]$ consists of those
symmetries of that simultaneously preserve the 120-cell $Q_{120}$ and
 its strongly inscribed ``left'' 600-cell $P_{600}'$.  To see
this, consider the dodecahedral cell $F_0$ that is centered at the
quaternion~$1$. As mentioned, each left multiplication by an element $2I$ maps
$F_0$, together with its inscribed tetrahedron $AA'A''A'''$ to a
unique dodecahedral cell of $Q_{120}$ with the corresponding tetrahedron.
To understand the full group, we have to consider those group elements
that keep $F_0$ fixed.
$\pm[I\times T]$ consists of the elements $[l,r]$ with $(l,r)\in 2I \times 2T$.
The transformation $[l,r]$ keeps $F_0$ fixed iff it maps $1$ to $1$,
and this is the case iff $l=r$. These elements are the elements
$[r,r]=[r]$ with $r\in 2T$, in other words, they form the tetrahedral
group $\pm T$. And indeed, the symmetries of $F_0$ that keep the
tetrahedron $AA'A''A'''$ invariant form a tetrahedral group.

We chose $[3, 3, 5]_{\frac15L}^+$ as an ad-hoc extension of Coxeter's notation
for the group $\pm[I\times T]$, to indicate a $1/5$ fraction of the group
$[3, 3, 5]^+$.

Now, there is also the original 600-cell  $P_{600}$, the polar of the
$Q_{120}$,
having one vertex in the center of each dodecahedron.
This gives rise to a larger group
 $[[3, 3, 5]_{\frac15L}^+]=\pm[I\times O]$
 where the two 600-cells  $P_{600}$ and
 $P_{600}'$ (properly scaled) are swapped.
This group is not a subgroup of any other 4-dimensional point group.

When the starting point $s$ is chosen in the center of
the dodecahedral cell
of $Q_{120}$, the polar orbit polytope
of
this group %
has 240 cells.
Figure~\ref{fig:IxO} shows such a cell $C$. %
The points of the orbit
 closest to~$s$
are four vertices of the
dodecahedron (say, those of color $A$, the red points in
Figure~\ref{fig:inscribed_tetrahedra_L}).
They form a tetrahedral cell of
$P_{600}'$, and
they are responsible for the rough tetrahedral shape of~$C$.
The centers of the twelve neighboring dodecahedra in
$Q_{120}$ give rise to the twelve small triangular faces, which are
the remainders of the twelve pentagons of the original dodecahedral
cell, when the polar is not present.
In addition, there are four neighboring cells that are adjacent
through hexagonal faces,
opposite the large 12-gons.
They centered at
vertices of
$P_{600}'$.
Two of these are shown as red points
in Figure~\ref{fig:inscribed_tetrahedra_L},
the point adjacent to $C$
in the lower cell $F_9$,
and the point adjacent to $D'''$
in the upper cell $F_1$.
The cell has chiral tetrahedral symmetry $+T$. In particular, it is
not mirror-symmetric.
In \cite[Figure~9]{dunbar94_f2}, this cell is shown together with a
fundamental domain inside it.
Incidentally, this cell (and the orbit polytope) coincides with that of the tubical group
$\pm[I\times C_4]$ when the starting point is chosen on a two-fold rotation
center
(Figure~\ref{fig:IxCn_2fold}).

\begin{figure}[htb]
  \centering
\includegraphics{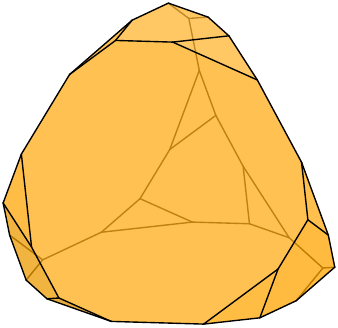}
  \caption{A cell $C$ of the polar orbit polytope of the group~$\pm[I\times O]$}
  \label{fig:IxO}
\end{figure}

If we use the ``right'' 5-coloring
we get the corresponding groups $[3, 3, 5]_{\frac15R}^+=\pm[T\times I]$
and $[[3, 3, 5]_{\frac15R}^+]=\pm[O\times I]$.
See Figure~\ref{fig:inscribed_tetrahedra_R}.
These four groups come in two enantiomorphic pairs. The two
corresponding groups are mirrors of each other. (They are therefore
\emph{metachiral groups} in the terminology of Conway and
Smith~\cite[\S 4.6]{CS}.)

\subsection{Symmetries of the 24-cell}
\label{sec:24-cell}
The set of 24 quaternions of $2T$ form the vertices of a regular
24-cell $P_T$.
The complete symmetry group of $P_T$ is $[3, 4, 3]$, and its chiral version is $[3, 4, 3]^+$.

The points of $P_T$ can be 3-colored:
There are 8 vertices of $P_T$
whose coordinates are the permutations of $(\pm1,0,0,0)$.
They form a cross-polytope.
The 16 remaining vertices are of the form $(\pm1/2,\pm1/2,\pm1/2,\pm1/2)$.
 They are the vertices of a 4-cube, and they
 can be naturally divided into two color groups of 8, as mentioned
 in Section~\ref{subsec:hypercube}.
In total, we have three groups of 8 vertices, which we interpret as
a \emph{3-coloring} of the vertices by the colors $a,b,c$, see
Figure~\ref{fig:truncated-cube}a.
Every triangular face contains vertices from all three colors. Thus,
every symmetry of $P_T$ induces a permutation of the colors.

We can look at those symmetries for which the permutations of the
colors is even. In other words, besides the identity, we allow only
cyclic shifts. These form the subgroup $[3,4,3^+]$.
Another way to express this is to establish an \emph{orientation of the edges} according to
some cyclic ordering of the colors $a\to b\to c\to a$ (a \emph{coherent orientation} %
\cite[\S 8.3]{Coxeter}).
The subgroup $[3,4,3^+]$ consists of those elements that preserve this
edge orientation.
(This is analogous to the pyritohedral group $\pm T$ in three dimensions,
which can also be described as preserving the orientation of the edges
of the octahedron shown in Figure~\ref{fig:truncated-cube}a.)

The 24-cell is a self-dual polytope. In fact, the vertices of the
polar polytope
 $P_{T_1}$ (properly scaled) are 
 the quaternions in the coset of $2T$ in $2O$.
If we add to $[3,4,3]$ the symmetries that swap $P_T$ and $P_{T_1}$,
we get the group $[[3,4,3]]$, the symmetry group of the joint configuration
$P_O=P_{T} \cup P_{T_1}$. Its chiral version is $[[3, 4, 3]]^+$.
The subgroup $[[3, 4, 3]^+]$ contains the symmetries that
exchange $P_T$ and $P_{T_1}$ only in
combination with a reversal of orientation.
This group is interesting, because it is achiral, but it contains no reflections.

The polar polytope also has a three-coloring of its vertices.
(One can give the partition explicitly in terms of the coordinates, as
for $P_T$:
The vertices of $P_{T_1}$ are the centers of the facets of~$P_T$, properly scaled,
and their coordinates $(x_1,x_2,x_3,x_4)$ are all permutations of the coordinates 
$(\pm1,\pm1,0,0)/\sqrt2$. The three color classes are characterized by
the
condition
$|x_1|=|x_2|$,
$|x_1|=|x_3|$, and
$|x_1|=|x_4|$, respectively.)
We can interpret this 3-coloring as
a 3-coloring of the \emph{cells} of $P_T$, which we denote by $A,B,C$.
The group $[^+3,4,3]$ contains those symmetries of $P_T$
for which the permutation of the colors of the \emph{cells} is even.
This group %
is of course geometrically the same as $[3,4,3^+]$, but we can also
have both conditions: $[^+3,4,3^+]$.

\begin{figure}[htb]
  \centering
  \includegraphics{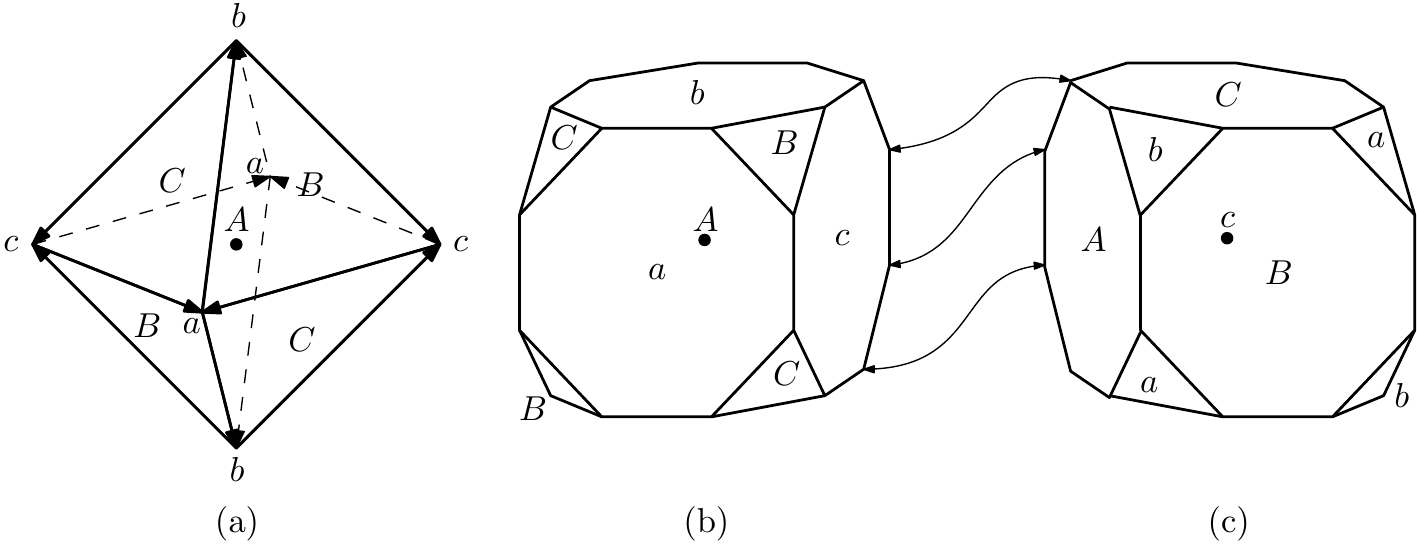}
  \caption{(a) An octahedral cell of the 24-cell with a consistent
    edge orientation. (b) The 48-cell
    consists of 48 truncated cubes.}
  \label{fig:truncated-cube}
\end{figure}

\begin{figure}[htb]

  \begin{minipage}[b]{0.46\linewidth}
  \centering
  \includegraphics{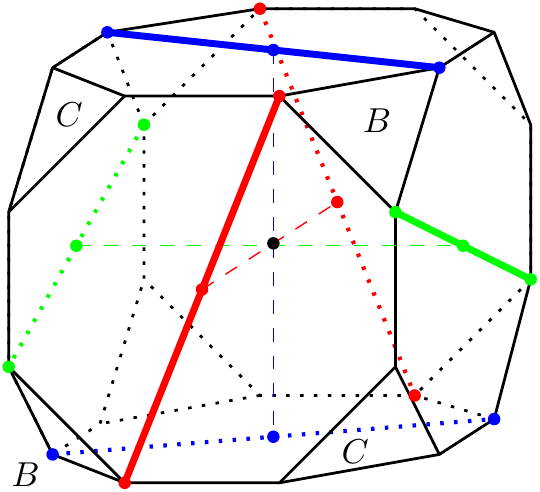}
  \caption{Decoration of the truncated cube by diagonals.}
  \label{fig:24-cell-decorated}
\end{minipage}
\hfill
  \begin{minipage}[b]{0.46\linewidth}
  \centering
  \includegraphics{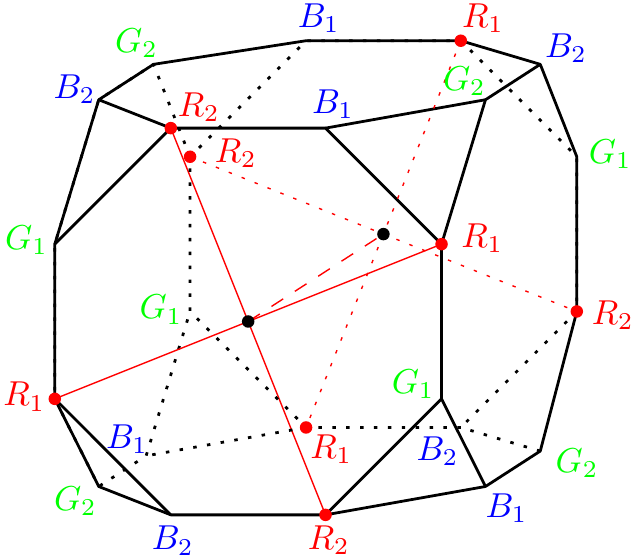}
  \caption{The 6 orbits of the vertices under $\pm[O\times C_1]$
    (left multiplication with $2O$)
  }
  \label{fig:truncated-cube-coloring}   
  \end{minipage}
\end{figure}

\begin{figure}[htb]
  \centering
  \includegraphics{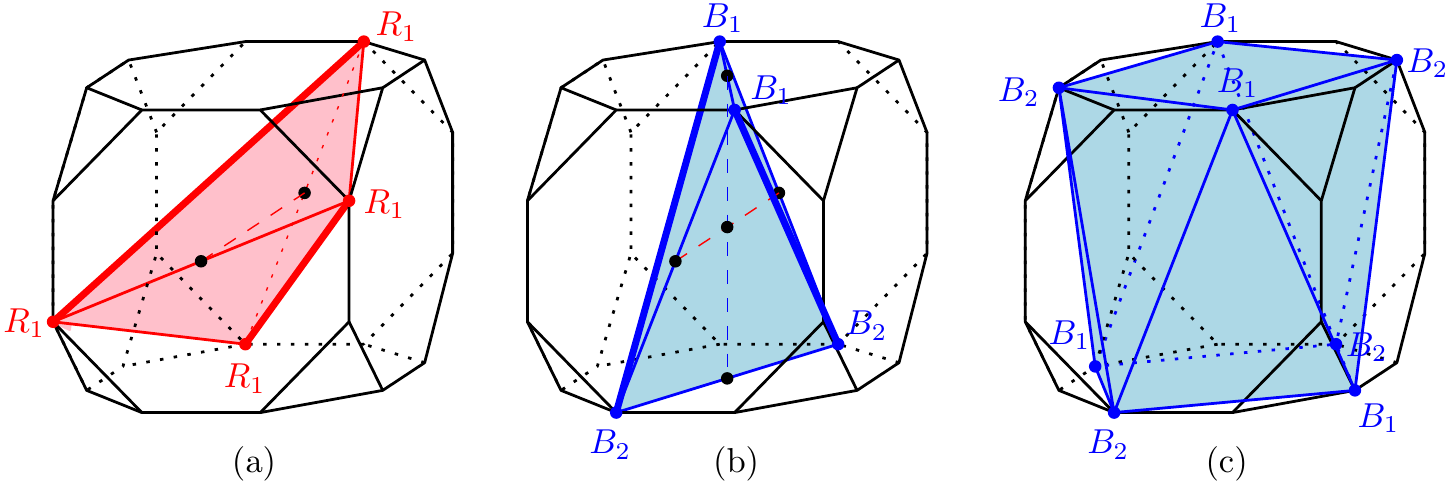}
  \caption{Facets inscribed in the truncated cube}
  \label{fig:inscribed}
\end{figure}

\subsubsection{A pair of \ena\ groups}
\label{48-cell}
Finally, we have two more groups, which are mirrors of each other.
To understand these groups, let us look at
 the polar orbit polytope of $P_O = P_{T} \cup P_{T_1}$:
The octahedral cells of the 24-cell shrink to
truncated cubes with 6
regular octagons and 8 triangles as faces,
see Figure~\ref{fig:truncated-cube}b.
This %
polytope
is sometimes called the bitruncated 24-cell, or
truncated-cubical
tetracontaoctachoron.
We will simply refer to it as
the \emph{48-cell}.
The small triangles are remainders from the triangular faces of the original
octahedral cells of the 24-cell, which are centered at the points $P_T$.

Figure~\ref{fig:truncated-cube}b
shows a cell of color $A$.
The triangles lead to adjacent cells,
colored $B$ or $C$, and we have labeled the
triangles accordingly. The octagons %
lead to cells centered at points of $P_{T}$, and we have labeled
them with the corresponding color $a$, $b$, or~$c$.

Figure~\ref{fig:truncated-cube}c shows an adjacent ``dual'' cell of the 48-cell,
centered at a point of color $c$. Note that these two cells are not
attached in a straight way, but by a %
screw of
$45^\circ$.
We can enforce the screw to be a left screw by decorating each of the six
octagonal faces with a diagonal, as shown in
Figure~\ref{fig:24-cell-decorated}.
The group $\pm[O \times C_1]$ will map one selected cell to each cell
by a unique left multiplication with an element of $2O$ and hence will
carry the diagonal pattern to every truncated cube of the 48-cell.
The diagonals on adjacent cells match: A left rotation that maps a
cell to the adjacent cell performs a left screw by
$45^\circ$, and one can check in 
Figure~\ref{fig:24-cell-decorated} that the screw that maps an octagon
to the opposite octagon while maintaining the diagonal is a left screw.

The group $\pm[O \times T]$ is the group that preserves the set of
diagonals (ignoring the colors). This can be confirmed as in the case
$\pm[I \times T]$
in Section~\ref{sec:600-cell}: The group that fixes a cell should be
the tetrahedral group $+T$, and indeed,
the diagonal pattern of Figure~\ref{fig:24-cell-decorated} has
tetrahedral symmetry: The diagonals connect only the $B$-triangles,
and the $B$-triangles form a tetrahedral pattern.
We have chosen the ad-hoc extension of Coxeter's notation $[[^+3,4,3^+]]_L$
for the group $\pm[O\times T]$ to indicate that it extends the operations
$[^+3, 4,3^+]$ by a swap between $P_T$ and the polar polytope
$P_{T_1}$, and this swap is effected by \emph{left} rotations.

Of course, there is a mirror pattern
of Figure~\ref{fig:24-cell-decorated}, which leads
to the mirror group
 $\pm[T \times O]=[[^+3,4,3^+]]_R$, and these two groups are \ena.

 \paragraph{Analogies with three dimensions.}
 As pointed out by Du Val \cite[p.~71]{duval}, 
 there is a strong analogy between the symmetries of the different self-dual
 polytopes in three and in four dimensions,
as shown in Table~\ref{tab:simplex}.
The simplex is a self-dual regular polytope,  both in
4 dimensions  (Section~\ref{sec:4-simplex})
and in 3 dimensions.
 In 3 dimensions,
 moreover,
 the simplex and its polar form the cube,
 and thus we have used alternate Coxeter notations
 to highlight the analogy (opposite ones from
 Table~\ref{tab:cube}, where the analogy with the cube is emphasized).
 Only five of the symmetries of the 24-cell and its polar are used.
 
 From the viewpoint of the cross-polytope, one could also match
 the group $\pm[T\times T]\cdot 2 = [3,4,3^+] =  [^+3,4,3]
 $ of order {576} with
 the pyritohedral group
 $\pm T=[^+3,4]$ of order 24, because they are both based on consistent
 edge orientations.

\begin{table}[htb]
  \centering
  \begin{tabular}{|l@{ }r|@{\,}|l@{ }r|@{\,}|l@{}r||l|}
    \hline
    4-simplex & \,order & 24-cell& order & 3-simplex& order & description\\\hline
    $[[3,3,3]]$&240&
    $[[3,4,3]]$&2304&
                     $[[3,3]]=[3,4]=\pm O$&48&all symmetries\\
    $[[3,3,3]]^+$&120&
    $[[3,4,3]]^+$&1152&
                        $[[3,3]]^+=[3,4]^+=+O$&24
                                                                     &chiral part\\
    $[3,3,3]$&120&
    $[3,4,3]$&1152&
                    $[3,3]=TO$&24
                                                                     &nonswapping\\
    $[[3,3,3]^+]$&120&
    $[[3,4,3]^+]$&1152&
                       $[[3,3]^+]=[^+3,4]=\pm T$&24&swap with mirror\\
    $[3,3,3]^+$&60&
    $[3,4,3]^+$&576&
                    $[3,3]^+=+T$&12&chiral \& nonswapping\\
    \hline
  \end{tabular}
  \caption{Analogies between %
    symmetries of self-dual polytopes}
  \label{tab:simplex}
\end{table}

\paragraph{A strongly inscribed polar polytope.} 

The convex hull of the points
$P_O = P_{T} \cup P_{T_1}$ is a polytope with 288 equal tetrahedral
facets,
which we call the \emph{288-cell}.
It is polar to the 48-cell.
We perform the same procedure as in Section~\ref{sec:600-cell}
 and split the vertices of the 48-cell into orbits under the action of
 $\pm[O \times C_1]$. 
We will see that this leads to another instance of polytope with a strongly
inscribed copy of its polar. However, we won't get any new groups.

The 48-cell has 288 vertices, and they are partitioned into
 6 orbits of size 48, as
shown in
Figure~\ref{fig:truncated-cube-coloring}, cf.\
Du Val~\cite[Figure~24, p.~85]{duval}:
There is a natural partition of the colors into three pairs
$R_1,R_2$; $G_1,G_2$; and $B_1,B_2$, according to the
opposite octagons to which the colors belong. (The partition of each
pair into
$R_1$ and $R_2$, etc., is arbitrary.)
Indeed, one can check that the transition from an octagon to the
opposite octagon with a left screw of $45^\circ$ preserves the six colors
(indicated for the red colors by two corresponding crosses.)
Likewise, the transition from a triangle to the opposite triangle
with a left screw of $60^\circ$ preserves the colors.

Now, as in Section~\ref{sec:600-cell}, the points of one color form a
right coset of $2O$, and hence they form a rotated and scaled copy
$P_O'$ of the 288-cell $P_O$.
This polytope is strongly inscribed in the 48-cell: Each truncated
cube of the 48-cell contains one tetrahedron of $P_O'$.
Figure~\ref{fig:inscribed}a shows one such tetrahedron, spanned by the
vertices of color $R_1$.

The geometry of this tetrahedron becomes clearer after
rotating it
by $45^\circ$ around the midpoints of the front and back octagons, as in
Figure~\ref{fig:inscribed}b.
We see that the tetrahedron has four equal sides, whose length is the diagonal of
the octagons, and two  opposite sides of
larger length, equal to the diagonal of a circumscribed square.
The 2-faces %
are therefore
congruent
isosceles triangles. Such a tetrahedron is called a
\emph{tetragonal disphenoid}.\footnote
{The side length of the ``untruncated'' cube is 
$\sqrt8-2\approx 0.8$, which equals the edge length of a circumscribed
8-gon around a unit circle.
Hence the two long edges of the tetrahedra, highlighted in bold, have length
$\sqrt2(
\sqrt8-2) =4-\sqrt8
\approx 1.17$. The four short edges have length
$\sqrt{8(10-\sqrt{98})}\approx 0.9$,
and the edge length of the 48-cell is $6-\sqrt{32}\approx0.34$.}

The symmetry group of the 48-cell together with its strongly inscribed
288-cell $P_O'$ is the tubical group $\pm[O\times D_4]$, because the symmetry group of the disphenoid
inside the truncated cube is only the vierergruppe $D_4$, consisting
of half-turns through edge midpoints.

We can try to start with the rotated tetrahedra of
Figure~\ref{fig:inscribed}b, spanned by two opposite diagonals used
for the decoration in
Figure~\ref{fig:truncated-cube-coloring},
hoping to recover the group $\pm[O\times T]$.
However, this tetrahedron
contains vertices of \emph{two} colors $B_1$ and $B_2$, and its orbit will
thus contain the union of the orbits $B_1$ and $B_2$. Inside each
truncated cube, the convex hull forms a quadratic antiprism, as shown
Figure~\ref{fig:inscribed}c. (The convex hull contains 48 such
antiprisms plus 192 tetrahedral cells, for a total of 240 facets.)

\section{The axial groups}
\label{sec:axial}

These are the finite subgroups of the direct product $\O(3) \times \O(1)$.
The subgroup $\O(1)$ operates on the 4-th coordinate $x_4$,
and we denote its elements by $\O(1)=\{+x_4,-x_4\}$.
Here $+x_4$ is the identity,
and $-x_4$ denotes the reflection of the 4-th coordinate.

Let $G$ be such an axial group.
Let $G_3 \in \O(3)$ be the ``projection'' of $G$ on $\O(3)$.
That is,
$$G_3 := \{\,g\in \O(3)\mid (g,+x_4)\in G \text{ or }  (g,-x_4)\in G \,\}.$$
If $G_3$ itself is a 3-dimensional axial group,
i.e. $G \leqslant \O(2)\times \O(1)$,
then we may call $G$ a \emph{doubly axial group}.
In this case, we prefer to regard $G$ as a toroidal group in 
$\O(2) \times \O(1) \times \O(1) \leqslant \O(2)\times \O(2)$
and classify it as such. %
(These groups are the subgroups of
$\grp+^{\textbf{p2mm}}_{m,2}$.)
Hence from now on,
we assume that $G_3$ is not an axial 3-dimensional group, i.e.,
we assume that $G_3\leqslant O(3)$ is one of the seven \emph{polyhedral} 3-dimensional groups.
These are well-understood, and thus
the axial groups are quite easy to classify.
There are 21 axial groups
(excluding the doubly axial groups), and their full list is given
below in
Table~\ref{tbl:axial_full},
 with references to other classifications from the literature.
Together with the polyhedral groups in
Table~\ref{tab:polyhedral}, these groups exhaust all entries in \cite[Tables 4.2
and 4.3]{CS} except the toroidal groups.
Table~\ref{tab:polyhedral-appendix}
in Appendix~\ref{sec:polyhedral-and-axial-table}
lists them %
with
generators
and cross references to other classifications.

Note that the product $\O(3) \times \O(1)$ used here is different from
the product $\pm[L\times R]$ on which the classic classification is
based. Both are direct products in the group-theoretic sense, but
$\O(3) \times \O(1)$ is a direct sum, a ``Cartesian'' product in a straightforward
geometric sense, consisting of pairs of independent transformations in
orthogonal subspaces, whereas the product $\pm[L\times R]$, which is
specific to $\SO(4)$, refers to the representation $[l,r]$ by pairs of
quaternions, which have by themselves a significance as operations
$[l]$ and $[r]$ in $\SO(3)$.

We will now derive the axial groups systematically.
Let $G_3^{+x_4} \leqslant \O(3)$ be the subgroup of $G_3$ of
those elements that don't negate the 4-th coordinate.
That is,
$$G^{+x_4}_3 := \{\,g\in \O(3)\mid (g,+x_4)\in G\,\}.$$
The subgroup $G^{+x_4}_3$ is either equal to $G_3$,
or it is an index-2 subgroup of $G_3$.

If $G^{+x_4}_3=G_3$, %
there are two cases, which are both easy: we can form the
``pyramidal'' group $G_3\times
\{+x_4\}$, which does not move the 4-th dimension at all, or the full
``prismatic'' group $G_3\times
\{+x_4,-x_4\}$.  This gives two axial groups for each
three-dimensional polyhedral group $G_3\leqslant
\SO(3)$, and they are listed in Table~\ref{tab:axial_pap}, together
with their ``CS names'' following Conway and Smith~\cite{CS}, and
their ``Coxeter names''.

The prismatic groups are never chiral.
The pyramidal group $G_3\times \{+x_4\}$ is chiral iff $G_3$ is:
These are the groups $+I$, $+O$, and $+T$.

\begin{table}[htb]
\setlength{\extrarowheight}{1.8pt}
\centering

  \begin{tabular}[t]{|ccc|llr|llr|}
    \hline
    \multicolumn 3{|c|}{$G_3$}
&   \multicolumn 3{c|}{pyramidal groups $G_3 \times \{+x_4\}$}
&   \multicolumn 3{c|}{prismatic groups  $G_3 \times \{+x_4,-x_4\}$}
\\    \hline
    name %
    & \hbox to 0pt{\hss orbitope\hss}& I.T.& CS name & Cox.%
          & \llap{ord}er & CS  name & Cox.%
                                      \ name
          & \llap{o}rder \\\hline
$\pm I$& $\mathbf{*532}$& $53m$%
&$+\frac1{60}[I\times I]\cdot 2_3$& $[3,5]$ & $120$&$\pm\frac1{60}[I\times I]\cdot 2$& $2.[3,5]$ & $240$\\
$+I$& $\mathbf{532}$& $532$%
&$+\frac1{60}[I\times I]$& $[3,5]^+$ & $60$&$+\frac1{60}[I\times I]\cdot 2_1$& $[3,5]^\circ$ & $120$\\
$\pm O$& $\mathbf{*432}$& $m3m$%
&$+\frac1{24}[O\times O]\cdot 2_3$& $[3,4]$ & $48$&$\pm\frac1{24}[O\times O]\cdot 2$& $2.[3,4]$ & $96$\\
$+O$& $\mathbf{432}$& $432$%
&$+\frac1{24}[O\times O]$& $[3,4]^+$ & $24$&$+\frac1{24}[O\times O]\cdot 2_1$& $[3,4]^\circ$ & $48$\\
$TO$& $\mathbf{*332}$& $\bar43m$%
&$+\frac1{12}[T\times \overline T]\cdot 2_1$& $[3,3]$ & $24$&$+\frac1{24}[O\times \overline O]\cdot 2_1$& $[2,3,3]$ & $48$\\
$\pm T$& $\mathbf{3{*}2}$& $m3$%
&$+\frac1{12}[T\times T]\cdot 2_3$& $[^+3,4]$ & $24$&$\pm\frac1{12}[T\times T]\cdot 2$& $2.[^+3,4]$ & $48$\\
$+T$& $\mathbf{332}$& $23$%
&$+\frac1{12}[T\times T]$& $[3,3]^+$ & $12$&$+\frac1{12}[T\times T]\cdot 2_1$& $[^+3,4]^\circ$ & $24$\\[2pt]
\hline
\end{tabular}

\caption
[The 14 pyramidal and prismatic axial groups]
{Pyramidal and prismatic axial groups (except doubly axial groups)}
\label{tab:axial_pap}
\end{table}

\begin{table}[htb]
\setlength{\extrarowheight}{1.6pt}
\centering
\begin{tabular}[t]
  {|l|llr|l|}\hline
    \multicolumn 5{|c|}{hybrid axial groups}\\
    \hline
    $G_3^{+x_4}$ in $G_3$ &    %
                            CS name %
    & Coxeter name 
    & order & methods
    \\\hline$+I$ in $\pm I$ &$\pm\frac1{60}[I\times I]$ %
&$2.[3,5]^+$ & $120$ & center, chirality\\
$\pm T$ in $\pm O$ &$+\frac1{24}[O\times \overline O]\cdot 2_3$ %
&$[2,3,3]^\circ$ & $48$ & edge orientation\\
$+O$ in $\pm O$ &$\pm\frac1{24}[O\times O]$ %
&$2.[3,4]^+$ & $48$ & center, chirality\\
$TO$ in $\pm O$ &$\pm\frac1{12}[T\times \overline T]\cdot 2$ %
&$2.[3,3]$ & $48$ & center, alternation\\
$+T$ in $\pm T$ &$\pm\frac1{12}[T\times T]$ %
&$2.[3,3]^+$ & $24$ & center, chirality\\
$+T$ in $+O$ &$+\frac1{12}[T\times \overline T]\cdot 2_3$ %
&$[3,3]^\circ$ & $24$ & alternation\\
$+T$ in $TO$ &$+\frac1{24}[O\times \overline O]$ %
&$[2,3,3]^+$ & $24$ & chirality\\[2pt]
\hline
\end{tabular}%

\caption
[The 7 hybrid axial groups]
{Hybrid axial groups (except doubly axial groups)
}
\label{tab:axial_hybrid}
\end{table}

\begin{table}[htbp]
\setlength{\extrarowheight}{1.8pt}
\centering
  \begin{tabular}[t]{|l|llllr|}
    \hline
 \multicolumn 6{|c|}{The 21 axial groups}
\\\hline
 \multicolumn 6{|c|}{pyramidal groups $G_3 \times \{+x_4\}$}
\\\hline $G_3$  &CS name & Du Val \# and name& Cox.%
 & {BBNZW} %
 & \llap{order} \\
\hline
$\pm I$&$+\frac1{60}[I\times I]\cdot 2_3$& 49\rlap{$'$}. $(I/C_1;I/C_1)^*$&  $[3,5]$ & n.cryst.%
& $120$\\
$+I$&$+\frac1{60}[I\times I]$& 31\rlap{$'$}. $(I/C_1;I/C_1)$&  $[3,5]^+$ & n.cryst.%
& $60$\\
$\pm O$&$+\frac1{24}[O\times O]\cdot 2_3$& 44\rlap{$'$}. $(O/C_1;O/C_1)^{*\prime}$&  $[3,4]$ & 25/10%
& $48$\\
$+O$&$+\frac1{24}[O\times O]$& 26\rlap{$'$}. $(O/C_1;O/C_1)'$&  $[3,4]^+$ & 25/03%
& $24$\\
$TO$&$+\frac1{12}[T\times \overline T]\cdot 2_1$& 40\rlap{$'$}. $(T/C_1;T/C_1)^* $&  $[3,3]$ & 24/04%
& $24$\\
$\pm T$&$+\frac1{12}[T\times T]\cdot 2_3$& 39\rlap{$'$}. $(T/C_1;T/C_1)^*_c$&  $[^+3,4]$ & 25/02%
& $24$\\
$+T$&$+\frac1{12}[T\times T]$& 21\rlap{$'$}. $(T/C_1;T/C_1)$&  $[3,3]^+$ & 24/01%
& $12$\\[2pt]
\hline
 \multicolumn 6{|c|}{prismatic groups  $G_3 \times \{+x_4,-x_4\}$}
\\\hline $G_3$  &CS name & Du Val \# and name& Cox.%
 & {BBNZW} %
 & \llap{order} \\
\hline
$\pm I$&$\pm\frac1{60}[I\times I]\cdot 2$& 49. $(I/C_2;I/C_2)^*$&  $2.[3,5]$ & n.cryst.%
& $240$\\
$+I$&$+\frac1{60}[I\times I]\cdot 2_1$& 49\rlap{$'$}. $(I/C_1;I/C_1)^*_-$&  $[3,5]^\circ$ & n.cryst.%
& $120$\\
$\pm O$&$\pm\frac1{24}[O\times O]\cdot 2$& 44. $(O/C_2;O/C_2)^*$&  $2.[3,4]$ & 25/11%
& $96$\\
$+O$&$+\frac1{24}[O\times O]\cdot 2_1$& 44\rlap{$'$}. $(O/C_1;O/C_1)^{*\prime}_-$&  $[3,4]^\circ$ & 25/07%
& $48$\\
$TO$&$+\frac1{24}[O\times \overline O]\cdot 2_1$& 44\rlap{$''$}. $(O/C_1;O/C_1)^{*\prime\prime}_-$&  $[2,3,3]$ & 25/08%
& $48$\\
$\pm T$&$\pm\frac1{12}[T\times T]\cdot 2$& 39. $(T/C_2;T/C_2)^*_c$&  $2.[^+3,4]$ & 25/05%
& $48$\\
$+T$&$+\frac1{12}[T\times T]\cdot 2_1$& 39\rlap{$'$}. $(T/C_1;T/C_1)^*_{c-}$&  $[^+3,4]^\circ$ & 25/01%
& $24$\\[2pt]
\hline
 \multicolumn 6{|c|}{hybrid axial groups $G_3^{+x_4}$ in $G_3$}
\\\hline $G_3^{+x_4}$ in $G_3$  &CS name & Du Val \# and name& Cox.%
 & {BBNZW} %
 & \llap{order} \\
\hline
$+I$ in $\pm I$&$\pm\frac1{60}[I\times I]$& 31. $(I/C_2;I/C_2)$&  $2.[3,5]^+$ & n.cryst.%
& $120$\\
$\pm T$ in $\pm O$&$+\frac1{24}[O\times \overline O]\cdot 2_3$& 44\rlap{$''$}. $(O/C_1;O/C_1)^{*\prime\prime}$&  $[2,3,3]^\circ$ & 25/09%
& $48$\\
$+O$ in $\pm O$&$\pm\frac1{24}[O\times O]$& 26. $(O/C_2;O/C_2)$&  $2.[3,4]^+$ & 25/06%
& $48$\\
$TO$ in $\pm O$&$\pm\frac1{12}[T\times \overline T]\cdot 2$& 40. $(T/C_2;T/C_2)^*$&  $2.[3,3]$ & 24/05%
& $48$\\
$+T$ in $\pm T$&$\pm\frac1{12}[T\times T]$& 21. $(T/C_2;T/C_2)$&  $2.[3,3]^+$ & 24/02%
& $24$\\
$+T$ in $+O$&$+\frac1{12}[T\times \overline T]\cdot 2_3$& 40\rlap{$'$}. $(T/C_1;T/C_1)^*_-$&  $[3,3]^\circ$ & 24/03%
& $24$\\
$+T$ in $TO$&$+\frac1{24}[O\times \overline O]$& 26\rlap{$''$}. $(O/C_1;O/C_1)''$&  $[2,3,3]^+$ & 25/04%
& $24$\\[2pt]
\hline
\end{tabular}

\caption[Summary of the 21 axial groups]
  {Summary of the 21 axial groups (except doubly axial groups).
We have included references
to the list of crystallographic 4-dimensional groups by
Brown, Bülow, Neubüser, Wondratschek, Zassenhaus (BBNWZ)  
\cite{BBNWZ},
and the names of Du Val~\cite{duval},
together with his numbering which extends the numbering of Goursat.
\\
We use two further adaptations of Coxeter's
notation, following~\cite{CS}:
$G^\circ$ is obtained by replacing the \OR\ elements $g$ of
$G$ by $-g$.  An initial
``$2.$'' indicates doubling the group by adjoining negatives.  The
$2$ in %
$[2,3,3]$ indicates the presence of an extra ``perpendicular'' mirror
$R_1$ that commutes with the other reflections.
\\
In Du Val's notation, achiral groups can be recognized by the $^*$ superscript.
Haploid groups (those whose CS name begins with a +), which were not
considered by Goursat, and Du~Val denotes them by adding primes to the numbers of
the corresponding diploid groups, such as 44$'$ and 44$''$.
Variations are indicated by various subscript and superscript
decorations of the group names.
In some cases, a unique notation is only achieved
by considering the number and the name together.
Thus, we are deviating from Du Val's notation by attaching the primes also to the names.
For example, Du Val distinguishes two groups 26$'$ and 26$''$
with the same name $(O/C_1;O/C_1)$.
Accordingly, although this is overlooked in Du Val~\cite[p.~61]{duval}, 
one must also make a distinction between the corresponding achiral groups
44$'$ and 44$''$.
\emph{Each of} these two achiral extensions comes in two variations:
$(O/C_1;O/C_1)^*$ and $(O/C_1;O/C_1)^*_-$.
This omission in Du Val's list was already noted by Dunbar
\cite[p.~141, last paragraph]{dunbar94_f2}.
}
\label{tbl:axial_full}
\end{table}

We are left with the case that $G_3^{+x_4}$ is
an index-2 subgroup $H$ of $G_3$.
In this case, the group $G$ is uniquely determined by $H$ and $G_3$:
It consists of the elements $(g,+x_4)$ for
$g\in H$ and $(g,-x_4)$ for $g\in G_3-H$.
We denote this group as
``$H$  in $G_3$''.
As an abstract group, it is isomorphic to~$G_3$.
There are seven index-2 containments among the
three-dimensional polyhedral groups. %
(See \cite[Figures 3.9 and~3.10]{CS} for an overview
about all index-2 containments in $\O(3)$.)
They lead to seven ``hybrid axial groups'',
which are listed in Table~\ref{tab:axial_hybrid}.

There are several methods by which such an index-2
containment can be constructed, and we indicate in the table
which methods are applicable:

\begin{enumerate}
\item Chirality:
$G_3^{+x_4}$ is the chiral part of an achiral group $G_3$.
In this case, the resulting group will be chiral,
because the \OR\ elements of $G_3$ are composed with
the reflection of the axis.
In other words, $G$ is the chiral part
$(G_3\times \{x_4,-x_4\})^+$
of the prismatic group
$G_3\times \{x_4,-x_4\}$.

\item Center:
$G_3^{+x_4}$ does not contain the central reflection.
In this case, an index-2 extension $G_3$
of $G_3^{+x_4}$
can always be obtained by adjoining
the central reflection (in $\mathbb{R}^3$).
The resulting group ``$G_3^{+x_4}$  in $G_3$''
is equivalently thought of as simply adjoining
the central reflection (in $\mathbb{R}^4$) to $G_3^{+x_4}$.
These groups can be recognized as having
their Coxeter names prefixed with ``$2.$''.
$G$ is achiral iff $G_3^{+x_4}$ is achiral, and in this case, the
construction is simultaneously a case of the chirality method.

\item Alternation:
This applies to the octahedral groups,
which are symmetries of the cube.
The vertices of the cube can be two-colored.
The subgroup consists of those transformations
that preserve the coloring.

\item Edge orientation:
There is only one case where this applies,
namely the pyritohedral group $\pm T$
as a subgroup of the full octahedral group $\pm O$.
The edges of the octahedron can be \emph{coherently} oriented in such
a way that the boundary of every face is a directed cycle.
The subgroup consists of
those transformations that preserve this orientation
(cf.\ the use of the edge orientation for the 24-cell and its polar,
Section~\ref{sec:24-cell}). 
\end{enumerate}

Often, the same result can be obtained by  two methods. For example,
$TO$ in $\pm O$ results both from alternation and from center.

The group ``$G_3^{+x_4}$ in $G_3$''
is chiral if and only if $G_3^{+x_4}$ is chiral and $G_3$ is achiral,
because the elements of $G_3 \setminus G_3^{+x_4}$
are flipped by the $x_4$-reflection.
These are the case of the form ``$+G$ in $\pm G$'' in the table, plus
the group ``$+T$ in $TO$''.

The situation is very much analogous to the construction of
the achiral groups in $\O(3)$ from the chiral groups in $\SO(3)$
and their index-2 subgroups in \cite[\S 3.8]{CS},
except that Conway and Smith prefer to extend by
the algebraically simpler central inversion
$-\id$  %
instead of the geometrically more natural reflection of the
axial coordinate.

The maximal axial groups are
$\pm\frac1{60}[I\times I]\cdot 2  = 2.[3,5]$
and
$\pm\frac1{24}[O\times O]\cdot 2 = 2.[3,4]$.
Hence, the axial groups can be characterized as the symmetries
of a 4-dimensional prism over an icosahedron
or over an octahedron, %
and the subgroups of these.
(This includes, however, the doubly axial groups, which we have
classified under the toroidal groups.)

We mention that, among the $3\times 7=21$ axial groups, there are 7
chiral ones and 14 achiral ones.
Among the polyhedral groups, there are 14 chiral ones.
We have no explanation for the frequent appearance of the magic number
7 and its multiples.

\section{Computer calculations}
\label{Computer calculations}

We used the help of computers for investigating the groups and
checking the results, as well as for the preparation of the figures
and tables.
We used SageMath~\cite{sagemath} and its
interface to the GAP~\cite{GAP4} software for group-theoretic calculations.
The computer code is available in
\url{https://github.com/LaisRast/point-groups}.

\subsection{Representation of transformations and groups}
\label{sec:representation}

We represent the orthogonal transformations
$[l,r]$ and $*[l,r]$ by
the quaternion pair $(l,r)$ and a bit for indicating orientation reversal.
In a group, each transformation is represented twice, by the
equivalent pairs
$(l,r)$ and $(-l,-r)$.

We used two different representations for quaternions:
For the elements of $2I$, $2O$, and $2T$, the quaternions
$x_1+x_2i+x_3j+x_4$
are
represented in the natural way with precise algebraic coefficients, using SageMath's
support for %
algebraic extension fields. %
For the elements of $2D_{2n}$, %
we used a tailored representation: These elements are of the form
$e_n^s$ or
$e_n^sj$, and we represent and manipulate them using the %
fraction $s/n$, and a bit that indicates whether the factor $j$ is
present. (An exact algebraic representation would have required extension
fields of arbitrarily high degree.)

The left group and the right group don't have to use the same
representation: For elements of tubical groups, like $[l,r]\in\pm[I\times C_n]$,
each of $l$ and $r$ uses its own appropriate representation.

\subsection{Fingerprinting}
\label{Fingerprinting}
For preparing a catalog of groups, it is useful to have some easily
computable invariants. We used the number of elements of each
geometric type as a \emph{fingerprint}.
This technique was initiated by Hurley~\cite{hurley} in his
classification of the 4-dimensional crystallographic groups.

We first discuss the classification of the individual
4-dimensional orthogonal transformations, as introduced
in Section~\ref{sec:the-4d-transformations}.
Every \OP\ orthogonal transformation can be written as a
block diagonal matrix $R_{\alpha_1,\alpha_2}$ of two rotation matrices
\eqref{rotation}.
We must be aware of other angle parameters
$R_{\alpha_1',\alpha'_2}$ that describe geometrically the same operation,
in other words, that are conjugate by an \OP\
transformation (see Section~\ref{sec:coordinate-system}).
If we swap the two invariant coordinate planes
$(x_1,x_2)\leftrightarrow(x_3,x_4)$, this is an \OP\ transformation,
and it turns $R_{\alpha_1,\alpha_2}$ into $R_{\alpha_2,\alpha_1}$.
A simultaneous reflection in both coordinate planes
($x_1\leftrightarrow x_2$
and $x_3\leftrightarrow x_4$)
is also \OP,
and it turns $R_{\alpha_1,\alpha_2}$ into  $R_{-\alpha_1,-\alpha_2}$.

Thus,
 $R_{\alpha_1,\alpha_2}\doteq R_{\alpha_2,\alpha_1}\doteq
 R_{-\alpha_1,-\alpha_2}
 \doteq R_{-\alpha_2,-\alpha_1}$.
On the other hand,
 $R_{\alpha_1,\alpha_2}$ and $R_{\alpha_1,-\alpha_2}$ are
 distinct unless one of the angles is $0$ or $\pm \pi$. They are
 mirrors of each other.

The \OR\ transformations $\bar R_{\alpha}$ of \eqref{eq:OR} are characterized by a
single angle $\alpha$.
Since
the simultaneous negation of $x_1$ and $x_4$ turns $\bar R_{\alpha}$
into $\bar R_{-\alpha}$, the parameter $\alpha$ can be normalized to
the range $0\le\alpha\le \pi/2$.

Since the angles are rational multiples of $\pi$, it is possible to
encode the data about the operation into a short code.
By collecting the codes of the elements in a group into a
string, we obtained a ``fingerprint'' of the group,
which we
used as a key for our catalog.\footnote
{Here are some details: We
  actually use the quaternion pair $[l,r]$ for computing the code for
  a rotation: If $[l,1]$ and $[1,r]$ are rotations by $a\pi$ and
  $b\pi$, respectively, we use the pair of rational numbers
  $(a,b)$ with $0\le a,b\le 1$.  The pair $[-l,-r]$, which represents
  the same rotation, gives the pair $(1-a,1-b)$, and hence we normalize
  by requiring that $a<b$ or $a=b\le 1/2$.

  For example, the group $\grp|^{\mathrm{pg}}_{2,4}$ has the fingerprint
 \texttt{0|0:2 0|1:2 1|1/4:4 1|3/4:4 1|1/2:4 *1/2:16}.
  We tried to make the code concise while keeping it readable.
  The term \texttt{/4} in
\texttt{1|3/4:4} is a common denominator for both components, and
hence
\texttt{1|3/4} stands for the pair $(a,b)=(\frac14,\frac34)$,
denoting a rotation of the form $[\exp \frac\pi4,\exp \frac{3\pi}4]
\doteq R_{-\pi/2,\pi}$.
The number \texttt{:4} after the colon denotes the multiplicity.
Since our group representation contains both pairs $[l,r]$ and
$[-l,-r]$ for each rotation, the multiplicity is always overcounted by
a factor of~2, the group actually contains only two operations
$R_{-\pi/2,\pi}$. (The reader may wish to identify them as torus
translations of this group, see Figure~\ref{fig:grid-parameters}.)
The symbol \texttt{0|0} denotes the identity.
The \OR\ transformations are written with a star.
 The sign \texttt{*}$a$ with a fraction $a$ denotes %
 $\bar R_{(1-a)\pi}$. In our example,
\texttt{*1/2:16} denotes eight operations of the form $\bar
R_{\pi/2}$.
The sum of the written multiplicities is 32, in accordance with the
fact that
the group has order $32/2=16$.}
Experimentally,
in all cases that we encountered, this method was sufficient to
distinguish groups up to conjugacy. (As reported below, we
considered,
from the infinite families of groups, at least all groups of order up
to 100.)
  
 The classification of the elements by Hurley~\cite{hurley} is almost equivalent, except that
 it disregards the orientation:
 He
classified a transformation by the
triplet of coefficients $(c_3,c_2,c_0)$ of
its characteristic equation
 $\lambda^4
-c_3 \lambda^3
+c_2 \lambda^2
-c_1 \lambda
+c_0=0$:
the trace $c_3$, the second invariant $c_2$, and the determinant $c_0$.
 Since all eigenvalues have absolute value 1, the linear coefficient
 $c_1$
 is determined by the others %
 through the
 formula
 $c_1=-c_0c_3$.
 The Hurley triplet determines the eigenvalues and thus the geometric
 conjugacy type and the rotation angles $\alpha_1,\alpha_2$, but only
 up to orientation.
 $R_{\alpha_1,\alpha_2}$ and $R_{\alpha_1,-\alpha_2}$ have the same
 spectrum and the same Hurley symbol.

 \paragraph{The Hurley symbol.}
 Hurley was interested in the crystallographic groups, and the
 operations in these groups must have integer coefficients in their
 characteristic polynomial. This restricts the operations to a finite
 set. Hurley denoted them by 24 letters %
 (the Hurley symbols).

 They were also used
in %
 the monumental
classification of the four-dimensional crystallographic space groups by
Brown, Bülow, Neubüser, Wondratschek, Zassenhaus
\cite{BBNWZ}.
 Brown et al.\
 refined the classification by splitting the groups into
 conjugacy classes \emph{under the group operations},
resulting in the \emph{Hurley pattern}.
 It may happen that several operations are geometrically the same
 but not conjugate to each other by a transformation of the group that
 is under consideration.\footnote
 {For example, the group 21/03
in \cite{BBNWZ} of order 12
   has the Hurley pattern
  \texttt{1*1I, 1*1E, 2*3E, 1*2S', 1*2B;}
  in our classification, it corresponds to two \ena\ groups,
  $\grp X^{\mathbf{c2mm}}_{1,3}$ and
  $\grp X^{\mathbf{c2mm}}_{3,1}$.
The fingerprints of these groups are
\texttt{0|0:2 0|2/3:4 1|1/2:14 3|5/6:4} and
\texttt{0|0:2 1|3/6:4 1|3/3:4 1|1/2:14}.
Both groups contain 7 half-turns (code \texttt{1|1/2}, Hurley symbol
\texttt{E}).
The second group, for example, is actually also a torus flip group:
$\grp X^{\mathbf{c2mm}}_{3,1} \doteq
\grp ._{3,2}$.
In this representation, it has 6 flip operations, which are
half-turns. In addition, it contains the torus translation
$R_{\pi,0}$, which is another half-turn.
This half-turn is not conjugate to the other half-turns by operations
of the group.
It forms a conjugacy class of its own,
as indicated by the code  \texttt{1*1E} in the
 Hurley pattern.
The 6 flip operations split into two
conjugacy classes of size 3,
as indicated by the code \texttt{2*3E}.
}
 
 Brown et al.\ \cite[p.~9]{BBNWZ} report that their classification,
 which is more refined than ours but in another respect coarser, since
 it does not distinguish enantiomorphic groups, was also found to be
 sufficient to characterize the crystallographic point groups uniquely
 (up to mirror congruence).

We could use the data in the Tables of \cite{BBNWZ}
to match them with our classification. The results are tabulated in
Tables \ref{tab:crystallographic}--\ref{tab:crystallographic2} in
Appendix~\ref{sec:crystallographic}.

\subsection{Computer checks}

As mentioned, the classic approach to the classification
following Goursat's method
yields
the chiral groups,
and with the exception of the toroidal groups, they are obtained
 quite painlessly.
However, the achiral groups must be found and classified
as index-2 extensions of the chiral groups.

This task has been carried out
by Du Val~\cite{duval} and Conway and Smith~\cite{CS}, but they
only gave the results.
Du Val~\cite[p.~61]{duval} explicitly lists the \OR\ elements of each
achiral group. %
Conway and Smith~\cite[Tables~4.1--4.3]{CS} provide generating elements for
each group.

A detailed derivation is not presented in the literature.  The
considerations about the extension from chiral groups to achiral ones
are only briefly sketched by Conway and Smith \cite[p.~51--52]{CS}, see
Figures~\ref{fig:p51}--\ref{fig:p52}.
Since we found this situation unsatisfactory,
 we
ran a brute-force computer check.
We generated
 all subgroups of the groups $\pm[I\times I]$,
$\pm[O\times O]$ and
$\pm[T\times T]$
and their achiral extensions. No missing groups were discovered.
More details are given below.

For the achiral extension of the subgroups of $\pm[C_n\times C_n]$,
and $\pm[D_{2n}\times D_{2n}]$, we have supplanted the classic
classification by own classification as
toroidal groups.
Nevertheless, we ran some computer checks also for these groups, see
Section~\ref{sec:compute-toroidal}.

\subsection{Checking the achiral polyhedral and axial groups}
\label{checking-achiral}
\label{achiral-polyhedral}

For each group $\pm[I\times I]$,
$\pm[O\times O]$ and
$\pm[T\times T]$ in turn,
we generate all subgroups,
We kept only those subgroups for which the left and right subgroup is
the full group $2I$, $2O$, or $2T$ respectively.
(For an achiral group, we must extend a group whose left group is
equal to its right group.)

For each obtained subgroup, we identified the possible
extending elements, using the considerations of Section~\ref{sec:achiral}.
Each achiral group was classified by
its fingerprint (the conjugacy types of its
elements), and for each class,
we managed to find geometric conjugations to show that all groups in
that class are geometrically the same.

We mention some details for the largest group $[I\times I]$.
The group $\pm[I\times I]$ was represented by its double-cover
$2I\times 2I$, and converted to a permutation group,
in order to let GAP generate
 the subgroups.
There are 19,987 subgroups in total, and they were found in about 5 minutes.
14,896 subgroups of them contain the pair $(-1,-1)$, which
is necessary to have a double cover of a rotation group in $\pm[I\times I]$,
and
only 241 of these groups have the left and right subgroups equal to
$2I$.
These represented the group
$\pm[I\times I]$ itself,
and 60 different copies of each group
$\pm\frac1{60}[I\times I]$,
$\pm\frac1{60}[I\times \bar I]$,
$+\frac1{60}[I\times I]$,
$+\frac1{60}[I\times \bar I]$.

For each of the 241 groups, we tried to extend it by an element
$*[1,c]$ in all possible ways, following
Proposition~\ref{1,c}.
Actually, it is easy to see that elements $c$ and $c'=cx$ that are
related by an element $x$ in the
kernel %
lead to the same extension, and thus
they need not be tried separately.

This leads to 361 distinct groups.
Again there are 60 representatives of each of the six achiral groups
with fraction $\frac1{60}$, plus one for the group
$\pm[I\times I]\cdot 2$ itself.

Since we searched for conjugacies in a systematic but somewhat
ad-hoc manner, it took about half a week for the computer to show that
all 60 groups in each class are geometrically the same.  With
hindsight, the multiplicity of 60 is not surprising, since there are
60 conjugacies that map the elements of $2I$ to themselves.

\subsection{Checking the toroidal groups}
\label{sec:compute-toroidal}

The toroidal groups form an infinite family, and hence we can only
generate them up to some limit.
We set the goal of checking all chiral toroidal groups 
 up to order 200 and all
 achiral groups up to order 400.
For this purpose, we generated all groups
$\pm[D_{n}\times D_{n}]\cdot 2$ (for even $n$) and
$\pm[C_{n}\times C_{n}]\cdot 2$ in the range $100<n\le 200$,
together with their subgroups.

For generating the subgroups, we
took a different approach than for the polyhedral groups: We
constructed a permutation group representation of the \emph{achiral} group
and computed all its subgroups. We took all subgroups, regardless
of whether the left and right group is the full group $C_n$ or $D_n$.
 For each
 chiral group up to order 200 and each
 achiral group up to order 400
that was generated,
we checked
 that it is conjugate to one on the
 known groups according to our classification.
 We also checked whether all known toroidal groups within these size
 bounds are found. This turned out to be the case
with a few exceptions.
The exceptions were the chiral groups
$\grp\setminus^{\mathrm{cm}}_{m,n}$,
$\grp\setminus^{\mathrm{cm}}_{n,m}$
$\grp/^{\mathrm{cm}}_{m,n}$, and
$\grp/^{\mathrm{cm}}_{n,m}$,
for 13 pairs
$(m,n)=(3,17),(3,19),\ldots
,(7,13),(9,11)$
of orders $2mn$ between 100 and 200.
The reason that these groups were missed is that they are of the form $+\frac12[D_{2m}\times C_{2n}]
\leqslant+\frac12[D_{2m}\times D_{4n}]$,
and the smallest group
$\pm[D_{n'}\times D_{n'}]\cdot 2$ that contains them has $n'=4\cdot\mathrm{lcm}(m,n)$,
which exceeds 200.

The group with the largest number of subgroups was %
$\pm[D_{192}\times D_{192}]\cdot 2$.
It has
1,361,642 subgroups. For
1,249,563 of these groups, the order was within the limits.
This computation requires a workstation with large memory, on the
order of about 100 gigabytes.
The whole computation took about 10 days.

\section{Higher dimensions}
\label{Higher dimensions}

In the classification of Theorem~\ref{classification},
there are categories that we expect in any dimension: the polyhedral groups, which are
related to the regular polytopes,
 the toroidal groups, and the axial groups, which come from direct sums
of lower-dimensional groups.
On the other hand, the tubical groups are more surprising.
They rely on the covering
$\SO(3) \times \SO(3) \stackrel{2:1\,}\longrightarrow \SO(4) $, which provides
a different product structure in terms of lower-dimensional groups
than the direct sum.

The scarcity of regular polytopes in high dimensions might
be an indication that these groups are not very exciting.
On the other hand, the root systems $E_6$, $E_7$, and $E_8$ in 6, 7, and 8
dimensions promise some richer structure in certain dimensions.

In five dimensions,
the \OP\ case has been settled by Mecchia and Zimmermann~\cite{MZimm}, see \cite[Corollary 2]{Zi}:
\begin{theorem}
  The finite subgroups of the orthogonal group $\SO(5)$ are
  \begin{enumerate}[\rm(i)]
  \item 
subgroups
of $\Orth4
\times \Orth1$ or $\Orth3 \times \Orth2$ \textup(the reducible case\textup)\textup;
\item 
  subgroups of the symmetry group
  $(\mathbb Z_2 )^5 \rtimes S_5$ of the hypercube\textup;
\item or isomorphic to $A_5$, $S_5$, $A_6$ or $S_6$. \textup(This includes
  symmetries of the simplex and its polar.\textup)
\end{enumerate}

\end{theorem}

The irreducible representations of the groups in (iii) can be looked up in
the character tables of the books on Representation Theory. It
would be interesting to know what the 5-dimensional representations
are in geometric terms (besides the symmetries of the simplex).

This theorem gives only the chiral groups,
but in odd dimensions like 5, it is in principle straightforward to
derive
the achiral groups from the chiral ones:
All one needs to know are the chiral groups and their index-2 subgroups.
See \cite[\S3.8%
]{CS} for the three-dimensional case.
Briefly, one can say
that nothing unexpected happens for the point groups in 5
dimensions.

\paragraph{Six dimensions.}

The richest part of the 4-dimensional groups were the toroidal groups,
which
have an invariant Clifford torus.
The sphere $S^5$ contains an analogous
three-dimensional %
torus
\begin{displaymath}
  x_1^2+x_2^2 =
  x_3^2+x_4^2 =
  x_5^2+x_6^2 = {1/3}
\end{displaymath}
A group that leaves this torus invariant behaves similarly to a
three-dimensional space group, involving translations, reflections,
and rotations in terms of torus
coordinates $\phi_1,\phi_2,\phi_3$.
Thus, %
the %
three-dimensional space
groups %
will make their appearance in the classification of 6-dimensional
point groups.

The situation in 4 dimensions was similar: We have
studied the toroidal groups in analogy to
the wallpaper groups (the two-dimensional space groups).
In contrast to the situation in the plane, a 6-fold rotation in 3-space is not inconsistent
with the requirement that the
lattice of translations contains a cubical lattice. Thus, we may expect that
 all of the 230 three-dimensional space
groups show up in the 6-dimensional point groups.
(In one dimension lower, we have %
another instance of this phenomenon: The frieze groups appear as the
3-dimensional axial point groups.)

Thus, a classification of the point groups in 6 dimensions will be much more
laborious than in 5 dimensions.
It has already been observed by Carl Hermann in 1952 \cite[p.~33]{Hermann1952}, in connection with the
crystallographic groups, that
``going up from an odd dimension to the next higher even one leads by far
to more surprises than the opposite case''.

\bibliographystyle{plainurl}
\markright{References}
{%
  \let\oldsection=\section
  \def\section*#1{\oldsection*#1%
    \addcontentsline {toc}{section}{\numberline {}References}%
  }
 \bibliography{main}
}  
\markboth{Laith Rastanawi and Günter Rote: 4-Dimensional Point Groups}
         {Laith Rastanawi and Günter Rote: 4-Dimensional Point Groups}

\appendix

\section{Generators for the polyhedral and axial groups}
\label{sec:polyhedral-and-axial-table}

Table~\ref{tab:polyhedral-appendix}
gives a complete summary of the polyhedral
(Table~\ref{tab:polyhedral})
and axial groups (Table~\ref{tbl:axial_full}),
following the numbering by Goursat~\cite{goursat-1889}, as extended
to the haploid groups by Du~Val~\cite{duval}, together with
a set of generators for each group.
  The axial groups can be recognized as having only two numbers
  different from 2
  in their Coxeter name. %
Our adaptations of Du Val's names was explained in
Table~\ref{tbl:axial_full} and footnote~\ref{duval-41-42} on 
p.~\pageref{duval-41-42}.
The top part contains the chiral groups (\#20--\#32)
and the bottom part the achiral ones (\#39--\#51).\footnote
{A similar table, containing some four-dimensional reflection groups
  and their subgroups,
  appears in
  Coxeter~\cite[p.~571]{semiregular2},
  with correspondences between Coxeter's own notation and Du~Val's
  names.
  The very first entry in that table, $[3,3,2]^+$, mistakenly refers to Du Val's group
  \#21 $(T/C_2;T/C_2)
=\pm\frac1{12}[T\times T] $, while it is actually
  \#26{$''$} $(O/C_1;O/C_1)''=
  +\frac1{24}[O\times \overline O]$.
  The fifth entry, $[3,3,2]$, refers to Du Val's group 
  $(O/C_1;O/C_1)^{*}$,
while it should actually be $(O/C_1;O/C_1)^{*}_-$, or more
precisely
\#44{$''$} $(O/C_1;O/C_1)^{*\prime\prime}_- =
+\frac1{24}[O\times\overline O]\cdot 2_1$.
The confusing ambiguity of Du Val's names for the groups   
44{$'$} and
44{$''$}
mentioned in the caption of Table~\ref{tbl:axial_full}
was apparently not realized by Coxeter.
  }

Where appropriate, we include
 a reference to the numbering of
crystallographic point groups according to
Brown, Bülow, Neubüser, Wondratschek, Zassenhaus (BBNWZ)  
\cite{BBNWZ}, see also Appendix~\ref{sec:crystallographic}.

\begin{table}[p]
  \centering
\setlength{\extrarowheight}{0,6pt}  
\setlength{\extrarowheight}{0,1pt}  

    \begin{tabular}{|l@{ }l@{ \ }ll@{ }r@{ \ }l|}
      \hline
      D\rlap{u Val \# \& name} &
      CS name & generators& Cox.\ name & \llap{ord}er & BBNWZ\\\hline
    20. $(T/T;T/T)$&$\pm[T\times T]$& $[i,\omega],[\omega,i]$& $[^+3,4,3^+]$&288&33/13\\
21. $(T/C_2;T/C_2)$&$\pm\frac1{12}[T\times T]$& $[\omega,-\omega],[i,i]$& $2.[3,3]^+$&24&24/02\\
21\rlap{$'$}. $(T/C_1;T/C_1)$&$+\frac1{12}[T\times T]$& $[\omega,\omega],[i,i]$& $[3,3]^+$&12&24/01\\
22. $(T/V;T/V)$&$\pm\frac1{3}[T\times T]$& $[i,1],[1,i],[\omega,\omega]$& $[^+3,3,4^+]$&96&32/16\\
23. $(O/O;T/T)$&$\pm[O\times T]$& $[i_O,\omega],[\omega,i]$& $[[^+3,4,3^+]]_L$&576&not cryst.\\
23. $(T/T;O/O)$&$\pm[T\times O]$& $[i,\omega],[\omega,i_O]$& $[[^+3,4,3^+]]_R$&576&not cryst.\\
24. $(I/I;T/T)$&$\pm[I\times T]$& $[i_I,\omega],[\omega,i]$& $[3,3,5]^+_{\frac15L}$&1440&not cryst.\\
24. $(T/T;I/I)$&$\pm[T\times I]$& $[i,\omega],[\omega,i_I]$& $[3,3,5]^+_{\frac15R}$&1440&not cryst.\\
25. $(O/O;O/O)$&$\pm[O\times O]$& $[i_O,\omega],[\omega,i_O]$& $[[3,4,3]]^+$&1152&not cryst.\\
26. $(O/C_2;O/C_2)$&$\pm\frac1{24}[O\times O]$& $[\omega,-\omega],[i_O,i_O]$& $2.[3,4]^+$&48&25/06\\
26\rlap{$'$}. $(O/C_1;O/C_1)'$&$+\frac1{24}[O\times O]$& $[\omega,\omega],[i_O,i_O]$& $[3,4]^+$&24&25/03\\
26\rlap{$''$}. $(O/C_1;O/C_1)''$&$+\frac1{24}[O\times \overline O]$& $[\omega,\omega],[i_O,-i_O]$& $[2,3,3]^+$&24&25/04\\
27. $(O/V;O/V)$&$\pm\frac1{6}[O\times O]$& $[i,j],[\omega,\omega],[i_O,i_O]$& $[3,3,4]^+$&192&32/20\\
28. $(O/T;O/T)$&$\pm\frac1{2}[O\times O]$& $[\omega,1],[1,\omega],[i_O,i_O]$& $[3,4,3]^+$&576&33/15\\
29. $(I/I;O/O)$&$\pm[I\times O]$& $[i_I,\omega],[\omega,i_O]$& $[[3,3,5]^+_{\frac15L}]$&2880&not cryst.\\
29. $(O/O;I/I)$&$\pm[O\times I]$& $[i_O,\omega],[\omega,i_I]$& $[[3,3,5]^+_{\frac15R}]$&2880&not cryst.\\
30. $(I/I;I/I)$&$\pm[I\times I]$& $[i_I,\omega],[\omega,i_I]$& $[3,3,5]^+$&7200&not cryst.\\
31. $(I/C_2;I/C_2)$&$\pm\frac1{60}[I\times I]$& $[\omega,\omega],[i_I,-i_I]$& $2.[3,5]^+$&120&not cryst.\\
31\rlap{$'$}. $(I/C_1;I/C_1)$&$+\frac1{60}[I\times I]$& $[\omega,\omega],[i_I,i_I]$& $[3,5]^+$&60&not cryst.\\
32. $(I^\dag/C_2;I/C_2)^{\dag}$&& $[\omega,\omega],[i_I,-i_I^\dag]$&\smash{\lower 1,4ex\hbox{\llap{$\biggr\}$ }$[[3,3,3]]^+$}}&\smash{\lower 1,4ex\hbox{120}}&\smash{\lower 1,4ex\hbox{31/06}}\\
&$\pm\frac1{60}[I\times \overline I]$& $[\omega,\omega],[i_I,-i'_I]$&&&\\
32\rlap{$'$}. $(I^\dag/C_1;I/C_1)^{\dag}$&& $[\omega,\omega],[i_I,i_I^\dag]$&\smash{\lower 1,4ex\hbox{\llap{$\biggr\}$ }$[3,3,3]^+$}}&\smash{\lower 1,4ex\hbox{60}}&\smash{\lower 1,4ex\hbox{31/03}}\\
&$+\frac1{60}[I\times \overline I]$& $[\omega,\omega],[i_I,i'_I]$&&&\\
[2pt]\hline
39. $(T/C_2;T/C_2)^*_c$&$\pm\frac1{12}[T\times T]\cdot 2$& $[\omega,-\omega],{*}[i,-i]$& $2.[^+3,4]$&48&25/05\\
39\rlap{$'$}. $(T/C_1;T/C_1)^*_c$&$+\frac1{12}[T\times T]\cdot 2_3$& $[\omega,\omega],{*}[i,i]$& $[^+3,4]$&24&25/02\\
39\rlap{$'$}. $(T/C_1;T/C_1)^*_{c-}$&$+\frac1{12}[T\times T]\cdot 2_1$& $[\omega,\omega],{*}[i,-i]$& $[^+3,4]^\circ$&24&25/01\\
40. $(T/C_2;T/C_2)^*$&& $[\omega,-\omega],{*}[i_O,-i_O]$&\smash{\lower 1,4ex\hbox{\llap{$\biggr\}$ }$2.[3,3]$}}&\smash{\lower 1,4ex\hbox{48}}&\smash{\lower 1,4ex\hbox{24/05}}\\
&$\pm\frac1{12}[T\times \overline T]\cdot 2$& $[\omega,-\overline \omega],{*}[i,-i]$&&&\\
40\rlap{$'$}. $(T/C_1;T/C_1)^* $&& $[\omega,\omega],{*}[i_O,i_O]$&\smash{\lower 1,4ex\hbox{\llap{$\biggr\}$ }$[3,3]$}}&\smash{\lower 1,4ex\hbox{24}}&\smash{\lower 1,4ex\hbox{24/04}}\\
&$+\frac1{12}[T\times \overline T]\cdot 2_1$& $[\omega,\overline \omega],{*}[i,i]$&&&\\
40\rlap{$'$}. $(T/C_1;T/C_1)^*_-$&& $[\omega,\omega],{*}[i_O,-i_O]$&\smash{\lower 1,4ex\hbox{\llap{$\biggr\}$ }$[3,3]^\circ$}}&\smash{\lower 1,4ex\hbox{24}}&\smash{\lower 1,4ex\hbox{24/03}}\\
&$+\frac1{12}[T\times \overline T]\cdot 2_3$& $[\omega,\overline \omega],{*}[i,-i]$&&&\\
41. $(T/V;T/V)^*$&$\pm\frac1{3}[T\times T]\cdot 2$& ${*}[i,1],[\omega,\omega]$& $[^+3,3,4]$&192&32/18\\
42. $(T/V;T/V)^*_-$&$\pm\frac1{3}[T\times \overline T]\cdot 2$& ${*}[i,1],[\omega,\overline \omega]$& $[3,3,4^+]$&192&32/19\\
43. $(T/T;T/T)^*$&$\pm[T\times T]\cdot 2$& $[i,\omega],{*}[\omega,i]$& $[3,4,3^+]$&576&33/14\\
44. $(O/C_2;O/C_2)^*$&$\pm\frac1{24}[O\times O]\cdot 2$& $[\omega,-\omega],[i_O,i_O],-{*}$& $2.[3,4]$&96&25/11\\
44\rlap{$'$}. $(O/C_1;O/C_1)^{*\prime}$&$+\frac1{24}[O\times O]\cdot 2_3$& $[\omega,\omega],[i_O,i_O],{*}$& $[3,4]$&48&25/10\\
44\rlap{$'$}. $(O/C_1;O/C_1)^{*\prime}_-$&$+\frac1{24}[O\times O]\cdot 2_1$& $[\omega,\omega],[i_O,i_O],-{*}$& $[3,4]^\circ$&48&25/07\\
44\rlap{$''$}. $(O/C_1;O/C_1)^{*\prime\prime}$&$+\frac1{24}[O\times \overline O]\cdot 2_3$& $[\omega,\omega],[i_O,-i_O],{*}$& $[2,3,3]^\circ$&48&25/09\\
44\rlap{$''$}. $(O/C_1;O/C_1)^{*\prime\prime}_-$&$+\frac1{24}[O\times \overline O]\cdot 2_1$& $[\omega,\omega],[i_O,-i_O],-{*}$& $[2,3,3]$&48&25/08\\
45. $(O/T;O/T)^*$&$\pm\frac1{2}[O\times O]\cdot 2$& ${*}[\omega,1],[i_O,i_O]$& $[3,4,3]$&1152&33/16\\
46. $(O/T;O/T)^*_-$&$\pm\frac1{2}[O\times O]\cdot \bar2$& $[\omega,1],{*}[1,i_O]$& $[[3,4,3]^+]$&1152&not cryst.\\
47. $(O/V;O/V)^*$&$\pm\frac1{6}[O\times O]\cdot 2$& ${*}[i\omega,\omega],[i_O,i_O]$& $[3,3,4]$&384&32/21\\
48. $(O/O;O/O)^*$&$\pm[O\times O]\cdot 2$& ${*}[1,\omega],[\omega,i_O]$& $[[3,4,3]]$&2304&not cryst.\\
49. $(I/C_2;I/C_2)^*$&$\pm\frac1{60}[I\times I]\cdot 2$& $[\omega,-\omega],{*}[i_I,-i_I]$& $2.[3,5]$&240&not cryst.\\
49\rlap{$'$}. $(I/C_1;I/C_1)^*$&$+\frac1{60}[I\times I]\cdot 2_3$& $[\omega,\omega],{*}[i_I,i_I]$& $[3,5]$&120&not cryst.\\
49\rlap{$'$}. $(I/C_1;I/C_1)^*_-$&$+\frac1{60}[I\times I]\cdot 2_1$& $[\omega,\omega],{*}[i_I,-i_I]$& $[3,5]^\circ$&120&not cryst.\\
50. $(I/I;I/I)^*$&$\pm[I\times I]\cdot 2$& $[i_I,\omega],[\omega,i_I],{*}$& $[3,3,5]$&14400&not cryst.\\
51. $(I^\dag/C_2;I/C_2)^{\dag*}$&& $[\omega,-\omega],{*}[i_I i_O i,i_I^\dag i_O i]$&\smash{\lower 1,4ex\hbox{\llap{$\biggr\}$ }$[[3,3,3]]$}}&\smash{\lower 1,4ex\hbox{240}}&\smash{\lower 1,4ex\hbox{31/07}}\\
&$\pm\frac1{60}[I\times \overline I]\cdot 2$& $[\omega,-\omega],{*}[i_I,i'_I]$&&&\\
51\rlap{$'$}. $(I^\dag/C_1;I/C_1)^{\dag*}$&& $[\omega,\omega],{*}[i_I i_O i,i_I^\dag i_O i]$&\smash{\lower 1,4ex\hbox{\llap{$\biggr\}$ }$[3,3,3]$}}&\smash{\lower 1,4ex\hbox{120}}&\smash{\lower 1,4ex\hbox{31/05}}\\
&$+\frac1{60}[I\times \overline I]\cdot 2_1$& $[\omega,\omega],[i_I,i'_I],-{*}$&&&\\
51\rlap{$'$}. $(I^\dag/C_1;I/C_1)^{\dag*}_-$&& $[\omega,\omega],{*}[i_I i_O i,-i_I^\dag i_O i]$&\smash{\lower 1,4ex\hbox{\llap{$\biggr\}$ }$[[3,3,3]^+]$}}&\smash{\lower 1,4ex\hbox{120}}&\smash{\lower 1,4ex\hbox{31/04}}\\
&$+\frac1{60}[I\times \overline I]\cdot 2_3$& $[\omega,\omega],{*}[i_I,i'_I]$&&&\\
[2pt]\hline
    \end{tabular}

\caption[The 46 polyhedral and axial groups with generators]
{Polyhedral and axial groups with generators} %
  \label{tab:polyhedral-appendix}
\end{table}

In addition to the quaternions defined in
\eqref{eq:defining_quaternions}
in Section~\ref{sec:quaternion-groups},
 the following elements are used for generating the groups:
\begin{align}
  \nonumber
    \bar\w &= \tfrac12(-1-i-j-k) &&\text{(order 3)}\\
    i_I^\dag &= \tfrac12\bigl(i + \tfrac{-\sqrt5-1}2 j +
    \tfrac{-\sqrt5+1}2k\bigr) \label{iIdag}
     &&\text{(order 4)}\\
  i_I' &= \tfrac12\bigl( -\tfrac{\sqrt5-1}2 i -  \tfrac{\sqrt5+1}2 j + k\bigr)  
     &&\text{(order 4)} \label{iI'}
\end{align}
$\bar \w$ is simply the conjugate quaternion of $\w$.
We tried to reduce the number of generators by trial and error,
confirming by computer whether the generated groups did not change.

For a few groups, the groups given by Conway and Smith are not
identical to the groups of Du Val, and our table
lists both possibilities.

Conway and Smith \cite[Tables 4.2--4.3]{CS} specified the five groups
of type $[I\times \bar I]$
(\#32, \#32$'$ and \#51--\#51$''$)
by the generating set
``$[\omega,\omega],[i_I,\pm i_I']$'',
possibly extended by $*$ or $-{*}$ for the achiral groups,
but they did not define what $i_I'$ is.\footnote
{Five years later, the tables were almost literally reproduced in
  another book~\cite[Chapter~26]{Symmetries}, still without a
  definition of $i_I'$.
}
We tried all 120 elements of $2I$, and it turned out that
\eqref{iI'} is the
only value that works in this way. We don't see %
how we could have
predicted 
precisely this element, and we have no explanation for it.

Du Val \cite{duval}, on the other hand,
specifies generators
for these five groups
in terms of
the quaternion $i_I^\dag$ defined in~\eqref{iIdag}, which is obtained
by flipping the sign of $\sqrt5$ in the
expression for $i_I
=\tfrac12(i + \tfrac{\sqrt5-1}2 j +   \tfrac{\sqrt5+1}2
        k)
$.
This alternative choice generates a %
group $2I^\dag$ that is
different from~$2I$.
With this setup, it is not possible to use
the simple extending elements $*$ and $-{*}$ for the three achiral extensions \#51--\#51$''$:
For example, the square of the element ${*}[i_I^\dag,i_I]$ is
$[i_I i_I^\dag,i^\dag_Ii_I]$ with
$i_Ii^\dag_I = \frac14 + \frac{\sqrt5}4(i+j-k)$, and this element is
in neither of the groups $2I$ or~$2I^\dag$.
Du Val \cite[p.~55--56]{duval} gives a thorough and transparent
exposition of these groups and explains why they represent
the symmetries of the 4-simplex.

For the axial groups of type $\frac1{12}[T\times \bar T]$ (\#40 and
\#40$'$), the natural
generators from an algebraic viewpoint involve the quaternion $\bar
\omega$, and these were chosen by Conway and Smith. However, the axis
that is kept invariant by the groups is then spanned by the quaternion $j-k$.
With $*[i_O,\pm i_O]$ as the \OR\ generator,
the invariant axis becomes the real axis, and only in this representation,
the groups are subgroups of the larger axial group
 $\pm\frac1{24}[O\times O]$ (\#44).

\section{Orbit polytopes for tubical groups with special starting points}
\label{sec:special_starting_points}

We show polar
orbit polytopes for the tubical groups of cyclic type with all choices
of special starting points.

Each subsection considers a left tubical group $G$ together with a representative $f$-fold rotation center $p$ of $G^h$, corresponding to an entry in 
Table~\ref{tbl:tubical_supergroups}.
The particular data are given in the caption.
In addition, we indicate the subgroup $H$ of $G$
of elements that preserve $K_p$.
An \emph{alternate group} refers to an index-2 dihedral-type
supergroup of $G$ that, for an appropriate starting point on~$K_p$,
produces the same orbit as $G$. %

Two of these groups were already illustrated in the main text
(Figures~\ref{fig:IxCn_5fold}~and~\ref{fig:hOxC2n_4fold}),
and we 
follow the same conventions as in these figures:
On the top left, we show the $G^h$-orbit polytope of $p$,
and on the top right the spherical Voronoi diagram of that orbit.
Then we show the cells of the polar $G$-orbit polytopes of 
a starting point on $K_p$, for different values of $n$,
in increasing order of the size of the orbit.
For each cell, we indicate the values of $n$, and in
addition, the counterclockwise angle (as seen from the top) by which the
group rotates the cell as it proceeds to the next cell
above.
A blue vertical line
indicates the cell axis, the direction towards the next cell along~$K_p$.
For small values of $n$, this axis sometimes exits through a vertex or
an edge of the cell, but for large enough $n$ it goes through the top face
where the next cell is attached.

When the same orbit arises for several values of $n$, then the specified rotation
angle is the unique valid angle only for the smallest value $n_0$ that
is given. For a larger value $n=n_0f$, this can be combined with
arbitrary multiples of an $f$-fold rotation. For example, in
Figure~\ref{fig:IxCn_3fold}, we have the same cell for $n=5$ and
$n=15$. The specified rotation angle $(\frac13+\frac1{30})\cdot2\pi$ is the
unique valid angle between consecutive cells in the group $\pm[I\times
C_5]$,
but in the larger group $\pm[I\times C_{15}]$, it can be combined with
all multiples of $\frac 23\pi$. That is, all three rotation angles
$\frac1{15}\pi$,
$(\frac23+\frac1{15})\pi$, and
$(\frac43+\frac1{15})\pi$ are valid.
In some cases, such as $n=18$, the angle is never unique, and this is
indicated by a free parameter $k$ in the angle specification, which
can take any integer value.

By observing the rotation angles for the successive cells in the
figures, one can recognize the pattern that they follow.
\newpage

\subsection{\texorpdfstring{$\pm[I\times C_n]$}{+-[IxCn]}}
\subsubsection{\texorpdfstring{$\pm[I\times C_n]$}{+-[IxCn]}, 3-fold rotation center}
\begin{minipage}{0.5\textwidth}
\centering
\includegraphics[scale=1]{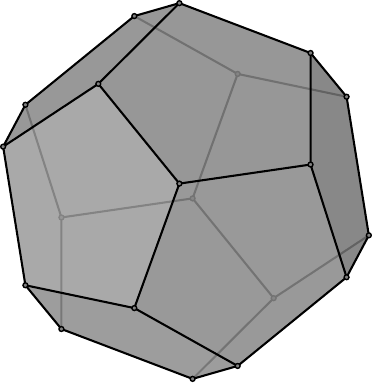}
\end{minipage}%
\begin{minipage}{0.5\textwidth}
\centering
\includegraphics[scale=1]{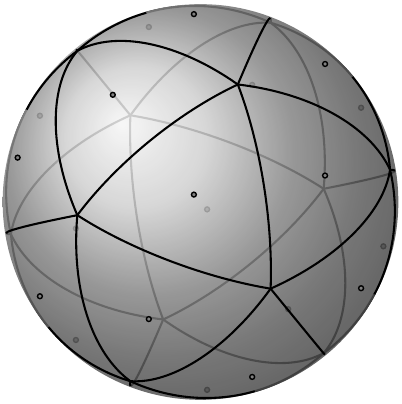}
\end{minipage}
\vskip 5pt plus 0.5fill
\begingroup
\setlength{\tabcolsep}{0pt}
\noindent\begin{tabular}{
>{\centering\arraybackslash}m{0.33\textwidth}
>{\centering\arraybackslash}m{0.33\textwidth}
>{\centering\arraybackslash}m{0.33\textwidth}}
\href{https://www.inf.fu-berlin.de/inst/ag-ti/software/DiscreteHopfFibration/gallery.html?f=IxCn/3/120cells_20tubes}{\vbox{\vskip-3mm \includegraphics[scale=1.1]{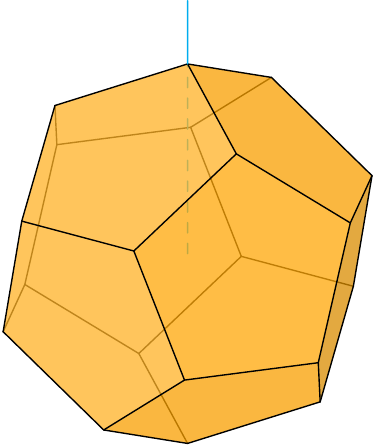}}}
&
\href{https://www.inf.fu-berlin.de/inst/ag-ti/software/DiscreteHopfFibration/gallery.html?f=IxCn/3/240cells_20tubes}{\includegraphics[scale=1.1]{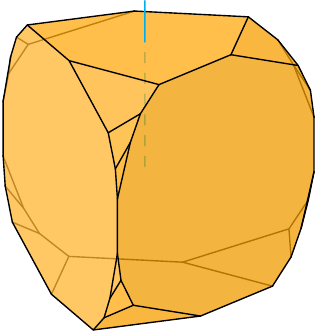}}
&
\href{https://www.inf.fu-berlin.de/inst/ag-ti/software/DiscreteHopfFibration/gallery.html?f=IxCn/3/360cells_20tubes}{\includegraphics[scale=1.1]{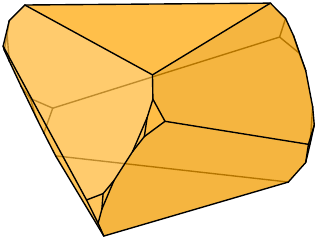}}
\\[1.5mm]
$n = 1, 3$\break
$(\frac{2}{3} + \frac{1}{6})\cdot 2\pi$
&$n = 2, 6$\break
$(\frac{1}{3} + \frac{1}{12})\cdot 2\pi$
&$n = 9$\break
$(\frac{k}{3} + \frac{1}{18})\cdot 2\pi$
\end{tabular}\nobreak

\vfill\nobreak
\noindent\begin{tabular}{
>{\centering\arraybackslash}m{0.33\textwidth}
>{\centering\arraybackslash}m{0.33\textwidth}
>{\centering\arraybackslash}m{0.33\textwidth}}
\href{https://www.inf.fu-berlin.de/inst/ag-ti/software/DiscreteHopfFibration/gallery.html?f=IxCn/3/480cells_20tubes}{\includegraphics[scale=1.1]{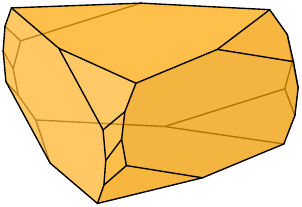}}
&
\href{https://www.inf.fu-berlin.de/inst/ag-ti/software/DiscreteHopfFibration/gallery.html?f=IxCn/3/600cells_20tubes}{\includegraphics[scale=1.1]{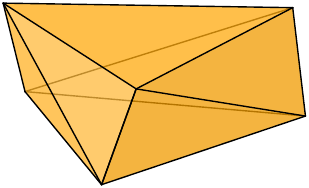}}
&
\href{https://www.inf.fu-berlin.de/inst/ag-ti/software/DiscreteHopfFibration/gallery.html?f=IxCn/3/720cells_20tubes}{\includegraphics[scale=1.1]{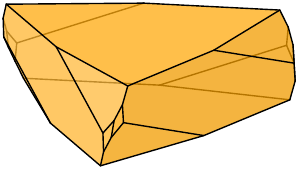}}
\\[1.5mm]
$n = 4, 12$\break
$(\frac{2}{3} + \frac{1}{24})\cdot 2\pi$
&$n = 5, 15$\break
$(\frac{1}{3} + \frac{1}{30})\cdot 2\pi$
&$n = 18$\break
$(\frac{k}{3} + \frac{1}{36})\cdot 2\pi$
\end{tabular}\nobreak

\vfill\nobreak
\noindent\begin{tabular}{
>{\centering\arraybackslash}m{0.33\textwidth}
>{\centering\arraybackslash}m{0.33\textwidth}
>{\centering\arraybackslash}m{0.33\textwidth}}
\href{https://www.inf.fu-berlin.de/inst/ag-ti/software/DiscreteHopfFibration/gallery.html?f=IxCn/3/840cells_20tubes}{\includegraphics[scale=1.1]{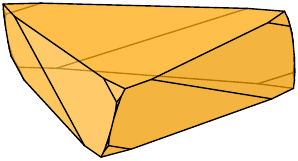}}
&
\href{https://www.inf.fu-berlin.de/inst/ag-ti/software/DiscreteHopfFibration/gallery.html?f=IxCn/3/960cells_20tubes}{\includegraphics[scale=1.1]{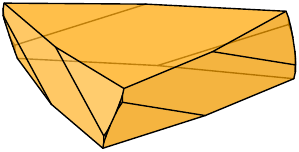}}
&
\href{https://www.inf.fu-berlin.de/inst/ag-ti/software/DiscreteHopfFibration/gallery.html?f=IxCn/3/1080cells_20tubes}{\includegraphics[scale=1.1]{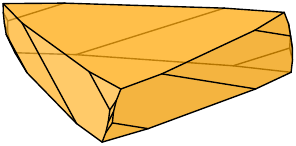}}
\\[1.5mm]
$n = 7, 21$\break
$(\frac{2}{3} + \frac{1}{42})\cdot 2\pi$
&$n = 8, 24$\break
$(\frac{1}{3} + \frac{1}{48})\cdot 2\pi$
&$n = 27$\break
$(\frac{k}{3} + \frac{1}{54})\cdot 2\pi$
\end{tabular}\nobreak

\vfill\nobreak
\noindent\begin{tabular}{
>{\centering\arraybackslash}m{0.33\textwidth}
>{\centering\arraybackslash}m{0.33\textwidth}
>{\centering\arraybackslash}m{0.33\textwidth}}
\href{https://www.inf.fu-berlin.de/inst/ag-ti/software/DiscreteHopfFibration/gallery.html?f=IxCn/3/1200cells_20tubes}{\includegraphics[scale=1.1]{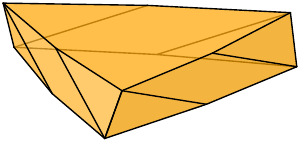}}
&
\href{https://www.inf.fu-berlin.de/inst/ag-ti/software/DiscreteHopfFibration/gallery.html?f=IxCn/3/1320cells_20tubes}{\includegraphics[scale=1.1]{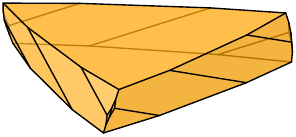}}
&
\href{https://www.inf.fu-berlin.de/inst/ag-ti/software/DiscreteHopfFibration/gallery.html?f=IxCn/3/1440cells_20tubes}{\includegraphics[scale=1.1]{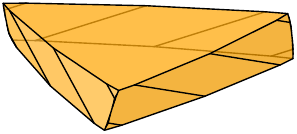}}
\\[1.5mm]
$n = 10, 30$\break
$(\frac{2}{3} + \frac{1}{60})\cdot 2\pi$
&$n = 11, 33$\break
$(\frac{1}{3} + \frac{1}{66})\cdot 2\pi$
&$n = 36$\break
$(\frac{k}{3} + \frac{1}{72})\cdot 2\pi$
\end{tabular}\nobreak

\vfill\nobreak
\noindent\begin{tabular}{
>{\centering\arraybackslash}m{0.33\textwidth}
>{\centering\arraybackslash}m{0.33\textwidth}
>{\centering\arraybackslash}m{0.33\textwidth}}
\href{https://www.inf.fu-berlin.de/inst/ag-ti/software/DiscreteHopfFibration/gallery.html?f=IxCn/3/1560cells_20tubes}{\includegraphics[scale=1.1]{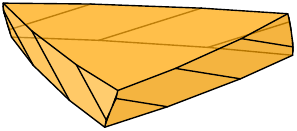}}
&
\href{https://www.inf.fu-berlin.de/inst/ag-ti/software/DiscreteHopfFibration/gallery.html?f=IxCn/3/1680cells_20tubes}{\includegraphics[scale=1.1]{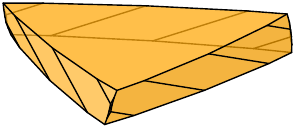}}
&
\href{https://www.inf.fu-berlin.de/inst/ag-ti/software/DiscreteHopfFibration/gallery.html?f=IxCn/3/1800cells_20tubes}{\includegraphics[scale=1.1]{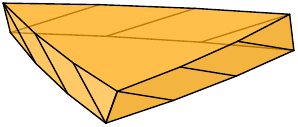}}
\\[1.5mm]
$n = 13, 39$\break
$(\frac{2}{3} + \frac{1}{78})\cdot 2\pi$
&$n = 14, 42$\break
$(\frac{1}{3} + \frac{1}{84})\cdot 2\pi$
&$n = 45$\break
$(\frac{k}{3} + \frac{1}{90})\cdot 2\pi$
\end{tabular}\nobreak

\begin{figure}[H]
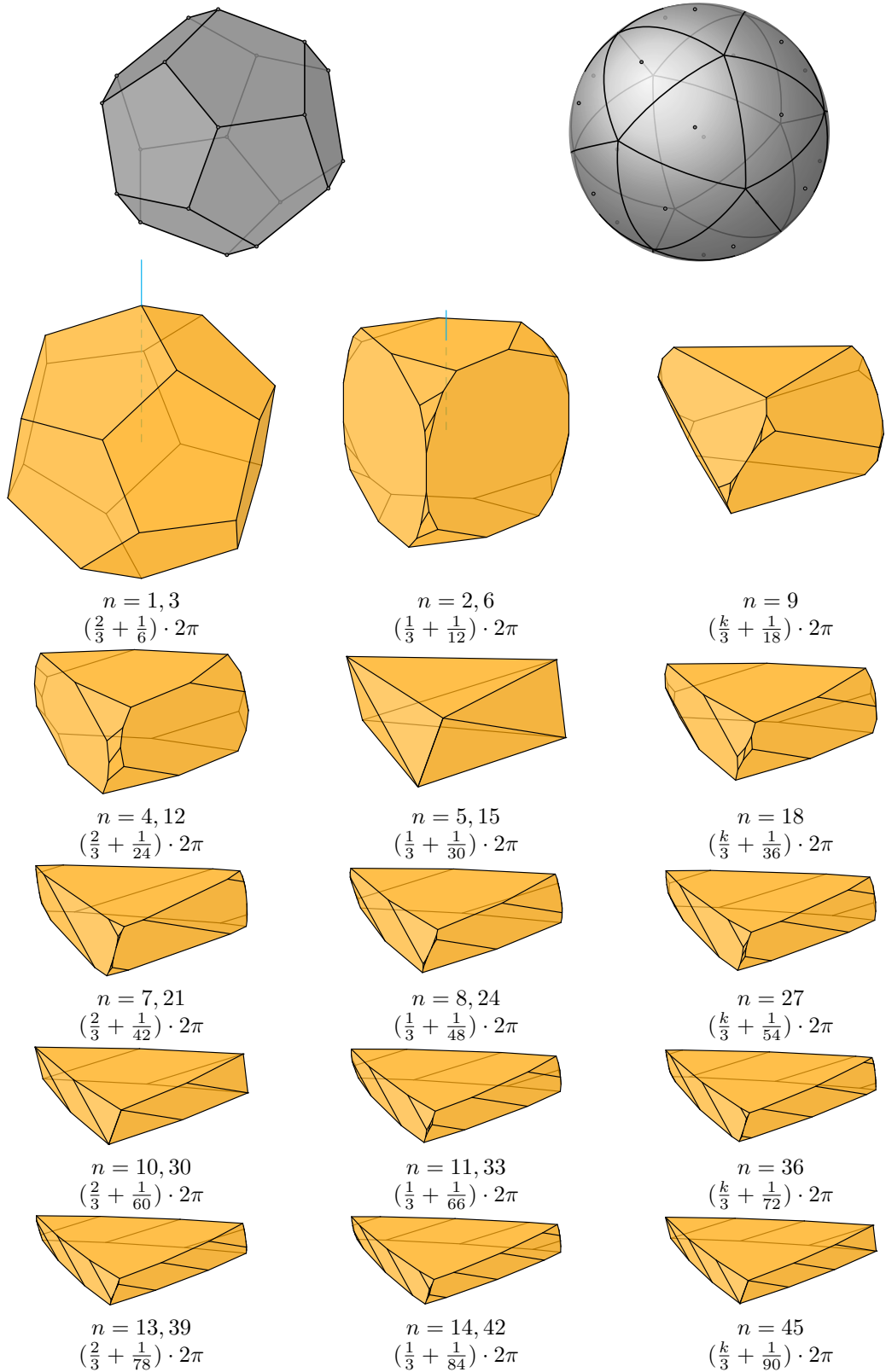

\caption{
$G=\pm[I\times C_n]$,
$G^h={+I}$,
3-fold rotation center
$p=\frac1{\sqrt3}(-1, -1, -1)$.
$H=\langle [-\omega, 1], [1, e_n] \rangle$.
20 tubes,
each with $\mathrm{lcm}(2n, 6)$ cells.
Alternate group: $\pm[I\times D_{2n}]$.
When $n=1$ or $n=3$,
the cells of a tube are disconnected from each other.
}
\label{fig:IxCn_3fold}
\end{figure}\endgroup
\newpage

\subsubsection{\texorpdfstring{$\pm[I\times C_n]$}{+-[IxCn]}, 2-fold rotation center}
\hrule height 0pt
\vskip -1mm  \vfill
\hrule height 0pt \nobreak
\begin{minipage}{0.5\textwidth}
\centering
\includegraphics[scale=1]{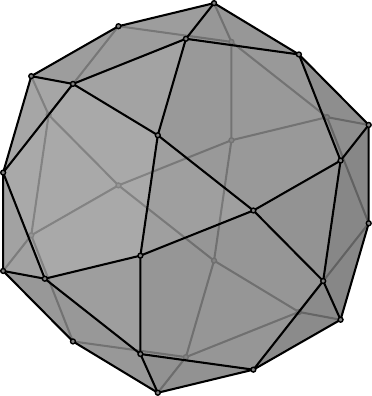}
\end{minipage}%
\begin{minipage}{0.5\textwidth}
\centering
\includegraphics[scale=1]{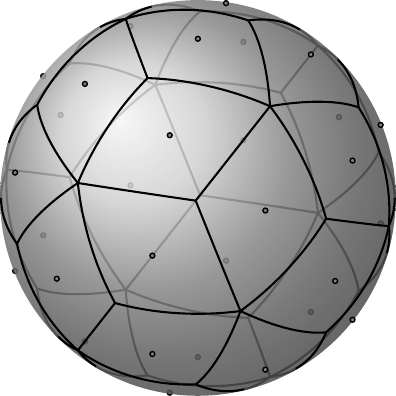}
\end{minipage}
\vskip 5pt plus 0.5fill
\begingroup
\setlength{\tabcolsep}{0pt}
\noindent\begin{tabular}{
>{\centering\arraybackslash}m{0.33\textwidth}
>{\centering\arraybackslash}m{0.33\textwidth}
>{\centering\arraybackslash}m{0.33\textwidth}}
\href{https://www.inf.fu-berlin.de/inst/ag-ti/software/DiscreteHopfFibration/gallery.html?f=IxCn/2/120cells_30tubes}{\vbox{\vskip-3mm \includegraphics[scale=1.1]{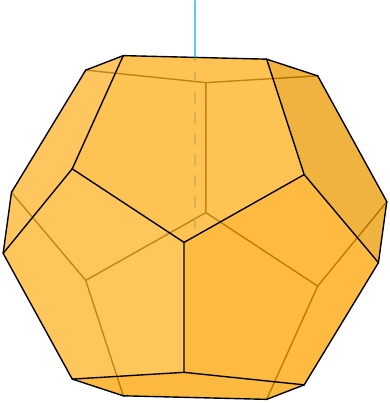}}}
&
\href{https://www.inf.fu-berlin.de/inst/ag-ti/software/DiscreteHopfFibration/gallery.html?f=IxCn/2/240cells_30tubes}{\includegraphics[scale=1.1]{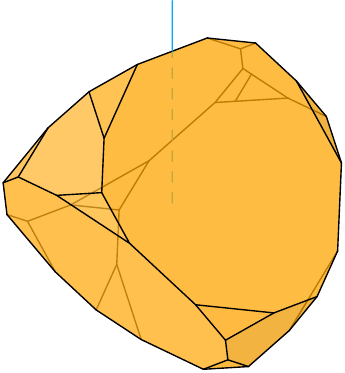}}
&
\href{https://www.inf.fu-berlin.de/inst/ag-ti/software/DiscreteHopfFibration/gallery.html?f=IxCn/2/360cells_30tubes}{\includegraphics[scale=1.1]{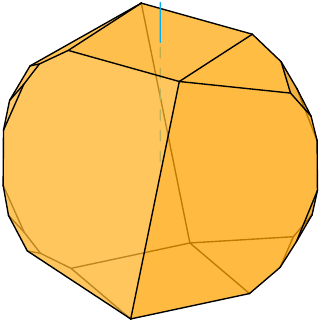}}
\\[1.5mm]
$n = 1, 2$\break
$(\frac{1}{2} + \frac{1}{4})\cdot 2\pi$
&$n = 4$\break
$(\frac{k}{2} + \frac{1}{8})\cdot 2\pi$
&$n = 3, 6$\break
$(\frac{1}{2} + \frac{1}{12})\cdot 2\pi$
\end{tabular}\nobreak

\vfill\nobreak
\noindent\begin{tabular}{
>{\centering\arraybackslash}m{0.33\textwidth}
>{\centering\arraybackslash}m{0.33\textwidth}
>{\centering\arraybackslash}m{0.33\textwidth}}
\href{https://www.inf.fu-berlin.de/inst/ag-ti/software/DiscreteHopfFibration/gallery.html?f=IxCn/2/480cells_30tubes}{\includegraphics[scale=1.1]{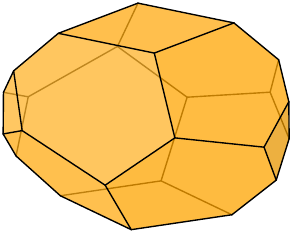}}
&
\href{https://www.inf.fu-berlin.de/inst/ag-ti/software/DiscreteHopfFibration/gallery.html?f=IxCn/2/600cells_30tubes}{\includegraphics[scale=1.1]{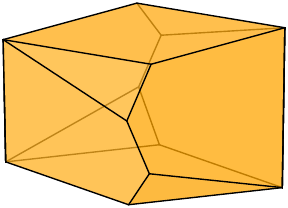}}
&
\href{https://www.inf.fu-berlin.de/inst/ag-ti/software/DiscreteHopfFibration/gallery.html?f=IxCn/2/720cells_30tubes}{\includegraphics[scale=1.1]{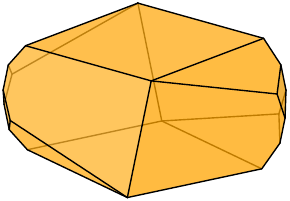}}
\\[1.5mm]
$n = 8$\break
$(\frac{k}{2} + \frac{1}{16})\cdot 2\pi$
&$n = 5, 10$\break
$(\frac{1}{2} + \frac{1}{20})\cdot 2\pi$
&$n = 12$\break
$(\frac{k}{2} + \frac{1}{24})\cdot 2\pi$
\end{tabular}\nobreak

\vfill\nobreak
\noindent\begin{tabular}{
>{\centering\arraybackslash}m{0.33\textwidth}
>{\centering\arraybackslash}m{0.33\textwidth}
>{\centering\arraybackslash}m{0.33\textwidth}}
\href{https://www.inf.fu-berlin.de/inst/ag-ti/software/DiscreteHopfFibration/gallery.html?f=IxCn/2/840cells_30tubes}{\includegraphics[scale=1.1]{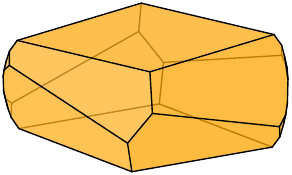}}
&
\href{https://www.inf.fu-berlin.de/inst/ag-ti/software/DiscreteHopfFibration/gallery.html?f=IxCn/2/960cells_30tubes}{\includegraphics[scale=1.1]{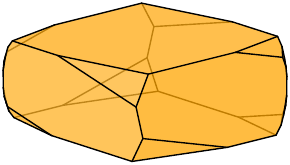}}
&
\href{https://www.inf.fu-berlin.de/inst/ag-ti/software/DiscreteHopfFibration/gallery.html?f=IxCn/2/1080cells_30tubes}{\includegraphics[scale=1.1]{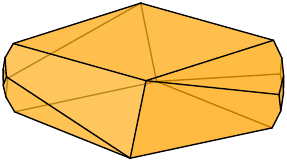}}
\\[1.5mm]
$n = 7, 14$\break
$(\frac{1}{2} + \frac{1}{28})\cdot 2\pi$
&$n = 16$\break
$(\frac{k}{2} + \frac{1}{32})\cdot 2\pi$
&$n = 9, 18$\break
$(\frac{1}{2} + \frac{1}{36})\cdot 2\pi$
\end{tabular}\nobreak

\vfill\nobreak
\noindent\begin{tabular}{
>{\centering\arraybackslash}m{0.33\textwidth}
>{\centering\arraybackslash}m{0.33\textwidth}
>{\centering\arraybackslash}m{0.33\textwidth}}
\href{https://www.inf.fu-berlin.de/inst/ag-ti/software/DiscreteHopfFibration/gallery.html?f=IxCn/2/1200cells_30tubes}{\includegraphics[scale=1.1]{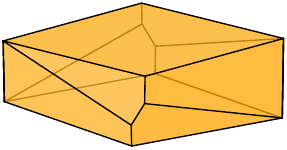}}
&
\href{https://www.inf.fu-berlin.de/inst/ag-ti/software/DiscreteHopfFibration/gallery.html?f=IxCn/2/1320cells_30tubes}{\includegraphics[scale=1.1]{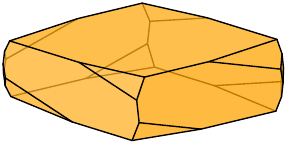}}
&
\href{https://www.inf.fu-berlin.de/inst/ag-ti/software/DiscreteHopfFibration/gallery.html?f=IxCn/2/1440cells_30tubes}{\includegraphics[scale=1.1]{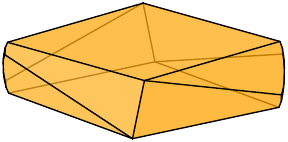}}
\\[1.5mm]
$n = 20$\break
$(\frac{k}{2} + \frac{1}{40})\cdot 2\pi$
&$n = 11, 22$\break
$(\frac{1}{2} + \frac{1}{44})\cdot 2\pi$
&$n = 24$\break
$(\frac{k}{2} + \frac{1}{48})\cdot 2\pi$
\end{tabular}\nobreak

\vfill\nobreak
\noindent\begin{tabular}{
>{\centering\arraybackslash}m{0.33\textwidth}
>{\centering\arraybackslash}m{0.33\textwidth}
>{\centering\arraybackslash}m{0.33\textwidth}}
\href{https://www.inf.fu-berlin.de/inst/ag-ti/software/DiscreteHopfFibration/gallery.html?f=IxCn/2/1560cells_30tubes}{\includegraphics[scale=1.1]{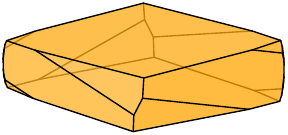}}
&
\href{https://www.inf.fu-berlin.de/inst/ag-ti/software/DiscreteHopfFibration/gallery.html?f=IxCn/2/1680cells_30tubes}{\includegraphics[scale=1.1]{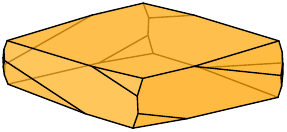}}
&
\href{https://www.inf.fu-berlin.de/inst/ag-ti/software/DiscreteHopfFibration/gallery.html?f=IxCn/2/1800cells_30tubes}{\includegraphics[scale=1.1]{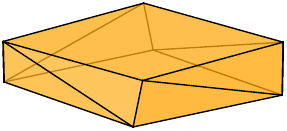}}
\\[1.5mm]
$n = 13, 26$\break
$(\frac{1}{2} + \frac{1}{52})\cdot 2\pi$
&$n = 28$\break
$(\frac{k}{2} + \frac{1}{56})\cdot 2\pi$
&$n = 15, 30$\break
$(\frac{1}{2} + \frac{1}{60})\cdot 2\pi$
\end{tabular}\nobreak

\begin{figure}[H]
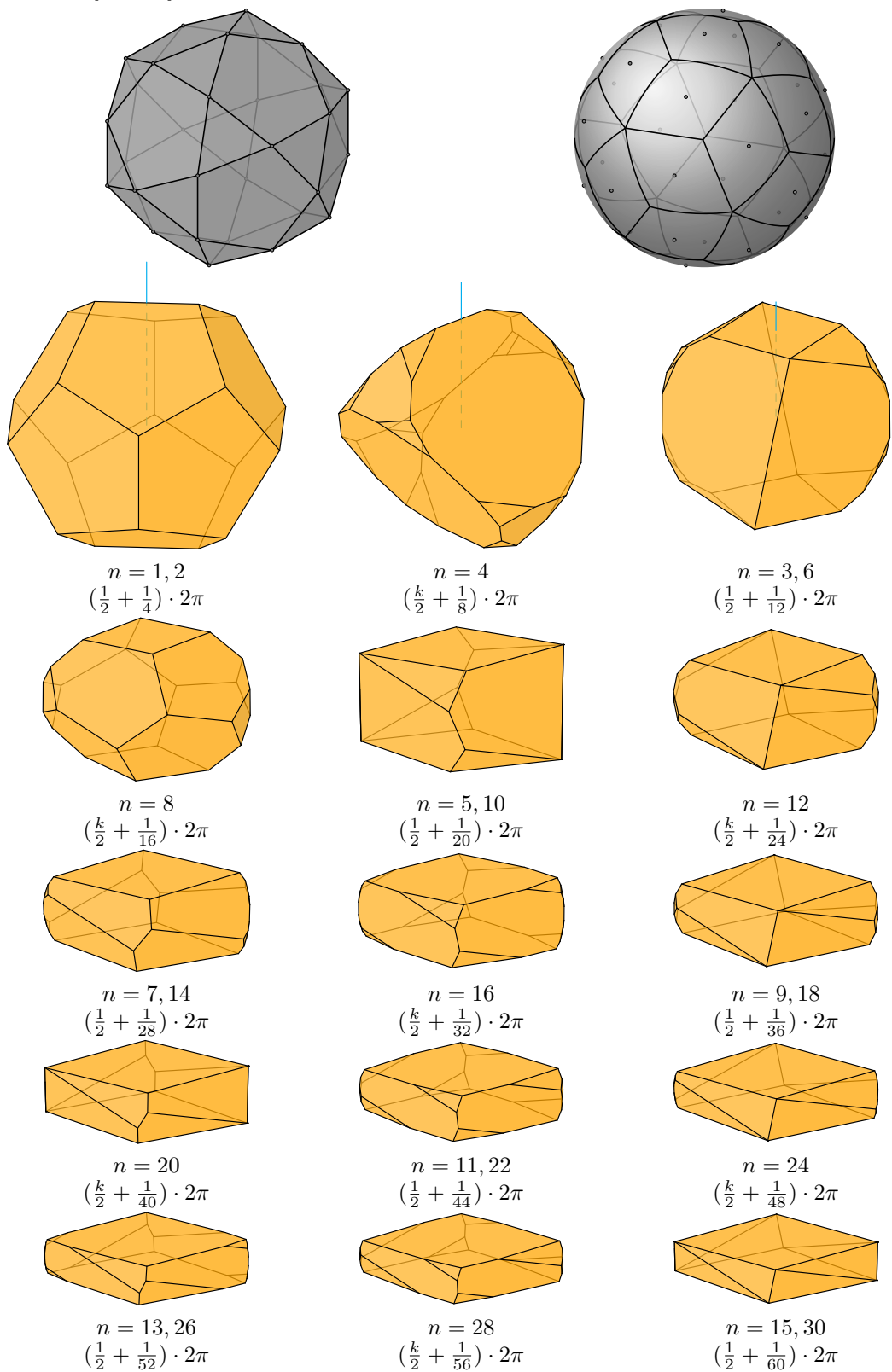

\caption{
$G=\pm[I\times C_n]$,
$G^h={+I}$,
2-fold rotation center
$p=\frac12(1, \frac1\varphi, \varphi)$, where $\varphi=\frac{1+\sqrt5}{2}$.
The $G^h$-orbit polytope is an icosidodecahedron.
The corresponding Voronoi diagram on the 2-sphere
has the structure of a rhombic triacontahedron.
$H=\langle [i_I, 1], [1, e_n] \rangle$.
30 tubes,
each with $\mathrm{lcm}(2n, 4)$ cells.
Alternate group: $\pm[I\times D_{2n}]$.
When $n=1$, $2$, or $4$,
the cells of a tube are disconnected from each other.
}
\label{fig:IxCn_2fold}
\end{figure}\endgroup
\newpage

\subsection{\texorpdfstring{$\pm[O\times C_n]$}{+-[OxCn]}}
\subsubsection{\texorpdfstring{$\pm[O\times C_n]$}{+-[OxCn]}, 4-fold
  rotation center}
\label{O-4fold}
\begin{minipage}{0.5\textwidth}
\centering
\includegraphics[scale=1]{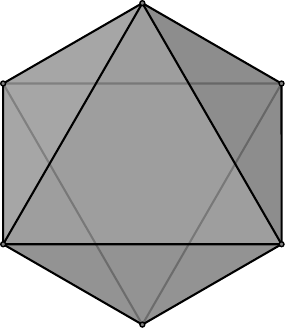}
\end{minipage}%
\begin{minipage}{0.5\textwidth}
\centering
\includegraphics[scale=1]{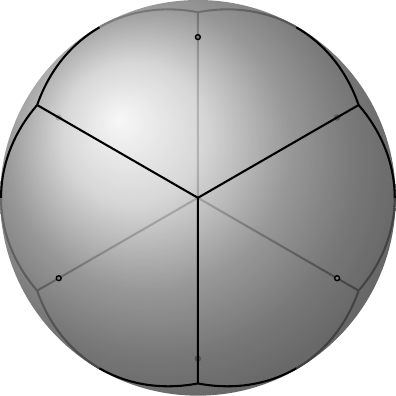}
\end{minipage}
\vskip 5pt plus 0.5fill
\begingroup
\setlength{\tabcolsep}{0pt}
\noindent\begin{tabular}{
>{\centering\arraybackslash}m{0.33\textwidth}
>{\centering\arraybackslash}m{0.33\textwidth}
>{\centering\arraybackslash}m{0.33\textwidth}}
\href{https://www.inf.fu-berlin.de/inst/ag-ti/software/DiscreteHopfFibration/gallery.html?f=OxCn/4/48cells_6tubes}{\includegraphics[scale=0.8]{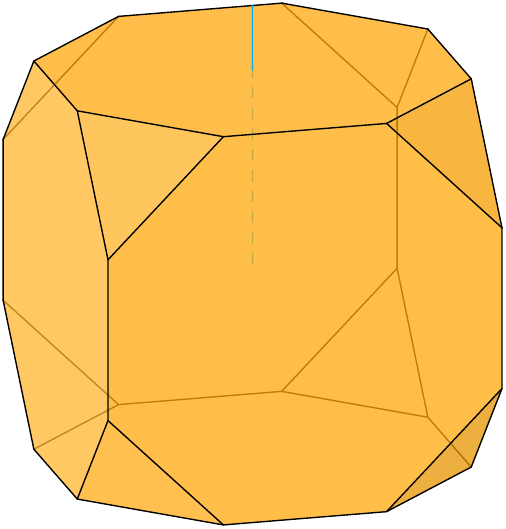}}
&
\href{https://www.inf.fu-berlin.de/inst/ag-ti/software/DiscreteHopfFibration/gallery.html?f=OxCn/4/96cells_6tubes}{\includegraphics[scale=0.8]{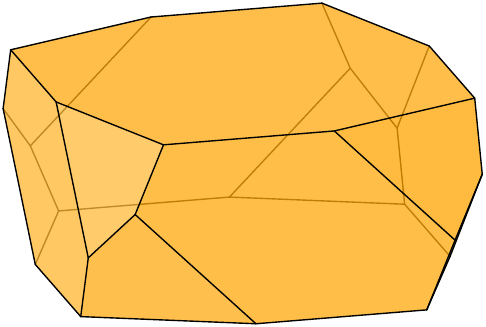}}
&
\href{https://www.inf.fu-berlin.de/inst/ag-ti/software/DiscreteHopfFibration/gallery.html?f=OxCn/4/144cells_6tubes}{\includegraphics[scale=0.8]{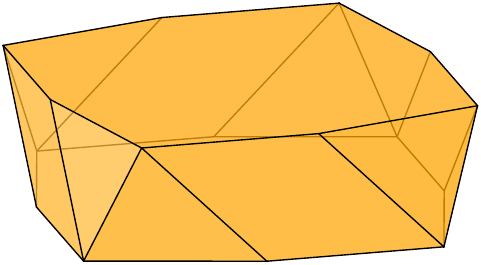}}
\\[1.5mm]
$n = 1, 2, 4$\break
$(\frac{3}{4} + \frac{1}{8})\cdot 2\pi$
&$n = 8$\break
$(\frac{k}{4} + \frac{1}{16})\cdot 2\pi$
&$n = 3, 6, 12$\break
$(\frac{1}{4} + \frac{1}{24})\cdot 2\pi$
\end{tabular}\nobreak

\vfill\nobreak
\noindent\begin{tabular}{
>{\centering\arraybackslash}m{0.33\textwidth}
>{\centering\arraybackslash}m{0.33\textwidth}
>{\centering\arraybackslash}m{0.33\textwidth}}
\href{https://www.inf.fu-berlin.de/inst/ag-ti/software/DiscreteHopfFibration/gallery.html?f=OxCn/4/192cells_6tubes}{\includegraphics[scale=0.8]{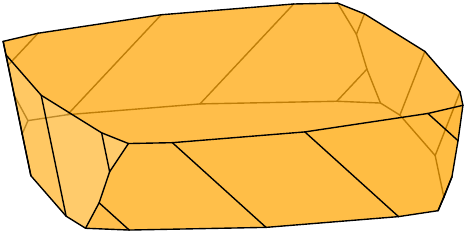}}
&
\href{https://www.inf.fu-berlin.de/inst/ag-ti/software/DiscreteHopfFibration/gallery.html?f=OxCn/4/240cells_6tubes}{\includegraphics[scale=0.8]{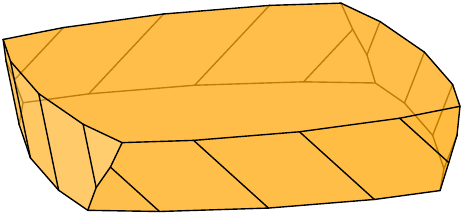}}
&
\href{https://www.inf.fu-berlin.de/inst/ag-ti/software/DiscreteHopfFibration/gallery.html?f=OxCn/4/288cells_6tubes}{\includegraphics[scale=0.8]{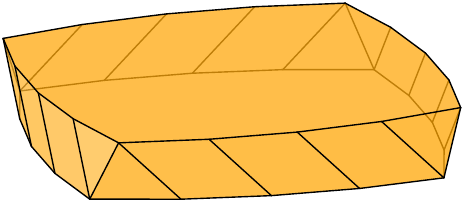}}
\\[1.5mm]
$n = 16$\break
$(\frac{k}{4} + \frac{1}{32})\cdot 2\pi$
&$n = 5, 10, 20$\break
$(\frac{3}{4} + \frac{1}{40})\cdot 2\pi$
&$n = 24$\break
$(\frac{k}{4} + \frac{1}{48})\cdot 2\pi$
\end{tabular}\nobreak

\vfill\nobreak
\noindent\begin{tabular}{
>{\centering\arraybackslash}m{0.33\textwidth}
>{\centering\arraybackslash}m{0.33\textwidth}
>{\centering\arraybackslash}m{0.33\textwidth}}
\href{https://www.inf.fu-berlin.de/inst/ag-ti/software/DiscreteHopfFibration/gallery.html?f=OxCn/4/336cells_6tubes}{\includegraphics[scale=0.8]{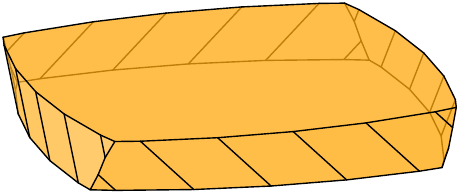}}
&
\href{https://www.inf.fu-berlin.de/inst/ag-ti/software/DiscreteHopfFibration/gallery.html?f=OxCn/4/384cells_6tubes}{\includegraphics[scale=0.8]{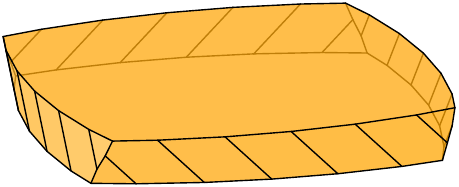}}
&
\href{https://www.inf.fu-berlin.de/inst/ag-ti/software/DiscreteHopfFibration/gallery.html?f=OxCn/4/432cells_6tubes}{\includegraphics[scale=0.8]{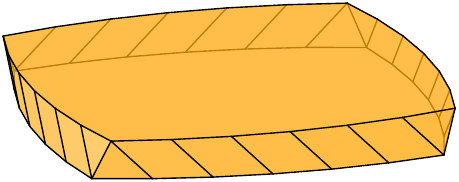}}
\\[1.5mm]
$n = 7, 14, 28$\break
$(\frac{1}{4} + \frac{1}{56})\cdot 2\pi$
&$n = 32$\break
$(\frac{k}{4} + \frac{1}{64})\cdot 2\pi$
&$n = 9, 18, 36$\break
$(\frac{3}{4} + \frac{1}{72})\cdot 2\pi$
\end{tabular}\nobreak

\vfill\nobreak
\noindent\begin{tabular}{
>{\centering\arraybackslash}m{0.33\textwidth}
>{\centering\arraybackslash}m{0.33\textwidth}
>{\centering\arraybackslash}m{0.33\textwidth}}
\href{https://www.inf.fu-berlin.de/inst/ag-ti/software/DiscreteHopfFibration/gallery.html?f=OxCn/4/480cells_6tubes}{\includegraphics[scale=0.8]{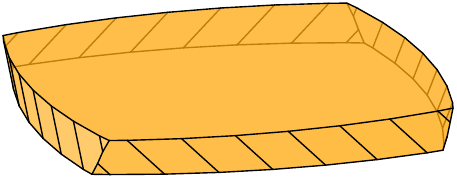}}
&
\href{https://www.inf.fu-berlin.de/inst/ag-ti/software/DiscreteHopfFibration/gallery.html?f=OxCn/4/528cells_6tubes}{\includegraphics[scale=0.8]{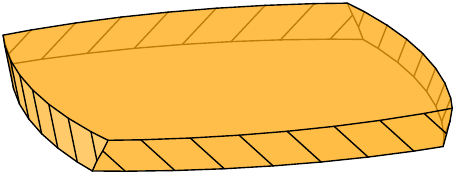}}
&
\href{https://www.inf.fu-berlin.de/inst/ag-ti/software/DiscreteHopfFibration/gallery.html?f=OxCn/4/576cells_6tubes}{\includegraphics[scale=0.8]{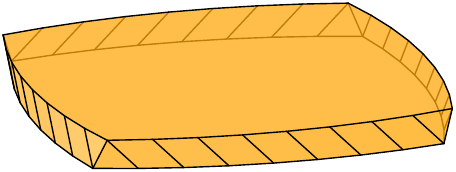}}
\\[1.5mm]
$n = 40$\break
$(\frac{k}{4} + \frac{1}{80})\cdot 2\pi$
&$n = 11, 22, 44$\break
$(\frac{1}{4} + \frac{1}{88})\cdot 2\pi$
&$n = 48$\break
$(\frac{k}{4} + \frac{1}{96})\cdot 2\pi$
\end{tabular}\nobreak

\vfill\nobreak
\noindent\begin{tabular}{
>{\centering\arraybackslash}m{0.33\textwidth}
>{\centering\arraybackslash}m{0.33\textwidth}
>{\centering\arraybackslash}m{0.33\textwidth}}
\href{https://www.inf.fu-berlin.de/inst/ag-ti/software/DiscreteHopfFibration/gallery.html?f=OxCn/4/624cells_6tubes}{\includegraphics[scale=0.8]{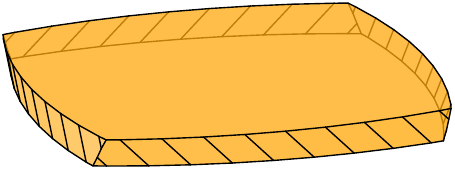}}
&
\href{https://www.inf.fu-berlin.de/inst/ag-ti/software/DiscreteHopfFibration/gallery.html?f=OxCn/4/672cells_6tubes}{\includegraphics[scale=0.8]{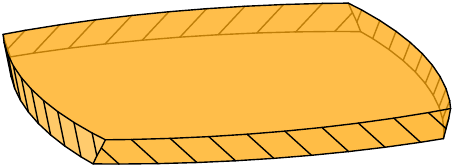}}
&
\href{https://www.inf.fu-berlin.de/inst/ag-ti/software/DiscreteHopfFibration/gallery.html?f=OxCn/4/720cells_6tubes}{\includegraphics[scale=0.8]{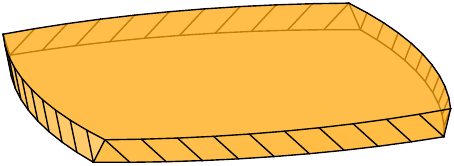}}
\\[1.5mm]
$n = 13, 26, 52$\break
$(\frac{3}{4} + \frac{1}{104})\cdot 2\pi$
&$n = 56$\break
$(\frac{k}{4} + \frac{1}{112})\cdot 2\pi$
&$n = 15, 30, 60$\break
$(\frac{1}{4} + \frac{1}{120})\cdot 2\pi$
\end{tabular}\nobreak

\begin{figure}[H]
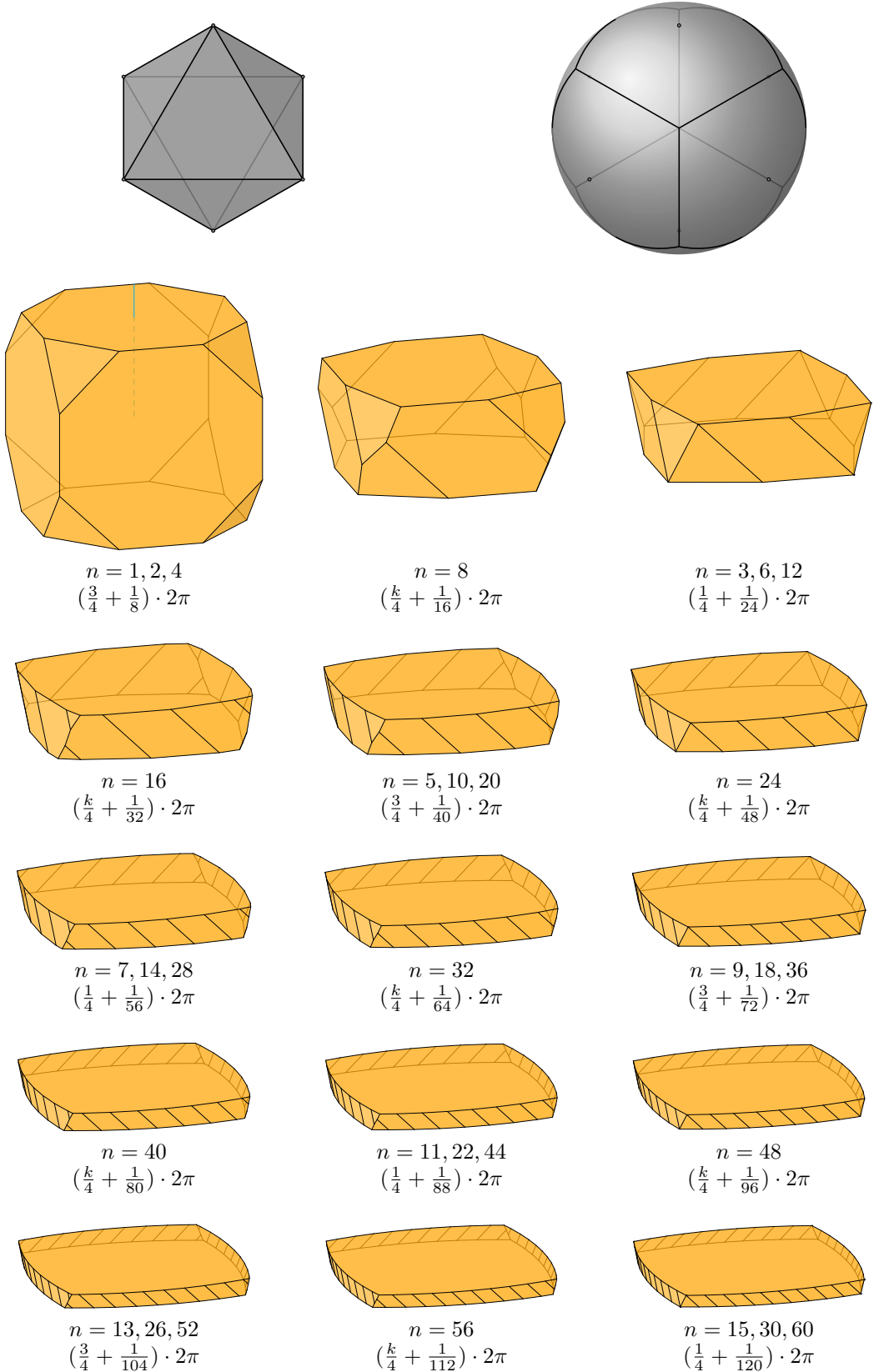

\caption{
$G=\pm[O\times C_n]$,
$G^h={+O}$,
4-fold rotation center
$p=(0, 1, 0)$.
$H=\langle [-\omega i_O, 1], [1, e_n] \rangle$.
6 tubes,
each with $\mathrm{lcm}(2n, 8)$ cells.
Alternate group: $\pm[O\times D_{2n}]$.
}
\label{fig:OxCn_4fold}
\end{figure}\endgroup
\newpage

\subsubsection{\texorpdfstring{$\pm[O\times C_n]$}{+-[OxCn]}, 3-fold rotation center}
\begin{minipage}{0.5\textwidth}
\centering
\includegraphics[scale=1]{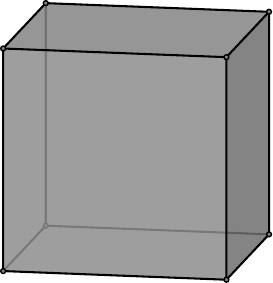}
\end{minipage}%
\begin{minipage}{0.5\textwidth}
\centering
\includegraphics[scale=1]{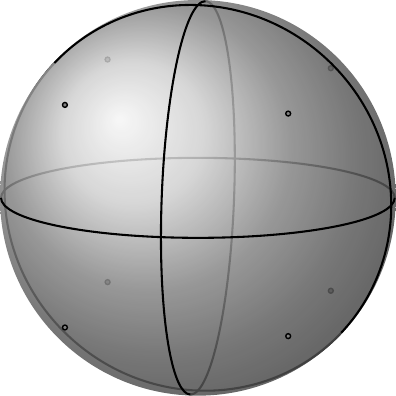}
\end{minipage}
\vskip 5pt plus 0.5fill
\begingroup
\setlength{\tabcolsep}{0pt}
\noindent\begin{tabular}{
>{\centering\arraybackslash}m{0.33\textwidth}
>{\centering\arraybackslash}m{0.33\textwidth}
>{\centering\arraybackslash}m{0.33\textwidth}}
\href{https://www.inf.fu-berlin.de/inst/ag-ti/software/DiscreteHopfFibration/gallery.html?f=OxCn/3/48cells_8tubes}{\includegraphics[scale=0.8]{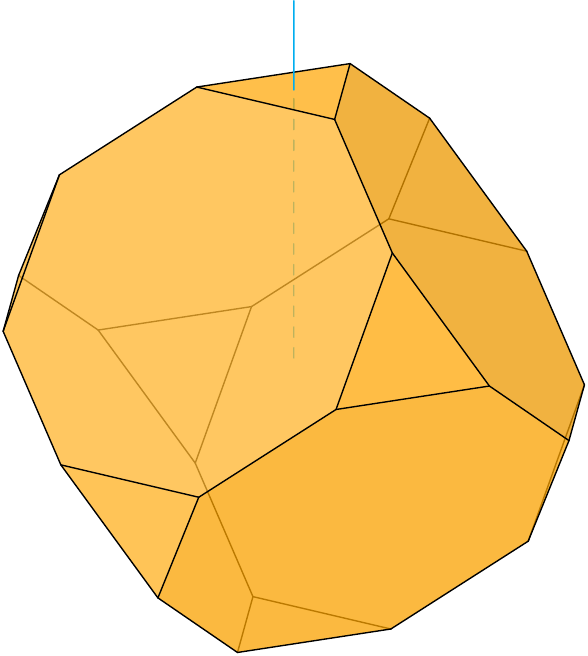}}
&
\href{https://www.inf.fu-berlin.de/inst/ag-ti/software/DiscreteHopfFibration/gallery.html?f=OxCn/3/96cells_8tubes}{\includegraphics[scale=0.8]{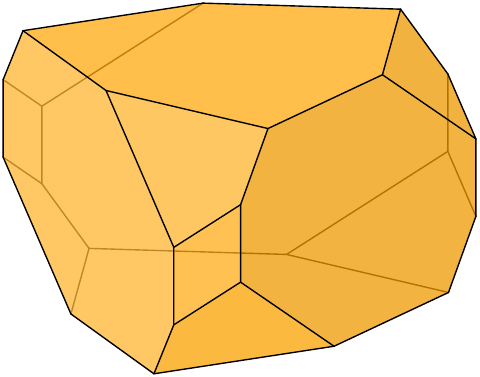}}
&
\href{https://www.inf.fu-berlin.de/inst/ag-ti/software/DiscreteHopfFibration/gallery.html?f=OxCn/3/144cells_8tubes}{\includegraphics[scale=0.8]{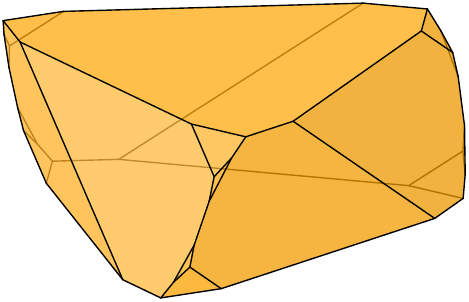}}
\\[1.5mm]
$n = 1, 3$\break
$(\frac{2}{3} + \frac{1}{6})\cdot 2\pi$
&$n = 2, 6$\break
$(\frac{1}{3} + \frac{1}{12})\cdot 2\pi$
&$n = 9$\break
$(\frac{k}{3} + \frac{1}{18})\cdot 2\pi$
\end{tabular}\nobreak

\vfill\nobreak
\noindent\begin{tabular}{
>{\centering\arraybackslash}m{0.33\textwidth}
>{\centering\arraybackslash}m{0.33\textwidth}
>{\centering\arraybackslash}m{0.33\textwidth}}
\href{https://www.inf.fu-berlin.de/inst/ag-ti/software/DiscreteHopfFibration/gallery.html?f=OxCn/3/192cells_8tubes}{\includegraphics[scale=0.8]{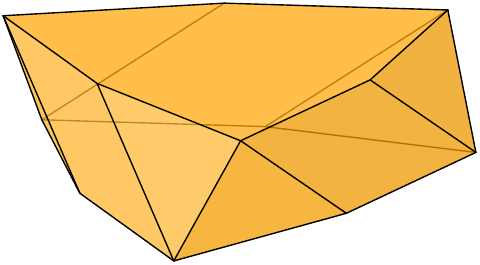}}
&
\href{https://www.inf.fu-berlin.de/inst/ag-ti/software/DiscreteHopfFibration/gallery.html?f=OxCn/3/240cells_8tubes}{\includegraphics[scale=0.8]{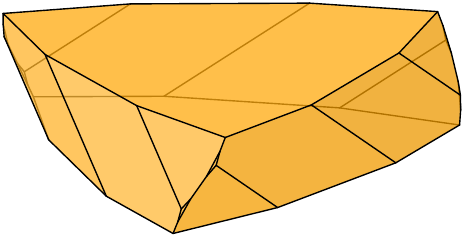}}
&
\href{https://www.inf.fu-berlin.de/inst/ag-ti/software/DiscreteHopfFibration/gallery.html?f=OxCn/3/288cells_8tubes}{\includegraphics[scale=0.8]{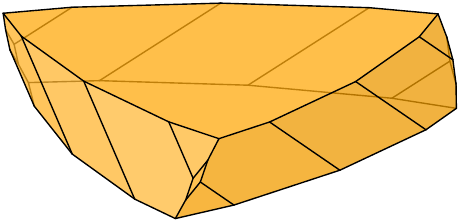}}
\\[1.5mm]
$n = 4, 12$\break
$(\frac{2}{3} + \frac{1}{24})\cdot 2\pi$
&$n = 5, 15$\break
$(\frac{1}{3} + \frac{1}{30})\cdot 2\pi$
&$n = 18$\break
$(\frac{k}{3} + \frac{1}{36})\cdot 2\pi$
\end{tabular}\nobreak

\vfill\nobreak
\noindent\begin{tabular}{
>{\centering\arraybackslash}m{0.33\textwidth}
>{\centering\arraybackslash}m{0.33\textwidth}
>{\centering\arraybackslash}m{0.33\textwidth}}
\href{https://www.inf.fu-berlin.de/inst/ag-ti/software/DiscreteHopfFibration/gallery.html?f=OxCn/3/336cells_8tubes}{\includegraphics[scale=0.8]{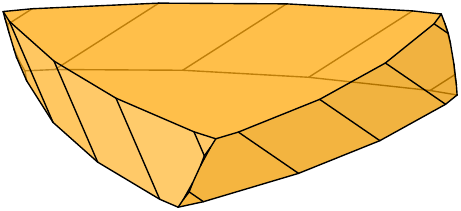}}
&
\href{https://www.inf.fu-berlin.de/inst/ag-ti/software/DiscreteHopfFibration/gallery.html?f=OxCn/3/384cells_8tubes}{\includegraphics[scale=0.8]{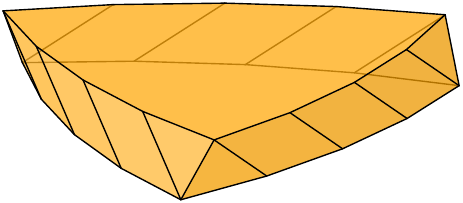}}
&
\href{https://www.inf.fu-berlin.de/inst/ag-ti/software/DiscreteHopfFibration/gallery.html?f=OxCn/3/432cells_8tubes}{\includegraphics[scale=0.8]{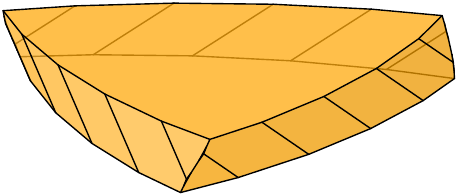}}
\\[1.5mm]
$n = 7, 21$\break
$(\frac{2}{3} + \frac{1}{42})\cdot 2\pi$
&$n = 8, 24$\break
$(\frac{1}{3} + \frac{1}{48})\cdot 2\pi$
&$n = 27$\break
$(\frac{k}{3} + \frac{1}{54})\cdot 2\pi$
\end{tabular}\nobreak

\vfill\nobreak
\noindent\begin{tabular}{
>{\centering\arraybackslash}m{0.33\textwidth}
>{\centering\arraybackslash}m{0.33\textwidth}
>{\centering\arraybackslash}m{0.33\textwidth}}
\href{https://www.inf.fu-berlin.de/inst/ag-ti/software/DiscreteHopfFibration/gallery.html?f=OxCn/3/480cells_8tubes}{\includegraphics[scale=0.8]{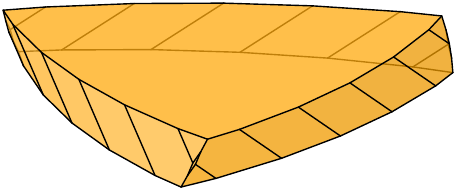}}
&
\href{https://www.inf.fu-berlin.de/inst/ag-ti/software/DiscreteHopfFibration/gallery.html?f=OxCn/3/528cells_8tubes}{\includegraphics[scale=0.8]{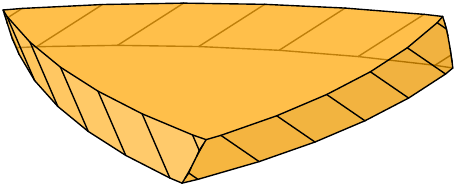}}
&
\href{https://www.inf.fu-berlin.de/inst/ag-ti/software/DiscreteHopfFibration/gallery.html?f=OxCn/3/576cells_8tubes}{\includegraphics[scale=0.8]{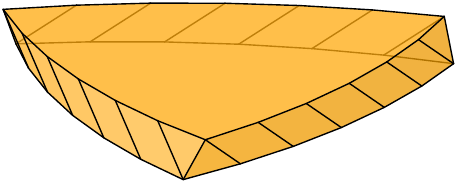}}
\\[1.5mm]
$n = 10, 30$\break
$(\frac{2}{3} + \frac{1}{60})\cdot 2\pi$
&$n = 11, 33$\break
$(\frac{1}{3} + \frac{1}{66})\cdot 2\pi$
&$n = 36$\break
$(\frac{k}{3} + \frac{1}{72})\cdot 2\pi$
\end{tabular}\nobreak

\vfill\nobreak
\noindent\begin{tabular}{
>{\centering\arraybackslash}m{0.33\textwidth}
>{\centering\arraybackslash}m{0.33\textwidth}
>{\centering\arraybackslash}m{0.33\textwidth}}
\href{https://www.inf.fu-berlin.de/inst/ag-ti/software/DiscreteHopfFibration/gallery.html?f=OxCn/3/624cells_8tubes}{\includegraphics[scale=0.8]{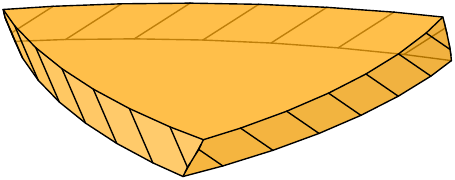}}
&
\href{https://www.inf.fu-berlin.de/inst/ag-ti/software/DiscreteHopfFibration/gallery.html?f=OxCn/3/672cells_8tubes}{\includegraphics[scale=0.8]{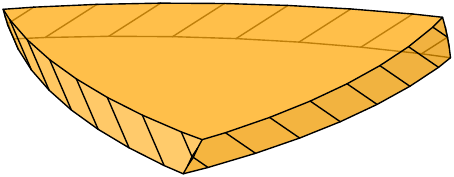}}
&
\href{https://www.inf.fu-berlin.de/inst/ag-ti/software/DiscreteHopfFibration/gallery.html?f=OxCn/3/720cells_8tubes}{\includegraphics[scale=0.8]{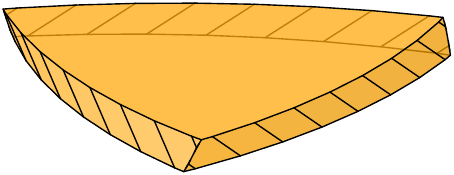}}
\\[1.5mm]
$n = 13, 39$\break
$(\frac{2}{3} + \frac{1}{78})\cdot 2\pi$
&$n = 14, 42$\break
$(\frac{1}{3} + \frac{1}{84})\cdot 2\pi$
&$n = 45$\break
$(\frac{k}{3} + \frac{1}{90})\cdot 2\pi$
\end{tabular}\nobreak

\begin{figure}[H]
\caption{
$G=\pm[O\times C_n]$,
$G^h={+O}$,
3-fold rotation center
$p=\frac1{\sqrt3}(-1, -1, -1)$.
$H=\langle [-\omega, 1], [1, e_n] \rangle$.
8 tubes,
each with $\mathrm{lcm}(2n, 4)$ cells.
Alternate group: $\pm[O\times D_{2n}]$.
}
\label{fig:OxCn_3fold}
\end{figure}\endgroup
\newpage

\subsubsection{\texorpdfstring{$\pm[O\times C_n]$}{+-[OxCn]}, 2-fold rotation center}
\hrule height 0pt
\vskip -2mm  \vfill
\hrule height 0pt \nobreak
\begin{minipage}{0.5\textwidth}
\centering
\includegraphics[scale=1]{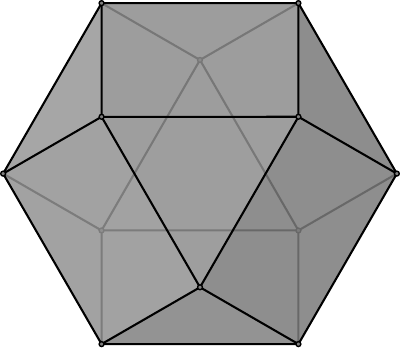}
\end{minipage}%
\begin{minipage}{0.5\textwidth}
\centering
\includegraphics[scale=1]{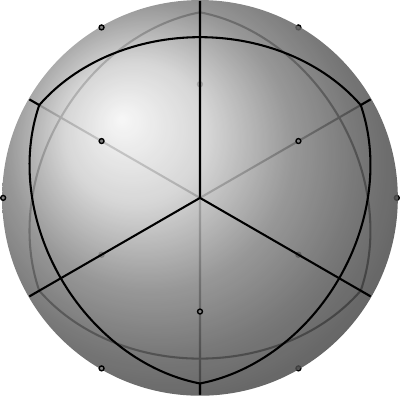}
\end{minipage}
\vskip 5pt plus 0.5fill
\begingroup
\setlength{\tabcolsep}{0pt}
\noindent\begin{tabular}{
>{\centering\arraybackslash}m{0.33\textwidth}
>{\centering\arraybackslash}m{0.33\textwidth}
>{\centering\arraybackslash}m{0.33\textwidth}}
\href{https://www.inf.fu-berlin.de/inst/ag-ti/software/DiscreteHopfFibration/gallery.html?f=OxCn/2/48cells_12tubes}{\vbox{\vskip-3mm \includegraphics[scale=0.8]{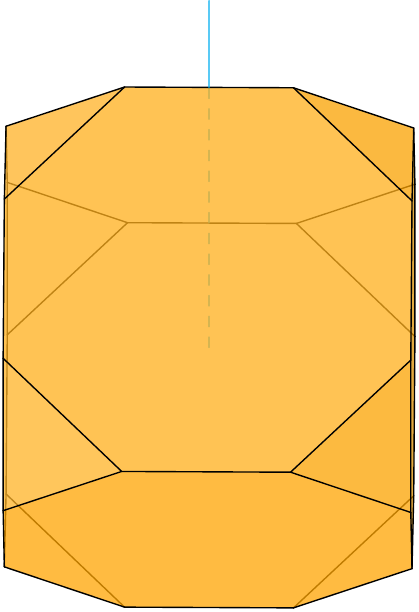}}}
&
\href{https://www.inf.fu-berlin.de/inst/ag-ti/software/DiscreteHopfFibration/gallery.html?f=OxCn/2/96cells_12tubes}{\vbox{\vskip-3mm \includegraphics[scale=0.8]{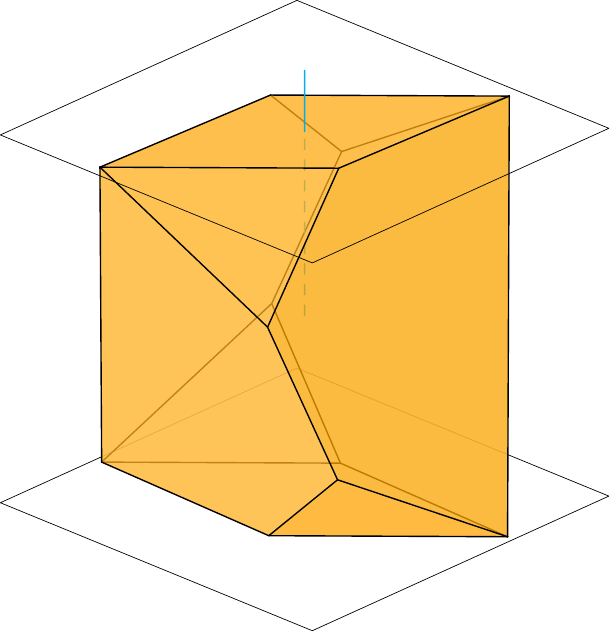}}}
&
\href{https://www.inf.fu-berlin.de/inst/ag-ti/software/DiscreteHopfFibration/gallery.html?f=OxCn/2/144cells_12tubes}{\includegraphics[scale=0.8]{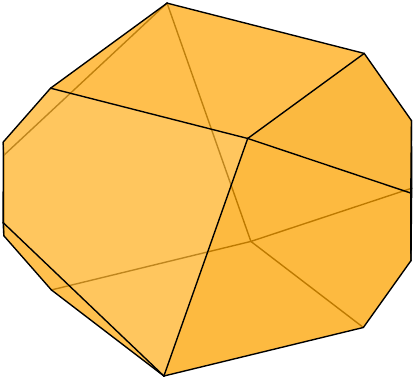}}
\\[1.5mm]
$n = 1, 2$\break
$(\frac{1}{2} + \frac{1}{4})\cdot 2\pi$
&$n = 4$\break
$(\frac{k}{2} + \frac{1}{8})\cdot 2\pi$
&$n = 3, 6$\break
$(\frac{1}{2} + \frac{1}{12})\cdot 2\pi$
\end{tabular}\nobreak

\vfill\nobreak
\noindent\begin{tabular}{
>{\centering\arraybackslash}m{0.33\textwidth}
>{\centering\arraybackslash}m{0.33\textwidth}
>{\centering\arraybackslash}m{0.33\textwidth}}
\href{https://www.inf.fu-berlin.de/inst/ag-ti/software/DiscreteHopfFibration/gallery.html?f=OxCn/2/192cells_12tubes}{\includegraphics[scale=0.8]{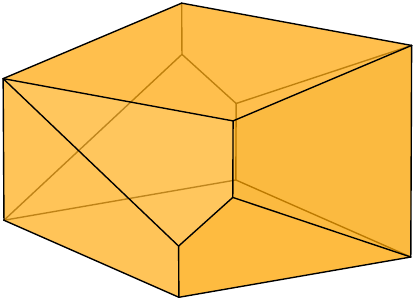}}
&
\href{https://www.inf.fu-berlin.de/inst/ag-ti/software/DiscreteHopfFibration/gallery.html?f=OxCn/2/240cells_12tubes}{\includegraphics[scale=0.8]{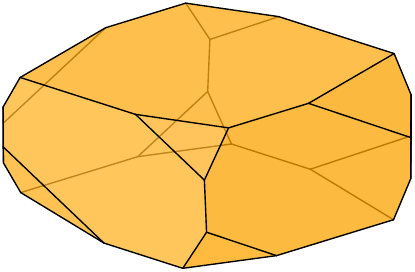}}
&
\href{https://www.inf.fu-berlin.de/inst/ag-ti/software/DiscreteHopfFibration/gallery.html?f=OxCn/2/288cells_12tubes}{\includegraphics[scale=0.8]{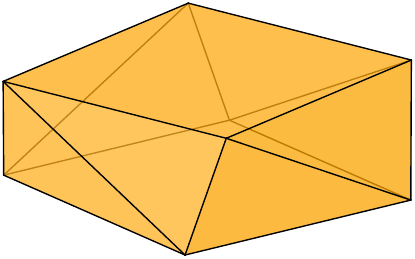}}
\\[1.5mm]
$n = 8$\break
$(\frac{k}{2} + \frac{1}{16})\cdot 2\pi$
&$n = 5, 10$\break
$(\frac{1}{2} + \frac{1}{20})\cdot 2\pi$
&$n = 12$\break
$(\frac{k}{2} + \frac{1}{24})\cdot 2\pi$
\end{tabular}\nobreak

\vfill\nobreak
\noindent\begin{tabular}{
>{\centering\arraybackslash}m{0.33\textwidth}
>{\centering\arraybackslash}m{0.33\textwidth}
>{\centering\arraybackslash}m{0.33\textwidth}}
\href{https://www.inf.fu-berlin.de/inst/ag-ti/software/DiscreteHopfFibration/gallery.html?f=OxCn/2/336cells_12tubes}{\includegraphics[scale=0.8]{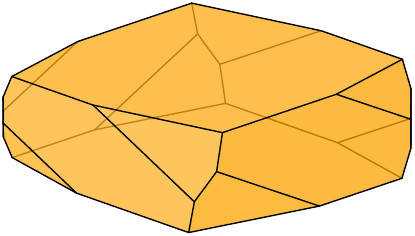}}
&
\href{https://www.inf.fu-berlin.de/inst/ag-ti/software/DiscreteHopfFibration/gallery.html?f=OxCn/2/384cells_12tubes}{\includegraphics[scale=0.8]{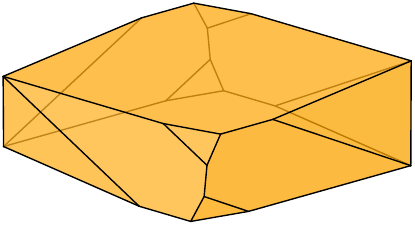}}
&
\href{https://www.inf.fu-berlin.de/inst/ag-ti/software/DiscreteHopfFibration/gallery.html?f=OxCn/2/432cells_12tubes}{\includegraphics[scale=0.8]{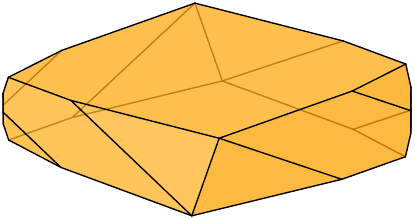}}
\\[1.5mm]
$n = 7, 14$\break
$(\frac{1}{2} + \frac{1}{28})\cdot 2\pi$
&$n = 16$\break
$(\frac{k}{2} + \frac{1}{32})\cdot 2\pi$
&$n = 9, 18$\break
$(\frac{1}{2} + \frac{1}{36})\cdot 2\pi$
\end{tabular}\nobreak

\vfill\nobreak
\noindent\begin{tabular}{
>{\centering\arraybackslash}m{0.33\textwidth}
>{\centering\arraybackslash}m{0.33\textwidth}
>{\centering\arraybackslash}m{0.33\textwidth}}
\href{https://www.inf.fu-berlin.de/inst/ag-ti/software/DiscreteHopfFibration/gallery.html?f=OxCn/2/480cells_12tubes}{\includegraphics[scale=0.8]{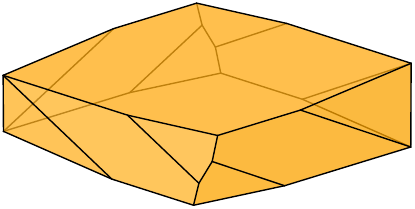}}
&
\href{https://www.inf.fu-berlin.de/inst/ag-ti/software/DiscreteHopfFibration/gallery.html?f=OxCn/2/528cells_12tubes}{\includegraphics[scale=0.8]{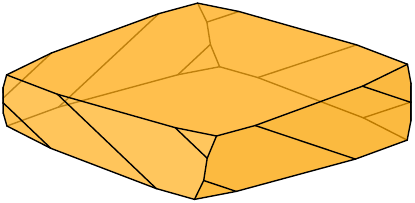}}
&
\href{https://www.inf.fu-berlin.de/inst/ag-ti/software/DiscreteHopfFibration/gallery.html?f=OxCn/2/576cells_12tubes}{\includegraphics[scale=0.8]{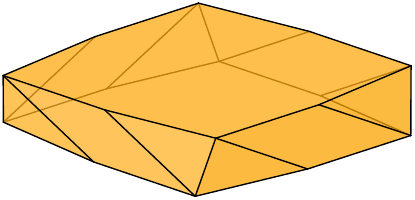}}
\\[1.5mm]
$n = 20$\break
$(\frac{k}{2} + \frac{1}{40})\cdot 2\pi$
&$n = 11, 22$\break
$(\frac{1}{2} + \frac{1}{44})\cdot 2\pi$
&$n = 24$\break
$(\frac{k}{2} + \frac{1}{48})\cdot 2\pi$
\end{tabular}\nobreak

\vfill\nobreak
\noindent\begin{tabular}{
>{\centering\arraybackslash}m{0.33\textwidth}
>{\centering\arraybackslash}m{0.33\textwidth}
>{\centering\arraybackslash}m{0.33\textwidth}}
\href{https://www.inf.fu-berlin.de/inst/ag-ti/software/DiscreteHopfFibration/gallery.html?f=OxCn/2/624cells_12tubes}{\includegraphics[scale=0.8]{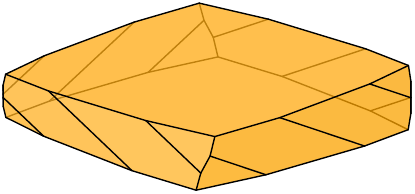}}
&
\href{https://www.inf.fu-berlin.de/inst/ag-ti/software/DiscreteHopfFibration/gallery.html?f=OxCn/2/672cells_12tubes}{\includegraphics[scale=0.8]{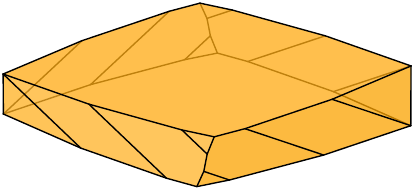}}
&
\href{https://www.inf.fu-berlin.de/inst/ag-ti/software/DiscreteHopfFibration/gallery.html?f=OxCn/2/720cells_12tubes}{\includegraphics[scale=0.8]{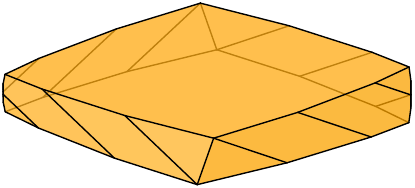}}
\\[1.5mm]
$n = 13, 26$\break
$(\frac{1}{2} + \frac{1}{52})\cdot 2\pi$
&$n = 28$\break
$(\frac{k}{2} + \frac{1}{56})\cdot 2\pi$
&$n = 15, 30$\break
$(\frac{1}{2} + \frac{1}{60})\cdot 2\pi$
\end{tabular}\nobreak

\begin{figure}[H]
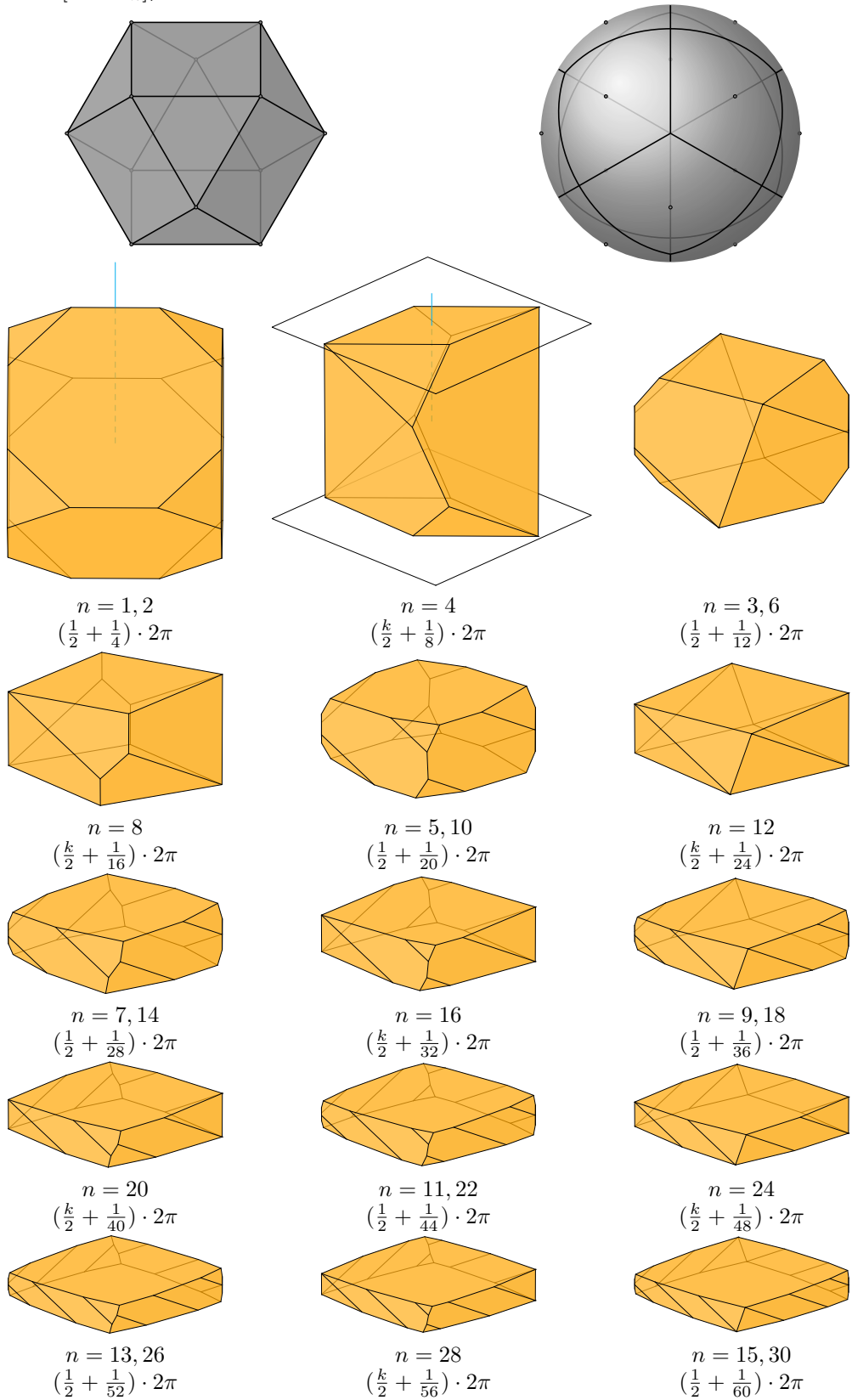

\caption{
$G=\pm[O\times C_n]$,
$G^h={+O}$,
2-fold rotation center
$p=\frac1{\sqrt2}(0, 1, 1)$.
$H=\langle [i_O, 1], [1, e_n] \rangle$.
12 tubes,
each with $\mathrm{lcm}(2n, 4)$ cells.
Alternate group: $\pm[O\times D_{2n}]$.
When $n=1$ or $n=2$,
the cells of a tube are disconnected from each other.
For $n = 4$, we have drawn squares in the planes around the
top and bottom face, to indicate that these faces are horizontal and parallel.
}
\label{fig:OxCn_2fold}
\end{figure}\endgroup
\newpage

\subsection{\texorpdfstring{$\pm\frac12[O\times C_{2n}]$}{+-1/2[OxC2n]}}
\subsubsection{\texorpdfstring{$\pm\frac12[O\times C_{2n}]$}{+-1/2[OxC2n]}, 3-fold rotation center}
\hrule height 0pt
\vskip -1mm  \vfill
\hrule height 0pt \nobreak
\begin{minipage}{0.5\textwidth}
\centering
\includegraphics[scale=1]{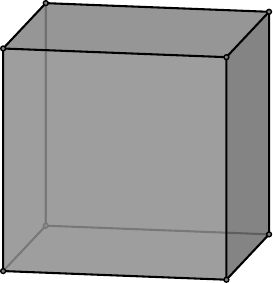}
\end{minipage}%
\begin{minipage}{0.5\textwidth}
\centering
\includegraphics[scale=1]{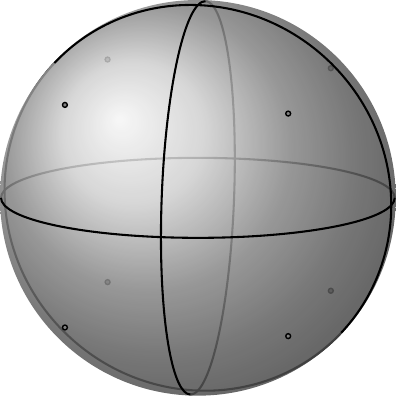}
\end{minipage}
\vskip 5pt plus 0.5fill
\begingroup
\setlength{\tabcolsep}{0pt}
\noindent\begin{tabular}{
>{\centering\arraybackslash}m{0.33\textwidth}
>{\centering\arraybackslash}m{0.33\textwidth}
>{\centering\arraybackslash}m{0.33\textwidth}}
\href{https://www.inf.fu-berlin.de/inst/ag-ti/software/DiscreteHopfFibration/gallery.html?f=hOxC2n/3/48cells_8tubes}{\includegraphics[scale=0.8]{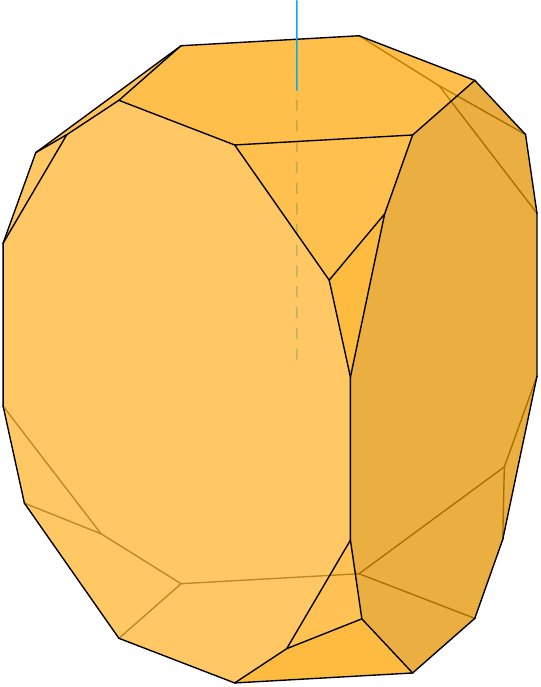}}
&
\href{https://www.inf.fu-berlin.de/inst/ag-ti/software/DiscreteHopfFibration/gallery.html?f=hOxC2n/3/96cells_8tubes}{\includegraphics[scale=0.8]{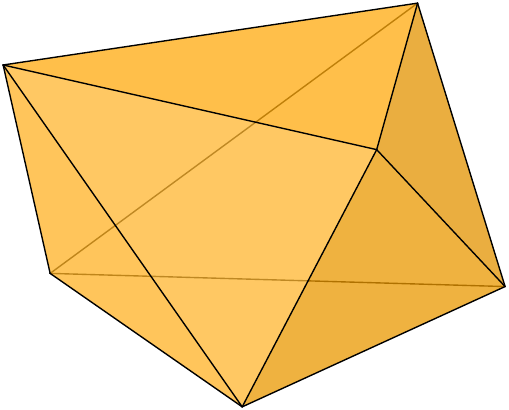}}
&
\href{https://www.inf.fu-berlin.de/inst/ag-ti/software/DiscreteHopfFibration/gallery.html?f=hOxC2n/3/144cells_8tubes}{\includegraphics[scale=0.8]{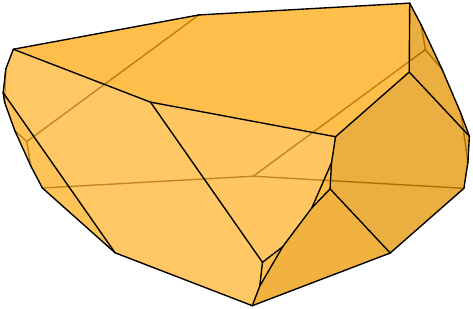}}
\\[1.5mm]
$n = 1, 3$\break
$(\frac{2}{3} + \frac{1}{6})\cdot 2\pi$
&$n = 2, 6$\break
$(\frac{1}{3} + \frac{1}{12})\cdot 2\pi$
&$n = 9$\break
$(\frac{k}{3} + \frac{1}{18})\cdot 2\pi$
\end{tabular}\nobreak

\vfill\nobreak
\noindent\begin{tabular}{
>{\centering\arraybackslash}m{0.33\textwidth}
>{\centering\arraybackslash}m{0.33\textwidth}
>{\centering\arraybackslash}m{0.33\textwidth}}
\href{https://www.inf.fu-berlin.de/inst/ag-ti/software/DiscreteHopfFibration/gallery.html?f=hOxC2n/3/192cells_8tubes}{\includegraphics[scale=0.8]{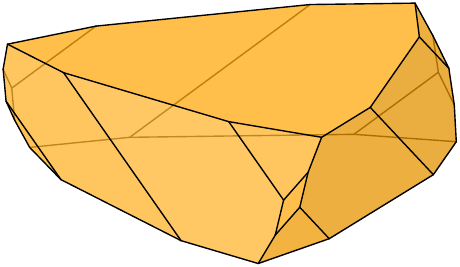}}
&
\href{https://www.inf.fu-berlin.de/inst/ag-ti/software/DiscreteHopfFibration/gallery.html?f=hOxC2n/3/240cells_8tubes}{\includegraphics[scale=0.8]{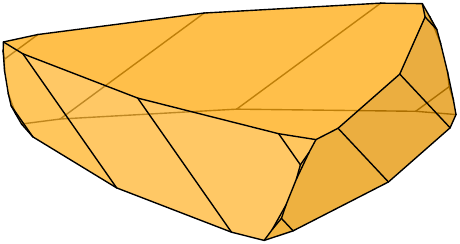}}
&
\href{https://www.inf.fu-berlin.de/inst/ag-ti/software/DiscreteHopfFibration/gallery.html?f=hOxC2n/3/288cells_8tubes}{\includegraphics[scale=0.8]{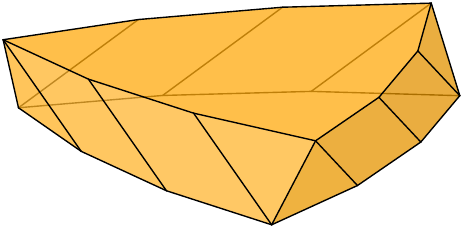}}
\\[1.5mm]
$n = 4, 12$\break
$(\frac{2}{3} + \frac{1}{24})\cdot 2\pi$
&$n = 5, 15$\break
$(\frac{1}{3} + \frac{1}{30})\cdot 2\pi$
&$n = 18$\break
$(\frac{k}{3} + \frac{1}{36})\cdot 2\pi$
\end{tabular}\nobreak

\vfill\nobreak
\noindent\begin{tabular}{
>{\centering\arraybackslash}m{0.33\textwidth}
>{\centering\arraybackslash}m{0.33\textwidth}
>{\centering\arraybackslash}m{0.33\textwidth}}
\href{https://www.inf.fu-berlin.de/inst/ag-ti/software/DiscreteHopfFibration/gallery.html?f=hOxC2n/3/336cells_8tubes}{\includegraphics[scale=0.8]{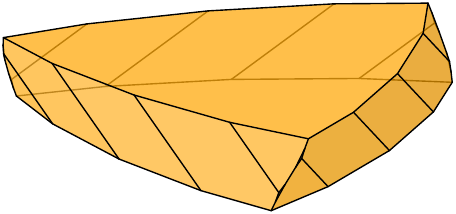}}
&
\href{https://www.inf.fu-berlin.de/inst/ag-ti/software/DiscreteHopfFibration/gallery.html?f=hOxC2n/3/384cells_8tubes}{\includegraphics[scale=0.8]{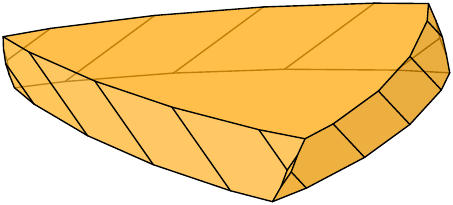}}
&
\href{https://www.inf.fu-berlin.de/inst/ag-ti/software/DiscreteHopfFibration/gallery.html?f=hOxC2n/3/432cells_8tubes}{\includegraphics[scale=0.8]{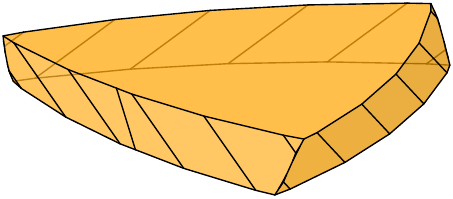}}
\\[1.5mm]
$n = 7, 21$\break
$(\frac{2}{3} + \frac{1}{42})\cdot 2\pi$
&$n = 8, 24$\break
$(\frac{1}{3} + \frac{1}{48})\cdot 2\pi$
&$n = 27$\break
$(\frac{k}{3} + \frac{1}{54})\cdot 2\pi$
\end{tabular}\nobreak

\vfill\nobreak
\noindent\begin{tabular}{
>{\centering\arraybackslash}m{0.33\textwidth}
>{\centering\arraybackslash}m{0.33\textwidth}
>{\centering\arraybackslash}m{0.33\textwidth}}
\href{https://www.inf.fu-berlin.de/inst/ag-ti/software/DiscreteHopfFibration/gallery.html?f=hOxC2n/3/480cells_8tubes}{\includegraphics[scale=0.8]{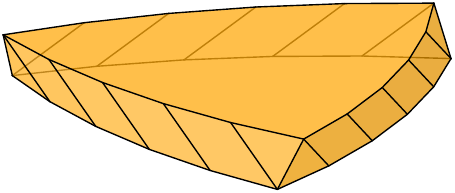}}
&
\href{https://www.inf.fu-berlin.de/inst/ag-ti/software/DiscreteHopfFibration/gallery.html?f=hOxC2n/3/528cells_8tubes}{\includegraphics[scale=0.8]{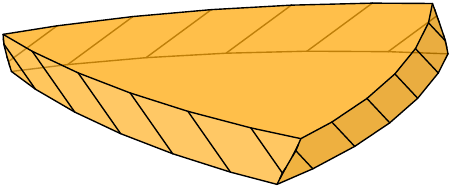}}
&
\href{https://www.inf.fu-berlin.de/inst/ag-ti/software/DiscreteHopfFibration/gallery.html?f=hOxC2n/3/576cells_8tubes}{\includegraphics[scale=0.8]{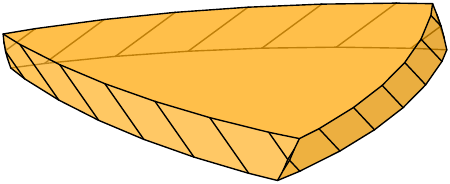}}
\\[1.5mm]
$n = 10, 30$\break
$(\frac{2}{3} + \frac{1}{60})\cdot 2\pi$
&$n = 11, 33$\break
$(\frac{1}{3} + \frac{1}{66})\cdot 2\pi$
&$n = 36$\break
$(\frac{k}{3} + \frac{1}{72})\cdot 2\pi$
\end{tabular}\nobreak

\vfill\nobreak
\noindent\begin{tabular}{
>{\centering\arraybackslash}m{0.33\textwidth}
>{\centering\arraybackslash}m{0.33\textwidth}
>{\centering\arraybackslash}m{0.33\textwidth}}
\href{https://www.inf.fu-berlin.de/inst/ag-ti/software/DiscreteHopfFibration/gallery.html?f=hOxC2n/3/624cells_8tubes}{\includegraphics[scale=0.8]{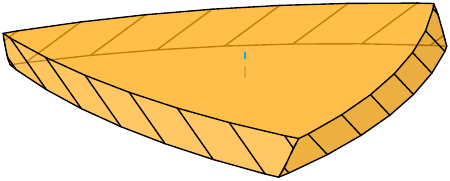}}
&
\href{https://www.inf.fu-berlin.de/inst/ag-ti/software/DiscreteHopfFibration/gallery.html?f=hOxC2n/3/672cells_8tubes}{\includegraphics[scale=0.8]{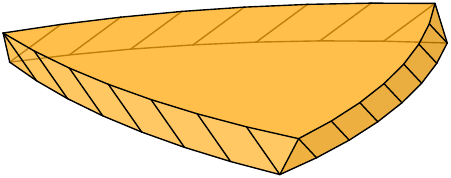}}
&
\href{https://www.inf.fu-berlin.de/inst/ag-ti/software/DiscreteHopfFibration/gallery.html?f=hOxC2n/3/720cells_8tubes}{\includegraphics[scale=0.8]{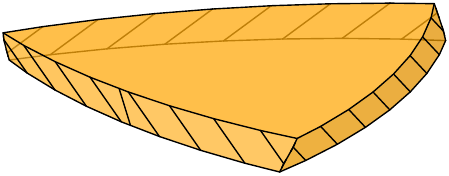}}
\\[1.5mm]
$n = 13, 39$\break
$(\frac{2}{3} + \frac{1}{78})\cdot 2\pi$
&$n = 14, 42$\break
$(\frac{1}{3} + \frac{1}{84})\cdot 2\pi$
&$n = 45$\break
$(\frac{k}{3} + \frac{1}{90})\cdot 2\pi$
\end{tabular}\nobreak

\begin{figure}[H]
\caption{
$G=\pm\frac12[O\times C_{2n}]$,
$G^h={+O}$,
3-fold rotation center
$p=\frac1{\sqrt3}(-1, -1, -1)$.
$H=\langle [-\omega, 1], [1, e_n] \rangle$.
8 tubes,
each with $\mathrm{lcm}(2n, 6)$ cells.
Alternate group: $\pm\frac12[O\times \overline{D}_{4n}]$.
}
\label{fig:hOxC2n_3fold}
\end{figure}\endgroup
\newpage

\newpage
\subsubsection{\texorpdfstring{$\pm\frac12[O\times C_{2n}]$}{+-1/2[OxC2n]}, 2-fold rotation center}
\hrule height 0pt
\vskip -3mm \vfill
\hrule height 0pt \nobreak
\begin{minipage}{0.5\textwidth}
\centering
\includegraphics[scale=1]{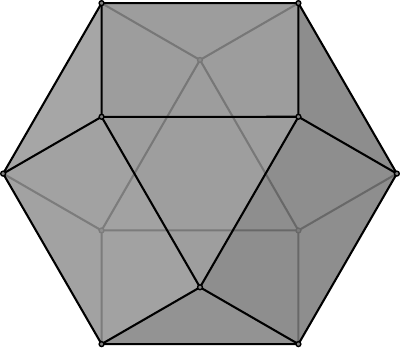}
\end{minipage}%
\begin{minipage}{0.5\textwidth}
\centering
\includegraphics[scale=1]{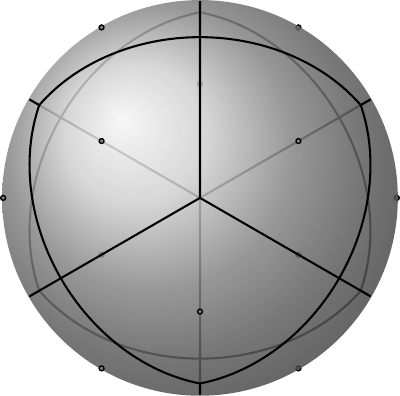}
\end{minipage}
\vskip -1pt plus 0.5fill
\begingroup
\setlength{\tabcolsep}{0pt}
\noindent\begin{tabular}{
>{\centering\arraybackslash}m{0.33\textwidth}
>{\centering\arraybackslash}m{0.33\textwidth}
>{\centering\arraybackslash}m{0.33\textwidth}}
\href{https://www.inf.fu-berlin.de/inst/ag-ti/software/DiscreteHopfFibration/gallery.html?f=hOxC2n/2/24cells_12tubes}{\includegraphics[scale=0.5]{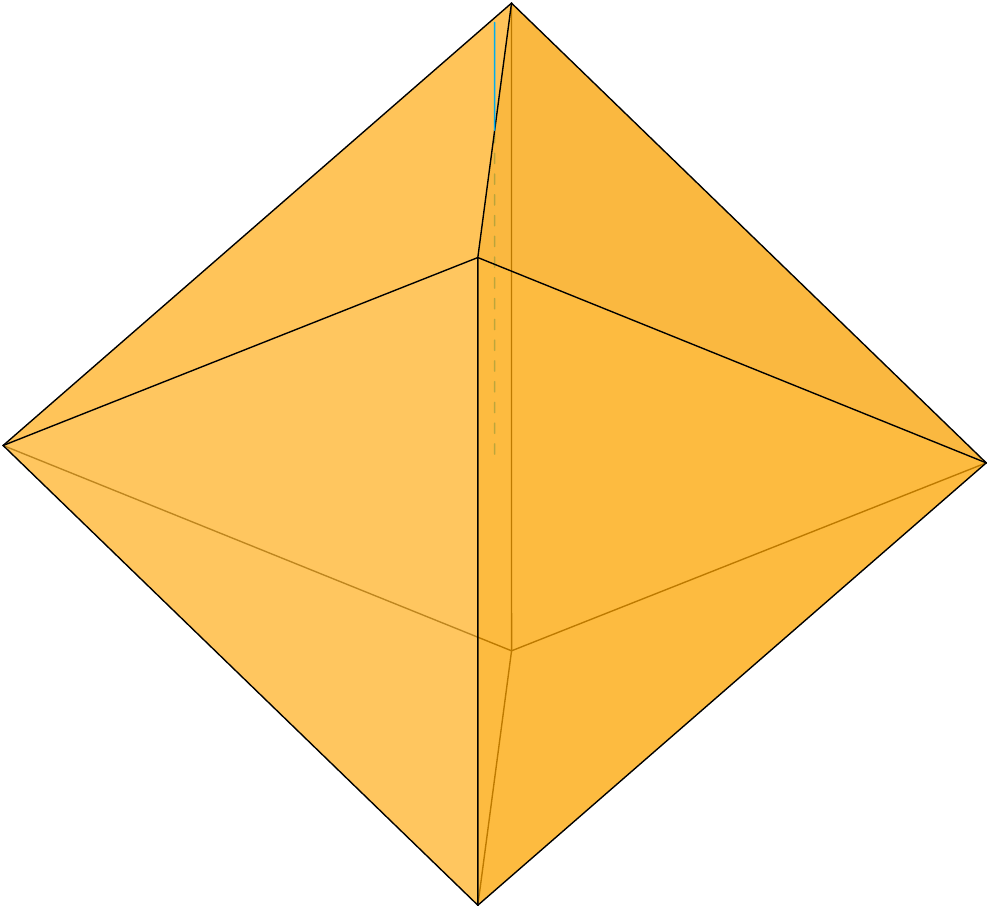}}
&
\href{https://www.inf.fu-berlin.de/inst/ag-ti/software/DiscreteHopfFibration/gallery.html?f=hOxC2n/2/72cells_12tubes}{\includegraphics[scale=0.8]{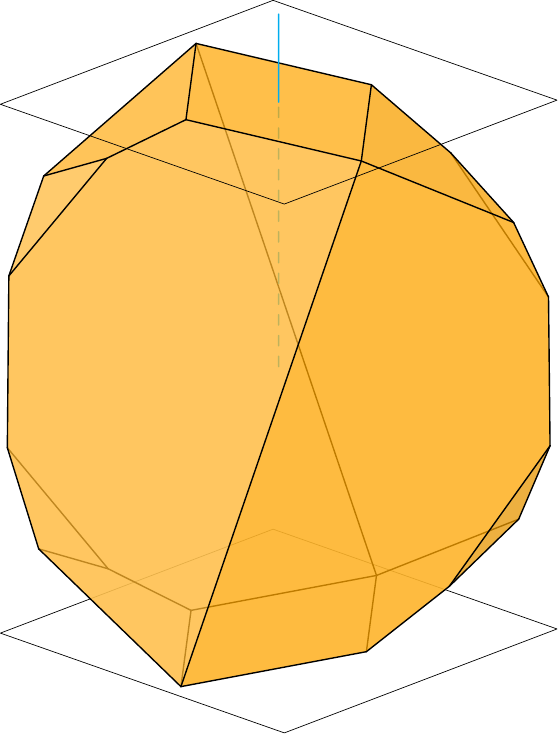}}
&
\href{https://www.inf.fu-berlin.de/inst/ag-ti/software/DiscreteHopfFibration/gallery.html?f=hOxC2n/2/96cells_12tubes}{\includegraphics[scale=0.8]{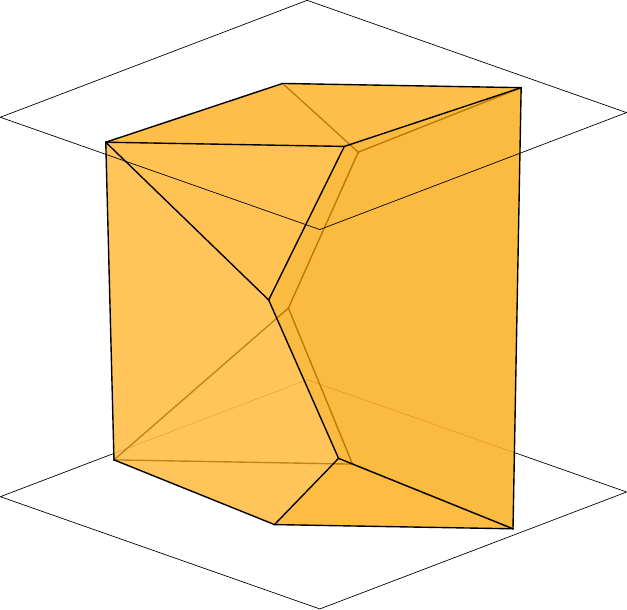}}
\\[1.5mm]
$n = 1$\break
$(\frac{k}{2} + \frac{1}{2})\cdot 2\pi$
&$n = 3$\break
$(\frac{k}{2} + \frac{1}{6})\cdot 2\pi$
&$n = 2$\break
$(\frac{1}{2} + \frac{1}{8})\cdot 2\pi$
\end{tabular}\nobreak

\vfill\nobreak
\noindent\begin{tabular}{
>{\centering\arraybackslash}m{0.33\textwidth}
>{\centering\arraybackslash}m{0.33\textwidth}
>{\centering\arraybackslash}m{0.33\textwidth}}
\href{https://www.inf.fu-berlin.de/inst/ag-ti/software/DiscreteHopfFibration/gallery.html?f=hOxC2n/2/120cells_12tubes}{\includegraphics[scale=0.8]{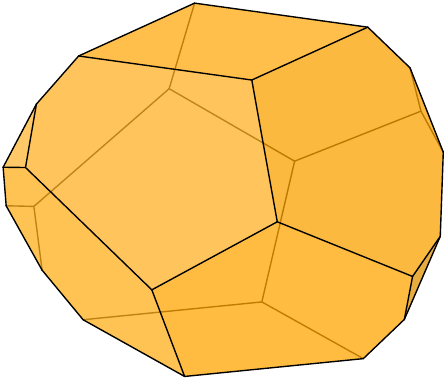}}
&
\href{https://www.inf.fu-berlin.de/inst/ag-ti/software/DiscreteHopfFibration/gallery.html?f=hOxC2n/2/168cells_12tubes}{\includegraphics[scale=0.8]{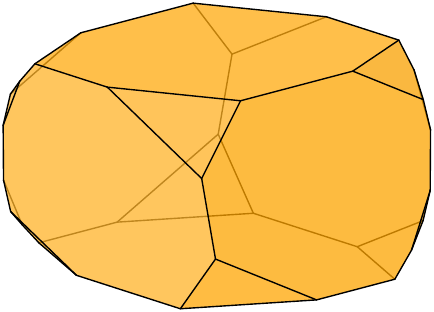}}
&
\href{https://www.inf.fu-berlin.de/inst/ag-ti/software/DiscreteHopfFibration/gallery.html?f=hOxC2n/2/192cells_12tubes}{\includegraphics[scale=0.8]{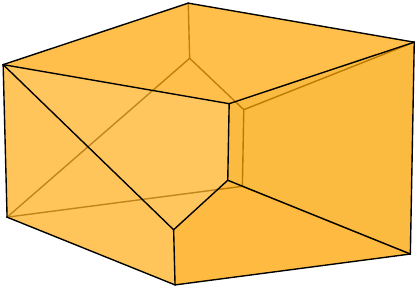}}
\\[1.5mm]
$n = 5$\break
$(\frac{k}{2} + \frac{1}{10})\cdot 2\pi$
&$n = 7$\break
$(\frac{k}{2} + \frac{1}{14})\cdot 2\pi$
&$n = 4$\break
$(\frac{1}{2} + \frac{1}{16})\cdot 2\pi$
\end{tabular}\nobreak

\vfill\nobreak
\noindent\begin{tabular}{
>{\centering\arraybackslash}m{0.33\textwidth}
>{\centering\arraybackslash}m{0.33\textwidth}
>{\centering\arraybackslash}m{0.33\textwidth}}
\href{https://www.inf.fu-berlin.de/inst/ag-ti/software/DiscreteHopfFibration/gallery.html?f=hOxC2n/2/216cells_12tubes}{\includegraphics[scale=0.8]{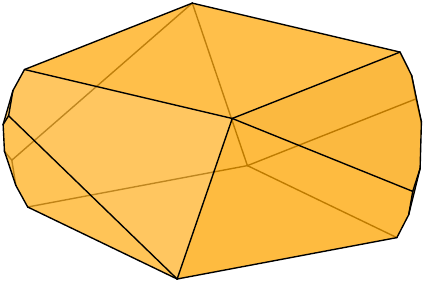}}
&
\href{https://www.inf.fu-berlin.de/inst/ag-ti/software/DiscreteHopfFibration/gallery.html?f=hOxC2n/2/264cells_12tubes}{\includegraphics[scale=0.8]{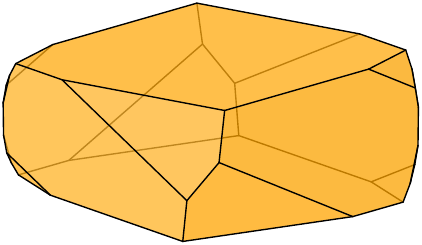}}
&
\href{https://www.inf.fu-berlin.de/inst/ag-ti/software/DiscreteHopfFibration/gallery.html?f=hOxC2n/2/288cells_12tubes}{\includegraphics[scale=0.8]{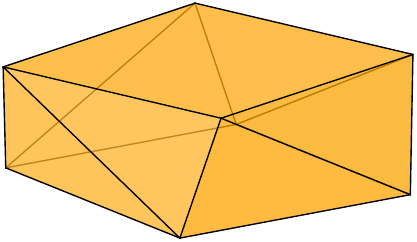}}
\\[1.5mm]
$n = 9$\break
$(\frac{k}{2} + \frac{1}{18})\cdot 2\pi$
&$n = 11$\break
$(\frac{k}{2} + \frac{1}{22})\cdot 2\pi$
&$n = 6$\break
$(\frac{1}{2} + \frac{1}{24})\cdot 2\pi$
\end{tabular}\nobreak

\vfill\nobreak
\noindent\begin{tabular}{
>{\centering\arraybackslash}m{0.33\textwidth}
>{\centering\arraybackslash}m{0.33\textwidth}
>{\centering\arraybackslash}m{0.33\textwidth}}
\href{https://www.inf.fu-berlin.de/inst/ag-ti/software/DiscreteHopfFibration/gallery.html?f=hOxC2n/2/312cells_12tubes}{\includegraphics[scale=0.8]{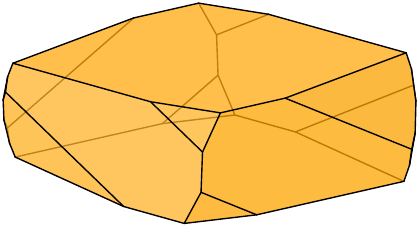}}
&
\href{https://www.inf.fu-berlin.de/inst/ag-ti/software/DiscreteHopfFibration/gallery.html?f=hOxC2n/2/360cells_12tubes}{\includegraphics[scale=0.8]{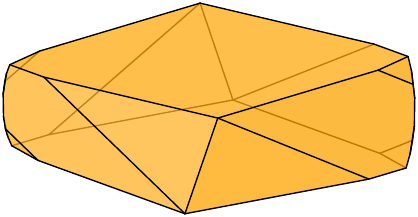}}
&
\href{https://www.inf.fu-berlin.de/inst/ag-ti/software/DiscreteHopfFibration/gallery.html?f=hOxC2n/2/384cells_12tubes}{\includegraphics[scale=0.8]{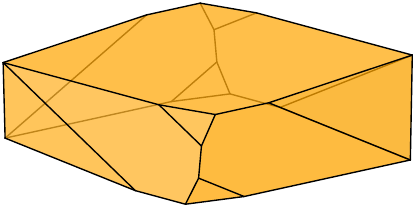}}
\\[1.5mm]
$n = 13$\break
$(\frac{k}{2} + \frac{1}{26})\cdot 2\pi$
&$n = 15$\break
$(\frac{k}{2} + \frac{1}{30})\cdot 2\pi$
&$n = 8$\break
$(\frac{1}{2} + \frac{1}{32})\cdot 2\pi$
\end{tabular}\nobreak

\begin{figure}[H]
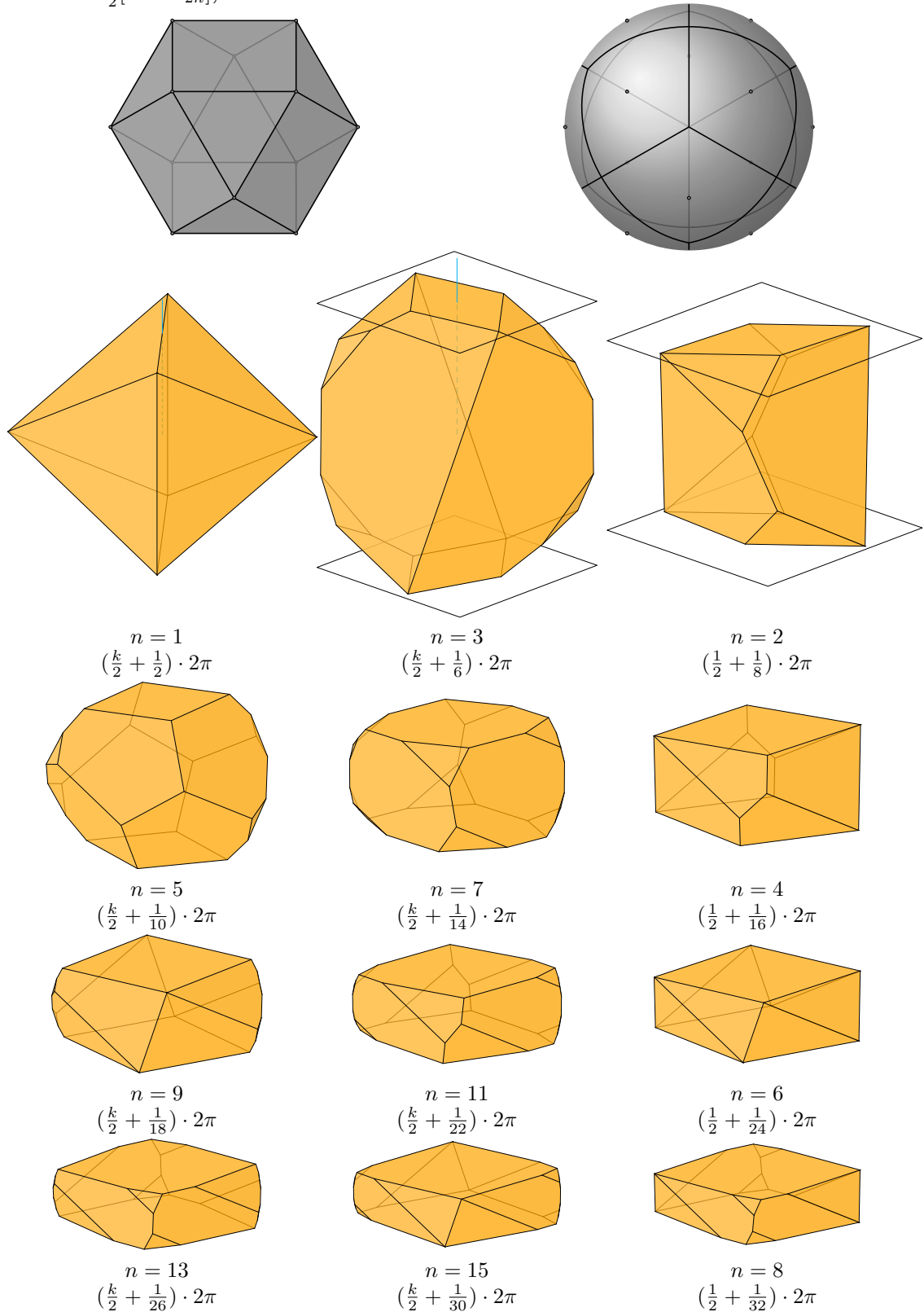

\caption{
$G=\pm\frac12[O\times C_{2n}]$,
$G^h={+O}$,
2-fold rotation center
$p=\frac1{\sqrt2}(0, 1, 1)$.
The $G^h$-orbit polytope is a cuboctahedron.
The corresponding Voronoi diagram on the 2-sphere
has the structure of a rhombic dodecahedron.
$H=\langle [i_O, e_{2n}], [1, e_n] \rangle$.
12 tubes,
each with $\frac{4n}{\gcd(n-1, 2)}$ cells.
Alternate group: $\pm\frac12[O\times \overline{D}_{4n}]$.
When $n = 1$,
the cells of a tube are disconnected from each other.
For $n=2$ and $n=3$,
we have drawn squares in the planes around the top and bottom face,
to indicate that these faces are horizontal and parallel.
}
\label{fig:hOxC2n_2fold}
\end{figure}\endgroup
\newpage

\subsection{\texorpdfstring{$\pm[T\times C_n]$}{+-[TxCn]}}
\subsubsection{\texorpdfstring{$\pm[T\times C_n]$}{+-[TxCn]}, 3-fold rotation center}
\begin{minipage}{0.5\textwidth}
\centering
\includegraphics[scale=1]{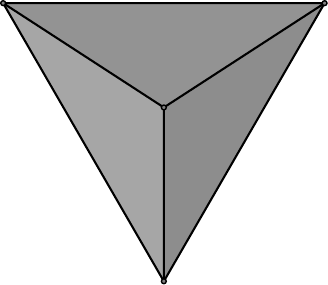}
\end{minipage}%
\begin{minipage}{0.5\textwidth}
\centering
\includegraphics[scale=1]{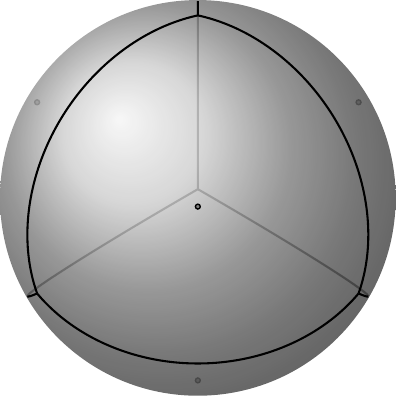}
\end{minipage}
\vskip 5pt plus 0.5fill
\begingroup
\setlength{\tabcolsep}{0pt}
\noindent\begin{tabular}{
>{\centering\arraybackslash}m{0.33\textwidth}
>{\centering\arraybackslash}m{0.33\textwidth}
>{\centering\arraybackslash}m{0.33\textwidth}}
\href{https://www.inf.fu-berlin.de/inst/ag-ti/software/DiscreteHopfFibration/gallery.html?f=TxCn/3/24cells_4tubes}{\includegraphics[scale=0.6]{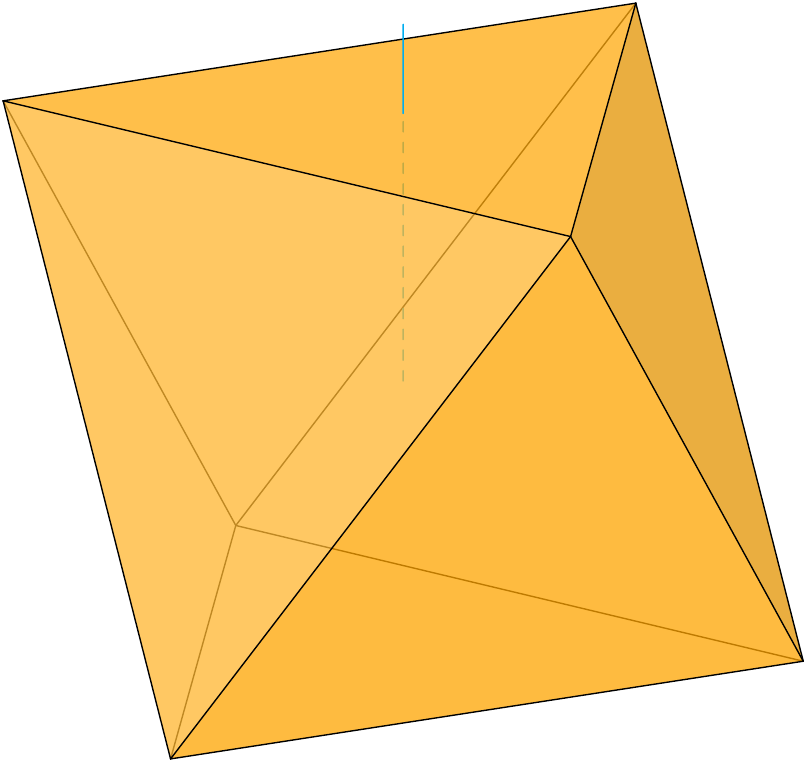}}
&
\href{https://www.inf.fu-berlin.de/inst/ag-ti/software/DiscreteHopfFibration/gallery.html?f=TxCn/3/48cells_4tubes}{\includegraphics[scale=0.6]{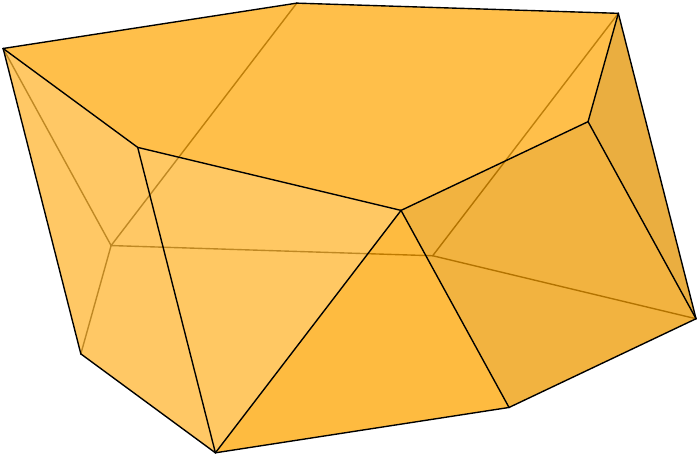}}
&
\href{https://www.inf.fu-berlin.de/inst/ag-ti/software/DiscreteHopfFibration/gallery.html?f=TxCn/3/72cells_4tubes}{\includegraphics[scale=0.6]{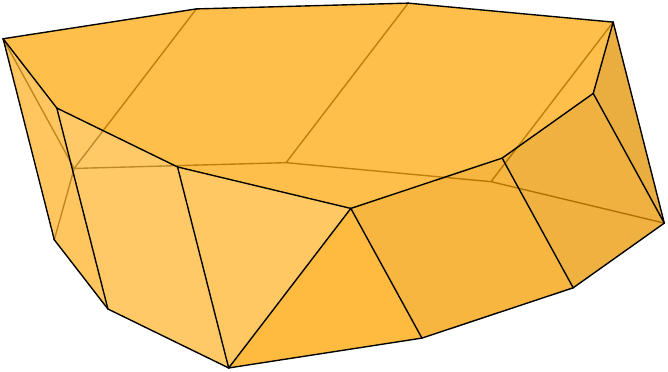}}
\\[1.5mm]
$n = 1, 3$\break
$(\frac{2}{3} + \frac{1}{6})\cdot 2\pi$
&$n = 2, 6$\break
$(\frac{1}{3} + \frac{1}{12})\cdot 2\pi$
&$n = 9$\break
$(\frac{k}{3} + \frac{1}{18})\cdot 2\pi$
\end{tabular}\nobreak

\vfill\nobreak
\noindent\begin{tabular}{
>{\centering\arraybackslash}m{0.33\textwidth}
>{\centering\arraybackslash}m{0.33\textwidth}
>{\centering\arraybackslash}m{0.33\textwidth}}
\href{https://www.inf.fu-berlin.de/inst/ag-ti/software/DiscreteHopfFibration/gallery.html?f=TxCn/3/96cells_4tubes}{\includegraphics[scale=0.6]{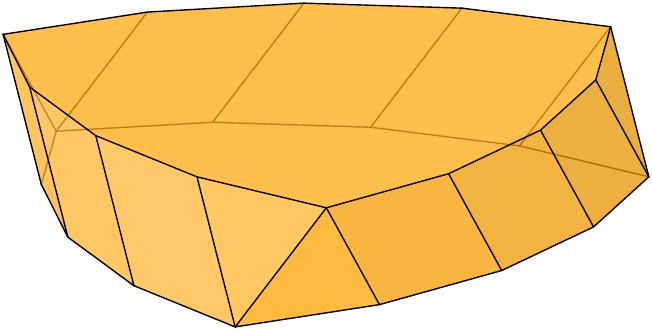}}
&
\href{https://www.inf.fu-berlin.de/inst/ag-ti/software/DiscreteHopfFibration/gallery.html?f=TxCn/3/120cells_4tubes}{\includegraphics[scale=0.6]{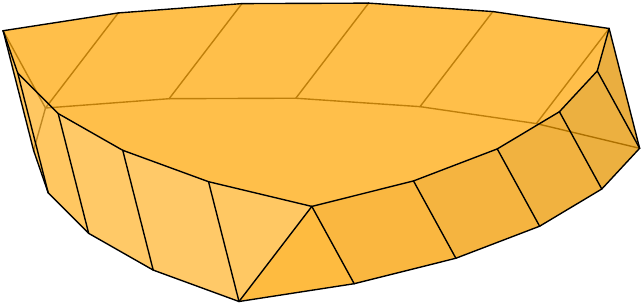}}
&
\href{https://www.inf.fu-berlin.de/inst/ag-ti/software/DiscreteHopfFibration/gallery.html?f=TxCn/3/144cells_4tubes}{\includegraphics[scale=0.6]{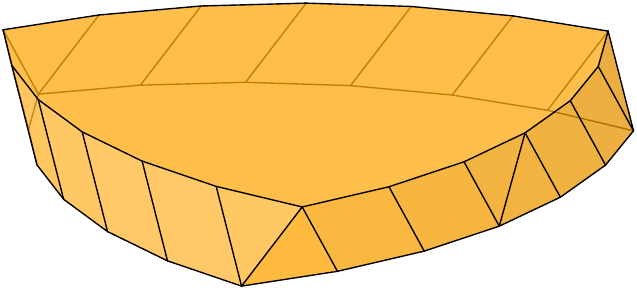}}
\\[1.5mm]
$n = 4, 12$\break
$(\frac{2}{3} + \frac{1}{24})\cdot 2\pi$
&$n = 5, 15$\break
$(\frac{1}{3} + \frac{1}{30})\cdot 2\pi$
&$n = 18$\break
$(\frac{k}{3} + \frac{1}{36})\cdot 2\pi$
\end{tabular}\nobreak

\vfill\nobreak
\noindent\begin{tabular}{
>{\centering\arraybackslash}m{0.33\textwidth}
>{\centering\arraybackslash}m{0.33\textwidth}
>{\centering\arraybackslash}m{0.33\textwidth}}
\href{https://www.inf.fu-berlin.de/inst/ag-ti/software/DiscreteHopfFibration/gallery.html?f=TxCn/3/168cells_4tubes}{\includegraphics[scale=0.6]{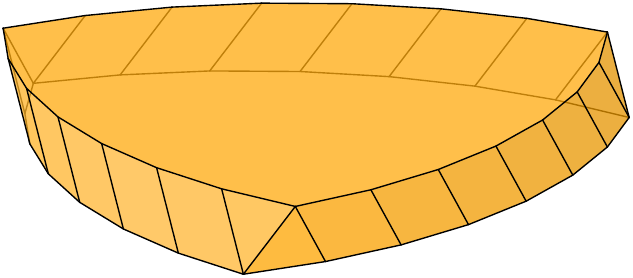}}
&
\href{https://www.inf.fu-berlin.de/inst/ag-ti/software/DiscreteHopfFibration/gallery.html?f=TxCn/3/192cells_4tubes}{\includegraphics[scale=0.6]{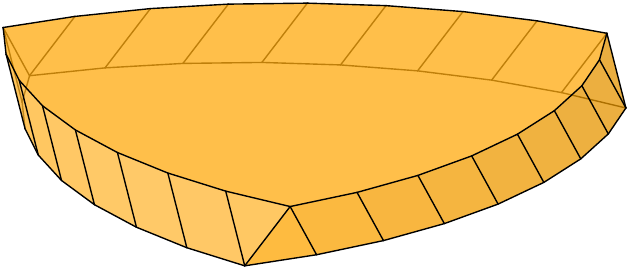}}
&
\href{https://www.inf.fu-berlin.de/inst/ag-ti/software/DiscreteHopfFibration/gallery.html?f=TxCn/3/216cells_4tubes}{\includegraphics[scale=0.6]{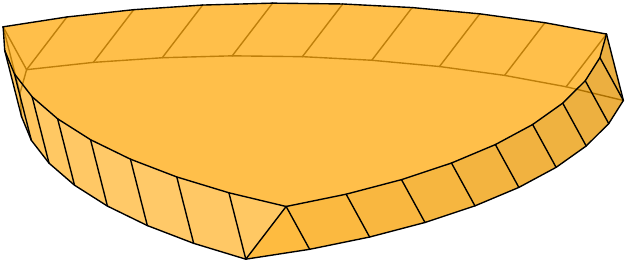}}
\\[1.5mm]
$n = 7, 21$\break
$(\frac{2}{3} + \frac{1}{42})\cdot 2\pi$
&$n = 8, 24$\break
$(\frac{1}{3} + \frac{1}{48})\cdot 2\pi$
&$n = 27$\break
$(\frac{k}{3} + \frac{1}{54})\cdot 2\pi$
\end{tabular}\nobreak

\vfill\nobreak
\noindent\begin{tabular}{
>{\centering\arraybackslash}m{0.33\textwidth}
>{\centering\arraybackslash}m{0.33\textwidth}
>{\centering\arraybackslash}m{0.33\textwidth}}
\href{https://www.inf.fu-berlin.de/inst/ag-ti/software/DiscreteHopfFibration/gallery.html?f=TxCn/3/240cells_4tubes}{\includegraphics[scale=0.6]{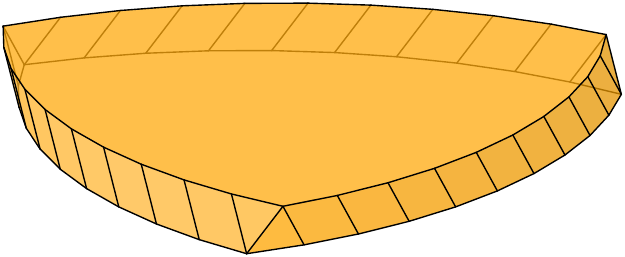}}
&
\href{https://www.inf.fu-berlin.de/inst/ag-ti/software/DiscreteHopfFibration/gallery.html?f=TxCn/3/264cells_4tubes}{\includegraphics[scale=0.6]{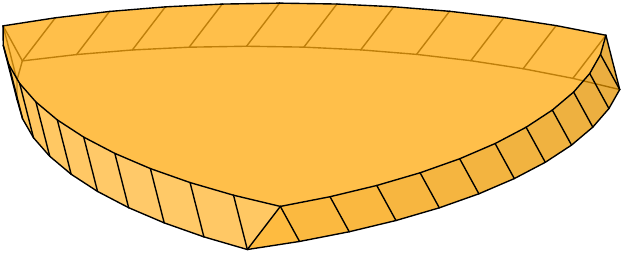}}
&
\href{https://www.inf.fu-berlin.de/inst/ag-ti/software/DiscreteHopfFibration/gallery.html?f=TxCn/3/288cells_4tubes}{\includegraphics[scale=0.6]{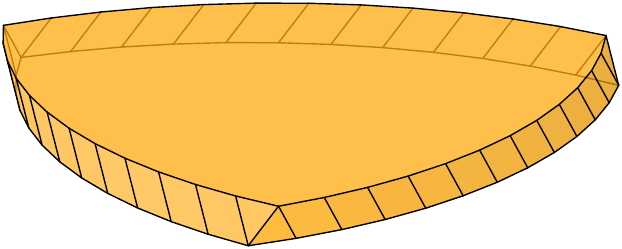}}
\\[1.5mm]
$n = 10, 30$\break
$(\frac{2}{3} + \frac{1}{60})\cdot 2\pi$
&$n = 11, 33$\break
$(\frac{1}{3} + \frac{1}{66})\cdot 2\pi$
&$n = 36$\break
$(\frac{k}{3} + \frac{1}{72})\cdot 2\pi$
\end{tabular}\nobreak

\vfill\nobreak
\noindent\begin{tabular}{
>{\centering\arraybackslash}m{0.33\textwidth}
>{\centering\arraybackslash}m{0.33\textwidth}
>{\centering\arraybackslash}m{0.33\textwidth}}
\href{https://www.inf.fu-berlin.de/inst/ag-ti/software/DiscreteHopfFibration/gallery.html?f=TxCn/3/312cells_4tubes}{\includegraphics[scale=0.6]{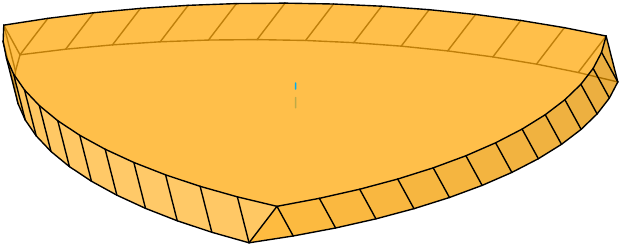}}
&
\href{https://www.inf.fu-berlin.de/inst/ag-ti/software/DiscreteHopfFibration/gallery.html?f=TxCn/3/336cells_4tubes}{\includegraphics[scale=0.6]{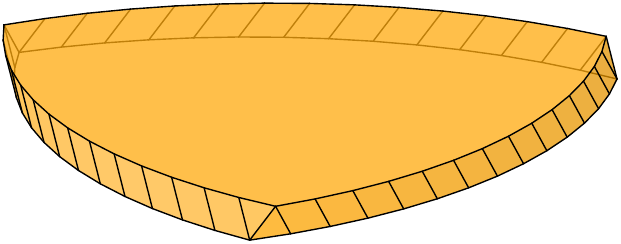}}
&
\href{https://www.inf.fu-berlin.de/inst/ag-ti/software/DiscreteHopfFibration/gallery.html?f=TxCn/3/360cells_4tubes}{\includegraphics[scale=0.6]{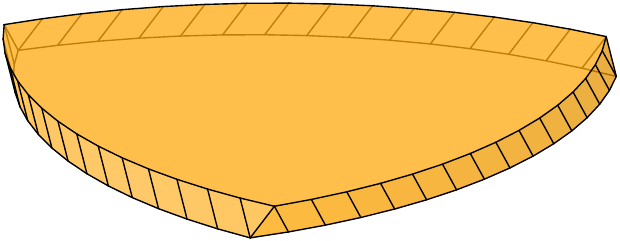}}
\\[1.5mm]
$n = 13, 39$\break
$(\frac{2}{3} + \frac{1}{78})\cdot 2\pi$
&$n = 14, 42$\break
$(\frac{1}{3} + \frac{1}{84})\cdot 2\pi$
&$n = 45$\break
$(\frac{k}{3} + \frac{1}{90})\cdot 2\pi$
\end{tabular}\nobreak

\begin{figure}[H]
\caption{
$G=\pm[T\times C_n]$,
$G^h={+T}$,
3-fold (type I) rotation center
$p=\frac1{\sqrt3}(-1, -1, -1)$.
$H=\langle [-\omega, 1], [1, e_n] \rangle$.
4 tubes,
each with $\mathrm{lcm}(2n, 6)$ cells.
Alternate group: $\pm\frac12[O\times D_{2n}]$.
}
\label{fig:TxCn_3fold}
\end{figure}\endgroup
\newpage

\subsubsection{\texorpdfstring{$\pm[T\times C_n]$}{+-[TxCn]}, 2-fold
  rotation center}
\label{T-2fold}
\begin{minipage}{0.5\textwidth}
\centering
\includegraphics[scale=1]{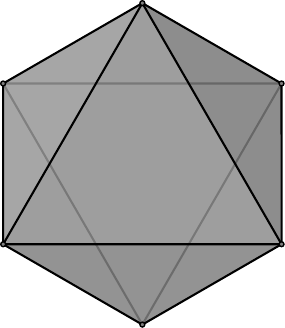}
\end{minipage}%
\begin{minipage}{0.5\textwidth}
\centering
\includegraphics[scale=1]{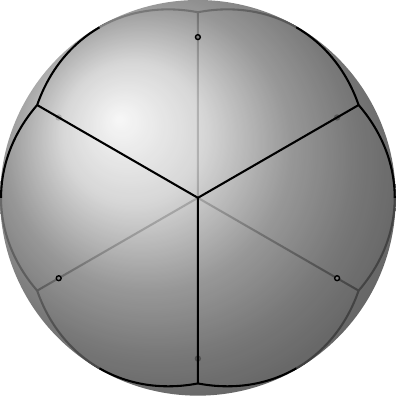}
\end{minipage}
\vskip 5pt plus 0.5fill
\begingroup
\setlength{\tabcolsep}{0pt}
\noindent\begin{tabular}{
>{\centering\arraybackslash}m{0.33\textwidth}
>{\centering\arraybackslash}m{0.33\textwidth}
>{\centering\arraybackslash}m{0.33\textwidth}}
\href{https://www.inf.fu-berlin.de/inst/ag-ti/software/DiscreteHopfFibration/gallery.html?f=TxCn/2/24cells_6tubes}{\includegraphics[scale=0.4]{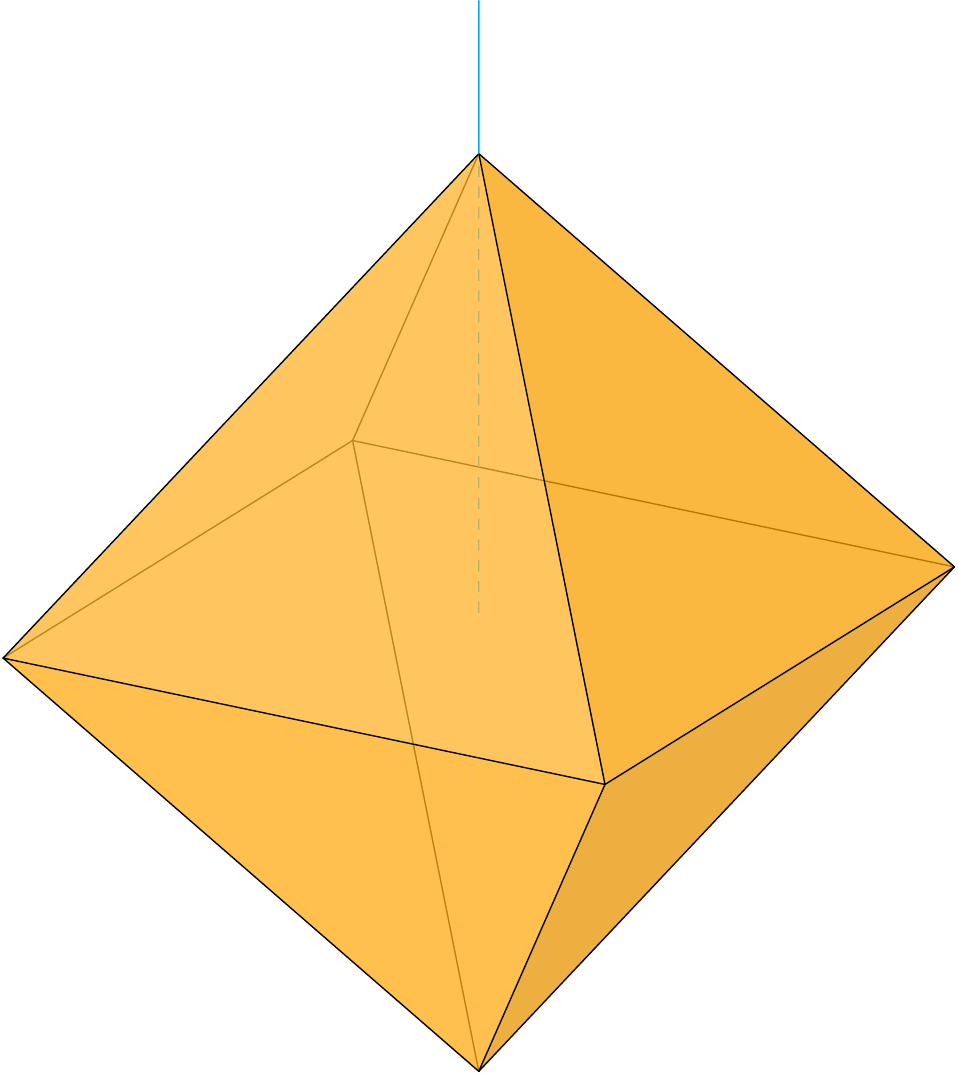}}
&
\href{https://www.inf.fu-berlin.de/inst/ag-ti/software/DiscreteHopfFibration/gallery.html?f=TxCn/2/48cells_6tubes}{\includegraphics[scale=0.6]{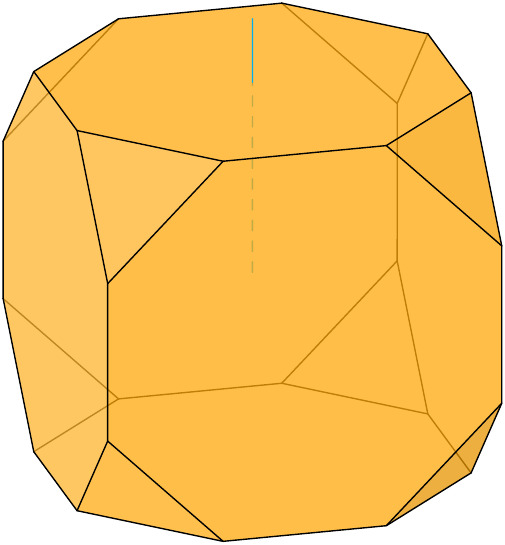}}
&
\href{https://www.inf.fu-berlin.de/inst/ag-ti/software/DiscreteHopfFibration/gallery.html?f=TxCn/2/72cells_6tubes}{\includegraphics[scale=0.6]{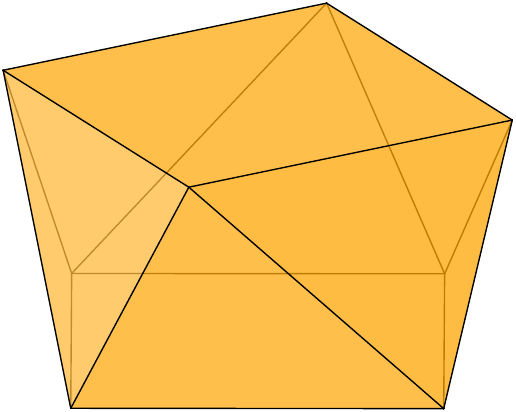}}
\\[1.5mm]
$n = 1, 2$\break
$(\frac{1}{2} + \frac{1}{4})\cdot 2\pi$
&$n = 4$\break
$(\frac{k}{2} + \frac{1}{8})\cdot 2\pi$
&$n = 3, 6$\break
$(\frac{1}{2} + \frac{1}{12})\cdot 2\pi$
\end{tabular}\nobreak

\vfill\nobreak
\noindent\begin{tabular}{
>{\centering\arraybackslash}m{0.33\textwidth}
>{\centering\arraybackslash}m{0.33\textwidth}
>{\centering\arraybackslash}m{0.33\textwidth}}
\href{https://www.inf.fu-berlin.de/inst/ag-ti/software/DiscreteHopfFibration/gallery.html?f=TxCn/2/96cells_6tubes}{\includegraphics[scale=0.6]{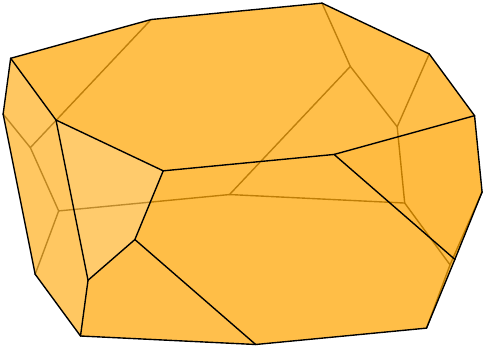}}
&
\href{https://www.inf.fu-berlin.de/inst/ag-ti/software/DiscreteHopfFibration/gallery.html?f=TxCn/2/120cells_6tubes}{\includegraphics[scale=0.6]{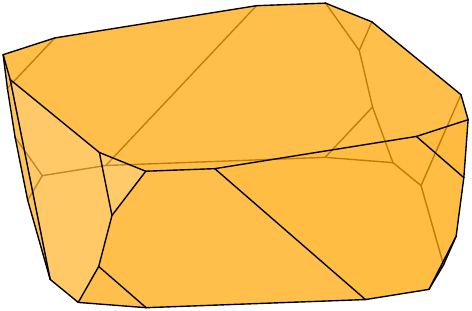}}
&
\href{https://www.inf.fu-berlin.de/inst/ag-ti/software/DiscreteHopfFibration/gallery.html?f=TxCn/2/144cells_6tubes}{\includegraphics[scale=0.6]{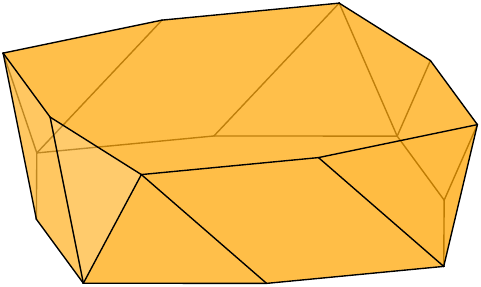}}
\\[1.5mm]
$n = 8$\break
$(\frac{k}{2} + \frac{1}{16})\cdot 2\pi$
&$n = 5, 10$\break
$(\frac{1}{2} + \frac{1}{20})\cdot 2\pi$
&$n = 12$\break
$(\frac{k}{2} + \frac{1}{24})\cdot 2\pi$
\end{tabular}\nobreak

\vfill\nobreak
\noindent\begin{tabular}{
>{\centering\arraybackslash}m{0.33\textwidth}
>{\centering\arraybackslash}m{0.33\textwidth}
>{\centering\arraybackslash}m{0.33\textwidth}}
\href{https://www.inf.fu-berlin.de/inst/ag-ti/software/DiscreteHopfFibration/gallery.html?f=TxCn/2/168cells_6tubes}{\includegraphics[scale=0.6]{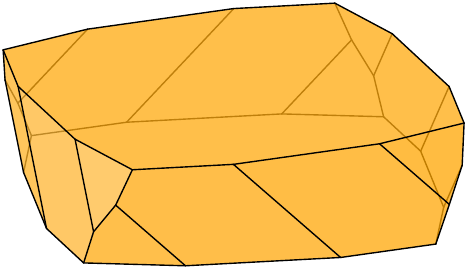}}
&
\href{https://www.inf.fu-berlin.de/inst/ag-ti/software/DiscreteHopfFibration/gallery.html?f=TxCn/2/192cells_6tubes}{\includegraphics[scale=0.6]{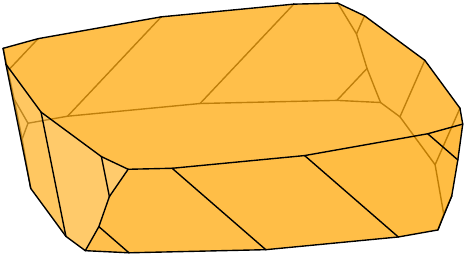}}
&
\href{https://www.inf.fu-berlin.de/inst/ag-ti/software/DiscreteHopfFibration/gallery.html?f=TxCn/2/216cells_6tubes}{\includegraphics[scale=0.6]{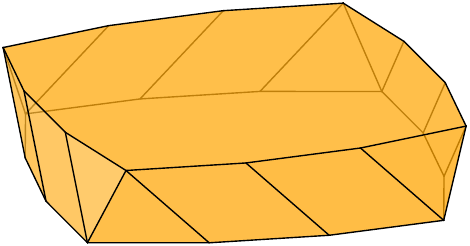}}
\\[1.5mm]
$n = 7, 14$\break
$(\frac{1}{2} + \frac{1}{28})\cdot 2\pi$
&$n = 16$\break
$(\frac{k}{2} + \frac{1}{32})\cdot 2\pi$
&$n = 9, 18$\break
$(\frac{1}{2} + \frac{1}{36})\cdot 2\pi$
\end{tabular}\nobreak

\vfill\nobreak
\noindent\begin{tabular}{
>{\centering\arraybackslash}m{0.33\textwidth}
>{\centering\arraybackslash}m{0.33\textwidth}
>{\centering\arraybackslash}m{0.33\textwidth}}
\href{https://www.inf.fu-berlin.de/inst/ag-ti/software/DiscreteHopfFibration/gallery.html?f=TxCn/2/240cells_6tubes}{\includegraphics[scale=0.6]{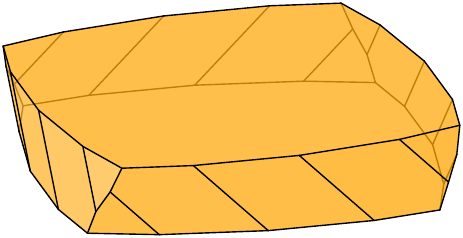}}
&
\href{https://www.inf.fu-berlin.de/inst/ag-ti/software/DiscreteHopfFibration/gallery.html?f=TxCn/2/264cells_6tubes}{\includegraphics[scale=0.6]{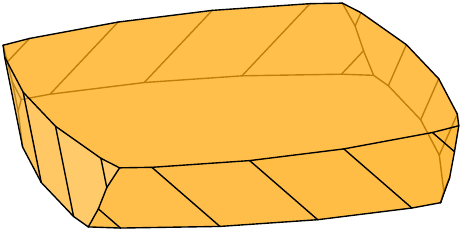}}
&
\href{https://www.inf.fu-berlin.de/inst/ag-ti/software/DiscreteHopfFibration/gallery.html?f=TxCn/2/288cells_6tubes}{\includegraphics[scale=0.6]{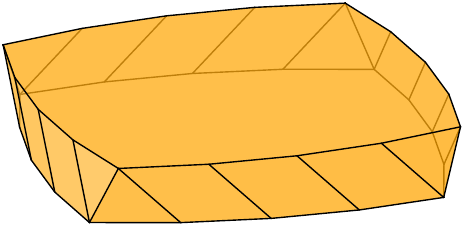}}
\\[1.5mm]
$n = 20$\break
$(\frac{k}{2} + \frac{1}{40})\cdot 2\pi$
&$n = 11, 22$\break
$(\frac{1}{2} + \frac{1}{44})\cdot 2\pi$
&$n = 24$\break
$(\frac{k}{2} + \frac{1}{48})\cdot 2\pi$
\end{tabular}\nobreak

\vfill\nobreak
\noindent\begin{tabular}{
>{\centering\arraybackslash}m{0.33\textwidth}
>{\centering\arraybackslash}m{0.33\textwidth}
>{\centering\arraybackslash}m{0.33\textwidth}}
\href{https://www.inf.fu-berlin.de/inst/ag-ti/software/DiscreteHopfFibration/gallery.html?f=TxCn/2/312cells_6tubes}{\includegraphics[scale=0.6]{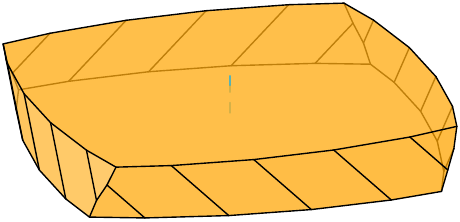}}
&
\href{https://www.inf.fu-berlin.de/inst/ag-ti/software/DiscreteHopfFibration/gallery.html?f=TxCn/2/336cells_6tubes}{\includegraphics[scale=0.6]{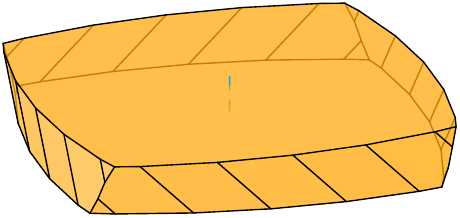}}
&
\href{https://www.inf.fu-berlin.de/inst/ag-ti/software/DiscreteHopfFibration/gallery.html?f=TxCn/2/360cells_6tubes}{\includegraphics[scale=0.6]{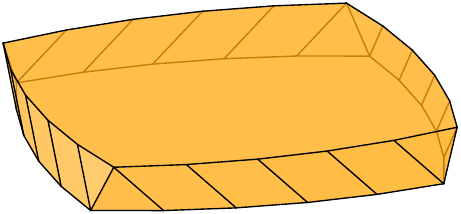}}
\\[1.5mm]
$n = 13, 26$\break
$(\frac{1}{2} + \frac{1}{52})\cdot 2\pi$
&$n = 28$\break
$(\frac{k}{2} + \frac{1}{56})\cdot 2\pi$
&$n = 15, 30$\break
$(\frac{1}{2} + \frac{1}{60})\cdot 2\pi$
\end{tabular}\nobreak

\begin{figure}[H]
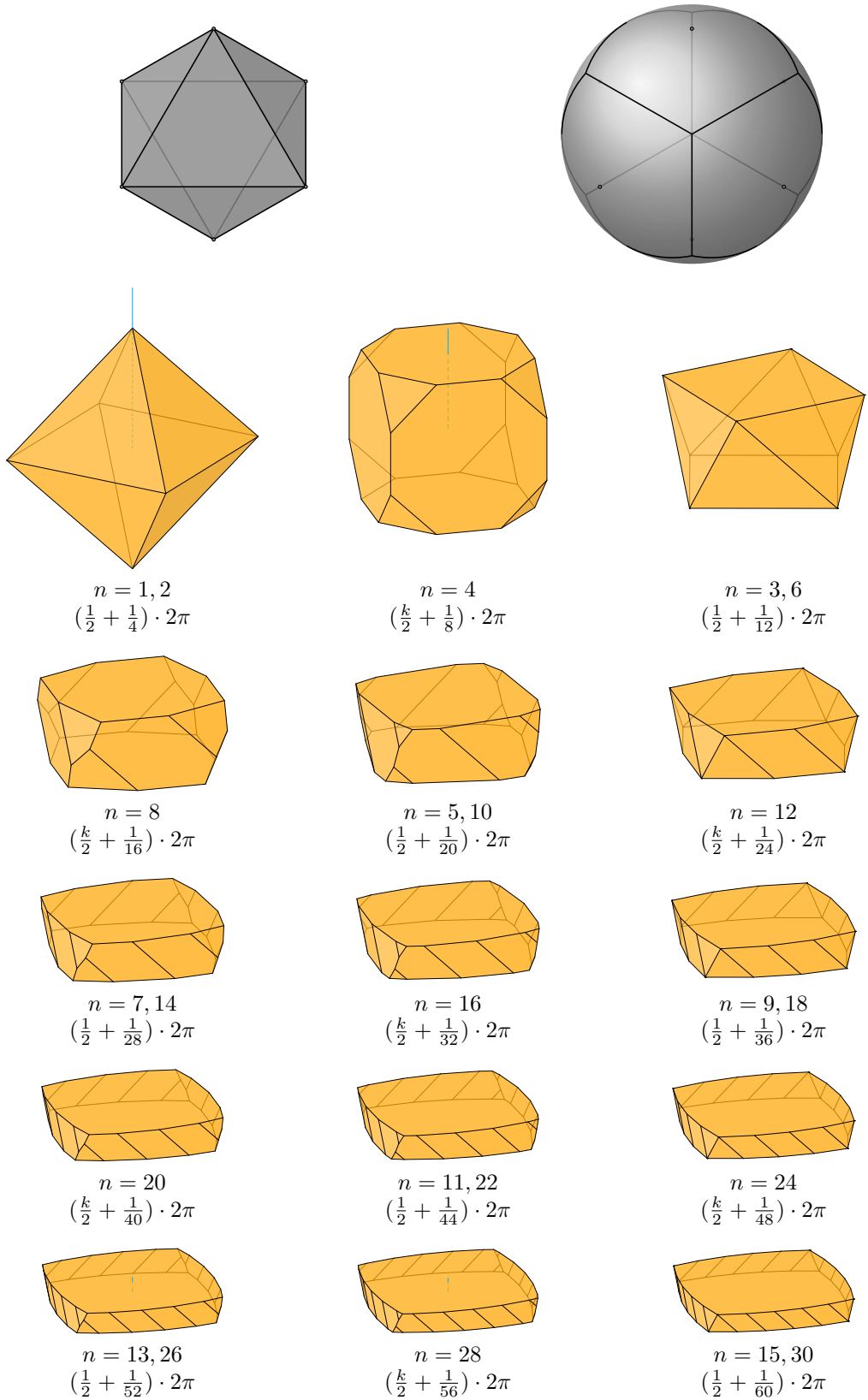

\caption{
$G=\pm[T\times C_n]$,
$G^h={+T}$,
2-fold rotation center
$p=(1, 0, 0)$.
$H=\langle [i, 1], [1, e_n] \rangle$.
6 tubes,
each with $\mathrm{lcm}(2n, 4)$ cells.
Alternate groups: $\pm[T\times D_{2n}]$ and $\pm\frac12[O\times D_{2n}]$ (also their common supergroup $\pm[O\times D_{2n}]$)
if $n \equiv 0 \mod 4$, else $\pm[T\times D_{2n}]$ (and its supergroup $\pm\frac12[O\times\overline{D}_{4n}]$).
When $n=1$ or $n=2$,
consecutive cells of a tube touch only via vertices.
}
\label{fig:TxCn_2fold}
\end{figure}\endgroup
\newpage

\subsection{\texorpdfstring{$\pm\frac13[T\times C_{3n}]$}{+-1/3[TxC3n]}}
\subsubsection{\texorpdfstring{$\pm\frac13[T\times C_{3n}]$}{+-1/3[TxC3n]}, 3-fold (type I) rotation center}
\begin{minipage}{0.5\textwidth}
\centering
\includegraphics[scale=1]{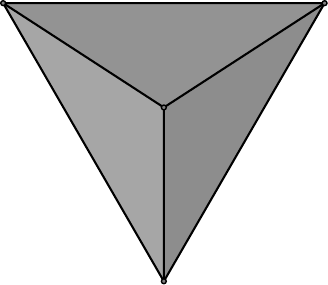}
\end{minipage}%
\begin{minipage}{0.5\textwidth}
\centering
\includegraphics[scale=1]{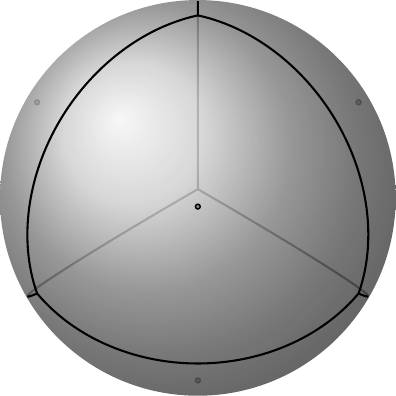}
\end{minipage}
\vskip 5pt plus 0.5fill
\begingroup
\setlength{\tabcolsep}{0pt}
\noindent\begin{tabular}{
>{\centering\arraybackslash}m{0.33\textwidth}
>{\centering\arraybackslash}m{0.33\textwidth}
>{\centering\arraybackslash}m{0.33\textwidth}}
\href{https://www.inf.fu-berlin.de/inst/ag-ti/software/DiscreteHopfFibration/gallery.html?f=tTxC3n/3/8cells_4tubes}{\vbox{\vskip-3mm \includegraphics[scale=0.3]{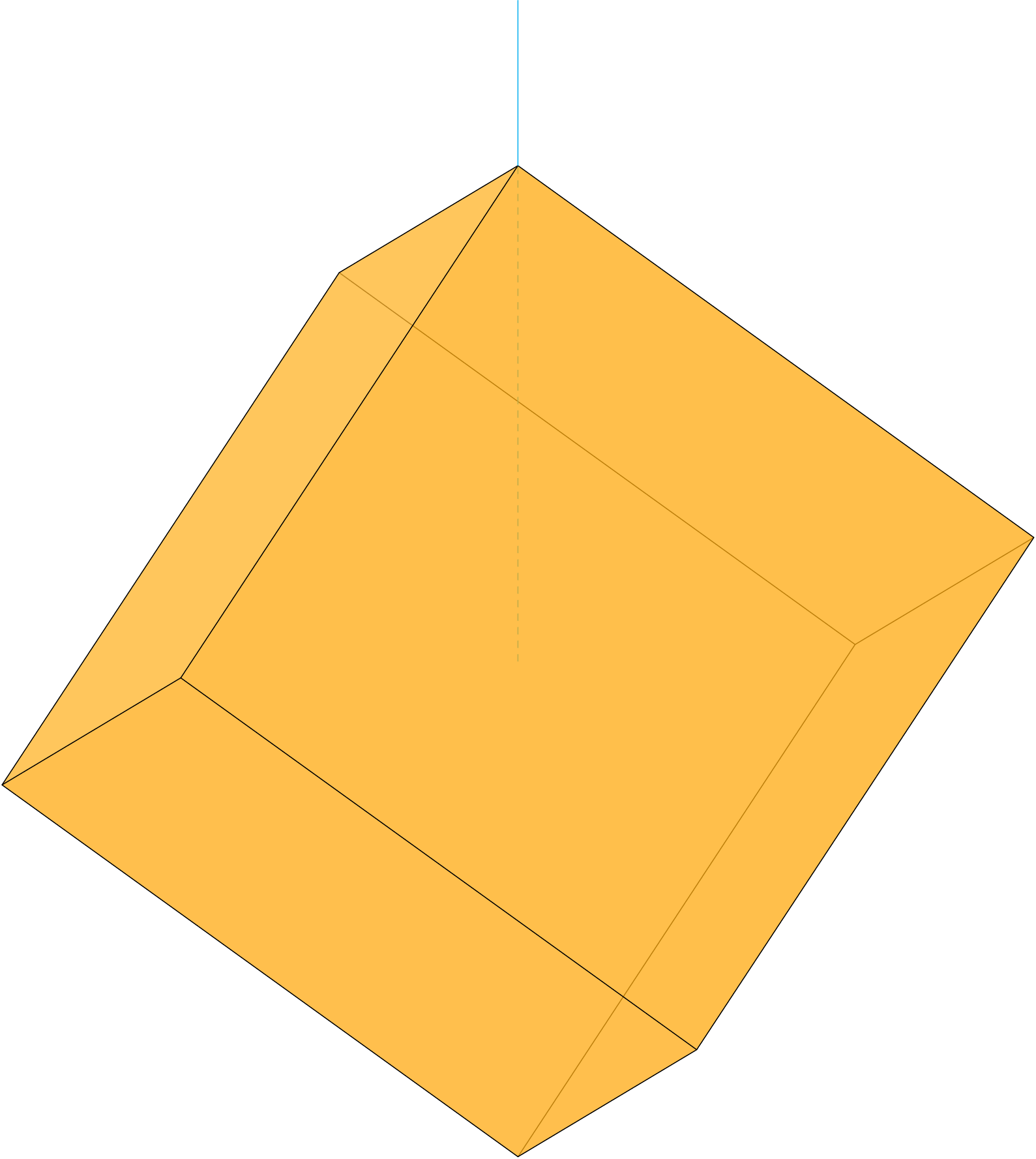}}}
&
\href{https://www.inf.fu-berlin.de/inst/ag-ti/software/DiscreteHopfFibration/gallery.html?f=tTxC3n/3/32cells_4tubes}{\vbox{\vskip-3mm \includegraphics[scale=0.6]{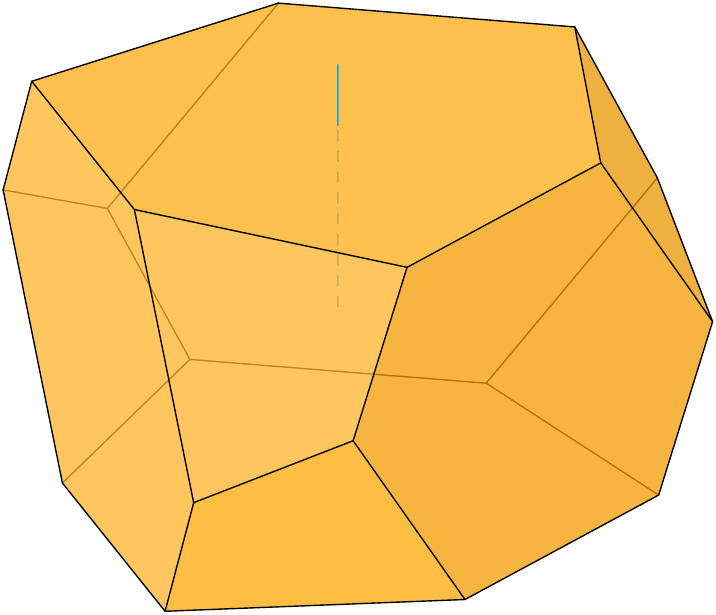}}}
&
\href{https://www.inf.fu-berlin.de/inst/ag-ti/software/DiscreteHopfFibration/gallery.html?f=tTxC3n/3/48cells_4tubes}{\includegraphics[scale=0.6]{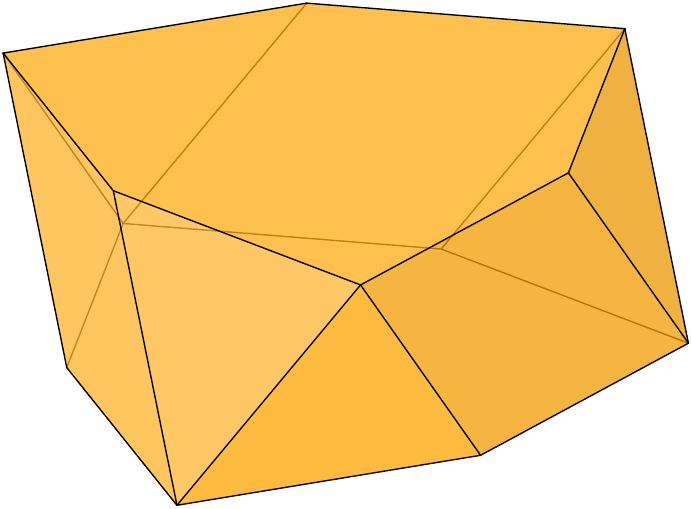}}
\\[1.5mm]
$n = 1$\break
$(\frac{k}{3} + \frac{1}{2})\cdot 2\pi$
&$n = 4$\break
$(\frac{k}{3} + \frac{1}{8})\cdot 2\pi$
&$n = 2$\break
$(\frac{2}{3} + \frac{1}{12})\cdot 2\pi$
\end{tabular}\nobreak

\vfill\nobreak
\noindent\begin{tabular}{
>{\centering\arraybackslash}m{0.33\textwidth}
>{\centering\arraybackslash}m{0.33\textwidth}
>{\centering\arraybackslash}m{0.33\textwidth}}
\href{https://www.inf.fu-berlin.de/inst/ag-ti/software/DiscreteHopfFibration/gallery.html?f=tTxC3n/3/56cells_4tubes}{\includegraphics[scale=0.6]{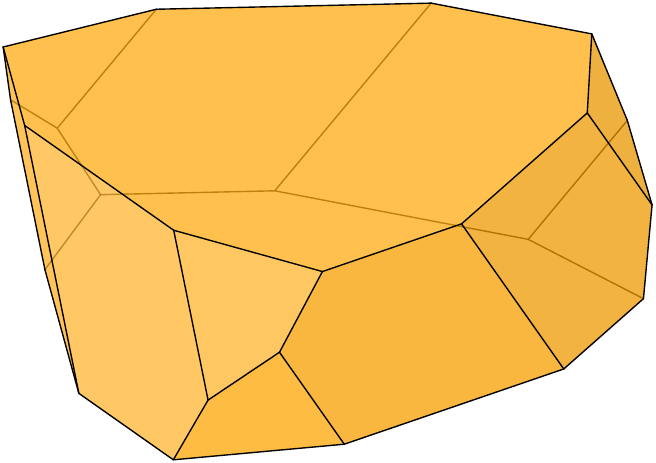}}
&
\href{https://www.inf.fu-berlin.de/inst/ag-ti/software/DiscreteHopfFibration/gallery.html?f=tTxC3n/3/72cells_4tubes}{\includegraphics[scale=0.6]{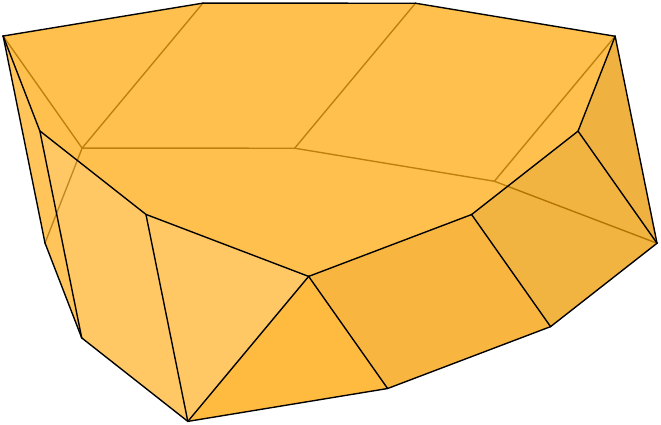}}
&
\href{https://www.inf.fu-berlin.de/inst/ag-ti/software/DiscreteHopfFibration/gallery.html?f=tTxC3n/3/80cells_4tubes}{\includegraphics[scale=0.6]{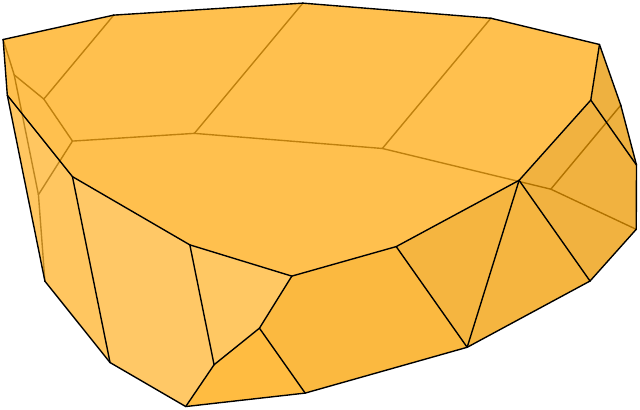}}
\\[1.5mm]
$n = 7$\break
$(\frac{k}{3} + \frac{1}{14})\cdot 2\pi$
&$n = 3$\break
$(\frac{1}{3} + \frac{1}{18})\cdot 2\pi$
&$n = 10$\break
$(\frac{k}{3} + \frac{1}{20})\cdot 2\pi$
\end{tabular}\nobreak

\vfill\nobreak
\noindent\begin{tabular}{
>{\centering\arraybackslash}m{0.33\textwidth}
>{\centering\arraybackslash}m{0.33\textwidth}
>{\centering\arraybackslash}m{0.33\textwidth}}
\href{https://www.inf.fu-berlin.de/inst/ag-ti/software/DiscreteHopfFibration/gallery.html?f=tTxC3n/3/104cells_4tubes}{\includegraphics[scale=0.6]{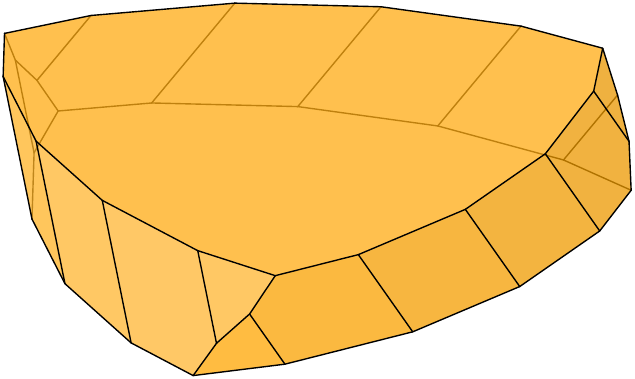}}
&
\href{https://www.inf.fu-berlin.de/inst/ag-ti/software/DiscreteHopfFibration/gallery.html?f=tTxC3n/3/120cells_4tubes}{\includegraphics[scale=0.6]{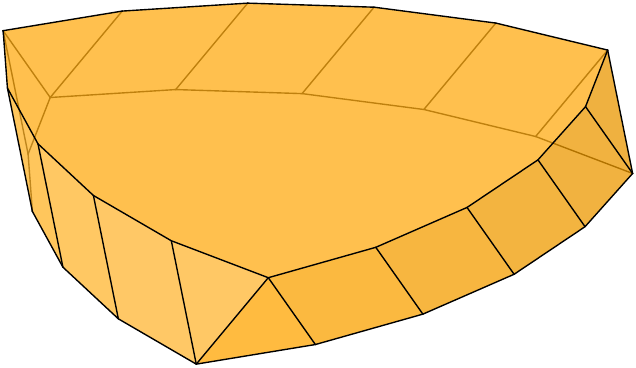}}
&
\href{https://www.inf.fu-berlin.de/inst/ag-ti/software/DiscreteHopfFibration/gallery.html?f=tTxC3n/3/128cells_4tubes}{\includegraphics[scale=0.6]{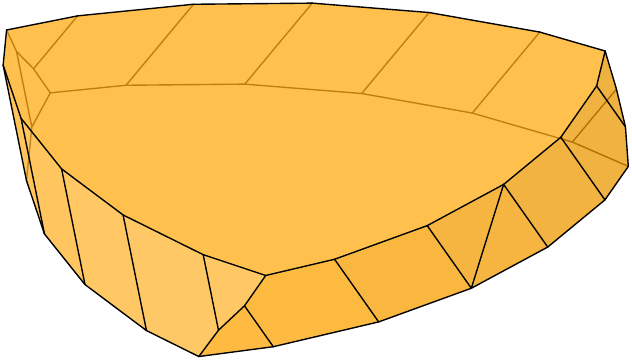}}
\\[1.5mm]
$n = 13$\break
$(\frac{k}{3} + \frac{1}{26})\cdot 2\pi$
&$n = 5$\break
$(\frac{2}{3} + \frac{1}{30})\cdot 2\pi$
&$n = 16$\break
$(\frac{k}{3} + \frac{1}{32})\cdot 2\pi$
\end{tabular}\nobreak

\vfill\nobreak
\noindent\begin{tabular}{
>{\centering\arraybackslash}m{0.33\textwidth}
>{\centering\arraybackslash}m{0.33\textwidth}
>{\centering\arraybackslash}m{0.33\textwidth}}
\href{https://www.inf.fu-berlin.de/inst/ag-ti/software/DiscreteHopfFibration/gallery.html?f=tTxC3n/3/144cells_4tubes}{\includegraphics[scale=0.6]{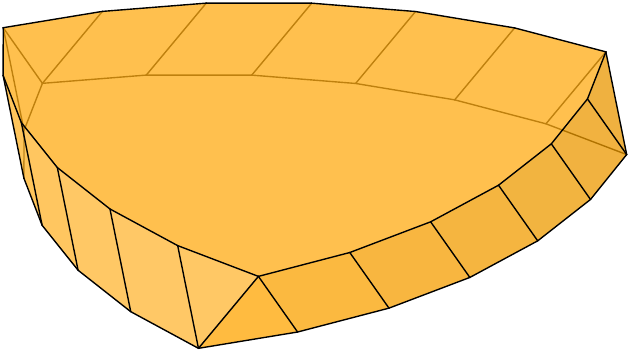}}
&
\href{https://www.inf.fu-berlin.de/inst/ag-ti/software/DiscreteHopfFibration/gallery.html?f=tTxC3n/3/152cells_4tubes}{\includegraphics[scale=0.6]{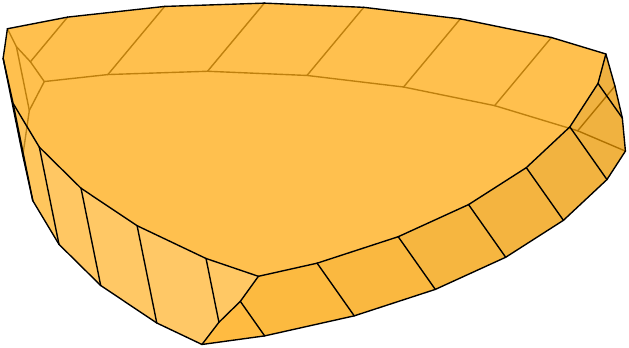}}
&
\href{https://www.inf.fu-berlin.de/inst/ag-ti/software/DiscreteHopfFibration/gallery.html?f=tTxC3n/3/176cells_4tubes}{\includegraphics[scale=0.6]{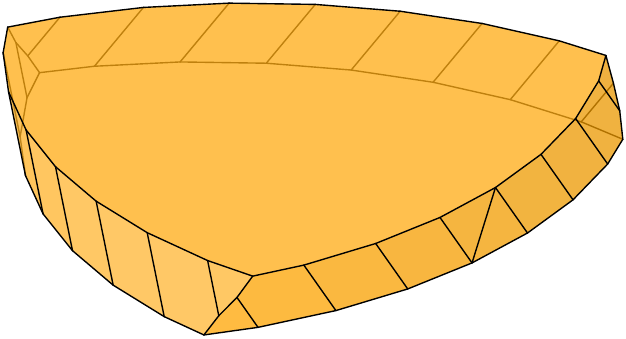}}
\\[1.5mm]
$n = 6$\break
$(\frac{1}{3} + \frac{1}{36})\cdot 2\pi$
&$n = 19$\break
$(\frac{k}{3} + \frac{1}{38})\cdot 2\pi$
&$n = 22$\break
$(\frac{k}{3} + \frac{1}{44})\cdot 2\pi$
\end{tabular}\nobreak

\begin{figure}[H]
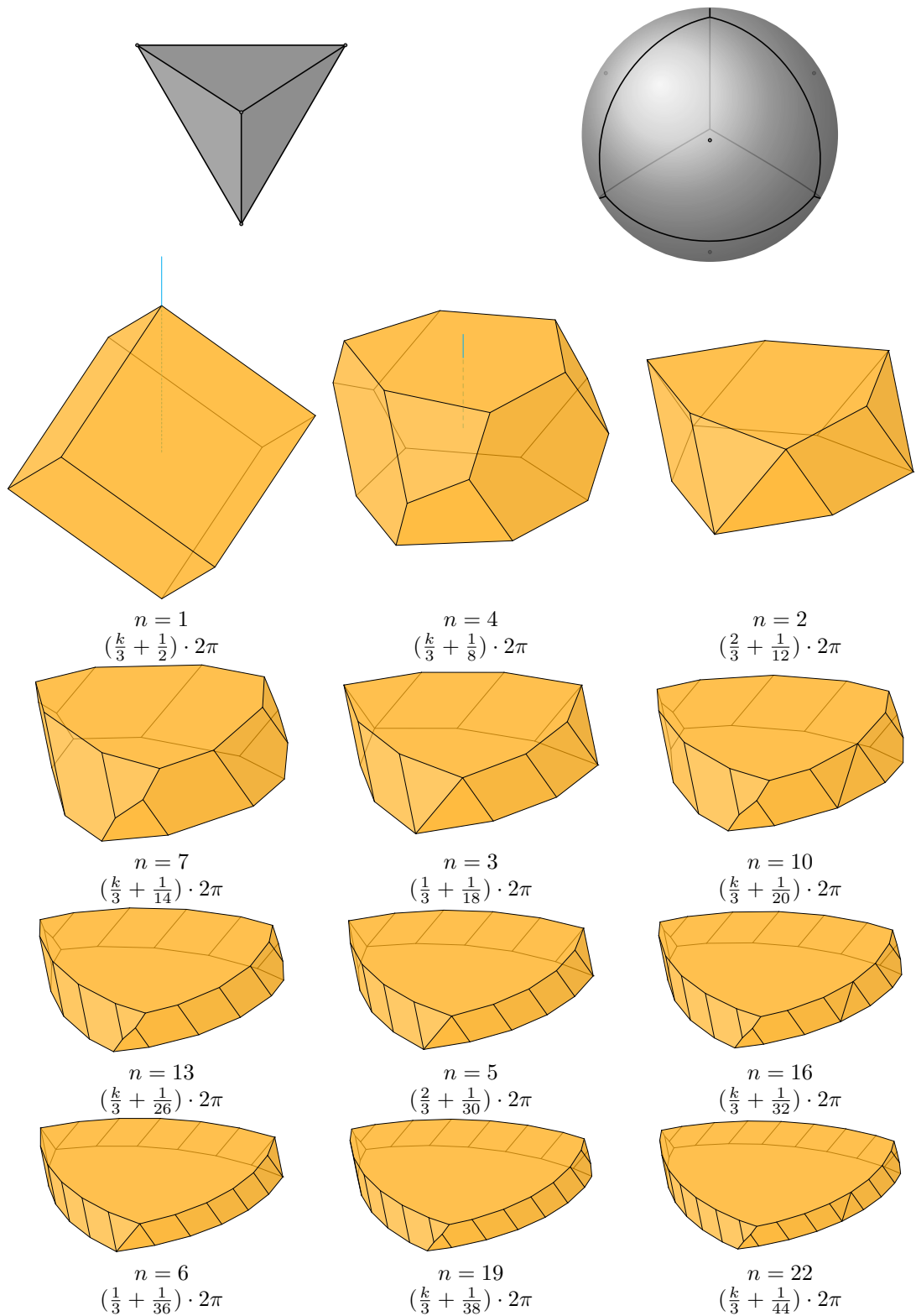

\caption{
$G=\pm\frac13[T\times C_{3n}]$,
$G^h={+T}$,
3-fold (type I) rotation center
$p=\frac1{\sqrt3}(-1, -1, -1)$.
$H=\langle [-\omega, e_{3n}], [1, e_n] \rangle$.
4 tubes,
each with $\frac{6n}{\gcd(n-1, 3)}$ cells.
Alternate groups: $\pm\frac16[O\times D_{6n}]$ (and its supergroup $\pm\frac12[O\times D_{6n}]$ if $n\not\equiv 1\mod 3$).
When $n = 1$,
the cells of a tube are disconnected from each other.
}
\label{fig:tTxC3n_3fold}
\end{figure}\endgroup
\newpage

\subsubsection{\texorpdfstring{$\pm\frac13[T\times C_{3n}]$}{+-1/3[TxC3n]}, 3-fold (type II) rotation center}
\begin{minipage}{0.5\textwidth}
\centering
\includegraphics[scale=1]{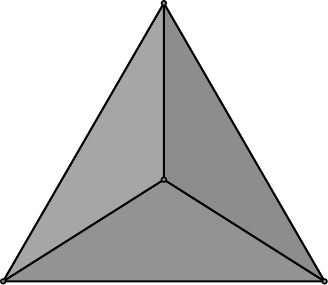}
\end{minipage}%
\begin{minipage}{0.5\textwidth}
\centering
\includegraphics[scale=1]{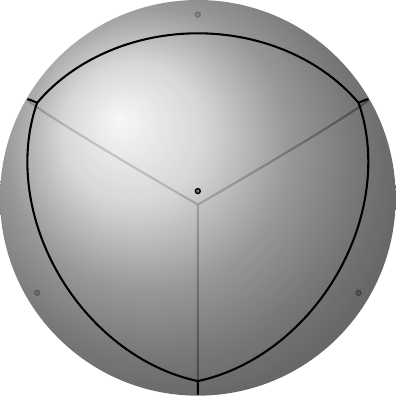}
\end{minipage}
\vskip 5pt plus 0.5fill
\begingroup
\setlength{\tabcolsep}{0pt}
\noindent\begin{tabular}{
>{\centering\arraybackslash}m{0.33\textwidth}
>{\centering\arraybackslash}m{0.33\textwidth}
>{\centering\arraybackslash}m{0.33\textwidth}}
\href{https://www.inf.fu-berlin.de/inst/ag-ti/software/DiscreteHopfFibration/gallery.html?f=tTxC3n/3p/16cells_4tubes}{\includegraphics[scale=0.5]{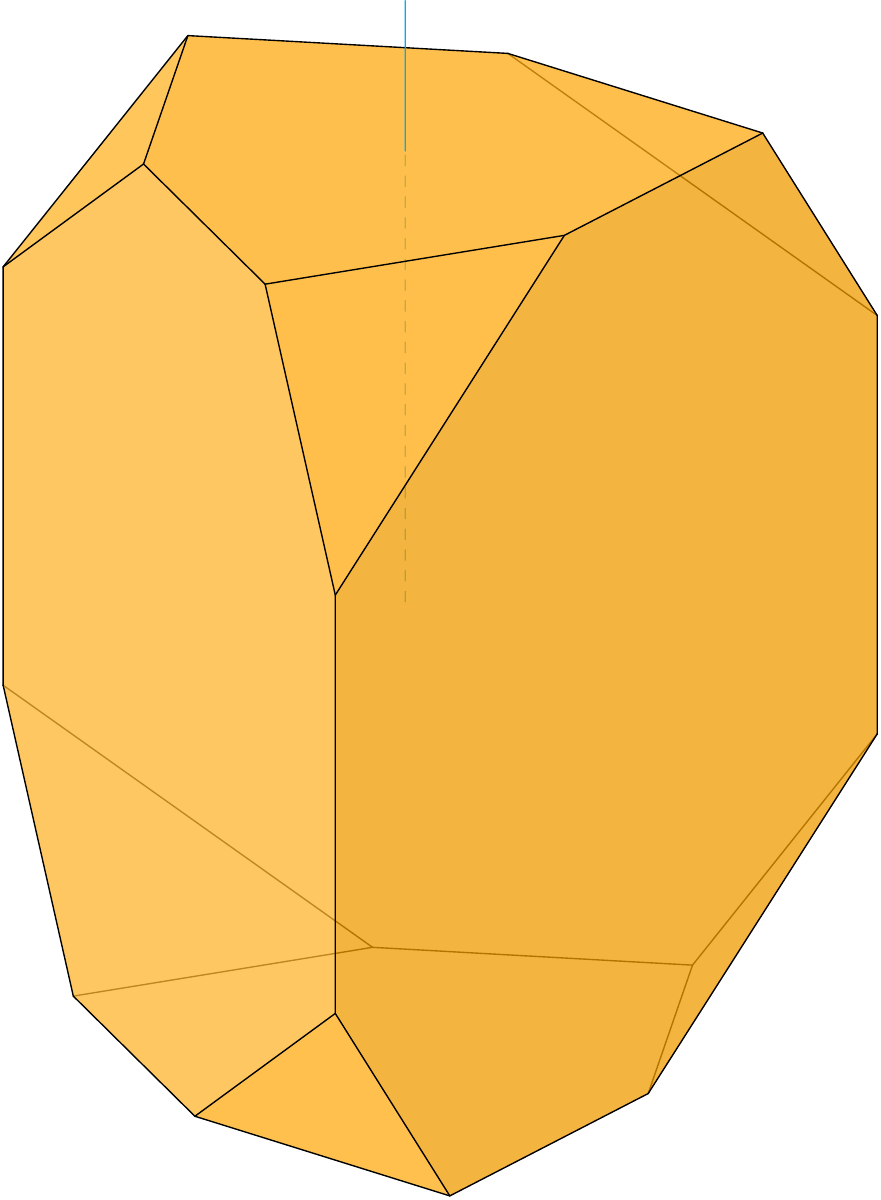}}
&
\href{https://www.inf.fu-berlin.de/inst/ag-ti/software/DiscreteHopfFibration/gallery.html?f=tTxC3n/3p/24cells_4tubes}{\includegraphics[scale=0.5]{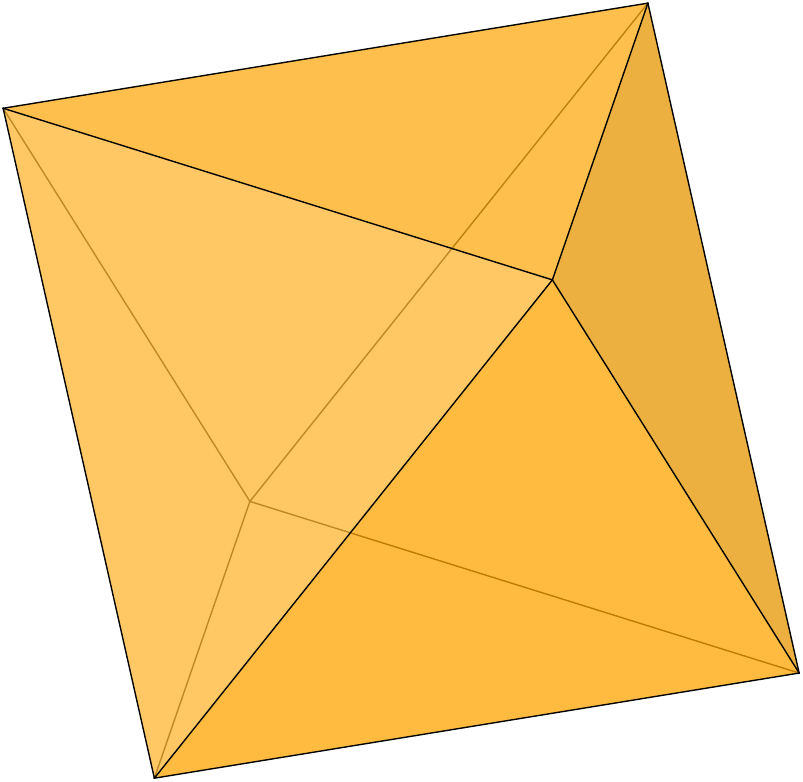}}
&
\href{https://www.inf.fu-berlin.de/inst/ag-ti/software/DiscreteHopfFibration/gallery.html?f=tTxC3n/3p/40cells_4tubes}{\includegraphics[scale=0.5]{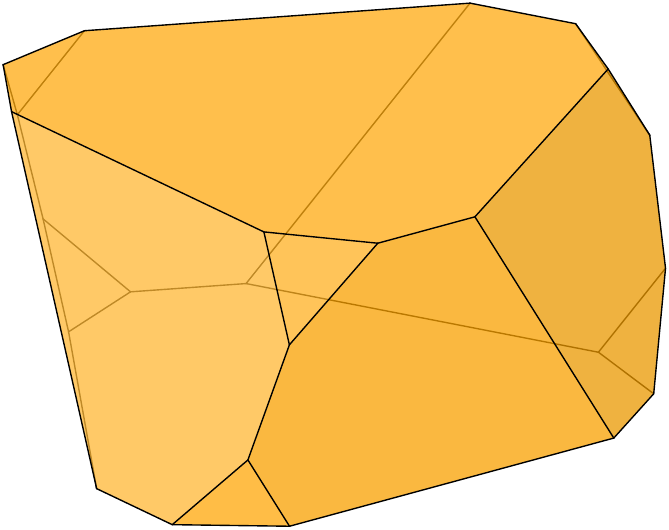}}
\\[1.5mm]
$n = 2$\break
$(\frac{k}{3} + \frac{1}{4})\cdot 2\pi$
&$n = 1$\break
$(\frac{1}{3} + \frac{1}{6})\cdot 2\pi$
&$n = 5$\break
$(\frac{k}{3} + \frac{1}{10})\cdot 2\pi$
\end{tabular}\nobreak

\vfill\nobreak
\noindent\begin{tabular}{
>{\centering\arraybackslash}m{0.33\textwidth}
>{\centering\arraybackslash}m{0.33\textwidth}
>{\centering\arraybackslash}m{0.33\textwidth}}
\href{https://www.inf.fu-berlin.de/inst/ag-ti/software/DiscreteHopfFibration/gallery.html?f=tTxC3n/3p/64cells_4tubes}{\includegraphics[scale=0.5]{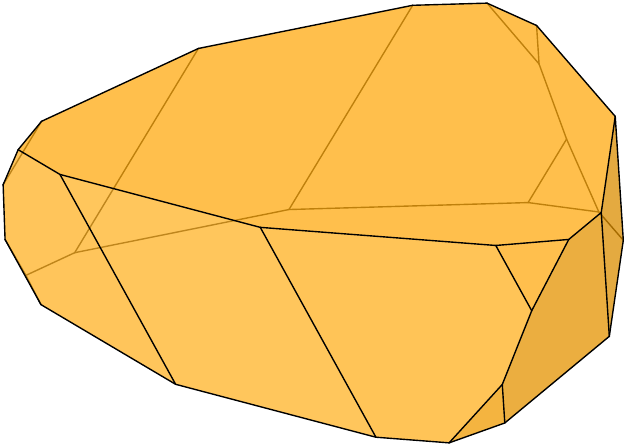}}
&
\href{https://www.inf.fu-berlin.de/inst/ag-ti/software/DiscreteHopfFibration/gallery.html?f=tTxC3n/3p/72cells_4tubes}{\includegraphics[scale=0.5]{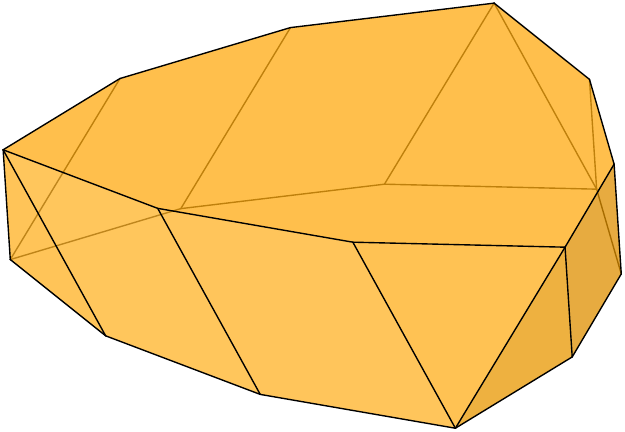}}
&
\href{https://www.inf.fu-berlin.de/inst/ag-ti/software/DiscreteHopfFibration/gallery.html?f=tTxC3n/3p/88cells_4tubes}{\includegraphics[scale=0.5]{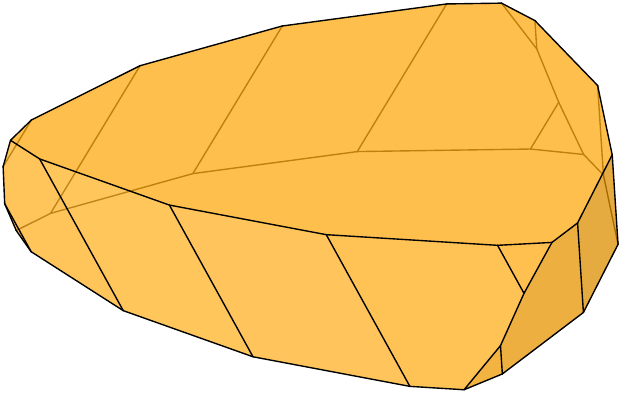}}
\\[1.5mm]
$n = 8$\break
$(\frac{k}{3} + \frac{1}{16})\cdot 2\pi$
&$n = 3$\break
$(\frac{2}{3} + \frac{1}{18})\cdot 2\pi$
&$n = 11$\break
$(\frac{k}{3} + \frac{1}{22})\cdot 2\pi$
\end{tabular}\nobreak

\vfill\nobreak
\noindent\begin{tabular}{
>{\centering\arraybackslash}m{0.33\textwidth}
>{\centering\arraybackslash}m{0.33\textwidth}
>{\centering\arraybackslash}m{0.33\textwidth}}
\href{https://www.inf.fu-berlin.de/inst/ag-ti/software/DiscreteHopfFibration/gallery.html?f=tTxC3n/3p/96cells_4tubes}{\includegraphics[scale=0.5]{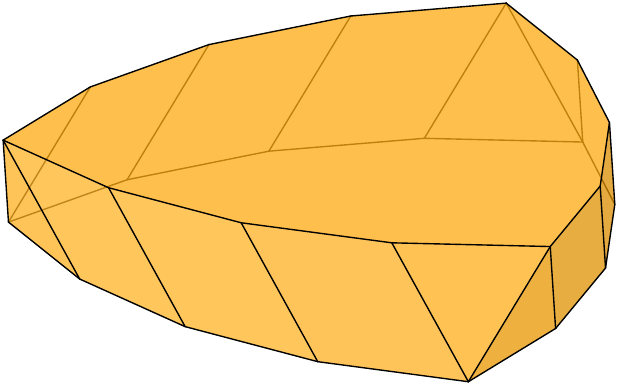}}
&
\href{https://www.inf.fu-berlin.de/inst/ag-ti/software/DiscreteHopfFibration/gallery.html?f=tTxC3n/3p/112cells_4tubes}{\includegraphics[scale=0.5]{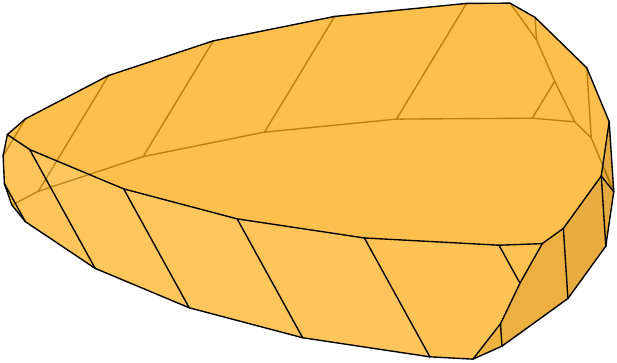}}
&
\href{https://www.inf.fu-berlin.de/inst/ag-ti/software/DiscreteHopfFibration/gallery.html?f=tTxC3n/3p/136cells_4tubes}{\includegraphics[scale=0.5]{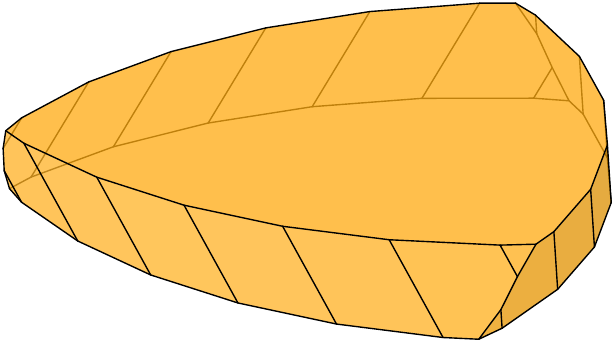}}
\\[1.5mm]
$n = 4$\break
$(\frac{1}{3} + \frac{1}{24})\cdot 2\pi$
&$n = 14$\break
$(\frac{k}{3} + \frac{1}{28})\cdot 2\pi$
&$n = 17$\break
$(\frac{k}{3} + \frac{1}{34})\cdot 2\pi$
\end{tabular}\nobreak

\vfill\nobreak
\noindent\begin{tabular}{
>{\centering\arraybackslash}m{0.33\textwidth}
>{\centering\arraybackslash}m{0.33\textwidth}
>{\centering\arraybackslash}m{0.33\textwidth}}
\href{https://www.inf.fu-berlin.de/inst/ag-ti/software/DiscreteHopfFibration/gallery.html?f=tTxC3n/3p/144cells_4tubes}{\includegraphics[scale=0.5]{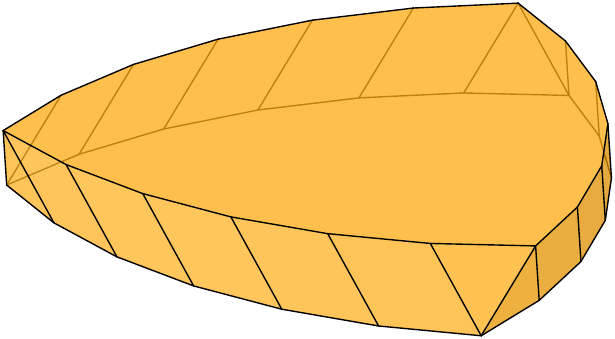}}
&
\href{https://www.inf.fu-berlin.de/inst/ag-ti/software/DiscreteHopfFibration/gallery.html?f=tTxC3n/3p/160cells_4tubes}{\includegraphics[scale=0.5]{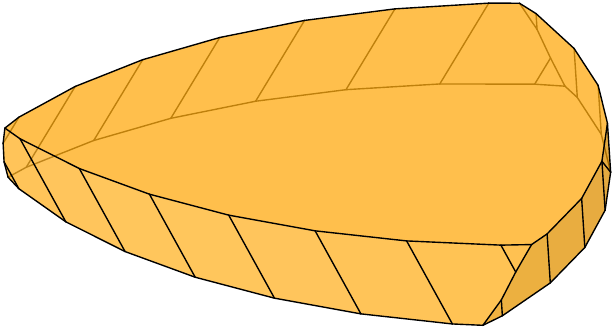}}
&
\href{https://www.inf.fu-berlin.de/inst/ag-ti/software/DiscreteHopfFibration/gallery.html?f=tTxC3n/3p/168cells_4tubes}{\includegraphics[scale=0.5]{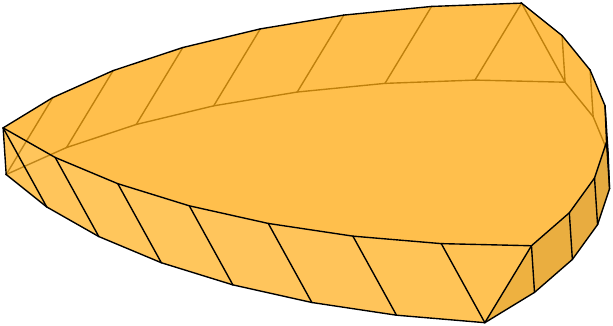}}
\\[1.5mm]
$n = 6$\break
$(\frac{2}{3} + \frac{1}{36})\cdot 2\pi$
&$n = 20$\break
$(\frac{k}{3} + \frac{1}{40})\cdot 2\pi$
&$n = 7$\break
$(\frac{1}{3} + \frac{1}{42})\cdot 2\pi$
\end{tabular}\nobreak

\begin{figure}[H]
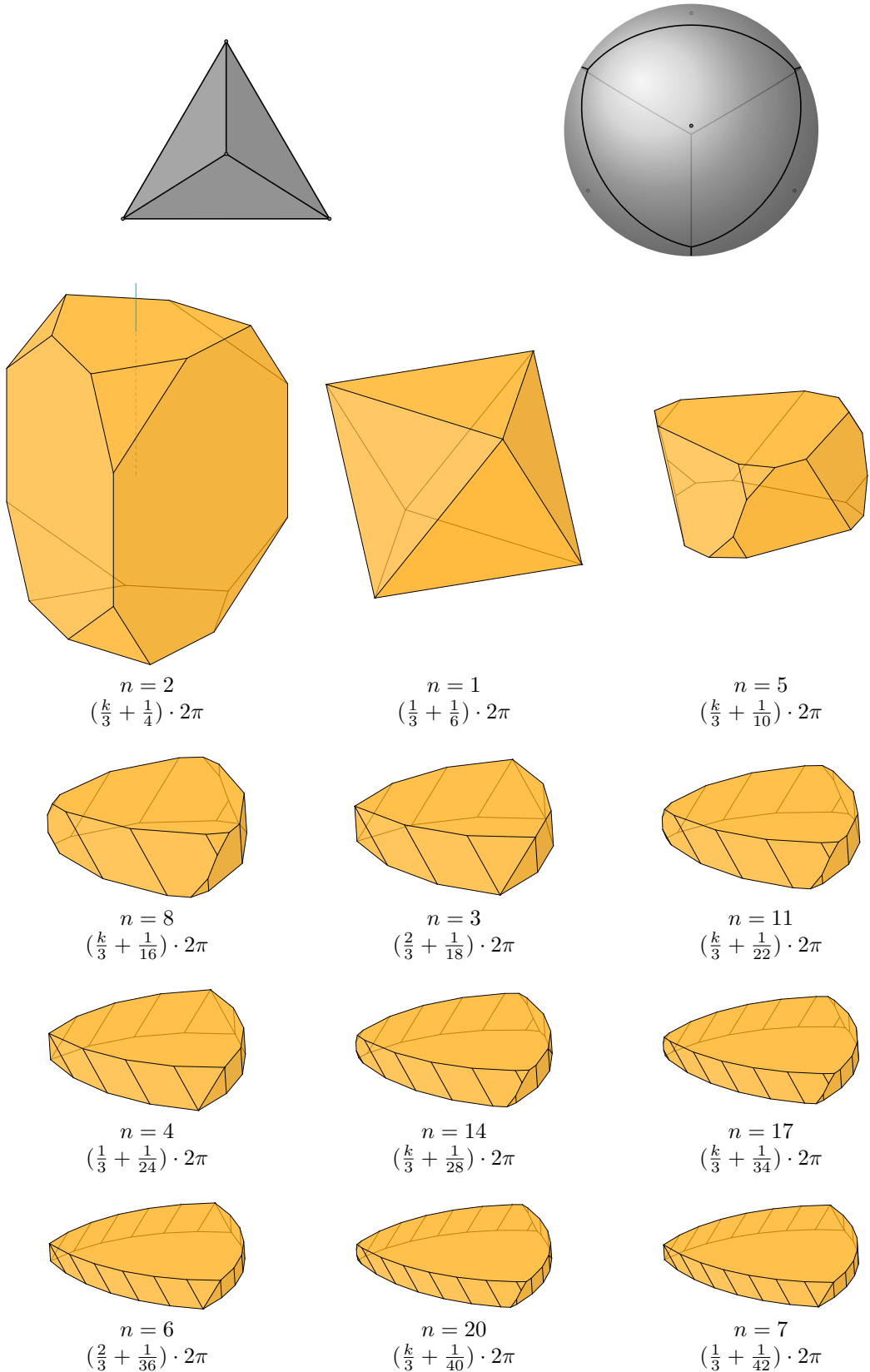

\caption{
$G=\pm\frac13[T\times C_{3n}]$,
$G^h={+T}$,
3-fold (type II) rotation center
$p=\frac1{\sqrt3}(1, 1, 1)$.
$H=\langle [-\omega^2, e_{3n}^2], [1, e_n] \rangle$.
4 tubes,
each with $\frac{6n}{\gcd(n-2, 3)}$ cells.
Alternate groups: $\pm\frac16[O\times D_{6n}]$ (and its supergroup $\pm\frac12[O\times D_{6n}]$ if $n\not\equiv 2\mod 3$).
}
\label{fig:tTxC3n_3pfold}
\end{figure}\endgroup
\newpage

\subsubsection{\texorpdfstring{$\pm\frac13[T\times C_{3n}]$}{+-1/3[TxC3n]}, 2-fold rotation center}
\begin{minipage}{0.5\textwidth}
\centering
\includegraphics[scale=1]{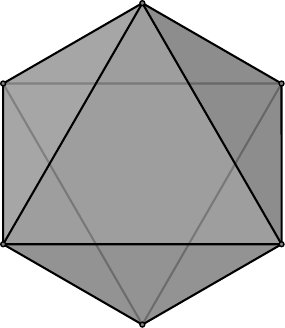}
\end{minipage}%
\begin{minipage}{0.5\textwidth}
\centering
\includegraphics[scale=1]{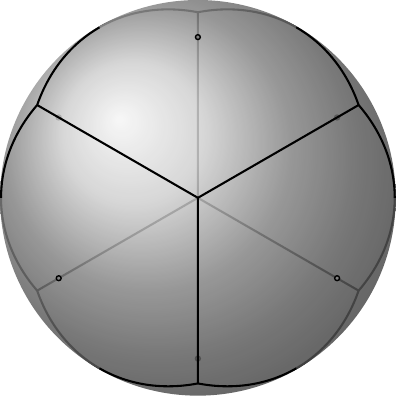}
\end{minipage}
\vskip 5pt plus 0.5fill
\begingroup
\setlength{\tabcolsep}{0pt}
\noindent\begin{tabular}{
>{\centering\arraybackslash}m{0.33\textwidth}
>{\centering\arraybackslash}m{0.33\textwidth}
>{\centering\arraybackslash}m{0.33\textwidth}}
\href{https://www.inf.fu-berlin.de/inst/ag-ti/software/DiscreteHopfFibration/gallery.html?f=tTxC3n/2/24cells_6tubes}{\includegraphics[scale=0.5]{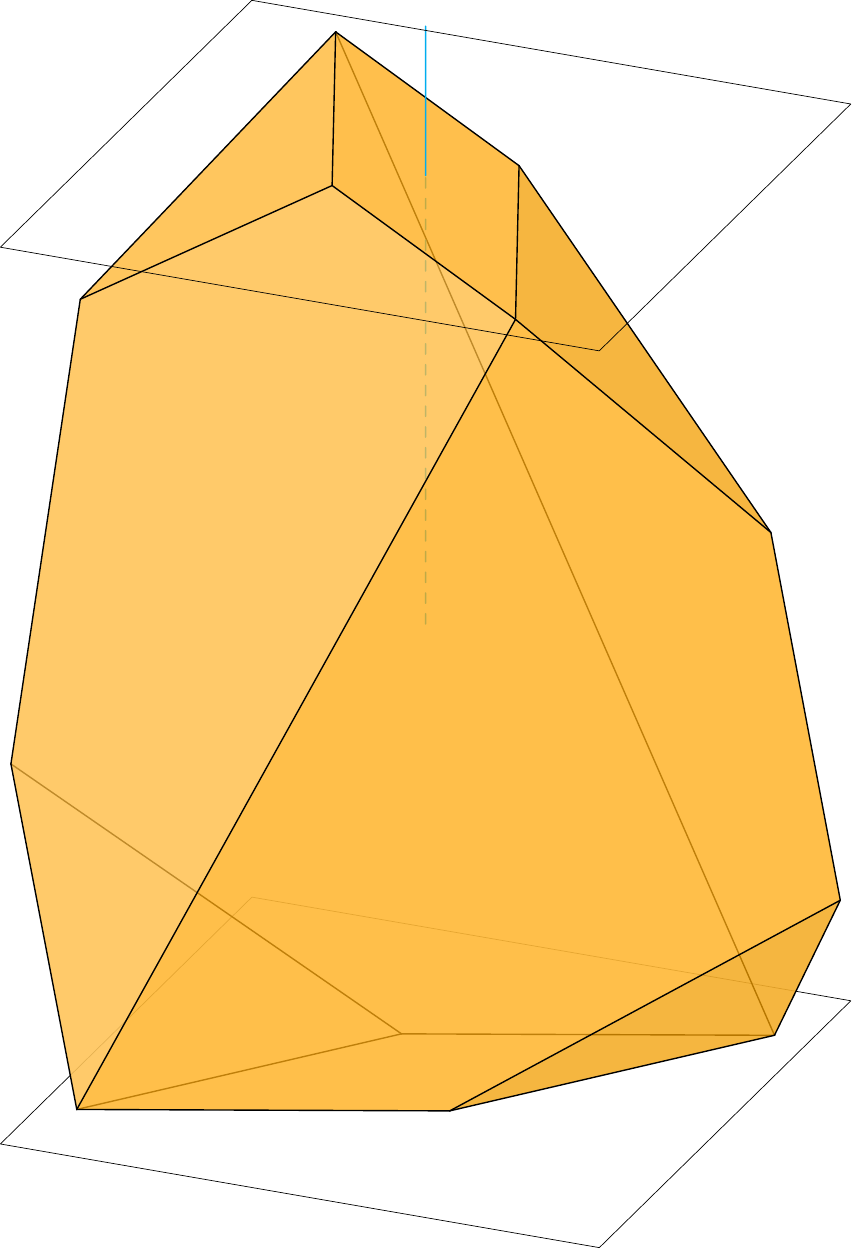}}
&
\href{https://www.inf.fu-berlin.de/inst/ag-ti/software/DiscreteHopfFibration/gallery.html?f=tTxC3n/2/48cells_6tubes}{\includegraphics[scale=0.6]{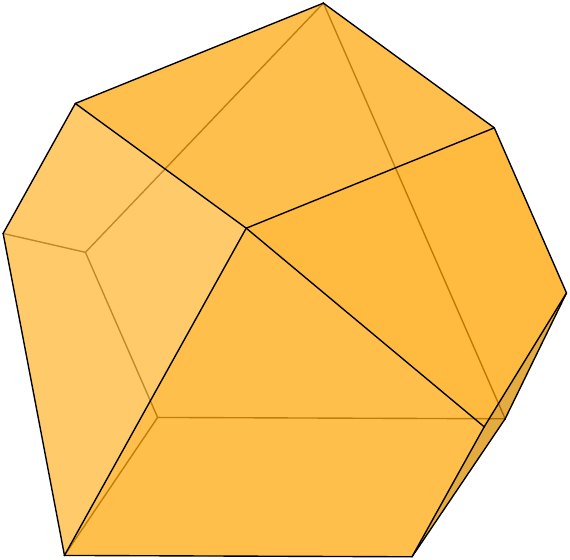}}
&
\href{https://www.inf.fu-berlin.de/inst/ag-ti/software/DiscreteHopfFibration/gallery.html?f=tTxC3n/2/72cells_6tubes}{\includegraphics[scale=0.6]{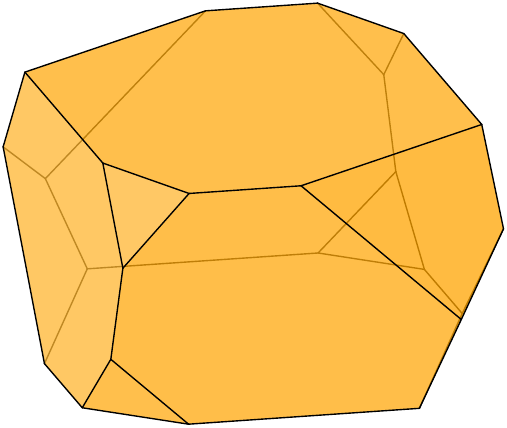}}
\\[1.5mm]
$n = 1, 2$\break
$(\frac{1}{2} + \frac{1}{4})\cdot 2\pi$
&$n = 4$\break
$(\frac{k}{2} + \frac{1}{8})\cdot 2\pi$
&$n = 3, 6$\break
$(\frac{1}{2} + \frac{1}{12})\cdot 2\pi$
\end{tabular}\nobreak

\vfill\nobreak
\noindent\begin{tabular}{
>{\centering\arraybackslash}m{0.33\textwidth}
>{\centering\arraybackslash}m{0.33\textwidth}
>{\centering\arraybackslash}m{0.33\textwidth}}
\href{https://www.inf.fu-berlin.de/inst/ag-ti/software/DiscreteHopfFibration/gallery.html?f=tTxC3n/2/96cells_6tubes}{\includegraphics[scale=0.6]{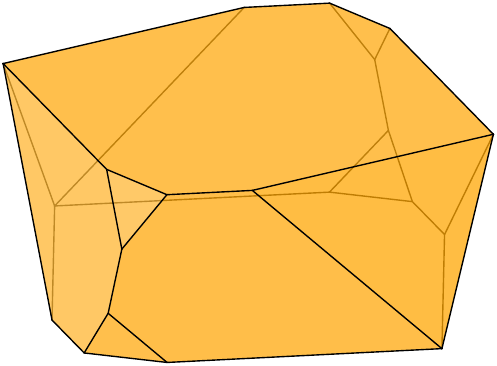}}
&
\href{https://www.inf.fu-berlin.de/inst/ag-ti/software/DiscreteHopfFibration/gallery.html?f=tTxC3n/2/120cells_6tubes}{\includegraphics[scale=0.6]{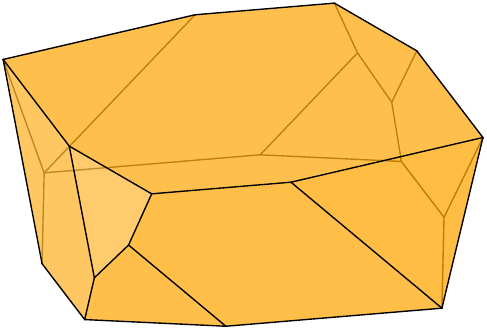}}
&
\href{https://www.inf.fu-berlin.de/inst/ag-ti/software/DiscreteHopfFibration/gallery.html?f=tTxC3n/2/144cells_6tubes}{\includegraphics[scale=0.6]{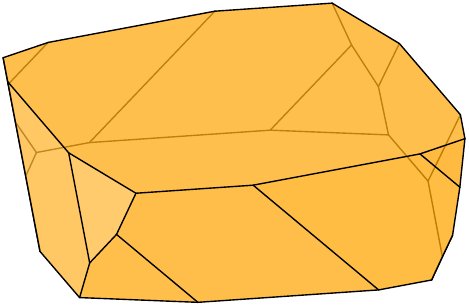}}
\\[1.5mm]
$n = 8$\break
$(\frac{k}{2} + \frac{1}{16})\cdot 2\pi$
&$n = 5, 10$\break
$(\frac{1}{2} + \frac{1}{20})\cdot 2\pi$
&$n = 12$\break
$(\frac{k}{2} + \frac{1}{24})\cdot 2\pi$
\end{tabular}\nobreak

\vfill\nobreak
\noindent\begin{tabular}{
>{\centering\arraybackslash}m{0.33\textwidth}
>{\centering\arraybackslash}m{0.33\textwidth}
>{\centering\arraybackslash}m{0.33\textwidth}}
\href{https://www.inf.fu-berlin.de/inst/ag-ti/software/DiscreteHopfFibration/gallery.html?f=tTxC3n/2/168cells_6tubes}{\includegraphics[scale=0.6]{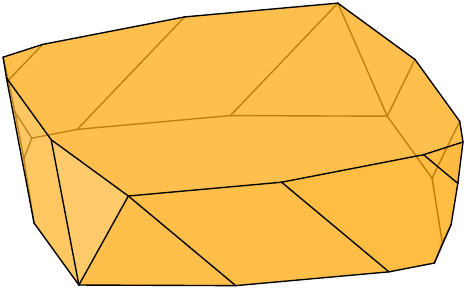}}
&
\href{https://www.inf.fu-berlin.de/inst/ag-ti/software/DiscreteHopfFibration/gallery.html?f=tTxC3n/2/192cells_6tubes}{\includegraphics[scale=0.6]{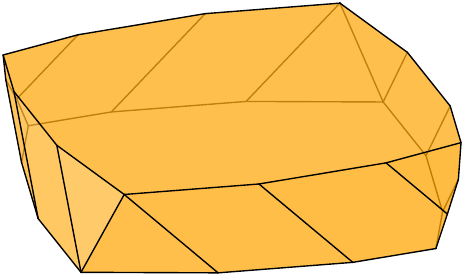}}
&
\href{https://www.inf.fu-berlin.de/inst/ag-ti/software/DiscreteHopfFibration/gallery.html?f=tTxC3n/2/216cells_6tubes}{\includegraphics[scale=0.6]{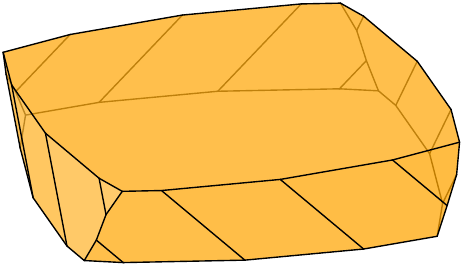}}
\\[1.5mm]
$n = 7, 14$\break
$(\frac{1}{2} + \frac{1}{28})\cdot 2\pi$
&$n = 16$\break
$(\frac{k}{2} + \frac{1}{32})\cdot 2\pi$
&$n = 9, 18$\break
$(\frac{1}{2} + \frac{1}{36})\cdot 2\pi$
\end{tabular}\nobreak

\vfill\nobreak
\noindent\begin{tabular}{
>{\centering\arraybackslash}m{0.33\textwidth}
>{\centering\arraybackslash}m{0.33\textwidth}
>{\centering\arraybackslash}m{0.33\textwidth}}
\href{https://www.inf.fu-berlin.de/inst/ag-ti/software/DiscreteHopfFibration/gallery.html?f=tTxC3n/2/240cells_6tubes}{\includegraphics[scale=0.6]{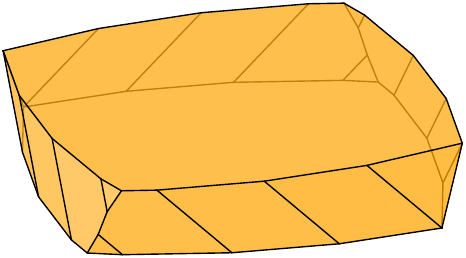}}
&
\href{https://www.inf.fu-berlin.de/inst/ag-ti/software/DiscreteHopfFibration/gallery.html?f=tTxC3n/2/264cells_6tubes}{\includegraphics[scale=0.6]{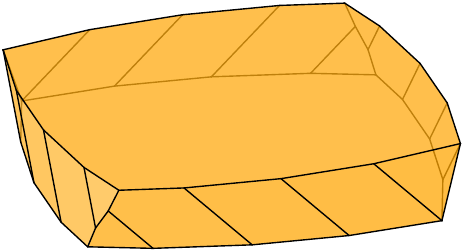}}
&
\href{https://www.inf.fu-berlin.de/inst/ag-ti/software/DiscreteHopfFibration/gallery.html?f=tTxC3n/2/288cells_6tubes}{\includegraphics[scale=0.6]{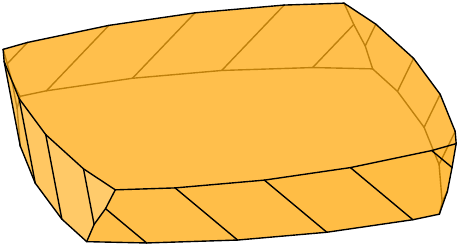}}
\\[1.5mm]
$n = 20$\break
$(\frac{k}{2} + \frac{1}{40})\cdot 2\pi$
&$n = 11, 22$\break
$(\frac{1}{2} + \frac{1}{44})\cdot 2\pi$
&$n = 24$\break
$(\frac{k}{2} + \frac{1}{48})\cdot 2\pi$
\end{tabular}\nobreak

\begin{figure}[H]
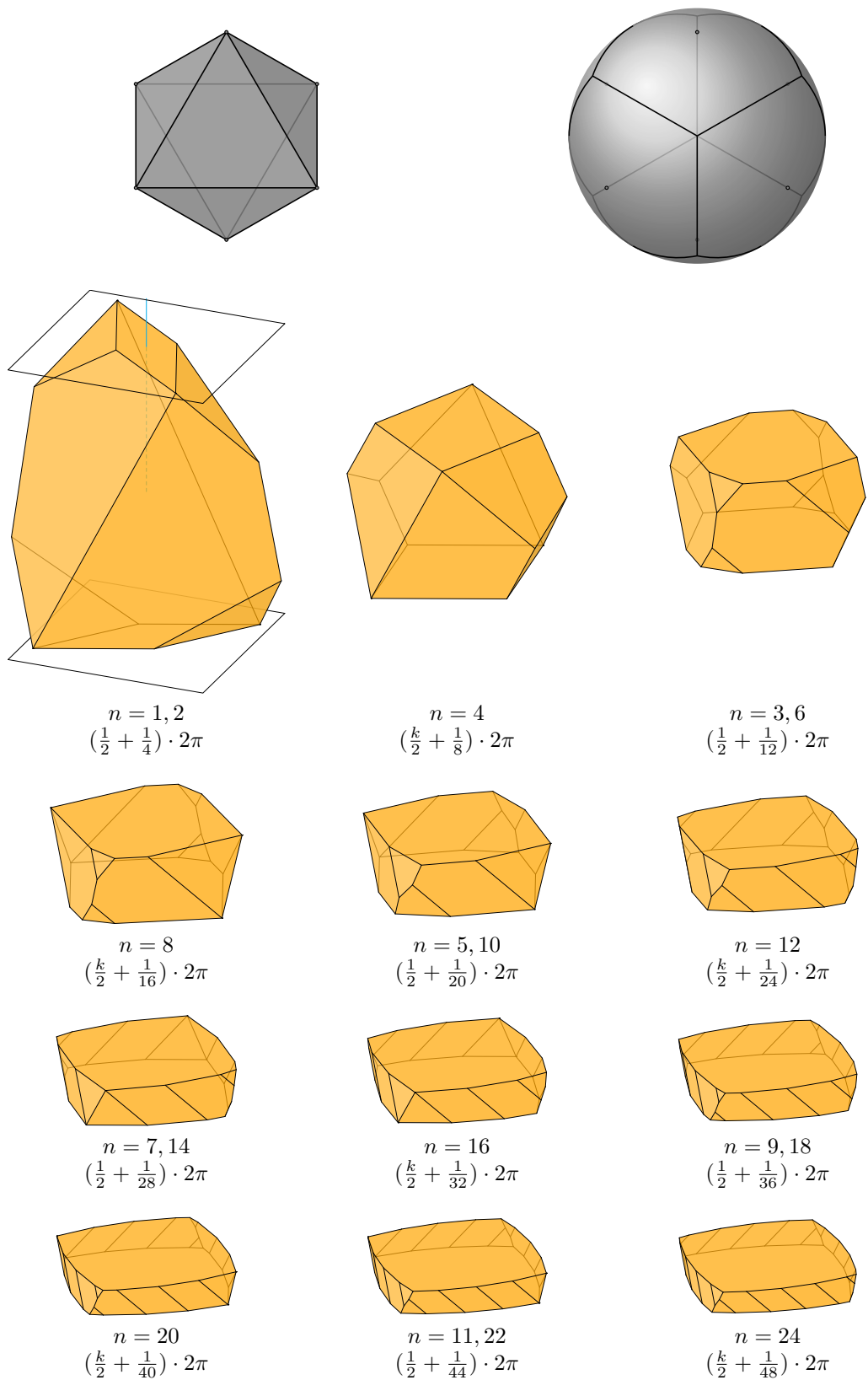

\caption{
$G=\pm\frac13[T\times C_{3n}]$,
$G^h={+T}$,
2-fold rotation center
$p=(1, 0, 0)$.
$H=\langle [i, 1], [1, e_n] \rangle$.
6 tubes,
each with $\mathrm{lcm}(2n, 4)$ cells.
Alternate group: $\pm\frac16[O\times D_{6n}]$.
For $n=1$ and $n=2$,
we have drawn squares in the planes around the top and bottom face,
to indicate that these faces are horizontal and parallel.
}
\label{fig:tTxC3n_2fold}
\end{figure}\endgroup
\newpage

\section{The number of groups of given order}
\label{sec:counting}

We will see that
the number of groups of order $N$ is always at least $N/2$,
and less than $O(N^2)$.
If $N$ is an odd prime,  there are exactly $(N+3)/2$
groups,
namely the torus translation groups
$\grp1_{1,N}^{(s)}$ for $0\le s \le (N-1)/2$ and %
$\grp1_{N,1}^{((1-N)/2)}$.

The richest class of groups are the toroidal groups, and among them,
the most numerous groups are the torus translation groups, of type $\grp1$: For each
divisor $m$ of $N$, there are $\sim n/2$ groups $\grp1_{m,n}^{(s)}$,
where
$n=N/m$. Thus, the number of groups is about $1/2$ times the
sum $\sigma(N)$
of
divisors of $N$, which is bounded by
$N^{1+\frac{1+O(1/\log\log n)}{\log_2\ln N}}\le N^2$
\cite{nicolas_robin_1983}.
 The upper bound of $O(N^2)$ is very weak; the actual bound is
 slightly superlinear.
 
The number of groups of type $\grp.$ is of similar magnitude, provided
that $N$ is even. For all the other types, there is at most one group
for every divisor of $N$, except for the swapturn groups, whose number
is related to the number of integer points on the circle
$a^2+b^2=N/4$, and this number is at most~$N$.

 From all the remaining classes of groups (tubical, polyhedral, or axial), there can be
 only a constant number of groups of a given order.

\paragraph{The number of groups of order 100.}
As an exercise, let us compute the number of point groups of order $N=100$.

We proceed through the toroidal classes of groups
in
Table~\ref{tab:overview}
one by one.
For the pure translation groups of type $\grp1$, we can write
$100=mn
=1\cdot 100
=2\cdot 50
=4\cdot 25
=5\cdot 20
=10\cdot 10
=20\cdot 5
=25\cdot 4
=50\cdot 2
=100\cdot 1$
with accordingly
$50+26+13+10+6+3+2+2+1=113$ choices of~$s$,
see the remark after \eqref{eq:ks}
in Section~\ref{sec:translations}.
For the flip groups of type $\grp.$ of order
$100=2mn$, we have to factor 50 instead of 100.
The possibilities are
$50
=1\cdot 50
=2\cdot 25
=5\cdot 10
=10\cdot 5
=25\cdot 2
=50\cdot 1$
with $
25+13+5+3+1+1=48$
choices of~$s$.

For the swap groups
$\grp\setminus^{\mathbf{pm}}_{m,n}$
of order $4mn$, we have to split
$25=mn$ into two factors $mn$ larger than 1. There is one possibility:
$25=5\times5$.
For the groups
$\grp\setminus^{\mathbf{pg}}_{m,n}$,
only the first factor $m$ must be larger than~1. This gives 2 choices.
For
$\grp\setminus^{\mathbf{cm}}_{m,n}$ of order $2mn$, $mn=50$ must be
split into two factors of the same parity. This is impossible since
$mn\equiv 2\pmod4$.
Thus, in total we have 3 swap groups of type
$\grp\setminus$. 
Clearly, there is the same number of 3 swap groups of type
$\grp/$.

Finally, for the full torus swap groups,
almost all
types have order $8mn$, which cannot equal 100.
We only need to consider the groups of
type
$\grp X^{\mathbf{c2mm}}_{m,n}$, of order $4mn$,
We have to split $100/4=25$ into two factors $\ge3$ of the same
parity.
There is one possibility: $25=5\times 5$.

In total, we get $113+48+3+3+1=168$ chiral toroidal groups of order 100.

Let us turn to the achiral groups:
For the reflection groups $\grp|$, we have to consider all
factorizations $100=2mn$
(types $\mathbf{pm}$ and $\mathbf{pg}$)
or $100=4mn$ (type
$\mathbf{cm}$). This gives $2\times \sigma_0(50) + \sigma_0(25) = 2\times 6 +
3=15$ groups, where $\sigma_0$ denotes the number of divisors of a number.

For the full reflection groups $\grp+$, we have to consider all
factorizations $100=4mn$ or $100=8mn$, respectively, where in one case
($\mathbf{p2mg}$), we distinguish the order of the factors.  We get
$2+3+2+0=7$ possibilities. For general $N$, there are
$2\lceil\sigma_0(\frac N4)/2\rceil + \sigma_0(\frac N4) + \lceil\sigma_0(\frac
N8)/2\rceil$ full reflection groups of order~$N$, where $\sigma_0(x)=0$ if
$x$ is not an integer.

For $\grp L$, we must have $100=4(a^2+b^2)$ with $a\ge b \ge 0$. There
are two possibilities: $(a,b)=(5,0)$ or $(4,3)$.

For the full torus groups $\grp*$, the order would have to be a
multiple of 8; so there are no such groups of order $100$.

In total, we get $15+7+2=24$ achiral toroidal groups of order 100,
and 192 toroidal groups altogether.

$N=100$ does not occur as the order of any of the other types of
groups. So 192 is
the total number of 4-dimensional point groups of order $100$.

\paragraph{Enantiomorphic pairs.}
As an advanced exercise,
we can ask, how many of the 168 chiral groups or order 100 are their own mirror
image?

For the groups of type $\grp1$, we are looking for a lattice of
translations of size 100 that
has an \OR\ symmetry.
If it
is symmetric with respect to a horizontal axis, then,
according to
Lemma~\ref{lattice-with-axis},
the possibilities are an $m\times n$ rectangular grid of $mn$ points
or
a rhombic grid of $2mn$ points.
In this case, it is also symmetric with respect to a vertical axis.

Thus, we have to split $100=mn$ and $50=mn$ into two factors $m$ and
$n$. The order of the factors plays no role, because the reflection
\sym/ swaps the factors.
We have 5 possibilities for
$100
=1\cdot 100
=2\cdot 50
=4\cdot 25
=5\cdot 20
=10\cdot 10
$ and 3 possibilities for
$50
=1\cdot 50
=2\cdot 25
=5\cdot 10
$,
which gives
$5+3=8$ possibilities in total.
(Alternatively,
adding a vertical and horizontal mirror to such a translational subgroup
will
produce a group of type $\grp+^{\mathbf{p2mm}}$
or $\grp+^{\mathbf{c2mm}}$. So we can equivalently count the groups of these
types of order $4N=400$.)

There is also the possibility that the lattice is symmetric with
respect to a swapturn operation~\sym L. The number of these
groups equals the number of groups of type \grp L of order $4N=400$.
It can be computed as the number of integer points $(a,b)$ on the circle
$100=a^2+b^2$ with $a\ge b \ge 0$. There are
two possibilities: $(10,0)$ and $(8,6)$.

We have overcounted the lattices that are symmetric with respect to
both \sym+ and \sym L, in other words, the upright or slanted square
lattices.
There is one lattice of this type: the $10\times 10$ upright lattice.

In total, $8+2-1=9$ groups among the 113 groups of type \grp1 are equal to their own mirror.

For the groups of type
 $\grp.$, we can repeat the same game, except that we are looking for a translation lattice
 of half the size, 50.
For the lattices with \sym+ symmetry,
we have
3 possibilities for
$50
=1\cdot 50
=2\cdot 25
=5\cdot 10
$,
and 2 possibilities for
$25
=1\cdot 25
=5\cdot 5
$, giving $3+2=5$ possibilities in total.
There are two possibilities for
$50=a^2+b^2$ with $a\ge b \ge 0$: $(7,1)$ and $(5,5)$.
We have to subtract 1 for the slanted $5\times 5$ grid, for
a total of $5+2-1=6$ groups among the 48 flip groups.

The mirrors of the groups of type $\grp/$ are the groups of type
$\grp\setminus$, and hence none of them is its own mirror.
The groups of type $\grp X$ are easy to
handle:
The two parameters $m$ and $n$ must be equal.
 We have one such group, $\grp X^{\mathbf{c2mm}}_{5,5}$.
In total, $9+6+1=16$ chiral
 groups are their own mirror images.
The remaining $168-16=152$ chiral groups consist of enantiomorphic pairs.

\paragraph{The number of groups of order 7200.}

To look at a more interesting example, let us count
the groups of order 7200.
The count of toroidal groups follows the same calculation as above, and it
amounts to 19,319 chiral and 216 achiral groups.
In addition, we have 22 tubical groups:
$
\pm[I\times C_{60}],
\pm[I\times D_{60}],
\pm[O\times C_{150}],
\pm[O\times D_{150}],
\pm[T\times C_{300}],
\pm[T\times D_{300}],
\pm\frac1{2}[O\times D_{300}],
\pm\frac1{2}[O\times \overline D_{300}],
\pm\frac1{2}[O\times C_{300}],
\pm\frac1{6}[O\times D_{900}],
\pm\frac1{3}[T\times C_{900}],
$
and their mirrors.
Finally, there is one polyhedral group $\pm[I\times I]$.
In total, we have $19{,}319+22+1=19{,}342$ chiral groups and 216 achiral ones.

\paragraph{The number of groups of order at most $M$.}

While the number of groups of a given order~$N$ fluctuates between
a linear lower bound and a slightly superlinear upper bound,
the ``average number'' can be estimated quite precisely:
We have seen that the number of groups of order $N$ is of order
$\Theta(\sigma(N))$, where 
$\sigma(N)$ is the sum of divisors of~$N$.
If we look at all groups of order at most $M$, we can sum over all
potential divisors $d$ and get
\begin{displaymath}
  \sum_{N=1}^M \sigma(N) =
  \sum_{d=1}^M d \lfloor M/d\rfloor = \Theta (M^2).
\end{displaymath}
Thus, the number of four-dimensional groups of order at most $M$
is $\Theta (M^2)$. The majority of these groups is chiral, but the
achiral ones alone are already of the order
$\Theta (M^2)$: There is essentially one swapturn
group for each integer point $(a,b)$ in the disk $a^2+b^2\le M/4$,
with roughly a factor 8 of overcounting of symmetric points, and this gives
$\Theta (M^2)$ chiral groups.

\section{The crystallographic point groups}
\label{sec:crystallographic}

\begingroup\setlength{\extrarowheight}{1.8pt}
   \begin{table}[phtb]
     \centering
     \begin{tabular}[t]{|ll@{}r|}
\hline
&&\llap{order} \\
\hline
01/01& $\grp1_{1,1}$&1\\
01/02& $\grp1_{2,1}$&2\\
\hline
02/01& $\grp|^{\mathbf{pg}}_{1,1}$&2\\
02/02& $\grp|^{\mathbf{pm}}_{1,1}$&2\\
02/03& $\grp|^{\mathbf{cm}}_{1,1}$&4\\
\hline
03/01& $\grp/^{\mathbf{cm}}_{1,1}$&2\\
03/02& $\grp/^{\mathbf{pm}}_{2,2}$&4\\
\hline
04/01& $\grp+^{\mathbf{p2gg}}_{1,1}$&4\\
04/02& $\grp+^{\mathbf{p2mg}}_{1,1}$&4\\
04/03& $\grp+^{\mathbf{p2mm}}_{1,1}$&4\\
04/04& $\grp+^{\mathbf{c2mm}}_{1,1}$&8\\
\hline
05/01& $\grp X^{\mathbf{c2mm}}_{1,1}$&4\\
05/02& $\grp X^{\mathbf{p2mm}}_{2,2}$&8\\
\hline
06/01& $\grp+^{\mathbf{p2mg}}_{2,1}$&8\\
06/02& $\grp+^{\mathbf{p2mm}}_{2,1}$&8\\
06/03& $\grp+^{\mathbf{p2mm}}_{2,2}$&16\\
\hline
07/01& $\grp1_{1,4}^{(1)}$&4\\
07/02& $\grp1_{1,4}^{(0)}$&4\\
07/03& $\grp1_{2,4}^{(0)}$&8\\
07/04& $\grp|^{\mathbf{cm}}_{1,2}$&8\\
07/05& $\grp|^{\mathbf{pm}}_{1,4}$&8\\
07/06& $\grp|^{\mathbf{pg}}_{1,4}$&8\\
07/07& $\grp|^{\mathbf{pm}}_{2,4}$&16\\
\hline
08/01& $\grp1_{1,3}^{(0)}$&3\\
08/02& $\grp1_{2,3}^{(0)}$&6\\
08/03& $\grp|^{\mathbf{pg}}_{1,3}$&6\\
08/04& $\grp|^{\mathbf{pm}}_{1,3}$&6\\
08/05& $\grp|^{\mathbf{cm}}_{1,3}$&12\\
\hline
09/01& $\grp1_{1,6}^{(0)}$&6\\
09/02& $\grp1_{1,6}^{(2)}$&6\\
09/03& $\grp1_{2,6}^{(0)}$&12\\
09/04& $\grp|^{\mathbf{pg}}_{1,6}$&12\\
09/05& $\grp|^{\mathbf{pm}}_{1,6}$&12\\
09/06& $\grp|^{\mathbf{pm}}_{2,3}$&12\\
09/07& $\grp|^{\mathbf{pm}}_{2,6}$&24\\
\hline
10/01& $\grp/^{\mathbf{pg}}_{2,2} \bigm| \grp\setminus^{\mathbf{pg}}_{2,2}$&4\\
\hline
11/01& $\grp1_{1,3}^{(1)} \bigm| \grp1_{3,1}$&3\\
11/02& $\grp1_{2,3}^{(-1)}\! \bigm| \grp1_{6,1}$&6\\
\hline
12/01& $\grp L_{1,0}$&4\\
12/02& $\grp L_{1,1}$&8\\
12/03& $\grp*^{\mathbf{p4gm}\textrm{U}}_{1}$&8\\
12/04& $\grp*^{\mathbf{p4mm}\textrm{U}}_{1}$&8\\
12/05& $\grp*^{\mathbf{p4mm}\textrm{S}}_{1}$&16\\
\hline
\end{tabular}
\nobreak %
\hfill\nobreak
\begin{tabular}[t]{|ll@{\,}r|}
\hline
&&\llap{order} \\
\hline
13/01& $\grp|^{\mathbf{pm}}_{4,1}$&8\\
13/02& $\grp|^{\mathbf{cm}}_{2,1}$&8\\
13/03& $\grp._{1,4}^{(1)}$&8\\
13/04& $\grp._{1,4}^{(0)}$&8\\
13/05& $\grp|^{\mathbf{pm}}_{4,2}$&16\\
13/06& $\grp+^{\mathbf{p2mg}}_{4,1}$&16\\
13/07& $\grp+^{\mathbf{c2mm}}_{2,1}$&16\\
13/08& $\grp+^{\mathbf{p2mm}}_{4,1}$&16\\
13/09& $\grp._{2,4}^{(0)}$&16\\
13/10& $\grp+^{\mathbf{p2mm}}_{4,2}$&32\\
\hline
14/01& $\grp|^{\mathbf{pm}}_{3,1}$&6\\
14/02& $\grp|^{\mathbf{pg}}_{3,1}$&6\\
14/03& $\grp._{1,3}^{(0)}$&6\\
14/04& $\grp|^{\mathbf{cm}}_{3,1}$&12\\
14/05& $\grp._{2,3}^{(0)}$&12\\
14/06& $\grp+^{\mathbf{p2mg}}_{3,1}$&12\\
14/07& $\grp+^{\mathbf{p2mm}}_{3,1}$&12\\
14/08& $\grp+^{\mathbf{p2gg}}_{3,1}$&12\\
14/09& $\grp+^{\mathbf{p2mg}}_{1,3}$&12\\
14/10& $\grp+^{\mathbf{c2mm}}_{3,1}$&24\\
\hline
15/01& $\grp|^{\mathbf{pm}}_{6,1}$&12\\
15/02& $\grp|^{\mathbf{pg}}_{3,2}$&12\\
15/03& $\grp|^{\mathbf{pm}}_{3,2}$&12\\
15/04& $\grp._{1,6}^{(0)}$&12\\
15/05& $\grp._{1,6}^{(2)}$&12\\
15/06& $\grp+^{\mathbf{p2mg}}_{6,1}$&24\\
15/07& $\grp+^{\mathbf{p2mg}}_{2,3}$&24\\
15/08& $\grp|^{\mathbf{pm}}_{6,2}$&24\\
15/09& $\grp+^{\mathbf{p2mm}}_{6,1}$&24\\
15/10& $\grp+^{\mathbf{p2mm}}_{3,2}$&24\\
15/11& $\grp._{2,6}^{(0)}$&24\\
15/12& $\grp+^{\mathbf{p2mm}}_{6,2}$&48\\
\hline
16/01& $\grp X^{\mathbf{p2gm}}_{2,2} \bigm| \grp X^{\mathbf{p2mg}}_{2,2}$&8\\
\hline
17/01& $\grp/^{\mathbf{cm}}_{1,3} \bigm| \grp\setminus^{\mathbf{cm}}_{3,1}$&6\\
17/02& $\grp/^{\mathbf{pm}}_{2,6} \bigm| \grp\setminus^{\mathbf{pm}}_{6,2}$&12\\
\hline
18/01& $\grp X^{\mathbf{p2gg}}_{2,2}$&8\\
18/02& $\grp*^{\mathbf{p4gm}\textrm{S}}_{1}$&16\\
18/03& $\grp L_{2,0}$&16\\
18/04& $\grp X^{\mathbf{c2mm}}_{2,2}$&16\\
18/05& $\grp*^{\mathbf{p4mm}\textrm{U}}_{2}$&32\\
\hline
19/01& $\grp|^{\mathbf{pg}}_{2,4}$&16\\
19/02& $\grp1_{4,4}^{(0)}$&16\\
19/03& $\grp|^{\mathbf{pm}}_{4,4}$&32\\
19/04& $\grp+^{\mathbf{p2mg}}_{4,2}$&32\\
19/05& $\grp._{4,4}^{(0)}$&32\\
19/06& $\grp+^{\mathbf{p2mm}}_{4,4}$&64\\
\hline
\end{tabular}
\nobreak %
\hfill\nobreak
\begin{tabular}[t]{|ll@{\,}r|}
\hline
&&\llap{order} \\
\hline
20/01& $\grp1_{1,12}^{(3)}$&12\\
20/02& $\grp1_{1,12}^{(2)}$&12\\
20/03& $\grp|^{\mathbf{pg}}_{2,3}$&12\\
20/04& $\grp|^{\mathbf{pm}}_{4,3}$&24\\
20/05& $\grp1_{2,12}^{(4)}$&24\\
20/06& $\grp|^{\mathbf{pm}}_{3,4}$&24\\
20/07& $\grp._{1,12}^{(3)}$&24\\
20/08& $\grp|^{\mathbf{pg}}_{3,4}$&24\\
20/09& $\grp|^{\mathbf{cm}}_{2,3}$&24\\
20/10& $\grp|^{\mathbf{cm}}_{3,2}$&24\\
20/11& $\grp._{1,12}^{(2)}$&24\\
20/12& $\grp+^{\mathbf{p2gg}}_{3,2}$&24\\
20/13& $\grp+^{\mathbf{p2mg}}_{3,2}$&24\\
20/14& $\grp|^{\mathbf{pg}}_{2,6}$&24\\
20/15& $\grp|^{\mathbf{pm}}_{4,6}$&48\\
20/16& $\grp+^{\mathbf{p2mm}}_{4,3}$&48\\
20/17& $\grp+^{\mathbf{p2mg}}_{4,3}$&48\\
20/18& $\grp|^{\mathbf{pm}}_{6,4}$&48\\
20/19& $\grp._{2,12}^{(4)}$&48\\
20/20& $\grp+^{\mathbf{c2mm}}_{3,2}$&48\\
20/21& $\grp+^{\mathbf{p2mg}}_{6,2}$&48\\
20/22& $\grp+^{\mathbf{p2mm}}_{6,4}$&96\\
\hline
21/01& $\grp\setminus^{\mathbf{cm}}_{1,3} \bigm| \grp/^{\mathbf{cm}}_{3,1}$&6\\
21/02& $\grp\setminus^{\mathbf{pm}}_{2,6} \bigm| \grp/^{\mathbf{pm}}_{6,2}$&12\\
21/03& $\grp X^{\mathbf{c2mm}}_{1,3} \bigm| \grp X^{\mathbf{c2mm}}_{3,1}$&12\\
21/04& $\grp X^{\mathbf{p2mm}}_{2,6} \bigm| \grp X^{\mathbf{p2mm}}_{6,2}$&24\\
\hline
22/01& $\grp1_{3,3}^{(0)}$&9\\
22/02& $\grp1_{6,3}^{(-3)}$&18\\
22/03& $\grp|^{\mathbf{pg}}_{3,3}$&18\\
22/04& $\grp|^{\mathbf{pm}}_{3,3}$&18\\
22/05& $\grp._{3,3}^{(0)}$&18\\
22/06& $\grp|^{\mathbf{cm}}_{3,3}$&36\\
22/07& $\grp._{6,3}^{(-3)}$&36\\
22/08& $\grp+^{\mathbf{p2gg}}_{3,3}$&36\\
22/09& $\grp+^{\mathbf{p2mg}}_{3,3}$&36\\
22/10& $\grp+^{\mathbf{p2mm}}_{3,3}$&36\\
22/11& $\grp+^{\mathbf{c2mm}}_{3,3}$&72\\
\hline
23/01& $\grp1_{3,6}^{(0)}$&18\\
23/02& $\grp1_{6,6}^{(0)}$&36\\
23/03& $\grp|^{\mathbf{pm}}_{3,6}$&36\\
23/04& $\grp|^{\mathbf{pg}}_{3,6}$&36\\
23/05& $\grp._{3,6}^{(0)}$&36\\
23/06& $\grp|^{\mathbf{pm}}_{6,3}$&36\\
23/07& $\grp|^{\mathbf{pm}}_{6,6}$&72\\
23/08& $\grp._{6,6}^{(0)}$&72\\
23/09& $\grp+^{\mathbf{p2mm}}_{6,3}$&72\\
23/10& $\grp+^{\mathbf{p2mg}}_{6,3}$&72\\
23/11& $\grp+^{\mathbf{p2mm}}_{6,6}$&144\\
\hline
\end{tabular}

\caption{The 227 crystallographic point groups in four dimensions, part 1}
     \label{tab:crystallographic}
   \end{table}

   \begin{table}[htbp]
     \centering
\begin{tabular}[t]{|llr|}
\hline
&&\llap{order} \\
\hline
24/01& $+\frac1{12}[T\times T]$&12\\
24/02& $\pm\frac1{12}[T\times T]$&24\\
24/03& $+\frac1{12}[T\times \bar T]\cdot 2_3$&24\\
24/04& $+\frac1{12}[T\times \bar T]\cdot 2_1$&24\\
24/05& $\pm\frac1{12}[T\times \bar T]\cdot 2$&48\\
\hline
25/01& $+\frac1{12}[T\times T]\cdot 2_1$&24\\
25/02& $+\frac1{12}[T\times T]\cdot 2_3$&24\\
25/03& $+\frac1{24}[O\times O]$&24\\
25/04& $+\frac1{24}[O\times \bar O]$&24\\
25/05& $\pm\frac1{12}[T\times T]\cdot 2$&48\\
25/06& $\pm\frac1{24}[O\times O]$&48\\
25/07& $+\frac1{24}[O\times O]\cdot 2_1$&48\\
25/08& $+\frac1{24}[O\times \bar O]\cdot 2_1$&48\\
25/09& $+\frac1{24}[O\times \bar O]\cdot 2_3$&48\\
25/10& $+\frac1{24}[O\times O]\cdot 2_3$&48\\
25/11& $\pm\frac1{24}[O\times O]\cdot 2$&96\\
\hline
26/01& $\grp\setminus^{\mathbf{pg}}_{2,4} \bigm| \grp/^{\mathbf{pg}}_{4,2}$&8\\
26/02& $\grp X^{\mathbf{p2mg}}_{2,4} \bigm| \grp X^{\mathbf{p2gm}}_{4,2}$&16\\
\hline
27/01& $\grp1_{1,5}^{(1)}$&5\\
27/02& $\grp1_{2,5}^{(1)}$&10\\
27/03& $\grp._{1,5}^{(1)}$&10\\
27/04& $\grp._{2,5}^{(1)}$&20\\
\hline
28/01& $\grp\setminus^{\mathbf{pg}}_{2,6} \bigm| \grp/^{\mathbf{pg}}_{6,2}$&12\\
28/02& $\grp X^{\mathbf{p2mg}}_{2,6} \bigm| \grp X^{\mathbf{p2gm}}_{6,2}$&24\\
\hline
29/01& $\grp\setminus^{\mathbf{cm}}_{3,3} \bigm| \grp/^{\mathbf{cm}}_{3,3}$&18\\
29/02& $\grp\setminus^{\mathbf{pm}}_{6,6} \bigm| \grp/^{\mathbf{pm}}_{6,6}$&36\\
29/03& $\grp X^{\mathbf{c2mm}}_{3,3}$&36\\
29/04& $\grp L_{3,0}$&36\\
29/05& $\grp X^{\mathbf{p2mm}}_{6,6}$&72\\
29/06& $\grp L_{3,3}$&72\\
29/07& $\grp*^{\mathbf{p4gm}\textrm{U}}_{3}$&72\\
29/08& $\grp*^{\mathbf{p4mm}\textrm{U}}_{3}$&72\\
29/09& $\grp*^{\mathbf{p4mm}\textrm{S}}_{3}$&144\\
\hline
30/01& $\grp/^{\mathbf{pg}}_{2,6} \bigm| \grp\setminus^{\mathbf{pg}}_{6,2}$&12\\
30/02& $\grp X^{\mathbf{p2gg}}_{2,6} \bigm| \grp X^{\mathbf{p2gg}}_{6,2}$&24\\
30/03& $\grp\setminus^{\mathbf{cm}}_{2,6} \bigm| \grp/^{\mathbf{cm}}_{6,2}$&24\\
30/04& $\grp X^{\mathbf{p2gm}}_{2,6} \bigm| \grp X^{\mathbf{p2mg}}_{6,2}$&24\\
30/05& $\grp\setminus^{\mathbf{pg}}_{6,6} \bigm| \grp/^{\mathbf{pg}}_{6,6}$&36\\
30/06& $\grp X^{\mathbf{c2mm}}_{2,6} \bigm| \grp X^{\mathbf{c2mm}}_{6,2}$&48\\
30/07& $\grp\setminus^{\mathbf{cm}}_{6,6} \bigm| \grp/^{\mathbf{cm}}_{6,6}$&72\\
30/08& $\grp X^{\mathbf{p2mg}}_{6,6} \bigm| \grp X^{\mathbf{p2gm}}_{6,6}$&72\\
30/09& $\grp X^{\mathbf{p2gg}}_{6,6}$&72\\
30/10& $\grp X^{\mathbf{c2mm}}_{6,6}$&144\\
30/11& $\grp L_{6,0}$&144\\
30/12& $\grp*^{\mathbf{p4gm}\textrm{S}}_{3}$&144\\
30/13& $\grp*^{\mathbf{p4mm}\textrm{U}}_{6}$&288\\
\hline
\end{tabular}
\qquad
\begin{tabular}[t]{|llr|}
\hline
&&\llap{order} \\
\hline
31/01& $\grp L_{2,1}$&20\\
31/02& $\grp L_{3,1}$&40\\
31/03& $+\frac1{60}[I\times \bar I]$&60\\
31/04& $+\frac1{60}[I\times \bar I]\cdot 2_3$&120\\
31/05& $+\frac1{60}[I\times \bar I]\cdot 2_1$&120\\
31/06& $\pm\frac1{60}[I\times \bar I]$&120\\
31/07& $\pm\frac1{60}[I\times \bar I]\cdot 2$&240\\
\hline
32/01& $\grp/^{\mathbf{pg}}_{2,4} \bigm| \grp\setminus^{\mathbf{pg}}_{4,2}$&8\\
32/02& $\grp\setminus^{\mathbf{cm}}_{2,4} \bigm| \grp/^{\mathbf{cm}}_{4,2}$&16\\
32/03& $\grp X^{\mathbf{p2gg}}_{2,4} \bigm| \grp X^{\mathbf{p2gg}}_{4,2}$&16\\
32/04& $\grp X^{\mathbf{p2mg}}_{4,2} \bigm| \grp X^{\mathbf{p2gm}}_{2,4}$&16\\
32/05& $\pm\frac1{3}[T\times C_{3}] \bigm| \pm\frac1{3}[C_{3}\times T]$&24\\
32/06& $\grp X^{\mathbf{c2mm}}_{2,4} \bigm| \grp X^{\mathbf{c2mm}}_{4,2}$&32\\
32/07& $\grp X^{\mathbf{p2gg}}_{4,4}$&32\\
32/08& $\grp\setminus^{\mathbf{cm}}_{4,4} \bigm| \grp/^{\mathbf{cm}}_{4,4}$&32\\
32/09& $\grp*^{\mathbf{p4gm}\textrm{U}}_{2}$&32\\
32/10& $\grp X^{\mathbf{p2mm}}_{4,4}$&32\\
32/11& $\pm\frac1{6}[O\times D_{6}] \bigm| \pm\frac1{6}[D_{6}\times O]$&48\\
32/12& $\grp X^{\mathbf{c2mm}}_{4,4}$&64\\
32/13& $\grp*^{\mathbf{p4gm}\textrm{S}}_{2}$&64\\
32/14& $\grp*^{\mathbf{p4mm}\textrm{S}}_{2}$&64\\
32/15& $\grp L_{4,0}$&64\\
32/16& $\pm\frac1{3}[T\times T]$&96\\
32/17& $\grp*^{\mathbf{p4mm}\textrm{U}}_{4}$&128\\
32/18& $\pm\frac1{3}[T\times T]\cdot 2$&192\\
32/19& $\pm\frac1{3}[T\times \bar T]\cdot 2$&192\\
32/20& $\pm\frac1{6}[O\times O]$&192\\
32/21& $\pm\frac1{6}[O\times O]\cdot 2$&384\\
\hline
33/01& $\grp\setminus^{\mathbf{pg}}_{4,6} \bigm| \grp/^{\mathbf{pg}}_{6,4}$&24\\
33/02& $\grp/^{\mathbf{pg}}_{4,6} \bigm| \grp\setminus^{\mathbf{pg}}_{6,4}$&24\\
33/03& $\pm[C_{1}\times T] \bigm| \pm[T\times C_{1}]$&24\\
33/04& $\grp X^{\mathbf{p2gg}}_{4,6} \bigm| \grp X^{\mathbf{p2gg}}_{6,4}$&48\\
33/05& $\pm[C_{2}\times T] \bigm| \pm[T\times C_{2}]$&48\\
33/06& $\pm\frac1{2}[O\times C_{2}] \bigm| \pm\frac1{2}[C_{2}\times O]$&48\\
33/07& $\pm[C_{3}\times T] \bigm| \pm[T\times C_{3}]$&72\\
33/08& $\pm[D_{4}\times T] \bigm| \pm[T\times D_{4}]$&96\\
33/09& $\pm\frac1{2}[O\times D_{4}] \bigm| \pm\frac1{2}[D_{4}\times O]$&96\\
33/10& $\pm\frac1{2}[O\times C_{4}] \bigm| \pm\frac1{2}[C_{4}\times O]$&96\\
33/11& $\pm\frac1{2}[O\times D_{6}] \bigm| \pm\frac1{2}[D_{6}\times O]$&144\\
33/12& $\pm\frac1{2}[O\times \bar D_{8}] \bigm| \pm\frac1{2}[\bar D_{8}\times O]$&192\\
33/13& $\pm[T\times T]$&288\\
33/14& $\pm[T\times T]\cdot 2$&576\\
33/15& $\pm\frac1{2}[O\times O]$&576\\
33/16& $\pm\frac1{2}[O\times O]\cdot 2$&1152\\
\hline
---& $\grp X^{\mathbf{p2gm}}_{4,6} \bigm| \grp X^{\mathbf{p2mg}}_{6,4}$&48\\
---& $\grp X^{\mathbf{p2mg}}_{4,6} \bigm| \grp X^{\mathbf{p2gm}}_{6,4}$&48\\
---& $\pm[D_{6}\times T] \bigm| \pm[T\times D_{6}]$&144\\
\hline
\end{tabular}

     \caption{The 227 crystallographic point groups, part~2, and three
       pseudo-crystal groups}
     \label{tab:crystallographic2}
   \end{table}
\endgroup

Brown, Bülow, Neubüser, Wondratschek, Zassenhaus %
classified the four-dimensional crystallographic space groups in 1978~\cite{BBNWZ}.
They grouped them by the underlying point groups (geometric crystal
classes, or $\mathbb Q$-classes), and
assigned numbers to these groups.  The \cry\ point groups are
characterized as having some lattice that they leave invariant.

There are 227 crystallographic points groups, sorted into 33
crystal systems according to the holohedry, i.e., the symmetry group
of the underlying lattice.  Tables
\ref{tab:crystallographic}--\ref{tab:crystallographic2} give a
reference from the 227 groups in the list of~\cite[Table~1C, pp.~79--260]
{BBNWZ} to our notation (for the toroidal groups) or Conway and
Smith's notation (for the remaining groups). When appropriate, we list
two \ena\ groups.

The first classification of the four-dimensional \cry\ point groups
was obtained by Hurley in 1951~\cite{hurley}, see
Section~\ref{Fingerprinting}. A few mistakes were
later corrected~\cite{hurley66}.

\goodbreak
All these groups are subgroups of only four maximal groups:
\begin{itemize}
\item 
$31/07=\pm\frac1{60}[I\times \bar I]\cdot 2=[[3,3,3]]$ (the simplex and its
polar, order 240)
\item 
$33/16=\pm\frac1{2}[O\times O]\cdot 2=[3,4,3]$ (the 24-cell, order 1152).
Taking the permutations of
$(\pm1,\pm1,0,0)$ as the vertices of a 24-cell, this set
generates a lattice, and this lattice is invariant under the group.
The symmetries of the hypercube/cross-polytope,
$32/21=\pm\frac1{6}[O\times O]\cdot 2=[3,3,4]$, are contained in this
group as a subgroup.
\item 
$30/13=\grp*^{\mathbf{p4mm}\textrm{U}}_{6}=\pm\frac12[\bar D_{12} \times
\bar D_{12}]\cdot 2$, order 288.
The invariant lattice is the
Cartesian product of two hexagonal plane lattices.

\item 
$20/22=\grp+^{\mathbf{p2mm}}_{6,4}
=\pm\frac1{24}[D_{24} \times D_{24}^{(5)}]\cdot 2^{(0,0)}
$, order 96.
The invariant lattice is the
Cartesian product
of a hexagonal lattice and a square lattice.
\end{itemize}

The last three items in Table~\ref{tab:crystallographic2} are the
``pseudo crystal groups'' of Hurley~\cite{hurley66}: Each such group
consists of transformations that can individually occur in
crystallographic groups, but as a whole, it is not a crystallographic
group.  All its proper subgroups are crystallographic groups.

\section{Geometric interpretation of oriented great circles}
\label{sec:oriented-circles-interpretation}

Section~\ref{sec:oriented} introduced the notation
$\vec K_p^q$ to denote {oriented great circles} on $S^3$.
Here we give a geometric interpretation of the orientation.
In fact, we will give two equivalent
geometric interpretations. However, at the
boundary cases $p=q$ and $q=-p$, one or the other of the
interpretations loses its meaning, and
only by combining both
interpretations we get a consistent definition that covers all cases.

\begin{figure}[htb]
  \centering
  \includegraphics{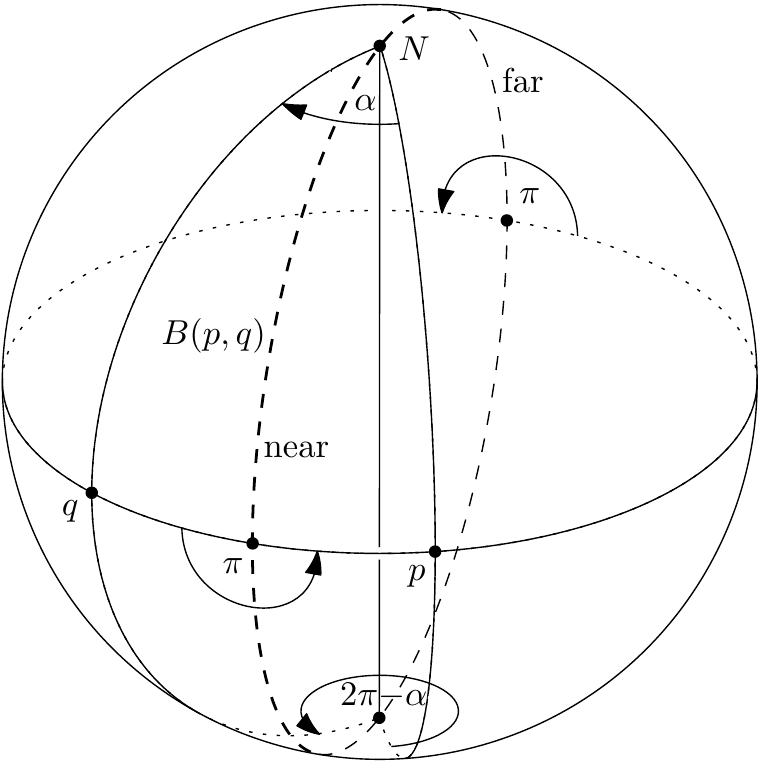}
\caption{The centers of the rotations mapping $p$ to $q$ lie on the
 bisecting circle $B(p,q)$.
}
  \label{fig:rotation-centers}
\end{figure}

We start from the definition~\eqref{eq:great-circles} of $K_p^q$ as
the
set of rotations $[x]$ that map $p$ to $q$ in $S^2$.
The centers $r$ of these rotations
lie on the
bisecting circle $B(p,q)$ between $p$ and $q$.
In Figure~\ref{fig:rotation-centers},
 we have drawn
$p$ and $q$ on the equator, with
 $p$ east of $q$.
If we observe the clockwise rotation angle $\phi$
as $r$ moves along $B(p,q)$, we see that $\phi$ has two extrema:
If the angular distance between $p$ and $q$ is $\alpha$,
the minimum clockwise angle $\phi=\alpha$ is achieved when $r$ is at the
North Pole.
The maximum $2\pi-\alpha$ is achieved at the South Pole.
The poles bisect $B(p,q)$ into two semicircles, the
\emph{near semicircle} and the
\emph{far semicircle}, according to the distance from $p$ and~$q$.

 \begin{figure}[htb]
   \centering
   \includegraphics{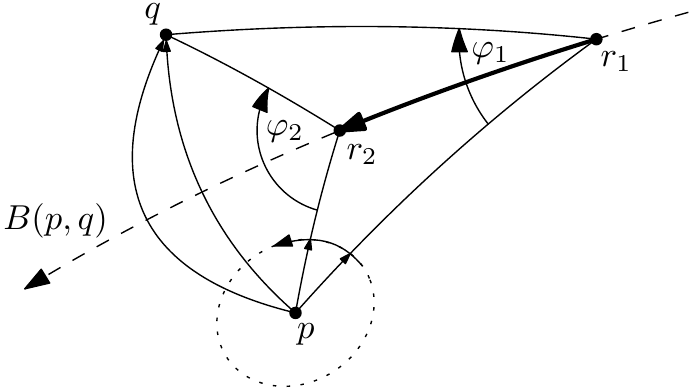}
   \caption{Orienting the great circle  $K^q_p$}
   \label{fig:oriented-circles}
 \end{figure}

To define an orientation,
we let %
$r$
move  continuously on $B(p,q)$, see
Figure~\ref{fig:oriented-circles} for an illustration on a small patch
of~$S^2$.
We make the movement %
in such a way that
\begin{compactenum}[(i)]
\item
    the rotation center $r$ moves in counterclockwise direction around $p$;
  \item simultaneously,
    the  clockwise rotation angle $\phi$
    increases when $r$ is on the near semicircle and
    decreases when $r$ is on the far semicircle.
  \end{compactenum}
  In Figure~\ref{fig:oriented-circles},
as $r$ moves from $r_1$ to $r_2$ along the thick arrow,
the angle $\phi$ increases from $\phi_1$ to~$\phi_2$.
These rules define an orientation of $B(p,q)$.

When we want to transfer
this orientation to $K_p^q$,
we must be aware of the $2:1$ relation between quaternions
$x=\cos \frac\phi2 + r\cdot \sin \frac\phi2$
and
rotations $[x]$ of $S^2$.
The angle $\phi$ is
defined only up to multiples of~$2\pi$, and hence a rotation
corresponds to two opposite quaternions $x$ and $-x$.  Thus, there are two
ways of defining a continuous dependence from $r$ via $\phi$ to~$x$.
Both possibilities lead to the same orientation of $K_p^q$, but
we can select one of them by restricting $\phi$ to the interval
$0\le\phi<2\pi$.
Once this mapping
is chosen, two opposite points $r$ and $-r$ on $B(p,q)$, which define
the same rotation $[r]$ of $S^2$, correspond to opposite quaternions
$x$ and $-x$ on $K_p^q$.
(The easiest way to check this is for the midpoint of $p$ and $q$ in
Figure~\ref{fig:rotation-centers} and the opposite point.
Both have the same rotation angle $\phi=\pi$.
Generally, the transition from $\phi$ to $2\pi-\phi$ changes
the sign of $\cos \frac\phi2$ and leaves $\sin \frac\phi2$ unchanged.)
Thus, as $r$ traverses $B(p,q)$, $x$ traverses $K_p^q$ once, and
this traversal defines the orientation $\vec K_p^q$.

The rules break down in the degenerate situations when $q=\pm
p$. Luckily, in each situation, there is one rule that works.
\begin{itemize}
\item When $p=q$, the only rotations centers are $r=p$ and $r=-p$. In
  this case, we can maintain rule (ii): We consider increasing rotation
  angles around $r=p$.
\item When $p=-q$, the rotation angle $\phi=180^\circ$ is constant, but
  we can stick to rule (i): The rotation centers $r$ lie on the %
  circle $B(p,-p)$ that has $p$ and $-p$ as poles, and we let them move
  counterclockwise around $p$.
\end{itemize}
Considering the definition~\eqref{eq:great-circles} of $K_p^q$, it is
actually surprising that 
$K_p^q$ makes a smooth transition when $q$ approaches~$p$: The locus
$B(p,q)$ of rotation centers changes discontinuously from a circle to a
set of opposite points.

When $p$ and $q$ are exchanged with $-p$ and $-q$, everything changes
its direction: A counterclockwise
movement of $r$ around $p$ becomes a clockwise movement when seen from
$-p$, and $r$ is on the near semicircle of $p$ and $q$ if it is on the
far semicircle of $-p$
and $-q$. Thus, $\vec K_{-p} ^{-q}$ has the opposite orientation.

\section{Subgroup relations between tubical groups}
\label{sec:subgroups-tubical}

Figure~\ref{fig:tubical_subgroup_structure} shows the subgroup
structure between different tubical groups.
Some types are included multiple times with different parameters
to indicate common supergroups.
However, all the types appear at least once with the parameter ``$n$''.
(Those are the ones in red.)

\begin{figure}[htb]
  \centering%
  {
  \includegraphics{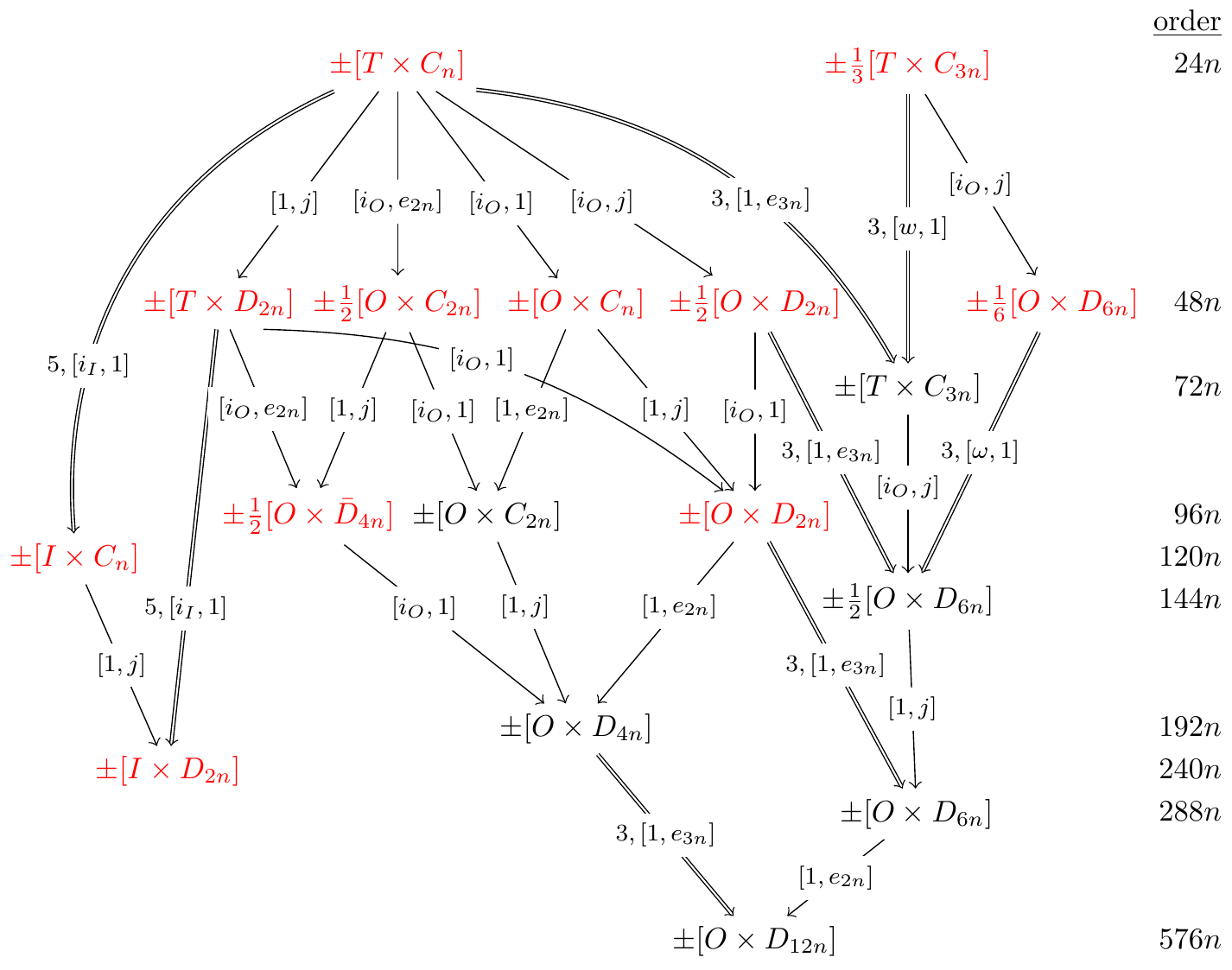}
}  \caption{
  Small-index containments between left tubical groups.
  Each arrow is marked with an extending element.
  Single arrows indicate index-2 containments.
  Double arrows denote index-3 or index-5 containments,
as specified with the extending element.
The red groups have the ``natural'' parameter $n$ (as in
Table~\ref{tbl:left_tubical_groups}).
Groups at the same horizontal level have the same order,
which is given in the rightmost column.
}
  \label{fig:tubical_subgroup_structure}
\end{figure}

\section{Conway and Smith's classification of the toroidal groups}
\label{conway-smith}

\begin{figure}[htb]
  \centering
  \includegraphics {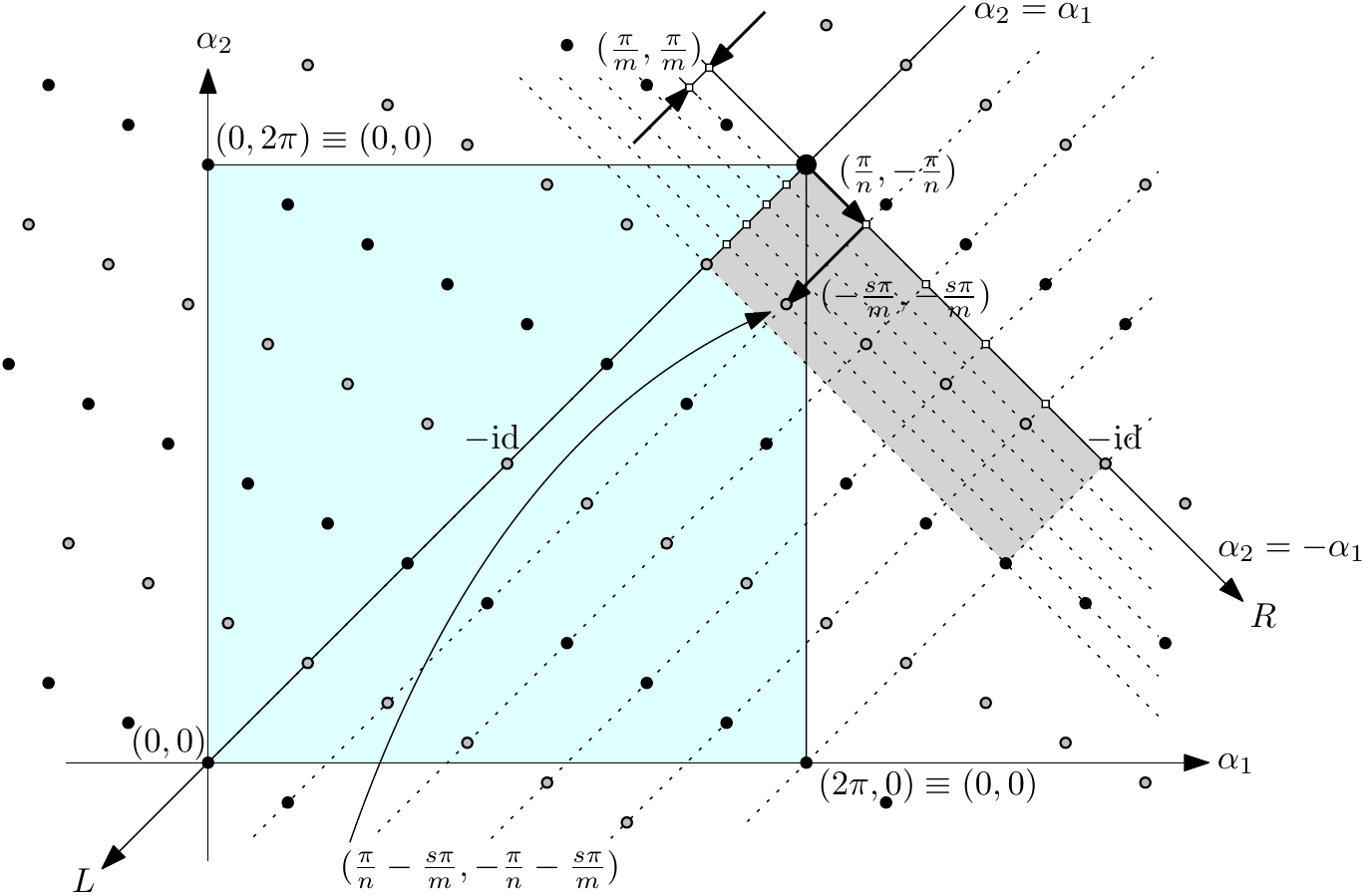}
  \caption{Parameterization of the translation groups in Conway and
    Smith.
    The black and gray points together form the diploid
    group $\pm \frac15[C_{15}^{(4)}\times C_5]
    =\grp1_{6,5}^{(-2)}$
    of order 30.
    The black points alone form the haploid
    group $+ \frac15[C_{15}^{(9)}\times C_5]
    =\grp1_{3,5}^{(-1)}$
    of order 15.}
  \label{fig:cs-translations}
\end{figure}

We describe the parameterization of the lattice translations for the
Conway--Smith classification of the groups of types $\pm[C\times C]$
and $+[C\times C]$
in geometric terms and relate them to our torus translations groups
(type \grp1). This might be interesting for readers who want to
study the classic classification for the toroidal groups and 
understand the connections.

As before, we describe the groups in terms of the lattice of torus
translations in the
$(\alpha_1,\alpha_2)$ coordinate system,
see Figure~\ref{fig:cs-translations}.
We put the origin at the top right corner $(2\pi,2\pi)$ because
the left rotations $[e_m,1]$ is a shift by $\pi/m$ along the negative
$\alpha_1=\alpha_2$ axis. This is the axis for the left rotations, and
we call it the $L$-axis.
The right rotations move on the
$\alpha_2=-\alpha_1$ axis in the southeast direction, and we call
this the $R$-axis.

We first describe the diploid groups
$\pm \frac1f[C_{m}^{(s)}\times C_n]
$, and we related them to our
groups
$\grp1_{m',n}^{(s')}$.
The left and right groups are determined by the grid formed by drawing
$\pm45^\circ$ lines through all points. If $2m$ grid lines cross
 the $L$-axis between $(0,0)$ and
$(2\pi,2\pi)$, then the left group is $C_m$.  Similarly, if there are
$2n$ grid intervals on the %
$R$-axis
between $(2\pi,2\pi)$ and $(4\pi,0)$,
(or equivalently, on the $-45^\circ$ diagonal of the square),
the %
right group is $C_n$.  The translation vectors on these diagonals form the left
kernel $C_{m/f}$ and the right kernel $C_{n/f}$.  The factor $f$ is
determined by the number of grid steps along the diagonal from one
point to the next.  In the picture, these are $f=5$ steps.  The
parameter $m'$ for our parameterization is hence $2m/f$.  The kernels
span a slanted rectangular grid; one rectangular box of this grid is shaded in the
picture. In terms of grid lines, the diagonal is an $f\times f$
square, and it contains exactly one point per grid line of either
direction, for a total of $f$ points (counting the four corners
only once).
In geometric terms,
Conway and Smith parameterize the lattice by looking at the first
grid line below the $L$-axis, as in our parameterization. They
measure $s$ as the number of grid steps to the first lattice point,
 starting from the $R$-axis in southwest direction.
The number $s$ must be relatively prime to~$f$, because otherwise,
 additional points on the $R$-axis would be generated.

By contrast, the parameter $s'$ in our setup (Figure~\ref{fig:translations})
is effectively measured
in the same units along the same diagonal line, but starting from the intersection with
the $\alpha_1$-axis, in the northeast direction.
Our parameterization is simpler because we don't specify in advance
the number of points on the $R$-axis. This allows us to freely choose
$s'$ within some range.

The group $\pm \frac1f[C_{m}^{(s)}\times C_n]$ is therefore generated by the
translation vectors
$[e_m^f,1]$ along the $L$-axis,
 $[1,e_n^f]$ along the $R$-axis,
and the additional vector
$[e_m^s,e_n]$.
(The second generator $[1,e_n^f]$ is actually redundant because
$[e_m^s,e_n]^f[e_m^f,1]^{-s} =
[1,e_n^f]$.)

For our group $\grp1_{m',n}^{(s')}$, the parameter $n$ is the same,
and $m'=2m/f$.
The parameter $s'$ can be computed as follows.
Choose generators for $\pm \frac1f[C_{m}^{(s)}\times C_n]$
as in Figure~\ref{fig:translations}.
These generators are then
$ t_1 = (\frac{f\pi}m, \frac{f\pi}m) $
and
$ t_2 = (\tfrac\pi{n} - \frac{s\pi}m + \frac{f\pi}m, 
-\tfrac\pi{n} - \frac{s\pi}m + \frac{f\pi}m)$.
Comparing them with the generators in
Proposition~\ref{prop:torus_translations}, we get
$s' = \frac{-m + (f-s)n}f$.

As mentioned in footnote~\ref{fn-mirror},
we have swapped the roles of the left and right groups with respect to Conway
and Smith's convention, to get a closer correspondence.
In the original convention of Conway and Smith,
the group
$\pm \frac1f[C_{m}\times C_n^{(s)}]$ is considered, whose third generator is
$[e_m,e_n^s]$. This group is the mirror of the
group $\pm \frac1f[C_n^{(s)}\times C_m]$.

A haploid group
$+ \frac1f[C_{m}^{(s)}\times C_n]
$ exists if both $m/f$ and $n/f$ are odd.
We modify the first generator to
$[e_m^{2f},1]$. This
omits every other point
on the $L$-axis (and on every line parallel to it) and thus avoids
 the point $(\pi,\pi)=-\id$.
In addition to being relatively prime to $f$,
$s$ must be odd, because otherwise, since
$[e_m^s,e_n]^{n}[e_m^{2f},1]^{-n/f\cdot s/2} = [1,e_n^{n}] =  [1,-1]$,
we would nevertheless generate the point $(\pi,\pi)=-\id$.

Reflection in the $L$-axis gives the same group. Hence
$\pm \frac15[C_{15}^{(4)}\times C_5]
\doteq
\pm \frac15[C_{15}^{(1)}\times C_5]
=\grp1_{6,5}^{(1)}$,
and
$+ \frac15[C_{15}^{(9)}\times C_5]
\doteq
+ \frac15[C_{15}^{(1)}\times C_5]
=\grp1_{3,5}^{(2)}$.
This reflection changes
the parameter $s$ to $f-s$
for the diploid groups
and to $2f-s$ for the haploid groups.
To eliminate these duplications,
the parameter $s$ should be constrained to
the interval
$0 \le s \le f/2$
for the diploid groups
and
$0 \le s \le f$
for the haploid groups.
As mentioned in footnote~\ref{fn-CS-escaped}, these constraints
are not stated in Conway and Smith. %
This concerns the last
four entries of~\cite[Table~4.2]{CS},
see Figure~\ref{fig:t42}.

With the help of the geometric picture of
 Figure~\ref{fig:cs-translations} for
 the parameterization of
Conway and Smith, one can give
 a geometric
 interpretation to the conditions
$s=fg\pm1$
of~\cite[pp.~52--53]{CS} for the last 4 lines of Table~4.3:
The condition
$s=fg-1$ expresses the fact that a square lattice is generated, as is necessary
for the torus swapturn groups \grp L (type $[D\times D]\cdot \bar2$).
The condition
$s=fg+1$
characterizes a rectangular lattice, as required for the
groups of type $\grp|$ and $\grp+$.
(Accordingly, for the two types of  groups
$\pm[C\times C]\cdot 2^{(\gamma)}$
and $+[C\times C]\cdot 2^{(\gamma)}$ in the upper half
of~\cite[pp.~53]{CS},
the condition $s=fg-1$ must be corrected to
$s=fg+1$, see~Figure~\ref{fig:p53}.)

\subsection{Index-4 subgroups of \texorpdfstring{$D_{4m}$}{D4m}}
\label{sec:index4}

There is one ambiguity that is notorious for causing oversights and omissions.
It arises when the group
$C_m$ is used as an 
index-4 subgroup of $D_{4m}$.

$D_{4m}$ is the chiral symmetry group of
a regular $2m$-gon $P_{2m}$ in space.  In Figure~\ref{fig:D20} we show such a $2m$-gon
with an alternating 2-coloring of its vertices.  $C_m$ is the normal subgroup of
rotations around the principal
axis, %
perpendicular to the polygon,
by multiples of $2\pi/m$ (those that respect the coloring).
$C_m$ has three cosets in $D_{4m}$: The ``cyclic coset'' $C_m'$ of rotations
by odd multiples of $\pi/m$ (those that swap the coloring), and two
``half-turn cosets''
$C_m^0$ and
$C_m^1$. %
One of these contains
the
half-turns through the vertices of $P_{2m}$ (the dashed axes, keeping
the colors), and the other %
the half-turns
through the edge midpoints of $P_{2m}$ (the dotted axes, swapping colors). However, when we rotate
$P_{2m}$ by $\pi/(2m)$, the involved groups and subgroups don't
change, and hence we see that 
$C_m^0$ and
$C_m^1$ are geometrically the same, whereas $C_m'$ is clearly
distinguishable (unless $m=1$).

\begin{figure}[htb]
  \centering
  \includegraphics{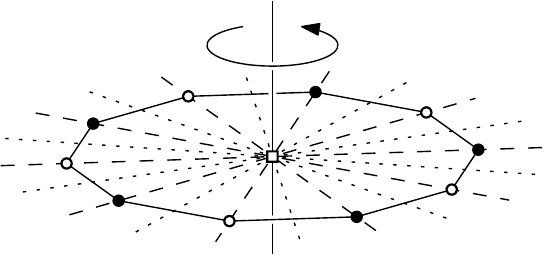}
  \caption{The operations of $D_{20}$ on a regular 10-gon $P_{10}$}
  \label{fig:D20}
\end{figure}

The case of
 the index-4 subgroups
 $C_m$ and $C_n$ of
$D_{4m}$ and $D_{4n}$ is denoted
in Conway and Smith~\cite{CS} by the notation
$\frac 14[D_{4m}\times D_{4n}]$, possibly with some decoration to
distinguish different cases.

The actual group is determined by an isomorphism between the cosets of
$D_{4m}/C_m$ and
$D_{4n}/C_n$.
For this there are two possibilities.
\begin{enumerate}[(a)]
\item 
 The cyclic coset $C_m'$ is matched with the
cyclic coset $C_n'$.
\item 
 The cyclic coset $C_m'$ and the
cyclic coset $C_n'$ are not matched to each other.  
\end{enumerate}

\paragraph{Goursat's omission.}

In the earliest enumeration by Goursat from 1889, the less natural possibility~(b)
has been overlooked. This was noted by
Threlfall and Seifert in 1931,
\cite[footnote 9 on p.~14%
]
{ThrS-I}\footnote
{Referring to Goursat's work:
  ``Gruppen dieser Substitutionen -- mit unseren Paargruppen 1-isomorph
 --
 sind mit
 einer Ausnahme (\S\,4 S.~18 Fu\ss note und \S\,4 S.~22) vollst\"andig
 angegeben.''
 (Groups of these substitutions -- which are 1-isomorphic to our pair
 groups -- are completely specified with one exception,
 see \S\,4 p.~18 footnote 13 and \S\,4 p.~22.)
In fact, in
footnote 13 on p.~18,
they %
use two such groups as an example of groups with equal
normal subgroups $L_0$ and $R_0$ that
are different already as abstract groups.
It is curious that Threlfall and Seifert, in the same paper, when they
came to the actual classification, overlooked this class of groups again. 
 They noted the gap themselves
 and filled it in part~II %
 \cite[pp.~585–586, Appendix II, Note 5]{ThrS-II}.
}
and by Hurley in 1951~\cite[bottom of p.~652]{hurley},\footnote
{``In the course of this calculation we find that Goursat has omitted one family
of groups. This omission appears to have passed unnoticed by subsequent writers.''}
who consequently extended the classification by adding an additional class
 XIII$'$
 of groups to Goursat's list.
 Du Val~\cite{duval} %
 followed
 Goursat and omitted case~(b) again.

\paragraph{A missed duplication in Conway and Smith.}

Conway and Smith \cite{CS} denote case (b) by adding a bar to the
second factor as follows:
\begin{equation}
\nonumber %
  \pm \tfrac 14[D_{4m}\times \bar D_{4n}]
  \text{ or }
 +\tfrac 14[D_{4m}\times \bar D_{4n}]
\end{equation}
When $n=1$, the distinction between case (a) and (b) disappears. $D_4$ is the
Vierergruppe,
whose nontrivial operations are half-turns around three perpendicular
axes,
and these elements are geometrically indistinguishable.

Conway and Smith 
express this succinctly in
the concluding sentence of their classification (see
Figure~\ref{fig:p53}):
``In the last eight lines, it is always permissible to replace
$D_2$ by $C_2$ and $\bar D_4$ by~$D_4$.''
However, this formulation in connection with the choice of notation
might lead an unwary reader into a trap:\footnote{Besides, the rule
  should also apply to entries that are not in the last eight lines of
  the tables. Accordingly, the constraint $n\ge 2$, which is stated
  for five of the eleven tubical groups in
Table~\ref{tbl:left_tubical_groups},
  should also be applied to the corresponding groups in
  \cite[Table 4.1]{CS}.
  For the group
$+ \frac12[D_{2m}\times C_{2n}]$
  in the penultimate line of Table 4.1, the obvious condition that $m$ and $n$
should be odd was forgotten. This omission has already been noted
by
Medeiros and Figueroa-O'Farrill~\cite[p.~1405]{atmp/spin}.
}
The choice (b) of an alternative mapping between the index-4 cosets in
$\frac14[D_{4m}\times  D_{4n}]$
 is not a property associated to
 $D_{4n}$ and its chosen normal subgroup, and it would
 more appropriate
to add the bar
to
the $\times$ operator or the whole
expression.
 The distinction disappears when
 at least \emph{one} of $D_{4m}$ and $D_{4n}$ is $D_4$, and hence,
 the bar can also be removed in a case like $[D_4\times \bar D_{4n}]$
 when the first factor is $D_4$.
 This duplication example has been treated in detail in
 Section~\ref{sec:dup-example}.

Conway and Smith use the bar notation $\bar D_{4n}$ also for something
different, namely in the index-2 case,
for example in
$\pm\frac{1}{2}[O\times\bar{D}_{4n}]$, see Table~\ref{tbl:left_tubical_groups}.
It indicates that,
as the kernel $R_0$ (or $L_0$)
of $D_{4n}$,
the normal
subgroup $D_{2n}$ is used,
as opposed to $C_{2n}$.
Also in this case, the
distinction disappears for $n=1$, but this time, it is a property of
the group $ D_{4n}$ and its normal subgroup, and hence the notation of
attaching the bar to
$ D_{4n}$ causes no confusion.

\paragraph{Another duplication in Conway and Smith.}

Our computer check unveiled another duplication
in Conway and Smith's classification.
It concerns the groups
$\grp+^{\mathbf{p2mg}}_{m,n}$ for $m=n$:
\begin{align*}
\grp+^{\mathbf{p2mg}}_{n,n} &\doteq
\pm\tfrac14[D_{2n}\times D_{2n}^{(1)}]\cdot 2^{(1,0)}
\doteq
\pm\tfrac14[D_{2n}\times D_{2n}^{(1)}]\cdot 2^{(1,1)}
&&\text{for even $n$}\\
\grp+^{\mathbf{p2mg}}_{n,n} &\doteq
+\tfrac12[D_{2n}\times D_{2n}^{(1)}]\cdot 2^{(0,0)}
\doteq
+\tfrac12[D_{2n}\times D_{2n}^{(1)}]\cdot 2^{(0,2)}
&&\text{for odd $n$}
\end{align*}
Neither of these duplications is warranted
according to the equalities listed
in~\cite[pp.~52--53]{CS}.
For example, for
$\pm\tfrac14[D_{2n}\times D_{2n}^{(s)}]\cdot 2^{(\alpha,\beta)}$ in the
first line,
we need a transition from $(\alpha,\beta)=(1,0)$
to  $(\alpha,\beta)=(1,1)$.
In this example, $f=2$ and $g=0$.
The only rule
according to~\cite[bottom of p.~52]{CS} that allows this change is the
transition
from $\langle s,\alpha,\beta\rangle$
to
$\langle s+f,\alpha,\beta-\alpha\rangle$
(see Figure~\ref{fig:p52}),
but it comes with a simultaneous change of  $s$ from
$s=1$ to $s+f=3$. The parameter $s$~is regarded modulo $2f=4$.

We did not investigate the reason for this duplication.
Since $f=2$ in both cases, it may have to do with
``\dots\  the easy cases when $f\le2$, which we exclude''
\cite[p.~52, line 2]{CS},
see Figure~\ref{fig:p52}.

The book of Conway and Smith~\cite{CS} is otherwise a very nice book
on topics related to quaternions and octonions, but it suffers from a %
concentration of mistakes near the end of Chapter~4, in particular,
concerning the achiral groups.  As an
``erratum'' to \cite[\S4]{CS}, we attach in
Figures~\ref{fig:t42}--\ref{fig:p53} the Tables 4.1--4.2 and the last
three pages of Chapter~4 of \cite{CS} with our additional
explanations and corrections, as far
as we could ascertain them, but we certainly did not fix all problems.

\begin{figure}[p]
  \centering
  \fbox{\vbox{\vskip-7mm
      \includegraphics[page=1,scale=0.5]{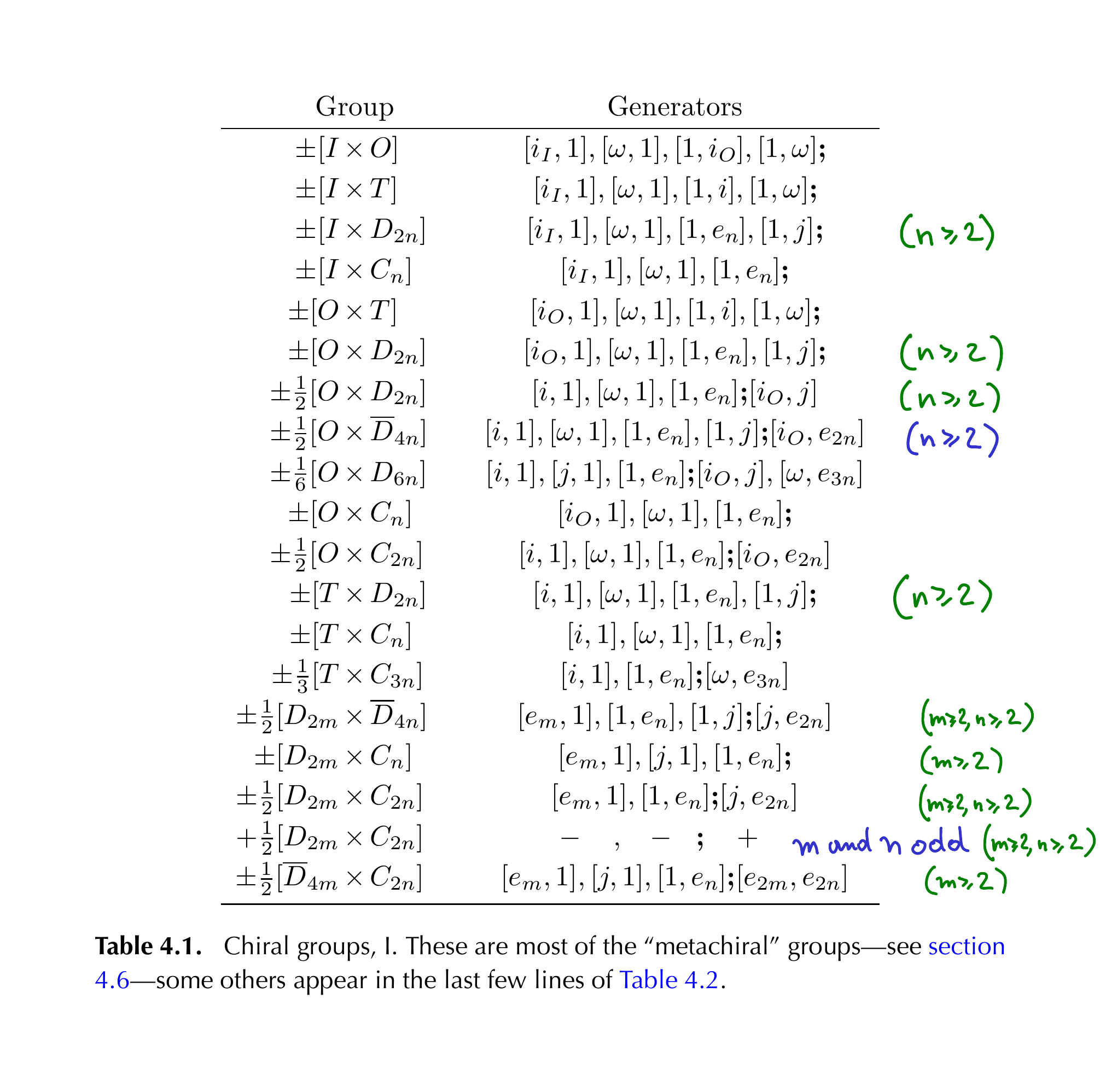}\vskip-7mm}}
  \bigbreak\vfill
  \fbox{\vbox{\vskip-13mm
      \includegraphics[page=2,scale=0.9]{figures/Conway-Smith-corrected-Tables.pdf}
      \vskip-12mm}}
  \caption{Corrections and remarks for \cite[Tables 4.1 and 4.2, p.~44 and 46]{CS}.}
  \label{fig:t42}
\end{figure}

\begin{figure}[p]
  \centering
  \fbox{\vbox{\vskip-1mm
      \includegraphics[page=1,scale=0.75]{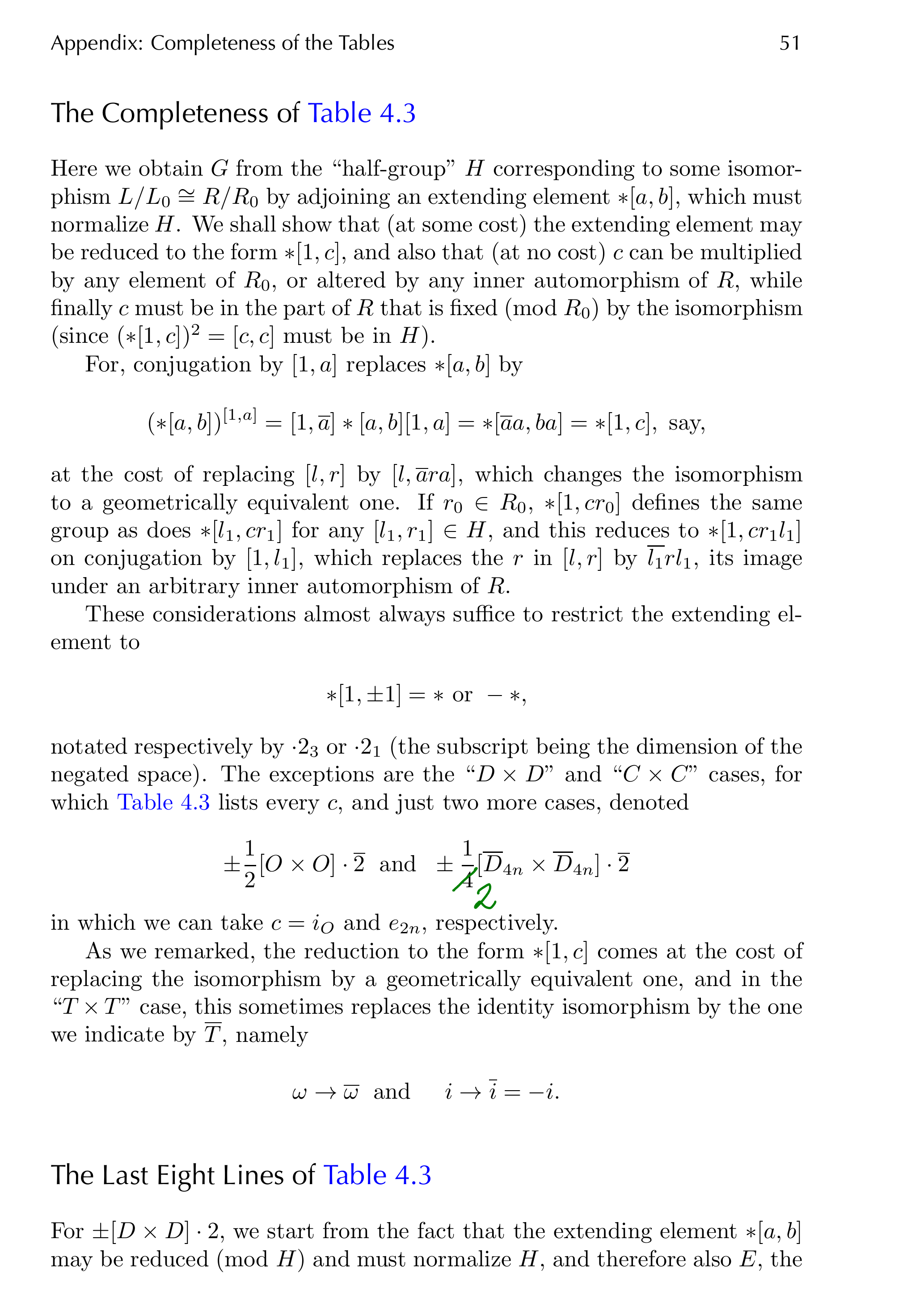}
\vskip-3mm}}
  \caption{Corrections for \cite[p.~51]{CS}.}
  \label{fig:p51}
\end{figure}
\begin{figure}[p]
  \centering
  \fbox{\includegraphics[page=2,scale=0.75]{figures/Conway-Smith-corrected.pdf}}
  \caption{Corrections and remarks for \cite[p.~52]{CS}.}
  \label{fig:p52}
\end{figure}
\begin{figure}[p]
  \centering
  \fbox{\includegraphics[page=3,scale=0.75]{figures/Conway-Smith-corrected.pdf}}
  \caption{Corrections and remarks for \cite[p.~53]{CS}.}
  \label{fig:p53}
\end{figure}

\end{document}